\newtheorem{thm}{Theorem}[section]
\newtheorem{cor}[thm]{Corollary}
\newtheorem{conj}[thm]{Conjecture}
\newtheorem{lem}[thm]{Lemma}
\newtheorem{prop}[thm]{Proposition}
\theoremstyle{definition}
\newtheorem{defn}[thm]{Definition}
\newtheorem{exm}[thm]{Example}
\theoremstyle{remark}
\newtheorem{rem}[thm]{\bf Remark}
\numberwithin{equation}{section}
\newcommand{\Barr}{\mathrm{Bar}}
\newcommand{\HH}{\mathrm{HH}}
\newcommand{\Hom}{\mathrm{Hom}}
\newcommand{\sg}{\mathrm{sg}}
\newcommand{\dg}{\mathrm{dg}}
\newcommand{\op}{\mathrm{op}}
\newcommand{\nc}{\mathrm{nc}}
\newcommand{\hooklongrightarrow}{\lhook\joinrel\longrightarrow}
\newcommand{\smallotimes}{\mathbin{\mathpalette\make@small\otimes}}
\newcommand{\make@small}[2]{%
  \vcenter{\hbox{%
    $\m@th\ifx#1\displaystyle\scriptstyle\else\ifx#1\textstyle\scriptstyle
     \else\scriptscriptstyle\fi\fi#2$%
  }}%
}
\begin{document}
\sloppy

\title[Leavitt path algebras, $B_\infty$-algebras and Keller's conjecture]{Leavitt path algebras, $B_\infty$-algebras and Keller's conjecture for singular Hochschild cohomology}

\author{Xiao-Wu Chen, Huanhuan Li,  and Zhengfang Wang$^*$}

\subjclass[2010]{16E05, 13D03, 16E40, 55U35}
\thanks{$^*$ the corresponding author}
\keywords{Leavitt path algebra, $B$-infinity algebra,  Hochschild cohomology,  singular Hochschild cohomology, singularity category}

\date{\today}

\maketitle

\selectlanguage{english}

\begin{abstract}
For a finite quiver without sinks, we establish an isomorphism in the homotopy category $\mathrm {Ho}(B_\infty)$ of $B_{\infty}$-algebras between the Hochschild cochain complex of the Leavitt path algebra $L$ and the singular Hochschild cochain complex  of the corresponding radical square zero algebra $\Lambda$. Combining this isomorphism with a description of the dg singularity category of $\Lambda$ in terms of the dg perfect derived category of $L$, we verify  Keller's conjecture for the singular Hochschild cohomology of  $\Lambda$. More precisely, we prove that there is an isomorphism in $\mathrm{Ho}(B_\infty)$ between the singular Hochschild cochain complex of $\Lambda$ and the Hochschild cochain complex of the dg singularity category of $\Lambda$. One ingredient of the proof is  the following duality theorem on $B_\infty$-algebras: for any $B_\infty$-algebra, there is a natural  $B_\infty$-isomorphism between its opposite $B_\infty$-algebra  and its transpose $B_\infty$-algebra.

We prove that Keller's conjecture is invariant under one-point (co)extensions and singular equivalences with levels. Consequently, Keller's conjecture holds for those algebras obtained inductively from $\Lambda$ by one-point (co)extensions and singular equivalences with levels. These algebras include all finite dimensional gentle algebras.

\end{abstract}

\tableofcontents

\section{Introduction}

\subsection{The background}

 Let $\mathbb k$ be  a field and $\Lambda$ be a finite dimensional associative $\mathbb k$-algebra. Denote by $\Lambda\mbox{-mod}$ the abelian category of finite dimensional left $\Lambda$-modules and by $\mathbf{D}^b(\Lambda\mbox{-mod})$ its bounded derived category. The \emph{singularity category} $\mathbf{D}_{\rm sg}(\Lambda)$ of $\Lambda$ is by definition the Verdier quotient category of $\mathbf{D}^b(\Lambda\mbox{-mod})$ by  the full subcategory of perfect complexes. This notion is first introduced in \cite{Buc},  and then rediscovered in \cite{Orl} with motivations from homological mirror symmetry.  The singularity category measures the homological singularity of the algebra $\Lambda$, and reflects the asymptotic behaviour of syzygies of $\Lambda$-modules.

It is well known that triangulated categories are less rudimentary than dg categories as the former are inadequate to handle many basic algebraic and geometric operations.   The bounded dg derived category $\mathbf{D}_{\rm dg}^b(\Lambda\mbox{-mod})$ is a dg category whose zeroth cohomology coincides with $\mathbf{D}^b(\Lambda\mbox{-mod})$. Similarly, the \emph{dg singularity category} $\mathbf{S}_{\rm dg}(\Lambda)$ of $\Lambda$ \cite{Kel18, BRTV, BrDy} is defined to be the dg quotient category of $\mathbf{D}_{\rm dg}^b(\Lambda\mbox{-mod})$ by the full dg subcategory of perfect complexes. Then the zeroth cohomology of $\mathbf{S}_{\rm dg}(\Lambda)$ coincides with $\mathbf{D}_{\rm sg}(\Lambda)$. In other words, the dg singularity category provides a canonical dg enhancement for the singularity category.

As one of the advantages of working with dg categories, their Hochschild theory behaves  well with respect to various operations \cite{Kel03, LoVa, Toe}.   We consider the Hochschild cochain complex $C^*(\mathbf{S}_{\rm dg}(\Lambda), \mathbf{S}_{\rm dg}(\Lambda))$ of the dg singularity category $\mathbf{S}_{\rm dg}(\Lambda)$, which has a natural structure of a $B_\infty$-algebra \cite{GJ}. Moreover, it induces a Gerstenhaber algebra structure \cite{Ger} on the Hochschild cohomology $\HH^*(\mathbf{S}_{\rm dg}(\Lambda), \mathbf{S}_{\rm dg}(\Lambda))$. The $B_\infty$-algebra structures on the Hochschild cochain complexes play an essential role in the deformation theory \cite{LoVa} of categories. We mention that  $B_{\infty}$-algebras  are the key ingredients in the proof \cite{Tam} of Kontsevich's formality theorem. We  refer to \cite[Subsection 1.19]{MSS} for the relationship between $B_{\infty}$-algebras and Deligne's conjecture.

The \emph{singular Hochschild cohomology}  $\HH_{\sg}^*(\Lambda, \Lambda)$ of  $\Lambda$ is defined as
$$\HH_{\sg}^n(\Lambda, \Lambda):=\Hom_{\mathbf{D}_{\sg}(\Lambda^e)}(\Lambda, \Sigma^n(\Lambda)), \quad\quad \mbox{for any $n\in \mathbb Z$},$$
where $\Sigma$ is the suspension functor of the singularity category $\mathbf{D}_{\sg}(\Lambda^e)$ of the enveloping algebra $\Lambda^e=\Lambda\otimes \Lambda^{\op}$; see \cite{BJ, ZF, Kel18}. By \cite{Wan1}, there are two complexes $\overline{C}_{\sg, L}^*(\Lambda, \Lambda)$ and $\overline{C}_{\sg, R}^*(\Lambda, \Lambda)$ computing $\HH_{\sg}^*(\Lambda, \Lambda)$, called the \emph{left singular Hochschild cochain complex} and the \emph{right singular Hochschild cochain complex} of $\Lambda$, respectively. Moreover, both  $\overline{C}_{\sg, L}^*(\Lambda, \Lambda)$ and $\overline{C}_{\sg, R}^*(\Lambda, \Lambda)$ have  natural $B_{\infty}$-algebra structures, which induce the same Gerstenhaber algebra structure on $\HH_{\sg}^*(\Lambda, \Lambda)$; see Proposition \ref{prop:Ger-dual}.

There is a canonical isomorphism
\begin{align}\label{iso:duality}
\overline{C}_{\sg, L}^*(\Lambda^{\rm op}, \Lambda^{\rm op}) \simeq \overline{C}_{\sg, R}^*(\Lambda, \Lambda)^{\rm opp}
\end{align}
of $B_\infty$-algebras; see Proposition \ref{lemma-CL1}.  Here,  for a $B_\infty$-algebra $A$ we denote by $A^{\rm opp}$ its \emph{opposite} $B_\infty$-algebra; see Definition \ref{defnopposite}. We mention that the $B_{\infty}$-algebra structures on the singular Hochschild cochain complexes come  from a natural action of the cellular chains of the  spineless cacti operad introduced in \cite{Ka}.

The singular Hochschild cohomology is also called Tate-Hochschild cohomology in \cite{Wan, Wan1, Wan2}.
The result in \cite{RW} shows that the singular Hochschild cohomology can be viewed as an  algebraic formalism of  Rabinowitz-Floer homology \cite{CFO} in symplectic geometry.

\subsection{The main results}
Let $A=(A, m_n; \mu_{p, q})$ be a $B_\infty$-algebra, where $(A, m_n)$ is the underlying $A_\infty$-algebra and $\mu_{p,q}$ are the $B_\infty$-products. We denote by $A^{\rm tr}$ the \emph{transpose} $B_\infty$-algebra; see Definition~\ref{defn-transposeB}.

The first main result, a duality theorem on general $B_\infty$-algebras, might be viewed as a conceptual advance on $B_\infty$-algebras.

\begin{thm}[= Theorem \ref{thm:dualityB}]\label{thm:dualityintro} Let $(A, m_n; \mu_{p, q})$ be a $B_\infty$-algebra. Then there is a natural $B_\infty$-isomorphism between the opposite $B_\infty$-algebra $A^{\rm opp}$ and the transpose $B_\infty$-algebra $A^{\rm tr}$.
\end{thm}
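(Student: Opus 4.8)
The plan is to work at the level of the bar construction. Recall that a $B_\infty$-algebra structure on $A$ is equivalent to a dg bialgebra structure on the tensor coalgebra $B A = \bigoplus_{n \geq 0} (sA)^{\otimes n}$: the $A_\infty$-operations $m_n$ assemble into a coderivation differential $d_{BA}$, and the $B_\infty$-products $\mu_{p,q}$ assemble into an associative multiplication $M \colon BA \otimes BA \to BA$ which is a morphism of dg coalgebras and is compatible with the deconcatenation coproduct. So the first step is to express all three $B_\infty$-algebras — $A$, $A^{\mathrm{opp}}$, and $A^{\mathrm{tr}}$ — in terms of explicit (de)coderivation and multiplication data on $BA$, reading off the definitions (Definitions \ref{defnopposite} and \ref{defn-transposeB}). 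Concretely, $A^{\mathrm{opp}}$ reverses the order of the bar-multiplication (replacing $M$ by $M^{\mathrm{op}} = M \circ \tau$, where $\tau$ is the flip of the two tensor factors of $BA$), keeping the same differential, while $A^{\mathrm{tr}}$ should correspond to precomposing/postcomposing with the \emph{word-reversal} map on the tensor coalgebra.

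The key step is then to exhibit the comparison map. I expect the desired $B_\infty$-isomorphism to be induced by the canonical \emph{reversing} isomorphism $R \colon BA \to BA$ that sends a bar element $[a_1 | a_2 | \cdots | a_n]$ to $\pm [a_n | \cdots | a_2 | a_1]$, with an appropriate Koszul sign. The plan is to verify, in order, three compatibilities: (i) $R$ is an isomorphism of graded coalgebras from the deconcatenation coalgebra to itself — this is a purely combinatorial sign check; (ii) $R$ intertwines the differential $d_{BA}$ built from $(A, m_n)$ with the differential built from the opposite $A_\infty$-structure (whose operations are $m_n$ precomposed with the order-reversal on $(sA)^{\otimes n}$, up to sign), which is exactly how the underlying $A_\infty$-algebra of $A^{\mathrm{opp}}$ and of $A^{\mathrm{tr}}$ are defined; and (iii) $R$ carries the bar-multiplication $M$ of $A$ to $M^{\mathrm{op}}$ twisted by the transpose products $\mu_{q,p}$ with reversed internal orders — i.e. it matches the multiplication of $A^{\mathrm{opp}}$ with that of $A^{\mathrm{tr}}$. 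The naturality claim then follows because $R$ is defined uniformly, not depending on the particular structure maps, so any $B_\infty$-morphism $f \colon A \to A'$ gives a commuting square with the $R$'s.

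The main obstacle, and the part that needs genuine care rather than formal nonsense, is the \textbf{sign bookkeeping} in steps (ii) and (iii). Reversing the order of $n$ suspended elements introduces a sign $\prod_{i<j}(-1)^{|sa_i||sa_j|}$, and one must check that this sign is exactly compensated by the Koszul signs appearing in the definitions of the opposite and transpose operations $m_n^{\mathrm{opp}}$, $\mu_{p,q}^{\mathrm{opp}}$ versus $m_n^{\mathrm{tr}}$, $\mu_{p,q}^{\mathrm{tr}}$; these definitions were presumably engineered (in Definitions \ref{defnopposite} and \ref{defn-transposeB}) precisely so that this works, so the real content is to confirm the conventions are mutually consistent. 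A secondary subtlety is making sure that $R$ respects the coalgebra structure on the \emph{nose} (not merely up to homotopy), since a $B_\infty$-morphism is by definition a morphism of dg bialgebras $BA \to BA'$; because $R$ is an involution built from the symmetry of the monoidal category, this should hold strictly. Once the signs are pinned down, assembling (i)–(iii) shows $R$ is the required $B_\infty$-isomorphism, and its inverse is $R$ itself (up to the same sign), completing the proof.
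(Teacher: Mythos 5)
Your reduction to the bar construction, and your identification of the transpose structure with word-reversal, are on the right track: that is exactly the content of Lemma~\ref{lem:transposeB}, where the signed reversal $O$ is an isomorphism of dg bialgebras from $(T^c(sA),\Delta^{\rm op},D,\mu)$ onto the bialgebra of $A^{\rm tr}$. The gap is in your step (i). The reversal map $R$ is \emph{not} a coalgebra automorphism of the deconcatenation coalgebra: it satisfies $\Delta\circ R=(R\otimes R)\circ\tau\circ\Delta$, i.e.\ it is a coalgebra \emph{anti}-automorphism, and no adjustment of Koszul signs can change this. Hence $R$ cannot serve as a morphism from the bialgebra of $A^{\rm opp}$ (whose coproduct is $\Delta$) to that of $A^{\rm tr}$ (whose coproduct is again $\Delta$). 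There is also a quick check that rules out your map independently of signs: since $R$ preserves tensor length, the $B_\infty$-morphism it would induce is \emph{strict} with first component $\pm\mathbf{1}_A$, which would force $\mu^{\rm opp}_{p,q}=\pm\mu^{\rm tr}_{p,q}$ for all $p,q$; but already for a brace $B_\infty$-algebra one has $\mu^{\rm tr}_{p,q}=0$ for $p>1$ while $\mu^{\rm opp}_{p,1}\neq 0$ in general (Remark~\ref{remark-B}). Indeed the isomorphism asserted in Theorem~\ref{thm:dualityB} is genuinely non-strict, with higher components built from the $B_\infty$-products themselves (Remark~\ref{rem:antipode-brace}), so any construction that produces a strict, structure-independent map is bound to fail.

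The missing idea is the antipode. Since $(T^c(sA),\Delta,\mu)$ is a connected bialgebra, it admits a bijective antipode $S$, given inductively by $S(sa)=-sa$ and $S(x)=-x-\sum\mu(x'\bigotimes S(x''))$, and $S$ is automatically a bialgebra isomorphism $(T^c(sA),\Delta,\mu^{\rm opp})\to(T^c(sA),\Delta^{\rm op},\mu)$: it is $S$, not word-reversal, that converts the opposite multiplication back into $\mu$, at the price of flipping the coproduct, which the reversal $O$ of Lemma~\ref{lem:transposeB} then straightens out. The remaining step, where the differential genuinely enters, is to verify $S\circ D=D\circ S$; the paper does this by induction on tensor length, using that $D$ is simultaneously a derivation for $\mu$ and a coderivation for $\Delta$. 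The required natural $B_\infty$-isomorphism is the composite of $S$ with $O$ (equivalently, the maps $\Theta_k$ of Remark~\ref{rem:antipode-brace} in the brace case); in particular it depends on the $\mu_{p,q}$, it is not an involution, and its component in tensor length one is $-\mathbf{1}$ rather than a bare sign-twisted identity on all of $T^c(sA)$.
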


 It is a standard fact that a $B_\infty$-algebra structure on $A$ is equivalent to a dg bialgebra structure on the tensor coalgebra $T^c(sA)$ over the $1$-shifted graded space $sA$. By a classical result,  the dg bialgebra $T^c(sA)$  admits a bijective antipode $S$.  The $B_\infty$-isomorphism in Theorem~\ref{thm:dualityintro} is precisely induced  by the antipode $S$.

 We mention that if $\mu_{p, q} = 0$ for any $p >1$, the antipode $S$ and thus the required $B_\infty$-isomorphism have an explicit graphic description from the Kontsevich-Soibelman minimal operad; see Remark~\ref{rem:antipode-brace}.

 Theorem~\ref{thm:dualityintro} is applied to establish $B_\infty$-isomorphisms between the (\emph{resp}.\ singular) Hochschild cochain complexes of  an algebra and of its opposite algebra; see Propositions~\ref{lemma-CL} and \ref{lemma-CL1} (= the isomorphism (\ref{iso:duality})), respectively.

\vskip 5pt

Recall that $\Lambda$ is a finite dimensional $\mathbb{k}$-algebra. Denote by $\Lambda_0$ the semisimple quotient algebra of $\Lambda$ modulo its Jacobson radical.
Recently,  Keller proves in \cite{Kel18} that if $\Lambda_0$ is separable over $\mathbb{k}$,  then there is a natural isomorphism of graded algebras
\begin{align}\label{equ:graded}
\HH_{\sg}^*(\Lambda, \Lambda)\stackrel{\sim}\longrightarrow \HH^*(\mathbf{S}_{\rm dg}(\Lambda), \mathbf{S}_{\rm dg}(\Lambda)).
\end{align}
This isomorphism plays a central role in \cite{HuKe}, which proves a weakened version of Donovan-Wemyss's conjecture \cite{DoWe}.

Denote by $\mathrm{Ho}(B_{\infty})$ the homotopy category of $B_\infty$-algebras \cite{Hinich, Kel03}. In \cite[{\bf Conjecture}~1.2]{Kel18}, Keller conjectures that there is an isomorphism in $\mathrm{Ho}(B_{\infty})$
\begin{align}\label{equ:Keller-conj-intr}
\overline{C}^*_{\sg, L}(\Lambda^{\rm op}, \Lambda^{\rm op})\simeq C^*(\mathbf{S}_{\rm dg}(\Lambda), \mathbf{S}_{\rm dg}(\Lambda)).
\end{align}
In particular, we have an induced isomorphism
$$\HH_{\sg}^*(\Lambda, \Lambda)\stackrel{\sim}\longrightarrow \HH^*(\mathbf{S}_{\rm dg}(\Lambda), \mathbf{S}_{\rm dg}(\Lambda))$$
respecting the Gerstenhaber structures. A slightly stronger version of the conjecture claims that the induced isomorphism above coincides with the natural isomorphism (\ref{equ:graded}).

 It is well known that a $B_\infty$-algebra induces a dg Lie algebra and that a $B_\infty$-quasi-isomorphism induces a quasi-isomorphism of dg Lie algebras; see Remark~\ref{Rem:dgliealgebra}. Keller's conjecture yields a zigzag of quasi-isomorphisms of dg Lie algebras, or equivalently, an $L_\infty$-quasi-isomorphism, between $\overline{C}^*_{\sg, L}(\Lambda^{\rm op}, \Lambda^{\rm op})$ and $C^*(\mathbf{S}_{\rm dg}(\Lambda), \mathbf{S}_{\rm dg}(\Lambda))$. From the general idea of deformation theory via dg Lie algebras \cite{Lu, Pri}, Keller's conjecture indicates that the deformation theory of the dg singularity category is controlled by the singular Hochschild cohomology, where the latter is usually much easier to compute than the Hochschild cohomology of the dg singularity category. For example, in view of the work \cite{BRTV,Dyc,Kel18}, it would be of interest to study the relationship between the singular Hochschild cohomology and the deformation theory of Landau-Ginzburg models.  We mention that Keller's conjecture is analogous to the isomorphism
\begin{align*}
C^*(\Lambda^{\rm op}, \Lambda^{\rm op})\simeq C^*(\mathbf{D}_{\rm dg}^b(\Lambda\mbox{-mod}), \mathbf{D}_{\rm dg}^b(\Lambda\mbox{-mod}))
\end{align*}
for the classical Hochschild cochain complexes; see \cite{Kel03, LoVa}.

We say that an algebra $\Lambda$ \emph{satisfies} Keller's conjecture, provided that there is an isomorphism (\ref{equ:Keller-conj-intr}) for $\Lambda$. The second main result, an invariance theorem,  justifies Keller's conjecture to some extent, as a reasonable conjecture should be invariant under reasonable equivalence relations.

\begin{thm}[= Theorem~\ref{thm:Keller-redu}]\label{thmintro-1}
Let $\Pi$ be another algebra. Assume that  $\Lambda$ and $\Pi$  are connected by a finite zigzag  of one-point (co)extensions and singular equivalences with levels. Then $\Lambda$ satisfies Keller's conjecture if and only if so does $\Pi$.
\end{thm}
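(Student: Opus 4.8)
The plan is to reduce Theorem~\ref{thmintro-1} to two separate invariance statements, treated independently, since a finite zigzag is built from the two types of elementary steps. First I would establish invariance of Keller's conjecture under singular equivalences with levels. The key point here is that a singular equivalence with levels between $\Lambda$ and $\Pi$ should induce, on one hand, an equivalence of dg singularity categories up to a shift/twist (or at least a quasi-equivalence of the relevant dg enhancements), hence a $B_\infty$-isomorphism $C^*(\mathbf{S}_{\rm dg}(\Lambda), \mathbf{S}_{\rm dg}(\Lambda))\simeq C^*(\mathbf{S}_{\rm dg}(\Pi), \mathbf{S}_{\rm dg}(\Pi))$ in $\mathrm{Ho}(B_\infty)$ by the functoriality of the Hochschild cochain complex on dg categories; on the other hand, one must show that the left singular Hochschild cochain complexes $\overline{C}^*_{\sg, L}(\Lambda^{\rm op}, \Lambda^{\rm op})$ and $\overline{C}^*_{\sg, L}(\Pi^{\rm op}, \Pi^{\rm op})$ are isomorphic in $\mathrm{Ho}(B_\infty)$. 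For the latter I would pass through the singularity category of the enveloping algebra: a singular equivalence with levels between $\Lambda$ and $\Pi$ induces one between $\Lambda^e$ and $\Pi^e$, and then I would invoke an identification of $\overline{C}^*_{\sg, L}$ with the Hochschild cochain complex of a suitable dg enhancement of $\mathbf{D}_{\sg}(\Lambda^e)$ viewed as a dg category with the distinguished bimodule $\Lambda$ — in effect reducing this half to a $B_\infty$-version of the already-known graded-algebra invariance, combined with the duality theorem (Theorem~\ref{thm:dualityintro}) to move between left/right versions and opposite algebras.

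Second, I would handle invariance under one-point extensions (the coextension case being dual, obtained by applying the first case to opposite algebras together with the duality isomorphism~(\ref{iso:duality})). If $\Pi$ is the one-point extension of $\Lambda$ by a $\Lambda$-module $M$, the triangular matrix algebra structure $\Pi = \left(\begin{smallmatrix} \mathbb k & 0 \\ M & \Lambda \end{smallmatrix}\right)$ gives a recollement relating $\mathbf{D}^b(\Pi\text{-mod})$ and $\mathbf{D}^b(\Lambda\text{-mod})$, and more importantly the classical fact that one-point (co)extensions do not change the singularity category up to triangle equivalence, and in fact induce a quasi-equivalence of the dg singularity categories $\mathbf{S}_{\rm dg}(\Pi)\simeq \mathbf{S}_{\rm dg}(\Lambda)$. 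This immediately yields the $B_\infty$-isomorphism on the geometric side. For the algebraic side, the cleanest route is to observe that a one-point (co)extension $\Lambda \rightsquigarrow \Pi$ can itself be realized as (or refined into) a singular equivalence with level — or at least that it induces a singular equivalence with levels between $\Lambda^e$ and $\Pi^e$ compatible with the distinguished bimodules — so that the first part applies verbatim; alternatively, one analyzes the effect of the recollement directly on the left singular Hochschild cochain complexes and checks it is a $B_\infty$-quasi-isomorphism.

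Assembling the two parts: given a finite zigzag $\Lambda = \Lambda_0, \Lambda_1, \dots, \Lambda_n = \Pi$ where consecutive terms are related by a one-point (co)extension or a singular equivalence with levels (in either direction), each elementary step gives an isomorphism in $\mathrm{Ho}(B_\infty)$ both for $\overline{C}^*_{\sg, L}((-)^{\rm op}, (-)^{\rm op})$ and for $C^*(\mathbf{S}_{\rm dg}(-), \mathbf{S}_{\rm dg}(-))$, and these isomorphisms are compatible in the sense that the square relating the two complexes for $\Lambda_i$ and $\Lambda_{i+1}$ commutes in $\mathrm{Ho}(B_\infty)$. Composing along the zigzag, one sees that $\Lambda$ satisfies Keller's conjecture — i.e.\ the top and bottom of~(\ref{equ:Keller-conj-intr}) are isomorphic in $\mathrm{Ho}(B_\infty)$ — if and only if $\Pi$ does. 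The compatibility of the two sides (the commuting-square statement) is what makes the argument go through rather than merely giving isomorphisms of each side separately.

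The main obstacle I anticipate is the $B_\infty$-refinement on the algebraic side: upgrading the known graded-algebra-level invariance of singular Hochschild cohomology under singular equivalences with levels to a genuine isomorphism in $\mathrm{Ho}(B_\infty)$, functorially and compatibly with the geometric side. This requires a clean model — presumably realizing $\overline{C}^*_{\sg, L}(\Lambda^{\rm op}, \Lambda^{\rm op})$ as the Hochschild cochain complex of an explicit dg category (a dg enhancement of the singularity category of $\Lambda^e$, or equivalently of a Leavitt-path-algebra-type model in the radical square zero case) on which singular equivalences with levels act by dg functors — and then transporting $B_\infty$-structures through these functors, where the duality theorem (Theorem~\ref{thm:dualityintro}) is essential for matching up the left version appearing in Keller's conjecture with the right version that arises naturally from the bimodule side. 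Keeping track of the shifts/twists introduced by the ``levels'' and checking they are harmless in $\mathrm{Ho}(B_\infty)$ is the delicate bookkeeping part.
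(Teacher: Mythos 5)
Your geometric half (quasi-equivalences of dg singularity categories plus the functoriality of Hochschild cochains of dg categories) is exactly what the paper does, but the algebraic half has a genuine gap at its key step. For singular equivalences with levels you propose to ``invoke an identification of $\overline{C}^*_{\sg,L}$ with the Hochschild cochain complex of a suitable dg enhancement of $\mathbf{D}_{\sg}(\Lambda^e)$ with the distinguished bimodule $\Lambda$'' and thereby upgrade the known graded invariance to $\mathrm{Ho}(B_\infty)$. No such $B_\infty$-level identification is available; it is a statement of essentially the same depth as Keller's conjecture itself, so the argument is circular precisely where the work lies. The paper avoids this entirely: it proves the invariance of the \emph{right} singular Hochschild cochain complex directly, by forming the triangular matrix algebra $\Gamma=\left(\begin{smallmatrix}\Lambda & M\\ 0 & \Pi\end{smallmatrix}\right)$ and showing that the two restriction maps from $\overline{C}^*_{\sg,R,E}(\Gamma,\Gamma)$ to $\overline{C}^*_{\sg,R}(\Lambda,\Lambda)$ and to $\overline{C}^*_{\sg,R}(\Pi,\Pi)$ are strict $B_\infty$-quasi-isomorphisms (Theorem~\ref{thm:quasi-iso-bimod}, Proposition~\ref{prop:sing-equi2}); this requires constructing the colimit complex $\overline{C}^*_{\sg}(M,M)$ computing $\Hom_{\mathbf{D}_{\sg}(\Lambda\otimes\Pi^{\op})}(M,\Sigma^*M)$ and a delicate long-exact-sequence argument, needed exactly because the obvious splittings are not compatible with the colimit maps (Remark~\ref{rem:tricky}). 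The conjecture is then converted to the right-sided complexes via the duality isomorphism (\ref{iso:duality}), which is the only place the duality theorem enters. Your side claim that a singular equivalence with level between $\Lambda$ and $\Pi$ induces one between $\Lambda^e$ and $\Pi^e$ is also not automatic and plays no role in the paper.

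The one-point extension step has a second gap: a one-point (co)extension is \emph{not} in general a singular equivalence with level. The natural bimodules $e\Lambda''$ and $\Lambda''e$ violate the ``projective on both sides'' requirement (for instance $e\Lambda''\simeq \Lambda\oplus N$ as a left $\Lambda$-module, which is not projective unless $N$ is), and indeed the paper treats the two kinds of elementary moves as genuinely separate, proving the invariance of the relevant (relative) left singular Hochschild cochain complexes under one-point (co)extensions by hand (Lemmas~\ref{lem:strict-R}, \ref{lem:strict-L-quasi} and \ref{onepointext}), the key computation being that the kernel of the restriction map is the mapping cone of an identity map and hence acyclic; your fallback ``analyze the recollement directly'' is the right spirit but carries all of this unperformed work. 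Finally, the commuting-square compatibility you insist on is not needed for the version of the conjecture treated here: since it only asserts the \emph{existence} of an isomorphism in $\mathrm{Ho}(B_\infty)$, isomorphisms of the two sides separately already give the zigzag reduction, and this is exactly how the paper concludes; the compatibility would only matter for the stronger form of the conjecture, which is not addressed.
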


Recall that a derived equivalence \cite{Ric} between two algebras naturally induces a singular equivalence with level. It follows that Keller's conjecture is invariant under derived equivalences.

We leave some comments on the proof of Theorem \ref{thmintro-1}. It is known that both one-point (co)extensions of algebras \cite{Chen11} and singular equivalences with levels \cite{Wan15} induce triangle equivalences between the singularity categories. We observe that these triangle equivalences can be enhanced to quasi-equivalences between the dg singularity categories.

On the other hand, we prove that the singular Hochschild cochain complexes, as $B_\infty$-algebras, are invariant under one-point (co)extensions and singular equivalences with levels. For the invariance under singular equivalences with levels, the idea using a triangular matrix algebra is adapted from \cite{Kel03}, while our argument is much more involved due to the colimits occurring in the consideration.  For example, analogous to the colimit construction \cite{Wan1} of the right singular Hochschild cochain complex,  we construct an explicit colimit complex for any $\Lambda$-$\Pi$-bimodule $M$. When  $M$ is projective on both sides,  the constructed colimit complex computes the Hom space from $M$ to $\Sigma^i(M)$ in the singularity category of $\Lambda$-$\Pi$-bimodules.

\vskip 5pt

Let $Q$ be a finite quiver without sinks. Denote by $\mathbb k Q/J^2$ the corresponding finite dimensional algebra with radical square zero. We aim  to verify Keller's conjecture for $\mathbb k Q/J^2$. However, our approach is indirect, using the \emph{Leavitt path algebra} $L(Q)$ over $\mathbb{k}$ in the sense of \cite{AA, AGGP, AMP}. We mention close connections of Leavitt path algebras with symbolic dynamic systems \cite{ALPS, Haz, Chen} and noncommutative geometry \cite{Smi}.

By the work \cite{Smi, CY, Li}, the singularity category of $\mathbb k Q/J^2$ is closely related to the Leavitt path algebra $L(Q)$. The Leavitt path algebra $L(Q)$ is infinite dimensional as $Q$ has no sinks, therefore its link to the finite dimensional algebra $\mathbb k Q/J^2$ is somehow unexpected. We mention that $L(Q)$ is naturally  $\mathbb{Z}$-graded, which will  be viewed as a dg algebra with trivial differential throughout this paper.

The third main result verifies Keller's conjecture for the algebra $\mathbb kQ/ J^2$.

\begin{thm}[= Theorem~\ref{thm-main}]\label{thmintro-2}
Let $Q$ be a finite quiver without sinks. Set $\Lambda=\mathbb k Q/J^2$. Then there are  isomorphisms in the homotopy category  $\mathrm{Ho}(B_{\infty})$ of $B_{\infty}$-algebras
$$\overline{C}_{\sg, L}^*(\Lambda^{\op}, \Lambda^{\op}) \stackrel{\Upsilon}\longrightarrow  C^*(L(Q), L(Q))  \stackrel{\Delta}\longrightarrow C^*(\mathbf{S}_{\rm dg}(\Lambda), \mathbf{S}_{\rm dg}(\Lambda)).$$
In particular, there are isomorphisms of Gerstenhaber algebras
$$\HH_{\sg}^*(\Lambda^{\op}, \Lambda^{\op}) \stackrel{}\longrightarrow  \HH^*(L(Q), L(Q))  \stackrel{}\longrightarrow \HH^*(\mathbf{S}_{\rm dg}(\Lambda), \mathbf{S}_{\rm dg}(\Lambda)).$$
\end{thm}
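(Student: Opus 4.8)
The plan is to construct the two $B_\infty$-isomorphisms $\Upsilon$ and $\Delta$ separately and then compose them in $\mathrm{Ho}(B_\infty)$. The morphism $\Upsilon$ relates the (left) singular Hochschild cochain complex of $\Lambda^{\op} = (\mathbb k Q/J^2)^{\op}$ to the ordinary Hochschild cochain complex of the Leavitt path algebra $L(Q)$, while $\Delta$ relates the latter to the Hochschild cochain complex of the dg singularity category $\mathbf S_{\rm dg}(\Lambda)$. For $\Delta$, the key input is a description of $\mathbf S_{\rm dg}(\Lambda)$ in terms of $L(Q)$: by the results of Smith, Chen--Yang and Li quoted in the introduction, the dg singularity category of $\mathbb k Q/J^2$ is quasi-equivalent to (a suitable dg model of) the perfect derived dg category of $L(Q)$ viewed as a dg algebra with trivial differential and its intrinsic $\mathbb Z$-grading. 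Granting such a quasi-equivalence, the classical invariance of the Hochschild cochain complex as a $B_\infty$-algebra under Morita/quasi-equivalences of dg categories --- the analogue $C^*(L(Q),L(Q)) \simeq C^*(\mathrm{per}_{\rm dg} L(Q), \mathrm{per}_{\rm dg} L(Q))$ in $\mathrm{Ho}(B_\infty)$, exactly the statement alluded to at the end of the introduction for $\mathbf D^b_{\rm dg}$ --- yields $\Delta$. So the substance of $\Delta$ is: (i) promote the known triangle equivalence on singularity categories to a genuine quasi-equivalence of dg enhancements, and (ii) invoke (or adapt, with care for the nonstandard grading) Keller's derived-invariance of the $B_\infty$-structure on Hochschild cochains.

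\medskip

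For $\Upsilon$, I would work directly with the combinatorics of the quiver $Q$. Since $\Lambda = \mathbb k Q/J^2$ has radical square zero, the singular Hochschild cochain complex $\overline C^*_{\sg,L}(\Lambda^{\op},\Lambda^{\op})$ admits a very explicit description: its underlying graded space is built from paths in $Q$ (equivalently, from the bimodule resolution of $\Lambda$ by projective $\Lambda^e$-modules, which in the radical-square-zero case is controlled purely by arrows and their sources/targets), and the colimit/stabilization procedure from \cite{Wan1} that defines the left singular complex has a transparent form. On the other side, the Leavitt path algebra $L(Q)$ has $\mathbb k$-basis given by "ghost-real" paths $p q^*$; the bar resolution (or a small projective resolution) of $L(Q)$ over $L(Q)^e$ is again governed by the arrows of $Q$ and the Cuntz--Krieger relations. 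I would set up an explicit map of complexes at the chain level, identifying the generating cochains on both sides with the same combinatorial data (arrows, vertices, and the "inverse" arrows $e^*$), check that it intertwines the differentials --- using that the Cuntz--Krieger relation $\sum_{s(e)=v} ee^* = v$ is precisely what encodes the stabilization in the singular complex --- and then verify that it is compatible with the cup products and the brace operations, i.e.\ that it is a morphism of $B_\infty$-algebras, and finally that it is a quasi-isomorphism (hence an isomorphism in $\mathrm{Ho}(B_\infty)$). The duality theorem, Theorem~\ref{thm:dualityintro}, together with the canonical isomorphism (\ref{iso:duality}) relating the left complex of $\Lambda^{\op}$ to the right complex of $\Lambda$, lets me choose whichever of $\overline C^*_{\sg,L}$, $\overline C^*_{\sg,R}$ makes the chain-level matching cleanest, which I expect to be a genuine simplification.

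\medskip

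Once $\Upsilon$ and $\Delta$ are in hand, the composite $\Delta \circ \Upsilon$ is an isomorphism in $\mathrm{Ho}(B_\infty)$, and passing to cohomology with its induced Gerstenhaber structure (a $B_\infty$-quasi-isomorphism induces an isomorphism of Gerstenhaber algebras on cohomology, cf.\ Remark~\ref{Rem:dgliealgebra} and the surrounding discussion) gives the stated isomorphisms $\HH_{\sg}^*(\Lambda^{\op},\Lambda^{\op}) \to \HH^*(L(Q),L(Q)) \to \HH^*(\mathbf S_{\rm dg}(\Lambda),\mathbf S_{\rm dg}(\Lambda))$. Note also that since $\mathbb k Q/J^2$ with $Q$ having no sinks need not have separable semisimple quotient issues beyond what Keller's theorem (\ref{equ:graded}) already handles over a field, the graded-algebra part is consistent with the known result; the new content is the full $B_\infty$, hence Gerstenhaber, refinement.

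\medskip

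\noindent\textbf{The main obstacle.} I expect the hard part to be establishing $\Upsilon$ as a morphism of $B_\infty$-algebras, not merely of complexes. Matching underlying complexes up to quasi-isomorphism is combinatorially tractable given the radical-square-zero structure, but the $B_\infty$-products $\mu_{p,q}$ on the singular side come from the action of cellular chains of the spineless cacti operad \cite{Ka}, and transporting these through the Leavitt-path-algebra picture requires a careful, operadically coherent choice of resolutions on both sides. A secondary difficulty, on the $\Delta$ side, is that the dg singularity category must be modeled so that its Hochschild $B_\infty$-structure is manifestly that of $C^*(L(Q),L(Q))$ --- i.e.\ the quasi-equivalence enhancing the Chen--Yang/Li equivalence must be constructed explicitly enough (perhaps via a tilting-type object or an explicit dg functor) that Keller's invariance machinery applies with the nonstandard $\mathbb Z$-grading on $L(Q)$ treated correctly as a dg structure. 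I would isolate these two points as the technical heart of the paper and handle everything else by the structural results already quoted.
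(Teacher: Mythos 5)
Your construction of $\Delta$ is essentially the paper's: promote the Chen--Yang/Li equivalence to a quasi-equivalence of dg enhancements (the paper does this via Krause's model $C^{\rm ac}_{\rm dg}(\Lambda\mbox{-}{\rm Inj})^c$ and the injective Leavitt complex, Proposition~\ref{prop:CY-Li} and Corollary~\ref{cor:Krause}), then apply Keller's invariance results (Lemmas~\ref{lem:C-quasi-iso} and \ref{lem:C-dga}); the grading of $L$ causes no extra difficulty since $L$ is simply treated as a dg algebra with zero differential. That half of your plan is sound and matches the paper.

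The gap is in $\Upsilon$. You propose to ``set up an explicit map of complexes at the chain level \dots and then verify that it is compatible with the cup products and the brace operations'', i.e.\ a strict $B_\infty$-quasi-isomorphism between the singular Hochschild complex and $C^*(L,L)$. This step would fail: what the radical-square-zero combinatorics actually gives you is a chain of \emph{strict} isomorphisms only as far as a very small model, namely $\overline{C}^*_{\sg,R,E}(\Lambda,\Lambda)\simeq \overline{C}^*_{\sg,R}(Q,Q)\simeq \widehat{C}^*(L,L)$, where $\widehat{C}^*(L,L)=\bigoplus_i e_iLe_i\oplus \bigoplus_i s^{-1}e_iLe_i$ (Theorem~\ref{thm:radical}, Proposition~\ref{prop:interme}). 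Comparing this small model with the full Hochschild complex $\overline{C}^*_E(L,L)\subseteq C^*(L,L)$ cannot be done by a single multiplication- and brace-preserving chain map; the paper needs a genuinely \emph{non-strict} morphism $(\Phi_1,\Phi_2,\dots)$, produced by the homotopy transfer theorem applied to an explicit homotopy deformation retract between the bimodule resolution $P$ of $L$ and $\overline{\Barr}_E(L)$ (Propositions~\ref{prop:htrforLPA} and \ref{proposition-Phi}), with all higher components $\Phi_k$ expressed through the brace operation of $\widehat{C}^*(L,L)$, and then a separate verification via the higher pre-Jacobi identity that this $A_\infty$-morphism is a $B_\infty$-morphism -- and it lands in the \emph{opposite} $B_\infty$-algebra $\overline{C}^*_E(L,L)^{\rm opp}$ (Theorem~\ref{prop-B4}), which is why the duality theorem and the isomorphism (\ref{iso:duality}) are not merely a convenience for choosing left versus right but a structural necessity in assembling the zigzag. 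You correctly flagged the $B_\infty$-compatibility as the main obstacle, but the proposal supplies no mechanism for it; the missing ideas are precisely the homotopy deformation retract from the explicit Leavitt bimodule resolution, the transfer-theorem construction of the higher components, and the pre-Jacobi/opposite-algebra bookkeeping that makes the transferred $A_\infty$-morphism a $B_\infty$-morphism.
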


 In  Theorem~\ref{thmintro-2}, the isomorphism $\Delta$  between the Hochschild cochain complex of the Leavitt path algebra $L(Q)$ and the one of the dg singularity category $\mathbf{S}_{\rm dg}(\mathbb{k}Q/J^2)$  enhances the link  \cite{Smi, CY, Li} between $L(Q)$ and $\mathbb k Q/J^2$ to the $B_\infty$-level. The approach to obtain $\Delta$ is categorical, relying  on a description of $\mathbf{S}_{\dg}(\mathbb kQ/ J^2)$ via the dg perfect derived category of $L(Q)$. The isomorphism $\Upsilon$, which is inspired by \cite{Wan} and is of combinatoric  flavour, establishes a brand new link between $L(Q)$ and $\mathbb{k}Q/J^2$. The primary tool to obtain $\Upsilon$ is the homotopy transfer theorem \cite{Kad} for dg algebras.

 The composite isomorphism  $\Delta\circ \Upsilon$ verifies Keller's conjecture for the algebra $\mathbb{k}Q/J^2$, which seems to be the first confirmed case. Indeed, combining Theorems \ref{thmintro-1} and \ref{thmintro-2}, we verify Keller's conjecture for $\mathbb{k}Q/J^2$ for \emph{any} finite quiver $Q$ (possibly with sinks), and for any finite dimensional \emph{gentle algebra}. Let us mention that gentle algebras are of interest from many different perspectives \cite{GR, HKK}. It is unclear whether  the proof of Theorem~\ref{thmintro-2} can be generalized to a wider class of algebras, for example, Koszul algebras.

\vskip 3pt

Let us describe the key steps in the proof of Theorem~\ref{thmintro-2}, which are illustrated in the diagram \eqref{diagram1} in the proof of Theorem~\ref{thm-main}.

 Using the standard argument for dg quotient categories \cite{Kel99, Dri}, we prove first that the dg singularity category is essentially the same as the dg enhancement of the singularity category via acyclic complexes of injective modules \cite{Kra}; see Corollary \ref{cor:Krause}. Then using the explicit compact generator  \cite{Li} of the homotopy category of acyclic complexes of injective modules and the general results in \cite{Kel03} on Hochschild cochain complexes, we infer the isomorphism $\Delta$.

The isomorphism $\Upsilon$ is constructed in a very explicit but indirect manner. The main ingredients  are  the (non-strict) $B_\infty$-isomorphism (\ref{iso:duality}), two strict $B_\infty$-isomorphisms and  an explicit $B_\infty$-quasi-isomorphism $(\Phi_1, \Phi_2, \cdots)$.

We introduce two new explicit $B_\infty$-algebras, namely the \emph{combinatorial $B_\infty$-algebra} $\overline{C}^*_{\sg, R}(Q, Q)$  of $Q$ constructed by parallel paths in $Q$,   and the \emph{Leavitt $B_\infty$-algebra} $\widehat{C}^*(L, L)$ whose construction is inspired by an explicit projective bimodule resolution of $L=L(Q)$.

Set $E=\mathbb kQ_0$ to be the  semisimple subalgebra of $\Lambda$. We first observe that $\overline{C}_{\sg, R}^*(\Lambda, \Lambda)$ is strictly $B_\infty$-quasi-isomorphic to $\overline{C}_{\sg, R, E}^*(\Lambda, \Lambda)$, the $E$-relative right singular Hochschild cochain complex.  Using the explicit description \cite{Wan} of $\overline{C}_{\sg, R, E}^*(\Lambda, \Lambda)$ via parallel paths in $Q$, we obtain a strict $B_\infty$-isomorphism between $\overline{C}_{\sg, R, E}^*(\Lambda, \Lambda)$ and $\overline{C}^*_{\sg, R}(Q, Q)$. We prove that $\overline{C}^*_{\sg, R}(Q, Q)$ and $\widehat{C}^*(L, L)$  are strictly $B_\infty$-isomorphic.

 We construct  an explicit homotopy deformation retract between $\widehat{C}^*(L, L)$  and $\overline{C}^*_E(L, L)$, the normalized $E$-relative  Hochschild cochain complex of $L$.  Then the homotopy transfer theorem  for dg algebras yields an $A_{\infty}$-quasi-isomorphism
 $$(\Phi_1, \Phi_2, \cdots)\colon \widehat{C}^*(L, L)\longrightarrow \overline{C}^*_E(L, L).$$
  This $A_{\infty}$-morphism  is explicitly  given by the brace operation of $\widehat{C}^*(L, L)$. Using the higher pre-Jacobi identity, we prove that
 $$(\Phi_1, \Phi_2, \cdots)\colon \widehat{C}^*(L, L)\longrightarrow \overline{C}^*_E(L, L)^{\rm opp}$$
  is indeed a $B_{\infty}$-morphism. Since the natural embedding of $\overline{C}^*_E(L, L)$ into $C^*(L, L)$ is a strict $B_{\infty}$-quasi-isomorphism, we obtain the required isomorphism $\Upsilon$.

\subsection{The structure of the paper}

The paper is structured as follows. In Section~\ref{section2}, we review basic facts and results on dg quotient categories. We prove  in Subsection~\ref{subsec:one-point} that  dg singularity categories  are invariant under both one-point (co)extensions  and singular equivalences with levels.

 We enhance a result in \cite{Kra} to the dg level in Section~\ref{section3}. More precisely, we prove that the dg singularity category is essentially the same as the dg category of certain acyclic complexes of injective modules; see Proposition~\ref{prop:Krause}. The notion of Leavitt path algebras is recalled in Section~\ref{sec:lpa}. We prove that there is a zigzag of quasi-equivalences connecting the dg singularity category of $\Lambda=\mathbb kQ/J^2$ to the dg perfect derived category of the opposite dg algebra $L^{\rm op}=L(Q)^{\rm op}$; see Proposition~\ref{prop:CY-Li}. Here, $Q$ is a finite quiver without sinks.

 In Section~\ref{section5}, we give a brief introduction to $B_\infty$-algebras. We describe the axioms of $B_\infty$-algebras explicitly.  For any given  $B_\infty$-algebra, we introduce the opposite $B_\infty$ algebra and the transpose $B_\infty$-algebra. We prove that there is a natural $B_\infty$-isomorphism between the opposite $B_\infty$-algebra and the transpose $B_\infty$-algebra; see Theorem~\ref{thm:dualityB}.  We mainly focus on a special kind of $B_\infty$-algebras, the so-called {\it brace $B_\infty$-algebras}, whose underlying $A_\infty$-algebras are dg algebras as well as some of whose $B_\infty$-products  vanish.  We review some facts on Hochschild cochain complexes of dg categories and  (normalized) relative bar resolutions of dg algebras in Section~\ref{section6}.

 Slightly generalizing a result in \cite{He-Li-Li}, we provide a general construction of homotopy deformation retracts for dg algebras in Section~\ref{sectionforhdr}. Using this, we construct  an explicit homotopy deformation retract for the bimodule projective resolutions of Leavitt path algebras; see Proposition~\ref{prop:htrforLPA}.

  We recall from \cite{Wan1} the singular Hochschild cochain complexes and their $B_\infty$-structures in Section~\ref{section7}. We prove the $B_\infty$-isomorphism \eqref{iso:duality} in Proposition~\ref{lemma-CL1}, based on the general result in Theorem~\ref{thm:dualityB}.    We describe explicitly the brace operation on the singular Hochschild cochain complex and illustrate it with an example in Subsection~\ref{subsec:example-brace}.

  In Section~\ref{sectionforinvariance}, we prove that the (relative) singular Hochschild cochain complexes, as  $B_\infty$-algebras, are invariant under one-point (co)extensions of algebras and singular equivalences with levels. In Section~\ref{section8}, we prove that Keller's conjecture is invariant under one-point (co)extensions of algebras and singular equivalences with levels; see Theorem~\ref{thm:Keller-redu}. We give a proof of Theorem~\ref{thmintro-2}  (= Theorem \ref{thm-main}).

 In Section~\ref{subsection:Algebras-radical}, we give a combinatorial description for the singular Hochschild cochain complex of $\Lambda=\mathbb kQ/J^2$. We introduce the combinatorial $B_\infty$-algebra $\overline{C}^*_{\sg, R}(Q, Q)$ of $Q$, which is strictly $B_\infty$-isomorphic to the (relative) singular Hochschild cochain complex of $\Lambda$; see Theorem~\ref{thm:radical}.  We introduce the Leavitt  $B_{\infty}$-algebra $\widehat{C}^*(L, L)$ in Section~\ref{Section:9},  and show that it  is strictly $B_\infty$-isomorphic to   $\overline{C}^*_{\sg, R}(Q, Q)$, and thus to the (relative) singular Hoschild cochain complex of $\Lambda$; see Proposition~\ref{prop:interme}.

 In Section~\ref{Section:10}, we apply  the homotopy transfer theorem \cite{Kad} for dg algebras to obtain an explicit $A_{\infty}$-quasi-isomorphism $(\Phi_1, \Phi_2, \cdots )$ from  $\widehat{C}^*(L, L)$ to $\overline{C}_E^*(L, L)$; see Proposition~\ref{proposition-Phi}. In Section
 \ref{section:11}, we verify that $(\Phi_1, \Phi_2, \cdots)$ is indeed a $B_\infty$-morphism; see Theorem~\ref{prop-B4}.

Throughout this paper, we work over a fixed field $\mathbb k$. In other words, we require that all the algebras, categories and functors in the sequel are $\mathbb k$-linear; moreover, the unadorned Hom and $\otimes$ are over $\mathbb k$.  We use $\mathbf{1}_V$  to denote the identity endomorphism of the (graded) $\mathbb k$-vector space $V$. When no confusion arises, we simply write it as $\mathbf{1}$.

\section{DG categories and dg quotients}
\label{section2}

In this section, we recall basic facts and results on dg categories. The standard references are \cite{Kel94, Dri}. We prove that both one-point (co)extensions of algebras and singular equivalences with levels induce quasi-equivalences between  dg singularity categories.

For the fixed field  $\mathbb k$, we denote by $\mathbb k\mbox{-Mod}$ the abelian  category of $\mathbb k$-vector spaces.

\subsection{DG categories and dg functors}

Let $\mathcal{A}$ be a dg category over $\mathbb k$. For two objects $x$ and $y$, the Hom-complex is usually denoted by $\mathcal{A}(x, y)$ and its differential is denoted by $d_\mathcal{A}$. For a homogeneous morphism $a$, its degree is denoted by $|a|$.  Denote by $Z^0(\mathcal{A})$ the ordinary category of $\mathcal{A}$, which has the same objects as $\mathcal{A}$ and its Hom-space is given by $Z^0(\mathcal{A}(x, y))$, the zeroth cocycle of $\mathcal{A}(x, y)$. Similarly, the \emph{homotopy category} $H^0(\mathcal{A})$ has the same objects, but its Hom-space is given by the zeroth cohomology $H^0(\mathcal{A}(x, y))$.

Recall that a dg functor $F\colon  \mathcal{A}\rightarrow \mathcal{B}$ is \emph{quasi-fully faithful}, if the cochain map
$$F_{x, y}\colon  \mathcal{A}(x, y)\longrightarrow \mathcal{B}(Fx, Fy)$$
is a quasi-isomorphism for any objects $x, y$ in $\mathcal{A}$. Then $H^0(F)\colon  H^0(\mathcal{A})\rightarrow H^0(\mathcal{B})$ is fully faithful. A quasi-fully faithful dg functor $F$ is called a \emph{quasi-equivalence} if $H^0(F)$ is dense.

\begin{exm}\label{exm:Cdg}
{\rm Let $\mathfrak{a}$ be an additive category. Denote by $C_{\rm dg}(\mathfrak{a})$ the dg category of cochain complexes in $\mathfrak{a}$. A cochain complex in $\mathfrak{a}$ is usually denoted by $X=(\bigoplus_{p\in \mathbb{Z}} X^p, d_X)$ or $(X, d_X)$. The $p$-th component  of the Hom-complex $C_{\rm dg}(\mathfrak{a})(X, Y)$ is given by the following infinite product
    $$C_{\rm dg}(\mathfrak{a})(X, Y)^p=\prod_{n \in \mathbb{Z}} {\rm Hom}_\mathfrak{a}(X^n, Y^{n+p}),$$
    whose elements will be denoted by $f=\{f^n\}_{n\in \mathbb{Z}}$ with $f^n \in {\rm Hom}_\mathfrak{a}(X^n, Y^{n+p}).$ The differential $d$ acts on $f$ such that $d(f)^{n}=d_Y^{n+p}\circ f^n-(-1)^{|f|}f^{n+1}\circ d_X^n$ for each $n\in \mathbb{Z}$.

    We observe that the homotopy category $H^0(C_{\rm dg}(\mathfrak{a}))$ coincides with the classical  homotopy category $\mathbf{K}(\mathfrak{a})$ of cochain complexes in $\mathfrak{a}$.
    }\end{exm}

\begin{exm}
{\rm The dg category $C_{\rm dg}(\mathbb k\mbox{-Mod})$ is usually denoted by $C_{\rm dg}(\mathbb k)$. Let $\mathcal{A}$ be a small dg category. By a left dg $\mathcal{A}$-module, we mean a dg functor $M\colon  \mathcal{A}\rightarrow C_{\rm dg}(\mathbb k)$. The following notation will be convenient: for a morphism $a\colon  x\rightarrow y$ in $\mathcal{A}$ and $m\in M(x)$, the resulting element $M(a)(m)\in M(y)$ is written as $a.m$. Here,  the dot indicates the left $\mathcal{A}$-action on $M$. Indeed, we usually identify $M$ with the formal sum $\bigoplus_{x\in {\rm obj}(\mathcal{A})} M(x)$ with the above left $\mathcal{A}$-action. The differential $d_M$ means $\bigoplus_{x\in {\rm obj}(\mathcal{A})} d_{M(x)}$.

  We denote by $\mathcal{A}\mbox{-{\rm DGMod}}$ the dg category formed by left dg $\mathcal{A}$-modules. For two dg $\mathcal{A}$-modules $M$ and $N$, a morphism $\eta=(\eta_x)_{x\in {\rm obj}(\mathcal{A})}\colon  M\rightarrow N$ of degree $p$ consists of maps $\eta_x\colon  M(x)\rightarrow N(x)$ of degree $p$ satisfying
  $$N(a)\circ \eta_x=(-1)^{|a|\cdot p}\eta_y\circ M(a)$$
  for each morphism $a\colon  x\rightarrow y$ in $\mathcal{A}$. These morphisms form the $p$-th component of $\mathcal{A}\mbox{-{\rm DGMod}}(M, N)$. The differential is defined such that $d(\eta)_x=d(\eta_x)$. Here, $d(\eta_x)$ means the differential in $C_{\rm dg}(\mathbb k)$. In other words, $d(\eta_x)=d_{N(x)}\circ \eta_x-(-1)^p \eta_x\circ d_{M(x)}$.

For a left dg $\mathcal{A}$-module $M$, the \emph{suspended dg module} $\Sigma(M)$ is defined such that $\Sigma(M)(x)=\Sigma(M(x))$, the suspension of the complex $M(x)$. The left $\mathcal{A}$-action on $\Sigma(M)$ is given such that $a.\Sigma(m)=(-1)^{|a|}\Sigma(a.m)$, where $\Sigma(m)$ means the  element in $\Sigma(M(x))$ corresponding to $m\in M(x)$. This gives rise to a dg endofunctor $\Sigma$ on $\mathcal{A}\mbox{-DGMod}$, whose action on morphisms $\eta$ is given such that $\Sigma(\eta)_x=(-1)^{|\eta|} \eta_x$.
}\end{exm}

\begin{exm}
    Denote by $\mathcal{A}^{\rm op}$ the \emph{opposite dg category} of $\mathcal{A}$, whose composition is given by $a\circ^{\rm op} b=(-1)^{|a|\cdot |b|} b\circ a$. We identify a left $\mathcal{A}^{\rm op}$-module with a right dg $\mathcal{A}$-module. Then we obtain the dg category $\mbox{{\rm DGMod}-}\mathcal{A}$ of right dg $\mathcal{A}$-modules.

    For a right dg $\mathcal{A}$-module $M$, a morphism $a\colon  x\rightarrow y$ in $\mathcal{A}$  and $m\in M(y)$,  the right $\mathcal{A}$-action on $M$ is given such that $m.a=(-1)^{|a|\cdot |m|} M(a)(m)\in M(x)$. The suspended dg module $\Sigma(M)$ is defined similarly. We emphasize that the right $\mathcal{A}$-action on $\Sigma(M)$ is identical to the one on $M$.
\end{exm}

Let $\mathcal{A}$ be a small dg category. Recall that  $H^0(\mathcal{A}\mbox{-}{\rm DGMod})$ has a  canonical triangulated structure with the suspension functor induced by $\Sigma$. The \emph{derived category} $\mathbf{D}(\mathcal{A})$ is the Verdier quotient category of $H^0(\mathcal{A}\mbox{-}{\rm DGMod})$ by the triangulated subcategory of  acyclic dg modules.

Let $\mathcal{T}$ be a triangulated category with arbitrary coproducts. A triangulated subcategory $\mathcal{N}\subseteq \mathcal{T}$ is \emph{localizing} if it is closed under arbitrary coproducts. For a set $\mathcal{S}$ of objects, we denote by ${\rm Loc}(\mathcal{S})$ the localizing subcategory generated by $\mathcal{S}$, that is, the smallest localizing subcategory containing $\mathcal{S}$.

An object $X$ in $\mathcal{T}$ is compact if ${\rm Hom}_\mathcal{T}(X, -)\colon  \mathcal{T}\rightarrow \mathbb k\mbox{-Mod}$ preserves coproducts. Denote by $\mathcal{T}^c$ the full triangulated subcategory formed by compact objects. The category $\mathcal{T}$ is \emph{compactly generated}, provided that there is a set $\mathcal{S}$ of compact objects
such that $\mathcal{T}={\rm Loc}(\mathcal{S})$.

For example, the free dg $\mathcal{A}$-module $\mathcal{A}(x, -)$ is compact in $\mathbf{D}(\mathcal{A})$. Indeed,  $\mathbf{D}(\mathcal{A})$ is compactly generated by these modules. The \emph{perfect derived category} $\mathbf{per}(\mathcal{A})=\mathbf{D}(\mathcal{A})^c$ is the full subcategory formed by compact objects.

The Yoneda dg functor
$$\mathbf{Y}_\mathcal{A}\colon  \mathcal{A}\longrightarrow {\rm DGMod}\mbox{-}\mathcal{A}, \quad x\longmapsto \mathcal{A}(-, x)$$
is fully faithful. In particular, it induces a full embedding
$$H^0(\mathbf{Y}_\mathcal{A})\colon  H^0(\mathcal{A}) \longrightarrow H^0({\rm DGMod}\mbox{-}\mathcal{A}).$$
 The dg category $\mathcal{A}$ is said to be \emph{pretriangulated}, provided that the essential image of $H^0(\mathbf{Y}_{\mathcal{A}})$ is a triangulated subcategory of $H^0({\rm DGMod}\mbox{-}\mathcal{A})$. The terminology is justified by the evident fact: the homotopy category $H^0(\mathcal{A})$ of a pretriangulated dg category $\mathcal{A}$ has a canonical triangulated structure.

The following fact is well known; see \cite[Lemma 3.1]{CC}.

\begin{lem}\label{lem:quasi-equiv}
Let $F\colon  \mathcal{A}\rightarrow \mathcal{B}$ be a dg functor between two pretriangulated dg categories. Then $H^0(F) \colon   H^0(\mathcal{A})\rightarrow H^0(\mathcal{B})$ is naturally a triangle functor. Moreover, $F$ is a quasi-equivalence if and only if  $H^0(F)$ is a triangle equivalence. \hfill $\square$
\end{lem}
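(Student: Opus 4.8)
\textbf{Proof proposal for Lemma~\ref{lem:quasi-equiv}.}

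The plan is to reduce everything to the two basic facts already available: that a dg functor $F$ induces a functor $H^0(F)$ on homotopy categories, and that the triangulated structure on $H^0(\mathcal{A})$ of a pretriangulated dg category is \emph{transported} from the triangulated subcategory $\mathrm{Image}(H^0(\mathbf{Y}_{\mathcal{A}}))$ of $H^0(\mathrm{DGMod}\mbox{-}\mathcal{A})$ via the full embedding $H^0(\mathbf{Y}_{\mathcal{A}})$. So the first step is to recall precisely how suspensions and triangles in $H^0(\mathcal{A})$ are defined: an object $x$ has a suspension $\Sigma x$ in $\mathcal{A}$ (up to isomorphism in $H^0(\mathcal{A})$) characterized by $\mathbf{Y}_{\mathcal{A}}(\Sigma x) \cong \Sigma(\mathbf{Y}_{\mathcal{A}}(x))$ in $H^0(\mathrm{DGMod}\mbox{-}\mathcal{A})$, and a candidate triangle $x \to y \to z \to \Sigma x$ is distinguished precisely when its image under $H^0(\mathbf{Y}_{\mathcal{A}})$ is distinguished in $H^0(\mathrm{DGMod}\mbox{-}\mathcal{A})$. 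The key structural point is that $F$ is compatible with the Yoneda construction up to a natural comparison: restriction of scalars along $F$ gives a dg functor $F_*\colon \mathrm{DGMod}\mbox{-}\mathcal{B} \to \mathrm{DGMod}\mbox{-}\mathcal{A}$, and there is an induced functor (left Kan extension / extension of scalars) $F^*$ the other way, with $F^* \circ \mathbf{Y}_{\mathcal{A}} \cong \mathbf{Y}_{\mathcal{B}} \circ F$.

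The second step is to show $H^0(F)$ is a triangle functor. For this I would use the comparison above: $F^*$ is a dg functor, hence $H^0(F^*)$ commutes with $\Sigma$ strictly and sends distinguished triangles in $H^0(\mathrm{DGMod}\mbox{-}\mathcal{A})$ to distinguished triangles in $H^0(\mathrm{DGMod}\mbox{-}\mathcal{B})$ (the homotopy category of dg modules has its triangles built from mapping cones, which $F^*$, being a left adjoint preserving the cone construction up to isomorphism, respects). Transporting this through the fully faithful embeddings $H^0(\mathbf{Y}_{\mathcal{A}})$ and $H^0(\mathbf{Y}_{\mathcal{B}})$ and using $F^* \circ \mathbf{Y}_{\mathcal{A}} \cong \mathbf{Y}_{\mathcal{B}} \circ F$ gives that $H^0(F)$ carries the suspension of $\mathcal{A}$ to the suspension of $\mathcal{B}$ up to natural isomorphism, and distinguished triangles to distinguished triangles; this is exactly the data of a triangle functor.

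The third step is the ``moreover''. One direction is soft: if $F$ is a quasi-equivalence then $H^0(F)$ is fully faithful (immediate from quasi-full-faithfulness, since quasi-isomorphisms of Hom-complexes induce isomorphisms on $H^0$) and dense (by definition of quasi-equivalence), so it is an equivalence, hence a triangle equivalence. For the converse, suppose $H^0(F)$ is a triangle equivalence. Density of $H^0(F)$ is part of being an equivalence, so it remains to prove quasi-full-faithfulness, i.e.\ that each $F_{x,y}\colon \mathcal{A}(x,y) \to \mathcal{B}(Fx,Fy)$ is a quasi-isomorphism, equivalently that $H^n(F_{x,y})$ is an isomorphism for all $n$. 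The standard trick is to rewrite cohomology of Hom-complexes in terms of graded Hom in the homotopy category: in a pretriangulated dg category one has $H^n(\mathcal{A}(x,y)) \cong \mathrm{Hom}_{H^0(\mathcal{A})}(x, \Sigma^n y)$, because $\Sigma^n y$ is represented inside $\mathcal{A}$ and the Hom-complex shifts accordingly. Under this identification $H^n(F_{x,y})$ becomes the map $\mathrm{Hom}_{H^0(\mathcal{A})}(x, \Sigma^n y) \to \mathrm{Hom}_{H^0(\mathcal{B})}(Fx, \Sigma^n Fy)$ induced by $H^0(F)$ together with the natural iso $H^0(F)(\Sigma^n y) \cong \Sigma^n(H^0(F)y)$ from step~2; since $H^0(F)$ is fully faithful this map is bijective. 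Hence $F_{x,y}$ is a quasi-isomorphism and $F$ is a quasi-equivalence.

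I expect the main obstacle to be the bookkeeping in step~2: making the comparison $F^* \circ \mathbf{Y}_{\mathcal{A}} \cong \mathbf{Y}_{\mathcal{B}} \circ F$ precise at the dg level and checking that $H^0(F^*)$ genuinely respects mapping cones and shifts with the correct signs, so that ``triangle functor'' is obtained honestly rather than up to unexamined coherence. Everything else — the cohomology-of-Hom-complex reinterpretation in step~3 and the soft direction of the equivalence — is routine once the pretriangulated structure is unwound. Given that the excerpt cites \cite[Lemma 3.1]{CC} for this statement, I would in practice just invoke that reference after indicating this outline.
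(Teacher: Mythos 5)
The paper does not prove this lemma at all — it simply records it as well known and cites \cite[Lemma 3.1]{CC} — so there is no in-paper argument to compare against; your outline is the standard proof underlying that citation and it is correct. The two mechanisms you isolate are exactly the right ones: transporting the triangulation through $H^0(\mathbf{Y}_{\mathcal{A}})$ and the extension-of-scalars comparison $F^*\circ \mathbf{Y}_{\mathcal{A}}\cong \mathbf{Y}_{\mathcal{B}}\circ F$ (with $F^*$ commuting with shifts and cones on the nose, so $H^0(F^*)$ is triangulated) for the first assertion, and the identification $H^n(\mathcal{A}(x,y))\cong \mathrm{Hom}_{H^0(\mathcal{A})}(x,\Sigma^n y)$, natural with respect to $F$, for the converse direction of the second; invoking the reference at the end is precisely what the paper itself does.
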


In this sequel, we will  identify quasi-equivalent dg categories. To be more precise, we work in the homotopy category $\mathbf{Hodgcat}$ \cite{Tab} of small dg categories, which is by definition the localization of $\mathbf{dgcat}$, the category of small dg categories, with respect to quasi-equivalences. The morphisms in $\mathbf{Hodgcat}$ are usually called \emph{dg quasi-functors}. Any dg quasi-functor from $\mathcal{A}$ to $\mathcal{B}$ can be realized as a roof
$$\mathcal{A} \stackrel{F_1} \longleftarrow \mathcal{C} \stackrel{F_2}\longrightarrow \mathcal{B}$$
of dg functors, where $F_1$ is a cofibrant replacement, in particular, it is a quasi-equivalence.  Recall that up to quasi-equivalences, every dg category might be identified with its cofibrant replacement; compare \cite[Appendix B.5]{Dri}.

Assume that $\mathcal{B}\subseteq \mathcal{A}$ is a full dg subcategory. We denote by $\pi\colon  \mathcal{A}\rightarrow \mathcal{A}/\mathcal{B}$ the \emph{dg quotient} of $\mathcal{A}$ by $\mathcal{B}$ \cite{Kel99, Dri}. Since we work over the field $\mathbb{k}$, the simple construction of $\mathcal{A}/\mathcal{B}$ is as follows: the objects of $\mathcal{A}/\mathcal{B}$ are the same as $\mathcal{A}$; we freely add new endomorphisms $\varepsilon_U$ of degree $-1$ for each object $U$ in $\mathcal{B}$, and set $d(\varepsilon_U)=1_U$. In other words, the added morphism $\varepsilon_U$ is a contracting homotopy for $U$; see \cite[Section 3]{Dri}.

The following fact follows immediately from the above simple construction.

\begin{lem}\label{lem:noether}
Assume that $\mathcal{C}\subseteq \mathcal{B}\subseteq \mathcal{A}$ are full dg subcategories. Then there is a canonical quasi-equivalence
\begin{equation*}
(\mathcal{A}/\mathcal{C})/{(\mathcal{B}/\mathcal{C})}\stackrel{\sim}\longrightarrow \mathcal{A}/\mathcal{B}.\tag*{$\square$}
\end{equation*}
\end{lem}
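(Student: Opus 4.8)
The plan is to unwind the explicit construction of the dg quotient recalled just above the statement. Recall that for a full dg subcategory $\mathcal{B}\subseteq\mathcal{A}$, the quotient $\mathcal{A}/\mathcal{B}$ has the same objects as $\mathcal{A}$, and its morphism complexes are obtained from those of $\mathcal{A}$ by freely adjoining, for each object $U$ of $\mathcal{B}$, a new endomorphism $\varepsilon_U$ of degree $-1$ with $d(\varepsilon_U)=1_U$. So a morphism in $\mathcal{A}/\mathcal{B}$ is a $\mathbb{k}$-linear combination of alternating composites $a_0\varepsilon_{U_1}a_1\varepsilon_{U_2}\cdots\varepsilon_{U_n}a_n$ with the $a_i$ morphisms of $\mathcal{A}$ and the $U_j$ objects of $\mathcal{B}$.

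First I would describe the comparison dg functor $\Phi\colon (\mathcal{A}/\mathcal{C})/(\mathcal{B}/\mathcal{C})\to\mathcal{A}/\mathcal{B}$. It is the identity on objects. On morphisms, the source category is built from $\mathcal{A}$ by first adjoining $\varepsilon_V$ for each $V\in\mathcal{C}$ (giving $\mathcal{A}/\mathcal{C}$), and then adjoining $\varepsilon'_U$ for each $U\in\mathcal{B}$ (where $U$ is viewed as an object of $\mathcal{B}/\mathcal{C}$); note every $V\in\mathcal{C}$ is also an object of $\mathcal{B}$. Define $\Phi$ to send $\varepsilon_V\mapsto\varepsilon_V$ (for $V\in\mathcal{C}\subseteq\mathcal{B}$) and $\varepsilon'_U\mapsto\varepsilon_U$ (for $U\in\mathcal{B}$), and to be the identity on morphisms coming from $\mathcal{A}$. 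Since $d(\varepsilon_V)=1_V$, $d(\varepsilon'_U)=1_U$ and $d(\varepsilon_U)=1_U$ in the respective quotients, $\Phi$ is compatible with differentials, hence a well-defined dg functor. It is plainly surjective on Hom-complexes, and since on the source side the two families $\{\varepsilon_V\}_{V\in\mathcal{C}}$ and $\{\varepsilon'_U\}_{U\in\mathcal{B}}$ overlap precisely on $\mathcal{C}$ (with $\varepsilon_V$ and $\varepsilon'_V$ both present for $V\in\mathcal{C}$), the remaining point is to see that $\Phi$ is a quasi-isomorphism on each Hom-complex.

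The key step, then, is to show $\Phi_{x,y}$ is a quasi-isomorphism for all objects $x,y$. I would do this by exhibiting $\mathcal{A}/\mathcal{B}$ as an explicit deformation retract of $(\mathcal{A}/\mathcal{C})/(\mathcal{B}/\mathcal{C})$: in the source, for each $V\in\mathcal{C}$ the two generators $\varepsilon_V$ and $\varepsilon'_V$ are both contracting homotopies for $1_V$, so $\varepsilon_V-\varepsilon'_V$ is a cycle, and in fact $\varepsilon_V-\varepsilon'_V = d(\varepsilon_V\varepsilon'_V)$ up to sign, so $\varepsilon_V$ and $\varepsilon'_V$ become homotopic. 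Concretely, one can filter the Hom-complexes by the number of occurrences of generators and run a spectral sequence (or an explicit homotopy) that kills the ``redundant'' copy; alternatively, invoke the standard fact from \cite{Dri} that adjoining a second contracting homotopy to an already contractible object does not change the quasi-isomorphism type, applied object-by-object over $\mathcal{C}$. This identifies $(\mathcal{A}/\mathcal{C})/(\mathcal{B}/\mathcal{C})$ up to quasi-isomorphism of Hom-complexes with the category obtained from $\mathcal{A}$ by adjoining $\varepsilon_V$ for $V\in\mathcal{C}$ and $\varepsilon'_U$ for $U\in\mathcal{B}\setminus\mathcal{C}$ together with $\varepsilon'_V$ for $V\in\mathcal{C}$ identified (homotopically) with $\varepsilon_V$ — which is exactly $\mathcal{A}/\mathcal{B}$.

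The main obstacle I anticipate is the bookkeeping in the last step: the Hom-complexes are free on alternating words in infinitely many generators (over possibly infinitely many objects of $\mathcal{C}$), so one must be a little careful to set up the filtration or the contracting homotopy so that it converges and is compatible with composition, i.e.\ genuinely gives a dg-functorial quasi-isomorphism rather than merely a quasi-isomorphism of complexes. This is entirely routine given the ``simple construction'' over a field and the standard manipulations with contracting homotopies in \cite[Section~3]{Dri}; since the statement is flagged as following \emph{immediately} from that construction, I would in the actual write-up simply point to the construction and the homotopy $\varepsilon_V\varepsilon'_V$ witnessing $\varepsilon_V\simeq\varepsilon'_V$, and leave the verification to the reader. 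Finally, to conclude that $\Phi$ is a quasi-\emph{equivalence} (not just quasi-fully faithful) one notes that it is the identity on objects, so $H^0(\Phi)$ is trivially dense.
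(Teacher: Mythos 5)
Your proposal is correct and is exactly the argument the paper intends: the paper offers no details beyond asserting the lemma is immediate from the explicit quotient construction (freely adjoining contracting homotopies $\varepsilon_U$ over the field $\mathbb{k}$), and your expansion — identifying $(\mathcal{A}/\mathcal{C})/(\mathcal{B}/\mathcal{C})$ as $\mathcal{A}/\mathcal{B}$ with a redundant second family of contracting homotopies adjoined over $\mathcal{C}$, and killing the redundancy via the homotopy $d(\varepsilon_V\varepsilon'_V)=\pm(\varepsilon_V-\varepsilon'_V)$ together with the standard filtration/Drinfeld fact that adjoining contracting homotopies to already contractible objects is a quasi-equivalence — is precisely that reasoning made explicit. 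The comparison functor being the identity on objects then gives the quasi-equivalence, as you note.
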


The following fundamental result follows immediately from \cite[Theorem 3.4]{Dri}; compare \cite[Theorem 1.3(i) and Lemma 1.5]{LO}.

\begin{lem}\label{lem:pre-qu}
Assume that both $\mathcal{A}$ and $\mathcal{B}$ are pretriangulated. Then $\mathcal{A}/\mathcal{B}$ is also pretriangulated. Moreover, $\pi\colon \mathcal{A}\rightarrow \mathcal{A}/\mathcal{B}$ induces a triangle equivalence
$$H^0(\mathcal{A})/H^0(\mathcal{B}) \stackrel{\sim}\longrightarrow H^0(\mathcal{A}/\mathcal{B}).$$
Here, $H^0(\mathcal{A})/H^0(\mathcal{B})$ denotes the Verdier quotient category of $H^0(\mathcal{A})$ by $H^0(\mathcal{B})$.  \hfill $\square$
\end{lem}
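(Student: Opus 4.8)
The assertion is essentially \cite[Theorem~3.4]{Dri}; here is how I would organise the argument, deducing the homotopy-category statement afterwards.

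\emph{Step 1: move everything into a derived category.} For any dg category $\mathcal{C}$ the Yoneda dg functor $\mathbf{Y}_\mathcal{C}$ is fully faithful, and since $\Hom$ out of a representable module computes morphisms in the derived category $\mathbf{D}(\mathcal{C})$ of right dg $\mathcal{C}$-modules, the composite $H^0(\mathcal{C})\to\mathbf{D}(\mathcal{C})$, $x\mapsto\mathcal{C}(-,x)$, is fully faithful. Let $\mathcal{U}_\mathcal{A}$, resp.\ $\mathcal{U}_\mathcal{B}$, be the full subcategory of $\mathbf{D}(\mathcal{A})$ that is the essential image of $H^0(\mathcal{A})$, resp.\ of $H^0(\mathcal{B})\subseteq H^0(\mathcal{A})$ (so $\mathcal{U}_\mathcal{B}=\{\mathcal{A}(-,U)\mid U\in\mathcal{B}\}$). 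By the pretriangulatedness hypothesis and Lemma~\ref{lem:quasi-equiv}, $\mathcal{U}_\mathcal{A}$ and $\mathcal{U}_\mathcal{B}$ are triangulated subcategories, $\mathcal{U}_\mathcal{B}\subseteq\mathcal{U}_\mathcal{A}$, and $\mathcal{U}_\mathcal{A}\subseteq\mathbf{per}(\mathcal{A})$ because each $\mathcal{A}(-,x)$ is compact. By \cite[Theorem~3.4]{Dri}, the induction functor $\pi_!\colon\mathbf{D}(\mathcal{A})\to\mathbf{D}(\mathcal{A}/\mathcal{B})$ identifies $\mathbf{D}(\mathcal{A}/\mathcal{B})$ with the Verdier quotient $\mathbf{D}(\mathcal{A})/\mathrm{Loc}(\mathcal{U}_\mathcal{B})$; moreover $\pi_!$ carries the free module $\mathcal{A}(-,x)$ to $(\mathcal{A}/\mathcal{B})(-,\pi x)$, and since $\pi$ is the identity on objects the essential image $\mathcal{U}_{\mathcal{A}/\mathcal{B}}$ of $H^0(\mathcal{A}/\mathcal{B})$ in $\mathbf{D}(\mathcal{A}/\mathcal{B})$ equals $\pi_!(\mathcal{U}_\mathcal{A})$.

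\emph{Step 2: a fully faithful comparison of Verdier quotients.} Since $\pi_!$ annihilates $\mathcal{U}_\mathcal{B}$, it factors through a triangle functor $\bar\pi\colon\mathcal{U}_\mathcal{A}/\mathcal{U}_\mathcal{B}\to\mathbf{D}(\mathcal{A}/\mathcal{B})$ with essential image $\mathcal{U}_{\mathcal{A}/\mathcal{B}}$. The objects of $\mathcal{U}_\mathcal{A}$ are compact, so the Neeman--Thomason theorem on compact objects of a Verdier quotient (applied to $\mathbf{D}(\mathcal{A})$ and the set $\{\mathcal{A}(-,U)\}$) gives, for $a,b\in\mathcal{U}_\mathcal{A}$,
$$\Hom_{\mathbf{D}(\mathcal{A}/\mathcal{B})}(\pi_!a,\pi_!b)\;\cong\;\Hom_{\mathbf{per}(\mathcal{A})/\mathrm{thick}(\mathcal{U}_\mathcal{B})}(a,b).$$
It thus remains to see that the canonical functor $\mathcal{U}_\mathcal{A}/\mathcal{U}_\mathcal{B}\to\mathbf{per}(\mathcal{A})/\mathrm{thick}(\mathcal{U}_\mathcal{B})$ is fully faithful. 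By the standard Verdier criterion this reduces to showing that every morphism $t\colon b\to n$ with $b\in\mathcal{U}_\mathcal{A}$ and $n\in\mathrm{thick}(\mathcal{U}_\mathcal{B})$ factors through an object of $\mathcal{U}_\mathcal{A}$. Here the pretriangulatedness of $\mathcal{B}$ enters: $\mathcal{U}_\mathcal{B}$ is already closed under shifts and cones, so $\mathrm{thick}(\mathcal{U}_\mathcal{B})$ consists of direct summands of objects of $\mathcal{U}_\mathcal{B}$; writing $n$ as a summand of some $n'\in\mathcal{U}_\mathcal{B}\subseteq\mathcal{U}_\mathcal{A}$ and composing $t$ with the split inclusion $n\to n'$ and split projection $n'\to n$ yields the required factorization.

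\emph{Step 3: conclusion.} Full faithfulness of $\bar\pi$ together with the identification of its essential image with $\mathcal{U}_{\mathcal{A}/\mathcal{B}}$ shows that $\mathcal{U}_{\mathcal{A}/\mathcal{B}}\simeq\mathcal{U}_\mathcal{A}/\mathcal{U}_\mathcal{B}$ is a triangulated subcategory of $\mathbf{D}(\mathcal{A}/\mathcal{B})$; hence $\mathcal{A}/\mathcal{B}$ is pretriangulated. Transporting back along the Yoneda identifications, $\bar\pi$ becomes the functor induced by $H^0(\pi)$, which is a triangle functor by Lemma~\ref{lem:quasi-equiv} now that all three dg categories are pretriangulated; being fully faithful and (by Step~1) essentially surjective, it is the asserted triangle equivalence $H^0(\mathcal{A})/H^0(\mathcal{B})\stackrel{\sim}{\longrightarrow}H^0(\mathcal{A}/\mathcal{B})$.

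\emph{Expected main obstacle.} The heart of the matter is Step~2: one needs Drinfeld's identification of $\mathbf{D}(\mathcal{A}/\mathcal{B})$ as a Bousfield/Verdier localization and the description of its compact objects. Once these are in hand, full faithfulness collapses to the elementary cofinality argument above, which is precisely where the pretriangulatedness of $\mathcal{B}$ (and not merely of $\mathcal{A}$) is used. A more self-contained but calculational alternative would instead work directly with the explicit model of $\mathcal{A}/\mathcal{B}$ over $\mathbb{k}$, freely adjoining the contracting homotopies $\varepsilon_U$, and describe morphisms in $H^0(\mathcal{A}/\mathcal{B})$ by explicit roofs; compare \cite[Theorem~1.3(i) and Lemma~1.5]{LO}.
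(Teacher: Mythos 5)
Your argument is correct, but it is considerably more elaborate than what the paper does: the paper treats the statement as an immediate consequence of \cite[Theorem~3.4]{Dri} (with \cite[Theorem~1.3(i), Lemma~1.5]{LO} as a comparison) and offers no further argument. You instead take the derived-category form of Drinfeld's theorem, $\mathbf{D}(\mathcal{A}/\mathcal{B})\simeq \mathbf{D}(\mathcal{A})/{\rm Loc}(\mathcal{U}_\mathcal{B})$, and descend to the representables via the Neeman--Thomason description of compact objects in a Bousfield/Verdier localization together with the Verdier cofinality criterion; the pleasant point of your write-up is that it isolates exactly where pretriangulatedness of $\mathcal{B}$ (and not only of $\mathcal{A}$) enters, namely in showing that ${\rm thick}(\mathcal{U}_\mathcal{B})$ is just the summand closure of $\mathcal{U}_\mathcal{B}$, so that maps from $\mathcal{U}_\mathcal{A}$ into ${\rm thick}(\mathcal{U}_\mathcal{B})$ factor through $\mathcal{U}_\mathcal{B}$. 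Two small elisions are worth repairing: first, the Verdier criterion as usually stated gives full faithfulness of $\mathcal{U}_\mathcal{A}/(\mathcal{U}_\mathcal{A}\cap{\rm thick}(\mathcal{U}_\mathcal{B}))\to \mathbf{per}(\mathcal{A})/{\rm thick}(\mathcal{U}_\mathcal{B})$, so you should add the (easy) observation that every object of $\mathcal{U}_\mathcal{A}\cap{\rm thick}(\mathcal{U}_\mathcal{B})$ is already zero in $\mathcal{U}_\mathcal{A}/\mathcal{U}_\mathcal{B}$, whence $\mathcal{U}_\mathcal{A}/\mathcal{U}_\mathcal{B}\simeq \mathcal{U}_\mathcal{A}/(\mathcal{U}_\mathcal{A}\cap{\rm thick}(\mathcal{U}_\mathcal{B}))$; second, the claim that $\mathcal{U}_\mathcal{B}$ is a triangulated subcategory of $\mathbf{D}(\mathcal{A})$ uses that the full embedding $H^0(\mathcal{B})\to H^0(\mathcal{A})$ is a triangle functor, which is indeed covered by Lemma~\ref{lem:quasi-equiv} applied to the inclusion, so your citation is apt but deserves the explicit remark. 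A lighter alternative, closer to what the paper's ``compare \cite{LO}'' suggests, is to work directly with the explicit model of $\mathcal{A}/\mathcal{B}$ obtained by adjoining the contracting homotopies $\varepsilon_U$ and compute morphisms in $H^0(\mathcal{A}/\mathcal{B})$ by roofs, which avoids compact generation and Neeman--Thomason altogether.
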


We will be interested in the following dg quotient categories.

\begin{exm}\label{exm:ddc}
For a small dg category $\mathcal{A}$, denote by $\mathcal{A}\mbox{-}{\rm DGMod}^{\rm ac}$ the full dg subcategory of $\mathcal{A}\mbox{-}{\rm DGMod}$ formed by acyclic modules. We have  the \emph{dg derived category} $$\mathbf{D}_{\rm dg}(\mathcal{A})=\mathcal{A}\mbox{-}{\rm DGMod}/{\mathcal{A}\mbox{-}{\rm DGMod}^{\rm ac}}.$$ The terminology is justified by the following fact: there is a canonical identification of $H^0(\mathbf{D}_{\rm dg}(\mathcal{A}))$ with $\mathbf{D}(\mathcal{A})$; see Lemma \ref{lem:pre-qu}. Then we have the \emph{dg perfect derived category} $\mathbf{per}_{\rm dg}(\mathcal{A})=\mathbf{D}_{\rm dg}(\mathcal{A})^c$, which is formed by modules becoming compact in $\mathbf{D}(\mathcal{A})$.

Here, we are sloppy about the precise definition of $\mathbf{D}_{\rm dg}(\mathcal{A})$, since neither of the dg categories $\mathcal{A}\mbox{-}{\rm DGMod}$ and $\mathcal{A}\mbox{-}{\rm DGMod}^{\rm ac}$ is small. However, by choosing a suitable universe $\mathbb{U}$ and restricting to $\mathbb{U}$-small dg modules, we can define the corresponding dg derived category $\mathbf{D}_{\rm dg, \mathbb{U}}(\mathcal{A})$; compare \cite[Remark 1.22 and Appendix A]{LO}. We then confuse $\mathbf{D}_{\rm dg}(\mathcal{A})$ with the well-defined category $\mathbf{D}_{\rm dg, \mathbb{U}}(\mathcal{A})$.
\end{exm}

\begin{exm}\label{exm:dsc}
Let $\Lambda$ be a $\mathbb k$-algebra,  which is a left noetherian ring. Denote by $\Lambda\mbox{-mod}$ the abelian category of finitely generated left $\Lambda$-modules. Denote by $C^b_{\rm dg}(\Lambda\mbox{-mod})$ the dg category of bounded complexes, and by $C^{b, {\rm ac}}_{\rm dg}(\Lambda\mbox{-mod})$ the full dg subcategory formed by acyclic complexes. The \emph{bounded dg derived category} is defined to be
  $$\mathbf{D}^b_{\rm dg}(\Lambda\mbox{-mod})=C^b_{\rm dg}(\Lambda\mbox{-mod})/{C^{b, {\rm ac}}_{\rm dg}(\Lambda\mbox{-mod})}.$$
  Similar as in Example~\ref{exm:ddc}, we identify $H^0(\mathbf{D}^b_{\rm dg}(\Lambda\mbox{-mod}))$ with the usual bounded derived category $\mathbf{D}^b(\Lambda\mbox{-mod})$.

 Denote by $\mathbf{per}(\Lambda)$ the full subcategory of  $\mathbf{D}^b(\Lambda\mbox{-mod})$  consisting of perfect complexes. The \emph{singularity category} \cite{Buc, Orl} of $\Lambda$ is defined to be the following Verdier quotient
 $$\mathbf{D}_{\rm sg}(\Lambda)=\mathbf{D}^b(\Lambda\mbox{-mod})/{\mathbf{per}(\Lambda)}.$$
  As its dg analogue, the \emph{dg singularity category} \cite{Kel18, BRTV} of $\Lambda$ is given by the following dg quotient category
    $$\mathbf{S}_{\rm dg}(\Lambda)=\mathbf{D}^b_{\rm dg}(\Lambda\mbox{-mod})/{\mathbf{per}_{\rm dg}(\Lambda)}.$$
    Here, $\mathbf{per}_{\rm dg}(\Lambda)$ denotes the full dg subcategory of $\mathbf{D}^b_{\rm dg}(\Lambda\mbox{-mod})$ formed by perfect complexes. This notation is consistent with the one in Example~\ref{exm:ddc},  if $\Lambda$ is viewed as  a dg category with a single object. By Lemma~\ref{lem:pre-qu}, we  identify $\mathbf{D}_{\rm sg}(\Lambda)$ with $H^0(\mathbf{S}_{\rm dg}(\Lambda))$.
\end{exm}

\subsection{One-point (co)extensions and singular equivalences with levels}\label{subsec:one-point}
In this subsection, we prove that both one-point (co)extensions  \cite[III.2]{ARS} and singular equivalences with levels \cite{Wan15} induce quasi-equivalences between dg singularity categories of the relevant algebras. For simplicity, we only consider finite dimensional algebras  and finite dimensional modules.

We first consider a  one-point coextension of an algebra.  Let $\Lambda$ be a finite dimensional $\mathbb{k}$-algebra, and $M$ be a finite dimensional right $\Lambda$-module. We view $M$ as a $\mathbb{k}$-$\Lambda$-bimodule  on which $\mathbb{k}$ acts centrally. The corresponding \emph{one-point coextension} is an upper triangular matrix algebra
$$\Lambda'=\begin{pmatrix}\mathbb{k} & M \\
                                  0 & \Lambda\end{pmatrix}.$$
As usual, a left $\Lambda'$-module is viewed as a column vector $\begin{pmatrix} V \\ X \end{pmatrix}$, where $V$ is a  $\mathbb{k}$-vector space and $X$ is a left $\Lambda$-module together with a $\mathbb{k}$-linear map $\psi\colon M\otimes_\Lambda X\rightarrow V$; see \cite[III.2]{ARS}. We usually suppress this $\psi$.

The obvious exact functor $j\colon \Lambda'\mbox{-mod}\rightarrow \Lambda\mbox{-mod}$ sends $\begin{pmatrix} V \\ X \end{pmatrix}$ to $X$. It induces a dg functor
$$j\colon \mathbf{D}^b_{\rm dg}(\Lambda'\mbox{-mod})\longrightarrow \mathbf{D}^b_{\rm dg}(\Lambda\mbox{-mod}).$$

\begin{lem}\label{lem:opce}
The above dg functor $j$ induces a quasi-equivalence $\bar{j} \colon \mathbf{S}_{\rm dg}(\Lambda')\stackrel{\sim}\longrightarrow \mathbf{S}_{\rm dg}(\Lambda)$.
\end{lem}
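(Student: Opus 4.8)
The plan is to verify that the exact functor $j\colon \Lambda'\mbox{-mod}\to\Lambda\mbox{-mod}$ descends to the singularity categories and, crucially, that the descended functor is a triangle equivalence; then Lemma~\ref{lem:quasi-equiv} together with Lemma~\ref{lem:pre-qu} will promote this to the desired quasi-equivalence $\bar j$ of dg singularity categories. So the heart of the matter is a statement purely about triangulated categories: the functor $j$ carries $\mathbf{per}(\Lambda')$ into $\mathbf{per}(\Lambda)$, hence induces $\bar j\colon \mathbf D_{\rm sg}(\Lambda')\to\mathbf D_{\rm sg}(\Lambda)$, and this $\bar j$ is an equivalence. The first step is straightforward: $j$ sends the indecomposable projective $\begin{pmatrix}\mathbb k\\ M\end{pmatrix}$ (the ``new'' projective) to the projective $\Lambda$-module $M$ viewed via the embedding? — no, one must be careful: $j$ applied to the projectives $\begin{pmatrix}0\\ P\end{pmatrix}$ gives $P$, while $j\begin{pmatrix}\mathbb k\\ M\end{pmatrix}=M$, which need not be projective over $\Lambda$. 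Nonetheless $M$ has finite projective dimension? Not in general either. The correct observation is softer: one does not need $j$ to preserve perfectness on the nose, only that it sends $\mathbf{per}(\Lambda')$ to $\mathbf{per}(\Lambda)$ — and indeed $j$ takes the generating projectives $\begin{pmatrix}\mathbb k\\ M\end{pmatrix}$, $\begin{pmatrix}0\\ P_i\end{pmatrix}$ to $M$, $P_i$, so the image of a perfect complex is a bounded complex of $\Lambda$-modules each of which is a direct summand of a finite sum of $M$'s and $P_i$'s; this is perfect over $\Lambda$ precisely when $M$ has finite projective dimension, which is \emph{not} automatic. I would therefore instead follow the route of \cite{Chen11}: there it is shown directly that $j$ induces a triangle equivalence $\mathbf D_{\rm sg}(\Lambda')\xrightarrow{\sim}\mathbf D_{\rm sg}(\Lambda)$ without such hypotheses, using the recollement attached to the idempotent $e=\begin{pmatrix}0&0\\0&1\end{pmatrix}\in\Lambda'$ with $e\Lambda' e\cong\Lambda$ and $\Lambda'/\Lambda' e\Lambda'\cong\mathbb k$ of finite global dimension.

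Concretely, I would proceed as follows. Let $e\in\Lambda'$ be the idempotent above, so that $\Lambda'e\Lambda'$ is the two-sided ideal $\begin{pmatrix}0&M\\0&\Lambda\end{pmatrix}$ and $\Lambda'/\Lambda'e\Lambda'\cong\mathbb k$ has global dimension $0$. The standard recollement of derived categories associated to an idempotent then yields, after passing to singularity categories, a triangle equivalence: since $\mathbb k$ has finite global dimension, the quotient part of the recollement is killed in the singularity category, and $\mathbf D_{\rm sg}(\Lambda')\simeq \mathbf D_{\rm sg}(e\Lambda'e)=\mathbf D_{\rm sg}(\Lambda)$, the equivalence being induced by the functor $\Lambda'e\otimes_{e\Lambda'e}-$ in one direction and $e\Lambda'\otimes_{\Lambda'}- = \mathrm{Hom}_{\Lambda'}(\Lambda'e,-)$ (restriction along $e$) in the other. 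One checks that the latter agrees with $j$ on modules, up to the natural identification of $e\Lambda'$-modules with $\Lambda$-modules. This gives the triangle equivalence $\bar j\colon \mathbf D_{\rm sg}(\Lambda')\xrightarrow{\sim}\mathbf D_{\rm sg}(\Lambda)$ on the level of $H^0$.

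To lift this to dg categories, I invoke the identifications $\mathbf D_{\rm sg}(\Lambda')=H^0(\mathbf S_{\rm dg}(\Lambda'))$ and $\mathbf D_{\rm sg}(\Lambda)=H^0(\mathbf S_{\rm dg}(\Lambda))$ recorded in Example~\ref{exm:dsc} (via Lemma~\ref{lem:pre-qu}), together with the fact that $j\colon C^b_{\rm dg}(\Lambda'\mbox{-mod})\to C^b_{\rm dg}(\Lambda\mbox{-mod})$ is an honest dg functor preserving acyclicity (it is exact) and carrying $\mathbf{per}_{\rm dg}(\Lambda')$ into $\mathbf{per}_{\rm dg}(\Lambda)$ — this last point is where I must be careful and use the recollement description rather than a naive projective-preservation argument; alternatively, I replace $j$ by the dg quasi-functor $\Lambda'e\otimes^{\mathbf L}_{\Lambda}-$ realized as a roof through cofibrant replacements as in the discussion of $\mathbf{Hodgcat}$ above, which manifestly respects the perfect subcategories. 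In either case the universal property of the dg quotient gives an induced dg (quasi-)functor $\bar j\colon \mathbf S_{\rm dg}(\Lambda')\to\mathbf S_{\rm dg}(\Lambda)$ with $H^0(\bar j)$ the triangle equivalence obtained above. Both $\mathbf S_{\rm dg}(\Lambda')$ and $\mathbf S_{\rm dg}(\Lambda)$ are pretriangulated by Lemma~\ref{lem:pre-qu}, so Lemma~\ref{lem:quasi-equiv} applies and $\bar j$ is a quasi-equivalence.

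The main obstacle is the one flagged above: that $j$ need not literally preserve perfect complexes (equivalently, that $M$ need not have finite projective dimension over $\Lambda$), so the reduction to the singularity category cannot be done by the naive ``apply $j$ termwise'' argument. The resolution is to route everything through the idempotent recollement of \cite{Chen11}, using that the glued-in part $\mathbb k$ has finite global dimension and therefore contributes nothing to the singularity categories; this is what makes the singular version of $j$ an equivalence even though the bounded-derived version is not. The one-point \emph{extension} case (lower triangular matrix algebra) is dual and is handled by the opposite-algebra argument, or symmetrically with the roles of the recollement functors interchanged.
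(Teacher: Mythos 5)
Your proposal goes wrong at the decisive point: you misidentify the projective left $\Lambda'$-modules and conclude, incorrectly, that $j$ need not preserve perfect complexes. With the paper's convention (a left $\Lambda'$-module is $\begin{pmatrix} V\\ X\end{pmatrix}$ together with $\psi\colon M\otimes_\Lambda X\to V$), the indecomposable projectives are the columns of $\Lambda'$, namely $\Lambda'e_1\cong \begin{pmatrix}\mathbb{k}\\ 0\end{pmatrix}$ and $\Lambda'e_2\cong \begin{pmatrix}M\\ \Lambda\end{pmatrix}$; more generally every projective is a direct sum of $\begin{pmatrix}\mathbb{k}\\ 0\end{pmatrix}$'s and modules $\begin{pmatrix}M\otimes_\Lambda P\\ P\end{pmatrix}$ with $P$ projective over $\Lambda$ (see \cite[III.2]{ARS}). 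The modules $\begin{pmatrix}0\\ P\end{pmatrix}$ are \emph{not} projective when $M\neq 0$, and $\begin{pmatrix}\mathbb{k}\\ M\end{pmatrix}$ is not even a natural $\Lambda'$-module in this convention. Consequently $j$ sends the indecomposable projectives to $0$ and to $P$, so it does preserve projectives, hence perfect complexes; this one-line observation is indispensable, because without it the dg functor $\bar j$ between the dg quotients--the very object the lemma is about--is not defined. The ``main obstacle'' you flag is therefore illusory, and the ``naive'' route you discard (induce $\bar j$ from the honest dg functor $j$, identify $H^0(\bar j)$ with the triangle equivalence of \cite[Proposition~4.2]{Chen11}, and conclude with Lemma~\ref{lem:quasi-equiv}) is exactly the paper's proof.

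The detour you substitute does not repair this. The principle ``$\Lambda'/\Lambda'e\Lambda'$ of finite global dimension implies that $e(-)$ induces an equivalence $\mathbf{D}_{\rm sg}(\Lambda')\simeq \mathbf{D}_{\rm sg}(e\Lambda'e)$'' is false in general: for the Auslander algebra $A=\mathrm{End}_B(B\oplus \mathbb{k})$ of $B=\mathbb{k}[x]/(x^2)$ and $e$ the idempotent corresponding to $B$, one has $A/AeA\cong \mathbb{k}$ and $eAe\cong B$, yet $\mathbf{D}_{\rm sg}(A)=0$ while $\mathbf{D}_{\rm sg}(B)\neq 0$. What makes the one-point coextension work is precisely the special feature you denied: the new simple $\begin{pmatrix}\mathbb{k}\\0\end{pmatrix}$ is projective, equivalently $j=e(-)$ preserves projectives, and this (together with the direct verification in \cite{Chen11}) is what yields the equivalence on $H^0$. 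Moreover, your proposed quasi-inverse $\Lambda'e\otimes_{\Lambda}-$ is problematic as stated: it must be derived, and since $\Lambda'e\cong M\oplus\Lambda$ as a right $\Lambda$-module, $\Lambda'e\otimes^{\mathbf{L}}_{\Lambda}-$ need not preserve bounded complexes when $M_\Lambda$ has infinite projective dimension, so it does not ``manifestly respect'' the relevant subcategories. Your final dg-level step (pretriangulatedness of the dg quotients and Lemma~\ref{lem:quasi-equiv}) is fine and agrees with the paper, but it only goes through once the preservation of perfect complexes by $j$ is reinstated.
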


\begin{proof}
We observe that the functor  $j\colon \Lambda'\mbox{-mod}\rightarrow \Lambda\mbox{-mod}$ sends projective $\Lambda'$-modules to projective $\Lambda$-modules. It follows that the above dg functor $j$ respects perfect complexes. Therefore, we have the induced dg functor $\bar{j}$  between the dg singularity categories.  As in Example~\ref{exm:dsc}, we  identify $H^0(\mathbf{S}_{\rm dg}(\Lambda'))$ and $H^0(\mathbf{S}_{\rm dg}(\Lambda))$  with $\mathbf{D}_{\rm sg}(\Lambda')$ and $\mathbf{S}_{\rm sg}(\Lambda)$, respectively. Then we observe that $H^0(\bar{j})\colon \mathbf{D}_{\rm sg}(\Lambda')\rightarrow \mathbf{D}_{\rm sg}(\Lambda)$ coincides with the triangle equivalence in \cite[Proposition~4.2 and its proof]{Chen11}. By Lemma~\ref{lem:quasi-equiv}, we are done.
\end{proof}

 Let $N$ be a finite dimensional  left $\Lambda$-module. The \emph{one-point extension} is an upper triangular matrix algebra
 $$\Lambda''=\begin{pmatrix}\Lambda & N \\
                                  0 & \mathbb{k} \end{pmatrix}.$$
Similarly, a  left $\Lambda''$-module is denoted by a column vector $\begin{pmatrix} Y \\ U \end{pmatrix}$, where $U$ is a  $\mathbb{k}$-vector space and $Y$ is a left $\Lambda$-module endowed  with a left $\Lambda$-module morphism  $\phi\colon N\otimes U\rightarrow Y$.

The exact functor $i\colon \Lambda\mbox{-mod}\rightarrow \Lambda''\mbox{-mod}$ sends a  left $\Lambda$-module $Y$ to an evidently-defined $\Lambda''$-module $\begin{pmatrix} Y \\ 0 \end{pmatrix}$. It induces a dg functor
$$i\colon \mathbf{D}^b_{\rm dg}(\Lambda\mbox{-mod})\longrightarrow \mathbf{D}^b_{\rm dg}(\Lambda''\mbox{-mod}).$$

\begin{lem}\label{lem:ope}
The above dg functor $i$ induces a quasi-equivalence $\bar{i} \colon \mathbf{S}_{\rm dg}(\Lambda)\stackrel{\sim}\longrightarrow \mathbf{S}_{\rm dg}(\Lambda'')$.
\end{lem}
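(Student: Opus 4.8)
The plan is to mimic the proof of Lemma~\ref{lem:opce} for the one-point coextension, exploiting the formal duality between one-point extensions and coextensions. First I would check that the exact functor $i\colon \Lambda\mbox{-mod}\rightarrow \Lambda''\mbox{-mod}$ sends projective $\Lambda$-modules to projective $\Lambda''$-modules. Indeed, the indecomposable projective $\Lambda''$-modules are of two kinds: the module $\begin{pmatrix} \mathbb{k} \\ 0\end{pmatrix}$ is not in the image, but each indecomposable projective $\Lambda''$-module of the form $\begin{pmatrix} P \\ 0 \end{pmatrix}$ with $P$ projective over $\Lambda$ lies in the essential image of $i$, together with $\begin{pmatrix} N \\ \mathbb{k}\end{pmatrix}$ — and the latter is $i$ applied to a projective? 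No; here one must be careful. What is true and sufficient is that $i$ sends projective $\Lambda$-modules to modules that are direct summands of projective $\Lambda''$-modules, equivalently that $i(P)$ is projective over $\Lambda''$ for $P$ projective over $\Lambda$; this is immediate since $\begin{pmatrix} P \\ 0 \end{pmatrix}$ is projective whenever $P$ is. Hence the induced dg functor $i\colon \mathbf{D}^b_{\rm dg}(\Lambda\mbox{-mod})\rightarrow \mathbf{D}^b_{\rm dg}(\Lambda''\mbox{-mod})$ respects perfect complexes, so it descends to a dg functor $\bar{i}\colon \mathbf{S}_{\rm dg}(\Lambda)\rightarrow \mathbf{S}_{\rm dg}(\Lambda'')$ between the dg quotient categories.

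Next I would identify $H^0(\bar{i})$. As in Example~\ref{exm:dsc}, we identify $H^0(\mathbf{S}_{\rm dg}(\Lambda))$ with $\mathbf{D}_{\rm sg}(\Lambda)$ and $H^0(\mathbf{S}_{\rm dg}(\Lambda''))$ with $\mathbf{D}_{\rm sg}(\Lambda'')$. Under these identifications, $H^0(\bar{i})\colon \mathbf{D}_{\rm sg}(\Lambda)\rightarrow \mathbf{D}_{\rm sg}(\Lambda'')$ is the triangle functor induced by the exact functor $i$, which is precisely the triangle equivalence established in \cite[Proposition~4.2 and its proof]{Chen11} (the statement there covers one-point extensions as well, or follows from the coextension case applied to the opposite algebra, since $(\Lambda'')^{\mathrm{op}}$ is the one-point coextension of $\Lambda^{\mathrm{op}}$ by the right $\Lambda^{\mathrm{op}}$-module $N$, and passing to opposite algebras is compatible with singularity categories). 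Thus $H^0(\bar{i})$ is a triangle equivalence. Since both $\mathbf{S}_{\rm dg}(\Lambda)$ and $\mathbf{S}_{\rm dg}(\Lambda'')$ are pretriangulated — being dg quotients of the pretriangulated dg categories $\mathbf{D}^b_{\rm dg}(-\mbox{-mod})$ by pretriangulated full dg subcategories, hence pretriangulated by Lemma~\ref{lem:pre-qu} — Lemma~\ref{lem:quasi-equiv} applies and yields that $\bar{i}$ is a quasi-equivalence.

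The only genuine point requiring care — and the step I expect to be the main (minor) obstacle — is pinning down that $H^0(\bar{i})$ really is the functor appearing in \cite{Chen11}, rather than some twist of it; this is bookkeeping about how the Verdier quotient in \cite{Chen11} is computed versus the homotopy category of the dg quotient, but it is handled uniformly by Lemma~\ref{lem:pre-qu}, exactly as in the proof of Lemma~\ref{lem:opce}. No new ideas beyond those already used for the coextension case are needed; the proof is essentially dual.

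\begin{proof}
The functor $i\colon \Lambda\mbox{-mod}\rightarrow \Lambda''\mbox{-mod}$ sends a projective $\Lambda$-module $P$ to $\begin{pmatrix} P \\ 0 \end{pmatrix}$, which is a projective $\Lambda''$-module. It follows that the above dg functor $i$ respects perfect complexes. Therefore, we have the induced dg functor $\bar{i}$ between the dg singularity categories. As in Example~\ref{exm:dsc}, we identify $H^0(\mathbf{S}_{\rm dg}(\Lambda))$ and $H^0(\mathbf{S}_{\rm dg}(\Lambda''))$ with $\mathbf{D}_{\rm sg}(\Lambda)$ and $\mathbf{D}_{\rm sg}(\Lambda'')$, respectively. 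Then we observe that $H^0(\bar{i})\colon \mathbf{D}_{\rm sg}(\Lambda)\rightarrow \mathbf{D}_{\rm sg}(\Lambda'')$ coincides with the triangle equivalence in \cite[Proposition~4.2 and its proof]{Chen11}; indeed, $(\Lambda'')^{\rm op}$ is the one-point coextension of $\Lambda^{\rm op}$ by the right $\Lambda^{\rm op}$-module $N$, and forming the opposite algebra is compatible with taking singularity categories, so this reduces to the situation of Lemma~\ref{lem:opce}. By Lemma~\ref{lem:quasi-equiv}, we are done.
\end{proof}
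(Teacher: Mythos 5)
Your overall skeleton is exactly the paper's: projectives go to projectives, so the dg functor respects perfect complexes and descends to $\bar{i}$; identify $H^0(\bar{i})$ with a known triangle equivalence between the singularity categories; conclude with Lemma~\ref{lem:quasi-equiv}. The genuine gap is in how you justify that $H^0(\bar{i})$ is an equivalence. The paper cites \cite[Proposition~4.1]{Chen11}, which treats the one-point \emph{extension} directly and shows that the extension-by-zero functor $i$ induces a triangle equivalence $\mathbf{D}_{\rm sg}(\Lambda)\rightarrow\mathbf{D}_{\rm sg}(\Lambda'')$. You instead cite Proposition~4.2 (the coextension case) and claim the extension case ``follows from the coextension case applied to the opposite algebra, since forming the opposite algebra is compatible with taking singularity categories.'' That reduction does not go through. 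Applying the coextension result to $\Lambda^{\rm op}$ with the right $\Lambda^{\rm op}$-module $N$ gives a statement about the \emph{restriction} functor between singularity categories of \emph{right} $\Lambda''$- and $\Lambda$-modules. To come back to left modules you would need the $\mathbb{k}$-duality $D=\operatorname{Hom}_{\mathbb{k}}(-,\mathbb{k})$, but $D$ exchanges projectives and injectives, so it identifies $\mathbf{D}_{\rm sg}(\Lambda^{\rm op})^{\rm op}$ with the quotient of $\mathbf{D}^b(\Lambda\mbox{-mod})$ by bounded complexes of \emph{injectives}, which coincides with $\mathbf{D}_{\rm sg}(\Lambda)$ only for Gorenstein algebras; and even in that case the transported functor is the corner restriction $\begin{pmatrix} X \\ V\end{pmatrix}\mapsto X$, not the extension functor $i$. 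There is no canonical compatibility of singularity categories with passage to the opposite algebra, so your parenthetical argument would fail for a general finite dimensional $\Lambda$.

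The fix is minimal: drop the duality reduction and invoke \cite[Proposition~4.1 and its proof]{Chen11} (or reprove directly that $i$ induces a fully faithful and dense functor on singularity categories, which is what Chen's proof does). With that citation in place, the rest of your argument — including the observation that $i(P)=\begin{pmatrix} P \\ 0\end{pmatrix}$ is projective, the descent to the dg quotients, and the appeal to Lemma~\ref{lem:quasi-equiv} via the identifications of Example~\ref{exm:dsc} — is precisely the paper's proof.
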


\begin{proof}
The argument here is similar to the one in the proof of  Lemma~\ref{lem:opce}. As the functor  $i\colon \Lambda\mbox{-mod}\rightarrow \Lambda''\mbox{-mod}$ sends projective $\Lambda$-modules to projective $\Lambda''$-modules,  the above dg functor $i$ respects perfect complexes. Therefore, we have the induced dg functor $\bar{i}$  between the dg singularity categories.  We observe that $H^0(\bar{i})\colon \mathbf{D}_{\rm sg}(\Lambda)\rightarrow \mathbf{D}_{\rm sg}(\Lambda'')$ coincides with the triangle equivalence in \cite[Proposition~4.1 and its proof]{Chen11}. Then we are done by applying Lemma~\ref{lem:quasi-equiv}.
\end{proof}

 Let $\Lambda$ and $\Pi$ be two finite dimensional  $\mathbb{k}$-algebras.  For a $\Lambda$-$\Pi$-bimodule, we always require that $\mathbb{k}$ acts centrally. Therefore, a $\Lambda$-$\Pi$-bimodule might be identified with a left module over $\Lambda\otimes \Pi^{\rm op}$.

 Denote by $\Lambda^e=\Lambda\otimes\Lambda^{\rm op}$ the \emph{enveloping algebra} of $\Lambda$.  Therefore, $\Lambda$-$\Lambda$-bimodules are viewed as left $\Lambda^e$-modules. Denote by $\Lambda^e\mbox{-\underline{mod}}$  the stable  category of $\Lambda^e\mbox{-{mod}}$ modulo projective $\Lambda^e$-modules \cite[IV.1]{ARS}, and by $\Omega_{\Lambda^e}^n(\Lambda)$ the $n$-th syzygy of $\Lambda$ for $n\geq 1$. By convention, we have $\Omega_{\Lambda^e}^0(\Lambda)=\Lambda$.

The following terminology is modified from \cite[Definition~2.1]{Wan15}.

\begin{defn}\label{defn:singequi}
Let $M$ and $N$ be a $\Lambda$-$\Pi$-bimodule and a $\Pi$-$\Lambda$-bimodule, respectively,  and let $n\geq 0$.  We say that the pair $(M, N)$ defines a \emph{singular equivalence with level $n$},  provided that the following conditions are fulfilled.
\begin{enumerate}
\item[(1)] The four one-sided modules $_\Lambda M$, $M_\Pi$, $_\Pi N$ and $N_\Lambda$ are all projective.
\item[(2)]  There are isomorphisms $M\otimes_\Pi N \simeq \Omega^n_{\Lambda^e}(\Lambda)$ and $N\otimes_\Lambda M\simeq \Omega^n_{\Pi^e}(\Pi)$ in $\Lambda^e\mbox{-\underline{mod}}$ and $\Pi^e\mbox{-\underline{mod}}$, respectively. \hfill $\square$
 \end{enumerate}
 \end{defn}

\begin{rem}\label{rem:singequi}
\begin{enumerate}
\item[(1)] A  stable equivalence of Morita type in the sense of \cite[Definition~5.A]{Brou} is naturally a singular equivalence with level zero.
\item[(2)] By \cite[Theorem~2.3]{Wan15}, a derived equivalence  induces a singular equivalence with a certain level.
    \item[(3)] By \cite[Proposition~2.6]{Sk}, a singular equivalence of Morita type, studied in  \cite{ZZ}, induces a singular equivalence with a certain level.
\end{enumerate}
\end{rem}

Assume that $M$ is a $\Lambda$-$\Pi$-bimodule such that both $_\Lambda M$ and $M_\Pi$ are projective. The obvious dg functor $M\otimes_\Pi-\colon \mathbf{D}_{\rm dg}^b(\Pi\mbox{-mod})\rightarrow \mathbf{D}_{\rm dg}^b(\Lambda\mbox{-mod})$ between the bounded dg derived categories preserves perfect complexes. Hence it induces a dg functor
$$M\otimes_\Pi- \colon \mathbf{S}_{\rm dg}(\Pi)\longrightarrow \mathbf{S}_{\rm dg}(\Lambda)$$
between the dg singularity categories.

Definition~\ref{defn:singequi} is justified by the following observation.

\begin{lem}\label{lem:sing-equi1}
Assume that  $(M, N)$ defines a singular equivalence with level $n$. Then the above dg functor $M\otimes_\Pi- \colon \mathbf{S}_{\rm dg}(\Pi)\rightarrow \mathbf{S}_{\rm dg}(\Lambda)$ is a quasi-equivalence.
\end{lem}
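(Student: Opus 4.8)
The plan is to mimic the strategy already used for one-point (co)extensions in Lemmas~\ref{lem:opce} and \ref{lem:ope}: exhibit $M\otimes_\Pi-$ as a dg functor that descends to the dg singularity categories, and then invoke Lemma~\ref{lem:quasi-equiv} to reduce the claim to the statement that $H^0(M\otimes_\Pi-)$ is a triangle equivalence between $\mathbf{D}_{\rm sg}(\Pi)$ and $\mathbf{D}_{\rm sg}(\Lambda)$. The first paragraph of work is therefore bookkeeping: since $_\Lambda M$ and $M_\Pi$ are both projective, the functor $M\otimes_\Pi-$ is exact, sends projective $\Pi$-modules to projective $\Lambda$-modules (because $M_\Pi$ is projective, hence a direct summand of a free module, so $M\otimes_\Pi \Pi^{\oplus k}\simeq M^{\oplus k}$ is projective over $\Lambda$ as $_\Lambda M$ is projective), and thus the induced dg functor $M\otimes_\Pi-\colon \mathbf{D}^b_{\rm dg}(\Pi\mbox{-mod})\to \mathbf{D}^b_{\rm dg}(\Lambda\mbox{-mod})$ carries $\mathbf{per}_{\rm dg}(\Pi)$ into $\mathbf{per}_{\rm dg}(\Lambda)$, yielding the dg functor $M\otimes_\Pi-\colon \mathbf{S}_{\rm dg}(\Pi)\to \mathbf{S}_{\rm dg}(\Lambda)$ appearing in the statement.

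The substantive part is to check that the induced triangle functor $H^0(M\otimes_\Pi-)$ on singularity categories is an equivalence. Here I would use the pair $(M,N)$ together with the syzygy isomorphisms in condition~(2). Symmetrically, $N\otimes_\Lambda-$ induces a triangle functor $\mathbf{D}_{\rm sg}(\Lambda)\to \mathbf{D}_{\rm sg}(\Pi)$. Composing, $(N\otimes_\Lambda-)\circ(M\otimes_\Pi-)\cong (N\otimes_\Lambda M)\otimes_\Pi-$ as functors on $\mathbf{D}^b(\Pi\mbox{-mod})$. Now $N\otimes_\Lambda M\simeq \Omega^n_{\Pi^e}(\Pi)$ in $\Pi^e\mbox{-\underline{mod}}$; the key point is that in the singularity category $\mathbf{D}_{\rm sg}(\Pi)$ one has $\Omega^n_{\Pi^e}(\Pi)\otimes_\Pi X \cong \Sigma^{-n}(X)$ functorially, because $\Omega^n_{\Pi^e}(\Pi)$ is obtained from $\Pi$ by an $n$-step projective bimodule resolution, each term of which is $\Pi$-projective on both sides, so tensoring the truncated resolution with $X$ produces a bounded complex of projectives with the right cohomology, exhibiting $\Omega^n_{\Pi^e}(\Pi)\otimes_\Pi X$ as $\Sigma^{-n}X$ up to perfect complexes; moreover a bimodule isomorphism in the stable category $\Pi^e\mbox{-\underline{mod}}$ differs from an honest isomorphism by maps factoring through projective bimodules, which tensor $X$ into perfect complexes and hence vanish in $\mathbf{D}_{\rm sg}(\Pi)$. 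Therefore $(N\otimes_\Lambda-)\circ(M\otimes_\Pi-)\cong \Sigma^{-n}$ on $\mathbf{D}_{\rm sg}(\Pi)$, which is an autoequivalence; symmetrically $(M\otimes_\Pi-)\circ(N\otimes_\Lambda-)\cong \Sigma^{-n}$ on $\mathbf{D}_{\rm sg}(\Lambda)$. Since both composites are equivalences, each of $H^0(M\otimes_\Pi-)$ and $H^0(N\otimes_\Lambda-)$ is an equivalence, and then Lemma~\ref{lem:quasi-equiv} upgrades this to the asserted quasi-equivalence. Alternatively, and more directly, one may simply observe that $H^0(M\otimes_\Pi-)$ coincides with the triangle equivalence established in \cite[Theorem~2.2 (or its proof)]{Wan15}, and conclude via Lemma~\ref{lem:quasi-equiv}.

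I expect the main obstacle to be the functoriality and naturality of the identification $\Omega^n_{\Pi^e}(\Pi)\otimes_\Pi -\cong \Sigma^{-n}$ on the singularity category, rather than its existence on objects: one must ensure that the syzygy of the bimodule $\Pi$ acts, after tensoring, as the inverse shift functor on $\mathbf{D}_{\rm sg}(\Pi)$ naturally in the module argument, and that replacing a genuine bimodule isomorphism by a stable one does not disturb this. The cleanest way to handle both points is to fix an honest $n$-step projective $\Pi^e$-resolution $P_\bullet\to \Pi$, set $\Omega^n_{\Pi^e}(\Pi)=\operatorname{Ker}(P_{n-1}\to P_{n-2})$, and note that tensoring the exact sequence $0\to \Omega^n_{\Pi^e}(\Pi)\to P_{n-1}\to\cdots\to P_0\to \Pi\to 0$ over $\Pi$ with $X$ (exact since the $P_i$ are $\Pi$-projective on the right, and $X$ may be replaced by a projective resolution if it is not already projective, which one handles by the usual truncation argument in $\mathbf{D}_{\rm sg}$) yields, in $\mathbf{D}_{\rm sg}(\Pi)$, a natural isomorphism $\Omega^n_{\Pi^e}(\Pi)\otimes_\Pi X\cong \Sigma^{-n}X$. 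Given the length of such verifications, and since \cite{Wan15} already records the triangle equivalence on $\mathbf{D}_{\rm sg}$, the economical proof is to cite that equivalence and appeal to Lemma~\ref{lem:quasi-equiv}, exactly as in the proofs of Lemmas~\ref{lem:opce} and~\ref{lem:ope}.
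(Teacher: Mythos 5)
Your proposal is correct, and its ``economical'' route is exactly the paper's proof: identify $H^0(M\otimes_\Pi-)$ with the tensor functor $\mathbf{D}_{\rm sg}(\Pi)\to\mathbf{D}_{\rm sg}(\Lambda)$, quote \cite{Wan15} (the paper cites Remark~2.2 there, which records the equivalence with quasi-inverse $\Sigma^n\circ(N\otimes_\Lambda-)$, rather than Theorem~2.2) and conclude with Lemma~\ref{lem:quasi-equiv}. Your self-contained syzygy argument is also sound and essentially reproduces what the paper itself carries out later, in the proof of Proposition~\ref{prop:sing-equi2}, where the natural isomorphism $\Omega^n_{\Pi^e}(\Pi)\otimes_\Pi X\simeq\Sigma^{-n}(X)$ in the singularity category is established by tensoring the truncated bimodule resolution; only your parenthetical justification that $M\otimes_\Pi-$ preserves projectives slightly misattributes the roles of the hypotheses ($_\Lambda M$ projective gives preservation of projectives, $M_\Pi$ projective gives exactness), which does not affect the argument.
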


\begin{proof}
We identify $H^0(\mathbf{S}_{\rm dg}(\Pi))$ with $\mathbf{D}_{\rm sg}(\Pi)$, and $H^0(\mathbf{S}_{\rm dg}(\Lambda))$ with $\mathbf{D}_{\rm sg}(\Lambda)$; see Example~\ref{exm:dsc}. Then $H^0(M\otimes_\Pi-)$ is identified with the obvious tensor functor
$$M\otimes_\Pi-\colon \mathbf{D}_{\rm sg}(\Pi)\longrightarrow \mathbf{D}_{\rm sg}(\Lambda).$$
 As noted in \cite[Remark~2.2]{Wan15}, the latter  functor is a triangle equivalence, whose quasi-inverse is given by $\Sigma^n\circ (N\otimes_\Lambda-)$. Then we are done by Lemma~\ref{lem:quasi-equiv}.
\end{proof}

\section{The dg singularity category and acyclic complexes}
\label{section3}

In this section, we enhance a result in \cite{Kra} to show that the dg singularity category can be described as the dg category of certain acyclic complexes of injective modules.

We fix a $\mathbb k$-algebra $\Lambda$, which is a left noetherian ring.  We denote by $\Lambda\mbox{-Mod}$ the abelian category of left $\Lambda$-modules. For two complexes $X$ and $Y$ of $\Lambda$-modules, the Hom complex $C_{\rm dg}(\Lambda\mbox{-Mod})(X, Y)$ is usually denoted by ${\rm Hom}_\Lambda(X, Y)$.  Recall that the classical homotopy category $\mathbf{K}(\Lambda\mbox{-Mod})$  coincides with $H^0(C_{\rm dg}(\Lambda\mbox{-Mod}))$.

Denote by $\Lambda\mbox{-Inj}$ the category of injective $\Lambda$-modules,  and by $\mathbf{K}(\Lambda\mbox{-Inj})$ the homotopy category of complexes of injective modules. The full subcategory $\mathbf{K}^{\rm ac}(\Lambda\mbox{-Inj})$ is formed by acyclic complexes of injective modules.

For a bounded complex $X$ of $\Lambda$-modules, we denote by $\phi_X\colon  X\rightarrow \mathbf{i}X$ its injective resolution. Then we have the following isomorphism
\begin{align}\label{iso:inj1}
{\rm Hom}_{\mathbf{K}(\Lambda\mbox{-}{\rm Inj})}(\mathbf{i}X, I)\simeq {\rm Hom}_{\mathbf{K}(\Lambda\mbox{-}{\rm Mod})}(X, I), \quad  f\longmapsto f\circ \phi_X,
\end{align}
for each complex $I\in {\mathbf{K}(\Lambda\mbox{-Inj})}$. It follows that $\mathbf{i}X$ is compact in ${\mathbf{K}(\Lambda\mbox{-Inj})}$, if $X$ lies in $\mathbf{K}^b(\Lambda\mbox{-mod})$; see \cite[Lemma 2.1]{Kra}. In particular, we have
 \begin{align}\label{iso:inj}
 {\rm Hom}_{\mathbf{K}(\Lambda\mbox{-}{\rm Inj})}(\mathbf{i}\Lambda, I)\simeq {\rm Hom}_{\mathbf{K}(\Lambda\mbox{-}{\rm Mod})}(\Lambda, I)\simeq H^0(I).
 \end{align}
Here, we view the regular module $_\Lambda\Lambda$ as a stalk complex concentrated in degree zero. We denote by ${\rm Loc}(\mathbf{i}\Lambda)$ the localizing subcategory of $\mathbf{K}(\Lambda\mbox{-Inj})$ generated by $\mathbf{i}\Lambda$.

Denote by $C_{\rm dg}^{\rm ac}(\Lambda\mbox{-}{\rm Inj})$ the full dg subcategory of $C_{\rm dg}(\Lambda\mbox{-Mod})$ formed by acyclic complexes of injective $\Lambda$-modules. We identify $H^0(C_{\rm dg}^{\rm ac}(\Lambda\mbox{-}{\rm Inj}))$ with $\mathbf{K}^{\rm ac}(\Lambda\mbox{-Inj})$. Then $C_{\rm dg}^{\rm ac}(\Lambda\mbox{-}{\rm Inj})^c$ means the full dg subcategory formed by complexes which become compact in $\mathbf{K}^{\rm ac}(\Lambda\mbox{-Inj})$.

The following result enhances \cite[Corollary 5.4]{Kra} to the dg level.

\begin{prop}\label{prop:Krause}
There is a dg quasi-functor
$$\Phi\colon  \mathbf{S}_{\rm dg}(\Lambda)\longrightarrow C_{\rm dg}^{\rm ac}(\Lambda\mbox{-}{\rm Inj})^c,$$
such that
$$H^0(\Phi)\colon  \mathbf{D}_{\rm sg}(\Lambda)\longrightarrow \mathbf{K}^{\rm ac}(\Lambda\mbox{-}{\rm Inj})^c$$
 is a triangle equivalence up to direct summands.
\end{prop}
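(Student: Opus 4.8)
The plan is to upgrade Krause's classical triangle equivalence $\mathbf{D}_{\mathrm{sg}}(\Lambda)\xrightarrow{\sim}\mathbf{K}^{\mathrm{ac}}(\Lambda\mbox{-Inj})^c$ (up to direct summands) to the dg level, by realizing both sides as homotopy categories of explicit pretriangulated dg categories and by factoring the functor through a chain of dg quotients. First I would recall Krause's recollement: the assignment $X\mapsto \mathbf{i}X$ on bounded complexes, extended to a coproduct-preserving functor, identifies $\mathbf{D}^b(\Lambda\mbox{-mod})$ with a full subcategory of $\mathbf{K}(\Lambda\mbox{-Inj})$, sends $\mathbf{per}(\Lambda)$ into the localizing subcategory generated by acyclic totalizations of bounded projective complexes, and the isomorphism \eqref{iso:inj1} shows $\mathbf{i}X$ is compact whenever $X\in\mathbf{K}^b(\Lambda\mbox{-mod})$. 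At the dg level, the functor $X\mapsto \mathbf{i}X$ is not itself a dg functor, so I would instead choose, once and for all, a functorial injective resolution and view $\mathbf{D}^b_{\mathrm{dg}}(\Lambda\mbox{-mod})$ via its cofibrant replacement inside $\mathbf{Hodgcat}$, so that $X\mapsto\mathbf{i}X$ becomes a well-defined dg quasi-functor $\iota\colon \mathbf{D}^b_{\mathrm{dg}}(\Lambda\mbox{-mod})\to C_{\mathrm{dg}}(\Lambda\mbox{-Inj})$ into the full dg subcategory of h-injective (K-injective) complexes, with $H^0(\iota)$ the usual fully faithful embedding.

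Next I would identify the target. Inside $C_{\mathrm{dg}}(\Lambda\mbox{-Inj})$ consider the full dg subcategory $\mathcal{I}$ on the objects $\mathbf{i}X$ with $X\in C^b_{\mathrm{dg}}(\Lambda\mbox{-mod})$, and its full dg subcategory $\mathcal{P}$ on the $\mathbf{i}X$ with $X\in\mathbf{per}_{\mathrm{dg}}(\Lambda)$; both are pretriangulated because $\iota$ is quasi-fully faithful on these objects and $\mathbf{D}^b_{\mathrm{dg}}(\Lambda\mbox{-mod})$, $\mathbf{per}_{\mathrm{dg}}(\Lambda)$ are. The dg quotient $\mathcal{I}/\mathcal{P}$ is then pretriangulated by Lemma~\ref{lem:pre-qu}, and $\iota$ induces a dg quasi-functor $\bar\iota\colon \mathbf{S}_{\mathrm{dg}}(\Lambda)=\mathbf{D}^b_{\mathrm{dg}}(\Lambda\mbox{-mod})/\mathbf{per}_{\mathrm{dg}}(\Lambda)\to \mathcal{I}/\mathcal{P}$ which is a quasi-equivalence by Lemma~\ref{lem:quasi-equiv}, since on $H^0$ it is Krause's equivalence $\mathbf{D}_{\mathrm{sg}}(\Lambda)\xrightarrow{\sim}\mathbf{D}^b(\Lambda\mbox{-mod})/\mathbf{per}(\Lambda)$ composed with the fully faithful $\iota$. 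The remaining, and main, point is to compare $\mathcal{I}/\mathcal{P}$ with $C^{\mathrm{ac}}_{\mathrm{dg}}(\Lambda\mbox{-Inj})^c$. For this I would pass to $C_{\mathrm{dg}}(\Lambda\mbox{-Inj})$ and its full dg subcategory $\mathcal{E}$ of exact (acyclic) complexes of injectives: by \eqref{iso:inj} the object $\mathbf{i}\Lambda$ compactly generates $\mathbf{K}^{\mathrm{ac}}(\Lambda\mbox{-Inj})$ — this is exactly \cite[Corollary 5.4]{Kra} — while on the other side the image of $\mathcal{I}/\mathcal{P}$ under the dg localization functor lands in $\mathcal{E}$ and its objects become compact there (again by \eqref{iso:inj1}). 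One then has the dg-categorical statement that the natural dg quasi-functor $\mathcal{I}/\mathcal{P}\to C^{\mathrm{ac}}_{\mathrm{dg}}(\Lambda\mbox{-Inj})^c$ induces on $H^0$ precisely Krause's equivalence $\mathbf{D}_{\mathrm{sg}}(\Lambda)\to\mathbf{K}^{\mathrm{ac}}(\Lambda\mbox{-Inj})^c$ (up to summands); closing up under direct summands — which is harmless up to quasi-equivalence — one invokes Lemma~\ref{lem:quasi-equiv} once more. Composing $\bar\iota$ with this quasi-functor and with the inverse of the cofibrant replacement yields the desired $\Phi$, and $H^0(\Phi)$ is a triangle equivalence up to direct summands.

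The hard part will be the bookkeeping around the fact that $X\mapsto\mathbf{i}X$ is not strictly a dg functor: one must either work with a genuine functorial injective resolution (available since $\Lambda$ is left noetherian, using minimal injective copresentations, or any standard construction) to get strict dg functoriality, or else replace $\mathbf{D}^b_{\mathrm{dg}}(\Lambda\mbox{-mod})$ by a cofibrant model and produce $\iota$ as an honest dg functor out of it — both are routine but need care so that $H^0(\iota)$ is identified with the classical embedding. A secondary subtlety is the passage from "compact in the big category $\mathbf{K}^{\mathrm{ac}}(\Lambda\mbox{-Inj})$" to the dg subcategory $C^{\mathrm{ac}}_{\mathrm{dg}}(\Lambda\mbox{-Inj})^c$ together with the idempotent-completion caveat ("up to direct summands"), which is inherited verbatim from Krause and is why the statement is only an equivalence up to direct summands; I would simply record this rather than try to remove it. Everything else — pretriangulatedness of the quotients, compatibility of $H^0$ with dg quotients (Lemma~\ref{lem:pre-qu}), and the translation $H^0$-equivalence $\Leftrightarrow$ quasi-equivalence (Lemma~\ref{lem:quasi-equiv}) — is formal and already available in the excerpt.
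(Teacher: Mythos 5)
Your overall strategy---enhancing Krause's equivalence by combining dg quotients with Lemmas~\ref{lem:pre-qu} and \ref{lem:quasi-equiv}---is the right one, but the step carrying the actual content of the proposition is asserted rather than proved, and the justification you offer for it would fail. You need a dg quasi-functor from $\mathcal{I}/\mathcal{P}$ (in effect, from a dg model of $\mathbf{K}(\Lambda\mbox{-}{\rm Inj})/{\rm Loc}(\mathbf{i}\Lambda)$) to $C^{\rm ac}_{\rm dg}(\Lambda\mbox{-}{\rm Inj})^c$, and your reason is that the image of $\mathcal{I}/\mathcal{P}$ under the dg localization ``lands in $\mathcal{E}$''. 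It does not: the objects of a dg quotient are the same objects $\mathbf{i}X$, which are not acyclic, so nothing lands in the acyclic subcategory. At the triangulated level the identification $\mathbf{K}(\Lambda\mbox{-}{\rm Inj})/{\rm Loc}(\mathbf{i}\Lambda)\simeq \mathbf{K}^{\rm ac}(\Lambda\mbox{-}{\rm Inj})$ comes from admissibility (Lemma~\ref{lem:adm}): the equivalence with a canonical description is the functor \emph{from} $\mathbf{K}^{\rm ac}(\Lambda\mbox{-}{\rm Inj})$ \emph{into} the quotient, while the inverse direction is given by an adjoint (Bousfield-type (co)localization) which has no evident dg incarnation. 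So the correct dg-level move is the opposite of what you wrote: exhibit a canonical dg functor pointing from the acyclic side into a suitable dg quotient, prove it is a quasi-equivalence, and invert it in $\mathbf{Hodgcat}$, so that $\Phi$ is a roof. This is exactly how the paper proceeds: it works inside the ambient $C_{\rm dg}(\Lambda\mbox{-Mod})$, introduces the orthogonally defined dg subcategories $\mathcal{L}$ and $\mathcal{M}$, proves via compact generation and admissibility that the canonical functor $C^{\rm ac}_{\rm dg}(\Lambda\mbox{-}{\rm Inj})\rightarrow C_{\rm dg}(\Lambda\mbox{-Mod})/\mathcal{M}$ is a quasi-equivalence (Lemma~\ref{lem:4equiv}(2)), and separately that $C^{b}_{\rm dg}(\Lambda\mbox{-mod})/\mathcal{P}\rightarrow (C_{\rm dg}(\Lambda\mbox{-Mod})/\mathcal{M})^c$ is quasi-fully faithful and a triangle equivalence up to summands on $H^0$ (Lemma~\ref{lem:sg-inj}, together with Lemma~\ref{lem:noether}); Krause's Corollary~5.4 and the resolution $\mathbf{i}$ enter only at the $H^0$ level there. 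Until you supply analogues of these two statements, your middle step is essentially a restatement of the proposition.

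A secondary point: the first ``hard part'' you identify---rigidifying $X\mapsto \mathbf{i}X$ into a dg quasi-functor via functorial resolutions or cofibrant replacements---is avoidable, and the paper does avoid it, precisely because all of its dg functors are literal inclusions of complexes followed by dg quotient functors, so no choice of resolutions is ever made at the dg level; $\mathbf{i}$ is used only to verify the triangulated statement in Lemma~\ref{lem:sg-inj} (commutativity up to natural isomorphism, using (\ref{iso:inj1})). If you insist on your route, be aware that minimal injective (co)resolutions are \emph{not} functorial; over the field $\mathbb{k}$ one can use $M\mapsto {\rm Hom}_{\mathbb k}(\Lambda, M)$ and totalize to get an honest dg functor on bounded complexes, but this is extra work that the paper's formulation renders unnecessary. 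The ``up to direct summands'' caveat is inherited from Krause in both approaches, as you note.
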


The following immediate consequence will be useful.

\begin{cor}\label{cor:Krause}
Assume that the $\mathbb k$-algebra $\Lambda$ is finite dimensional. Then there is a zigzag of quasi-equivalences connecting $\mathbf{S}_{\rm dg}(\Lambda)$ to $C_{\rm dg}^{\rm ac}(\Lambda\mbox{-}{\rm Inj})^c$.
\end{cor}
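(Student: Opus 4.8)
The plan is to deduce Corollary~\ref{cor:Krause} from Proposition~\ref{prop:Krause} by upgrading the stated triangle equivalence ``up to direct summands'' to an honest quasi-equivalence after passing through one intermediate dg category. First I would recall the general principle, used repeatedly in Section~\ref{section2}, that a dg quasi-functor between pretriangulated dg categories which becomes a triangle equivalence on $H^0$ is a quasi-equivalence (Lemma~\ref{lem:quasi-equiv}, combined with Lemma~\ref{lem:pre-qu} so that $\mathbf{S}_{\rm dg}(\Lambda)$ and $C_{\rm dg}^{\rm ac}(\Lambda\mbox{-}{\rm Inj})^c$ are pretriangulated). The only gap between Proposition~\ref{prop:Krause} and the desired statement is the phrase ``up to direct summands'': $H^0(\Phi)$ is fully faithful with image a full triangulated subcategory whose idempotent completion is all of $\mathbf{K}^{\rm ac}(\Lambda\mbox{-}{\rm Inj})^c$.

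The key step is therefore to observe that this discrepancy disappears when $\Lambda$ is finite dimensional. Here I would invoke the finiteness: for a finite dimensional algebra, $\mathbf{D}_{\rm sg}(\Lambda) = H^0(\mathbf{S}_{\rm dg}(\Lambda))$ is idempotent complete (being, up to the equivalence $\mathbf{D}_{\rm sg}(\Lambda)\simeq \underline{\mathrm{mod}}\text{-}\Lambda$ when $\Lambda$ is Gorenstein, or more generally by the Krull--Schmidt property of $\mathbf{D}^b(\Lambda\mbox{-mod})$ and the fact that a quotient of a Krull--Schmidt category by a thick subcategory is again Krull--Schmidt, hence idempotent complete). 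Since $H^0(\Phi)$ is fully faithful and its source is idempotent complete, its essential image is already a thick subcategory equal to its own idempotent completion; as that idempotent completion is $\mathbf{K}^{\rm ac}(\Lambda\mbox{-}{\rm Inj})^c$ up to summands, and the latter is itself idempotent complete (being the compact objects of a triangulated category with coproducts, by a theorem of Neeman/Bökstedt--Neeman), we conclude $H^0(\Phi)$ is in fact dense, hence a triangle equivalence. Then Lemma~\ref{lem:quasi-equiv} upgrades the dg quasi-functor $\Phi$ to a quasi-equivalence, and since any dg quasi-functor is realized by a roof of honest dg functors through a cofibrant replacement, we obtain the asserted zigzag of quasi-equivalences.

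Alternatively, and perhaps more cleanly, I would avoid idempotent-completeness subtleties by working with the idempotent completions throughout: Proposition~\ref{prop:Krause} gives a quasi-equivalence onto a dg category whose $H^0$ is a full subcategory of $\mathbf{K}^{\rm ac}(\Lambda\mbox{-}{\rm Inj})^c$ dense up to summands, and one passes to Karoubi envelopes of dg categories (which commute with $H^0$). But the finiteness hypothesis is exactly what lets us dispense with this, so the first route is preferable for the paper.

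The main obstacle I anticipate is purely the bookkeeping around ``up to direct summands'': one must be careful that idempotent completeness of $\mathbf{D}_{\rm sg}(\Lambda)$ for finite dimensional $\Lambda$ is correctly justified (this is standard — e.g.\ $\mathbf{D}^b(\Lambda\mbox{-mod})$ is Krull--Schmidt, hence Karoubian, and Verdier quotients by thick subcategories of Karoubian triangulated categories with the relevant finiteness are again Karoubian) and that this genuinely forces $H^0(\Phi)$ to be essentially surjective rather than merely dense-up-to-summands. Everything else — pretriangulatedness, the transition from triangle equivalence to quasi-equivalence, and the realization of a dg quasi-functor as a zigzag of dg functors — is already packaged in Lemmas~\ref{lem:quasi-equiv}, \ref{lem:pre-qu} and the discussion of $\mathbf{Hodgcat}$ in Section~\ref{section2}.
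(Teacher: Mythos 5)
Your overall skeleton is the same as the paper's: starting from Proposition~\ref{prop:Krause}, use the finite dimensionality of $\Lambda$ to show that $\mathbf{D}_{\rm sg}(\Lambda)$ has split idempotents, deduce that the fully faithful functor $H^0(\Phi)$ — whose essential image is then closed under direct summands and is dense up to summands — is an honest triangle equivalence, and then invoke Lemma~\ref{lem:quasi-equiv} to upgrade the dg functors in the construction of $\Phi$ to quasi-equivalences, giving the zigzag. This is exactly what the paper does, except that for the split idempotents it simply quotes \cite[Corollary~2.4]{Chen11}.

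The genuine gap is in your justification of that key input. The claim that a Verdier quotient of a Krull--Schmidt triangulated category by a thick subcategory is again Krull--Schmidt (hence idempotent complete) is false; the failure of idempotent completeness under Verdier quotients is precisely why idempotent completions of singularity categories are studied at all. In fact the paper's own setting gives a counterexample to the Krull--Schmidt claim: take $\Lambda=\mathbb{k}Q/J^2$ with $Q$ the quiver with one vertex and two loops $\alpha,\beta$. Then $\mathbf{D}^b(\Lambda\mbox{-mod})$ is Krull--Schmidt, but in $\mathbf{D}_{\rm sg}(\Lambda)$ the endomorphism ring of the simple module is the degree-zero part of the Leavitt path algebra $L(Q)$ (this underlies Proposition~\ref{prop:CY-Li}), so the orthogonal Cuntz--Krieger idempotents $\alpha^*\alpha$ and $\beta^*\beta$ split the simple into two nonzero summands, each of which splits again in the same way, indefinitely; hence the simple admits no finite decomposition into indecomposables and $\mathbf{D}_{\rm sg}(\Lambda)$ is not Krull--Schmidt. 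It is nevertheless idempotent complete, but this is a nontrivial theorem of Chen \cite[Corollary~2.4]{Chen11}, not a formal consequence of properties of $\mathbf{D}^b(\Lambda\mbox{-mod})$, and it is exactly the citation the paper's proof rests on. Your alternative remark via $\mathbf{D}_{\rm sg}(\Lambda)\simeq \underline{\mathrm{mod}}\,\Lambda$ only covers self-injective algebras (for Gorenstein algebras one gets the stable category of Gorenstein projective modules), so it does not handle general finite dimensional $\Lambda$ either. Once the claimed general fact is replaced by the citation of \cite{Chen11}, the rest of your argument is correct and coincides with the paper's proof.
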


\begin{proof}
By \cite[Corollary 2.4]{Chen11}, the singularity category $\mathbf{D}_{\rm sg}(\Lambda)$ has split idempotents. It follows that $H^0(\Phi)$ is actually  a triangle equivalence. In view of Lemma~\ref{lem:quasi-equiv},  the required result follows immediately.
\end{proof}

Let $\mathcal{T}$ be a triangulated category. For a triangulated subcategory $\mathcal{N}$, we have the right orthogonal subcategory $\mathcal{N}^\perp=\{X\in \mathcal{T}\mid {\rm Hom}_\mathcal{T}(N, X)=0 \mbox{ for all } N\in \mathcal{N}\}$ and the left orthogonal subcategory  $^\perp \mathcal{N}=\{Y\in \mathcal{T}\mid {\rm Hom}_\mathcal{T}(Y, N)=0 \mbox{ for all } N\in \mathcal{N}\}$. The subcategory $\mathcal{N}$ is right admissible (\emph{resp}. left admissible) provided that the inclusion $\mathcal{N}\hookrightarrow \mathcal{T}$ has a right adjoint (\emph{resp}. left adjoint); see \cite{Bon}.

The following lemma is well  known; see \cite[Lemma 3.1]{Bon}.

\begin{lem}\label{lem:adm}
Let $\mathcal{N}\subseteq \mathcal{T}$ be left admissible. Then the natural functor $\mathcal{N}\rightarrow \mathcal{T}/{^\perp\mathcal{N}}$ is an equivalence.  Moreover, the left orthogonal subcategory $^\perp\mathcal{N}$ is right admissible satisfying $\mathcal{N}=(^\perp\mathcal{N})^\perp$. \hfill $\square$
\end{lem}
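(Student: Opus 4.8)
\textbf{Plan of proof for Lemma~\ref{lem:adm}.}
The statement is a standard fact about admissible subcategories, so the plan is to give a direct construction-based argument rather than invoking anything deep. Let $\mathcal{N}\subseteq\mathcal{T}$ be left admissible, so the inclusion $\iota\colon\mathcal{N}\hookrightarrow\mathcal{T}$ has a left adjoint $\ell\colon\mathcal{T}\to\mathcal{N}$. First I would record the standard semiorthogonal decomposition: for each $X\in\mathcal{T}$ the unit $X\to\iota\ell X$ fits into a triangle $Y\to X\to\iota\ell X\to\Sigma Y$, and a quick adjunction computation shows $Y\in{}^\perp\mathcal{N}$ (apply $\Hom_\mathcal{T}(-,\iota N)$ to the triangle and use that $\Hom_\mathcal{T}(X,\iota N)\cong\Hom_\mathcal{N}(\ell X,N)\cong\Hom_\mathcal{T}(\iota\ell X,\iota N)$ is an isomorphism, whence $\Hom_\mathcal{T}(\Sigma^{\pm1}Y,\iota N)=0$). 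This realizes every object as an extension of something in $\mathcal{N}$ by something in ${}^\perp\mathcal{N}$, with $\Hom_\mathcal{T}({}^\perp\mathcal{N},\mathcal{N})=0$ by definition.

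\textbf{Step 1: the quotient functor is an equivalence.} Consider the composite $F\colon\mathcal{N}\hookrightarrow\mathcal{T}\xrightarrow{\pi}\mathcal{T}/{}^\perp\mathcal{N}$. For full faithfulness I would use the calculus of fractions description of $\Hom_{\mathcal{T}/{}^\perp\mathcal{N}}$: a morphism $N\to N'$ in the quotient is a roof $N\xleftarrow{s}Z\to N'$ with $\mathrm{cone}(s)\in{}^\perp\mathcal{N}$; the triangle on $s$ together with $\Hom_\mathcal{T}({}^\perp\mathcal{N},\mathcal{N})=0$ forces $s_*\colon\Hom_\mathcal{T}(Z,N')\to\Hom_\mathcal{T}(N,N')$ — wait, rather $s^*$ — to be bijective, so the roof is equivalent to a genuine morphism and $F$ is full and faithful. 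For essential surjectivity, given any $X\in\mathcal{T}$, the triangle $Y\to X\to\iota\ell X\to\Sigma Y$ from the preliminary step has $Y\in{}^\perp\mathcal{N}$, so $\pi(X)\cong\pi(\iota\ell X)=F(\ell X)$. Hence $F$ is an equivalence.

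\textbf{Step 2: right admissibility of ${}^\perp\mathcal{N}$ and the biorthogonality identity.} The same decomposition triangle $Y\to X\to\iota\ell X$ exhibits $Y$, via $X\mapsto Y$, as the value of a right adjoint to the inclusion ${}^\perp\mathcal{N}\hookrightarrow\mathcal{T}$: one checks functoriality and the adjunction isomorphism $\Hom_\mathcal{T}(Z,Y)\cong\Hom_\mathcal{T}(Z,X)$ for $Z\in{}^\perp\mathcal{N}$ by applying $\Hom_\mathcal{T}(Z,-)$ to the triangle and using $\Hom_\mathcal{T}(Z,\Sigma^{\pm1}\iota\ell X)=0$. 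Thus ${}^\perp\mathcal{N}$ is right admissible. Finally $\mathcal{N}\subseteq({}^\perp\mathcal{N})^\perp$ is immediate; conversely if $W\in({}^\perp\mathcal{N})^\perp$, the decomposition triangle $Y\to W\to\iota\ell W$ has $Y\in{}^\perp\mathcal{N}$ and $\Hom_\mathcal{T}(Y,W)=\Hom_\mathcal{T}(Y,\Sigma^{-1}\iota\ell W)=0$ (the latter since $Y\in{}^\perp\mathcal{N}$), so $Y\to W$ is zero and $W$ is a direct summand of $\iota\ell W\in\mathcal{N}$; as $\mathcal{N}$ is a triangulated subcategory it is closed under summands (or one argues $W\cong\iota\ell W$ directly since the connecting map $\iota\ell W\to\Sigma Y$ is then also zero), giving $W\in\mathcal{N}$.

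\textbf{Main obstacle.} There is no serious obstacle here; the one point requiring care is the calculus-of-fractions computation in Step~1, where one must verify that ${}^\perp\mathcal{N}$ is a \emph{thick} (or at least saturated enough) subcategory so that the Verdier quotient admits a calculus of left fractions and the roof computation is legitimate — this follows because ${}^\perp\mathcal{N}$ is right admissible (hence the inclusion has an adjoint, making it thick). Since this is a textbook result I would in practice simply cite \cite[Lemma~3.1]{Bon} and omit the above, but the sketch records why it is true.
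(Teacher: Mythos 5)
The paper offers no argument of its own here (it simply cites \cite[Lemma 3.1]{Bon}), and your preliminary step, Step 1, and the right-admissibility half of Step 2 are the standard semiorthogonal-decomposition argument and are correct (the slip $s_*$ vs.\ $s^*$ is cosmetic). The genuine gap is in the final claim $({}^\perp\mathcal{N})^\perp\subseteq\mathcal{N}$, where both justifications you offer fail. First, a full triangulated subcategory need \emph{not} be closed under direct summands; worse, since any right orthogonal such as $({}^\perp\mathcal{N})^\perp$ \emph{is} automatically summand-closed, thickness of $\mathcal{N}$ is a consequence of the very identity $\mathcal{N}=({}^\perp\mathcal{N})^\perp$ you are proving, so invoking it is circular unless you establish it independently. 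Second, in the triangle $Y\xrightarrow{0}W\xrightarrow{\eta_W}\iota\ell W\xrightarrow{h}\Sigma Y$ the vanishing of the first map only makes $h$ a split epimorphism (the triangle is isomorphic to the split one), not zero; and no orthogonality forces $h=0$, since $h$ is a map \emph{from} an object of $\mathcal{N}$ \emph{to} an object of ${}^\perp\mathcal{N}$, and $\Hom_\mathcal{T}(\mathcal{N},{}^\perp\mathcal{N})$ need not vanish.

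The conclusion is still within reach by a short extra argument, so the repair is local. Given $W\in({}^\perp\mathcal{N})^\perp$ with decomposition triangle $Y\to W\to\iota\ell W\to\Sigma Y$, $Y\in{}^\perp\mathcal{N}$, apply $\Hom_\mathcal{T}(Y,-)$ to the rotated triangle $\Sigma^{-1}\iota\ell W\to Y\to W$: since $\Hom_\mathcal{T}(Y,W)=0$ (as $Y\in{}^\perp\mathcal{N}$ and $W\in({}^\perp\mathcal{N})^\perp$), the identity $\mathrm{id}_Y$ factors through $\Sigma^{-1}\iota\ell W\in\mathcal{N}$; but $\Hom_\mathcal{T}(Y,\Sigma^{-1}\iota\ell W)=0$ as well, so $\mathrm{id}_Y=0$, i.e.\ $Y\cong 0$ and the unit $W\to\iota\ell W$ is an isomorphism, whence $W\in\mathcal{N}$ with no thickness assumption. (Equivalently: $\Sigma Y$ is a direct summand of $\iota\ell W$, hence lies in $({}^\perp\mathcal{N})^\perp\cap{}^\perp\mathcal{N}$, which forces $\Sigma Y=0$.) Finally, the worry in your "main obstacle" paragraph is unnecessary and its justification again circular: the Verdier quotient and its calculus of fractions exist for any full triangulated subcategory, and ${}^\perp\mathcal{N}$ is closed under direct summands simply because it is a left orthogonal, not because it is right admissible.
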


Denote by $\mathcal{L}$ the full dg subcategory of $C_{\rm dg}(\Lambda\mbox{-Mod})$ consisting of those complexes $X$ such that ${\rm Hom}_\Lambda(X, I)$ is acyclic for each $I\in C_{\rm dg}(\Lambda\mbox{-Inj})$. Similarly, denote by $\mathcal{M}$ the full dg subcategory formed by $Y$ satisfying that ${\rm Hom}_\Lambda(Y, J)$ is acyclic for each $J\in C_{\rm dg}^{\rm ac}(\Lambda\mbox{-Inj})$.

\begin{lem}\label{lem:4equiv}
The following canonical functors are all equivalences
\begin{enumerate}
\item[(1)] $\mathbf{K}(\Lambda\mbox{-}{\rm Inj}) \stackrel{\sim}\longrightarrow \mathbf{K}(\Lambda\mbox{-}{\rm Mod})/{H^0(\mathcal{L})}$;

  \item[(2)] $\mathbf{K}^{\rm ac}(\Lambda\mbox{-}{\rm Inj}) \stackrel{\sim}\longrightarrow \mathbf{K}(\Lambda\mbox{-}{\rm Mod})/{H^0(\mathcal{M})}$;

      \item[(3)] $\mathbf{K}^{\rm ac}(\Lambda\mbox{-}{\rm Inj}) \stackrel{\sim}\longrightarrow \mathbf{K}(\Lambda\mbox{-}{\rm Inj})/{{\rm Loc}(\mathbf{i}\Lambda)}$;

          \item[(4)] $  \mathbf{K}(\Lambda\mbox{-}{\rm Inj})/{{\rm Loc}(\mathbf{i}\Lambda)} \stackrel{\sim}\longrightarrow  \mathbf{K}(\Lambda\mbox{-}{\rm Mod})/{H^0(\mathcal{M})}$,
\end{enumerate}
which send any complex $I$ to itself, viewed as an object in the target categories.
\end{lem}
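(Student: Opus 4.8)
\textbf{Proof plan for Lemma~\ref{lem:4equiv}.}
The plan is to deduce all four equivalences from Krause's recollement \cite{Kra} together with Lemma~\ref{lem:adm}, realised internally inside $\mathbf{K}(\Lambda\mbox{-Mod})$ so that the identifications of objects are literally the identity. First I would identify $H^0(\mathcal{L})$ and $H^0(\mathcal{M})$ concretely: a complex $X$ lies in $H^0(\mathcal{L})$ exactly when ${\rm Hom}_{\mathbf{K}(\Lambda\text{-}{\rm Mod})}(X,I)=0$ for all $I\in\mathbf{K}(\Lambda\text{-Inj})$, i.e. $H^0(\mathcal{L})={}^\perp\mathbf{K}(\Lambda\text{-Inj})$ computed inside $\mathbf{K}(\Lambda\text{-Mod})$; this is precisely the class of complexes that Krause calls $\mathbf{K}_{\rm ac}$-like, in fact $H^0(\mathcal{L})$ is the category of \emph{acyclic} complexes, since over a noetherian ring a complex is orthogonal to all complexes of injectives on the left iff it is acyclic (one direction is injective resolutions, the other is that acyclic complexes map to zero into K-injective, and bounded-below complexes of injectives are K-injective — here one uses the noetherian hypothesis to get enough injectives and the standard K-injectivity argument). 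Similarly $H^0(\mathcal{M})={}^\perp\mathbf{K}^{\rm ac}(\Lambda\text{-Inj})$ inside $\mathbf{K}(\Lambda\text{-Mod})$.

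Next I would invoke the key input from \cite{Kra}: the inclusion $\mathbf{K}(\Lambda\text{-Inj})\hookrightarrow\mathbf{K}(\Lambda\text{-Mod})$ has a left adjoint (the ``injective resolution'' functor, well-defined because $\Lambda\text{-Mod}$ has enough injectives and products), so $\mathbf{K}(\Lambda\text{-Inj})$ is a left admissible subcategory of $\mathbf{K}(\Lambda\text{-Mod})$; and the inclusion $\mathbf{K}^{\rm ac}(\Lambda\text{-Inj})\hookrightarrow\mathbf{K}(\Lambda\text{-Mod})$ also has a left adjoint (this is \cite[Proposition~3.4 / Corollary~4.3]{Kra} in effect), so $\mathbf{K}^{\rm ac}(\Lambda\text{-Inj})$ is left admissible as well. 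Applying Lemma~\ref{lem:adm} to $\mathcal{N}=\mathbf{K}(\Lambda\text{-Inj})$ gives (1): the natural functor $\mathbf{K}(\Lambda\text{-Inj})\to\mathbf{K}(\Lambda\text{-Mod})/{}^\perp\mathbf{K}(\Lambda\text{-Inj})=\mathbf{K}(\Lambda\text{-Mod})/H^0(\mathcal{L})$ is an equivalence. Applying Lemma~\ref{lem:adm} to $\mathcal{N}=\mathbf{K}^{\rm ac}(\Lambda\text{-Inj})$ gives (2). For (3), I would use that $\mathbf{K}^{\rm ac}(\Lambda\text{-Inj})$ is, inside $\mathbf{K}(\Lambda\text{-Inj})$, the right orthogonal of ${\rm Loc}(\mathbf{i}\Lambda)$ — indeed by \eqref{iso:inj} we have ${\rm Hom}_{\mathbf{K}(\Lambda\text{-Inj})}(\mathbf{i}\Lambda,I)\cong H^0(I)$, so $I\in{\rm Loc}(\mathbf{i}\Lambda)^\perp$ iff all shifts of $I$ have vanishing cohomology iff $I$ is acyclic; since ${\rm Loc}(\mathbf{i}\Lambda)$ is generated by a compact object of $\mathbf{K}(\Lambda\text{-Inj})$, it is both admissible, and the quotient $\mathbf{K}(\Lambda\text{-Inj})/{\rm Loc}(\mathbf{i}\Lambda)$ is equivalent to ${\rm Loc}(\mathbf{i}\Lambda)^\perp=\mathbf{K}^{\rm ac}(\Lambda\text{-Inj})$ via the right adjoint (Bousfield localisation), which on objects is the identity. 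Finally (4) is the composite of the (quasi-)inverse of (3) with (2), or directly: combining (1) with the identification ${}^\perp\mathbf{K}(\Lambda\text{-Inj})\subseteq{}^\perp\mathbf{K}^{\rm ac}(\Lambda\text{-Inj})$ and Lemma~\ref{lem:noether}-type transitivity of Verdier quotients, $\mathbf{K}(\Lambda\text{-Inj})/{\rm Loc}(\mathbf{i}\Lambda)\xrightarrow{\sim}\bigl(\mathbf{K}(\Lambda\text{-Mod})/H^0(\mathcal{L})\bigr)/\bigl(H^0(\mathcal{M})/H^0(\mathcal{L})\bigr)\xrightarrow{\sim}\mathbf{K}(\Lambda\text{-Mod})/H^0(\mathcal{M})$. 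In every case the functor is constructed so as to fix objects, which is the last thing to check and is immediate from the explicit form of the adjoints and quotient functors.

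The main obstacle I anticipate is not any single equivalence — each follows from a cited result of Krause plus Lemma~\ref{lem:adm} — but rather the bookkeeping needed to make all four functors simultaneously be the literal identity on objects, and in particular identifying $H^0(\mathcal{L})$ with the acyclic complexes and $H^0(\mathcal{M})$ with ${}^\perp\mathbf{K}^{\rm ac}(\Lambda\text{-Inj})$ cleanly. The subtle point in the $\mathcal{L}$ computation is that one must know that bounded-below complexes of injectives are K-injective (so acyclic complexes are left-orthogonal to $\mathbf{K}(\Lambda\text{-Inj})$), and conversely that any complex left-orthogonal to all of $\mathbf{K}(\Lambda\text{-Inj})$ is acyclic (map it to shifts of its own injective resolution); both use that $\Lambda$ is left noetherian only insofar as $\Lambda\text{-Mod}$ is a Grothendieck category with enough injectives, so this hypothesis is harmless. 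The $\mathcal{M}$ computation is formally identical once one knows $\mathbf{K}^{\rm ac}(\Lambda\text{-Inj})$ is left admissible, which is exactly Krause's compact-generation result. Thus the proof is essentially an assembly of known pieces, and I would write it as: (i) identify the two orthogonal subcategories, (ii) quote left admissibility from \cite{Kra}, (iii) apply Lemma~\ref{lem:adm} for (1) and (2), (iv) use \eqref{iso:inj} and Bousfield localisation for (3), (v) compose for (4), checking object-fixing throughout.
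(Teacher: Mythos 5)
Your overall route coincides with the paper's proof: by the very definition of $\mathcal{L}$ and $\mathcal{M}$ one has $H^0(\mathcal{L})={}^\perp\mathbf{K}(\Lambda\mbox{-}{\rm Inj})$ and $H^0(\mathcal{M})={}^\perp\mathbf{K}^{\rm ac}(\Lambda\mbox{-}{\rm Inj})$ inside $\mathbf{K}(\Lambda\mbox{-}{\rm Mod})$; left admissibility of these two subcategories is quoted from Krause (compact generation, closure under products, Brown representability and its dual); Lemma~\ref{lem:adm} then yields (1) and (2); for (3) one uses (\ref{iso:inj}) to get $\mathbf{K}^{\rm ac}(\Lambda\mbox{-}{\rm Inj})={\rm Loc}(\mathbf{i}\Lambda)^\perp$, right admissibility of ${\rm Loc}(\mathbf{i}\Lambda)$ via the compact generator $\mathbf{i}\Lambda$, and the dual of Lemma~\ref{lem:adm}; and (4) follows by composing (2) and (3), after observing ${\rm Loc}(\mathbf{i}\Lambda)\subseteq H^0(\mathcal{M})$ so that the canonical functor is defined. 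This is exactly the paper's argument, including the object-fixing bookkeeping.

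There is, however, one genuinely false step in your write-up: the claim that $H^0(\mathcal{L})$ is the class of acyclic complexes. Your sketch only shows that acyclic complexes are left orthogonal to bounded-below complexes of injectives (which are K-injective), not to arbitrary complexes of injectives. Concretely, for $\Lambda=\mathbb{k}[x]/(x^2)$ the unbounded complex $\cdots\to\Lambda\xrightarrow{x}\Lambda\xrightarrow{x}\Lambda\to\cdots$ is acyclic and consists of injective modules, yet its identity is not null-homotopic, so ${\rm Hom}_{\mathbf{K}(\Lambda\mbox{-}{\rm Mod})}(X,X)\neq 0$ and $X\notin{}^\perp\mathbf{K}(\Lambda\mbox{-}{\rm Inj})$. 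Indeed, were $H^0(\mathcal{L})$ the acyclics, statement (1) would assert $\mathbf{K}(\Lambda\mbox{-}{\rm Inj})\simeq\mathbf{D}(\Lambda\mbox{-}{\rm Mod})$, which fails in general and would defeat the purpose of Krause's stable derived category and of this lemma. Fortunately the misidentification is never used later: only the (definitional) equality $H^0(\mathcal{L})={}^\perp\mathbf{K}(\Lambda\mbox{-}{\rm Inj})$ feeds into the admissibility argument, so the rest of your proof survives; you should simply delete the claim. A smaller imprecision in the same spirit: the left adjoint of the inclusion $\mathbf{K}(\Lambda\mbox{-}{\rm Inj})\hookrightarrow\mathbf{K}(\Lambda\mbox{-}{\rm Mod})$ is not literally the injective-resolution functor on unbounded complexes (that description is only valid for bounded complexes, cf.\ (\ref{iso:inj1})); its existence should be quoted, as the paper does, from compact generation together with closure under products and (the dual of) Brown representability.
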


\begin{proof}
 The Brown representability theorem and its dual version yield the following useful fact: for a triangulated category $\mathcal{T}$ with arbitrary coproducts and a localizing subcategory $\mathcal{N}$ which is compactly generated, then the subcategory $\mathcal{N}$ is right admissible; if furthermore $\mathcal{N}$ is closed under products, then $\mathcal{N}$ is also left admissible; see \cite[Proposition 3.3]{Kra}.

Recall from \cite[Proposition 2.3 and Corollary 5.4]{Kra} that both $\mathbf{K}(\Lambda\mbox{-}{\rm Inj})$ and $\mathbf{K}^{\rm ac}(\Lambda\mbox{-}{\rm Inj})$ are compactly generated, which are both closed under coproducts and products in $\mathbf{K}(\Lambda\mbox{-}{\rm Mod})$. Moreover, we observe that $^\perp \mathbf{K}(\Lambda\mbox{-}{\rm Inj})=H^0(\mathcal{L})$ and $^\perp\mathbf{K}^{\rm ac}(\Lambda\mbox{-}{\rm Inj})=H^0(\mathcal{M})$, where the orthogonal is taken in $\mathbf{K}(\Lambda\mbox{-}{\rm Mod})$. Then the above fact and Lemma \ref{lem:adm} yield (1) and (2).

By the isomorphism (\ref{iso:inj}), we infer that $\mathbf{K}^{\rm ac}(\Lambda\mbox{-}{\rm Inj})={\rm Loc}(\mathbf{i}\Lambda)^\perp$, where the orthogonal is taken in $\mathbf{K}(\Lambda\mbox{-}{\rm Inj})$. Since $\mathbf{i}\Lambda$ is compact in $\mathbf{K}(\Lambda\mbox{-}{\rm Inj})$, the subcategory ${\rm Loc}(\mathbf{i}\Lambda)$ is right admissible. It follows from the dual version of Lemma \ref{lem:adm} that $\mathbf{K}^{\rm ac}(\Lambda\mbox{-}{\rm Inj})\subseteq \mathbf{K}(\Lambda\mbox{-}{\rm Inj})$ is left admissible satisfying $^\perp\mathbf{K}^{\rm ac}(\Lambda\mbox{-}{\rm Inj})={\rm Loc}(\mathbf{i}\Lambda)$. Then (3) follows from  Lemma \ref{lem:adm}.

The functor in (4) is well defined, since ${\rm Loc}(\mathbf{i}\Lambda)\subseteq H^0(\mathcal{M})$. Then (4) follows by combining (2) and (3).
\end{proof}

Denote by $\mathcal{P}$ the full dg subcategory of $C_{\rm dg}^b(\Lambda\mbox{-mod})$ formed by those complexes which are  isomorphic to  bounded complexes of projective $\Lambda$-modules in $\mathbf{D}^b(\Lambda\mbox{-mod})$. Therefore, we might identify the singularity category $\mathbf{D}_{\rm sg}(\Lambda)$ with $\mathbf{K}^b(\Lambda\mbox{-mod})/{H^0(\mathcal{P})}$.

\begin{lem}\label{lem:sg-inj}
The canonical functor $\mathbf{K}^b(\Lambda\mbox{-}{\rm mod})/{H^0(\mathcal{P})}\rightarrow  \mathbf{K}(\Lambda\mbox{-}{\rm Mod})/{H^0(\mathcal{M})}$ is fully faithful, which induces a triangle equivalence up to direct summands
$$\mathbf{K}^b(\Lambda\mbox{-}{\rm mod})/{H^0(\mathcal{P})}\stackrel{\sim}\longrightarrow (\mathbf{K}(\Lambda\mbox{-}{\rm Mod})/{H^0(\mathcal{M})})^c.$$
\end{lem}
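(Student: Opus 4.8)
The plan is to identify the domain $\mathbf{K}^b(\Lambda\mbox{-mod})/H^0(\mathcal{P})$ with the compact objects of $\mathbf{K}(\Lambda\mbox{-Mod})/H^0(\mathcal{M})$, by comparing both sides with $\mathbf{K}^{\rm ac}(\Lambda\mbox{-Inj})$ via Lemma~\ref{lem:4equiv}. First I would observe that the canonical functor in question is induced by the inclusion $\mathbf{K}^b(\Lambda\mbox{-mod})\hookrightarrow \mathbf{K}(\Lambda\mbox{-Mod})$, which carries $H^0(\mathcal{P})$ into $H^0(\mathcal{M})$; indeed a bounded complex of projectives $P$ satisfies $\mathrm{Hom}_\Lambda(P,J)$ acyclic for every acyclic $J$ (projectives are acyclic-Hom-testing objects on the left, dually to injectives on the right), and an object isomorphic in $\mathbf{D}^b(\Lambda\mbox{-mod})$ to such a $P$ differs from it by a bounded acyclic complex, which also lies in $H^0(\mathcal{M})$. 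So the functor is well defined.

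Next I would prove full faithfulness. Using equivalence (2) of Lemma~\ref{lem:4equiv}, the target $\mathbf{K}(\Lambda\mbox{-Mod})/H^0(\mathcal{M})$ is identified with $\mathbf{K}^{\rm ac}(\Lambda\mbox{-Inj})$, the identification sending a complex to itself. Under this, a bounded complex $X\in\mathbf{K}^b(\Lambda\mbox{-mod})$ is sent to its "complete injective resolution" — more precisely, since $\mathbf{K}^{\rm ac}(\Lambda\mbox{-Inj})\subseteq\mathbf{K}(\Lambda\mbox{-Mod})$ is left admissible with $^\perp\mathbf{K}^{\rm ac}(\Lambda\mbox{-Inj})=H^0(\mathcal{M})$, the composite $\mathbf{K}^b(\Lambda\mbox{-mod})\to\mathbf{K}(\Lambda\mbox{-Mod})\to\mathbf{K}(\Lambda\mbox{-Mod})/H^0(\mathcal{M})\simeq\mathbf{K}^{\rm ac}(\Lambda\mbox{-Inj})$ is the left-adjoint (reflection) composed with inclusion, so it kills precisely $H^0(\mathcal{P})$ among objects of $\mathbf{K}^b(\Lambda\mbox{-mod})$ (one checks $H^0(\mathcal{P})=\mathbf{K}^b(\Lambda\mbox{-mod})\cap H^0(\mathcal{M})$ using that a complex with bounded projective resolution has acyclic total injective complex after reflection, and conversely), and induces a fully faithful functor on the Verdier quotient. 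Concretely, for $X,Y\in\mathbf{K}^b(\Lambda\mbox{-mod})$ one computes
$$\mathrm{Hom}_{\mathbf{K}(\Lambda\mbox{-Mod})/H^0(\mathcal{M})}(X,Y)\simeq \mathrm{Hom}_{\mathbf{D}_{\rm sg}(\Lambda)}(X,Y)$$
by a calculus-of-fractions / stabilization argument paralleling the classical description of $\mathbf{D}_{\rm sg}(\Lambda)$ as stable Hom, identifying both with $\varinjlim_n \mathrm{Hom}_{\mathbf{D}^b}(X,\Sigma^n Y[\,\cdot\,])$-type colimits; the key input is that morphisms in $\mathbf{K}^{\rm ac}(\Lambda\mbox{-Inj})$ between the reflections of bounded complexes reduce, via the compactness of $\mathbf{i}\Lambda$ and the isomorphisms (\ref{iso:inj1}) and (\ref{iso:inj}), to morphisms in $\mathbf{D}_{\rm sg}(\Lambda)$.

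For the final claim — that the image is exactly $(\mathbf{K}(\Lambda\mbox{-Mod})/H^0(\mathcal{M}))^c$ up to summands — I would transport everything to $\mathbf{K}^{\rm ac}(\Lambda\mbox{-Inj})$ and invoke that this category is compactly generated with $\mathbf{i}\Lambda$ inducing a compact generator of $\mathbf{K}^{\rm ac}(\Lambda\mbox{-Inj})$ in the sense of \cite[Corollary~5.3]{Kra}: its compact objects are, up to direct summands, the thick subcategory generated by the reflection of $\mathbf{i}\Lambda$, equivalently the images of $\mathbf{K}^b(\Lambda\mbox{-mod})$. This is essentially the content of \cite[Corollary~5.4]{Kra}, and since our functor has been shown fully faithful with image inside the compacts, and the compacts are generated as a thick subcategory by objects in that image, density up to summands follows. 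I expect the main obstacle to be the bookkeeping in identifying the Hom-spaces of the Verdier quotient $\mathbf{K}(\Lambda\mbox{-Mod})/H^0(\mathcal{M})$ between bounded complexes with singularity-category Hom-spaces — i.e.\ verifying that the left-admissibility reflection, restricted to $\mathbf{K}^b(\Lambda\mbox{-mod})$, really does compute stable Hom — since one must control the unbounded acyclic injective complexes appearing in the reflection and check no extra morphisms are introduced or lost; the compactness of $\mathbf{i}\Lambda$ together with (\ref{iso:inj1}) is the lever that makes this tractable.
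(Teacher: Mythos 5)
Your overall strategy is the paper's: compare both sides with $\mathbf{K}^{\rm ac}(\Lambda\mbox{-Inj})$ via Lemma~\ref{lem:4equiv} and let Krause's results carry the weight. But as written there are two gaps at the decisive step. First, the inference ``it kills precisely $H^0(\mathcal{P})$ \dots\ and induces a fully faithful functor on the Verdier quotient'' is not valid: a triangle functor whose kernel is $\mathcal{N}$ induces a functor on $\mathcal{T}/\mathcal{N}$, but that induced functor need not be fully faithful; full faithfulness here \emph{is} the content of \cite[Corollary~5.4]{Kra}, and your substitute for it --- the ``calculus-of-fractions / stabilization'' identification of Hom-spaces with singularity-category Homs --- is exactly the hard part and is only gestured at (you concede as much in your last sentence). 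Even the kernel identification $H^0(\mathcal{P})=\mathbf{K}^b(\Lambda\mbox{-mod})\cap H^0(\mathcal{M})$ is nontrivial in the direction ``$\in H^0(\mathcal{M})\Rightarrow$ perfect'' without already having Krause's theorem in hand.

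Second, and this is the fixable missing idea: Krause's Corollary~5.4 concerns the injective-resolution (stabilization) functor $X\mapsto\mathbf{i}X$, landing in $\mathbf{K}(\Lambda\mbox{-Inj})/{\rm Loc}(\mathbf{i}\Lambda)\simeq\mathbf{K}^{\rm ac}(\Lambda\mbox{-Inj})$, whereas the functor in the statement sends $X$ to itself in $\mathbf{K}(\Lambda\mbox{-Mod})/H^0(\mathcal{M})$. To transfer Krause's conclusion you must verify that these two functors agree up to natural isomorphism under the identifications of Lemma~\ref{lem:4equiv}; your proposal assumes this silently when it says the composite ``is the left-adjoint (reflection) composed with inclusion'' and sends $X$ to its complete injective resolution. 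The paper's proof consists of precisely this check: in the triangle $X\xrightarrow{\phi_X}\mathbf{i}X\to{\rm Cone}(\phi_X)\to\Sigma(X)$, the isomorphism (\ref{iso:inj1}) shows ${\rm Cone}(\phi_X)\in H^0(\mathcal{L})\subseteq H^0(\mathcal{M})$, so $\phi_X$ becomes invertible in $\mathbf{K}(\Lambda\mbox{-Mod})/H^0(\mathcal{M})$ and the relevant triangle of functors commutes. Once that is done, full faithfulness and the equivalence up to direct summands onto the compacts follow at once from \cite[Corollary~5.4]{Kra} together with Lemma~\ref{lem:4equiv}(4) (an equivalence matches compacts with compacts); no re-derivation of stable Hom-spaces is needed. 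Supply that commutativity argument and drop the ``kernel $\Rightarrow$ fully faithful'' shortcut, and your proof closes.
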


\begin{proof}
The functor is well defined since we have $\mathcal{P}\subseteq \mathcal{M}$. The assignment $X\mapsto \mathbf{i}X$ of injective resolutions yields a triangle functor $\mathbf{i}\colon  \mathbf{K}^b(\Lambda\mbox{-}{\rm mod})\rightarrow \mathbf{K}(\Lambda\mbox{-}{\rm Inj})$. It induces the following horizontal functor.
\[
\xymatrix{
\mathbf{K}^b(\Lambda\mbox{-}{\rm mod})/{H^0(\mathcal{P})}\ar[rd] \ar[rr]^-{\mathbf{i}} && \mathbf{K}(\Lambda\mbox{-}{\rm Inj})/{{\rm Loc}(\mathbf{i}\Lambda)} \ar[ld]\\
& \mathbf{K}(\Lambda\mbox{-}{\rm Mod})/{H^0(\mathcal{M})}
}\]
The unnamed arrows are canonical functors. By \cite[Corollary 5.4]{Kra} the horizontal functor $\mathbf{i}$ induces a triangle equivalence up to direct summands
 $$\mathbf{K}^b(\Lambda\mbox{-}{\rm mod})/{H^0(\mathcal{P})}\stackrel{\sim}\longrightarrow (\mathbf{K}(\Lambda\mbox{-}{\rm Inj})/{{\rm Loc}(\mathbf{i}\Lambda)})^c.$$

 We claim that the diagram is commutative up to a natural isomorphism. Then we are done by Lemma \ref{lem:4equiv}(4).

 For the claim, we take $X\in \mathbf{K}^b(\Lambda\mbox{-}{\rm mod})$ and consider its injective resolution $\phi_X\colon  X\rightarrow {\bf i}X$. We have the exact triangle
 $$X\stackrel{\phi_X}\longrightarrow \mathbf{i}X \longrightarrow {\rm Cone}(\phi_X)\longrightarrow \Sigma(X).$$
 The isomorphism (\ref{iso:inj1}) implies that ${\rm Cone}(\phi_X)$ lies in $H^0(\mathcal{L})\subseteq H^0(\mathcal{M})$. Therefore, $\phi_X$ becomes an isomorphism in  $\mathbf{K}(\Lambda\mbox{-}{\rm Mod})/{H^0(\mathcal{M})} $, proving the claim.
 \end{proof}

We are now in a position to prove Proposition~\ref{prop:Krause}.

\vskip 5pt

\noindent \emph{Proof of Proposition~\ref{prop:Krause}.}\quad By the equivalence in Lemma \ref{lem:4equiv}(2), the canonical  dg functor
$$C_{\rm dg}^{\rm ac}(\Lambda \mbox{-} {\rm Inj}) \stackrel{\sim}\longrightarrow C_{\rm dg}(\Lambda \mbox{-}{\rm Mod})/\mathcal{M}$$
is a quasi-equivalence, which restricts to a quasi-equivalence on compact objects
$$C_{\rm dg}^{\rm ac}(\Lambda \mbox{-} {\rm Inj})^c \stackrel{\sim}\longrightarrow (C_{\rm dg}(\Lambda \mbox{-}{\rm Mod})/\mathcal{M})^c.$$
 Here, for the precise definition of the dg quotient category $C_{\rm dg}(\Lambda \mbox{-}{\rm Mod})/\mathcal{M}$, we have to consult \cite[Remark 1.22]{LO}; compare Example~\ref{exm:ddc}.

By Lemma \ref{lem:noether}, we may identify $\mathbf{S}_{\rm dg}(\Lambda)$ with $C_{\rm dg}^{b}(\Lambda\mbox{-}{\rm mod})/\mathcal{P}$. By Lemma \ref{lem:sg-inj}, the following canonical dg functor
$$C_{\rm dg}^{b}(\Lambda\mbox{-}{\rm mod})/\mathcal{P} \longrightarrow (C_{\rm dg}(\Lambda \mbox{-}{\rm Mod})/\mathcal{M})^c $$
is quasi-fully faithful, which induces a triangle equivalence up to direct summands between the homotopy categories. Combining them, we obtain the required dg quasi-functor.  \hfill $\square$

\section{Quivers and Leavitt path algebras}\label{sec:lpa}

In this section, we recall basic facts on quivers and Leavitt path algebras. Using the main result in \cite{Li}, we relate the dg singularity category of the finite dimensional algebra with radical square zero to the dg perfect derived category of the Leavitt path algebra. We obtain an explicit graded derivation over the Leavitt path algebra, which will be used in Subsection~\ref{subsection:hdrforLPA}.

Recall that a quiver $Q=(Q_{0}, Q_{1}; s, t)$ consists of a set $Q_{0}$ of vertices, a set $Q_{1}$ of arrows and two maps $s, t\colon  Q_{1}\xrightarrow []{}Q_{0}$, which associate to each arrow $\alpha$ its starting vertex $s(\alpha)$ and its terminating vertex $t(\alpha)$, respectively. A vertex $i$ of $Q$ is a \emph{sink} provided that the set $s^{-1}(i)$ is empty.

  A path of length $n$ is a sequence $p=\alpha_{n}\dotsb\alpha_{2}\alpha_{1}$ of arrows with $t(\alpha_{j})=s(\alpha_{j+1})$ for $1\leq j\leq n-1$. Denote by $l(p)=n$.  The starting vertex of $p$, denoted by $s(p)$, is $s(\alpha_{1})$ and the terminating vertex of $p$, denoted by $t(p)$, is $t(\alpha_{n})$. We identify an arrow with a path of length one. We associate to each vertex $i\in Q_{0}$ a trivial path $e_{i}$ of length zero. Set $s(e_i)=i=t(e_i)$. Denote by $Q_n$ the set of paths of length $n$.

The \emph{path algebra} $\mathbb k Q=\bigoplus_{n\geq 0}\mathbb k Q_{n}$ has a basis given by all paths in $Q$, whose multiplication is given as follows: for two paths $p$ and $q$ satisfying $s(p)=t(q)$, the product $pq$ is their concatenation; otherwise, we set the product $pq$ to be zero. Here, we write the concatenation of paths from right to left. For example,  $e_{t(p)}p=p=pe_{s(p)}$ for each path $p$. Denote by $J=\bigoplus_{n\geq 1} \mathbb k Q_n$ the two-sided ideal generated by arrows.

  We denote by $\overline{Q}$ the \emph{double quiver} of $Q$, which is obtained by adding for each arrow $\alpha\in Q_1$ a new arrow $\alpha^*$ in the opposite direction. Clearly, we have $s(\alpha^*)=t(\alpha)$ and $t(\alpha^*)=s(\alpha)$. The added arrows $\alpha^*$ are called the ghost arrows.

In what follows, we assume that $Q$ is a finite quiver without sinks. We set $\Lambda=\mathbb k Q/J^2$ to be the corresponding finite dimensional algebra with radical square zero. Observe that $J^2$ is the two-sided ideal of $\mathbb{k}Q$ generated by the set of all paths of length two.

The \emph{Leavitt path algebra} $L=L(Q)$ \cite{AA,AGGP, AMP} is by definition the quotient algebra of $\mathbb k\overline{Q}$ modulo the two-sided ideal generated by the following set
\begin{align*}
\big\{\alpha\beta^*-\delta_{\alpha, \beta}e_{t(\alpha)}\mid \alpha, \beta\in Q_1 \mbox{ with }s(\alpha)=s(\beta)\}\cup \{\sum_{\{\alpha\in Q_1\mid s(\alpha)=i\}}\alpha^*\alpha-e_i\mid i\in Q_0 \big\}.
\end{align*}
These elements are known as the \emph{first Cuntz-Krieger relations} and the \emph{second Cuntz-Krieger relations}, respectively.

If $p=\alpha_{n}\cdots\alpha_2\alpha_{1}$ is a path in $Q$ of length $n\geq 1$, we define $p^{*}=\alpha_{1}^*\alpha_2^*\cdots\alpha_{n}^*$. We have $s(p^*)=t(p)$ and $t(p^*)=s(p)$. For convention, we set $e_i^*=e_i$.  We observe that for paths $p, q$ in $Q$ satisfying $t(p)\neq t(q)$, $p^*q=0$ in $L$. Recall that the Leavitt path algebra $L$ is spanned by the following set
$$\big\{e_i, p, p^*, \gamma^*\eta\mid i\in Q_0, \; p, \gamma, \text{~and~} \eta \text{~are ~nontrivial paths in ~} Q \text{~with~} t(\gamma)=t(\eta)\big\};$$
see \cite[Corollary 3.2]{Tom}. In general, this set is not $\mathbb k$-linearly independent. For an explicit basis, we refer to \cite[Theorem 1]{AAJZ}.

The Leavitt path algebra $L$ is naturally $\mathbb{Z}$-graded by  $|e_{i}|=0$, $|\alpha|=1$ and $|\alpha^{*}|=-1$ for $i\in Q_{0}$ and $\alpha\in Q_{1}$. We write $L=\bigoplus_{n\in\mathbb{Z}} L^{n}$, where $L^n$ consists of homogeneous elements of degree $n$.

For each $i\in Q_0$ and $m\geq 0$, we consider the following subspace of $e_iLe_i$
$$X_{i, m}={\rm Span}_\mathbb k\{\gamma^*\eta\mid t(\gamma)=t(\eta), s(\gamma)=i=s(\eta), l(\eta)=m\}.$$
We observe that $X_{i, m}\subseteq X_{i, m+1}$, since we have
\begin{align}\label{equ:CK2}
\gamma^*\eta=\sum_{\{\alpha\in Q_1\mid s(\alpha)=t(\eta)\}} (\alpha\gamma)^*\alpha\eta.
\end{align}

\begin{lem} \label{lemma:basis}
The following facts hold.
\begin{enumerate}
\item The set $\{\gamma^*\eta\mid t(\gamma)=t(\eta), s(\gamma)=i=s(\eta), l(\eta)=m\}$ is $\mathbb k$-linearly independent.
 \item We have $e_iLe_i=\bigcup_{m\geq 0} X_{i, m}$.
\end{enumerate}
\end{lem}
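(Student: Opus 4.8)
\textbf{Proof plan for Lemma~\ref{lemma:basis}.}

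The plan is to reduce both statements to combinatorial facts about the spanning set of $L=L(Q)$ and the second Cuntz--Krieger relations. For part~(2), the inclusion $\bigcup_{m\geq 0} X_{i,m}\subseteq e_iLe_i$ is clear from the definitions, so the real content is the reverse inclusion. First I would recall the standard spanning set of $L$: every element of $L$ is a $\mathbb{k}$-linear combination of $e_j$, nontrivial paths $p$, their duals $p^*$, and ``mixed'' monomials $\gamma^*\eta$ with $t(\gamma)=t(\eta)$. Cutting down by $e_i$ on both sides, an element of $e_iLe_i$ is a combination of monomials of the form $\gamma^*\eta$ with $s(\gamma)=i=s(\eta)$ and $t(\gamma)=t(\eta)$, where I allow $\gamma$ or $\eta$ to be the trivial path $e_i$ (covering the cases $e_i$, $p$, $p^*$). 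So it suffices to show each such individual monomial $\gamma^*\eta$ lies in some $X_{i,m}$. If $l(\eta)=m$ this is immediate. If $l(\gamma)=l(\eta)$ we are already in $X_{i,l(\eta)}$. The remaining case is a length mismatch, say $l(\gamma)>l(\eta)$ (the other case is symmetric via the map $\gamma^*\eta\mapsto\eta^*\gamma$, or handled directly): here I would repeatedly apply the relation \eqref{equ:CK2}, which expresses $\gamma^*\eta\in X_{i,m}$ as an element of $X_{i,m+1}$; iterating $l(\gamma)-l(\eta)$ times lands $\gamma^*\eta$ in $X_{i,l(\gamma)}$. Since $Q$ has no sinks, the sum on the right-hand side of \eqref{equ:CK2} is always nonempty, so the process never stalls. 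This proves $e_iLe_i=\bigcup_{m\geq 0}X_{i,m}$, and incidentally re-derives the chain $X_{i,m}\subseteq X_{i,m+1}$ already noted before the lemma.

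For part~(1), the $\mathbb{k}$-linear independence of $\{\gamma^*\eta\mid t(\gamma)=t(\eta),\, s(\gamma)=i=s(\eta),\, l(\eta)=m\}$, I would invoke an explicit known $\mathbb{k}$-basis of $L$. The cleanest route is to cite \cite[Theorem~1]{AAJZ} (referenced in the excerpt) or \cite[Corollary~3.2]{Tom}: these give a normal form in which the monomials $\gamma^*\eta$ with $t(\gamma)=t(\eta)$, subject to a mild extra condition (e.g. that $\gamma$ and $\eta$ do not both begin with the same arrow, i.e. the monomial is ``reduced''), form a basis. The subtlety is that an arbitrary $\gamma^*\eta$ is \emph{not} a basis element — if $\gamma=\alpha\gamma'$ and $\eta=\alpha\eta'$ then $\gamma^*\eta=\gamma'^*\alpha^*\alpha\eta'$, and $\alpha^*\alpha$ is only \emph{part} of the sum $\sum_{s(\beta)=t(\gamma')}\beta^*\beta=e_{t(\gamma')}$, so one cannot simply cancel. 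However, in our set we have the rigid constraint $l(\eta)=m$ fixed and $s(\eta)=i$; I claim that under these constraints the $\gamma^*\eta$ are genuinely independent. The key point is the $\mathbb{Z}$-grading: $|\gamma^*\eta|=l(\eta)-l(\gamma)=m-l(\gamma)$, so distinct values of $l(\gamma)$ sit in distinct graded components and independence can be checked degree by degree. Within a fixed degree (fixed $l(\gamma)=k$), I would argue as follows: suppose $\sum c_{\gamma,\eta}\,\gamma^*\eta=0$. Apply this relation on the left to a fixed path $\mu$ of length $k$ with $s(\mu)=i$: since $\mu\gamma^*=\delta_{\mu,\gamma}e_{t(\mu)}$ by iterating the first Cuntz--Krieger relation (both $\mu$ and $\gamma$ have the same length $k$ and the same source $i$), we get $\sum_{\eta}c_{\mu,\eta}\,\eta=0$ in $L$. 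As $\eta$ ranges over paths of length $m$ with source $i$, and paths in $Q$ are $\mathbb{k}$-linearly independent in $L$ (they map injectively, e.g. via the grading and the fact that $\mathbb{k}Q$ embeds in degrees $\geq 0$), we conclude $c_{\mu,\eta}=0$ for all $\eta$. Since $\mu$ was an arbitrary path of length $k$ with source $i$, all coefficients vanish.

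I expect the main obstacle to be the independence argument in part~(1), specifically making precise \emph{which} paths $\eta$ of length $m$ with source $i$ actually occur — one must be careful that the terminal vertex $t(\gamma)=t(\eta)$ constraint is respected and that the identity $\mu\gamma^*=\delta_{\mu,\gamma}e_{t(\mu)}$ for equal-length paths really does follow by a clean induction from the first Cuntz--Krieger relations (the inductive step strips off the last arrow of each path and uses $\alpha\beta^*=\delta_{\alpha,\beta}e_{t(\alpha)}$ when $s(\alpha)=s(\beta)$; one needs $s(\alpha)=s(\beta)$, which holds because the shorter paths, if equal so far, have a common terminal vertex). An alternative, perhaps safer, route is to simply quote the explicit basis from \cite[Theorem~1]{AAJZ} and observe that our set, after fixing $l(\eta)=m$, maps injectively onto a subset of basis elements via the normal-form rewriting; but the grading-plus-cancellation argument above is self-contained and I would present that as the primary proof, remarking on the basis reference as an alternative.
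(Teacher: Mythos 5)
Your proposal is correct, but on part (1) it takes a genuinely different route from the paper: the paper's entire proof is two sentences, namely that part (1) ``follows, using the grading of $L$, from \cite[Proposition~4.1]{Chen}'' and that part (2) ``is trivial'', whereas you give a self-contained argument. Your mechanism (split a dependence relation by the $\mathbb{Z}$-grading so that $l(\gamma)=k$ is fixed, then left-multiply by a path $\mu$ of length $k$ with source $i$ and use $\mu\gamma^*=\delta_{\mu,\gamma}e_{t(\mu)}$ for equal-length co-initial paths) is sound, and what it buys is independence from Chen's result, at the cost of one external input: the $\mathbb{k}$-linear independence of distinct paths of $Q$ inside $L$. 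Note that this input is exactly the $k=0$ graded component of the very statement you are proving, so it cannot be dismissed as following ``via the grading'' alone (the degree-$m$ component of $L$ contains much more than paths of length $m$); cite it, e.g.\ via the basis of \cite[Theorem~1]{AAJZ} or the injectivity of $\mathbb{k}Q\to L$, as you yourself indicate in the fallback. Two small clean-ups: in part (2) the entire ``length mismatch'' discussion is unnecessary, since $X_{i,m}$ imposes no condition on $l(\gamma)$ --- any monomial $\gamma^*\eta\in e_iLe_i$ from the spanning set already lies in $X_{i,l(\eta)}$, so the iterated rewriting via \eqref{equ:CK2} (which the paper uses only to see $X_{i,m}\subseteq X_{i,m+1}$) plays no role there; and in part (1) the constraint $t(\gamma)=t(\eta)$ causes no difficulty after multiplying by $\mu$, since the surviving $\eta$'s are simply distinct paths of length $m$, so your worry at the end is unfounded.
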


\begin{proof}
Using the grading of $L$, the first statement follows from \cite[Proposition~4.1]{Chen}. The second one is trivial.
\end{proof}

The following result is based on the main result of \cite{Li}. We will always view the $\mathbb{Z}$-graded algebra $L=L(Q)$ as a dg algebra with trivial differential. Then $L^{\rm op}$ denotes the opposite dg algebra. We view $\Lambda=\mathbb k Q/J^2$ as a dg algebra concentrated in degree zero.

\begin{prop}\label{prop:CY-Li}
Keep the notation as above. Then there is a zigzag of  quasi-equivalences connecting $\mathbf{S}_{\rm dg}(\Lambda)$ to $\mathbf{ per}_{\rm dg}(L^{\rm op})$.
\end{prop}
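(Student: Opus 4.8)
The plan is to build the zigzag in three stages, interpolating between $\mathbf{S}_{\rm dg}(\Lambda)$ and $\mathbf{per}_{\rm dg}(L^{\rm op})$ through the dg category of acyclic complexes of injective $\Lambda$-modules. First, by Corollary~\ref{cor:Krause} there is already a zigzag of quasi-equivalences connecting $\mathbf{S}_{\rm dg}(\Lambda)$ to $C_{\rm dg}^{\rm ac}(\Lambda\mbox{-}{\rm Inj})^c$, since $\Lambda$ is finite dimensional. So it suffices to connect $C_{\rm dg}^{\rm ac}(\Lambda\mbox{-}{\rm Inj})^c$ to $\mathbf{per}_{\rm dg}(L^{\rm op})$ by a zigzag of quasi-equivalences. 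The key input is the main result of \cite{Li}, which (on the level of triangulated/homotopy categories) produces a compact generator of $\mathbf{K}^{\rm ac}(\Lambda\mbox{-}{\rm Inj})$ whose graded endomorphism algebra is identified with $L=L(Q)$; more precisely $\mathbf{K}^{\rm ac}(\Lambda\mbox{-}{\rm Inj})$ is triangle equivalent to $\mathbf{D}(L^{\rm op})$, restricting to an equivalence $\mathbf{K}^{\rm ac}(\Lambda\mbox{-}{\rm Inj})^c\simeq \mathbf{per}(L^{\rm op})$.

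The second and central stage is to upgrade that triangulated equivalence to a zigzag of quasi-equivalences of dg categories. The standard mechanism (Keller's theory of dg enhancements, \cite{Kel94}) applies: let $G\in C_{\rm dg}^{\rm ac}(\Lambda\mbox{-}{\rm Inj})$ be the explicit complex from \cite{Li} that is a compact generator of $\mathbf{K}^{\rm ac}(\Lambda\mbox{-}{\rm Inj})$. Form its dg endomorphism algebra $\mathcal{E}=C_{\rm dg}^{\rm ac}(\Lambda\mbox{-}{\rm Inj})(G,G)$, a dg algebra. I would verify that the dg algebra $\mathcal{E}$ is quasi-isomorphic to $L^{\rm op}$, viewed as a dg algebra with zero differential — this is where the computation of $\mathrm{Hom}$-complexes between the relevant acyclic injective complexes, carried out over the Cuntz--Krieger relations and using Lemma~\ref{lemma:basis}, enters; the grading on $L$ by $|\alpha|=1$, $|\alpha^*|=-1$ matches the cohomological grading of the $\mathrm{Hom}$-complex. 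Once $\mathcal{E}\simeq L^{\rm op}$ is a quasi-isomorphism of dg algebras, it induces a quasi-equivalence $\mathbf{per}_{\rm dg}(\mathcal{E})\simeq \mathbf{per}_{\rm dg}(L^{\rm op})$ (the dg perfect derived category is invariant under quasi-isomorphism of dg algebras). On the other hand, since $G$ is a compact generator, the functor $C_{\rm dg}^{\rm ac}(\Lambda\mbox{-}{\rm Inj})(G,-)$ together with a cofibrant replacement yields a zigzag of quasi-equivalences between $C_{\rm dg}^{\rm ac}(\Lambda\mbox{-}{\rm Inj})^c$ and $\mathbf{per}_{\rm dg}(\mathcal{E})$; this is the dg-module-theoretic refinement of the Morita theory for triangulated categories, applicable because $\mathbf{K}^{\rm ac}(\Lambda\mbox{-}{\rm Inj})$ is compactly generated (Lemma~\ref{lem:4equiv} and \cite[Corollary~5.4]{Kra}) and $C_{\rm dg}^{\rm ac}(\Lambda\mbox{-}{\rm Inj})$ is pretriangulated.

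Concatenating: $\mathbf{S}_{\rm dg}(\Lambda)\;\leftrightsquigarrow\;C_{\rm dg}^{\rm ac}(\Lambda\mbox{-}{\rm Inj})^c\;\leftrightsquigarrow\;\mathbf{per}_{\rm dg}(\mathcal{E})\;\leftrightsquigarrow\;\mathbf{per}_{\rm dg}(L^{\rm op})$, each $\leftrightsquigarrow$ being a finite zigzag of quasi-equivalences, gives the desired statement. I expect the main obstacle to be the middle stage: namely, pinning down the explicit complex $G$ from \cite{Li} inside $C_{\rm dg}^{\rm ac}(\Lambda\mbox{-}{\rm Inj})$ and computing its dg endomorphism algebra $\mathcal{E}$ with enough precision to see that the differential vanishes up to quasi-isomorphism and that $H^*(\mathcal{E})\cong L^{\rm op}$ as graded algebras. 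The subtlety is that \cite{Li} works with the triangulated category $\mathbf{K}^{\rm ac}(\Lambda\mbox{-}{\rm Inj})$, so one must track the dg structure carefully — in particular checking that the compact generator can be taken to be a genuine (not merely homotopy) complex of injectives and that the multiplicative structure on $\mathrm{End}$ agrees with the Leavitt multiplication; the formula \eqref{equ:CK2} and the colimit description $e_iLe_i=\bigcup_m X_{i,m}$ should govern the composition of the relevant homotopy classes. The final bookkeeping — that $\mathbf{per}_{\rm dg}$ is quasi-equivalence-invariant and that everything restricts correctly to compact objects — is routine given the results already assembled in Sections~\ref{section2} and~\ref{section3}.
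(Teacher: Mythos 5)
Your outer stages coincide with the paper's: Corollary~\ref{cor:Krause} handles $\mathbf{S}_{\rm dg}(\Lambda)\leftrightsquigarrow C^{\rm ac}_{\rm dg}(\Lambda\mbox{-}{\rm Inj})^c$, and everything reduces to the middle stage. There you diverge from the paper, which never passes through the dg endomorphism algebra of a generator: the injective Leavitt complex $\mathcal I$ of \cite{Li} is already a dg $\Lambda$-$L^{\rm op}$-bimodule, so ${\rm Hom}_\Lambda(\mathcal I,-)$ is a dg functor from $C^{\rm ac}_{\rm dg}(\Lambda\mbox{-}{\rm Inj})$ to dg $L^{\rm op}$-modules; since its $H^0$ is the triangle equivalence $\mathbf{K}^{\rm ac}(\Lambda\mbox{-}{\rm Inj})\simeq\mathbf{D}(L^{\rm op})$ of \cite{Li}, restricting to compacts and applying Lemma~\ref{lem:quasi-equiv} gives in one stroke a quasi-equivalence $C^{\rm ac}_{\rm dg}(\Lambda\mbox{-}{\rm Inj})^c\to\mathbf{per}_{\rm dg}(L^{\rm op})$, with no Morita-theoretic zigzag and no endomorphism computation. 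Your route (compact generator $G$, dg endomorphism algebra $\mathcal E$, dg Morita theory to get $C^{\rm ac}_{\rm dg}(\Lambda\mbox{-}{\rm Inj})^c\leftrightsquigarrow\mathbf{per}_{\rm dg}(\mathcal E)\leftrightsquigarrow\mathbf{per}_{\rm dg}(L^{\rm op})$) is viable and is the standard fallback when one only knows the generator and its graded endomorphism ring, but it is strictly longer, and it hinges on the step you yourself flag.

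That step, as you describe it, is under-justified: computing that $H^*(\mathcal E)\cong L^{\rm op}$ as graded algebras (with the Cuntz--Krieger relations, Lemma~\ref{lemma:basis}, etc.) does not by itself yield a quasi-isomorphism of dg algebras $\mathcal E\simeq L^{\rm op}$ --- a dg algebra is in general not quasi-isomorphic to its cohomology, so "the differential vanishes up to quasi-isomorphism" is a genuine formality claim needing its own proof. What closes it is precisely the datum your plan leaves implicit: the action of $L$ on $\mathcal I$ by honest chain maps (the bimodule structure constructed in \cite{Li}), which produces a genuine dg algebra homomorphism comparing $L^{\rm op}$ with the dg endomorphism algebra of the generator, and this map is a quasi-isomorphism by the main theorem of \cite{Li}. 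With that input your argument goes through; without it, the middle stage has a real hole. So the cleanest fix is either to import the bimodule structure to realize the quasi-isomorphism $L^{\rm op}\to\mathcal E$ directly, or simply to use the bimodule to define the dg functor ${\rm Hom}_\Lambda(\mathcal I,-)$ as the paper does and skip the Morita detour altogether.
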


\begin{proof}
 Recall that the {\it injective Leavitt complex} $\mathcal I$ is constructed in \cite{Li}, which is a dg $\Lambda$-$L^{\rm op}$-bimodule. Moreover, it induces a triangle equivalence
 $${\rm Hom}_\Lambda(\mathcal I, -) \colon  \mathbf{K}^{\rm ac}(\Lambda\mbox{-Inj}) \stackrel{\sim}\longrightarrow \mathbf{D}(L^{\rm op}),$$
 which restricts to an equivalence
 $$\mathbf{K}^{\rm ac}(\Lambda\mbox{-Inj})^c \stackrel{\sim}\longrightarrow \mathbf{per}(L^{\rm op}).$$

Recall the identifications  $H^0( C_{\rm dg}^{\rm ac}(\Lambda\mbox{-Inj})^c)=\mathbf{K}^{\rm ac}(\Lambda\mbox{-Inj})^c$ and $H^0(\mathbf{per}_{\rm dg}(L^{\rm op}))=\mathbf{per}(L^{\rm op})$.  Then combining the above restricted equivalence and   Lemma~\ref{lem:quasi-equiv},  we infer that the dg functor
 $${\rm Hom}_\Lambda(\mathcal I, -)\colon  C_{\rm dg}^{\rm ac}(\Lambda\mbox{-Inj})^c\longrightarrow \mathbf{per}_{\rm dg}(L^{\rm op})$$
is  a quasi-equivalence. Then we are done by Corollary \ref{cor:Krause}.
\end{proof}

Set $E=\mathbb kQ_0=\bigoplus_{i\in Q_0} \mathbb ke_i$, which is viewed as a semisimple subalgebra of $L^0$. Let $M$ be a graded $L$-$L$-bimodule. A graded map $D\colon  L\rightarrow M$ of degree $-1$ is called a \emph{graded derivation} provided that it satisfies the graded Leibniz rule
$$D(xy)=D(x)y+(-1)^{|x|}xD(y)$$
 for $x,y\in L$; if furthermore it satisfies $D(e_i)=0$ for each $i\in Q_0$, it is called a \emph{graded $E$-derivation}.

Let $s\mathbb k$ be the $1$-shifted space of $\mathbb k$, that is, $s\mathbb k$ is concentrated in degree $-1$. The element $s1_\mathbb k$ of degree $-1$  will be simply denoted by $s$. Then we have the graded $L$-$L$-bimodule $\bigoplus_{i\in Q_0} Le_i\otimes s\mathbb k\otimes e_i L$, which is clearly isomorphic to $L \otimes_E sE \otimes_E L$.

\begin{lem}\label{lem:deri}
Keep the notation as above. Then there is a unique graded $E$-derivation
$$D\colon  L\longrightarrow \bigoplus_{i\in Q_0}Le_i\otimes s\mathbb k\otimes e_i L $$
satisfying  $D(\alpha)=-\alpha\otimes s\otimes e_{s(\alpha)}$ and $D(\alpha^*)=-e_{s(\alpha)}\otimes s\otimes  \alpha^*$ for each $\alpha\in Q_1$.
\end{lem}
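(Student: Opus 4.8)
The plan is to construct $D$ directly on the spanning set of $L$ recalled above and to check it is well-defined and a derivation. First I would define $D$ on generators by the prescribed formulas $D(e_i)=0$, $D(\alpha)=-\alpha\otimes s\otimes e_{s(\alpha)}$ and $D(\alpha^*)=-e_{s(\alpha)}\otimes s\otimes\alpha^*$, and then extend to arbitrary paths and their stars by forcing the graded Leibniz rule. Concretely, for a path $p=\alpha_n\cdots\alpha_1$ one computes inductively
\begin{equation*}
D(p)=-\sum_{j=1}^{n}(-1)^{n-j}\,\alpha_n\cdots\alpha_j\otimes s\otimes \alpha_{j-1}\cdots\alpha_1,
\end{equation*}
where the $j=1$ term reads $-(-1)^{n-1}p\otimes s\otimes e_{s(p)}$, and dually
\begin{equation*}
D(p^*)=-\sum_{j=1}^{n}(-1)^{j-1}\,\alpha_1^*\cdots\alpha_{j-1}^*\otimes s\otimes \alpha_j^*\cdots\alpha_n^*.
\end{equation*}
Since every element of $L$ is a linear combination of $e_i$, $p$, $p^*$ and products $\gamma^*\eta$, I would define $D$ on $\gamma^*\eta$ by the Leibniz rule, $D(\gamma^*\eta)=D(\gamma^*)\eta+(-1)^{|\gamma^*|}\gamma^* D(\eta)$, using the bimodule structure on $\bigoplus_{i}Le_i\otimes s\mathbb{k}\otimes e_iL$. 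Uniqueness is then immediate: any graded $E$-derivation agreeing with $D$ on arrows and ghost arrows must agree with $D$ on all products by the Leibniz rule, and these products span $L$.

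The substantive point, and the main obstacle, is \textbf{well-definedness}: the spanning set is not linearly independent, so I must check that $D$ respects the defining relations of $L$, i.e.\ the two Cuntz--Krieger relations (and, implicitly, the relations $p^*q=0$ when $t(p)\neq t(q)$, which hold automatically since both sides of $D$ land in the appropriate $e_iL$-$Le_j$ component). For the first Cuntz--Krieger relation I would verify that $D$ annihilates $\alpha\beta^*-\delta_{\alpha,\beta}e_{t(\alpha)}$ whenever $s(\alpha)=s(\beta)$: applying Leibniz, $D(\alpha\beta^*)=D(\alpha)\beta^*+(-1)^{1}\alpha D(\beta^*)=-\alpha\otimes s\otimes e_{s(\alpha)}\beta^*+\alpha\otimes s\otimes \beta^*$ — wait, more carefully $D(\alpha)\beta^*=(-\alpha\otimes s\otimes e_{s(\alpha)})\beta^*=-\alpha\otimes s\otimes \beta^*$ (since $e_{s(\alpha)}\beta^*=\beta^*$ as $t(\beta^*)=s(\beta)=s(\alpha)$), and $-\alpha D(\beta^*)=-\alpha(-e_{s(\beta)}\otimes s\otimes\beta^*)=\alpha\otimes s\otimes\beta^*$; these cancel, and $D(e_{t(\alpha)})=0$, so the relation is preserved. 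For the second Cuntz--Krieger relation $\sum_{\{\alpha:s(\alpha)=i\}}\alpha^*\alpha-e_i$, a similar computation gives $\sum_\alpha D(\alpha^*\alpha)=\sum_\alpha\big(D(\alpha^*)\alpha-\alpha^*D(\alpha)\big)=\sum_\alpha\big(-e_i\otimes s\otimes\alpha^*\alpha+\alpha^*\alpha\otimes s\otimes e_i\big)$; using the second Cuntz--Krieger relation itself ($\sum_\alpha\alpha^*\alpha=e_i$) inside the bimodule, this collapses to $-e_i\otimes s\otimes e_i+e_i\otimes s\otimes e_i=0=D(e_i)$, so again the relation is respected.

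Thus $D$ descends to a well-defined map $L\to\bigoplus_{i\in Q_0}Le_i\otimes s\mathbb{k}\otimes e_iL$. That it has degree $-1$ is visible from $|s|=-1$ and $|D(\alpha)|=|\alpha|-1$, $|D(\alpha^*)|=|\alpha^*|-1$ on generators, hence on all products by induction. The graded Leibniz rule holds on all of $L$ because it holds on the generators and both sides of the Leibniz identity are, once we fix one argument, graded maps of the same degree that are built multiplicatively; a short induction on word length propagates it from generators to arbitrary elements. Finally $D(e_i)=0$ by construction, so $D$ is a graded $E$-derivation, and we have already noted it is the unique such derivation with the prescribed values on $Q_1$ and the ghost arrows. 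I expect the relation-checking for the two Cuntz--Krieger relations to be the only place demanding genuine care; everything else is a formal consequence of the universal property of derivations on a presented algebra.
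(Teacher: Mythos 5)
Your proposal is correct and follows essentially the same route as the paper: one builds the unique graded $E$-derivation on the free algebra $\mathbb{k}\overline{Q}$ from the prescribed values on arrows and ghost arrows, checks (as you do explicitly, and as the paper leaves as "routine") that it annihilates the two Cuntz--Krieger relations, and concludes via the graded Leibniz rule that it kills the whole defining ideal and hence descends uniquely to $L$. Your explicit formulas for $D(p)$ and $D(p^*)$ agree with the paper's Remark on the derivation, and your closing appeal to the universal property of derivations on a presented algebra is exactly the formalization the paper uses.
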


\begin{proof}
It is well known that there is a unique graded $E$-derivation
$$\overline{D}\colon  \mathbb k \overline{Q}\longrightarrow \bigoplus_{i\in Q_0}Le_i\otimes s\mathbb k\otimes e_i L$$
satisfying $\overline{D}(\alpha)=-\alpha\otimes s\otimes e_{s(\alpha)}$ and $\overline{D}(\alpha^*)=-e_{s(\alpha)}\otimes s \otimes \alpha^*$; consult the explicit bimodule projective resolution in \cite[Chapter 2, Proposition 2.6]{Cohn}. It is routine to verify that $\overline{D}$ vanishes on the Cuntz-Krieger relations. Therefore, by the graded Leibniz rule, it vanishes on the whole defining ideal. Then $\overline{D}$ induces uniquely the required derivation $D$.
\end{proof}

The following observation will be useful in the proof of Proposition~\ref{proposition-Phi}.

 \begin{rem}\label{rem:derivation}
By the graded Leibniz rule, the graded $E$-derivation $D$ has the following explicit description: for  nontrivial paths $\eta=\alpha_m\cdots \alpha_2\alpha_1$ and $\gamma=\beta_p\cdots \beta_2\beta_1$ satisfying $t(\eta)=t(\gamma)$, we have
\begin{equation*}
\begin{split}
D(\gamma^*\eta) = & -e_{s(\gamma)}\otimes s\otimes \gamma^*\eta -
\sum_{l=1}^{p-1}(-1)^l  \beta_1^*\cdots \beta_l^*\otimes s\otimes \beta_{l+1}^*\cdots \beta_p^*\alpha_m\cdots \alpha_1+{}\\
&\!\!\!\! \!\!  \sum_{l=1}^{m-1} (-1)^{m+p-l}\beta_1^*\cdots \beta_p^*\alpha_m\cdots \alpha_{l+1} \otimes s\otimes \alpha_l\cdots \alpha_1 +(-1)^{m+p} \gamma^*\eta\otimes s\otimes e_{s(\eta)}.
\end{split}
\end{equation*}
Similarly,  we have
\begin{equation*}
\begin{split}
D(\gamma^*)&=-e_{s(\gamma)}\otimes s\otimes \gamma^*-\sum^{p-1}_{l=1} (-1)^l \beta_1^*\cdots \beta_l^*\otimes s\otimes \beta_{l+1}^*\cdots \beta_p^*, \; {\rm and} \\
D(\eta)&=\sum^{m-1}_{l=1}(-1)^{m-l} \alpha_m\cdots\alpha_{l+1}\otimes s\otimes \alpha_{l}\cdots \alpha_1+(-1)^{m} \eta\otimes s\otimes e_{s(\eta)}.
\end{split}
\end{equation*}

\end{rem}

\section{An introduction to $B_\infty$-algebras}
\label{section5}

In this section, we give a brief self-contained introduction to  $B_{\infty}$-algebras and $B_{\infty}$-morphisms.  We introduce  the  opposite $B_\infty$-algebra and the  transpose $B_\infty$-algebra of   any given  $B_\infty$-algebra. There is a natural $B_\infty$-isomorphism between them; see Theorem~\ref{thm:dualityB}. We are mainly interested in a class of $B_{\infty}$-algebras, called brace $B_{\infty}$-algebras, whose underlying $A_{\infty}$-algebras are dg algebras and some of whose $B_\infty$-products vanish.

\subsection{$A_\infty$-algebras and morphisms}\label{subsec:A-infinity}

Let us  start by recalling $A_{\infty}$-algebras and $A_{\infty}$-morphisms. For details,  we refer  to \cite{Kel01}. For two graded maps $f\colon  U\rightarrow V$ and $f'\colon  U'\rightarrow V'$ between graded spaces, the tensor product $f\otimes f'\colon  U\otimes U'\rightarrow V\otimes V'$ is defined such that $$(f\otimes f')(u\otimes u')=(-1)^{|f'|\cdot |u|} f(u)\otimes f'(u'),$$
 where the sign $(-1)^{|f'|\cdot |u|}$ is given by the {\it Koszul sign rule}.   We use $\mathbf{1}$  to denote the identity endomorphism.

\begin{defn}
\label{defn:A-infinity}
 An {\it  $A_{\infty}$-algebra} is a graded $\mathbb k$-vector space $A=\bigoplus_{p\in \mathbb Z} A^p$ endowed with graded $\mathbb k$-linear maps
$$m_n\colon  A^{\otimes n}\longrightarrow A, \quad n\geq 1,$$
of degree $2-n$ satisfying the following relations
\begin{equation}\label{relationforA}
\sum_{j=0}^{n-1}\sum_{s=1}^{n-j}(-1)^{j+s(n-j-s)} \; m_{n-s+1} (\mathbf{1}^{\otimes j}\otimes m_s\otimes \mathbf{1}^{\otimes(n-j-s)})=0,
\quad \mbox{for $n\geq 1$.}
\end{equation}
In particular, $(A, m_1)$ is a cochain complex of $\mathbb k$-vector spaces.

For two $A_{\infty}$-algebras $A$ and $A'$, an {\it $A_{\infty}$-morphism} $f=(f_n)_{\geq 1}\colon  A\rightarrow A'$ is given by a collection of graded maps
$f_n\colon  A^{\otimes n}\rightarrow A'$
of degree $1-n$ such that, for all $n\geq 1$, we have
\begin{equation}\label{equationforAmorph}
\sum_{\substack{a+s+t=n\\ a, t\geq 0, s\geq 1}} (-1)^{a+st} f_{a+1+t}(\mathbf{1}^{\otimes
a}\otimes m_s\otimes \mathbf{1}^{\otimes t})=\sum_{\substack{r\geq 1\\ i_1+\dotsb + i_r=n}} (-1)^{\epsilon} m'_r (f_{i_1}\otimes \cdots\otimes f_{i_r}),
\end{equation}
where $\epsilon=(r-1)(i_1-1)+(r-2)(i_2-1)+\cdots+2(i_{r-2}-1)+(i_{r-1}-1)$; if $r=1$, we set $\epsilon=0$.  In particular, $f_1\colon (A, m_1)\rightarrow (A', m'_1)$ is a cochain map.

The {\it composition} $g\circ_\infty f$ of two $A_{\infty}$-morphisms $f\colon  A\rightarrow A'$ and $g\colon  A'\rightarrow A''$ is given by
$$(g\circ_{\infty} f)_n=\sum_{r\geq 1, \; i_1+\cdots+i_r=n} (-1)^{\epsilon}g_r  (f_{i_1}\otimes \cdots\otimes f_{i_r}), \quad n\geq 1,$$
where $\epsilon$ is defined as above. \hfill $\square$
\end{defn}

An $A_{\infty}$-morphism  $f\colon   A\rightarrow A'$ is  \emph{strict} provided that $f_i=0$ for all $i\neq 1$. The identity morphism is the strict morphism $f$ given by $f_1=\mathbf{1}_A$.  An $A_{\infty}$-morphism $f\colon   A\rightarrow A'$ is an {\it $A_\infty$-isomorphism} if  there exists an $A_{\infty}$-morphism $g\colon   A'\rightarrow A$ such that the composition
$f\circ_{\infty} g$ coincides with the identity morphism of $A'$ and $g\circ_{\infty} f$ coincides with the identity morphism of $A$. In general,  an $A_\infty$-isomorphism is not necessarily strict; see Theorem~\ref{thm:dualityB} for an example.

An $A_{\infty}$-morphism  $f\colon   A\rightarrow A'$  is called an {\it $A_\infty$-quasi-isomorphism} provided that $f_1\colon   (A, m_1)\rightarrow (A', m'_1)$ is a quasi-isomorphism between the underlying complexes. An $A_\infty$-isomorphism is necessarily an $A_\infty$-quasi-isomorphism.

\begin{rem}\label{rem:A-inf}
Let $A$ be a graded $\mathbb k$-space and let $sA$ be the  $1$-shifted graded  space: $(sA)^i=A^{i+1}$. Denote by $(T^c(sA), \Delta)$ the tensor coalgebra over $sA$. It is well known that   an $A_{\infty}$-algebra structure on $A$ is equivalent to a dg coalgebra structure $(T^c(sA), \Delta, D)$ on $T^c(sA)$, where  $D$ is a coderivation of degree one satisfying $D^2=0$ and $D(1)=0$. Accordingly, $A_\infty$-morphisms $f\colon  A\rightarrow A'$ correspond bijectively to dg coalgebra homomorphisms $T^c(sA)\rightarrow T^c(sA')$. Under this  bijection, the above composition $f\circ_{\infty} g$ of the $A_{\infty}$-morphisms $f$ and $g$  corresponds to the usual composition of the induced dg coalgebra homomorphisms; see \cite[Lemma 3.6]{Kel01}.
 \end{rem}

 We mention that any dg algebra $A$ is viewed as an $A_\infty$-algebra with $m_n=0$ for $n\geq 3$. In Subsection~\ref{subsection:A-quasi-brace},  we will construct an explicit $A_\infty$-quasi-isomorphism between two concrete dg algebras, which is a non-strict $A_\infty$-morphism, that is, not a dg algebra homomorphism between the dg algebras.

\subsection{$B_\infty$-algebras and morphisms}\label{subsection:B-infinity}
The notion of $B_\infty$-algebras\footnote{We remark that the letter \lq B' stands for Baues,  who showed in \cite{BAU} that the normalized cochain complex  $C^*(X)$ of any simplicial set $X$ carries a natural $B_\infty$-algebra.} is due to \cite[Subsection~5.2]{GJ}. We unpack the definition therein and write the axioms explicitly. We are mainly concerned with a certain kind of $B_\infty$-algebras, called \emph{brace $B_\infty$-algebras}; see Definition~\ref{defn:special}.  We mention other references \cite{Vor,  Kel03} for $B_\infty$-algebras.

Let $A=\bigoplus_{p\in \mathbb{Z}} A^p$ be a graded space, and  let $r\geq 1$ and $l, n\geq 0$.  For any two sequences of nonnegative integers $(l_1, l_2, \dotsc, l_r)$ and $(n_1, n_2, \dotsc, n_r)$ satisfying  $l=l_1+\cdots+l_r$ and $n=n_1+\cdots+n_r$, we define a $\mathbb k$-linear map
$$\tau_{(l_1, \dotsc, l_r; n_1, \dotsc, n_r)}\colon   A^{\otimes l}\bigotimes A^{\otimes n} \longrightarrow (A^{\otimes l_1}\bigotimes A^{\otimes n_1}) \otimes \cdots\otimes (A^{\otimes l_r}\bigotimes A^{\otimes n_r})$$
 by sending $(a_1\otimes \cdots \otimes a_l)\bigotimes (b_1\otimes \cdots\otimes b_n)\in A^{\otimes l}\bigotimes A^{\otimes n}$ to
 \begin{equation*}
\begin{split}
(-1)^{\epsilon'} &(a_1\otimes \cdots \otimes a_{l_1} \bigotimes b_1\otimes \cdots\otimes b_{n_1})\otimes \cdots \otimes \\
&(a_{l_1+\cdots+l_{r-1}+1} \otimes \dotsb\otimes a_l\bigotimes b_{n_1+\cdots+n_{r-1}+1}\otimes \cdots\otimes b_n),
\end{split}
\end{equation*}
where $\epsilon'= \sum_{i=0}^{r-2} (|b_{n_1+\cdots+n_i+1}|+\cdots+|b_{n_1+\cdots+n_{i+1}}|)(|a_{l_1+\cdots+ l_{i+1}+1}|+\cdots+|a_l|)$ with  $n_0=0$. If $l_i=0$ for some $1\leq i \leq r$ we set $A^{\otimes l_i} = \mathbb k$ and
$a_{l_1+\cdots+l_{i-1}+1}\otimes \cdots\otimes a_{l_1+\cdots+l_i}=1\in \mathbb k$; similarly,  if $n_i=0$ we set $A^{\otimes n_i} = \mathbb k$ and $b_{n_1+\cdots+n_{i-1}+1}\otimes \cdots\otimes b_{n_1+\cdots+n_i}=1\in \mathbb{k}$. Here and later, we use the big tensor product $\bigotimes$ to distinguish from the usual $\otimes$ and to specify the space where the tensors belong to.

\begin{defn}\label{definition-B-infinity}
A {\it $B_{\infty}$-algebra} is an $A_{\infty}$-algebra $(A, m_1, m_2, \cdots)$ together with a collection of graded  maps (called {\it $B_\infty$-products})
$$\mu_{p, q}\colon  A^{\otimes p}\bigotimes A^{\otimes q}\longrightarrow A, \quad p, q\geq 0$$
of degree $1-p-q$ satisfying the following relations.
\begin{enumerate}
\item The unital condition:
\begin{equation}\label{equation-B01}
\mu_{1, 0}=\mathbf{1}_A= \mu_{0, 1}, \quad \mu_{k, 0}=0=\mu_{0, k}\quad  \mbox{for $k\neq 1$}.
\end{equation}

\item The associativity of $\mu_{p, q}$: for any fixed $k, l, n\geq 0$, we have
\begin{equation}\label{equation-B02}
\begin{split}
&\sum_{r=1}^{l+n}\sum_{\substack{l_1+\cdots+l_r=l\\ n_1+\cdots+n_r=n}} (-1)^{\epsilon_1}\mu_{k, r} (\mathbf{1}^{\otimes k}\bigotimes (\mu_{l_1, n_1}\otimes \cdots\otimes \mu_{l_r, n_r})\circ \tau_{(l_1, \dotsc, l_r; n_1, \dotsc, n_r)}) \\
={}& \sum_{s=1}^{k+l}\sum_{\substack{k_1+\cdots+k_s=k\\ l_1+\cdots+l_s=l}} (-1)^{\eta_1}\mu_{s, n}((\mu_{k_1, l_1}\otimes \cdots\otimes \mu_{k_s, l_s})\circ \tau_{(k_1, \dotsc, k_s; l_1, \dotsc, l_s)}\bigotimes \mathbf{1}^{\otimes n}),
\end{split}
\end{equation}
where
$$
\epsilon_1=\sum_{i=1}^{r-1}(l_i+n_i-1)(r-i)+\sum_{i=1}^{r-1}n_i(l_{i+1}+\cdots+l_r),
$$
$$\text{and} \quad \eta_1=\sum_{i=1}^s(k_i+l_i-1)(n+s-i)+\sum_{i=1}^{s-1} l_i(k_{i+1}+\cdots+k_s).
$$

\item The Leibniz rule for $m_n$ with respect to  $\mu_{p, q}$: for any fixed $k, l\geq 0$, we have
\begin{equation}\label{equation-B03}
\begin{split}
&\sum_{r=1}^{k+l} \sum_{\substack{k_1+\cdots+k_r=k\\ l_1+\cdots+l_r=l}} (-1)^{\epsilon_2} m_r (\mu_{k_1, l_1}\otimes \cdots\otimes  \mu_{k_r, l_r})\circ \tau_{(k_1, \dotsc, k_r; l_1, \dotsc, l_r)}\\
={}& \sum_{r=1}^{k} \sum_{i=0}^{k-r} (-1)^{\eta_2'} \mu_{k-r+1, l} (\mathbf{1}^{\otimes i}\otimes  m_r\otimes \mathbf{1}^{\otimes k-r-i}\bigotimes \mathbf{1}^{\otimes l})\\
&+ \sum_{s=1}^l \sum_{i=0}^{l-s} (-1)^{\eta_2''} \mu_{k, l-s+1}(\mathbf{1}^{\otimes k}\bigotimes  \mathbf{1}^{\otimes i}\otimes m_s\otimes \mathbf{1}^{\otimes l-i-s} ),
\end{split}
\end{equation}
where $$\epsilon_2=\sum_{i=1}^r (k_i+l_i-1)(r-i)+\sum_{i=1}^rl_i(k-k_{1}-\cdots-k_i),$$ $$\eta_2'=r(k-r-i+l)+i, \quad \text{and} \quad
\eta_2''=s(l-i-s)+k+i.$$
\end{enumerate}
We usually denote a $B_{\infty}$-algebra by $(A, m_n; \mu_{p,q})$.

 A {\it $B_{\infty}$-morphism}  from   $(A, m_n; \mu_{p, q})$ to  $(A', m'_n; \mu_{p, q}')$ is an $A_{\infty}$-morphism
 $$f=(f_n)_{n\geq 1}\colon   A\longrightarrow A'$$
 satisfying the following identity for any $p, q\geq 0$:
 \begin{equation}\label{equation-B-new}
\begin{split}
&\sum_{r, s\geq 0}\sum_{\substack{i_1+ i_2+ \dotsb + i_r=p\\ j_1+j_2+ \cdots+ j_s=q}}(-1)^{\epsilon} \mu'_{r, s}  (f_{i_1}\otimes  \cdots\otimes f_{i_r}\bigotimes f_{j_1} \otimes   \cdots\otimes f_{j_s})\\
={}&\sum_{t\geq 1}\sum_{\substack{l_1+ l_2+ \dotsb + l_t=p\\ m_1+m_2+ \cdots+ m_t=q}} (-1)^{\eta}f_t\circ  (\mu_{l_1, m_1}\otimes  \cdots\otimes\mu_{l_t, m_t})\circ \tau_{(l_1, \dotsc, l_t; m_1, \dotsc, m_t)},
\end{split}
\end{equation}
where
\begin{equation*}
\begin{split}
\epsilon={}&\sum_{k=1}^r (i_k-1)(r+s-k)+\sum_{k=1}^s( j_k-1)(s-k), \; {\rm and} \\
\eta={}&\sum_{k=1}^t m_k(p-l_1-\cdots-l_k)+\sum_{k=1}^t (l_k+m_k-1) (t-k).\end{split}
\end{equation*}
The composition of $B_\infty$-morphisms is the same as the one of $A_\infty$-morphisms. \hfill $\square$
\end{defn}

  A $B_{\infty}$-morphism $f\colon A\rightarrow A'$ is \emph{strict} if $f_i=0$ for each $i\neq 1$. A $B_{\infty}$-morphism $f\colon A\rightarrow A'$ is a {\it $B_\infty$-isomorphism}, if there exists an $B_{\infty}$-morphism $g\colon A'\rightarrow A$ such that the compositions $f\circ_{\infty} g=\mathbf{1}_{A'}$ and $g\circ_{\infty} f=\mathbf{1}_{A}.$  A $B_{\infty}$-morphism $f\colon A\rightarrow A'$ is a {\it $B_\infty$-quasi-isomorphism} if $f_1\colon (A, m_1)\rightarrow (A', m'_1)$ is a quasi-isomorphism.

Consider the category of $B_{\infty}$-algebras, whose objects are $B_{\infty}$-algebras and whose morphisms are $B_{\infty}$-morphisms. It follows from \cite{Hinich, Kel03} that the category of $B_{\infty}$-algebras admits a model structure, whose weak equivalences are precisely $B_{\infty}$-quasi-isomorphisms.  We denote by $\mathrm{Ho}(B_{\infty})$ the \emph{homotopy category} associated with this model structure. In particular, each isomorphism in $\mathrm{Ho}(B_{\infty})$ comes from a zigzag of $B_{\infty}$-quasi-isomorphisms.

\begin{rem}\label{remark-B-infinity}
		Similar to Remark~\ref{rem:A-inf},  a $B_{\infty}$-algebra structure on $A$  is equivalent to a dg bialgebra structure $(T^c(sA), \Delta, D, \mu)$ on the tensor coalgebra $T^c(sA)$  such that  $1\in \mathbb k= (sA)^{\otimes 0}$ is the unit of the algebra $(T^c(sA), \mu)$; compare \cite{BAU} and \cite[Subsection~1.4]{LoRo2006}.

More precisely, for a $B_{\infty}$-algebra $(A, m_n; \mu_{p, q})$, we may define two families of graded maps $M_n$ and $M_{p, q}$ on $sA$ via the following two commutative diagrams:
		\begin{equation*}\label{} \xymatrix@C=4pc{
A^{\otimes n}\ar[d]_-{s^{\otimes n}} \ar[r]^-{m_n} & A\ar[d]_{s}\\
(sA)^{\otimes n} \ar[r]^-{M_n}&sA
} \end{equation*} and
\begin{equation*}\label{} \xymatrix@C=4pc{
A^{\otimes p}\bigotimes A^{\otimes q}\ar[d]_-{s^{\otimes p+q}} \ar[r]^-{\mu_{p, q}} & A\ar[d]_{s}\\
(sA)^{\otimes p}\bigotimes (sA)^{\otimes q} \ar[r]^-{M_{p, q}}&sA,
} \end{equation*} where $s\colon A\xrightarrow{} sA$ denotes the canonical map $a\mapsto sa$ of degree $-1$. The maps $M_n$ and $M_{p, q}$ induce, respectively,  the differential $D$ and the multiplication $\mu$ on $T^c(sA)$; compare Remark~\ref{rem:A-inf} and \cite[Proposition~1.6]{LoRo2006}.

Accordingly, an $A_\infty$-morphism between two $B_\infty$-algebras is a $B_\infty$-morphism if and only if its induced dg coalgebra homomorphism is a dg bialgebra homomorphism.
	\end{rem}

\subsection{A duality theorem on $B_\infty$-algebras}
We introduce the opposite $B_\infty$-algebra and the transpose $B_\infty$-algebra of any given $B_\infty$-algebra. We show that there is a natural $B_\infty$-isomorphism between them.
\begin{defn}
\label{defnopposite}
The {\it opposite $B_\infty$-algebra} of a $B_{\infty}$-algebra $(A, m_n; \mu_{p, q})$ is defined to be the $B_{\infty}$-algebra  $(A, m_n; \mu_{p, q}^{\rm opp})$, where
$$\mu_{p, q}^{\rm opp}(a_1\otimes \dotsb \otimes a_p \bigotimes b_1 \otimes \dotsb\otimes b_q) = (-1)^{pq+\epsilon} \mu_{q, p}(b_1 \otimes \dotsb \otimes b_q \bigotimes a_1 \otimes \dotsb \otimes a_p).
$$
Here we denote
$\epsilon:=(|b_1|+\cdots+|b_q|)(|a_1|+\cdots+|a_p|).$
\end{defn}

\begin{rem}\label{Rem:oppositeB}
Let $(T^c(sA), \Delta, D, \mu)$ be the dg bialgebra associated to $(A, m_n; \mu_{p, q})$.  Then the corresponding dg bialgebra to $(A, m_n; \mu_{p, q}^{\rm opp})$ is given by $(T^c(sA), \Delta, D, \mu^{\rm opp})$ where
$
\mu^{\rm opp} = \mu\circ \tau$. Here $\tau$ is the swapping map
\begin{align}\label{algin:swappingmap}
\tau\colon T^c(sA) \otimes T^c(sA) \xrightarrow{\simeq} T^c(sA) \otimes T^c(sA), \quad x \otimes y \longmapsto (-1)^{|x||y|} y \otimes x.
\end{align}

We will simply denote $(A, m_n; \mu_{p, q}^{\rm opp})$ by $A^{\rm opp}$ when no confusion can arise. By definition,  $A^{\rm opp}$ and $A$ have the same $A_{\infty}$-algebra structure. Note that $(A^{\rm opp})^{\rm opp} = A$ as $B_{\infty}$-algebras.
\end{rem}

We also need the following notion of the {\it transpose $B_{\infty}$-algebra} $A^{\rm tr}$ of $A$. We mention that the transpose arises naturally when one considers the opposite algebra; see the proofs of Propositions~\ref{lemma-CL} and \ref{lemma-CL1}.

\begin{defn}
\label{defn-transposeB}
The {\it transpose  $B_{\infty}$-algebra}    of a $B_\infty$-algebra $(A, m_n; \mu_{p, q})$ is defined  to be the $B_{\infty}$-algebra   $A^{\rm tr}= (A, m_n^{\rm tr}; \mu_{p, q}^{\rm tr})$, where
\begin{align*}
\label{B-}
 m_n^{\rm{tr}}(a_1 \otimes a_2\otimes \dotsb \otimes a_n) & := (-1)^{\epsilon_n} m_n(a_n \otimes a_{n-1}\otimes\dotsb \otimes a_1),\nonumber\\
\mu^{\rm tr}_{p, q}(a_1\otimes \dotsb\otimes a_p\bigotimes b_1\otimes \dotsb \otimes b_q) &:= (-1)^{\epsilon} \mu_{p,q}(a_p \otimes \dotsb \otimes a_1\bigotimes b_q \otimes \dotsb \otimes b_1),
\end{align*}
for any $a_1, \dotsb, a_p, b_1, \dotsc, b_q \in A$. Here
\begin{align*}
\epsilon_n &= \frac{(n-1)(n-2)}{2} + \sum_{j=1}^{n-1} |a_j|(|a_{j+1}| + \dotsb + |a_{n}|)\\
\epsilon & = 1+ \frac{p(p+1)}{2} +\frac{q(q+1)}{2} + \sum_{j=1}^{p-1} |a_j|(|a_{j+1}| + \dotsb + |a_{p}|) + \sum_{j=1}^{q-1} |b_j|(|b_{j+1}|+ \dotsb + |b_q|).
\end{align*} \end{defn}

\begin{rem}\label{rem:antipode}
Let $(T^c(sA), \Delta, D, \mu)$ be the dg bialgebra associated to $(A, m_n; \mu_{p, q})$.  Then the corresponding dg bialgebra to $A^{\rm tr}$ is  $(T^c(sA), \Delta, D^{\rm tr}, \mu^{\rm tr})$. By Remark~\ref{remark-B-infinity}, $D^{\rm tr}$ and $\mu^{\rm tr}$ are uniquely determined by
\begin{align*}
M_n^{\rm tr}(sa_1\otimes \dotsb \otimes sa_n)& =(-1)^{n-1+\epsilon} M_n(sa_n\otimes \dotsb \otimes sa_1)\\
M_{p, q}^{\rm tr} (sa_1\otimes \dotsb \otimes sa_p \bigotimes sb_1 \otimes \dotsb \otimes sb_q)&  = (-1)^{p+q-1+\epsilon'} M_{p, q}(sa_p \otimes \dotsb \otimes sa_1 \bigotimes sb_q \otimes \dotsb \otimes sb_1)
\end{align*}
where
\begin{align*}
\epsilon &= \sum_{i=1}^{n-1} (|a_i|-1)((|a_{i+1}|-1) + \dotsb + (|a_n|-1))\\
\epsilon' &= \sum_{i=1}^{p-1} (|a_i|-1)((|a_{i+1}|-1) + \dotsb + (|a_p|-1)) + \sum_{j=1}^{q-1}(|b_j|-1)((|b_{j+1}|-1)+\dotsb +( |b_q|-1)).
\end{align*}
Here, the maps $M_n$ and $M_{p, q}$ are given in Remark~\ref{remark-B-infinity}, and are uniquely determined by $D$ and $\mu$, respectively.
\end{rem}

\begin{lem}\label{lem:transposeB}
There is an isomorphism of dg bialgebras
$$
O \colon (T^c(sA), \Delta^{\rm op}, D, \mu)  \xrightarrow{\simeq} (T^c(sA), \Delta, D^{\rm tr}, \mu^{\rm tr})
$$
where $\Delta^{\rm op}  = \tau\circ  \Delta$ and $\tau$ is given in \eqref{algin:swappingmap}.
\end{lem}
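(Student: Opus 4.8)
The map $O$ should be the unique coalgebra anti-automorphism of $(T^c(sA),\Delta)$ extending $-\mathbf{1}_{sA}$ in degree one; concretely, on a tensor word I would set
\[
O(sa_1\otimes \dotsb\otimes sa_n) = (-1)^{n+\epsilon}\, sa_n\otimes \dotsb \otimes sa_1,
\]
with $\epsilon = \sum_{i=1}^{n-1}(|a_i|-1)\bigl((|a_{i+1}|-1)+\dotsb+(|a_n|-1)\bigr)$ the Koszul sign for reversing the word in $sA$. The first thing to check is that $O$ is an isomorphism of graded coalgebras from $(T^c(sA),\Delta^{\rm op})$ to $(T^c(sA),\Delta)$; this is a standard computation with the deconcatenation coproduct, and $O^2$ is easily seen to differ from the identity only by Koszul signs that cancel, so $O$ is invertible (indeed an involution up to sign, hence bijective). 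I would record this as the ``coalgebra part'' and treat it as routine.

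The substance is compatibility with the differential and the multiplication, i.e.\ $O\circ D = D^{\rm tr}\circ O$ and $O\circ \mu = \mu^{\rm tr}\circ (O\otimes O)$ (as maps $T^c(sA)\otimes T^c(sA)\to T^c(sA)$, where the source uses the algebra structure $\mu$). For the differential: $D$ is the unique coderivation determined by its corestriction $\sum_n M_n$ to $sA$, and similarly $D^{\rm tr}$ is determined by $\sum_n M_n^{\rm tr}$. Since $O$ is a coalgebra morphism, both $O\circ D$ and $D^{\rm tr}\circ O$ are $(\mathrm{id},O)$-coderivations (resp.\ $(O,\mathrm{id})$-coderivations) into $T^c(sA)$, and two such coincide as soon as their corestrictions to $sA$ agree. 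The corestriction of $O\circ D$ to $sA$, evaluated on $sa_1\otimes\dotsb\otimes sa_n$, picks out $-M_n(sa_1\otimes\dotsb\otimes sa_n)$ (from the length-one output of $D$, the minus from $O$ in length one), while the corestriction of $D^{\rm tr}\circ O$ gives $M_n^{\rm tr}$ applied to the reversed word with the sign $(-1)^{n+\epsilon}$ from $O$; by the defining formula for $M_n^{\rm tr}$ in Remark~\ref{rem:antipode}, these match up to a sign check. The multiplication compatibility is handled the same way: $\mu$ and $\mu^{\rm tr}$ are each determined by their corestrictions $\sum_{p,q} M_{p,q}$, resp.\ $\sum_{p,q}M_{p,q}^{\rm tr}$, to $sA$; using that $O\otimes O$ followed by $\mu$ is a morphism out of the tensor-product coalgebra and that $O$ reverses and re-signs, I would compare corestrictions to $sA$ and read off precisely the defining formula for $M_{p,q}^{\rm tr}$ in Remark~\ref{rem:antipode}, including the swap of the two tensor factors that the $\tau$ in $\mu^{\rm op}$ (hidden in passing from $\Delta$ to $\Delta^{\rm op}$) produces.

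The main obstacle is purely bookkeeping: getting every Koszul sign right, especially reconciling the ``reverse the whole word of length $p+q$'' sign that $O$ contributes with the two separate ``reverse each block'' signs $\epsilon'$ in the definition of $M_{p,q}^{\rm tr}$, together with the block-transposition sign hidden in $\Delta^{\rm op}$ versus $\Delta$. The key identity I would isolate and prove once is that, for a concatenation $sa_1\otimes\dotsb\otimes sa_p\otimes sb_1\otimes\dotsb\otimes sb_q$, the sign relating $O$(whole word) to $O(\text{$a$-block})\otimes O(\text{$b$-block})$ followed by the swap of blocks is exactly the discrepancy between $\epsilon$ (for the length-$(p+q)$ word) and $\epsilon'$; granting this lemma, both the differential and the multiplication comparisons collapse to the formulas stated in Remark~\ref{rem:antipode}, and the coalgebra uniqueness principle finishes the proof. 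I expect no conceptual difficulty beyond this sign lemma; the structure of the argument — ``morphisms of (co)derivations agreeing on the cogenerators $sA$ are equal'' — is exactly the cofree coalgebra argument used throughout the $A_\infty$/$B_\infty$ literature.
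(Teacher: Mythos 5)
Your proposal is correct and follows essentially the same route as the paper: you define the same map $O(sa_1\otimes\dotsb\otimes sa_n)=(-1)^{n+\epsilon}sa_n\otimes\dotsb\otimes sa_1$, check it is a coalgebra isomorphism intertwining $\Delta^{\rm op}$ and $\Delta$, and then use cofreeness of $T^c(sA)$ to reduce $O\circ D=D^{\rm tr}\circ O$ and $O\circ\mu=\mu^{\rm tr}\circ(O\otimes O)$ to the corestriction identities $O_1\circ M_n=M_n^{\rm tr}\circ O_n$ and $O_1\circ M_{p,q}=M_{p,q}^{\rm tr}\circ(O_p\otimes O_q)$, which hold by the defining formulas in Remark~\ref{rem:antipode}. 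The remaining work is exactly the Koszul-sign bookkeeping you flag, which is routine.
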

\begin{proof}
We first define
$$
O(sa_1 \otimes \dotsb \otimes sa_n) = (-1)^{n+\epsilon} sa_n \otimes \dotsb \otimes sa_1
$$
where $\epsilon = \sum_{j=1}^{n-1}(|a_j|-1)((|a_{j+1}|-1)+\dotsb +(|a_n|-1))$. Clearly, we have $O \circ O = \mathbf 1_{T^c(sA)}$.

We observe that $O$ is a morphism of coalgebras, that is,  $(O\otimes O)\circ \Delta^{\rm op} = \Delta \circ O$. Denote by ${\rm pr}\colon T^c(sA)\rightarrow sA$ the projection. By the cofreeness of $T^c(sA)$, to prove $O \circ D = D^{\rm tr} \circ O$, it suffices to prove
$${\rm pr}\circ O \circ D = {\rm pr}\circ D^{\rm tr} \circ O.$$
The above identity is equivalent to $O_1\circ M_n= M_n^{\rm tr} \circ O_n$ for each $n\geq 1$. Here, $O_n = O|_{(sA)^{\otimes n}}$. But the last identity follows from the very definition of $M_n^{\rm tr}$.

We observe that $O_1\circ M_{p, q} =  M_{p, q}^{\rm tr} \circ (O_p\otimes O_q)$. By a similar argument as above, we infer that $ O \circ \mu = \mu^{\rm tr} \circ (O \otimes O)$.
\end{proof}

Let us prove the following duality theorem. For applications, we refer to Propositions~\ref{lemma-CL} and \ref{lemma-CL1} below.

\begin{thm}\label{thm:dualityB}
Let $(A, m_n;\mu_{p, q})$ be a $B_\infty$-algebra. Then there is a natural $B_\infty$-isomorphism between the opposite $B_\infty$-algebra $A^{\rm opp}$ and the transpose $B_\infty$-algebra $A^{\rm tr}$.
\end{thm}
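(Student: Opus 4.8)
The plan is to translate the statement entirely into the language of dg bialgebras on $T^c(sA)$, where $B_\infty$-isomorphisms correspond bijectively to isomorphisms of dg bialgebras (Remark~\ref{remark-B-infinity}). Thus it suffices to produce an isomorphism of dg bialgebras from the one associated to $A^{\rm opp}$ to the one associated to $A^{\rm tr}$. By Remark~\ref{Rem:oppositeB} the dg bialgebra of $A^{\rm opp}$ is $(T^c(sA), \Delta, D, \mu^{\rm opp})$ with $\mu^{\rm opp} = \mu\circ\tau$, and by Remark~\ref{rem:antipode} the dg bialgebra of $A^{\rm tr}$ is $(T^c(sA), \Delta, D^{\rm tr}, \mu^{\rm tr})$.

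The key idea is to use the antipode $S$ of the tensor coalgebra. Recall the classical fact that $(T^c(sA), \Delta, \mathrm{conc})$, with concatenation product and deconcatenation coproduct, is a connected graded bialgebra, hence a Hopf algebra; its antipode $S$ is the graded-signed reversal map. The plan is: first, invoke Lemma~\ref{lem:transposeB}, which already provides an isomorphism of dg bialgebras $O\colon (T^c(sA), \Delta^{\rm op}, D, \mu) \xrightarrow{\simeq} (T^c(sA), \Delta, D^{\rm tr}, \mu^{\rm tr})$. So I still need to bridge the gap between $(T^c(sA), \Delta, D, \mu^{\rm opp})$ and $(T^c(sA), \Delta^{\rm op}, D, \mu)$. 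The antipode $S$ of the Hopf algebra $(T^c(sA), \Delta, \mathrm{conc})$ is exactly the map intertwining the coproduct with its opposite in the required way: $S$ is a coalgebra anti-endomorphism, so $(S\otimes S)\circ\Delta = \tau\circ\Delta\circ S = \Delta^{\rm op}\circ S$, and dually it converts a product into its opposite. Composing $O\circ S$ (or arguing that $S$ itself is an isomorphism $(T^c(sA), \Delta, D, \mu^{\rm opp}) \to (T^c(sA), \Delta^{\rm op}, D, \mu)$ compatibly with the differentials) should yield the desired dg bialgebra isomorphism. Concretely I would verify: (i) $S$ is an isomorphism of coalgebras $(T^c(sA), \Delta) \to (T^c(sA), \Delta^{\rm op})$; (ii) $S \circ \mu^{\rm opp} = \mu \circ (S\otimes S)$, which follows because $S$ is an algebra anti-homomorphism for the concatenation product together with the compatibility of $\mu$ with concatenation built into the $B_\infty$ axioms; (iii) $S$ commutes with the differential, $S\circ D = D \circ S$ — using that $D$ is a coderivation and the cofreeness of $T^c(sA)$, it suffices to check $\mathrm{pr}\circ S\circ D = \mathrm{pr}\circ D\circ S$, reducing to a componentwise sign identity. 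Then $O\circ S$ is the sought isomorphism, and reading it back through the correspondence of Remark~\ref{remark-B-infinity} gives the natural $B_\infty$-isomorphism $A^{\rm opp}\to A^{\rm tr}$.

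The main obstacle I anticipate is bookkeeping of Koszul signs: verifying that the antipode $S$, given explicitly by the graded-signed reversal of tensors, intertwines $D$ with $D$ (equivalently $M_n$ with itself up to the correct sign) and $\mu^{\rm opp}$ with $\mu$. Both of these come down to proving, for each $n$ and each $p,q$, explicit sign identities comparing $\epsilon$-type exponents such as $\sum_{j}(|a_j|-1)(\cdots)$ with the reversed-index versions appearing after applying $S$; these are the same kind of quadratic-in-the-degrees sign computations that already occur in Definitions~\ref{defn-transposeB} and in Lemma~\ref{lem:transposeB}. A clean way to organize this is to phrase $S$ as the composite of the plain order-reversal with the standard Koszul sign of a permutation, and to note that the antipode axiom $\mu\circ(S\otimes\mathbf 1)\circ\Delta = u\circ\eta = \mu\circ(\mathbf 1\otimes S)\circ\Delta$ pins $S$ down uniquely, so that one may freely use whichever description makes a given compatibility transparent. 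The naturality claim is then automatic, since $S$, $O$, and the dictionary of Remark~\ref{remark-B-infinity} are all defined without reference to choices, and a $B_\infty$-morphism $f\colon A\to A'$ induces a dg bialgebra map that manifestly commutes with the respective antipodes.

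Finally, I would remark (matching Remark~\ref{rem:antipode-brace} promised in the introduction) that when $\mu_{p,q}=0$ for all $p>1$ — the brace case — the bialgebra $(T^c(sA),\Delta,\mu)$ is the Kontsevich–Soibelman minimal operad picture, the antipode $S$ has the familiar explicit formula $S(x) = \sum_{k\ge 1}(-1)^k \,\mathrm{conc}(\bar x^{(1)}\otimes\cdots\otimes\bar x^{(k)})$ over iterated reduced coproducts, and the induced $B_\infty$-isomorphism $(\Phi_1,\Phi_2,\dots)\colon A^{\rm opp}\to A^{\rm tr}$ acquires the graphic description via rooted trees; this is not needed for the proof of Theorem~\ref{thm:dualityB} but clarifies the structure of the isomorphism produced.
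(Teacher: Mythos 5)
Your overall strategy---pass to dg bialgebras via Remark~\ref{remark-B-infinity}, invoke Lemma~\ref{lem:transposeB}, and bridge $(T^c(sA),\Delta,D,\mu^{\rm opp})$ and $(T^c(sA),\Delta^{\rm op},D,\mu)$ by an antipode---is exactly the paper's. But the antipode you use is the wrong one, and this is a genuine gap, not sign bookkeeping. You take $S$ to be the antipode of ``the Hopf algebra $(T^c(sA),\Delta,\mathrm{conc})$'', i.e.\ the graded-signed reversal of tensors. First, deconcatenation and concatenation do not satisfy the bialgebra compatibility, so that Hopf algebra does not exist as stated (the signed reversal is the antipode of the \emph{shuffle} bialgebra). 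More importantly, the map that does the job is the antipode of the bialgebra $(T^c(sA),\Delta,\mu)$, where $\mu$ is the product encoding the $B_\infty$-products $\mu_{p,q}$: it is the standard Hopf-algebra fact that \emph{this} antipode is simultaneously an algebra and a coalgebra anti-morphism which yields your steps (i) and (ii) for free, and it is this $S$---defined inductively by $S(x)=-x-\sum\mu(x'\bigotimes S(x''))$ as in (\ref{equ-indu}), bijective by connectedness---for which $S\circ D=D\circ S$ can be proved by induction on tensor-length, using that $D$ is a derivation for $\mu$ and a coderivation for $\Delta$. In particular no closed formula for $S$ is needed, so the sign computations you anticipate never arise.

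With your $S$ (the signed reversal), steps (ii) and (iii) fail in general: the reversal has no reason to satisfy $S\circ\mu^{\rm opp}=\mu\circ(S\otimes S)$ (that identity characterizes the antipode of $\mu$ itself, not of concatenation), and it intertwines $D$ with $D^{\rm tr}$ rather than with $D$---indeed your $S$ is, up to sign, precisely the map $O$ of Lemma~\ref{lem:transposeB}, which plays a different role. If your claims did hold, the composite $O\circ S$ would preserve tensor-length and be the identity up to signs, so $A^{\rm opp}$ and $A^{\rm tr}$ would be \emph{strictly} isomorphic; this is false in general, as Remark~\ref{rem:antipode-brace} makes explicit: the induced $B_\infty$-isomorphism has nonvanishing higher components $\Theta_k$ ($k\geq 2$) built from the products $M^{\rm tr}_{1,r}$, so it is genuinely non-strict. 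Relatedly, your closing formula for the brace case should use the bialgebra product $\mu$ in the Takeuchi-type sum, not concatenation. The repair is exactly the paper's proof: replace your $S$ by the antipode of $(T^c(sA),\Delta,\mu)$, cite its bijectivity, and run the induction for $S\circ D=D\circ S$.
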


\begin{proof}
Denote by $(T^c(sA), \Delta, D, \mu)$ the corresponding dg bialgebra of the given $B_\infty$-algebra $(A, m_n;\mu_{p, q})$. By Remark~\ref{Rem:oppositeB}, the opposite $B_\infty$-algebra $A^{\rm opp}$ corresponds to the dg bialgebra $(T^c(sA), \Delta, D, \mu^{\rm opp})$.  By Lemma~\ref{lem:transposeB} the transpose $B_\infty$-algebra $A^{\rm tr}$ corresponds to the dg bialgebra $(T^c(sA), \Delta^{\rm op}, D, \mu)$.

Recall that the category of $B_\infty$-algebras is equivalent to the category of dg cofree bialgebras; compare Remark~\ref{remark-B-infinity}.  Therefore, it suffices to prove that there is a natural isomorphism of dg bialgebras between $(T^c(sA), \Delta, D, \mu^{\rm opp})$ and $(T^c(sA), \Delta^{\rm op}, D, \mu)$.

It is well known that as a connected bialgebra, the bialgebra $(T^c(sA), \Delta,\mu)$ admits an \emph{antipode} $S$; see \cite[Subsection~1.2]{LoRo2006}. Moreover, the antipode $S$ is bijective by a classical result \cite[Proposition~1.2]{Masu}.

Indeed, the antipode $S$ is given inductively  as follows: $S(sa)=-sa$ for $sa\in sA$; for $n\geq 2$ and $x\in (sA)^{\otimes n}$, we use Sweedler's notation to  write
$$\Delta(x)=1\bigotimes x + x\bigotimes 1 + \sum x'\bigotimes x''$$
 with $x'$ and $x''$ having smaller tensor-length, and then set
\begin{equation}
\label{equ-indu}
S(x)=-x-\sum \mu(x'\bigotimes S( x'')).
\end{equation}
A similar inductive formula holds for the inverse of $S$.

The antipode $S$ gives automatically a bialgebra isomorphism
$$
S \colon (T^c(sA), \Delta, \mu^{\rm opp}) \longrightarrow (T^c(sA), \Delta^{\rm op}, \mu).
$$
To complete the proof, it remains to show $S\circ D=D\circ S$. It is clear that $SD(sa)=DS(sa)$ for $sa\in sA$, as both equal $-sm_1(a)$. We prove  the general case by induction on the tensor-length of elements in $T^c(sA)$.

For $n\geq 2$ and $x\in (sA)^{\otimes n}$, we use  (\ref{equ-indu}) and the fact that $D$ is a derivation with respect to $\mu$, and obtain the first equality of the following identity.
\begin{align*}
DS(x)&=-D(x)-\sum \mu(D(x')\bigotimes S(x''))-\sum (-1)^{|x'|}\mu(x'\otimes DS(x''))\\
     &=-D(x)-\sum \mu(D(x')\bigotimes S(x''))-\sum (-1)^{|x'|}\mu(x'\otimes SD(x''))\\
     &=SD(x)
\end{align*}
Here, the second equality uses the induction hypothesis. For the last one, we use the fact that $D$ is a coderivation with respect to $\Delta$. Then $\Delta D(x)=1\bigotimes D(x) + D(x)\bigotimes 1 + \sum D(x')\bigotimes x''+ \sum (-1)^{|x'|} x'\bigotimes D(x'')$. Applying  (\ref{equ-indu}) to $D(x)$, we infer the last equality. This completes the proof. \end{proof}

\begin{rem}\label{rem:antipode-brace}
To obtain an explicit $B_\infty$-isomorphism from $A^{\rm opp}$ to $A^{\rm tr}$, one has to compute ${\rm pr}\circ O\circ S$, where $O$ is the isomorphism in Lemma~\ref{lem:transposeB} and ${\rm pr}\colon T^c(sA)\rightarrow sA$ is the projection. By the inductive formula (\ref{equ-indu}), it seems possible to describe the antipode $S$ explicitly. However, if the multiplication $\mu$ is arbitrary, we do not have a closed formula for $S$.

We assume that  $\mu_{p, q}=0$ for  any $p>1$ (for example, the condition holds for any brace $B_\infty$-algebra; see Subsection~\ref{subsec:brace}). We might compute explicitly $S$ and then ${\rm pr}\circ O\circ S$. It turns out that  the above $B_\infty$-isomorphism is given by
$$
\Theta_k \colon (sA)^{\otimes k} \longrightarrow sA, \quad k \geq 1,
$$
where  $\Theta_k = \mathbf 1$ and for $k >1$ \begin{align}
\label{recursive}
\Theta_{k} = \sum_{(i_1, \dotsc, i_r) \in \mathcal I_{k-1}}M^{\rm tr}_{1,r} \circ(\mathbf{1} \otimes \Theta_{i_1}\otimes  \Theta_{i_2}\otimes \cdots \otimes  \Theta_{i_r}).
\end{align}
Here, $M_{1, r}^{\rm tr}$ is given in Remark \ref{rem:antipode} and the sum on the right hand side is taken over the set
$$
\mathcal I_{k-1} = \{(i_1, i_2,  \dotsc, i_r)\mid r\geq 1\ \text{and} \    i_1, i_2, \dotsc, i_r \geq 1\ \ \text{such that} \ \  i_1+ i_2 + \dotsb +  i_r = k-1\}.
$$
For instance, we have $\Theta_2 =M^{\rm tr}_{1,1}, \ \Theta_3 =M^{\rm tr}_{1,2}+M^{\rm tr}_{1,1}\circ (\mathbf 1 \otimes M^{\rm tr}_{1,1})$ and
\begin{align*}
\Theta_4 ={} &M^{\rm tr}_{1, 3}+M^{\rm tr}_{1, 2}\circ (\mathbf 1 \otimes M^{\rm tr}_{1, 1} \otimes \mathbf 1) + M^{\rm tr}_{1, 2}\circ (\mathbf 1 \otimes \mathbf 1 \otimes M^{\rm tr}_{1, 1}) \\
& + M^{\rm tr}_{1, 1}\circ (\mathbf 1 \otimes M^{\rm tr}_{1, 2})+ M^{\rm tr}_{1, 1}\circ \big(\mathbf 1 \otimes M^{\rm tr}_{1, 1}\circ(\mathbf 1 \otimes M^{\rm tr}_{1, 1})\big).
\end{align*}

The construction of the maps $(\Theta_k)_{k\geq 1}$ might be also obtained  from the Kontsevich-Soibelman minimal operad $\mathcal{M}$ introduced in  \cite[Section~5]{KoSo}. Roughly speaking, the $n$-th space $\mathcal M(n)$ for $n \geq 1$ is the  $\mathbb k$-linear space spanned by planar rooted trees with $n$-vertices labelled by $1, 2, \dotsc, n$ and some (possibly zero) number of unlabelled vertices. Note that  the summands of $\Theta_k$ correspond bijectively to those trees $T$ without unlabelled vertices in $\mathcal M(k)$  whose vertices are labelled in counterclockwise order. Since such labelling is unique, the number of summands in $\Theta_k$ equals the Catalan number $C_{k-1}=\frac{1}{k} {{2k-2} \choose{k-1}}$. For instance, the three trees in Figure \ref{minimal-operad}  correspond respectively to the following three summands in $\Theta_6$:
\begin{align*}
M_{1, 3}^{\rm tr} \circ ( \mathbf 1 \otimes M_{1, 1}^{\rm tr} \otimes \mathbf 1 \otimes M_{1, 1}^{\rm tr})\\
M^{\rm tr}_{1, 2}\circ  (\mathbf 1 \otimes M^{\rm tr}_{1, 2} \otimes M_{1, 1}) \\
M_{1, 2}^{\rm tr}\circ (\mathbf 1 \otimes \mathbf 1 \otimes M^{\rm tr}_{1, 2}(\mathbf 1 \otimes M^{\rm tr}_{1, 1}\otimes \mathbf 1))
\end{align*}
\begin{figure}[h]
\centering{
 \includegraphics[height=35mm]{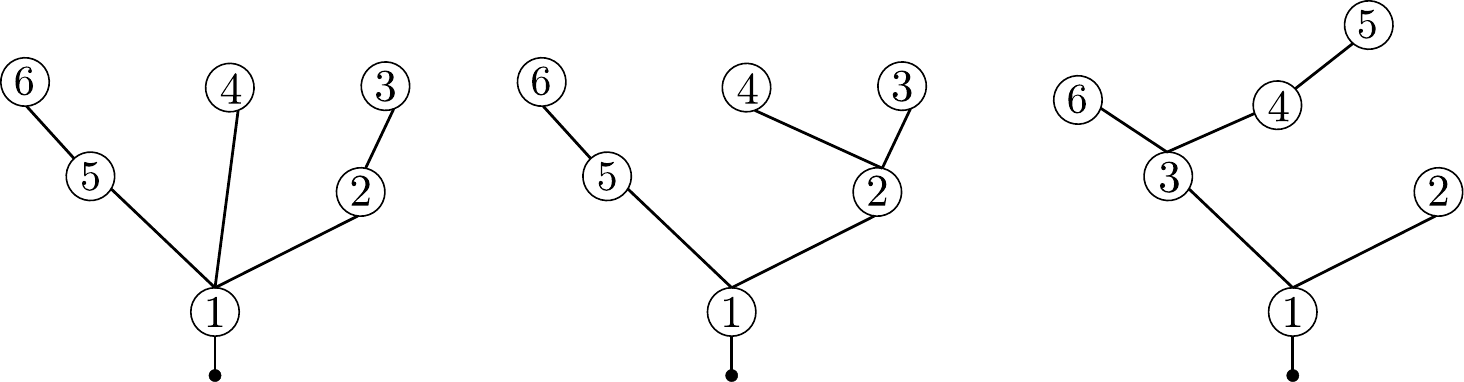}
 \caption{Three of the summands in $\Theta_6$}
  \label{minimal-operad}
  }
\end{figure}
\end{rem}

\subsection{Brace $B_\infty$-algebras}\label{subsec:brace}

The following new terminology will be convenient for us.

\begin{defn}\label{defn:special}
A $B_{\infty}$-algebra $(A, m_n; \mu_{p, q})$ is called a {\it brace $B_\infty$-algebra},  provided that $m_n=0$ for $n>2$  and that $\mu_{p, q}=0$ for $p> 1$. \hfill $\square$
\end{defn}

We mention that a brace $B_\infty$-algebra is called  a {\it homotopy $G$-algebra} in \cite{GV} or  a {\it Gerstenhaber-Voronov algebra} in \cite{LoRo, GTV, BI}. The notion is introduced mainly as an algebraic model to  unify the rich algebraic structures on the Hochschild cochain complex of an algebra.

The underlying $A_\infty$-algebra structure of a brace $B_{\infty}$-algebra  is just a dg algebra. For a brace $B_\infty$-algebra, we usually use the following notation, called the {\it brace operation} \cite{GV, Vor}:
\begin{align*}
a \{b_1, \dotsc, b_p\}:=(-1)^{p|a|+(p-1)|b_1|+(p-2)|b_2|+\cdots+|b_{p-1}|} \mu_{1, p}(a\bigotimes b_1\otimes \cdots\otimes b_p)
\end{align*}
for any $a, b_1, \dotsc, b_p\in A$. In particular, $a\{\emptyset\}=\mu_{1, 0}(a\bigotimes 1)=a $ by (\ref{equation-B01}). We will abbreviate $a\{b_1, \dotsc, b_p\}$  and $a'\{c_1, \dotsc, c_q\}$ as $a\{b_{1, p}\}$ and $a'\{c_{1, q}\}$, respectively.

The $B_\infty$-algebras occurring in this paper  are all brace $B_{\infty}$-algebras; see Subsections~\ref{subsection-dg-HH} and \ref{subsec:sHcc}. In the following remark, we describe the axioms for brace $B_\infty$-algebras explicitly, which will be useful later.

\begin{rem}
\label{rem-specialB} Let $(A, m_n; \mu_{p,q})$ be a brace  $B_{\infty}$-algebra.
  Then the above $B_{\infty}$-relation (\ref{equation-B02}) is simplified as (\ref{higher-pre-Jacobi}) below,  and the $B_{\infty}$-relation (\ref{equation-B03}) splits into (\ref{distributivity}) and (\ref{higher-homotopy}) below (corresponding to the cases $k=2$ and $k=1$, respectively).
\begin{enumerate}
\item \label{higher-pre-Jacobi} The higher pre-Jacobi identity:
\begin{equation*}
\begin{split}
& (a\{b_{1, p}\}) \{c_{1, q}\}\\
={}&\sum (-1)^{\epsilon}a\{c_{1, i_1}, b_1\{c_{i_1+1, i_1+l_1}\}, c_{i_1+l_1+1, i_2}, b_2\{c_{i_2+1, i_2+l_2}\}, \dotsc, c_{i_p}, b_p\{c_{i_p+1, i_p+l_p}\}, c_{i_p+l_p+1, q}\},
\end{split}
\end{equation*}
where the sum is taken over all  sequences of nonnegative integers $(i_1, \dotsc, i_p; l_1, \dotsc, l_p)$ such that $$0\leq i_1\leq i_1+l_1\leq i_2 \leq i_2+l_2\leq i_3 \leq \cdots \leq i_p+l_p\leq q$$ and $$\epsilon=\sum\limits_{l=1}^p \left(\left(|b_l|-1\right)\sum\limits_{j=1}^{i_l}\left(|c_j|-1\right)\right).$$
\item \label{distributivity} The distributivity:
\begin{align*}
m_2(a_1\otimes a_2)\{b_{1, q}\}=\sum_{j=0}^q(-1)^{|a_2|\sum\limits_{i=1}^j (|b_i|-1)} m_2((a_1\{b_{1, j}\})\otimes (a_2\{b_{j+1, q}\})).
\end{align*}

\item \label{higher-homotopy} The higher homotopy:
\begin{align*}
&  m_1(a\{b_{1, p}\}  )-(-1)^{|a|(|b_1|-1)} m_2(b_1\otimes (a \{b_{2, p}\}))+(-1)^{\epsilon_{p-1}} m_2((a\{b_{1, p-1}\})\otimes b_p) \\
={} &  m_1(a) \{b_{1, p}\}-\sum_{i=0}^{p -1} (-1)^{\epsilon_i}a\{b_{1, i}, m_1(b_{i+1}),b_{i+2, p}\}  + \sum_{i=0}^{p -2} (-1)^{\epsilon_{i+1}}a \{b_{1, i}, m_2(b_{i+1,i+2}), b_{i+3, p}\},
\end{align*}
where $\epsilon_0 = |a|$ and $\epsilon_i=|a|+\sum\limits_{j=1}^i(|b_j|-1)$ for $i \geq 1$.
\end{enumerate}
\end{rem}

\begin{rem}\label{remark-B}
The opposite $B_\infty$-algebra $(A, m_n; \mu_{p, q}^{\rm opp})$ of a brace $B_{\infty}$-algebra $A$  is given by
$$\mu^{\rm opp}_{0, 1}=\mu^{\rm opp}_{1, 0}=\mathbf{1}_A, \ \mu^{\rm opp}_{p,1} (b_1\otimes \cdots\otimes b_p\bigotimes  a)=(-1)^{\epsilon}\mu_{1, p}(a\bigotimes b_1\otimes \cdots\otimes b_p),$$ and $\mu^{\rm opp}_{p, q}=0$ for other cases,
where $\epsilon=|a|(|b_1|+\cdots+|b_p|)+p$. In general, the opposite $B_\infty$-algebra $A^{\rm opp}$ is not a brace $B_{\infty}$-algebra.

The transpose $B_{\infty}$-algebra $(A^{\rm tr}, m_1^{\rm tr}, m_2^{\rm tr}; -\{-, \dotsc, -\}^{\rm tr})$ of a brace $B_\infty$-algebra $A$ is also a brace $B_{\infty}$-algebra given by
\begin{align}
\label{tr1}
m_1^{\rm tr} = m_1,\quad m_2^{\rm tr}(a \otimes b) =(-1)^{|a||b|} m_2(b \otimes a),\nonumber\\
a\{b_1, b_2, \dotsc, b_k\}^{\rm tr} =(-1)^{\epsilon'} a \{b_k, b_{k-1}, \dotsc, b_1\}
\end{align}
where $\epsilon'=k+\sum_{j=1}^{k-1} (|b_j|-1)\big((|b_{j+1}|-1)+ (|b_{j+2}|-1) + \dotsb + (|b_k|-1)\big)$.  As dg algebras, $(A^{\rm tr}, m_1^{\rm tr}, m_2^{\rm tr})$ coincides with the (usual) opposite dg algebra $A^{\rm op}$ of $A$.
\end{rem}

The following observation follows directly from    Definition~\ref{definition-B-infinity}.

\begin{lem}\label{lemma-infinity-morphism1}
Let $A$ and $A'$ be two brace  $B_{\infty}$-algebras. A homomorphism of dg algebras $f\colon  (A, m_1, m_2)\rightarrow (A', m_1', m_2')$ becomes a strict $B_{\infty}$-morphism if
and only if $f$ is compatible with $-\{-, \cdots,-\}_A$ and $-\{-, \dotsc, -\}_{A'}$, namely
$$f(a\{b_1, \dotsc, b_p\}_A)=f(a)\{ f(b_1),\dotsc,  f(b_p)\}_{A'}$$
for any $p\geq 1$ and $a, b_1, \dotsc, b_p \in A$. \hfill $\square$
\end{lem}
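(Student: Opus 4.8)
The plan is to unwind the defining identity $(\ref{equation-B-new})$ of a $B_\infty$-morphism under the assumption that $f=(f_n)_{n\ge 1}$ is strict, that is $f_1=f$ and $f_n=0$ for $n\ge 2$, and then to simplify using the brace hypothesis. First I would note that the strict collection $(f,0,0,\dots)$ automatically satisfies the $A_\infty$-morphism relations $(\ref{equationforAmorph})$: since $m_n=m_n'=0$ for $n\ge 3$ (both $A$ and $A'$ being brace $B_\infty$-algebras), relation $(\ref{equationforAmorph})$ is vacuous for $n\ge 3$, while for $n=1$ and $n=2$ it reduces exactly to $f\circ m_1=m_1'\circ f$ and $f\circ m_2=m_2'\circ(f\otimes f)$, which hold since $f$ is a homomorphism of dg algebras. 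Thus the only additional content of ``$f$ is a strict $B_\infty$-morphism'' is the family $(\ref{equation-B-new})$.

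Next I would substitute $f_1=f$, $f_n=0$ $(n\ge 2)$ into $(\ref{equation-B-new})$ for fixed $p,q\ge 0$. On the right-hand side only the summand with $t=1$, hence $l_1=p$ and $m_1=q$, survives, and its exponent is $\eta=q(p-p)+(p+q-1)\cdot 0=0$, so the right-hand side equals $f\circ\mu_{p,q}$. On the left-hand side $f_{i_k}\ne 0$ forces $i_k=1$ for all $k$, hence $r=p$, and likewise $s=q$ with all $j_k=1$; then the exponent $\epsilon$ vanishes since every $i_k-1$ and $j_k-1$ is zero, so the left-hand side equals $\mu'_{p,q}\circ f^{\otimes(p+q)}$. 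Consequently, for a strict $f$, identity $(\ref{equation-B-new})$ is equivalent to the family of identities
\begin{equation*}
\mu'_{p,q}\big(f(a_1)\otimes\cdots\otimes f(a_p)\bigotimes f(b_1)\otimes\cdots\otimes f(b_q)\big)=f\big(\mu_{p,q}(a_1\otimes\cdots\otimes a_p\bigotimes b_1\otimes\cdots\otimes b_q)\big),
\end{equation*}
ranging over all $p,q\ge 0$ and all homogeneous arguments.

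Now I would discard the trivial cases. For $p>1$ both $\mu_{p,q}$ and $\mu'_{p,q}$ vanish by Definition~\ref{defn:special}, so the identity reads $0=0$; by the unital condition $(\ref{equation-B01})$ the case $p=0$ is trivial unless $q=1$, where $\mu_{0,1}=\mathbf{1}=\mu'_{0,1}$ forces the identity $f(b_1)=f(b_1)$, and the case $p=1$, $q=0$ is trivial as $\mu_{1,0}=\mathbf{1}=\mu'_{1,0}$. Hence the entire content of $(\ref{equation-B-new})$ for a strict $f$ between brace $B_\infty$-algebras is the family of identities with $p=1$ and $q\ge 1$, namely $\mu'_{1,q}(f(a)\bigotimes f(b_1)\otimes\cdots\otimes f(b_q))=f(\mu_{1,q}(a\bigotimes b_1\otimes\cdots\otimes b_q))$.

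Finally I would pass to the brace operation. Since $f$ has degree $0$, one has $|f(x)|=|x|$ for homogeneous $x$, so the sign $(-1)^{q|a|+(q-1)|b_1|+\cdots+|b_{q-1}|}$ that converts $\mu_{1,q}$ into $a\{b_1,\dots,b_q\}$ in $A$ agrees, after applying $f$, with the corresponding sign in $A'$. Therefore the identity for $(p,q)=(1,q)$ is equivalent to $f(a\{b_1,\dots,b_q\}_A)=f(a)\{f(b_1),\dots,f(b_q)\}_{A'}$, which is precisely the condition in the statement; this proves the equivalence. The only delicate point is the sign bookkeeping---verifying that $\epsilon$ and $\eta$ vanish on the surviving terms and that the brace-defining sign is preserved by the degree-zero map $f$---but this is routine, and the lemma indeed follows directly from Definition~\ref{definition-B-infinity}.
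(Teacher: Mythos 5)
Your proof is correct and takes exactly the route the paper intends: the paper states this lemma as an observation ``following directly from Definition~\ref{definition-B-infinity}'' without writing out the verification, and your argument is precisely that verification—specializing \eqref{equation-B-new} to a strict morphism, noting the signs $\epsilon$ and $\eta$ vanish, discarding the cases killed by the unital condition and the brace hypothesis $\mu_{p,q}=0$ for $p>1$, and translating the surviving identities $\mu'_{1,q}\circ f^{\otimes 1+q}=f\circ\mu_{1,q}$ into brace compatibility via the degree-zero sign bookkeeping.
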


Let $f=(f_n)_{\geq 1}\colon A\to A'$ be an $A_{\infty}$-morphism. We define $\widetilde{f}_n\colon (sA)^{\otimes n} \to A'$ by the following commutative diagram.
\begin{equation*}\label{equation:shift} \xymatrix@C=4pc{
A^{\otimes n}\ar[d]_-{s^{\otimes n}} \ar[r]^-{f_n} & A'\\
(sA)^{\otimes n} \ar[ru]_-{\widetilde{f}_n}
} \end{equation*}
Namely, we have
\begin{equation}
\label{formula-sec}
\widetilde{f}_n(sa_1\otimes sa_2\otimes \cdots\otimes sa_n)=(-1)^{\sum_{i=1}^n (n-i)|a_i|}
f_n(a_1\otimes a_2\otimes \cdots\otimes a_n).\end{equation}
 The advantage of using $(\widetilde{f}_n)_{n\geq 1}$ in Lemma \ref{lemma-infinity-morphism} below,  instead of  using $(f_n)_{n\geq 1}$,  is that the signs become much simpler.

The following lemma will be used in the proofs of Theorem~\ref{prop-B4}. We will abbreviate $sa_1\otimes \cdots \otimes  sa_n$ as $sa_{1, n}$, and $a\{b_1, \dotsc, b_m\}$ as $a\{b_{1, m}\}$.

\begin{lem}\label{lemma-infinity-morphism}
Let $A$ and $A'$ be two brace $B_{\infty}$-algebras. Assume that $(f_n)_{n\geq 1}\colon  A \rightarrow A'$ is an $A_\infty$-morphism. Then  $(f_n)_{n\geq 1} \colon   A\rightarrow  A'^{\rm opp}$ is a $B_{\infty}$-morphism  if and only if  the following identities hold for any  $p, q\geq 0$ and  $a_1, \dotsc, a_p, b_1, \dotsc, b_q\in A$
\begin{align}\label{equation-infinity-morphism}
&\sum_{r\geq 1} \sum_{i_1+\dotsb+i_r=p} (-1)^{\epsilon}\widetilde{f}_q(sb_{1, q})\{\widetilde{f}_{i_1}(sa_{1, i_1}), \widetilde{f}_{i_2}(sa_{i_1+1, i_1+i_2}), \dotsb, \widetilde{f}_{i_r}(sa_{i_1+\dotsb+i_{r-1}+1, p})\}_{A'}\nonumber\\
={}&\sum (-1)^{\eta}
\widetilde{f}_t(sb_{1, j_1}\otimes s(a_1\{b_{j_1+1, j_1+l_1}\}_A)\otimes  sb_{j_1+l_1+1, j_2}\otimes s(a_2\{b_{j_2+1, j_2+l_2}\}_A) \otimes\nonumber\\
& \quad\quad \dotsb\otimes sb_{j_p} \otimes s(a_{p}\{b_{j_p+1, j_p+l_p}\}_A)\otimes sb_{j_p+l_p+1, q}).
\end{align}
Here, the maps $\widetilde{f}_q$ and $\widetilde{f}_t$ are defined in  (\ref{formula-sec});  the sum on the right hand side is taken over all the  sequences of nonnegative integers  $(j_1, \dotsc, j_{p}; l_1, \dotsc, l_p)$ such that  $$0 \leq j_1 \leq j_1+l_1 \leq j_2 \leq j_2+l_2\leq \dotsb \leq j_p \leq j_p+l_p \leq q,$$ and $t=p+q-l_1-\cdots-l_p$;   the signs are determined by the identities
\begin{align*}
\epsilon &= (|a_1|+\cdots+|a_p|-p)(|b_1|+\cdots+|b_q|-q),\; and \\
\eta& =\sum_{i=1}^p (|a_i|-1)\big((|b_1|-1)+(|b_2|-1) + \dotsb +(|b_{j_i}|-1)\big).
\end{align*}
\end{lem}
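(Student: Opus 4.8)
The plan is to translate the $B_\infty$-morphism condition \eqref{equation-B-new} into the claimed identities \eqref{equation-infinity-morphism} by a direct bookkeeping argument, exploiting the fact that both $A$ and $A'^{\rm opp}$ have almost all $B_\infty$-products vanishing. First I would recall from Remark~\ref{remark-B} that the only nonzero $B_\infty$-products of the target are $\mu'^{\rm opp}_{0,1}=\mu'^{\rm opp}_{1,0}=\mathbf 1$ and $\mu'^{\rm opp}_{r,1}$ for $r\geq 1$, and that the only nonzero $B_\infty$-products of the source are $\mu_{0,1}=\mu_{1,0}=\mathbf 1$ and $\mu_{1,l}$ for $l\geq 1$. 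Plugging this into \eqref{equation-B-new}: on the left side the term $\mu'^{\rm opp}_{r,s}(f_{i_1}\otimes\cdots\otimes f_{i_r}\bigotimes f_{j_1}\otimes\cdots\otimes f_{j_s})$ survives only when $s=1$, i.e.\ $q=j_1$ is \emph{not} cut, which forces the single factor $f_{j_1}=f_q$ to absorb all $q$ inputs $b_1,\dots,b_q$, while the $a$-inputs get partitioned into $r$ blocks fed to $f_{i_1},\dots,f_{i_r}$ and then to $\mu'^{\rm opp}_{r,1}$. Unwinding $\mu'^{\rm opp}_{r,1}$ via its definition in Remark~\ref{remark-B} in terms of $\mu'_{1,r}$, and then via the brace notation $\mu'_{1,r}(a\bigotimes b_{1,r})=\pm a\{b_{1,r}\}$, this is exactly the left side of \eqref{equation-infinity-morphism}. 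On the right side of \eqref{equation-B-new}, the composite $(\mu_{l_1,m_1}\otimes\cdots\otimes\mu_{l_t,m_t})\circ\tau$ has a nonzero contribution only when each $\mu_{l_k,m_k}$ is either $\mu_{1,0}$, $\mu_{0,1}$, or $\mu_{1,m_k}$; translating the combinatorics of which $a_i$'s carry a block of $b$'s yields precisely the sum over sequences $(j_1,\dots,j_p;l_1,\dots,l_p)$ on the right of \eqref{equation-infinity-morphism}.

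The technical heart of the argument is the sign comparison. I would pass to the shifted picture using the maps $\widetilde f_n$ of \eqref{formula-sec}, since the excerpt explicitly remarks that the signs become simpler there; concretely, I would re-express both sides of \eqref{equation-B-new} in terms of the induced dg bialgebra homomorphism $T^c(sA)\to T^c(sA')$ (Remark~\ref{remark-B-infinity}), where the $B_\infty$-morphism condition becomes compatibility with the multiplications $\mu$ and $\mu'^{\rm opp}=\mu'\circ\tau$. Then the left side becomes a sum of terms $\widetilde f_q(sb_{1,q})\{\widetilde f_{i_1}(sa_{1,i_1}),\dots\}_{A'}$ and the sign $\epsilon$ is forced to be the Koszul sign of moving the block $sb_{1,q}$ (total shifted degree $\sum|b_j|-q$) past the block $sa_{1,p}$ (total shifted degree $\sum|a_i|-p$), matching $\epsilon=(|a_1|+\cdots+|a_p|-p)(|b_1|+\cdots+|b_q|-q)$. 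For the right side, the sign $\eta$ arises from applying $\tau_{(l_1,\dots,l_t;m_1,\dots,m_t)}$, which interleaves each $a_i$ with its block of $b$'s; the resulting Koszul sign is exactly $\eta=\sum_i(|a_i|-1)((|b_1|-1)+\cdots+(|b_{j_i}|-1))$. I would verify these by carefully matching the general signs $\epsilon,\eta$ in \eqref{equation-B-new} against those in \eqref{equation-infinity-morphism} term-by-term, using that $\widetilde f_n$ absorbs the sign $(-1)^{\sum(n-i)|a_i|}$ and that the brace operation absorbs its defining sign.

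The step I expect to be the main obstacle is precisely this sign reconciliation: the formula \eqref{equation-B-new} carries the signs $\epsilon=\sum_k(i_k-1)(r+s-k)+\sum_k(j_k-1)(s-k)$ and $\eta=\sum_k m_k(p-l_1-\cdots-l_k)+\sum_k(l_k+m_k-1)(t-k)$, which look quite different from the target signs, and collapsing them requires carefully substituting $s=1$, $m_k\in\{0,1\}$, and tracking how the sign buried inside the definition of $\mu^{\rm opp}$ (namely $(-1)^{pq+\epsilon}$ in Definition~\ref{defnopposite}, specialized in Remark~\ref{remark-B} to $(-1)^{|a|(|b_1|+\cdots+|b_p|)+p}$) interacts with the shift signs and the brace-operation signs. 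I would organize this by first handling the degenerate low cases $p=0$ (where both sides reduce to the $A_\infty$-morphism relation for $f$, already known) and $q=0$ (where the right side is a single term $\widetilde f_p(s(a_1)\otimes\cdots\otimes s(a_p))$ composed trivially and the left side collapses since $\mu'^{\rm opp}_{r,0}=0$ for $r\neq1$), and then doing the general induction-free bookkeeping. Once the signs are pinned down, the equivalence is a formal matching of summands, and no further input beyond Definition~\ref{definition-B-infinity}, Remark~\ref{remark-B}, and the formula \eqref{formula-sec} is needed.
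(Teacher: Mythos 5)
Your proposal follows essentially the same route as the paper's proof: specialize \eqref{equation-B-new} using that the target $A'^{\rm opp}$ has $\mu'^{\rm opp}_{r,s}=0$ unless $s=1$ (with $(\mu'_{r,1})^{\rm opp}=(-1)^{r}\mu'_{1,r}\circ\tau_{r,1}$) and that the source is brace, so the right-hand side collapses to the sum indexed by the sequences $(j_1,\dotsc,j_p;l_1,\dotsc,l_p)$, and then translate the signs into the shifted picture via \eqref{formula-sec}. The paper performs that last sign bookkeeping by applying the collapsed identity to the element rescaled by $(-1)^{\sum_{i}|a_i|(p+q-i)+\sum_{j}|b_j|(q-j)}$, which is exactly the coordinate form of your ``pass to $T^c(sA)$'' step, so the two arguments coincide in substance (your side remark that the $p=0$ case reduces to the $A_\infty$-morphism relation is inaccurate but harmless, since that case is trivial by the unital conditions).
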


\begin{proof}
Since $\mu_{s, r}' = 0$ for $s > 1$ and $(\mu'_{r, 1})^{\rm opp}=(-1)^{r}\mu'_{1, r}\circ \tau_{r, 1}$, the identity (\ref{equation-B-new}) becomes
 \begin{align}
 \label{202003}
&\sum_{r \geq 1}\sum_{\substack{i_1+ i_2+ \dotsb + i_r=p}}(-1)^{\epsilon_1 + r} \mu_{1, r}' \circ \tau_{r, 1}(f_{i_1}\smallotimes  \cdots\smallotimes f_{i_r}\otimes f_{q})  \\
={}&\sum_{\substack{t\geq 1\\ m_1+\cdots+m_t=p\\ n_1+\cdots+n_t=q}}(-1)^{\eta_1}f_t\circ (\mu_{m_1,n_1}\smallotimes \cdots \smallotimes \mu_{m_t, n_t})\circ \tau_{(m_1, \dotsc, m_t; n_1, \dotsc, n_t)} \nonumber \\
={}&\sum (-1)^{\eta_1}  f_t\circ  (\mu_{0, 1}^{\smallotimes j_1} \smallotimes  \mu_{1, l_1} \smallotimes \mu_{0, 1}^{\smallotimes j_2-j_1-l_1} \smallotimes \mu_{1, l_2} \smallotimes \!\dotsb\! \smallotimes\mu_{1, l_p} \smallotimes \mu_{0, 1}^{\smallotimes q-l_p-j_p})\circ \tau_{(m_1, \dotsc, m_t; n_1, \dotsc, n_t)},\nonumber
\end{align}
where the sum on the right hand side of the last identity is taken over all the  sequences of nonnegative integers  $(j_1, \dotsc, j_{p}; l_1, \dotsc, l_p)$ such that  $$0 \leq j_1 \leq j_1+l_1 \leq j_2 \leq j_2+l_2\leq \dotsb \leq j_p \leq j_p+l_p \leq q,$$ and $t=p+q-l_1-\cdots-l_p$.    The signs are determined by
\begin{align*}
\epsilon_1 & =\sum_{k=1}^r (i_k-1)(r+1-k), \; \mbox{\rm and}\\
\eta_1 & =\sum_{k=1}^t n_k(p-m_1-\cdots-m_k)+\sum_{k=1}^t(m_k+n_k-1)(t-k)\\
&=\sum_{i=1}^p j_i+\sum_{i=1}^p l_i (t-j_i-l_1-\dotsb- l_{i-1}+i).
\end{align*}
We apply \eqref{202003} to the element $(-1)^{\sum_{i=1}^p|a_i|(p+q-i)+\sum_{j=1}^q |b_j|(q-j)}(a_1\otimes \cdots \otimes a_p\bigotimes b_1\otimes \cdots \otimes b_q)$, where the sign $(-1)^{\sum_{i=1}^p|a_i|(p+q-i)+\sum_{j=1}^q |b_j|(q-j)} $ is added just in order to simplify the sign computation.
Using  (\ref{formula-sec}), we obtain the required identity (\ref{equation-infinity-morphism}).
\end{proof}

\subsection{Gerstenhaber algebras}

In this subsection, we recall the well-known relationship between $B_{\infty}$-algebras and Gerstenhaber algebras.

\begin{defn}
A {\it Gerstenhaber algebra} is the triple $(G, -\cup-, [-,-])$, where   $G=\bigoplus_{n\in \mathbb Z} G^n$ is a graded $\mathbb k$-space equipped with two graded maps: a cup product
$$-\cup- \colon   G\otimes G \longrightarrow G$$
of degree zero, and a Lie bracket of degree $-1$
$$[-, -]\colon  G \otimes G\longrightarrow G$$
satisfying the following conditions:
\begin{enumerate}
\item $(G, -\cup-)$ is a graded commutative associative algebra;
\item $(G^{*+1}, [-, -])$ is a graded Lie algebra, that is
$$[\alpha, \beta]=-(-1)^{(|\alpha|-1)(|\beta|-1)}[\beta, \alpha]$$
and
\begin{align}\label{jacobiidentityg}(-1)^{(|\alpha|-1)(|\gamma|-1)}[[\alpha, \beta], \gamma]+(-1)^{(|\beta|-1)(|\alpha|-1)}[[\beta, \gamma], \alpha]
+(-1)^{(|\gamma|-1)(|\beta|-1)}[[\gamma, \alpha], \beta]=0;\end{align}
\item the operations $-\cup-$ and $[-, -]$ are compatible through the graded Leibniz rule
\begin{equation*}[\alpha, \beta\cup \gamma]=[\alpha, \beta]\cup \gamma+(-1)^{(|\alpha|-1)|\gamma|} \beta\cup[\alpha, \gamma].\tag*{$\square$}
\end{equation*}
\end{enumerate}
\end{defn}

The following well-known result is contained in \cite[Subsection 5.2]{GJ}.

\begin{lem}\label{lem:Ger}
Let $(A, m_n; \mu_{p, q})$ be a $B_{\infty}$-algebra. Then there is a natural Gerstenhaber algebra structure $(H^*(A, m_1), -\cup-, [-, -])$ on its cohomology, where the cup product $-\cup-$ and the Lie bracket $[-, -]$ of degree $-1$ are given by
\begin{equation*}
\begin{split}
\alpha\cup \beta={}&m_2(\alpha, \beta);\\
[\alpha, \beta]={}&(-1)^{|\alpha|} \mu_{1, 1}(\alpha,  \beta)-(-1)^{(|\alpha|-1)(|\beta|-1)+|\beta|} \mu_{1,1}(\beta, \alpha).
\end{split}
\end{equation*}
Moreover, a $B_{\infty}$-quasi-isomorphism between two $B_{\infty}$-algebras $A$ and $A'$ induces an isomorphism of Gerstenhaber algebras between $H^*(A)$ and $H^*(A')$. \hfill $\square$
\end{lem}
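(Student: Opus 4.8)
The plan is to verify the Gerstenhaber axioms on $H^*(A, m_1)$ by localising each one to a small instance of the $B_\infty$-relations (\ref{relationforA}), (\ref{equation-B02}), (\ref{equation-B03}), with the Lie-algebra part handled most cleanly through the dg bialgebra model of Remark~\ref{remark-B-infinity}. Throughout I would represent cohomology classes by $m_1$-cocycles, and use repeatedly that an expression of the shape $m_1(x)+\sum(\text{terms }\phi(\dots,m_1(-),\dots))$ vanishes in cohomology when evaluated on cocycles. The statement is in any case classical (\cite[Subsection~5.2]{GJ}), so the point is to indicate which relation yields which axiom.

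For the commutative associative algebra $(H^*(A),\cup)$: since $m_1^2=0$ (relation (\ref{relationforA}) for $n=1$), the cohomology is defined; relations (\ref{relationforA}) for $n=2,3$ say that $m_1$ is a graded derivation of $m_2$ and that $m_2$ is associative up to the homotopy $m_3$, so $\alpha\cup\beta:=[m_2(\alpha,\beta)]$ is a well-defined associative product. Graded commutativity is the one axiom that forces the $B_\infty$-products to enter: specialising the Leibniz relation (\ref{equation-B03}) to $k=l=1$ gives, up to signs,
\[
m_1\mu_{1,1}(a,b)+\mu_{1,1}(m_1a,b)\pm\mu_{1,1}(a,m_1b)=\pm\bigl(m_2(a,b)-(-1)^{|a||b|}m_2(b,a)\bigr),
\]
which exhibits $\mu_{1,1}$ as a contracting homotopy for the commutator of $m_2$; hence $\cup$ is graded commutative on cohomology.

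For the bracket I would pass to the dg bialgebra $(T^c(sA),\Delta,D,\mu)$ of Remark~\ref{remark-B-infinity}. As $D$ is a coderivation it preserves the primitives of the deconcatenation coalgebra $T^c(sA)$, which are $\mathbb k\cdot 1\oplus sA$, and on $sA$ it restricts to the shift of $m_1$ (Remark~\ref{rem:A-inf}). The bialgebra compatibility $\Delta\mu=(\mu\otimes\mu)(\mathbf 1\otimes\tau\otimes\mathbf 1)(\Delta\otimes\Delta)$ gives, for $x,y$ primitive in the augmentation ideal, $\Delta\mu(x\otimes y)=\mu(x\otimes y)\otimes 1+1\otimes\mu(x\otimes y)+x\otimes y+(-1)^{|x||y|}y\otimes x$, so the primitive defects cancel in the graded commutator $[x,y]_\mu:=\mu(x\otimes y)-(-1)^{|x||y|}\mu(y\otimes x)$. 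Thus the primitives form a dg Lie subalgebra of the commutator dg Lie algebra $(T^c(sA),D,[-,-]_\mu)$, and restricting to the augmentation ideal, $(sA,D|_{sA},[-,-]_\mu)$ is a dg Lie algebra; hence $H^*(sA,D|_{sA})=H^{*+1}(A)$ is a graded Lie algebra with bracket of degree $-1$. Unwinding the correspondence $\mu\leftrightarrow\mu_{p,q}$ identifies the $sA$-component of $[-,-]_\mu$ with the bracket in the statement, which disposes of graded antisymmetry and the Jacobi identity (\ref{jacobiidentityg}) at one stroke. The Poisson compatibility of $\cup$ and $[-,-]$ is then the specialisation of the Leibniz relation (\ref{equation-B03}) to $k=2$ (the distributivity of Remark~\ref{rem-specialB}(\ref{distributivity}) in the brace case): it writes $\mu_{1,1}(-,m_2(-,-))$ as the appropriately signed sum $m_2(\mu_{1,1}(-,-),-)\pm m_2(-,\mu_{1,1}(-,-))$ modulo $m_1$-coboundaries, which is exactly the graded Leibniz rule after passing to cohomology.

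Finally, for the ``moreover'' clause: if $f=(f_n)_{n\geq 1}\colon A\to A'$ is a $B_\infty$-quasi-isomorphism, then $f_1$ is a quasi-isomorphism of complexes, so $H^*(f_1)$ is an isomorphism of graded spaces; relation (\ref{equationforAmorph}) for $n=2$ shows $f_1m_2-m_2'(f_1\otimes f_1)$ is an $m_1'$-coboundary on cocycles, with homotopy $f_2$, and relation (\ref{equation-B-new}) for $p=q=1$ does the same for $f_1\mu_{1,1}-\mu_{1,1}'(f_1\otimes f_1)$, so $H^*(f_1)$ preserves both $\cup$ and $[-,-]$ and is an isomorphism of Gerstenhaber algebras. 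Since every isomorphism in $\mathrm{Ho}(B_\infty)$ is a zigzag of $B_\infty$-quasi-isomorphisms, the general claim follows. The part requiring the most care is purely combinatorial: pinning down signs so that the primitive defects genuinely cancel and the commutator on primitives matches the displayed formula for $[-,-]$, and checking that the correction terms in the commutativity and Poisson identities are honest $m_1$-coboundaries rather than merely cocycles; none of this is conceptually hard.
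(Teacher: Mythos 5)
Your construction of the cup product (associativity from the $A_\infty$-relations, graded commutativity from the $(k,l)=(1,1)$ instance of \eqref{equation-B03} with $\mu_{1,1}$ as the commuting homotopy) and your treatment of the Lie bracket via the dg bialgebra $(T^c(sA),\Delta,D,\mu)$ are fine, and the latter is essentially the content the paper records in Remark~\ref{Rem:dgliealgebra} (the paper itself gives no proof, citing \cite{GJ}); one small slip is that the primitives of the tensor coalgebra are exactly $sA$, not $\mathbb k\cdot 1\oplus sA$ ($1$ is group-like), though this does not affect your argument that the commutator of two elements of $sA$ again lies in $sA$.

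The genuine gap is the Leibniz compatibility between $\cup$ and $[-,-]$, and, to a lesser extent, the bracket-preservation in the ``moreover'' clause. For a general $B_\infty$-algebra the distributivity identity of Remark~\ref{rem-specialB} is not available: it presupposes a brace $B_\infty$-algebra ($m_n=0$ for $n>2$ and $\mu_{p,q}=0$ for $p>1$). The honest specialisation of \eqref{equation-B03} — at $(k,l)=(2,1)$ for $\mu_{1,1}(m_2(b,c),a)$, or at $(k,l)=(1,2)$ for $\mu_{1,1}(a,m_2(b,c))$ (note your ``$k=2$'' labels the former, not the expression you wrote) — contains, besides the harmless $\mu_{2,1}$ resp.\ $\mu_{1,2}$ terms (which are exact or vanish on cocycles), the three shuffle terms $m_3(a\otimes b\otimes c)$, $m_3(b\otimes a\otimes c)$, $m_3(b\otimes c\otimes a)$, and these are \emph{not} $m_1$-coboundaries; so ``modulo coboundaries'' does not close the argument from a single instance. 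The standard repair is to treat $[\alpha,\beta\cup\gamma]$ as the full graded commutator, use \emph{both} instances $(1,2)$ and $(2,1)$, and verify that the $m_3$-shuffle contributions cancel in the antisymmetrised combination — precisely the sign computation your proposal postpones, and the only place where this axiom has real content. Similarly, \eqref{equation-B-new} at $p=q=1$ does not exhibit $f_1\mu_{1,1}-\mu_{1,1}'(f_1\otimes f_1)$ as a coboundary with homotopy $f_2$: it produces the non-exact shuffle terms $f_2(a\otimes b)$ and $f_2(b\otimes a)$, which again disappear only after antisymmetrising; the cleaner route, consistent with your own bialgebra setup, is to note that a morphism of dg bialgebras $T^c(sA)\to T^c(sA')$ acts on length-one tensors by $f_1$ alone and sends the (primitive) commutator of primitives to the commutator, so $f_1$ is a strict morphism of the associated degree $-1$ dg Lie algebras, as asserted in Remark~\ref{Rem:dgliealgebra}. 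With these two repairs your sketch becomes sound.
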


\begin{rem}\label{Rem:dgliealgebra}
A prior, the Lie bracket $[-, -]$ in Lemma~\ref{lem:Ger} is defined on $A$ at the cochain complex level. By definition, we have $[\alpha,\beta] = -(-1)^{(|\alpha|-1)(|\beta|-1)} [\beta, \alpha].$
It follows from \eqref{equation-B02} that $[-, -]$  satisfies the graded Jacobi identity \eqref{jacobiidentityg}. By \eqref{equation-B03} we have
$$
m_1([\alpha, \beta]) = [m_1(\alpha), \beta] +(-1)^{|\alpha|-1} [\alpha, m_1(\beta)],
$$
which ensures that $[-,-]$ descends to $H^*(A)$. That is,  $(A, m_1, [-, -])$ is a {\it dg Lie algebra of degree $-1$}; see \cite[Subsection~5.2]{GJ}.  By \eqref{equation-B-new} we see that a $B_{\infty}$-morphism induces a morphism of dg Lie algebras between the associated dg Lie algebras.

We mention that the associated dg Lie algebras to $B_\infty$-algebras play a crucial role in deformation theory; see e.g. \cite{LoVa}.
\end{rem}

\section{The Hochschild cochain complexes}
\label{section6}
In this section, we  recall basic results on Hochschild cochain complexes of dg categories and  (normalized) relative bar resolutions of dg algebras.

 \subsection{The Hochschild cochain complex of a dg category} \label{subsection-dg-HH}

 Recall that for  a cochain complex $(V, d_V)$, we denote by $sV$ the $1$-shifted complex. For a homogeneous element $v\in V$, the degree of the corresponding element $sv\in sV$ is given by $|sv|=|v|-1$ and $d_{sV}(sv)=-sd_V(v)$. Indeed, we have $sV=\Sigma(V)$, where $\Sigma$ is the suspension functor.

Let $\mathcal A$ be a small dg category over $\mathbb{k}$. The  {\it Hochschild cochain complex} of $\mathcal A$ is the complex
{\small $$C^*(\mathcal A, \mathcal A)=\prod_{n\geq 0} \prod_{A_0, \dotsc, A_n\in {\rm obj}(\mathcal A)} {\rm Hom}( s\mathcal A(A_{n-1}, A_n)\otimes s\mathcal A(A_{n-2}, A_{n-1}) \otimes \cdots \otimes s\mathcal A(A_0, A_1), \mathcal A(A_0, A_n))$$}
 with differential $\delta=\delta_{in}+\delta_{ex}$ defined as follows. For any $\varphi\in {\rm Hom}( s\mathcal A(A_{n-1}, A_n)\smallotimes \dotsb \smallotimes s\mathcal A(A_0, A_1), \mathcal A(A_0, A_n))$ the {\it internal differential} $\delta_{in}$ is
  \begin{eqnarray*}
\delta_{in} (\varphi)(sa_{1, n})=d_\mathcal{A}\varphi(sa_{1, n})+\sum_{i=1}^n(-1)^{\epsilon_i}\varphi(sa_{1, i-1}\otimes sd_\mathcal{A}(a_i)\otimes sa_{i+1, n})
\end{eqnarray*}
and the {\it external differential} is
\begin{equation*}
\begin{split}
\delta_{ex}(\varphi)(sa_{1, n+1})={}&-(-1)^{(|a_1|-1)|\varphi|} a_1\circ \varphi(sa_{2, n+1})+(-1)^{\epsilon_{n+1}}\varphi(sa_{1, n})\circ a_{n+1}\\
&-\sum_{i=2}^{n+1}(-1)^{\epsilon_i} \varphi(sa_{1, i-2}\otimes s(a_{i-1}\circ a_i)\otimes sa_{i+1, n+1}).\end{split}
\end{equation*}
Here, $\epsilon_i=|\varphi|+\sum_{j=1}^{i-1}(|a_j|-1)$ and $sa_{i, j}:=sa_i\otimes \cdots\otimes sa_j\in s\mathcal A(A_{n-i}, A_{n-i+1})\otimes \cdots \otimes s\mathcal A(A_{n-j}, A_{n-j+1})$ for $i\leq j$.

For any $n\geq 0$, we define the following subspace of $C^*(\mathcal{A}, \mathcal{A})$
{\small $$C^{*, n}(\mathcal A, \mathcal A):=\prod_{A_0, \dotsc, A_n\in {\rm obj}(\mathcal A)}{\rm Hom}( s\mathcal A(A_{n-1}, A_n)\otimes s\mathcal A(A_{n-2}, A_{n-1}) \otimes\cdots \otimes s\mathcal A(A_0, A_1), \mathcal A(A_0, A_n)).$$}
We observe  $C^{*,0}(\mathcal{A}, \mathcal{A})=\prod_{A_0\in {\rm obj}(\mathcal{A})}{\rm Hom}(\mathbb{k}, \mathcal{A}(A_0, A_0))\simeq \prod_{A_0\in {\rm obj}(\mathcal{A})} \mathcal{A}(A_0, A_0)$.

There are two basic operations on $C^*(\mathcal A, \mathcal A)$.   The first one is the  {\it cup product}
$$-\cup- \colon  C^*(\mathcal A, \mathcal A)\otimes C^*(\mathcal A, \mathcal A)\longrightarrow C^*(\mathcal A, \mathcal A).$$
For $\phi\in C^{*, p}(\mathcal A, \mathcal A)$ and $\varphi\in C^{*, q}(\mathcal A, \mathcal A)$, we define
$$\phi\cup \varphi(sa_{1, p+q})=(-1)^{\epsilon} \phi(sa_{1, p})\circ \varphi(sa_{p+1, p+q}),$$
where $\epsilon=(|a_1|+\cdots+|a_p|-p)|\varphi|$.

The second one is the {\it brace operation}
$$-\{-, \dotsc, -\}\colon   C^*(\mathcal A, \mathcal A)\otimes C^*(\mathcal A,\mathcal A)^{\otimes k}\longrightarrow C^*(\mathcal A, \mathcal A)$$
defined as follows. Let $k \geq 1$.  For
$\varphi\in C^{*, m}(\mathcal A, \mathcal A)$ and $\phi_i\in C^{*, n_i}(\mathcal A,\mathcal  A)\ (1\leq i\leq k)$,
\begin{equation}\label{equation:brace}
\varphi\{\phi_1, \dotsc, \phi_k\}=\sum  \varphi (\mathbf{1}^{\otimes i_1} \otimes (s\circ\phi_1) \otimes \mathbf{1}^{\otimes i_2} \otimes (s\circ\phi_2)\otimes \cdots\otimes  \mathbf{1}^{\otimes i_k}\otimes (s\circ\phi_k)\otimes \mathbf{1}^{\otimes i_{k+1}}),
\end{equation}
where the summation is taken over the set
$$ \{ (i_1, i_2, \dotsc, i_{k+1})\in \mathbb Z_{\geq 0}^{\times (k+1)} \mid i_1+i_2+\cdots+ i_{k+1}=m-k \}.$$
If the set  is empty, we define $\varphi\{\phi_1, \dotsc, \phi_k\}=0$. Here, $s\circ \phi_j$ means the composition of $\phi_j$ with the natural isomorphism $s\colon  \mathcal{A}(A, A')\rightarrow s\mathcal{A}(A, A')$
of degree $-1$ for suitable  $A, A'\in {\rm obj}(\mathcal{A})$. For $k = 0$,  we set $-\{\emptyset\} = \mathbf 1$. Observe that the cup product and the brace operation extend naturally to the whole space $C^*(\mathcal A, \mathcal A) = \prod_{n \geq 0} C^{*, n}(\mathcal A, \mathcal A)$.

It is well known that   $C^*(\mathcal A, \mathcal A)$ is a brace $B_{\infty}$-algebra with
$$ \quad m_1=\delta, \quad m_2=-\cup-, \quad \mbox{and}\quad m_i=0 \quad \mbox{for $i>2$};  $$
$$\mu_{0, 1}=\mu_{1, 0}=\mathbf{1}, \ \mu_{1, k} (\varphi, \phi_1, \dotsc, \phi_k)=\varphi\{\phi_1, \dotsc, \phi_k\}, \ \mbox{and}\ \mu_{p, q}=0 \ \mbox{otherwise}.$$
We refer to \cite[Subsections 5.1 and 5.2]{GJ} for  details.

The following useful lemma is contained in \cite[Theorem 4.6 b)]{Kel03}.

\begin{lem}\label{lem:C-quasi-iso}
Let $F\colon  \mathcal{A}\rightarrow \mathcal{B}$ be a quasi-equivalence between two small dg categories. Then there is an isomorphism
$$C^*(\mathcal A, \mathcal A)\longrightarrow C^*(\mathcal B, \mathcal B)$$
 in the homotopy category ${\rm Ho}(B_\infty)$ of $B_\infty$-algebras. \hfill $\square$
\end{lem}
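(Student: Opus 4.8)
\textbf{Proof plan for Lemma~\ref{lem:C-quasi-iso}.} The statement is essentially quoted from \cite[Theorem~4.6~b)]{Kel03}, so the plan is to reduce it to results already available in the literature and in this excerpt, tracking carefully that everything stays at the level of $B_\infty$-algebras (not just dg algebras or dg Lie algebras). First I would recall that the Hochschild cochain complex $C^*(\mathcal{A}, \mathcal{A})$ of a small dg category $\mathcal{A}$ can be identified, as a $B_\infty$-algebra, with the Hochschild cochain complex of the dg category of (cofibrant, or all) right dg $\mathcal{A}$-modules, or equivalently computed via a suitable bimodule resolution; the $B_\infty$-structure in each description is the standard brace structure recalled above. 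The key functoriality input is that a quasi-equivalence $F\colon \mathcal{A}\to\mathcal{B}$ induces, after passing to derived categories of bimodules, an equivalence $\mathbf{D}(\mathcal{A}^e)\simeq \mathbf{D}(\mathcal{B}^e)$ compatible with the relevant tensor and Hom operations, so that the two Hochschild complexes are connected by a zigzag of $B_\infty$-quasi-isomorphisms.

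The steps, in order, would be: (1) Reduce to the case where $F$ is quasi-fully faithful and quasi-essentially surjective, and recall that up to quasi-equivalence we may replace $\mathcal{A}$, $\mathcal{B}$ by cofibrant dg categories, so $F$ may be taken to be a genuine dg functor that is a cofibration and a quasi-equivalence (or factor it as such). (2) Invoke Keller's description of the Hochschild complex of a dg category as (a brace $B_\infty$-model for) the derived endomorphism complex $\mathbf{R}\mathrm{Hom}_{\mathcal{A}^e}(\mathcal{A}, \mathcal{A})$ of the diagonal bimodule, together with the fact \cite{Kel03} that this is functorial up to $B_\infty$-quasi-isomorphism in the dg category, via restriction/extension of scalars along $F\otimes F^{\mathrm{op}}$. (3) Use that $F$ being a quasi-equivalence makes the diagonal bimodule ${}_{\mathcal{B}}\mathcal{B}_{\mathcal{B}}$ correspond, under the induced equivalence of derived bimodule categories, to ${}_{\mathcal{A}}\mathcal{A}_{\mathcal{A}}$, so the two derived endomorphism $B_\infty$-algebras are $B_\infty$-quasi-isomorphic. (4) Assemble the zigzag of $B_\infty$-quasi-isomorphisms obtained in steps (2)--(3) into a single isomorphism in $\mathrm{Ho}(B_\infty)$, using that (by \cite{Hinich, Kel03}) $\mathrm{Ho}(B_\infty)$ is the localization of $B_\infty$-algebras at $B_\infty$-quasi-isomorphisms, so every such zigzag represents an isomorphism.

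The main obstacle is step (2): one must make sure the comparison between the explicit brace $B_\infty$-structure on $C^*(\mathcal{A}, \mathcal{A})$ and the $B_\infty$-structure on the derived endomorphism algebra of the diagonal bimodule is an honest $B_\infty$-quasi-isomorphism (not merely a quasi-isomorphism of complexes, or of dg Lie algebras), and that this comparison is natural enough in $\mathcal{A}$ to be transported along $F$. This is precisely the content of \cite[Theorem~4.6]{Kel03}, where the brace operations are matched with the operadic structure coming from the bar resolution; in our write-up it suffices to cite that theorem, noting that the bimodule-theoretic constructions there are manifestly compatible with the restriction functor along $F\otimes F^{\mathrm{op}}$, which is a $B_\infty$-morphism and a quasi-isomorphism exactly when $F$ is a quasi-equivalence. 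Hence no new argument is needed beyond organizing these citations; the lemma follows.
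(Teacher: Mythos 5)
Your proposal is correct and matches the paper's treatment: the paper gives no independent argument, simply recording the statement as contained in \cite[Theorem~4.6~b)]{Kel03}, which is exactly the reduction you make. The additional sketch of Keller's bimodule-theoretic mechanism (restriction along $F\otimes F^{\mathrm{op}}$ and the zigzag of $B_\infty$-quasi-isomorphisms assembled in $\mathrm{Ho}(B_\infty)$) is consistent with that reference but not required here.
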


Let $A$ be a dg algebra. We view $A$  as a dg category with a single object, still denoted by $A$. In particular, the Hochschild cochain complex $C^*(A, A)$ is defined as above.  The dg category $A$ might be identified as a full dg subcategory of $\mathbf{per}_{\rm dg}(A^{\rm op})$ by taking the right regular dg $A$-module $A_A$.  Then the following result follows from \cite[Theorem 4.6 c)]{Kel03}; compare \cite[Theorem~4.4.1]{LoVa}.

\begin{lem}\label{lem:C-dga}
Let $A$ be a dg algebra. Then the restriction map
$$C^*(\mathbf{ per}_{\rm dg}(A^{\rm op}), \mathbf{ per}_{\rm dg}(A^{\rm op})) \longrightarrow C^*(A, A)$$ is an isomorphism in $\mathrm{Ho}(B_{\infty})$. \hfill $\square$
\end{lem}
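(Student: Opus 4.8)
The plan is to deduce the lemma from Keller's derived-invariance theorem for the Hochschild cochain complex, after reducing the $B_\infty$-statement to a statement about the underlying complexes. As explained just before the lemma, the first step is to identify the dg algebra $A$ with the full dg subcategory of $\mathbf{per}_{\rm dg}(A^{\rm op})$ on the single object $A_A$, the right regular dg module; this is legitimate since the Yoneda functor gives a canonical isomorphism of dg algebras $\mathbf{per}_{\rm dg}(A^{\rm op})(A_A,A_A)\cong A$. Under this identification the map in the statement is simply the restriction of a Hochschild cochain of $\mathbf{per}_{\rm dg}(A^{\rm op})$ to chains of composable morphisms supported at $A_A$. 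Because $A$ sits inside $\mathbf{per}_{\rm dg}(A^{\rm op})$ as a \emph{full} dg subcategory, the differential $\delta=\delta_{in}+\delta_{ex}$, the cup product $m_2$ and the braces $\mu_{1,k}$ of Subsection~\ref{subsection-dg-HH} --- all of which are assembled purely from composition in the ambient dg category --- are preserved on the nose by restriction; hence, by Lemma~\ref{lemma-infinity-morphism1}, the restriction map is a \emph{strict} $B_\infty$-morphism of brace $B_\infty$-algebras. Since the weak equivalences of $\mathrm{Ho}(B_\infty)$ are precisely the $B_\infty$-quasi-isomorphisms, it therefore remains only to prove that this strict morphism is a quasi-isomorphism of the underlying cochain complexes.

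I would then record that the free module $A_A$ is a compact generator of $\mathbf{D}(A^{\rm op})$, so that $H^0(\mathbf{per}_{\rm dg}(A^{\rm op}))=\mathbf{per}(A^{\rm op})$ is the thick triangulated subcategory generated by $A_A$; equivalently, the inclusion $A\hookrightarrow\mathbf{per}_{\rm dg}(A^{\rm op})$ is quasi-fully faithful with image that classically generates the target. (Note that this inclusion is not dense, so Lemma~\ref{lem:C-quasi-iso} does not apply directly.) The assertion that restriction along such a functor is a quasi-isomorphism on Hochschild cochain complexes is exactly \cite[Theorem~4.6~c)]{Kel03} (compare \cite[Theorem~4.4.1]{LoVa}). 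The mechanism there is to enlarge first from $\mathbf{per}_{\rm dg}(A^{\rm op})$ to the whole dg module category $\mathbf{D}_{\rm dg}(A^{\rm op})$, which affects neither the quasi-isomorphism type of the Hochschild complex nor, up to $\mathrm{Ho}(B_\infty)$, its brace structure, and then to identify $C^*$ with a derived endomorphism complex of the diagonal bimodule, which depends only on the Morita class of the dg category. Combining this quasi-isomorphism with the strict $B_\infty$-morphism of the previous paragraph yields the desired isomorphism $C^*(\mathbf{per}_{\rm dg}(A^{\rm op}),\mathbf{per}_{\rm dg}(A^{\rm op}))\stackrel{\sim}{\longrightarrow}C^*(A,A)$ in $\mathrm{Ho}(B_\infty)$.

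The main obstacle is concentrated entirely in the invocation of \cite{Kel03}: upgrading the elementary derived invariance of the Hochschild \emph{complex} under Morita morphisms to an invariance of the full brace $B_\infty$-structure. At the level of underlying complexes the invariance is immediate, but the cup product and braces do not transport strictly across the enlargement step $\mathbf{per}_{\rm dg}(A^{\rm op})\hookrightarrow\mathbf{D}_{\rm dg}(A^{\rm op})$, so one must either produce an explicit intermediate zigzag of strict $B_\infty$-morphisms together with an explicit homotopy, or appeal to the model structure on $B_\infty$-algebras of \cite{Hinich, Kel03} to conclude that a $B_\infty$-quasi-isomorphism already represents an isomorphism in $\mathrm{Ho}(B_\infty)$. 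All remaining ingredients --- the Yoneda identification, the fullness argument, and the compact generation of $\mathbf{D}(A^{\rm op})$ by $A_A$ --- are routine.
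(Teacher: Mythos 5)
Your proposal is correct and takes essentially the same route as the paper, whose entire proof is the citation of \cite[Theorem 4.6 c)]{Kel03} (compare \cite[Theorem~4.4.1]{LoVa}) after identifying $A$ with the full dg subcategory of $\mathbf{per}_{\rm dg}(A^{\rm op})$ on the right regular module $A_A$ --- exactly your setup. Your refinement (restriction along this full inclusion is a strict $B_\infty$-morphism, so only a quasi-isomorphism of the underlying complexes needs to be imported) is sound, and the concern in your final paragraph is in fact moot, since \cite[Theorem 4.6 c)]{Kel03} is already a statement in $\mathrm{Ho}(B_\infty)$ rather than merely about complexes.
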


\subsection{The relative bar resolutions}\label{subsection-bar}

Let $A$ be a dg algebra with its differential $d_A$. Let $E=\bigoplus_{i\in \mathcal I} \mathbb ke_i \subseteq A^0 \subseteq A$ be a  semisimple subalgebra satisfying $d_A(e_i)=0$ and $e_ie_j=\delta_{i,j}e_i$ for any $i, j \in \mathcal I$.   Set $(sA)^{\otimes_E 0}=E$ and  $T_E(sA):=\bigoplus_{n \geq 0} (sA)^{\otimes_E n}$.

Recall from \cite{Abb} that the {\it $E$-relative bar resolution} of $A$  is the dg $A$-$A$-bimodule $$\Barr_E(A):=A\otimes_E T_E(sA)\otimes_E A $$
 with the differential $d=d_{in}+d_{ex}$, where $d_{in}$ is the \emph{internal differential} given by
\begin{equation*}
\begin{split}
d_{in}(a \otimes_E sa_{1, n} \otimes_E b)={}&d_A(a) \otimes_E sa_{1, n}\otimes_E b+(-1)^{\epsilon_{n+1}}a\otimes_E sa_{1, n}\otimes_E d_A(b)\\
&-\sum_{i=1}^n(-1)^{\epsilon_i}a\otimes_E sa_{1, i-1}\otimes_E sd_A(a_i)\otimes_E sa_{i+1, n}\otimes_E b
\end{split}
\end{equation*}
and $d_{ex}$ is the \emph{external differential} given by
\begin{equation*}
\begin{split}
 d_{ex}(a\otimes_E sa_{1, n}\otimes_E b)={}&(-1)^{\epsilon_1} aa_1\otimes_E sa_{2, n}\otimes_E b-(-1)^{\epsilon_n} a\otimes_E sa_{1, n-1}\otimes_E a_nb\\
 &+\sum_{i=2}^n(-1)^{\epsilon_i}a\otimes_E sa_{1, i-2}\otimes_E sa_{i-1}a_{i}\otimes_E sa_{i+1, n}\otimes_E b.
\end{split}
\end{equation*}
Here,  $\epsilon_i=|a|+\sum_{j=1}^{i-1} (|a_j|-1)$, and  for simplicity,  we denote $sa_i\otimes_E sa_{i+1}\otimes_E \cdots \otimes_E sa_j$ by $sa_{i, j}$ for $i<j$.    The degree of  $a\otimes_E sa_{1, n}\otimes_E b\in A\otimes_E (sA)^{\otimes_E n} \otimes_E A$ is $$|a|+\sum_{j=1}^n (|a_j|-1)+|b|.$$
The graded $A$-$A$-bimodule structure on $A\otimes_E (sA)^{\otimes_E n}\otimes_E A$ is given by the {\it outer} action
$$a(a_0\otimes_E sa_{1, n}\otimes_E a_{n+1})b:=aa_0\otimes_E sa_{1, n}\otimes_E a_{n+1}b.$$
There is a natural  morphism of dg $A$-$A$-bimodules $ \varepsilon \colon  \Barr_E(A)\rightarrow A$ given by the composition
\begin{align}\label{equ:bar}
\Barr_E(A)\longrightarrow{} A\otimes_E A\stackrel{\mu}\longrightarrow{} A,
\end{align}
where the first map is the canonical projection and $\mu$ is the multiplication  of $A$.  It is well known that  $\varepsilon$ is a quasi-isomorphism.

Set $\overline{A}$ to be the quotient dg $E$-$E$-bimodule $A/(E\cdot 1_A)$. We  have the notion of {\it normalized $E$-relative bar resolution} $\overline{\Barr}_E(A)$  of $A$.  By definition, it is the dg $A$-$A$-bimodule
$$\overline{\Barr}_E(A)=A\otimes_E T_E(s\overline{A})\otimes_E A$$
 with the induced differential from $\Barr(A)$.  It is also well known that the natural projection $\Barr_E(A)\rightarrow \overline{\Barr}_E(A)$ is a quasi-isomorphism.

Let $\mathbf{D}(A^e)$ be the derived category of dg $A$-$A$-bimodules. Let $M$ be a dg $A$-$A$-bimodule. The Hochschild cohomology group with coefficients in $M$ of degree $p$, denoted by $\HH^p(A, M)$,  is defined as $\Hom_{\mathbf{D}(A^e)}(A, \Sigma^p(M))$, where $\Sigma$ is the suspension functor in $\mathbf{D}(A^e)$.  Since $\Barr_E(A)$ is a dg-projective bimodule resolution of $A$, we obtain that
$$\HH^p(A, M)\cong H^p({\rm Hom}_{A\text{-}A}(\Barr_E(A), M), \delta),  \quad \text{for $p \in \mathbb Z$} $$
where $\delta(f):=d_M\circ f-(-1)^{|f|} f\circ d$.  We observe that there is a natural isomorphism, for each $i\geq 0$,
 \begin{align}
 \label{identification-bimodule}
 {\rm Hom}_{\text{$E$-$E$}}((sA)^{\otimes_E i}, M) \stackrel{\sim}\longrightarrow {\rm Hom}_{A\text{-}A}(A\otimes_E (sA)^{\otimes_E i}\otimes_E A, M)
 \end{align}
which sends $f$ to the map $a_0\otimes_E sa_{1, i}\otimes_E a_{i+1}\mapsto (-1)^{|a_0|\cdot |f|}a_0f(sa_{1, i})a_{i+1}$. It follows that
$$\HH^p(A, M)\cong H^p({\rm Hom}_{\text{$E$-$E$}}(T_E(sA), M), \delta=\delta_{in}+\delta_{ex}),$$
where the differentials  $\delta_{in}$ and $ \delta_{ex}$ are defined as in Subsection~\ref{subsection-dg-HH}.

We call $C_E^*(A, M):=({\rm Hom}_{\text{$E$-$E$}}(T_E(sA), M), \delta)$ the {\it $E$-relative Hochschild cochain complex} of $A$ with coefficients in $M$. In particular, $C_E^*(A, A)$ is called the {\it $E$-relative Hochschild cochain complex} of $A$. Similarly,  the {\it normalized $E$-relative  Hochschild cochain complex} $\overline{C}_E^*(A, M)$ is defined as ${\rm Hom}_{\text{$E$-$E$}}(T_E(s\overline A), M)$ with the induced differential. When $E=\mathbb k$, we simply write $C_\mathbb k^*(A, M)$ as $C^*(A, M)$ and write $\overline C_\mathbb k^*( A, M)$ as $\overline C^*( A, M)$.

When the dg algebra $A$ is viewed as a dg category $\mathcal{A}$ with a single object,   $C^*(\mathcal A, \mathcal A)$ coincides with $C^*(A, A)$. Thus, from Subsection~\ref{subsection-dg-HH}, $C^*(A, A)$ has a   $B_{\infty}$-algebra structure induced by the cup product $-\cup- $ and the brace operation $-\{-, \dotsc, -\}$ .

 We have the following commutative diagram of injections.
$$
\xymatrix{
\overline C_E^*(A, A)\ar@{^(->}[r] \ar@{^(->}[d]&  C_E^*(A, A)\ar@{^(->}[d]\\
\overline C^*( A, A)\ar@{^(->}[r] & C^*(A, A)
}
$$

  \begin{lem}\label{lemma6.1-split}
 The $B_{\infty}$-algebra structure on $C^*(A, A)$  restricts to the other three smaller complexes $C_E^*(A, A), \overline C_E^*( A, A)$ and $\overline C^*( A, A)$. In particular, the above injections are strict $B_{\infty}$-quasi-isomorphisms.
 \end{lem}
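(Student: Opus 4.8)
The plan is to prove the two assertions in turn: first, that the operations making $C^*(A,A)$ a brace $B_\infty$-algebra carry each of the three subspaces into itself, so that the $B_\infty$-structure restricts; and second, that the resulting inclusions are quasi-isomorphisms. Granting the first, the $B_\infty$-relations of Definition~\ref{definition-B-infinity} for the restricted operations are inherited from those on $C^*(A,A)$, each of $C_E^*(A,A)$, $\overline{C}_E^*(A,A)$, $\overline{C}^*(A,A)$ becomes a brace $B_\infty$-subalgebra (the vanishing conditions $m_i=0$ for $i>2$ and $\mu_{p,q}=0$ for $p>1$ are inherited trivially), and each of the four inclusions in the square, being an injective homomorphism of dg algebras compatible with the brace operation, is a strict $B_\infty$-morphism by Lemma~\ref{lemma-infinity-morphism1}.

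For the closure of the structure maps, only $m_2=-\cup-$ and the braces $\mu_{1,k}=-\{-,\dots,-\}$ need attention: $m_1=\delta$ preserves the three subcomplexes by the very definition of the (normalized) relative cochain complexes given in Subsection~\ref{subsection-bar}, and the remaining structure maps are either identities or zero. It helps to describe the subspaces concretely inside $C^*(A,A)$: via the identification~(\ref{identification-bimodule}) (and its $\mathbb{k}$-relative version), $\varphi$ lies in $C_E^*(A,A)$ iff it is outer $E$-$E$-bilinear and factors through $\otimes_E$, i.e. $\varphi(\dots\otimes s(a_ie)\otimes sa_{i+1}\otimes\dots)=\varphi(\dots\otimes sa_i\otimes s(ea_{i+1})\otimes\dots)$ for all $e\in E$; it lies in $\overline{C}^*(A,A)$ iff it vanishes whenever some argument lies in $s(\mathbb{k}\cdot 1_A)$; and $\overline{C}_E^*(A,A)$ is cut out by the conjunction of outer $E$-bilinearity, the $\otimes_E$-balance, and vanishing whenever some argument lies in $sE$.

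Next I would verify that $-\cup-$ and $-\{-,\dots,-\}$ respect these conditions. For the cup product this is immediate from $\phi\cup\varphi(sa_{1,p+q})=\pm\,\phi(sa_{1,p})\cdot\varphi(sa_{p+1,p+q})$: an argument in $s(\mathbb{k}\cdot 1_A)$ lies entirely inside $\phi$ or inside $\varphi$ and annihilates the corresponding factor; outer $E$-linearity and the $\otimes_E$-balance at interior positions pass through $\phi$ and $\varphi$; and at the junction at position $p$ one uses associativity of the multiplication of $A$ together with the right $E$-linearity of $\phi$ and the left $E$-linearity of $\varphi$. For the brace operation one inspects a single summand of~(\ref{equation:brace}): an argument in $s(\mathbb{k}\cdot 1_A)$ lands either in a $\mathbf{1}$-slot fed directly to $\varphi$, killing that summand since $\varphi$ is normalized, or inside some $\phi_j$-block, killing $s\circ\phi_j$; and for the $E$-relative conditions one chases an element $e\in E$ across a tensor position by means of the $\otimes_E$-balance and outer $E$-linearity of whichever of $\varphi,\phi_1,\dots,\phi_k$ is involved, splitting into the cases where the position lies inside a single block or at a junction of two consecutive blocks. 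This last point — the bookkeeping at junctions of consecutive $\phi_j$-blocks — is the only genuinely delicate part of the argument; everything else is routine.

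Finally, for the quasi-isomorphism statement I would pass to the bar-resolution picture. Under the identification~(\ref{identification-bimodule}), the four inclusions are exactly the maps obtained by applying $\Hom_{A^e}(-,A)$ to the canonical projections $\Barr(A)\to\Barr_E(A)\to\overline{\Barr}_E(A)$ and $\Barr(A)\to\overline{\Barr}(A)\to\overline{\Barr}_E(A)$. All four of these bimodule complexes are dg-projective resolutions of $A$ over $A^e$ — here one uses that $E$ is semisimple, so that $T_E(sA)$ and $T_E(s\overline{A})$ are projective $E$-bimodules — and the projections are quasi-isomorphisms, as recalled in Subsection~\ref{subsection-bar} (for $\Barr(A)\to\Barr_E(A)$ this follows by two-out-of-three from the augmentations to $A$). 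Hence the projections are homotopy equivalences of dg-projective $A^e$-modules, and applying $\Hom_{A^e}(-,A)$ produces homotopy equivalences, in particular quasi-isomorphisms, between the four cochain complexes. Together with the closure established above, every inclusion in the square is a strict $B_\infty$-quasi-isomorphism.
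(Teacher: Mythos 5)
Your proposal is correct and follows essentially the same route as the paper: check that the cup product and brace operation restrict (which, with Lemma~\ref{lemma-infinity-morphism1}, gives strict $B_\infty$-morphisms), and deduce the quasi-isomorphism statement from the fact that all four complexes compute $\HH^*(A,A)$ via the (relative, normalized) bar resolutions. You merely spell out the closure checks and the resolution comparison that the paper leaves as straightforward.
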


 \begin{proof}
 It is straightforward to check that the cup product and brace operation on $C^*(A, A)$ restrict to the subcomplexes $C_E^*(A, A), \overline C_E^*( A, A)$ and $\overline C^*( A, A)$. Moreover, the injections preserve the two operations.   Thus by Lemma \ref{lemma-infinity-morphism1},  the injections are strict $B_{\infty}$-morphisms. Clearly, the injections are quasi-isomorphisms since all the complexes compute  $\HH^*(A, A)$. This proves the lemma.
  \end{proof}

Let $A$ be a dg $\mathbb k$-algebra. Consider the $B_{\infty}$-algebra $(C^*(A, A), \delta, -\cup-; -\{-, \dotsc, -\})$ of Hochschild cochain complex; compare Subsection~\ref{subsection-dg-HH}. Let $A^{\op}$ be the opposite dg algebra of $A$.

\begin{prop}\label{lemma-CL}
There is a $B_\infty$-isomorphism from the opposite  $B_{\infty}$-algebra $C^*(A, A)^{\rm opp}$  to the $B_{\infty}$-algebra $C^*(A^{\op}, A^{\op})$.
\end{prop}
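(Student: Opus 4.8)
The plan is to reduce the statement to the duality theorem (Theorem~\ref{thm:dualityB}). By that theorem there is a natural $B_\infty$-isomorphism $C^*(A, A)^{\rm opp} \xrightarrow{\ \sim\ } C^*(A, A)^{\rm tr}$, so it suffices to produce a \emph{strict} $B_\infty$-isomorphism $\Psi\colon C^*(A, A)^{\rm tr} \xrightarrow{\ \sim\ } C^*(A^{\op}, A^{\op})$; the required map is then the composite. The point is that the transpose is designed precisely to encode ``reversal of the order of the arguments'', which is exactly the operation relating the Hochschild theory of $A$ to that of $A^{\op}$.

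Concretely, since $A$ and $A^{\op}$ share the same underlying graded space, one has $(sA^{\op})^{\otimes n} = (sA)^{\otimes n}$, and for $\varphi \in C^{*, n}(A, A) = \mathrm{Hom}\big((sA)^{\otimes n}, A\big)$ I would set
\[
\Psi(\varphi)(sa_1 \otimes \cdots \otimes sa_n) := (-1)^{\epsilon_n}\, \varphi(sa_n \otimes \cdots \otimes sa_1),
\]
where $\epsilon_n$ is the Koszul sign of the order-reversal of $sa_1, \dots, sa_n$, normalised exactly as in Definition~\ref{defn-transposeB} (equivalently Remark~\ref{rem:antipode}). Plainly $\Psi$ is a bijection of graded spaces with $\Psi^{-1}$ of the same shape. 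Now both $C^*(A, A)^{\rm tr}$ (by Remark~\ref{remark-B}) and $C^*(A^{\op}, A^{\op})$ are brace $B_\infty$-algebras, so by Lemma~\ref{lemma-infinity-morphism1} it is enough to check that $\Psi$ is an isomorphism of the underlying dg algebras that is moreover compatible with the two brace operations. This comes down to three verifications: that $\Psi$ intertwines $m_1^{\rm tr} = \delta$ with the Hochschild differential of $A^{\op}$; that $\Psi$ sends $m_2^{\rm tr}(\phi, \psi) = (-1)^{|\phi||\psi|}\, \psi \cup \phi$ to $\Psi(\phi) \cup_{A^{\op}} \Psi(\psi)$; and that $\Psi\big(\varphi\{\phi_1, \dots, \phi_k\}^{\rm tr}\big) = \Psi(\varphi)\{\Psi(\phi_1), \dots, \Psi(\phi_k)\}_{A^{\op}}$ for all $k \geq 1$.

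Each of the three identities is conceptually transparent. For the differential, reversing the arguments interchanges the two ``outer'' summands $a_1 \circ \varphi(\cdots)$ and $\varphi(\cdots) \circ a_{n+1}$ of $\delta_{ex}$ and replaces each inner product $a_{i-1} \circ a_i$ by the opposite product $a_i \circ^{\op} a_{i-1}$, while fixing $\delta_{in}$; this is the classical statement that the Hochschild cochain complex of $A^{\op}$ is the ``reversed'' Hochschild cochain complex of $A$. For the cup product and the brace operation one observes that under the reversal the substitution pattern defining $-\{-, \dots, -\}$ on $C^*(A, A)$ turns into the one defining the transpose brace $-\{-, \dots, -\}^{\rm tr}$ (in particular the inserted cochains $\phi_1, \dots, \phi_k$ reappear in their original order after the two successive reversals), and similarly for $\cup$.

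The main obstacle is the sign bookkeeping. Although none of the three checks involves anything beyond a careful rearrangement, keeping track simultaneously of the Koszul signs produced by the $n$-fold reversal, of the shift $s$, and of the signs hard-coded into $m_n^{\rm tr}$, $m_2^{\rm tr}$, $-\{-, \dots, -\}^{\rm tr}$ and into the multiplication of $A^{\op}$ is where the work lies. The cleanest way to organise it is to pass to the associated dg (bi)algebras on $T^c(sA)$ as in Remarks~\ref{remark-B-infinity} and \ref{rem:antipode}: there $\Psi$ is nothing but the order-reversal coalgebra automorphism $O$ of $T^c(sA)$, and, by cofreeness, checking that $O$ transports the dg-bialgebra structure of $C^*(A, A)^{\rm tr}$ onto that of $C^*(A^{\op}, A^{\op})$ reduces, as in the proof of Lemma~\ref{lem:transposeB}, to comparing the corestrictions $M_n, M_{p,q}$ along the cogenerators, where the signs collapse. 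The resulting isomorphism is evidently natural in $A$.
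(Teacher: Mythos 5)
Your proposal is correct and follows essentially the same route as the paper: the paper's proof defines exactly your map $\Psi$ (the ``swap isomorphism'' $T$), verifies via Lemma~\ref{lemma-infinity-morphism1} that it is a strict $B_\infty$-isomorphism from $C^*(A,A)^{\rm tr}$ to $C^*(A^{\op},A^{\op})$ by checking compatibility with the cup product and brace operation, and then composes with the $B_\infty$-isomorphism $C^*(A,A)^{\rm opp}\simeq C^*(A,A)^{\rm tr}$ from Theorem~\ref{thm:dualityB}. The only cosmetic difference is that the paper does the sign check directly on cochains rather than passing to the coalgebra $T^c(sA)$, but that is a matter of bookkeeping, not of substance.
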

\begin{proof}
 Consider the \emph{swap isomorphism}
 \begin{align*}
T\colon   C^*(A, A) \longrightarrow C^*(A^{\op}, A^{\op})
\end{align*}
which sends $f\in C^*(A, A)$ to
$$T(f)(sa_1\otimes sa_2\otimes \cdots\otimes sa_m)=(-1)^{\epsilon}f(sa_m\otimes \cdots\otimes sa_2\otimes sa_1),$$
for any $a_1, a_2, \dotsc, a_m \in A$,
where $\epsilon=|f|+\sum_{i=1}^{m-1} (|a_i|-1)(|a_{i+1}|-1+\cdots+|a_m|-1)$. Here, we use the identification $A^{\op}= A$ as dg $\mathbb k$-modules.

We claim that $T$ is a strict $B_\infty$-isomorphism from the transpose $B_\infty$-algebra $C^*(A, A)^{\rm tr}$ to $C^*(A^{\op}, A^{\op})$. By Lemma~\ref{lemma-infinity-morphism1} it suffices to verify the following two identities
\begin{align}
\label{lemma-tr}
T(g_1\cup^{\rm tr} g_2) & = T(g_1) \cup T(g_2) \nonumber\\
 T(f\{g_1, \dotsc, g_k\}^{\rm tr}) & =T(f)\{T(g_1), \dotsc, T(g_k)\}.
\end{align}

By definition,  $g_1 \cup^{\rm tr} g_2 = (-1)^{|g_1||g_2|} g_2 \cup g_1$ and $f\{g_1, \dotsc, g_k\}^{\rm tr} = (-1)^{\epsilon} f\{g_k, \dotsc, g_1\}$, where
$\epsilon = k+\sum_{i=1}^{k-1} (|g_i|-1)((|g_{i+1}|-1)+ (|g_{i+2}|-1) + \dotsb + (|g_k|-1))$. By a straightforward computation, we have
\begin{align*}
T(g_1) \cup T(g_2) & = (-1)^{|g_1||g_2|}\;  T(g_2 \cup g_1)\\
T(f)\{T(g_1), \dotsc, T(g_k)\} & = (-1)^{\epsilon} \; T(f \{g_k, \dotsc, g_1\}).
\end{align*}
This proves the claim.

By Theorem~\ref{thm:dualityB} there is a $B_\infty$-isomorphism between $C^*(A, A)^{\rm tr}$ and $C^*(A, A)^{\rm opp}$. Thus, we obtain a $B_\infty$-isomorphism between $C^*(A, A)^{\rm opp}$ and $C^*(A^{\op}, A^{\op}).$
\end{proof}

\begin{rem}
In a private communication (March 2019), Bernhard Keller pointed out a proof of Proposition~\ref{lemma-CL} using the intrinsic description of the $B_{\infty}$-algebra structures on Hochschild cochain complexes; compare \cite[Subsection~5.7]{Kel06}. We are very grateful to him for sharing his  intuition on $B_\infty$-algebras, which essentially  leads to the general result Theorem~\ref{thm:dualityB}.
\end{rem}

\section{A homotopy deformation retract and the homotopy transfer theorem}
\label{sectionforhdr}

In this section, we provide an explicit homotopy deformation retract for the Leavitt path algebra. We begin by recalling a construction of homotopy deformation retracts between resolutions.

\subsection{A construction for homotopy deformation retracts}

We will generalize a result in \cite{He-Li-Li}, which provides a general construction of homotopy deformation retracts between the bar resolution and a smaller projective resolution for a dg algebra.

The following notion is standard; see \cite[Subsection~1.5.5]{LodV}.

\begin{defn}
\label{defn-retract}
Let $(V, d_V)$ and $(W, d_W)$ be two cochain complexes. A {\it homotopy deformation retract} from $V$ to $W$ is a triple $(\iota, \pi, h)$, where
$\iota\colon  V\rightarrow W$ and $\pi\colon W\rightarrow V$ are cochain maps satisfying  $\pi\circ \iota=\mathbf{1}_V$,  and $h\colon  W\rightarrow W$ is a homotopy of degree $-1$ between $\mathbf{1}_W$ and $\iota\circ \pi$, that is,
$\mathbf{1}_W = \iota\circ \pi +d_W\circ h+h\circ d_W$.

The homotopy deformation retract $(\iota, \pi, h)$ is usually depicted by the following diagram
\begin{equation*}\xymatrix@C=0.0000000000001pc{
(V, d_V)   \ar@<0.5ex>[rrrrrrrrrr]^-{\iota}&&&&&&&&&&(W, d_W)\ar@<0.5ex>[llllllllll]^-{\pi} & \ar@(dr, ur)_-{h}}
\end{equation*}
\end{defn}

Let $A$ be a dg algebra with a semisimple subalgebra $E=\bigoplus_{i=1}^n \mathbb ke_i \subseteq A^0 \subseteq A$ satisfying $d_A(e_i)=0$ and $e_ie_j = \delta_{i, j}e_i$ for any $i, j\in \mathcal I$. We consider the (normalized) $E$-relative  bar resolution $\overline{\rm Bar}_E(A)$, whose differential is denoted by $d$. The \emph{tensor-length} of a  typical element $y=a_0\otimes_E s \overline{a_{1, n}}\otimes_Eb\in A\otimes_E (s\overline A)^{\otimes_E n}\otimes_E A$ is defined to be $n+2$, where $s\overline{a_{1, n}}$ means $s\overline{a_1}\otimes_E s\overline{a_2} \otimes_E\cdots \otimes_E s\overline{a_n}$.  The following natural map
\begin{align}\label{equ:natproj}
s\colon  A\otimes_E (s\overline A)^{\otimes_E n}\otimes_E A  &\longrightarrow (s\overline A)^{\otimes_E n+1}\otimes_E A\\
y=a_0\otimes_E s \overline{a_{1, n}}\otimes_Eb &\longmapsto s(y)=s\overline{a_{0, n}}\otimes_E b \nonumber
\end{align}
is of degree $-1$.

The following result is inspired by \cite[Proposition 3.3]{He-Li-Li}.

\begin{prop}\label{prop:hdr}
Let $A$ be a dg algebra with a semisimple subalgebra $E =\bigoplus_{i=1}^n \mathbb ke_i \subseteq A^0\subseteq A$ satisfying $d_A(e_i)=0$ and $e_ie_j = \delta_{i, j}e_i$. Assume that $\omega\colon  \overline{\rm Bar}_E(A)\rightarrow \overline{\rm Bar}_E(A)$ is a morphism of dg $A$-$A$-bimodules satisfying $\omega(a\otimes_E b)=a\otimes_E b$ for all $a, b\in A$. Define a $\mathbb k$-linear map $h\colon  \overline{\rm Bar}_E(A)\rightarrow \overline{\rm Bar}_E(A)$ of degree $-1$ as follows
  \begin{equation*}
  \begin{split}
h&(a_0\otimes_E s\overline{a_{1, n}} \otimes_E b)\\
={} &\begin{cases}
0 & \mbox{if $n=0$; }\\
\sum_{i=1}^n (-1)^{\epsilon_i+1} a_0\otimes_E  s\overline{a_{1, i-1}}\otimes_E \overline{\omega}(1\otimes_E s\overline{a_{i, n}}\otimes_E b) & \mbox{if $n>0$.}
\end{cases}
\end{split}
\end{equation*}
Here,  $\epsilon_{i}=|a_0|+|a_1|+\cdots+|a_{i-1}|+i-1$,  and  $\overline{\omega}$ denotes the composition of $\omega$ with the natural map $s$ in (\ref{equ:natproj}). Then we have $d\circ h+h\circ d={\bf 1}_{\overline{\rm Bar}_E(A)}-\omega$.
\end{prop}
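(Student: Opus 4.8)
The plan is to prove $d\circ h + h\circ d = \mathbf 1_{\overline{\rm Bar}_E(A)} - \omega$ by first reducing to an evaluation on bimodule generators and then carrying out a telescoping computation. One checks immediately that $h$ is a morphism of graded $A$-$A$-bimodules: in the defining formula the left coefficient $a_0$ and the bar factors $s\overline{a_{1,i-1}}$ are left untouched, while the right coefficient $b$ enters only through the right $A$-linear maps $\omega$ and $s$ of \eqref{equ:natproj}. Since $d$, and hence $d h + h d$ and $\mathbf 1 - \omega$, are all $A$-$A$-bimodule morphisms, and since $\overline{\rm Bar}_E(A)$ is generated as an $A$-$A$-bimodule in each tensor-length $n$ by the elements $1\otimes_E s\overline{a_{1,n}}\otimes_E 1$, it suffices to prove the identity applied to these generators, where $h$ reads $\sum_{i=1}^n (-1)^{\epsilon_i+1}\, 1\otimes_E s\overline{a_{1,i-1}}\otimes_E \overline{\omega}(1\otimes_E s\overline{a_{i,n}}\otimes_E 1)$.

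The computation itself rests on the standard ``extra degeneracy'' identity for the bar construction: the degree $-1$ map $s$ of \eqref{equ:natproj} is, up to sign, the first half of the canonical contracting homotopy of $\overline{\rm Bar}_E(A)$ regarded as a complex of right $A$-modules. Concretely, prepending a head $a_0\otimes_E s\overline{a_{1,i-1}}$ to $s$ and applying $d_{ex}$ produces the same head applied to $s\bigl(d_{ex}(-)\bigr)$, plus the same head applied to the input \emph{without} the $s$, plus terms in which $d_{ex}$ acts inside the head; and $d_{in}$ commutes with the prepend-then-$s$ operation up to sign. Substituting the formula for $h$ into $dh+hd$, expanding, and using that $\omega$ is a chain map (so $d\,\omega=\omega\,d$ lets one push the differential through $\omega$ inside $\overline{\omega}=s\circ\omega$), the terms organize into three families: the ``$d$-acting-inside-the-head'' terms telescope along the index $i$ and cancel in pairs; the ``$s\circ\omega\circ d$'' terms cancel against the matching part of $h\,d$; and the residual ``head-without-$s$'' terms, summed over $i$ and combined with the boundary index $i=1$ (where one uses that $\omega$ is the identity in tensor-length $2$), assemble precisely into $1\otimes_E s\overline{a_{1,n}}\otimes_E 1-\omega(1\otimes_E s\overline{a_{1,n}}\otimes_E 1)$. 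Conceptually, $h$ is simply the homotopy produced by the classical inductive ``comparison of resolutions'' argument, applied to the bimodule chain map $\omega$ --- which fixes the resolution in tensor-length $2$ --- by means of the above contracting homotopy; the content of the proposition is that this homotopy has the stated closed form.

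The main obstacle is bookkeeping rather than conceptual depth: one must carry the Koszul signs through $d_{in}$, $d_{ex}$, and the exponents $\epsilon_i$, and --- more substantively --- cope with the fact that $\omega$ need not preserve tensor-length, so that $\overline{\omega}(1\otimes_E s\overline{a_{i,n}}\otimes_E b)$ is an inhomogeneous sum of terms of varying length. This rules out any naive length-by-length induction and forces one to apply the extra-degeneracy identity for $s$ termwise together with the global relation $d\,\omega=\omega\,d$; once the sign conventions are pinned down, the required cancellations are forced and the identity follows.
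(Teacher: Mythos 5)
Your reduction to bimodule generators and your identification of $s$ with (half of) the standard contracting homotopy are fine, but two things keep this from being a proof. First, the reason you give for abandoning induction is spurious: the paper proves the identity precisely by induction on the tensor-length of the \emph{input}, writing $x=a\otimes_E s(y)$, using the recursions $d(a\otimes_E s(y))=d_A(a)\otimes_E s(y)+(-1)^{|a|+1}a\otimes_E s(d(y))+(-1)^{|a|}a\,y$ and $h(a\otimes_E s(y))=(-1)^{|a|+1}\bigl(a\otimes_E s(h(y))+a\otimes_E s(\omega(1\otimes_E s(y)))\bigr)$, and applying the inductive hypothesis $(d h+h d)(y)=y-\omega(y)$ to the element $y$ as a whole; that $\omega(y)$ is inhomogeneous in tensor-length is irrelevant, since no homogeneous component of it is ever fed back into the induction. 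So the route you declare impossible is exactly the paper's short argument, and what you propose instead is that same computation unrolled, with all of its bookkeeping still to be done.

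Second, and more seriously, the heart of your argument is asserted rather than carried out, and the assertion glosses the only delicate point, namely the bottom of the complex. The identity $d\sigma+\sigma d=\mathbf 1$ for $\sigma(z)=1\otimes_E s(z)$ holds only on elements with at least one bar factor; on $A\otimes_E A$ one has $d\sigma(z)+\sigma d(z)=z-1\otimes_E\mu(z)$, where $\mu$ is the multiplication. Since $\omega$ applied to a tail $1\otimes_E s\overline{a_{i, n}}\otimes_E b$ may have a nonzero component in $A\otimes_E A$, expanding $d\circ h$ produces, besides your three families, residual terms of the form $\pm\, a_0\otimes_E s\overline{a_{1, i-1}}\otimes_E \mu\bigl(\text{length-two part of }\omega(1\otimes_E s\overline{a_{i, n}}\otimes_E b)\bigr)$, and there are further terms that disappear only because of the normalization $s\overline{1_A}=0$, which you never invoke. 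Disposing of all of these is exactly where the work lies, and your one boundary remark, that ``$\omega$ is the identity in tensor-length two at the boundary index $i=1$'', is both misplaced (in the actual computation the tensor-length-two inputs to $\omega$ and to $h$ arise from $d$ acting on the shortest tail $1\otimes_E s\overline{a_n}\otimes_E b$ and from $h$ killing the length-two part of $d(x)$, not at $i=1$) and not obviously sufficient to make the residual terms above vanish. Until those terms are exhibited and shown to cancel or to assemble into $x-\omega(x)$, the claimed telescoping is a hope rather than a proof; the inductive argument the paper gives handles this in a few lines.
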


\begin{proof}
We use induction on the tensor-length. Let $a\in A$ and $y\in A\otimes_E (s\overline A)^{\otimes_E n}\otimes_E A$. Then $a\otimes_E s(y)$ lies in $A\otimes_E (s\overline A)^{\otimes_E n+1} \otimes_E A$. To save the space, we write $a\otimes_E s(y)$ as $a\otimes_E \overline{y}$.

 Recall from Subsection~\ref{subsection-bar} that $d = d_{in} + d_{ex}$, where $d_{in}$ is the internal differential and $d_{ex}$ is the external differential. We observe that $d_{in}(a\otimes_E \overline{y})=d_A(a)\otimes_E \overline{y}+(-1)^{|a|+1} a\otimes_E \overline{d_{in}}(y)$ and that $d_{ex}(a\otimes_E \overline{y})=(-1)^{|a|}(ay-a\otimes_E\overline{d_{ex}}(y))$. Here, $ay$ denotes the left action of $a$ on $y$, and $\overline{d_{in}}$ (resp. $\overline{d_{ex}}$) is the composition of $d_{in}$ (resp. $d_{ex}$) with the map $s$ in (\ref{equ:natproj}). Then we have
\begin{align}\label{equ:diff}
d(a\otimes_E \overline{y})=d_A(a)\otimes_E\overline{y}+(-1)^{|a|+1}a\otimes_E \overline{d}(y) +(-1)^{|a|} ay.
\end{align}
From the very definition, we observe
\begin{align*}
h(a\otimes_E \overline{y})=(-1)^{|a|+1}(a\otimes_E \overline{h}(y)+a\otimes_E \overline{\omega}(1\otimes_E \overline{y})).
\end{align*}

Using the above two identities, we obtain
\begin{align*}
d\circ h(a\otimes_E\overline{y})&=(-1)^{|a|+1} d_A(a)\otimes_E \overline{h}(y) + a\otimes_E \overline{d\circ h}(y) -ah(y)\\
&+(-1)^{|a|+1} d_A(a)\otimes_E \overline{\omega}(1\otimes_E \overline{y})+a\otimes_E \overline{d\circ \omega}(1\otimes_E\overline{y}) -a \omega(1\otimes_E \overline{y}),
\end{align*}
and
\begin{align*}
h\circ d(a\otimes_E\overline{y})&=(-1)^{|a|} d_A(a)\otimes_E \overline{h}(y) + (-1)^{|a|} d_A(a)\otimes_E \overline{\omega}(1\otimes_E\overline{y})\\
&+a\otimes_E\overline{h\circ d}(y)+a\otimes_E\overline{\omega}(1\otimes_E \overline{d}(y))+(-1)^{|a|}h(ay).
\end{align*}
Using the fact $ah(y)=(-1)^{|a|}h(ay)$, we infer the first equality of the following identities
\begin{align*}
&(d\circ h+h\circ d)(a\otimes_E\overline{y})\\
={}& a\otimes_E \overline{(d\circ h+h\circ d)}(y) + a\otimes_E \overline{d\circ \omega}(1\otimes_E\overline{y})
+a\otimes_E\overline{\omega}(1\otimes_E \overline{d}(y)) -a \omega(1\otimes_E \overline{y})\\
={}& a\otimes_E \overline{y}-a\otimes_E\overline{\omega}(y) + a\otimes_E \overline{d\circ \omega}(1\otimes_E\overline{y})+a\otimes_E\overline{\omega}(1\otimes_E \overline{d}(y)) -a \omega(1\otimes_E \overline{y})\\
={}&a\otimes_E \overline{y}-a\otimes_E\overline{\omega}(y) + a\otimes_E \overline{ \omega\circ d}(1\otimes_E\overline{y})+a\otimes_E\overline{\omega}(1\otimes_E \overline{d}(y)) -\omega(a\otimes_E \overline{y})\\
={}& a\otimes_E \overline{y}-\omega(a\otimes_E \overline{y}).
\end{align*}
Here, the second equality uses the induction hypothesis,  and the third one uses the fact that $\omega$ respects the differentials and the left $A$-module structure. The last equality uses the following special case of (\ref{equ:diff})
$$-y+d(1\otimes_E \overline{y})+1\otimes_E \overline{d}(y)=0.$$
This completes the proof.
\end{proof}

\begin{rem}
We observe that the obtained homotopy $h$ respects the $A$-$A$-bimodule structures. More precisely, $h\colon  \overline{\Barr}_E(A)\rightarrow \Sigma^{-1}\overline {\Barr}_E(A)$ is a morphism of graded $A$-$A$-bimodules.
\end{rem}

The following immediate consequence of Proposition~\ref{prop:hdr} is a slight generalization of \cite[Proposition 3.3]{He-Li-Li}, which might be a useful tool in many fields to construct explicit homotopy deformation retracts. We recall from (\ref{equ:bar}) the quasi-isomorphism $\varepsilon\colon  \overline{\rm Bar}_E(A)\rightarrow A$.

\begin{cor}\label{cor:hdr}
Let $A$ be a dg algebra with a semisimple subalgebra $E=\bigoplus_{i=1}^n \mathbb ke_i\subseteq A^0\subseteq A$ satisfying $d_A(e_i)=0$ and  $e_ie_j = \delta_{i,j}e_i$. Assume that $P$ is a dg $A$-$A$-bimodule and that there are two morphisms of dg $A$-$A$-bimodules
 $$\iota\colon   P\longrightarrow \overline{\Barr}_E(A), \quad \pi\colon   \overline{\Barr}_E(A)\longrightarrow P$$
satisfying $\pi\circ \iota=\mathbf{1}_P$ and $\iota \circ \pi|_{A\otimes_EA}=\mathbf{1}_{A\otimes_E A}$. Then the pair $(\iota, \pi)$ can be extended to a homotopy deformation retract $(\iota, \pi, h)$, where $h\colon  \overline{\Barr}_E(A)\rightarrow \overline{\Barr}_E(A)$ is given as in Proposition~\ref{prop:hdr} with $\omega=\iota\circ \pi$.

In particular, the composition
$$P\stackrel{\iota}\longrightarrow \overline{\Barr}_E(A)\stackrel{\varepsilon}\longrightarrow A$$
is a quasi-isomorphism of dg $A$-$A$-bimodules. \hfill $\square$
\end{cor}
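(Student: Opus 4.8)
The plan is to deduce the corollary directly from Proposition~\ref{prop:hdr} by feeding it the morphism $\omega=\iota\circ\pi$, and then to read off the homotopy‑equivalence consequence. First I would verify that $\omega=\iota\circ\pi$ satisfies the hypotheses of Proposition~\ref{prop:hdr}: as $\iota$ and $\pi$ are morphisms of dg $A$-$A$-bimodules, so is their composite $\omega$; and the assumption $\iota\circ\pi|_{A\otimes_E A}=\mathbf 1_{A\otimes_E A}$ is precisely the condition $\omega(a\otimes_E b)=a\otimes_E b$ for all $a,b\in A$. Hence Proposition~\ref{prop:hdr} applies and delivers the explicit degree $-1$ map $h\colon\overline{\Barr}_E(A)\to\overline{\Barr}_E(A)$, given by the displayed formula with $\omega=\iota\circ\pi$, satisfying $d\circ h+h\circ d=\mathbf 1_{\overline{\Barr}_E(A)}-\iota\circ\pi$.

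Next I would match this against Definition~\ref{defn-retract}. Together with the hypothesis $\pi\circ\iota=\mathbf 1_P$, the identity $\mathbf 1_{\overline{\Barr}_E(A)}=\iota\circ\pi+d\circ h+h\circ d$ shows that the triple $(\iota,\pi,h)$ is exactly a homotopy deformation retract from $P$ to $\overline{\Barr}_E(A)$, so the pair $(\iota,\pi)$ has indeed been extended as claimed. I would also note, as in the remark following Proposition~\ref{prop:hdr}, that the $h$ produced respects the $A$-$A$-bimodule structures, so the retract lives in the category of dg $A$-$A$-bimodules, although this refinement is not needed for the statement.

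For the final assertion, I would observe that being one half of a homotopy deformation retract makes $\iota\colon P\to\overline{\Barr}_E(A)$ a homotopy equivalence: $\pi$ is a homotopy inverse since $\pi\circ\iota=\mathbf 1_P$ and $\iota\circ\pi$ is homotopic to $\mathbf 1_{\overline{\Barr}_E(A)}$ via $h$; in particular $\iota$ is a quasi-isomorphism. Since $\varepsilon\colon\overline{\Barr}_E(A)\to A$ of \eqref{equ:bar} is a quasi-isomorphism (recalled in Subsection~\ref{subsection-bar}), the composition $\varepsilon\circ\iota\colon P\to A$ is a quasi-isomorphism, and it is a morphism of dg $A$-$A$-bimodules because $\iota$ and $\varepsilon$ both are.

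I do not expect any genuine obstacle here: the entire technical content—the verification of the chain‑homotopy identity by induction on tensor‑length—has already been carried out in Proposition~\ref{prop:hdr}, so the corollary is a matter of bookkeeping, namely aligning the two hypotheses $\pi\circ\iota=\mathbf 1_P$ and $\iota\circ\pi|_{A\otimes_E A}=\mathbf 1_{A\otimes_E A}$ with the definition of a homotopy deformation retract and with the standard fact that $\varepsilon$ is a quasi-isomorphism.
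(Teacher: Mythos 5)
Your proposal is correct and matches the paper's intent: the paper states the corollary as an immediate consequence of Proposition~\ref{prop:hdr} applied to $\omega=\iota\circ\pi$, and your bookkeeping — identifying $\iota\circ\pi|_{A\otimes_E A}=\mathbf 1_{A\otimes_E A}$ with the hypothesis on $\omega$, matching the resulting identity with Definition~\ref{defn-retract}, and concluding the quasi-isomorphism via $\pi\circ\iota=\mathbf 1_P$ together with the standard fact that $\varepsilon$ is a quasi-isomorphism — is exactly the argument left implicit there.
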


\subsection{A homotopy deformation retract for the Leavitt path algebra}\label{subsection:hdrforLPA}

In this subsection, we apply the above construction to  Leavitt path algebras. We obtain a homotopy deformation retract between the normalized $E$-relative bar resolution and an explicit bimodule projective resolution.

Let $Q$ be a finite quiver without sinks. Let $L=L(Q)$ be the Leavitt path algebra viewed as a dg algebra with trivial differential; see Section~\ref{sec:lpa}. Set $E=\bigoplus_{i\in Q_0} \mathbb ke_i\subseteq L^0\subseteq L$. We write $\overline L = L / (E\cdot \mathbf 1_{L})$.   In what follows, we will construct an explicit homotopy deformation retract.
\begin{equation}\label{equation-hdr-1}\xymatrix@C=0.00000000001pc{
( P, \partial) \ar@<0.5ex>[rrrrrr]^-{\iota}&&&&& &(\overline{\Barr}_E(L), d)\ar@<0.5ex>[llllll]^-{\pi}& \ar@(dr,ur)_-{h}}
\end{equation}

Let us first describe the dg $L$-$L$-bimodule  $(P, \partial)$. As a graded $L$-$L$-bimodule, $$P=\bigoplus_{i\in Q_0}\left( Le_i\otimes s\mathbb k\otimes e_i L\right) \oplus \bigoplus_{i\in Q_0} Le_i\otimes e_i L.$$
The differential $\partial$ of $P$ is given by
\begin{equation*}
\begin{split}
\partial(x\otimes s\otimes y)={}&(-1)^{|x|} x\otimes y-(-1)^{|x|} \sum_{\{\alpha\in Q_1\mid s(\alpha)=i \}} x\alpha^*\otimes \alpha y,\\
\partial(x\otimes y)={}&0,
\end{split}
\end{equation*}
for $x\in Le_i$, $y\in e_iL$ and $i\in Q_0$. Here, $s\mathbb k$ is the $1$-dimensional graded $\mathbb k$-vector space concentrated in degree $-1$, and the element $s1_{\mathbb k}\in s\mathbb k$ is abbreviated as $s$.

The homotopy deformation retract (\ref{equation-hdr-1}) is defined  as follows.
\begin{enumerate}
\item The injection $\iota\colon  P\rightarrow \overline{\Barr}_E(L)$ is given by
 \begin{equation*}
 \begin{split}
 \iota(x\otimes y)&=x\otimes_E  y,\\
 \iota(x\otimes s \otimes y)&=-\sum_{\{\alpha\in Q_1\mid s(\alpha)=i \}}x\alpha^*\otimes_E s\alpha\otimes_E   y,
 \end{split}
 \end{equation*}
 for  $x\in Le_i$, $y\in e_iL$ and $i\in Q_0$.

 \vskip 3pt

 \item The surjection $\pi\colon  \overline{\Barr}_E(L)\rightarrow P$ is given by

 \begin{equation}
 \label{pi}
 \begin{split}
 \pi(a'\otimes_E b')&=a'\otimes b',\\
 \pi(a\otimes_E s\overline{z} \otimes_E b)&= aD(z)b, \\
 \pi|_{L\otimes_E (s\overline{L})^{\otimes_E >1}\otimes_E L}&=0,
 \end{split}
 \end{equation}
for  $a'=a'e_i$ and $b'=e_ib'$ for some $i\in Q_0$, and any $a, b, z\in L$,  where $D\colon  L\rightarrow\bigoplus_{i\in Q_0}\left( Le_i\otimes s\mathbb k\otimes e_i L\right)$ is the graded $E$-derivation of degree $-1$ in Lemma \ref{lem:deri}. Here and also in the proof of Proposition~\ref{prop:htrforLPA}, we use the canonical identification
$$\bigoplus_{i\in Q_0} Le_i\otimes e_i L=L\otimes_E L, \quad x\otimes y\longmapsto x\otimes_E y.$$

 \item The homotopy $h\colon  \overline{\Barr}_E(L)\rightarrow  \overline{\Barr}_E(L)$  is given by
 \begin{eqnarray*}
\lefteqn{h(a_0\otimes_E s\overline{a_1}\otimes_E \cdots\otimes_E s\overline{a_n}\otimes_E b)}\\
={} &\begin{cases}
0 & \mbox{if $n=0$;}\\
(-1)^{\epsilon_n+1} a_0\otimes_E s\overline{a_1}\otimes_E\cdots \otimes_E s\overline{a_{n-1}}\otimes_E \overline{\iota\circ \pi}(1\otimes_E s\overline{a_n}\otimes_E b) & \mbox{if $n>0$},
\end{cases}
\end{eqnarray*}
where $\epsilon_{n}=|a_0|+|a_1|+\cdots+|a_{n-1}|+n-1$, and  $\overline{\iota\circ \pi}$ is the composition of $\iota\circ \pi$ with the natural isomorphism  $s\colon  L\otimes_E s\overline{L}\otimes_E L\rightarrow s\overline L\otimes_E s\overline{L}\otimes_E L$ of degree $-1$.
 \end{enumerate}

\begin{prop}\label{prop:htrforLPA}
The above triple $(\iota, \pi, h)$ defines a homotopy deformation retract in the abelian category of dg $L$-$L$-bimodules. In particular, the dg $L$-$L$-bimodule $P$ is a dg-projective bimodule resolution of $L$.
\end{prop}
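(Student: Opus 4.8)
The plan is to invoke Corollary~\ref{cor:hdr} with $A = L$, the semisimple subalgebra $E = \bigoplus_{i\in Q_0}\mathbb k e_i$, and the dg $L$-$L$-bimodule $P$ as described above, taking $\omega = \iota\circ\pi$. According to that corollary, it suffices to check three things: that $\iota$ and $\pi$ are morphisms of dg $L$-$L$-bimodules, that $\pi\circ\iota = \mathbf 1_P$, and that $(\iota\circ\pi)|_{L\otimes_E L} = \mathbf 1_{L\otimes_E L}$. Once these are verified, Corollary~\ref{cor:hdr} automatically produces the homotopy deformation retract $(\iota,\pi,h)$ with $h$ of precisely the displayed form (its formula is the one from Proposition~\ref{prop:hdr} specialized to $\omega = \iota\circ\pi$), and simultaneously tells us that the composite $P\xrightarrow{\iota}\overline{\Barr}_E(L)\xrightarrow{\varepsilon}L$ is a quasi-isomorphism of dg $L$-$L$-bimodules. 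Since $P$ is manifestly a complex of projective $L$-$L$-bimodules (each summand $Le_i\otimes e_iL$ and $Le_i\otimes s\mathbb k\otimes e_iL$ is a direct summand of a free $L$-$L$-bimodule), this quasi-isomorphism exhibits $P$ as a dg-projective bimodule resolution of $L$, which is the "in particular" clause.

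First I would check that $\iota$ is a chain map. Since $\iota$ visibly respects the bimodule structure and is $E$-bilinear, one only needs $d\circ\iota = \iota\circ\partial$. On the summand $\bigoplus_i Le_i\otimes e_iL$ both sides vanish. On a generator $x\otimes s\otimes y$ with $x\in Le_i$, $y\in e_iL$, one computes $d\,\iota(x\otimes s\otimes y) = -\sum_{s(\alpha)=i} d(x\alpha^*\otimes_E s\alpha\otimes_E y)$; expanding the bar differential $d = d_{in}+d_{ex}$ (here $d_{in}=0$ as $L$ has trivial differential) gives a telescoping sum, and the first Cuntz--Krieger relations $\alpha\beta^* = \delta_{\alpha,\beta}e_{t(\alpha)}$ together with the outer-action conventions collapse it to $(-1)^{|x|}x\otimes_E y - (-1)^{|x|}\sum_{s(\alpha)=i}x\alpha^*\otimes_E\alpha y$, which is exactly $\iota\,\partial(x\otimes s\otimes y)$. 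For $\pi$ I would use the characterization of maps out of the bar resolution: by \eqref{identification-bimodule} a bimodule morphism $\overline{\Barr}_E(L)\to P$ is the same as an $E$-$E$-bimodule map $T_E(s\overline L)\to P$, and checking $\partial\circ\pi = \pi\circ d$ amounts, in tensor-length $1$, to verifying that $z\mapsto D(z)$ composed with $\partial$ recovers $\varepsilon$ up to the relevant signs — this is precisely the defining property that makes $(P,\partial,\varepsilon)$ the beginning of a resolution, and it follows from the graded Leibniz rule for $D$ (Lemma~\ref{lem:deri}) together with the explicit formula for $D$ in Remark~\ref{rem:derivation}; in tensor-length $>1$ both composites vanish by the third line of \eqref{pi}.

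Then $\pi\circ\iota = \mathbf 1_P$ is a direct computation: $\pi\iota(x\otimes y) = \pi(x\otimes_E y) = x\otimes y$, and $\pi\iota(x\otimes s\otimes y) = -\sum_{s(\alpha)=i}\pi(x\alpha^*\otimes_E s\alpha\otimes_E y) = -\sum_{s(\alpha)=i}x\alpha^* D(\alpha)\, y = \sum_{s(\alpha)=i}x\alpha^*\otimes e_{s(\alpha)}\otimes \alpha y$ — wait, one must be careful: $D(\alpha) = -\alpha\otimes s\otimes e_{s(\alpha)}$, so $x\alpha^* D(\alpha) y = -x\alpha^*\alpha\otimes s\otimes e_{s(\alpha)}y$, and summing over $\alpha$ with $s(\alpha)=i$ and using the \emph{second} Cuntz--Krieger relation $\sum_{s(\alpha)=i}\alpha^*\alpha = e_i$ yields $x e_i\otimes s\otimes y = x\otimes s\otimes y$. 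Finally $(\iota\circ\pi)|_{L\otimes_E L}$: on $a'\otimes_E b'$ with $a' = a'e_i$, $b' = e_ib'$, we get $\iota\pi(a'\otimes_E b') = \iota(a'\otimes b') = a'\otimes_E b'$. The main obstacle, and the only place requiring genuine care rather than bookkeeping, is managing the interplay between the two Cuntz--Krieger relations and the Koszul signs in the verification that $\iota$ is a chain map and in $\pi\circ\iota=\mathbf 1$; everything else is formal once Corollary~\ref{cor:hdr} is in hand. I would organize the sign computations using the shifted notation $s\overline{a_{1,n}}$ and the identity \eqref{identification-bimodule} throughout to keep signs uniform.
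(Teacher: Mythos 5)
Your overall strategy is exactly the paper's: show that $\iota$ and $\pi$ are morphisms of dg $L$-$L$-bimodules, that $\pi\circ\iota=\mathbf 1_P$ and $(\iota\circ\pi)|_{L\otimes_EL}=\mathbf 1_{L\otimes_EL}$, and then invoke Corollary~\ref{cor:hdr} (with $\omega=\iota\circ\pi$, so that the resulting homotopy is the displayed $h$, and the quasi-isomorphism $\varepsilon\circ\iota$ gives the ``in particular'' clause). Your computations for the chain-map property of $\iota$ and for $\pi\circ\iota=\mathbf 1_P$ are essentially those of the paper, up to one slip: the collapse of $\sum_{s(\alpha)=i}x\alpha^*\alpha$ to $x$ in the $\iota$-check uses the \emph{second} Cuntz--Krieger relation $\sum_{s(\alpha)=i}\alpha^*\alpha=e_i$, not the first one as you state.

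The genuine gap is in your verification that $\pi$ is a chain map. First, the claim that ``in tensor-length $>1$ both composites vanish by the third line of \eqref{pi}'' fails on the component $L\otimes_E(s\overline{L})^{\otimes_E 2}\otimes_E L$: there $\pi$ vanishes, but $d_{ex}$ maps this component to $L\otimes_E s\overline{L}\otimes_E L$, where $\pi$ is $a\otimes_E s\overline{z}\otimes_E b\mapsto aD(z)b$ and is nonzero; the vanishing of $\pi\circ d_{ex}$ there is a nontrivial identity, and it is precisely here that the graded Leibniz rule of $D$ is needed (it forces $ayD(z)b-aD(yz)b+aD(y)zb=0$ up to signs). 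Second, the remaining and hardest identity, $\partial\circ\pi=d_{ex}$ on $L\otimes_E s\overline{L}\otimes_E L$, is asserted rather than proved: calling it ``the defining property that makes $(P,\partial,\varepsilon)$ the beginning of a resolution'' is circular, since the proposition is exactly what establishes $P$ as a resolution. The paper proves it by checking it on the generators $s\overline{\alpha}$ and $s\overline{\alpha^*}$ --- and this base case is where the \emph{first} Cuntz--Krieger relations $\alpha\beta^*=\delta_{\alpha,\beta}e_{t(\alpha)}$ enter --- and then inducting on the length of the path $w$ in $a\otimes_E s\overline{w}\otimes_E b$, splitting $w=\gamma\eta$ via the Leibniz rule of Lemma~\ref{lem:deri} and using $d_{ex}^2=0$; your alternative of computing directly from the formulas in Remark~\ref{rem:derivation} can be made to work on the spanning monomials $\gamma^*\eta$, but the telescoping again requires the first Cuntz--Krieger relations, which your sketch never invokes in this step. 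With these two squares actually verified, the rest of your argument goes through as in the paper.
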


 \begin{proof}
We first observe that $\iota$ and $\pi$ are  morphisms of $L$-$L$-bimodules. Recall that the differential  of $\overline{\Barr}_E(L)$ is given by the external differential $d_{ex}$ since the internal differential $d_{in}$ is zero; see Subsection~\ref{subsection-bar}. We claim that both $\iota$ and $\pi$ respect the differential.  It suffices to prove the commutativity of the following diagram.
\begin{equation*}
\xymatrix@C=1.5pc{
\cdots \ar[r] & 0\ar[d]\ar[r] & \bigoplus_{i\in Q_0} Le_i\otimes s\mathbb k\otimes e_i L \ar[d]^-{\iota}\ar[r]^-{\partial} & L\otimes_E L \ar@{=}[d] \\
\cdots \ar[r]& L\otimes_E (s\overline{L})^{\otimes_E 2}\otimes_EL \ar[d] \ar[r]^-{d_{ex}} & L\otimes_E s\overline{L} \otimes_E L \ar[d]^-{\pi}\ar[r]^-{d_{ex}} & L\otimes_E L  \ar@{=}[d] \\
\cdots \ar[r]& 0\ar[r] & \bigoplus_{i\in Q_0} Le_i\otimes s\mathbb k\otimes e_i L \ar[r]^-{\partial} & L\otimes_E L
}\end{equation*}

For the northeast square, we have
 \begin{equation*}
 \begin{split}
 d_{ex}\circ \iota(x\otimes s\otimes y)={}&-\sum_{\{\alpha\in Q_1\mid s(\alpha)=i \}}d_{ex}(x\alpha^*\otimes_Es\alpha\otimes_E  y)\\
 ={}&\sum_{\{\alpha\in Q_1\mid s(\alpha)=i \}}  -(-1)^{|x|+1} x\alpha^*\alpha\otimes_E y-(-1)^{|x|} x\alpha^*\otimes_E \alpha y\\
 ={}&
 \partial(x\otimes s\otimes y), \end{split}
 \end{equation*}
 where the third equality follows from the second Cuntz-Krieger relations.

 For the southwest square, we have
\begin{equation*}
\begin{split}
&\pi \circ d_{ex}(a\otimes_E  s\overline{y}\otimes_E  s\overline{z}\otimes_E  b)\\
={}&(-1)^{|a|} \pi(ay\otimes_E  s\overline{z}\otimes_E  b)+(-1)^{|a|+|y|-1} (\pi(a\otimes_E  s
\overline{yz}\otimes_E  b)- \pi (a\otimes_E  s\overline{y}\otimes_E  zb))\\
={}&(-1)^{|a|} ay D(z)b+(-1)^{|a|+|y|-1} a D(yz)w-(-1)^{|a|+|y|-1} a D(y) zb\\
={}&0,
\end{split}
\end{equation*}
where the last equality follows from the graded Leibniz rule of $D$.

It remains to verify that the southeast square commutes, namely $\partial\circ \pi=d_{ex}.$  For this, we first note that
\begin{align*}
\partial \circ \pi(a \otimes_E  s\overline{\alpha}\otimes_E  b)= &-(-1)^{|a|+1} a\alpha\otimes  b +(-1)^{|a|+1} \sum_{\{\beta\in Q_1\mid s(\beta)=s(\alpha) \}}a\alpha\beta^*\otimes  \beta b \\
={}& (-1)^{|a|} a\alpha\otimes  b -(-1)^{|a|} a \otimes  \alpha b\\
={}&d_{ex}(a\otimes_E  s\alpha\otimes_E b),
\end{align*}
where $\alpha\in Q_1$ is an arrow, $a\in Le_{t(\alpha)}$ and $b\in e_{s(\alpha)}L$. For the second equality, we use the first Cuntz-Krieger relations $\alpha\beta^*=\delta_{\alpha, \beta}e_{t(\alpha)}$. Similarly, we have $\partial \circ \pi(a\otimes_E s\alpha^*\otimes_E b)=d_{ex}(a \otimes_E s\alpha^*\otimes_E b). $

For the general case, we use induction on the length of the path $w$ in $a\otimes_E sw\otimes_E b$. By the \emph{length} of a path $w$ in $L$, we mean the number of  arrows in $w$, including the ghost arrows.  We write $w=\gamma\eta$ such that the lengths of $\gamma$ and $\eta$ are both strictly smaller than that of $w$. We have  \begin{equation*}
\begin{split}
\partial\circ  \pi(a\otimes_E s\overline{\gamma\eta} \otimes_E b)
={}&\partial\big(a D(\gamma)\eta b+(-1)^{|\gamma|} a\gamma D(\eta )b\big)\\
={}& \partial \circ \pi\big(a\otimes_E s\overline{\gamma}\otimes_E \eta b+(-1)^{|\gamma|} a\gamma\otimes_E s\overline{\eta} \otimes_E b\big)\\
={}&d_{ex}\big(a\otimes_E s\overline{\gamma} \otimes_E \eta b+(-1)^{|\gamma|} a\gamma\otimes_E s\overline{\eta} \otimes_E b\big)\\
={}&d_{ex}(a\otimes_E s\overline{\gamma\eta} \otimes_E b),
\end{split}
\end{equation*}
where the third equality uses the induction hypothesis, and the fourth one follows from $d_{ex}^2(a\otimes_Es\overline{\gamma} \otimes_E s\overline{\eta} \otimes_E b)=0$. This proves the required commutativity and the claim.

The fact $\pi\circ \iota=\mathbf{1}_P$ follows
from the second Cuntz-Krieger relations. By Corollary \ref{cor:hdr},  it follows that $(\iota, \pi)$ extends to a homotopy deformation retract $(\iota, \pi, h)$; moreover, the obtained $h$ coincides with the given one.
\end{proof}

\begin{rem}
\begin{enumerate}
\item From the $L$-$L$-bimodule resolution $P$ above, it follows that the Leavitt path algebra $L$ is {\it quasi-free} in the sense of \cite[Section 3]{CQ}; this result can be also proved along the way of the proof of \cite[Proposition 5.3(2)]{CQ}.
\item The following comment is due to Bernhard Keller: the above explicit projective bimodule resolution  $P$  might be used to give a shorter proof of the computation of the Hochschild homology of $L$  in \cite[Theorem~4.4]{AC}.
\end{enumerate}
\end{rem}

\subsection{The homotopy transfer theorem for dg algebras}

We recall the homotopy transfer theorem for dg algebras, which will be used in the next section.

\begin{thm}[\cite{Kad}]\label{thm-hdr}
Let $(A, d_A, \mu_A)$ be a dg algebra. Let \begin{equation*}\xymatrix@C=0.0000000000001pc{
(V, d_V)   \ar@<0.5ex>[rrrrrrrrrr]^-{\iota}&&&&&&&&&&(A, d_A)\ar@<0.5ex>[llllllllll]^-{\pi} & \ar@(dr, ur)_-{h}}
\end{equation*}
be a homotopy deformation retract between cochain complexes (cf. Definition \ref{defn-retract}). Then there is an $A_{\infty}$-algebra structure $(m_1=d_V, m_2, m_3, \cdots)$ on $V$, where $m_k$ is depicted in  Figure~\ref{A-infinity-product}. Moreover, the map $\iota\colon   V\rightarrow A$ extends to an $A_{\infty}$-quasi-isomorphism $(\iota_1=\iota, \iota_2, \cdots)$ from the resulting $A_{\infty}$-algebra $V$ to the dg algebra $A$, where $\iota_k$ is depicted in Figure~\ref{A-infinity-product}.
\end{thm}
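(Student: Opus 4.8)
The plan is to construct the transferred $A_\infty$-structure and the extension of $\iota$ by the standard tree-summation formulas, the validity of which is exactly the content of Kadeishvili's theorem \cite{Kad}; the task here is therefore not to reprove the theorem from scratch but to write down the explicit formulas (which the statement already refers to via Figures) and verify that they satisfy the $A_\infty$-relations \eqref{relationforA} and the $A_\infty$-morphism relations \eqref{equationforAmorph}. Concretely, I would first fix notation: write $p\colon A\to A$ for the idempotent-up-to-homotopy $\iota\circ\pi$, so that $\mathbf 1_A = \iota\circ\pi + d_A\circ h + h\circ d_A$, and record the side conditions one may always arrange (the \emph{annihilation} or \emph{side} conditions $h\circ h = 0$, $h\circ\iota = 0$, $\pi\circ h = 0$), since these are what make the tree sums finite and the induction clean. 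Since the paper later only applies this to a concrete homotopy deformation retract coming from Proposition~\ref{prop:htrforLPA}, these side conditions are harmless to assume.

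Next I would give the recursive definitions. Set $m_1 = d_V$, $m_2 = \pi\circ\mu_A\circ(\iota\otimes\iota)$, and for $n\geq 3$ define
\begin{equation*}
m_n = \pi\circ\mu_A\circ\Big(\sum_{\substack{i+j=n\\ i,j\geq 1}} (-1)^{?}\, (h\iota)_i \otimes (h\iota)_j\Big),
\end{equation*}
where $(h\iota)_1 = \iota$ and $(h\iota)_k = h\circ\mu_A\circ\big(\sum_{i+j=k} (h\iota)_i\otimes(h\iota)_j\big)$ for $k\geq 2$; equivalently, $m_n$ is the sum over planar binary rooted trees with $n$ leaves, with $\iota$ at the leaves, $\mu_A$ at each internal vertex, $h$ on each internal edge and $\pi$ at the root, carrying the Koszul sign determined by the degrees $|h| = -1$, $|\iota|=|\pi|=0$, $|\mu_A|=0$. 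The extension of $\iota$ is defined by $\iota_1 = \iota$ and $\iota_n = h\circ\mu_A\circ\big(\sum_{i+j=n}(h\iota)_i\otimes(h\iota)_j\big) = (h\iota)_n$ for $n\geq 2$, i.e. the same trees but with $h$ at the root in place of $\pi$. This is precisely the content of Figures~\ref{A-infinity-product}.

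The verification then proceeds by induction on $n$. For the $A_\infty$-relations, the key computational input is the homotopy identity $d_A h + h d_A = \mathbf 1_A - \iota\pi$ rewritten as $d_A\circ h + h\circ d_A + \iota\circ\pi = \mathbf 1$; applying $d_A$ (or $d_V$) to an $n$-leaf tree and using the Leibniz rule for $\mu_A$, every term either reconstitutes a smaller composite $m_k$ (via the $\iota\pi$ piece replacing an internal $h$), or is cancelled by another tree (via $d_A$ hitting an internal edge), leaving exactly the quadratic sum $\sum (-1)^{\cdots} m_{n-s+1}(\mathbf 1^{\otimes j}\otimes m_s\otimes\mathbf 1^{\otimes(n-j-s)}) = 0$; the side conditions $h^2 = 0$, $h\iota = 0$ kill the would-be non-contributing terms. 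The argument for \eqref{equationforAmorph} for $(\iota_n)$ is entirely parallel, with $h$ at the root replacing $\pi$. The only genuinely delicate point, and the one I expect to be the main obstacle, is the \textbf{sign bookkeeping}: one must pick the Koszul signs in the tree sums so that the telescoping cancellations actually occur and so that the stated relations hold on the nose rather than up to sign. I would handle this not by a brute-force sign chase but by passing to the bar/cofree-coalgebra picture of Remark~\ref{rem:A-inf}: a homotopy deformation retract of complexes induces one of the cofree coalgebras $T^c(sV)\rightleftarrows T^c(sA)$ (with the tensor coalgebra differential on $T^c(sA)$ encoding the dg-algebra structure via $m_2$), and the homological perturbation lemma transports the codifferential to $T^c(sV)$ and produces the comorphism; unwinding this through the desuspension isomorphisms $s^{\otimes n}$ yields the formulas above with signs automatically consistent, since on the coalgebra side all maps have honest square-zero codifferentials and the perturbation lemma is a formal identity. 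This reduces the theorem to the (known, sign-free in spirit) perturbation lemma plus the translation already set up in Remarks~\ref{rem:A-inf} and \eqref{formula-sec}.
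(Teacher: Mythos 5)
The paper does not actually prove Theorem~\ref{thm-hdr}: it is quoted from Kadeishvili \cite{Kad}, and only the special case recorded in Corollary~\ref{corollary-hdr} (under the extra hypothesis \eqref{assumptionhomtopytransfer}) is used afterwards. So there is no in-paper argument to match; your proposal amounts to sketching the standard proof, and in outline (the sum over planar binary rooted trees of Figure~\ref{A-infinity-product}, equivalently the bar-construction/perturbation picture) it is the right one, and your recursion via $(h\iota)_k$ does reproduce exactly the tree sums of the figure.

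Two points are genuinely problematic, however. First, the side conditions $h\circ h=0$, $h\circ\iota=0$, $\pi\circ h=0$ are neither hypotheses of Definition~\ref{defn-retract} nor what makes the sums finite: for each $k$ the sum runs over the finite set $BPT(k)$, so finiteness is automatic. More seriously, the theorem asserts the explicit formulas for the \emph{given} $h$; if you arrange the side conditions by modifying $h$, your induction verifies the $A_\infty$-relations for the formulas built from the modified homotopy, which is a strictly weaker statement, and the claim that this is ``harmless'' because of the later application is circular, since Propositions~\ref{proposition-A-infinity} and \ref{proposition-Phi} compute with the formulas attached to the specific $H$ coming from Proposition~\ref{prop:htrforLPA}. (That particular retract does happen to satisfy the side conditions: $H\circ\Phi=0$ because $\Phi$ lands in cochain length $\le 1$ where $H$ vanishes, and $\Psi\circ H=0$, $H\circ H=0$ follow from Lemma~\ref{facts}-type computations; but your proposal neither checks this nor needs to, because the standard treatments, e.g.\ \cite{Kad} and \cite[Section~10.3]{LodV}, prove the transfer for an arbitrary homotopy with no side conditions.) Second, in the coalgebra route you offer for the signs, the delicate step is not the perturbation lemma itself but the assertion that the perturbed differential on $T^c(sV)$ is a coderivation and the perturbed map a morphism of coalgebras; this is precisely where the tensor-trick homotopy must be chosen compatibly (and where side conditions usually re-enter the argument), so dismissing it as ``a formal identity'' skips the one step that actually requires proof.
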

\begin{figure}[H]
\centering
  \includegraphics[height=30mm]{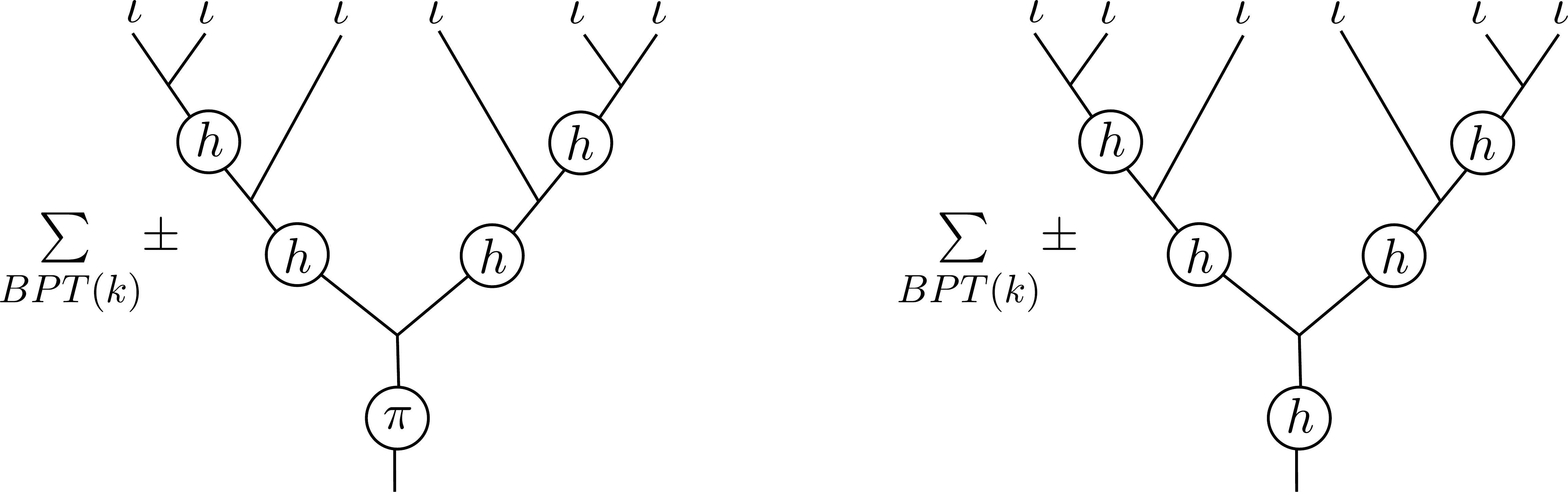}
  \caption{The $A_{\infty}$-product $m_k$ is on the left and the $A_{\infty}$-quasi-isomorphism $\iota_k$ is on the right,  where  the sums are taken over $BPT(k)$, the set of all planar rooted binary trees with $k$ leaves. }
  \label{A-infinity-product}
\end{figure}

In this paper, we only need the following special case of Theorem \ref{thm-hdr}.

\begin{cor}\label{corollary-hdr}
Let $(A, d_A, \mu_A)$ be a dg algebra. Let \begin{equation*}\xymatrix@C=0.0000000000001pc{
(V, d_V)   \ar@<0.5ex>[rrrrrrrrrr]^-{\iota}&&&&&&&&&&(A, d_A)\ar@<0.5ex>[llllllllll]^-{\pi} & \ar@(dr, ur)_-{h}}
\end{equation*}
be a homotopy deformation retract between cochain complexes. We further assume that
\begin{flalign}\label{assumptionhomtopytransfer}
& &h\mu_A(a\otimes  h(b))=0=\pi \mu_A(a\otimes h(b))&&  \mbox{for any $a, b\in A$}.&&
\end{flalign}
  Then the resulting $A_{\infty}$-algebra $(V, m_1=d_V, m_2, m_3, \cdots)$ is simply given by (cf. Figure \ref{A-infinity-special1})
\begin{align*}
m_2(a_1\otimes a_2) &=\pi (\iota(a_1) \iota(a_2)),\\ m_k(a_1\otimes \cdots \otimes a_k) & = \pi( h(\cdots(h(h(\iota(a_1)\iota(a_2))\iota(a_3))\cdots)\iota(a_k)), \quad \quad k > 2,
\end{align*}
where we simply write $\iota(a)\iota(b)=\mu_A(\iota(a)\otimes \iota(b)).$

 Moreover, the $A_{\infty}$-quasi-isomorphism $(\iota_1=\iota, \iota_2, \cdots)$ is  given by (cf. Figure  \ref{A-infinity-special1})
 $$ \iota_k(a_1\otimes \cdots \otimes a_k)=(-1)^{\frac{k(k-1)}{2}}h(h(\cdots(h(h(\iota(a_1)\iota(a_2))\iota(a_3))\cdots)\iota(a_k)), \quad\quad  k \geq 2.$$
\end{cor}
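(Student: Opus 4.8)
The plan is to obtain Corollary~\ref{corollary-hdr} as a direct specialization of Theorem~\ref{thm-hdr}: under the extra hypothesis \eqref{assumptionhomtopytransfer}, all but one of the planar binary trees contributing to $m_k$ and to $\iota_k$ will be shown to decorate the zero operator. Recall from Theorem~\ref{thm-hdr} and Figure~\ref{A-infinity-product} that $m_k$ (respectively the component $\iota_k$) is a signed sum over $T\in BPT(k)$ of the operator obtained by decorating each leaf of $T$ with $\iota$, each internal vertex with the multiplication $\mu_A$, each internal edge with the homotopy $h$, and the outgoing root edge with $\pi$ (respectively with $h$).

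First I would isolate the single surviving tree. Call $T\in BPT(k)$ a \emph{left comb} if the right child of every internal vertex of $T$ is a leaf; there is exactly one such tree, and the operator it decorates is precisely the nested left-bracketed expression displayed in the statement of Corollary~\ref{corollary-hdr} (with $h$ replacing the outermost $\pi$ in the case of $\iota_k$), up to an overall sign. For any other $T$ there is an internal vertex $v$ whose right child $u$ is again an internal vertex. Evaluating the decoration of $T$ from the leaves downward, the subtree rooted at $u$ enters $v$ through the internal edge $u\to v$, hence as an element of the form $h(z)$; consequently the element produced at $v$ is $\mu_A\bigl(y\otimes h(z)\bigr)$, where $y$ is the element produced by the left subtree of $v$. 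The edge leaving $v$ carries the decoration $h$ if $v$ is not the topmost internal vertex, and $\pi$ (for $m_k$) or $h$ (for $\iota_k$) if it is; in every case the value of $T$ factors through $h\mu_A\bigl(y\otimes h(z)\bigr)$ or $\pi\mu_A\bigl(y\otimes h(z)\bigr)$, which vanish by \eqref{assumptionhomtopytransfer}. In the left comb itself the right input at every internal vertex is a leaf decorated by $\iota$, never an $h$-decorated edge, so no such cancellation occurs and this term survives.

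What remains is to pin down the sign of the left-comb term, and this is the only genuine computation in the argument. I would carry it out by induction on $k$, using that the left comb with $k$ leaves is obtained from the one with $k-1$ leaves by grafting one extra leaf at the top right, and tracking the Koszul signs and degree shifts dictated by Figure~\ref{A-infinity-product}; the outcome is the sign $+1$ for $m_k$ and $(-1)^{k(k-1)/2}$ for $\iota_k$, with the immediate base cases $m_2(a_1\otimes a_2)=\pi(\iota(a_1)\iota(a_2))$ and $\iota_2(a_1\otimes a_2)=-h(\iota(a_1)\iota(a_2))$. The main obstacle is precisely this sign bookkeeping: keeping the shifts introduced by $h$ consistent with the conventions fixed in Theorem~\ref{thm-hdr}. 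Once the sign is verified the stated formulas for $m_k$ and $\iota_k$ follow at once, and since $(\iota_1,\iota_2,\dots)$ is already an $A_\infty$-quasi-isomorphism by Theorem~\ref{thm-hdr}, there is nothing further to prove.
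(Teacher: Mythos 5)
Your proposal is correct and matches the argument the paper intends (and illustrates in Figure~\ref{A-infinity-special1}): under the hypothesis \eqref{assumptionhomtopytransfer} every tree in the sum of Theorem~\ref{thm-hdr} containing a composite $h\mu_A(\mathbf 1\otimes h)$ or $\pi\mu_A(\mathbf 1\otimes h)$ vanishes, leaving only the left-comb tree, and the sign is then fixed by the recursion as in Remark~\ref{ind}. Nothing essential differs from the paper's treatment.
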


\begin{figure}[H]
\centering
  \includegraphics[height=30mm]{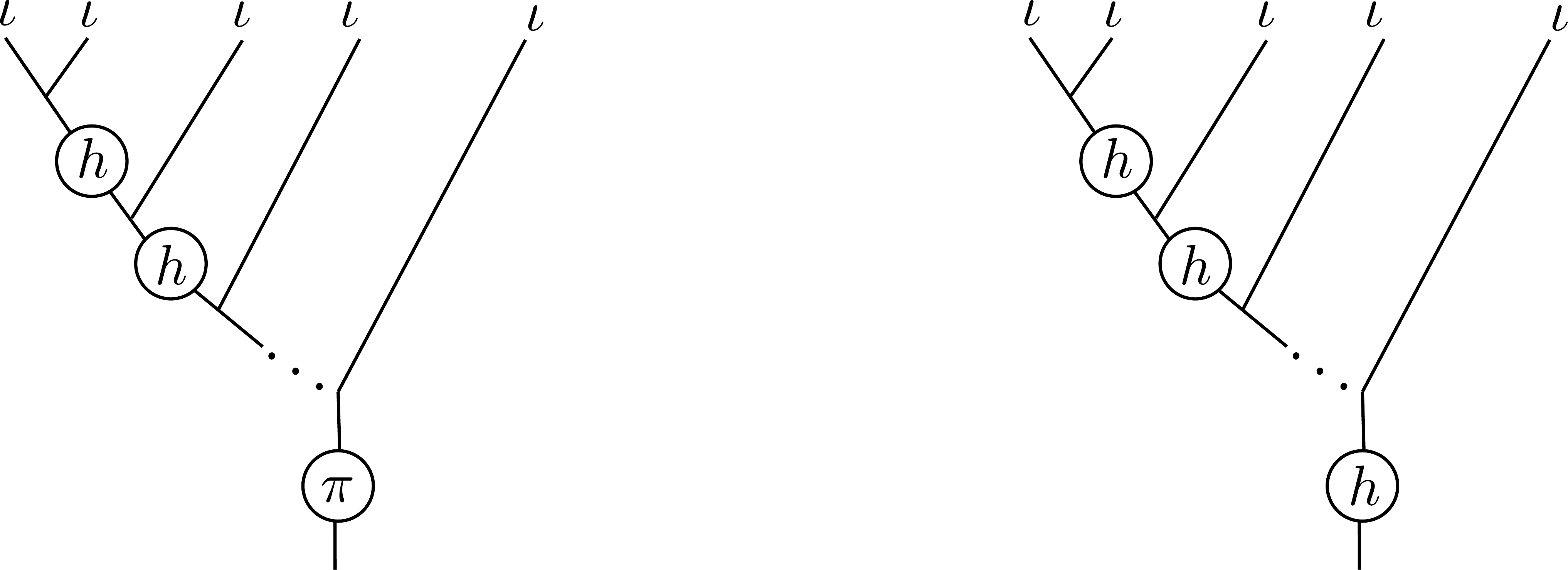}
  \caption{The  $A_{\infty}$-product $m_k$ and $A_{\infty}$-quasi-isomorphism $\iota_k$. }
  \label{A-infinity-special1}
\end{figure}

\begin{rem}\label{ind}
Note that under the assumption \eqref{assumptionhomtopytransfer}, the formulae for the resulting $A_\infty$-algebra and $A_\infty$-morphism are highly simplified.

For $k\geq 2$, we have the following recursive formula
\begin{align*}
\iota_k(a_1\otimes \cdots \otimes a_k)   =(-1)^{k-1}h(\iota_{k-1}(a_1\otimes \cdots\otimes a_{k -1}) \iota(a_k))
\end{align*}
and the following identity
\begin{align*}
m_k(a_1\otimes \cdots\otimes a_k)  = (-1)^{\frac{(k-1)(k-2)}{2}}\pi( \iota_{k-1}(a_1\otimes \cdots\otimes a_{k-1})\iota(a_k)).
\end{align*}	
\end{rem}

\section{The singular Hochschild cochain complexes}
\label{section7}

In this section, we recall the singular Hochschild cochain complexes and their $B_\infty$-structures. We describe explicitly the brace operation on the singular Hochschild cochain complex and illustrate it with an example.

\subsection{The left and right singular Hochschild cochain complexes}\label{subsec:sHcc}

Let $\Lambda$ be a finite dimensional $\mathbb k$-algebra. Denote by $\Lambda^e=\Lambda\otimes \Lambda^{\op}$ its enveloping algebra. Let $\mathbf{D}_{\sg}(\Lambda^e)$ be the singularity category of $\Lambda^e$. Following \cite{BJ, ZF, Kel18},  the {\it singular Hochschild cohomology} of $\Lambda$ is defined  as
$$\HH_{\sg}^n(\Lambda, \Lambda):=\Hom_{\mathbf{D}_{\sg}(\Lambda^e)}(\Lambda, \Sigma^n(\Lambda)), \quad \text{for $n\in \mathbb Z$}.$$
Recall from \cite[Section 3]{Wan1} that the singular Hochschild cohomology $\HH_{\sg}^*(\Lambda, \Lambda)$ can be computed by the so-called  singular Hochschild cochain complex.

There are two kinds of  singular Hochschild cochain complexes: the {\it left singular Hochschild cochain complex} and the {\it right singular Hochschild cochain complex}, which are  constructed by using  the left noncommutative differential forms and the right noncommutative differential forms, respectively. We mention that only the left one is considered in \cite{Wan1} with slightly different notation; see \cite[Definition~3.2]{Wan1}. We will first recall the right singular Hochschild cochain complex $\overline C_{\sg, R}^*(\Lambda, \Lambda)$.

Throughout this subsection, we denote  $\overline{\Lambda} = \Lambda/ (\mathbb k \cdot 1_{\Lambda})$.  Recall from \cite{ZF} that  the \emph{graded $\Lambda$-$\Lambda$-bimodule of right noncommutative differential $p$-forms} is defined as
 $$\Omega_{\nc, R}^p(\Lambda)=(s\overline \Lambda)^{\otimes p}\otimes \Lambda.$$
 Observe that $\Omega_{\nc,R}^p(\Lambda)$ is concentrated in degree $-p$ and that its bimodule structure is given by
\begin{equation}\label{rightblacktriangle}
\begin{split}
a_0\blacktriangleright( s \overline{a_1}\otimes \cdots\otimes s\overline{a_p}\otimes a_{p+1})b={}&\sum_{i=0}^{p-1}(-1)^{i}  s\overline{a_0}\otimes \cdots\otimes s\overline{a_ia_{i+1}} \otimes \cdots\otimes s\overline{a_{p}}\otimes a_{p+1}b\\
& +(-1)^p s\overline{a_0}\otimes s\overline{a_1}\otimes \cdots\otimes s\overline{a_{p-1}}\otimes a_pa_{p+1}b
\end{split}
\end{equation}
for $b, a_{0}\in \Lambda$ and $ s\overline{a_1}\otimes \cdots\otimes s\overline{a_p}\otimes a_{p+1}\in \Omega_{\nc,R}^p(\Lambda)$. Note that there is a $\mathbb k$-linear isomorphism between $\Omega^p_{\nc, R}(\Lambda)$ and the cokernel of the $(p+1)$-th differential
$$\Lambda \otimes (s\overline\Lambda)^{\otimes p+1} \otimes \Lambda \xrightarrow{d_{ex}} \Lambda \otimes (s\overline\Lambda)^{\otimes p} \otimes \Lambda$$ in $\overline\Barr(\Lambda)$ defined in  Subsection~\ref{subsection-bar}. Then the above bimodule structure on $\Omega_{\nc, R}^p(\Lambda)$ is inherited from this cokernel; compare \cite[Lemma~2.5]{Wan1}. For ungraded noncommutative differential forms, we refer to \cite[Sections~1 and 3]{CQ}.

 We have a short exact sequence of graded bimodules
\begin{align}\label{shortexactsequence00}
0\longrightarrow \Sigma^{-1}\Omega^{p+1}_{\nc, R}(\Lambda) \stackrel{d'}\longrightarrow \Lambda \otimes (s\bar\Lambda)^{\otimes p} \otimes \Lambda   \stackrel{d''}\longrightarrow \Omega^{p}_{\nc, R}(\Lambda)\longrightarrow 0
\end{align}
where $d'(s^{-1}x) = d_{ex} (1 \otimes x)$ for any $x \in \Omega^{p+1}_{\nc, R}(\Lambda)$, and $d''=(\varpi \otimes \mathbf 1_{s\overline \Lambda}^{\otimes p-1} \otimes \mathbf 1_\Lambda)\circ  d_{ex}$. Here, $\varpi \colon \Lambda \to s\overline \Lambda$ is the natural projection of degree $-1$. We observe that $d_{ex}$ factors as
$$ \Lambda \otimes (s\bar\Lambda)^{\otimes p+1} \otimes \Lambda \stackrel{d''}\longrightarrow \Omega^{p+1}_{{\rm nc}, R}(\Lambda) \stackrel{\Sigma(d')} \longrightarrow \Lambda \otimes (s\bar\Lambda)^{\otimes p} \otimes \Lambda, $$
and that
$$d''(a\otimes s\overline{a_{1,p}}\otimes a_{p+1})=a\blacktriangleright(s\overline{a_{1, p}}\otimes a_{p+1}).$$

Let $\overline{C}^*(\Lambda, \Omega_{\nc,R}^p(\Lambda))$ be the normalized Hochschild cochain complex of $\Lambda$ with coefficients in the graded bimodule $\Omega_{\nc,R}^p(\Lambda)$; see Subsection~\ref{subsection-bar}. Here,  $\Lambda$ is viewed as a dg algebra concentrated in degree zero.

For each $p\geq 0$, we define a morphism (of degree zero) of complexes
\begin{equation}\label{theta-p}
\theta_{p, R}\colon   \overline{C}^*(\Lambda, \Omega_{\nc,R}^p(\Lambda))\longrightarrow \overline{C}^*(\Lambda, \Omega_{\nc,R}^{p+1}(\Lambda)), \quad f \longmapsto  \mathbf{1}_{s\overline{\Lambda}}\otimes f. 
\end{equation}
Here, we recall that  $\overline{C}^m(\Lambda, \Omega_{\nc,R}^p(\Lambda))={\rm  Hom}((s\overline \Lambda)^{\otimes m+p}, \Omega_{\nc,R}^p(\Lambda))$, the Hom-space between non-graded spaces. Then for $f\in \overline{C}^m(\Lambda, \Omega_{\nc,R}^p(\Lambda))$, the map $\mathbf{1}_{s\overline{\Lambda}}\otimes f$ naturally lies in $\overline{C}^m(\Lambda, \Omega_{\nc,R}^{p+1}(\Lambda))$,  using the following identification
$$\Omega_{\nc, R}^{p+1}(\Lambda)=s\overline{\Lambda}\otimes \Omega_{\nc, R}^p(\Lambda).$$
 We mention that when $\mathbf{1}_{s\overline{\Lambda}}\otimes f$ is applied to  elements in $(s\overline{\Lambda})^{\otimes m+p+1}$, an additional  sign $(-1)^{|f|}$  appears  due to the Koszul sign rule.

The  {\it right singular Hochschild cochain complex} $\overline{C}_{\sg,R}^*(\Lambda, \Lambda)$ is defined to be the colimit of the inductive system
\begin{equation}\label{equation-inductive} \overline{C}^*(\Lambda, \Lambda)\xrightarrow{\theta_{0, R}} \overline{C}^*(\Lambda, \Omega_{\nc, R}^1(\Lambda))\xrightarrow{\theta_{1, R}}  \cdots \xrightarrow{\theta_{p-1, R}}  \overline{C}^*(\Lambda, \Omega_{\nc, R}^p(\Lambda))\xrightarrow{\theta_{p, R}} \cdots.
\end{equation}
We mention that all the maps $\theta_{p, R}$ are injective.

The above terminology is justified by the following observation.

\begin{lem}\label{lem:HH-colimit}
For each $n\in \mathbb{Z}$, we have an isomorphism
$$\HH_{\rm sg}^n(\Lambda, \Lambda) \simeq H^n(\overline{C}_{\sg,R}^*(\Lambda, \Lambda)).$$\end{lem}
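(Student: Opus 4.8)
\textbf{Proof plan for Lemma~\ref{lem:HH-colimit}.} The plan is to compare the colimit complex $\overline{C}_{\sg,R}^*(\Lambda,\Lambda)$ with a complex computing $\Hom_{\mathbf{D}_{\sg}(\Lambda^e)}(\Lambda,\Sigma^\bullet\Lambda)$ by interpreting each term $\overline{C}^*(\Lambda,\Omega_{\nc,R}^p(\Lambda))$ as a Hom-complex in the homotopy category of $\Lambda^e$-modules and identifying the transition maps $\theta_{p,R}$ with syzygy shifts. First I would recall from the short exact sequence \eqref{shortexactsequence00} that $\Sigma^{-1}\Omega_{\nc,R}^{p+1}(\Lambda)$ is the kernel of the surjection $\Lambda\otimes(s\overline\Lambda)^{\otimes p}\otimes\Lambda\twoheadrightarrow\Omega_{\nc,R}^p(\Lambda)$, where the middle term is a projective $\Lambda^e$-module (a summand-wise free bimodule); hence, degree by degree, $\Omega_{\nc,R}^{p+1}(\Lambda)$ is a (shifted) first syzygy of $\Omega_{\nc,R}^{p}(\Lambda)$ in $\Lambda^e\mbox{-}\underline{\mathrm{mod}}$, and iterating, $\Omega_{\nc,R}^p(\Lambda)\simeq\Omega^p_{\Lambda^e}(\Lambda)$ up to the appropriate internal degree shift. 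This is exactly the content already invoked implicitly in Subsection~\ref{subsec:sHcc}; I would cite \cite[Lemma~2.5]{Wan1} and the discussion around \eqref{shortexactsequence00} for the bimodule identification.

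Next I would show that $\overline{C}^*(\Lambda,\Omega_{\nc,R}^p(\Lambda))$ computes $\Hom_{\mathbf{D}(\Lambda^e)}(\Lambda,\Sigma^\bullet\Omega_{\nc,R}^p(\Lambda))$. Since $\overline{\Barr}(\Lambda)$ is a dg-projective bimodule resolution of $\Lambda$ (Subsection~\ref{subsection-bar}) and, via \eqref{identification-bimodule}, $\Hom_{\Lambda\text{-}\Lambda}(\overline{\Barr}(\Lambda),\Omega_{\nc,R}^p(\Lambda))\cong\Hom_{\mathbb k\text{-}\mathbb k}(T(s\overline\Lambda),\Omega_{\nc,R}^p(\Lambda))=\overline{C}^*(\Lambda,\Omega_{\nc,R}^p(\Lambda))$, taking cohomology gives $H^n(\overline{C}^*(\Lambda,\Omega_{\nc,R}^p(\Lambda)))\cong\HH^n(\Lambda,\Omega_{\nc,R}^p(\Lambda))=\Hom_{\mathbf{D}(\Lambda^e)}(\Lambda,\Sigma^n\Omega_{\nc,R}^p(\Lambda))$. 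Then I would check that the map $\theta_{p,R}$ of \eqref{theta-p}, after passing to cohomology, is identified with the connecting-map-induced isomorphism $\Hom_{\mathbf{D}(\Lambda^e)}(\Lambda,\Sigma^n\Omega_{\nc,R}^p(\Lambda))\to\Hom_{\mathbf{D}(\Lambda^e)}(\Lambda,\Sigma^{n+1}\Sigma^{-1}\Omega_{\nc,R}^{p+1}(\Lambda))=\Hom_{\mathbf{D}(\Lambda^e)}(\Lambda,\Sigma^n\Omega_{\nc,R}^{p+1}(\Lambda))$ coming from the triangle associated to \eqref{shortexactsequence00}, together with the fact that $\Hom_{\mathbf{D}(\Lambda^e)}(\Lambda,\Sigma^n(\text{projective}))=0$ for the bounded object $\Lambda$ once one shifts high enough — more precisely, one uses that the middle term of \eqref{shortexactsequence00} is projective so that $\Sigma(d')$ induces the desired iso on $\Hom_{\mathbf{D}(\Lambda^e)}(\Lambda,-)$ in each fixed degree $n$ for $p$ large. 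This is the step where I expect the bookkeeping to be delicate: one must match the cochain-level formula $f\mapsto\mathbf 1_{s\overline\Lambda}\otimes f$ with the abstract connecting morphism, keeping track of the Koszul signs flagged after \eqref{theta-p}.

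Finally I would pass to the colimit. Since filtered colimits are exact, $H^n(\overline{C}_{\sg,R}^*(\Lambda,\Lambda))=H^n(\varinjlim_p\overline{C}^*(\Lambda,\Omega_{\nc,R}^p(\Lambda)))\cong\varinjlim_p H^n(\overline{C}^*(\Lambda,\Omega_{\nc,R}^p(\Lambda)))\cong\varinjlim_p\Hom_{\mathbf{D}(\Lambda^e)}(\Lambda,\Sigma^n\Omega_{\nc,R}^p(\Lambda))$. Because in $\mathbf{D}_{\sg}(\Lambda^e)$ the cone of each map $\Lambda\otimes(s\overline\Lambda)^{\otimes p}\otimes\Lambda\to\Omega_{\nc,R}^p(\Lambda)$ becomes zero, $\Sigma^{-1}\Omega_{\nc,R}^{p+1}(\Lambda)\simeq\Sigma^{-1}\Omega_{\nc,R}^{p}(\Lambda)$ in $\mathbf{D}_{\sg}(\Lambda^e)$ (up to internal shift), so this colimit stabilizes to $\Hom_{\mathbf{D}_{\sg}(\Lambda^e)}(\Lambda,\Sigma^n\Lambda)=\HH_{\sg}^n(\Lambda,\Lambda)$; here I would invoke that $\mathbf{D}_{\sg}(\Lambda^e)$ is the stabilization (equivalently, $\Hom_{\mathbf{D}_{\sg}(\Lambda^e)}(\Lambda,\Sigma^n\Lambda)=\varinjlim_p\Hom_{\Lambda^e\text{-}\underline{\mathrm{mod}}}(\Omega^{p}_{\Lambda^e}\Lambda,\Sigma^n\Omega^{p}_{\Lambda^e}\Lambda)$, cf.\ \cite{BJ,ZF,Kel18}) to conclude. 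The main obstacle, as noted, is the precise sign- and shift-compatible identification of $\theta_{p,R}$ with the syzygy-shift map in step two; everything else is exactness of filtered colimits and the standard dictionary between relative bar cochains and $\Hom$ in $\mathbf{D}(\Lambda^e)$. Alternatively, and perhaps more cleanly, I would simply cite \cite[Theorem~3.6]{Wan1}, which establishes the analogous statement for the left singular Hochschild cochain complex, and reduce the right-hand version to it via the duality isomorphism $\overline{C}_{\sg,L}^*(\Lambda^{\op},\Lambda^{\op})\simeq\overline{C}_{\sg,R}^*(\Lambda,\Lambda)^{\rm opp}$ of Proposition~\ref{lemma-CL1} together with $\HH_{\sg}^*(\Lambda^{\op},\Lambda^{\op})\cong\HH_{\sg}^*(\Lambda,\Lambda)$.
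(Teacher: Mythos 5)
Your plan is essentially the paper's own proof: commute cohomology with the filtered colimit, apply $\HH^*(\Lambda,-)$ to the short exact sequence \eqref{shortexactsequence00} to obtain connecting morphisms, invoke \cite[Subsection~2.3]{KV} or \cite[Lemma~2.4]{Kel18} to identify $\HH^n_{\sg}(\Lambda,\Lambda)$ with the colimit along them, and check that the cochain maps $\theta_{p,R}$ induce exactly these connecting morphisms. The ``delicate bookkeeping'' you defer is precisely the substance of the paper's argument (an explicit lift of a cocycle to a bimodule map and a commutative diagram comparing \eqref{shortexactsequence00} in weights $n+p$ and $p$), and it does go through; note only that your auxiliary claim that $\Hom_{\mathbf{D}(\Lambda^e)}(\Lambda,\Sigma^n(\mathrm{projective}))$ vanishes for large shifts (so that $\Sigma(d')$ would induce an isomorphism at fixed $p$) is neither true in general nor needed, since the identification with $\HH_{\sg}$ happens only after taking the colimit.
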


\begin{proof}
The proof is  analogous to that of \cite[Theorem 3.6]{Wan1} for the left singular Hochschild cochain complex.   For  the convenience of the reader,  we give a complete proof.

Since  the direct colimit commutes with the cohomology functor, we obtain that
$$H^n(\overline{C}_{\sg,R}^*(\Lambda, \Lambda))\simeq \varinjlim_{\widetilde \theta_{p,R}} \; \HH^n(\Lambda, \Omega^p_{\nc, R}(\Lambda)),$$
where the maps $\widetilde \theta_{p, R}$ are induced by the above cochain maps $\theta_{p, R}$.

Applying the functor $\HH^*(\Lambda, -)$ to the short exact sequence \eqref{shortexactsequence00}, we obtain a long exact sequence
$$
\dotsb  \to \HH^n(\Lambda, \Lambda \otimes (s\bar\Lambda)^{\otimes p} \otimes \Lambda)   \to \HH^n(\Lambda, \Omega^{p}_{\nc, R}(\Lambda)) \stackrel{c}\to \HH^{n+1}(\Lambda, \Sigma^{-1}\Omega^{p+1}_{\nc, R}(\Lambda))\to \dotsb.
$$
Since $\HH^{n+1}(\Lambda, \Sigma^{-1}\Omega^{p+1}_{\nc, R}(\Lambda)$ is naturally isomorphic to  $\HH^{n}(\Lambda, \Omega^{p+1}_{\nc, R}(\Lambda))$, the connecting morphism $c$ in the long exact sequence  induces a map
$$
\widehat \theta_{p, R} \colon \HH^n(\Lambda, \Omega^{p}_{\nc, R}(\Lambda))\longrightarrow  \HH^{n}(\Lambda, \Omega^{p+1}_{\nc, R}(\Lambda)).
$$
By  \cite[Subsection~2.3]{KV} or \cite[Lemma~2.4]{Kel18},  we have a natural isomorphism
$${\rm HH}^n_{\rm sg}(\Lambda, \Lambda) \simeq \varinjlim_{\widehat \theta_{p,R}} \; \HH^n(\Lambda, \Omega^p_{\nc, R}(\Lambda)).$$

It remains to  show that $\widetilde \theta_{p, R}=\widehat \theta_{p, R}$. Indeed, let $f \in  \HH^n(\Lambda, \Omega^{p}_{\nc, R}(\Lambda))$. Assume that it is represented by a linear map  $f\colon (s\overline{\Lambda})^{\otimes n+p}\rightarrow \Omega^p_{\nc, R}(\Lambda)$. As $f$ is a cocycle,  the induced map
$$f'\colon \Omega^{n+p}_{\nc, R}(\Lambda)\longrightarrow \Omega^p_{\nc, R}(\Lambda), \quad x\otimes a\mapsto f(x)a$$
is a bimodule homomorphism of degree $n$; here, we recall that $\Omega^{n+p}_{\nc, R}(\Lambda)=(s\overline{\Lambda})^{\otimes n+p}\otimes \Lambda$. We have the following commutative diagram of bimodules with exact rows.
\begin{equation*}
\xymatrix{
0\ar[r]& \Sigma^{-1}\Omega^{n+p+1}_{\nc, R}(\Lambda) \ar[r]^-{d'}\ar[d]_-{(-1)^n\Sigma^{-1}({\bf 1}_{s\overline{\Lambda}}\otimes f')} &  \Lambda \otimes (s\bar\Lambda)^{\otimes n+p} \otimes \Lambda  \ar[d]^-{{\bf 1}_\Lambda\otimes f'} \ar[r]^-{d''} & \Omega^{n+p}_{\nc, R}(\Lambda)  \ar[d]^-{f'} \ar[r] &0\\
0\ar[r] & \Sigma^{-1}\Omega^{p+1}_{\nc, R}(\Lambda) \ar[r]^-{d'} & \Lambda \otimes (s\bar\Lambda)^{\otimes p} \otimes \Lambda \ar[r]^-{d''} & \Omega^p_{\nc, R}(\Lambda)  \ar[r] & 0
}
\end{equation*}
For the sign in the leftmost vertical arrow, we recall that $\Sigma^{-1}$ acts on any morphism of degree $n$ by $(-1)^n$. The bimodule homomorphism ${\bf 1}_{s\overline{\Lambda}}\otimes f'$ corresponds to the linear map
$${\bf 1}_{s\overline{\Lambda}}\otimes f\colon (s\overline{\Lambda})^{\otimes n+p+1}\longrightarrow \Omega^{p+1}_{\nc, R}(\Lambda).$$
By the construction of the connecting morphism, we infer that $\widehat \theta_{p, R}(f)={\bf 1}_{s\overline{\Lambda}}\otimes f$. In view of the  very definition of $\theta_{p, R}$, we deduce $\widetilde \theta_{p, R}=\widehat \theta_{p, R}$.
\end{proof}

There are two basic  operations on   $\overline{C}_{\sg , R}^*(\Lambda, \Lambda)$. The first one is the cup product
$$-\cup_R-\colon   \overline{C}_{\sg, R}^*(\Lambda, \Lambda)\otimes \overline{C}_{\sg, R}^*(\Lambda, \Lambda)\longrightarrow \overline{C}_{\sg, R}^*(\Lambda, \Lambda)$$
which is defined as follows: for $\varphi\in \overline C^{m-p}(\Lambda, \Omega^p_{\nc, R}(\Lambda))$ and $\phi\in \overline C^{n-q}(\Lambda, \Omega^q_{\nc, R}(\Lambda))$, we define
\begin{equation}
\label{equation-defcupforsg}
\varphi\cup_R \phi:=\left(\mathbf{1}_{s\overline{\Lambda}}^{\otimes p+q}\otimes \mu\right)\circ \left( \mathbf{1}_{s\overline{\Lambda}}^{\otimes q}\otimes \varphi\otimes \mathbf{1}_\Lambda\right) \circ  \left( \mathbf{1}_{s\overline{\Lambda}}^{\otimes m}\otimes \phi\right) \in \overline{C}^{m+n-p-q}(\Lambda, \Omega^{p+q}_{\nc, R}(\Lambda)),
\end{equation}
where $\mu$ denotes the multiplication of $\Lambda$. When $\varphi\cup_R \phi$ is applied to  elements in $(s\overline{\Lambda})^{\otimes m+n}$, an additional  sign $(-1)^{mn+pq}$  appears  due to the Koszul sign rule. In particular, if  $p=q=0$ we get  the classical cup product on $\overline{C}^*(\Lambda, \Lambda)$. Note that  $-\cup_R-$ is compatible with the colimit, hence it is well defined on $\overline{C}_{\sg , R}^*(\Lambda,\Lambda)$.

The second one is the brace operation
$$\begin{array}{lcccr}
-\{-, \dotsc, -\}_R\colon   & \overline{C}_{\sg, R}^*(\Lambda, \Lambda) \otimes \overline{C}_{\sg, R}^*(\Lambda, \Lambda)^{\otimes k}  &  \longrightarrow  & \overline{C}_{\sg, R}^*(\Lambda, \Lambda), & \mbox{for $k\geq 1$}, \\
 & x \otimes (y_1\otimes \cdots\otimes y_k) &  \longmapsto & x\{y_1, \dotsc, y_k\}_R,&
\end{array}
$$
 which is defined in Subsection~\ref{subsec:example-brace} below; see Definition~\ref{defn:brace-A}. It restricts to the classical brace operation on $\overline{C}^*(\Lambda, \Lambda)$.

The following result is a right-sided version of \cite[Theorem 5.1]{Wan1}.

 \begin{thm}\label{thm-Wan2}
 The right singular Hochschild cochain complex  $\overline{C}_{\sg, R}^*(\Lambda, \Lambda)$,
 equipped with $\cup_R$ and $-\{-,\dotsc,-\}_R$, is a
 brace $B_{\infty}$-algebra.   Consequently, $(\HH_{\sg}^*(\Lambda, \Lambda),-\cup_R-, [-,-]_R)$ is a Gerstenhaber algebra\hfill $\square$\end{thm}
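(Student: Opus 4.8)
The plan is to prove Theorem~\ref{thm-Wan2} by transporting the entire verification to the already-established left-sided case, i.e.\ to \cite[Theorem~5.1]{Wan1}, rather than re-deriving the $B_\infty$-axioms from scratch. The key observation is that the left noncommutative differential forms $\Omega_{\nc, L}^p(\Lambda) = \Lambda \otimes (s\overline\Lambda)^{\otimes p}$ for $\Lambda$ are, under the standard identification $\Lambda^{\rm op}{}^e \cong \Lambda^e$, precisely the right noncommutative differential forms $\Omega_{\nc, R}^p(\Lambda^{\rm op})$ for the opposite algebra $\Lambda^{\rm op}$; the $\blacktriangleright$-bimodule structure \eqref{rightblacktriangle} is the exact mirror of the left one under reversing the order of tensor factors. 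Consequently the inductive system \eqref{equation-inductive} defining $\overline{C}_{\sg, R}^*(\Lambda, \Lambda)$ is, term by term and map by map, the opposite-algebra mirror of the inductive system defining $\overline{C}_{\sg, L}^*(\Lambda^{\rm op}, \Lambda^{\rm op})$. Passing to colimits, one obtains a canonical isomorphism of graded spaces
\[
\overline{C}_{\sg, L}^*(\Lambda^{\rm op}, \Lambda^{\rm op}) \;\cong\; \overline{C}_{\sg, R}^*(\Lambda, \Lambda)^{\rm opp},
\]
which is the very isomorphism \eqref{iso:duality} (this is exactly Proposition~\ref{lemma-CL1}, whose proof invokes Theorem~\ref{thm:dualityB}). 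The right-sided operations $\cup_R$ and $-\{-,\dotsc,-\}_R$ are defined so as to correspond, under this isomorphism, to the transpose of the left-sided $\cup_L$ and $-\{-,\dotsc,-\}_L$; more precisely, they correspond to the operations of the opposite $B_\infty$-algebra structure.

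First I would make the dictionary precise: for each $p$, exhibit the $\mathbb k$-linear isomorphism $\Omega_{\nc, R}^p(\Lambda) \cong \Omega_{\nc, L}^p(\Lambda^{\rm op})$ given by $s\overline{a_1}\otimes\cdots\otimes s\overline{a_p}\otimes a_{p+1}\mapsto (-1)^{\star}\, a_{p+1}\otimes s\overline{a_p}\otimes\cdots\otimes s\overline{a_1}$ with the appropriate Koszul sign $\star$, and check it intertwines the $\Lambda^e$-action $\blacktriangleright$ with the left $(\Lambda^{\rm op})^e$-action. Second, I would check that this family of isomorphisms is compatible with the structure maps $\theta_{p,R}$ and the left-sided maps $\theta_{p,L}$ (up to the evident sign), so that one gets an isomorphism of the inductive systems and hence of their colimits. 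Third, I would verify directly from the formulae \eqref{equation-defcupforsg} and Definition~\ref{defn:brace-A} that under this isomorphism $\cup_R$ goes to $\cup_L$ (with the expected sign) and $-\{-,\dotsc,-\}_R$ goes to $-\{-,\dotsc,-\}_L$, i.e.\ that the resulting transported structure on $\overline{C}_{\sg, R}^*(\Lambda, \Lambda)$ is precisely $\bigl(\overline{C}_{\sg, L}^*(\Lambda^{\rm op}, \Lambda^{\rm op})\bigr)^{\rm opp}$. Then, since $\overline{C}_{\sg, L}^*(\Lambda^{\rm op}, \Lambda^{\rm op})$ is a brace $B_\infty$-algebra by \cite[Theorem~5.1]{Wan1}, its opposite is a $B_\infty$-algebra by Definition~\ref{defnopposite}, and it is easy to see (using Remark~\ref{remark-B}) that the opposite of a brace $B_\infty$-algebra with only $\mu_{1,q}$ nonzero again has all $m_n$ vanishing for $n>2$; one checks the right-sided operations are genuinely the brace operations of a \emph{brace} $B_\infty$-algebra rather than merely its opposite, which is what the statement asserts. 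The final sentence of the theorem, that $(\HH_{\sg}^*(\Lambda,\Lambda), \cup_R, [-,-]_R)$ is a Gerstenhaber algebra, then follows immediately from Lemma~\ref{lem:Ger} applied to this $B_\infty$-algebra, combined with Lemma~\ref{lem:HH-colimit} identifying the cohomology with $\HH_{\sg}^*(\Lambda,\Lambda)$.

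The main obstacle will be bookkeeping the Koszul signs carefully enough that the identification of $\cup_R$ with (the transpose/opposite of) $\cup_L$, and of the $R$-brace with the $L$-brace, is genuinely correct rather than correct-up-to-sign; in particular the sign conventions built into $\theta_{p,R}$ (the extra $(-1)^{|f|}$ mentioned after \eqref{theta-p}) and into \eqref{equation-defcupforsg} (the $(-1)^{mn+pq}$) must be reconciled with the corresponding left-sided conventions and with the sign in Definition~\ref{defnopposite}. A secondary subtlety is that the brace operation $-\{-,\dotsc,-\}_R$ has not yet been defined at this point in the paper (it is deferred to Definition~\ref{defn:brace-A}), so the honest proof must either forward-reference that definition or, equivalently, observe that the right-sided brace is \emph{defined} as the transported left-sided brace, in which case the verification that it restricts to the classical brace operation on $\overline{C}^*(\Lambda,\Lambda)$ and satisfies the higher pre-Jacobi identity \eqref{higher-pre-Jacobi} is automatic from the left-sided case. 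I would also remark that, alternatively, one can mimic verbatim the proof of \cite[Theorem~5.1]{Wan1}: the cellular chain action of the spineless cacti operad \cite{Ka} underlying the $B_\infty$-structure is manifestly symmetric under the left/right mirror, so a line-by-line translation of that argument yields the result without passing through the opposite algebra — but the transport argument above is shorter and makes the compatibility with \eqref{iso:duality} transparent.
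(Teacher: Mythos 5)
Your main route is genuinely different from the paper's: the paper offers no argument for Theorem~\ref{thm-Wan2} at all, recording it as the right-sided mirror of \cite[Theorem 5.1]{Wan1} (the word-for-word translation of the spineless cacti action that you mention only as a closing alternative). Transporting the structure from $\overline{C}_{\sg, L}^*(\Lambda^{\op}, \Lambda^{\op})$ along the Koszul-signed order reversal is a reasonable substitute, and in substance it is the computation the paper carries out later in the proof of Proposition~\ref{lemma-CL1}, read in the opposite direction.

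However, as written the transport step contains a genuine error. You assert that under the order-reversal identification the operations $\cup_R$ and $-\{-,\dotsc,-\}_R$ correspond to the operations of the \emph{opposite} $B_\infty$-structure, so that the transported structure is $\overline{C}_{\sg, L}^*(\Lambda^{\op}, \Lambda^{\op})^{\rm opp}$. This is not correct, and if it were it would not prove the theorem: by Remark~\ref{remark-B} the opposite of a brace $B_\infty$-algebra is in general \emph{not} a brace $B_\infty$-algebra (it has nonzero $\mu^{\rm opp}_{p,1}$ for $p>1$), whereas the transpose is again one. The strict correspondence induced by the swap map $T$ (built from $\tau_p$, exactly as in the proof of Proposition~\ref{lemma-CL1}) identifies $\cup_R$ and $-\{-,\dotsc,-\}_R$ with the \emph{transpose} operations $\cup_L^{\rm tr}$ and $-\{-,\dotsc,-\}_L^{\rm tr}$; the opposite structure is reached only through the non-strict antipode isomorphism of Theorem~\ref{thm:dualityB}, which is not a transport along a linear isomorphism of the underlying complexes, so declaring the right-sided operations to be the opposite ones does not reproduce the formulae \eqref{equation-defcupforsg} and Definition~\ref{defn:brace-A}. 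Relatedly, you should not invoke Proposition~\ref{lemma-CL1} or the isomorphism \eqref{iso:duality} here: their statements presuppose that $\overline{C}_{\sg, R}^*$ already carries the $B_\infty$-structure you are constructing, so this would be circular. The repair is straightforward: drop every mention of the opposite, verify the strict intertwining identities $T(g_1)\cup_R T(g_2)=(-1)^{|g_1||g_2|}T(g_2\cup_L g_1)$ and $T(f)\{T(g_1),\dotsc,T(g_k)\}_R=(-1)^{\epsilon}T(f\{g_k,\dotsc,g_1\}_L)$ together with the compatibility of $T$ with $\theta_{p,L}$ and $\theta_{p,R}$, and then conclude from \cite[Theorem 5.1]{Wan1} and Remark~\ref{remark-B}; the Gerstenhaber consequence then follows from Lemma~\ref{lem:Ger} combined with Lemma~\ref{lem:HH-colimit}, as you indicate.
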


We now recall the left singular Hochschild cochain complex $\overline{C}_{\sg, L}^*(\Lambda, \Lambda)$. The \emph{graded $\Lambda$-$\Lambda$-bimodule of left noncommutative differential $p$-forms} is
$$\Omega_{\nc, L}^p(\Lambda)=\Lambda\otimes (s\overline \Lambda)^{\otimes p},$$
whose bimodule structure is given by
\begin{equation*}
\begin{split}
b( a_0 \otimes s\overline{a_1} \cdots\otimes s\overline{a_p})\blacktriangleleft a_{p+1}={}& (-1)^{p}ba_0a_1\otimes s\overline{a_2}\otimes \cdots \otimes s\overline{a_p}\otimes s\overline{a_{p+1}}+{}\\
& \sum_{i=1}^p (-1)^{p-i} ba_0\otimes s\overline{a_1}\otimes \cdots\otimes s\overline{a_ia_{i+1}}\otimes \cdots \otimes s\overline{a_{p+1}}
\end{split}
\end{equation*}
for $b, a_{p+1}\in \Lambda$ and $ a_0\otimes s\overline{a_1}\otimes \cdots\otimes s\overline{a_p}\in \Omega_{\nc, L}^p(\Lambda)$. It follows from \cite[Lemma~2.5]{Wan1} that $\Omega_{\nc, L}^p(\Lambda)$ is also isomorphic, as graded $\Lambda$-$\Lambda$-bimodules,  to the cokernel of the $(p+1)$-th differential
$$\Lambda \otimes (s\overline\Lambda)^{\otimes p+1} \otimes \Lambda \xrightarrow{d_{ex}} \Lambda \otimes (s\overline\Lambda)^{\otimes p} \otimes \Lambda$$
in $\overline\Barr(\Lambda)$. In particular, we infer that  $\Omega_{\nc, L}^p(\Lambda)$ and $\Omega_{\nc, R}^p(\Lambda)$ are isomorphic as graded $\Lambda$-$\Lambda$-bimodules.

The {\it left singular Hochschild cochain complex} $\overline{C}_{\sg, L}^*( \Lambda, \Lambda)$ is defined as the colimit of the inductive system
$$ \overline{C}^*(\Lambda, \Lambda)\xrightarrow{\theta_{0, L}} \overline{C}^*(\Lambda, \Omega_{\nc, L}^1(\Lambda))\xrightarrow{\theta_{1, L}} \cdots \xrightarrow{\theta_{p-1, L}}  \overline{C}^*(\Lambda, \Omega_{\nc, L}^p(\Lambda))\xrightarrow{\theta_{p, L}} \dotsc, $$
where $$\theta_{p, L}\colon   \overline{C}^*(\Lambda, \Omega_{\nc, L}^p(\Lambda))\longrightarrow \overline{C}^*(\Lambda, \Omega_{\nc, L}^{p+1}(\Lambda)), \quad f \longmapsto f\otimes \mathbf{1}_{s\overline{\Lambda}}. $$

The cup product and brace operation on $\overline C_{\sg,L}^*(\Lambda, \Lambda)$ are defined in \cite[Subsections~4.1 and 5.2]{Wan1}. Let us denote them by $- \cup_L -$ and $-\{-, \dotsc, -\}_L$, respectively.

\begin{thm}{\rm (\cite[Theorem~5.1]{Wan1})}
The left singular Hochschild cochain complex $\overline{C}_{\sg , L}^*(\Lambda, \Lambda)$, equipped with the mentioned cup product and brace operation, is a brace $B_{\infty}$-algebra. Consequently,  $(\HH_{\sg}^*(\Lambda, \Lambda), -\cup_L-, [-, -]_L)$ is a Gerstenhaber algebra. \hfill $\square$
\end{thm}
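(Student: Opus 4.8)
The plan is to deduce this from its right‑sided counterpart, Theorem~\ref{thm-Wan2}, by a swap‑and‑transpose argument in the spirit of Proposition~\ref{lemma-CL}. Applying Theorem~\ref{thm-Wan2} to the opposite algebra $\Lambda^{\op}$, one knows that $\overline{C}_{\sg, R}^*(\Lambda^{\op}, \Lambda^{\op})$, with its cup product $-\cup_R-$ and brace operation $-\{-, \dots, -\}_R$, is a brace $B_\infty$-algebra; by Remark~\ref{remark-B} its transpose $B_\infty$-algebra $\overline{C}_{\sg, R}^*(\Lambda^{\op}, \Lambda^{\op})^{\rm tr}$ is again a brace $B_\infty$-algebra, whose cup product and brace operation are obtained from those of $\overline{C}_{\sg, R}^*(\Lambda^{\op}, \Lambda^{\op})$ by reversing the order of the arguments together with the appropriate Koszul signs, the differential being unchanged.

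The heart of the argument is to construct an explicit \emph{swap isomorphism} of graded vector spaces
\[
T_{\sg}\colon \overline{C}_{\sg, L}^*(\Lambda, \Lambda) \stackrel{\sim}\longrightarrow \overline{C}_{\sg, R}^*(\Lambda^{\op}, \Lambda^{\op}),
\]
built level‑wise on the two defining inductive systems. For each $p$ one identifies the graded $\Lambda$-$\Lambda$-bimodules $\Omega^p_{\nc, L}(\Lambda) = \Lambda \otimes (s\overline\Lambda)^{\otimes p}$ and $\Omega^p_{\nc, R}(\Lambda^{\op}) = (s\overline{\Lambda^{\op}})^{\otimes p}\otimes \Lambda^{\op}$ by reversing the order of all tensor factors (with the Koszul sign) and swapping the two sides, using $\Lambda^{\op}=\Lambda$ as a vector space — this is the bimodule isomorphism underlying \cite[Lemma~2.5]{Wan1}. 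Pre‑ and post‑composing cochains with the reversal maps, again with the sign dictated by the Koszul rule exactly as for the map $T$ in Proposition~\ref{lemma-CL}, yields isomorphisms $\overline{C}^*(\Lambda, \Omega^p_{\nc, L}(\Lambda)) \simeq \overline{C}^*(\Lambda^{\op}, \Omega^p_{\nc, R}(\Lambda^{\op}))$; since $\theta_{p, L}$ adjoins $\mathbf 1_{s\overline\Lambda}$ on the right while $\theta_{p, R}$ adjoins it on the left, these commute with the transition maps and assemble into $T_{\sg}$.

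It then remains to check three compatibilities: that $T_{\sg}$ is a cochain map, and that it transports $-\cup_L-$ and $-\{-, \dots, -\}_L$ to the transposed operations $-\cup_R^{\rm tr}-$ and $-\{-, \dots, -\}_R^{\rm tr}$ of $\overline{C}_{\sg, R}^*(\Lambda^{\op}, \Lambda^{\op})$; that is, $T_{\sg}$ is an isomorphism from $(\overline{C}_{\sg, L}^*(\Lambda, \Lambda), \delta_L, -\cup_L-, -\{-, \dots, -\}_L)$ onto the brace $B_\infty$-algebra $\overline{C}_{\sg, R}^*(\Lambda^{\op}, \Lambda^{\op})^{\rm tr}$, as structured graded spaces. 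Since the defining axioms of a brace $B_\infty$-algebra — the higher pre‑Jacobi identity, distributivity and higher homotopy of Remark~\ref{rem-specialB} — are preserved by any such isomorphism, it follows that $\overline{C}_{\sg, L}^*(\Lambda, \Lambda)$ is a brace $B_\infty$-algebra; by Lemma~\ref{lemma-infinity-morphism1}, $T_{\sg}$ is then a strict $B_\infty$-isomorphism, which a posteriori recovers the isomorphism \eqref{iso:duality}. The Gerstenhaber algebra structure on $\HH_{\sg}^*(\Lambda, \Lambda)$ is then immediate from Lemma~\ref{lem:Ger}, with $-\cup_L-$ induced by $m_2$ and $[-, -]_L$ the graded commutator of $\mu_{1,1}$. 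The main obstacle throughout is the sign bookkeeping in that last step — matching \eqref{rightblacktriangle} and its left‑sided analogue, the two families of transition maps $\theta_{p, L}$ and $\theta_{p, R}$, and the explicit cup‑product and brace formulas through the reversal — which is precisely the verification carried out in \cite[Theorem~5.1]{Wan1}, and which one may alternatively invoke directly.
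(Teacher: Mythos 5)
Your route is genuinely different from the paper's, because the paper gives no proof of this statement at all: it is quoted verbatim from \cite[Theorem~5.1]{Wan1}, where the left-sided $B_\infty$-structure is verified directly (via the action of the cellular chains of the spineless cacti operad), and the Gerstenhaber consequence then follows from Lemma~\ref{lem:Ger}. Your transport-of-structure argument — push the right-sided structure of Theorem~\ref{thm-Wan2} through a level-wise reversal isomorphism $T_{\sg}$, check that the transported (transposed) cup product and brace coincide with $-\cup_L-$ and $-\{-,\dotsc,-\}_L$, and conclude that the brace-$B_\infty$ axioms of Remark~\ref{rem-specialB} hold on the left — is mechanically sound, and the compatibility computation you describe is exactly the pair of identities established in the proof of Proposition~\ref{lemma-CL1} (your $T_{\sg}$ is the map $T$ of \eqref{align-T}); so a posteriori you indeed recover \eqref{iso:duality}.

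The one genuine issue is logical order: within this paper, Theorem~\ref{thm-Wan2} is itself only ``a right-sided version of \cite[Theorem~5.1]{Wan1}'' and carries no independent proof — its justification is the mirror image of the very argument that proves the statement you are asked to establish. Taking Theorem~\ref{thm-Wan2} as your starting point therefore makes the deduction circular unless you supply (or cite) an independent verification of the right-sided structure, e.g.\ by redoing the operadic/direct check of \cite{Wan1} with right noncommutative differential forms and the module structure \eqref{rightblacktriangle}. Your closing sentence also misattributes the sign bookkeeping: the reversal-compatibility of the operations is the content of Proposition~\ref{lemma-CL1} in this paper, whereas \cite[Theorem~5.1]{Wan1} verifies the left-sided axioms directly rather than any swap identity. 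In short: as a reduction of ``left'' to ``right'' your argument buys a clean symmetry principle (either one-sided theorem yields the other via $T_{\sg}$ and the transpose), but it cannot replace the direct verification that must be done at least once on one side, which is precisely what the cited reference provides.
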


The above two Gerstenhaber algebra structures on $\HH_{\sg}^*(\Lambda, \Lambda)$ are actually the same.

\begin{prop}\label{prop:Ger-dual}
The above two Gerstenhaber algebras $(\HH_{\sg}^*(\Lambda, \Lambda), -\cup_L-, [-, -]_L)$ and $(\HH_{\sg}^*(\Lambda, \Lambda), -\cup_R-, [-, -]_R)$ coincide.
\end{prop}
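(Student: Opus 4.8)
The plan is to deduce the coincidence of the two Gerstenhaber structures on $\HH_{\sg}^*(\Lambda,\Lambda)$ from the chain-level duality statement (\ref{iso:duality}), i.e. from Proposition~\ref{lemma-CL1}, together with the general duality Theorem~\ref{thm:dualityB}. The key observation is that passing to the opposite algebra swaps the roles of ``left'' and ``right'': replacing $\Lambda$ by $\Lambda^{\op}$ interchanges left noncommutative differential forms with right noncommutative differential forms, so that $\overline{C}_{\sg,L}^*(\Lambda^{\op},\Lambda^{\op})$ is built from the same combinatorial data as $\overline{C}_{\sg,R}^*(\Lambda,\Lambda)$, only with reversed tensor orders and Koszul signs. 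Thus the isomorphism (\ref{iso:duality}) is exactly the bridge we need.

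First I would record the elementary fact that $\Omega_{\nc,L}^p(\Lambda^{\op}) = \Lambda^{\op}\otimes (s\overline{\Lambda^{\op}})^{\otimes p}$ is naturally identified, via reversal, with $\Omega_{\nc,R}^p(\Lambda) = (s\overline{\Lambda})^{\otimes p}\otimes \Lambda$, and that under this identification the connecting maps $\theta_{p,L}$ for $\Lambda^{\op}$ correspond to the maps $\theta_{p,R}$ for $\Lambda$. This gives a ``swap'' cochain isomorphism $\overline{C}_{\sg,L}^*(\Lambda^{\op},\Lambda^{\op}) \xrightarrow{\sim} \overline{C}_{\sg,R}^*(\Lambda,\Lambda)^{\rm opp}$ on the colimit level, which is precisely the content of Proposition~\ref{lemma-CL1}: it is a $B_\infty$-isomorphism. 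Next, by Theorem~\ref{thm:dualityB} there is a natural $B_\infty$-isomorphism between $\overline{C}_{\sg,R}^*(\Lambda,\Lambda)^{\rm opp}$ and $\overline{C}_{\sg,R}^*(\Lambda,\Lambda)^{\rm tr}$. Composing, we obtain a $B_\infty$-isomorphism
$$
\overline{C}_{\sg,L}^*(\Lambda^{\op},\Lambda^{\op}) \xrightarrow{\ \sim\ } \overline{C}_{\sg,R}^*(\Lambda,\Lambda)^{\rm tr}.
$$
By Lemma~\ref{lem:Ger}, a $B_\infty$-isomorphism induces an isomorphism of Gerstenhaber algebras on cohomology; hence $\HH_{\sg}^*(\Lambda^{\op},\Lambda^{\op})$ with its $(-\cup_L-,[-,-]_L)$ structure is isomorphic, as a Gerstenhaber algebra, to $H^*$ of the transpose $B_\infty$-algebra.

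The remaining point is purely formal: one checks directly from Definition~\ref{defn-transposeB} and the formulas in Lemma~\ref{lem:Ger} that the Gerstenhaber algebra induced on $H^*(A^{\rm tr})$ by a brace $B_\infty$-algebra $A$ coincides with the one induced on $H^*(A)$ — the cup product is commutative so $m_2^{\rm tr}$ gives the same product up to the Koszul sign (which is trivial after symmetrization on cohomology classes), and the bracket $[\alpha,\beta]$, being already graded-antisymmetric, is unchanged by the transpose (the reversal on $\mu_{1,1}$ simply interchanges the two terms in the definition of $[-,-]$). Applying this with $A = \overline{C}_{\sg,R}^*(\Lambda,\Lambda)$, we conclude that the Gerstenhaber algebra $(\HH_{\sg}^*(\Lambda^{\op},\Lambda^{\op}),-\cup_L-,[-,-]_L)$ is isomorphic to $(\HH_{\sg}^*(\Lambda,\Lambda),-\cup_R-,[-,-]_R)$; finally, running the same argument with $\Lambda$ replaced by $\Lambda^{\op}$ (and using $(\Lambda^{\op})^{\op}=\Lambda$) transports this back to $\Lambda$, yielding $(\HH_{\sg}^*(\Lambda,\Lambda),-\cup_L-,[-,-]_L) = (\HH_{\sg}^*(\Lambda,\Lambda),-\cup_R-,[-,-]_R)$.

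The main obstacle I anticipate is bookkeeping rather than conceptual: one must verify that the ``swap'' identification of left and right noncommutative differential $p$-forms for $\Lambda$ versus $\Lambda^{\op}$ is genuinely compatible with the colimit structure maps $\theta_{p,L}$ and $\theta_{p,R}$, and that it intertwines the two cup products and brace operations up to exactly the signs appearing in the definition of the opposite $B_\infty$-algebra — this is the substance of Proposition~\ref{lemma-CL1}, which I am invoking, but one should make sure the Koszul signs match. Beyond that, the identification of the Gerstenhaber structure on $H^*(A^{\rm tr})$ with that on $H^*(A)$ is a short sign computation from the explicit formulas $\alpha\cup\beta = m_2(\alpha,\beta)$ and $[\alpha,\beta] = (-1)^{|\alpha|}\mu_{1,1}(\alpha,\beta) - (-1)^{(|\alpha|-1)(|\beta|-1)+|\beta|}\mu_{1,1}(\beta,\alpha)$, using graded-commutativity of the cup product on cohomology; I do not expect any difficulty there.
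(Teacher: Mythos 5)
Your route through Proposition~\ref{lemma-CL1} and Theorem~\ref{thm:dualityB} cannot prove this proposition, for two reasons. First, a concrete sign error: the transpose (and the opposite) does \emph{not} leave the induced bracket unchanged. For $p=q=1$, Definition~\ref{defn-transposeB} gives $\mu^{\rm tr}_{1,1}=-\mu_{1,1}$, so the formula of Lemma~\ref{lem:Ger} yields $[\alpha,\beta]^{\rm tr}=-[\alpha,\beta]$ (and likewise $[\alpha,\beta]^{\rm opp}=-[\alpha,\beta]$); this is exactly what the paper records in Remark~\ref{rem:left-right}, where the duality (\ref{iso:duality}) is shown to induce an isomorphism
$(\HH_{\sg}^*(\Lambda,\Lambda),-\cup_L-,[-,-]_L)\simeq(\HH_{\sg}^*(\Lambda^{\op},\Lambda^{\op}),-\cup_R-,-[-,-]_R)$,
with the bracket negated. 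Second, and more fundamentally, the duality only compares the \emph{left} structure of one algebra with the \emph{right} structure of the \emph{opposite} algebra; it never places $-\cup_L-$ and $-\cup_R-$ (or the two brackets) on the same space $\HH_{\sg}^*(\Lambda,\Lambda)$. Your final step, ``running the same argument with $\Lambda$ replaced by $\Lambda^{\op}$,'' merely produces the mirror statement relating $(\HH_{\sg}^*(\Lambda,\Lambda),\cup_L,[-,-]_L)$ to $(\HH_{\sg}^*(\Lambda^{\op},\Lambda^{\op}),\cup_R,-[-,-]_R)$; composing the two gives no comparison of the two structures on a single algebra unless you already have an identification of $\HH_{\sg}^*(\Lambda,\Lambda)$ with $\HH_{\sg}^*(\Lambda^{\op},\Lambda^{\op})$ compatible with left (resp.\ right) structures, which is not available --- Remark~\ref{rem:left-right} even states that no natural quasi-isomorphism between $\overline{C}^*_{\sg,L}(\Lambda,\Lambda)$ and $\overline{C}^*_{\sg,R}(\Lambda,\Lambda)$ is known. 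Note also that the proposition asserts the two structures \emph{coincide} on $\HH_{\sg}^*(\Lambda,\Lambda)$, i.e.\ the identity intertwines them, which is strictly stronger than an abstract isomorphism of Gerstenhaber algebras.

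The paper's actual proof is of a different nature: it identifies each operation intrinsically. By a cited result of \cite{Wan1}, both $-\cup_L-$ and $-\cup_R-$ agree with the Yoneda product on $\HH_{\sg}^*(\Lambda,\Lambda)$, hence are equal; and by \cite{Wan2}, both brackets are identified with the same structure arising from the singular derived Picard group of $\Lambda$. Any repair of your argument would need an intrinsic characterization of this kind; the opposite/transpose duality alone cannot supply it.
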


\begin{proof}
By \cite[Proposition 4.7]{Wan1}, both $-\cup_L-$ and $-\cup_R-$ coincide with the Yoneda product on $\HH_{\sg}^*(\Lambda, \Lambda)$. Then  we have $-\cup_L-  = -\cup_R-$. By \cite[Corollary 5.10]{Wan2}, we infer that $[-, -]_R$ is isomorphic to a subgroup $G_{\Lambda}$ of the singular derived Picard group of $\Lambda$. Similarly,  one  proves that $[-, -]_L$ is also isomorphic to $G_{\Lambda}$. For more details, we refer to \cite{Wan2}.
\end{proof}

Let $\Lambda^{\op}$ be the opposite algebra of $\Lambda$. Consider the following two  $B_{\infty}$-algebras $$(\overline{C}_{\sg , L}^*(\Lambda, \Lambda), \delta, \cup_L; -\{-, \dotsc, -\}_L)$$ and $$(\overline{C}_{\sg , R}^*(\Lambda^{\op}, \Lambda^{\op})
, \delta, \cup_R; -\{-,\dotsc, -\}_R).$$
The following result is analogous to Proposition~\ref{lemma-CL}.

\begin{prop}
\label{lemma-CL1}
Let $\Lambda$ be a $\mathbb{k}$-algebra,  and $\Lambda^{\op}$ be the opposite algebra of $\Lambda$. Then there is a $B_\infty$-isomorphism between the opposite $B_\infty$-algebra $\overline{C}_{\sg , L}^*(\Lambda, \Lambda)^{\rm opp}$ and the $B_\infty$-algebra $\overline{C}_{\sg , R}^*(\Lambda^{\op}, \Lambda^{\op})$.
\end{prop}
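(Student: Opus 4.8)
The plan is to reduce Proposition~\ref{lemma-CL1} to the general duality theorem (Theorem~\ref{thm:dualityB}) exactly as in the proof of Proposition~\ref{lemma-CL}, the only new ingredient being an explicit \emph{swap isomorphism} between the left singular Hochschild cochain complex of $\Lambda$ and the transpose $B_\infty$-algebra of the right singular Hochschild cochain complex of $\Lambda^{\op}$. So the three-step structure will be: (i) produce a strict $B_\infty$-isomorphism
$$
T\colon \overline{C}_{\sg, R}^*(\Lambda^{\op}, \Lambda^{\op}) \stackrel{\sim}\longrightarrow \overline{C}_{\sg, L}^*(\Lambda, \Lambda)^{\rm tr};
$$
(ii) invoke Theorem~\ref{thm:dualityB} to get a $B_\infty$-isomorphism $\overline{C}_{\sg, L}^*(\Lambda, \Lambda)^{\rm tr} \simeq \overline{C}_{\sg, L}^*(\Lambda, \Lambda)^{\rm opp}$; (iii) compose and conclude.

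First I would set up $T$ at the level of the colimit defining the singular cochain complexes. Recall $\overline{C}_{\sg, R}^*(\Lambda^{\op}, \Lambda^{\op})$ is the colimit of $\overline{C}^*(\Lambda^{\op}, \Omega_{\nc, R}^p(\Lambda^{\op}))$ along the maps $\theta_{p, R}$ (tensoring $\mathbf 1_{s\overline{\Lambda^{\op}}}$ on the \emph{left}), while $\overline{C}_{\sg, L}^*(\Lambda, \Lambda)$ is the colimit of $\overline{C}^*(\Lambda, \Omega_{\nc, L}^p(\Lambda))$ along $\theta_{p, L}$ (tensoring on the \emph{right}). Using the identification $\Lambda^{\op}=\Lambda$ as $\mathbb k$-modules, there is a canonical graded-bimodule isomorphism $\Omega_{\nc, R}^p(\Lambda^{\op}) = (s\overline{\Lambda})^{\otimes p}\otimes \Lambda \xrightarrow{\sim} \Lambda\otimes (s\overline{\Lambda})^{\otimes p} = \Omega_{\nc, L}^p(\Lambda)$ given by reversing the order of all tensor factors with the appropriate Koszul sign; a direct check (using the explicit bimodule actions $\blacktriangleright$ from \eqref{rightblacktriangle} and $\blacktriangleleft$) shows this intertwines the right $\Lambda^{\op}$-action with the left $\Lambda$-action and vice versa. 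Conjugating the classical swap isomorphism of Proposition~\ref{lemma-CL} through these bimodule identifications, for $f\in \overline C^{m-p}(\Lambda^{\op}, \Omega_{\nc, R}^p(\Lambda^{\op}))$ I would define $T(f)\in \overline C^{m-p}(\Lambda, \Omega_{\nc, L}^p(\Lambda))$ by
$$
T(f)(s\overline{a_1}\otimes \cdots\otimes s\overline{a_{m}}) = (-1)^{\epsilon}\,\rho\bigl(f(s\overline{a_{m}}\otimes \cdots\otimes s\overline{a_1})\bigr),
$$
where $\rho$ is the order-reversing bimodule identification above and $\epsilon$ is the standard Koszul sign $\sum_{i<j}(|a_i|-1)(|a_j|-1)$ adjusted by $|f|$, exactly as in Proposition~\ref{lemma-CL}. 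One checks $T$ is compatible with the colimit structure maps, i.e. $T\circ \theta_{p,R} = \theta_{p,L}\circ T$ up to the relevant sign, so $T$ descends to a map of the colimits.

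The main work — and the step I expect to be the real obstacle — is verifying that $T$ is a \emph{strict} $B_\infty$-isomorphism onto the transpose, i.e. the two identities of \eqref{lemma-tr}:
$$
T(g_1\cup_R g_2) = T(g_1)\cup_L^{\rm tr} T(g_2), \qquad
T(f\{g_1,\dots,g_k\}_R) = T(f)\{T(g_1),\dots,T(g_k)\}_L^{\rm tr}.
$$
By Lemma~\ref{lemma-infinity-morphism1} these two identities suffice. The cup-product identity is a moderately painful but routine sign computation using the explicit formula \eqref{equation-defcupforsg} for $\cup_R$ and its left analogue, together with the fact that $\rho$ exchanges the roles of the left and right tensor factors. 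The brace identity is harder: one must unwind the explicit definition of $-\{-,\dots,-\}_R$ (from Definition~\ref{defn:brace-A}, referenced but not reproduced in the excerpt) and of $-\{-,\dots,-\}_L$, and show that reversing all orders converts one into the other with precisely the sign $\epsilon'=k+\sum_{j=1}^{k-1}(|g_j|-1)(\sum_{i>j}(|g_i|-1))$ appearing in Remark~\ref{remark-B} for the transpose brace; the interplay between the colimit (noncommutative-forms) direction and the tensor-reversal is where the bookkeeping is most delicate, and I would likely organize it by first treating the classical (form-degree-zero) brace — where it reduces to the computation already done in Proposition~\ref{lemma-CL} — and then checking compatibility with $\theta_{p,R}$ and $\theta_{p,L}$ to extend to the colimit. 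Once \eqref{lemma-tr} is established, Theorem~\ref{thm:dualityB} gives a $B_\infty$-isomorphism $\overline{C}_{\sg, L}^*(\Lambda, \Lambda)^{\rm tr}\simeq \overline{C}_{\sg, L}^*(\Lambda, \Lambda)^{\rm opp}$, and composing with $T$ yields the desired $B_\infty$-isomorphism $\overline{C}_{\sg, R}^*(\Lambda^{\op}, \Lambda^{\op})\simeq \overline{C}_{\sg, L}^*(\Lambda, \Lambda)^{\rm opp}$, which is the statement (the isomorphism \eqref{iso:duality}).
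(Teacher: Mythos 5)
Your proposal is correct and follows essentially the same route as the paper: one defines the order-reversing swap isomorphism $T$ (the paper writes it directly with the reversal $\tau_p$ of the noncommutative-forms factor, which is your $\rho$), checks the cup and brace identities to see it is a strict $B_\infty$-isomorphism onto the transpose via Lemma~\ref{lemma-infinity-morphism1}, and then composes with the duality isomorphism of Theorem~\ref{thm:dualityB}. The only difference is cosmetic (your $T$ goes in the opposite direction, and you make explicit the compatibility with the colimit maps $\theta_{p,R}$, $\theta_{p,L}$, which the paper leaves implicit).
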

\begin{proof}
Consider the {\it swap isomorphism} (note that $\Lambda = \Lambda^{\op}$ as $\mathbb k$-modules)
\begin{align}
\label{align-T}
T \colon  \overline{C}_{\sg , L}^*(\Lambda, \Lambda)\stackrel{}\longrightarrow  \overline{C}_{\sg , R}^*(\Lambda^{\op}, \Lambda^{\op})
\end{align}
which sends  $f\in \Hom((s\overline \Lambda)^{\otimes m}, \Lambda\otimes (s\overline{\Lambda})^{\otimes p})$ to $T(f)\in \Hom((s\overline{\Lambda})^{\otimes m}, (s\overline{\Lambda})^{\otimes p}\otimes \Lambda)$ with
$$T(f)(s\overline{a_1}\otimes s\overline{a_2} \otimes \cdots\otimes s\overline{a_m})= (-1)^{m-p+\frac{m(m-1)}{2}} \tau_p(f(s\overline{a_m}\otimes \cdots \otimes s\overline{a_2} \otimes s\overline{a_1})).$$
Here, the $\mathbb k$-linear map
$\tau_p \colon  \Lambda \otimes  (s\overline \Lambda)^{\otimes p} \rightarrow (s\overline{\Lambda})^{\otimes p} \otimes \Lambda $ is defined as
$$
\tau_p(b_0 \otimes s\overline{b_1} \otimes s\overline{b_2}\otimes  \cdots \otimes s\overline{b_p})=(-1)^{\frac{p(p-1)}{2}} s\overline{b_p} \otimes \cdots \otimes s\overline{b_2}\otimes s\overline{b_1} \otimes b_0.
$$

It is straightforward to verify the following two identities
\begin{align*}
T(g_1) \cup_R T(g_2) & = (-1)^{|g_1||g_2|} T(g_2 \cup_L g_1), \\
 T(f)\{T(g_1), \dotsc, T(g_k)\}_R & = (-1)^{\epsilon}T( f\{g_k, \dotsc, g_1\}_L),
\end{align*}
where $\epsilon =k+ \sum_{i=1}^{k-1} (|g_i|-1)((|g_{i+1}|-1)+ (|g_{i+2}|-1) + \dotsb + (|g_k|-1))$.
By (\ref{tr1}) we have
\begin{align*}
T(g_1 \cup^{\rm tr}_L g_2) & = (-1)^{|g_1||g_2|} T(g_2 \cup_L g_1),  \\
T(f\{g_1, \dotsc, g_k\}_L^{\rm tr}) & = (-1)^{\epsilon}T( f\{g_k, \dotsc, g_1\}_L).
\end{align*}
Combining the above identities, from Lemma~\ref{lemma-infinity-morphism1} we obtain that  $T$ is a strict $B_{\infty}$-isomorphism from
$\overline{C}_{\sg , L}^*(\Lambda, \Lambda)^{\rm tr}$ to $ \overline{C}_{\sg , R}^*(\Lambda^{\op}, \Lambda^{\op})$.

By Theorem~\ref{thm:dualityB} there is a $B_\infty$-isomorphism between $\overline{C}_{\sg , L}^*(\Lambda, \Lambda)^{\rm tr}$ and $\overline{C}_{\sg , L}^*(\Lambda, \Lambda)^{\rm opp}$. We obtain a $B_\infty$-isomorphism between $\overline{C}_{\sg , L}^*(\Lambda, \Lambda)^{\rm opp}$ and $ \overline{C}_{\sg , R}^*(\Lambda^{\op}, \Lambda^{\op})$.
\end{proof}

\begin{rem}\label{rem:left-right}
From Proposition \ref{lemma-CL1} it follows that there is  a (non-strict) $B_{\infty}$-isomorphism
$$\overline{C}^*_{\sg, L}(\Lambda, \Lambda)\cong \overline{C}_{\sg, R}^*(\Lambda^{\mathrm{op
}}, \Lambda^{\mathrm{op}})^{\rm opp}.$$
 In particular, this $B_\infty$-isomorphism induces an isomorphism of Gerstenhaber algebras
 $$(\HH_{\sg}^*(\Lambda, \Lambda),-\cup_L-, [-,-]_L) \simeq  (\HH_{\sg}^*(\Lambda^{\mathrm{op}}, \Lambda^{\mathrm{op}}), -\cup_R-, [-, -]^{\mathrm{opp}}_R),$$
 where $[f, g]^{\mathrm{opp}}_R = -[f, g]_R$.

In contrast to Proposition \ref{prop:Ger-dual}, we do not know whether the $B_{\infty}$-algebras $\overline{C}^*_{\sg, L}(\Lambda, \Lambda)$ and $\overline{C}^*_{\sg, R}(\Lambda, \Lambda)$ are isomorphic in $\mathrm{Ho}(B_{\infty})$. Actually, it seems that there is  even no obvious natural quasi-isomorphism of complexes between them,  although both of them compute the same $\HH_{\sg}^*(\Lambda, \Lambda)$.
\end{rem}

\subsection{The relative singular Hochschild cochain complexes}\label{subsec:rel-sinHo}

We will need the relative version of the singular Hochschild cochain complexes.

Let $E=\bigoplus_{i=1}^n \mathbb ke_i\subseteq \Lambda$ be a semisimple subalgebra of $\Lambda$ with a decomposition $e_1 +\dotsb + e_n=1_{\Lambda}$ of the unity into orthogonal idempotents. Assume that $\varepsilon \colon   \Lambda \twoheadrightarrow E$ is a split surjective algebra homomorphism such that the inclusion map $E\hookrightarrow \Lambda$ is a section of $\varepsilon$.

The following notion is slightly different from the one in Subsection~\ref{subsec:sHcc}.  We will denote the quotient $E$-$E$-bimodule $\Lambda/(E\cdot 1_\Lambda)$ by $\overline{\Lambda}$. The quotient $\mathbb{k}$-module $\Lambda/(\mathbb{k}\cdot 1_\Lambda)$ will be temporarily denoted by $\overline{\overline \Lambda}$ in this subsection. Identifying $\overline{\Lambda}$ with ${\rm Ker}(\varepsilon)$, we obtain a natural injection
$$\xi\colon \overline{\Lambda}\longrightarrow \overline{\overline \Lambda},\quad  x+(E\cdot 1_\Lambda)\longmapsto x+(\mathbb{k}\cdot 1_\Lambda)$$
for each $x\in {\rm Ker}(\varepsilon)$.

Consider the \emph{graded $\Lambda$-$\Lambda$-bimodule of $E$-relative right noncommutative differential $p$-forms} $$\Omega_{\nc, R, E}^p(\Lambda)=(s\overline \Lambda)^{\otimes_E p}\otimes_E \Lambda.$$
Similarly, $\Omega_{\nc, R, E}^p(\Lambda)$ is isomorphic to the cokernel of the differential in $\overline{\Barr}_E(\Lambda)$
$$\Lambda \otimes_E (s\overline{\Lambda})^{\otimes_E  p+1}\otimes_E \Lambda\xrightarrow{d_{ex}} \Lambda \otimes_E (s\overline{\Lambda})^{\otimes_E  p}\otimes_E \Lambda. $$

The {\it $E$-relative right singular Hochschild cochain complex} $\overline{C}_{\sg, R, E}^*(\Lambda, \Lambda)$ is defined  to be  the colimit of the inductive system
$$ \overline C_E^*( \Lambda, \Lambda)\xrightarrow{\theta_{0, R, E}} \overline{C}_E^*( \Lambda, \Omega_{\nc, R, E}^1(\Lambda)) \xrightarrow{\theta_{1, R, E}}  \cdots \rightarrow \overline C_E^*( \Lambda, \Omega_{\nc, R, E}^p(\Lambda))\xrightarrow{\theta_{p, R, E}} \dotsc, $$
where
\begin{align}\label{equ:thetaRE}
\theta_{p, R, E}\colon  \overline C_E^*( \Lambda, \Omega_{\nc, R, E}^p(\Lambda))\longrightarrow\overline C_E^*( \Lambda, \Omega_{\nc, R, E}^{p+1}(\Lambda)), \quad f \longmapsto \mathbf{1}_{s\overline{\Lambda}}\otimes_E f. \end{align}

 We have the natural ($\mathbb{k}$-linear) projections
$${\varpi}^{m}\colon (s\overline{\overline \Lambda})^{\otimes m}\longrightarrow (s\overline{\Lambda})^{\otimes_E  m}, \quad \mbox{for  all } m\geq 0.$$
  Denote by  $t_p$ the natural injection
   $$\Omega_{\nc, R,E}^p(\Lambda)\hooklongrightarrow \Omega^p_{\nc, R}(\Lambda),$$
 induced by $\xi$. We have  inclusions
{\small
\begin{align*}
{\rm Hom}_{\text{$E$-$E$}}((s\overline{\Lambda})^{\otimes_E m+p}, \Omega^p_{\nc, R, E}(\Lambda))
&\hooklongrightarrow {\rm Hom}_{}((s\overline{\overline \Lambda})^{\otimes m+p}, \Omega^p_{\nc, R, E}(\Lambda))\hooklongrightarrow {\rm Hom}_{}((s\overline{\overline \Lambda})^{\otimes m+p}, \Omega^p_{\nc, R}(\Lambda)),
\end{align*}
}
where the first inclusion is induced by the projection $\varpi^{m+p}$, and  the second one  is given by ${\rm Hom}((s\overline{\overline \Lambda})^{\otimes m+p}, t_{p})$.   Therefore,  we have the injection $$\overline{C}^m_E(\Lambda, \Omega^p_{\nc, R,E}(\Lambda))\hooklongrightarrow \overline{C}^m(\Lambda, \Omega^p_{\nc, R}(\Lambda)).$$

 For any $m\in \mathbb Z$, we have the following commutative diagram.
\begin{equation*}
\xymatrix@C=1.5pc{
\overline{C}^m_E(\Lambda, \Lambda)\ar[r]^-{\theta_{0, R,E}}\ar@{^{(}->}[d]& \overline{C}^m_E(\Lambda, \Omega^1_{\nc, R,E}(\Lambda)) \ar@{^{(}->}[d]\ar[r]^-{\theta_{1, R,E}} & \cdots \ar[r]^-{} & \overline{C}^m_E(\Lambda, \Omega^p_{\nc, R,E}(\Lambda)) \ar@{^{(}->}[d]\ar[r]^-{\theta_{p, R,E}} & \cdots \\
\overline{C}^m(\Lambda, \Lambda)\ar[r]^-{\theta_{0,R}}& \overline{C}^m(\Lambda, \Omega^1_{\nc, R}(\Lambda)) \ar[r]^-{\theta_{1,R}} & \cdots\ar[r]^-{} & \overline{C}^m(\Lambda, \Omega^p_{\nc, R}(\Lambda))   \ar[r]^-{\theta_{p,R}} & \cdots
}\end{equation*}
It gives rise to an  injection of complexes
$$\iota\colon  \overline{C}_{\sg, R,  E}^*(\Lambda, \Lambda)\hooklongrightarrow \overline{C}_{\sg, R}^*(\Lambda, \Lambda). $$
We observe that the cup product and the brace operation on $\overline{C}_{\sg, R}^*(\Lambda, \Lambda)$ restrict to  $\overline{C}_{\sg, R, E}^*(\Lambda, \Lambda)$. Thus $\overline{C}_{\sg, R, E}^*(\Lambda, \Lambda)$ inherits a brace  $B_{\infty}$-algebra structure.

\begin{lem}\label{lemma7.3-split}
The injection $\iota\colon  \overline{C}_{\sg, R,  E}^*(\Lambda, \Lambda)\hookrightarrow \overline{C}_{\sg, R}^*(\Lambda, \Lambda)$ is a strict $B_{\infty}$-quasi-isomorphism.
\end{lem}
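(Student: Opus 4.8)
The statement asserts that the canonical inclusion $\iota\colon \overline{C}_{\sg, R, E}^*(\Lambda,\Lambda)\hookrightarrow \overline{C}_{\sg, R}^*(\Lambda,\Lambda)$ is a strict $B_\infty$-quasi-isomorphism. The plan splits into two essentially independent tasks: verifying that $\iota$ is strict as a $B_\infty$-morphism, and verifying that it is a quasi-isomorphism of the underlying complexes.

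For strictness: by construction the cup product $\cup_R$ and the brace operation $-\{-,\dots,-\}_R$ on $\overline{C}_{\sg,R}^*(\Lambda,\Lambda)$ restrict to the subcomplex $\overline{C}_{\sg,R,E}^*(\Lambda,\Lambda)$ (this is noted just before the statement), and $\iota$ by definition preserves these two operations. Since both complexes are brace $B_\infty$-algebras, Lemma~\ref{lemma-infinity-morphism1} tells us that a dg algebra homomorphism compatible with the brace operations is automatically a strict $B_\infty$-morphism. So the strictness part is immediate; one only needs to spell out that $\iota$ commutes with $\delta$ (which follows since $\iota$ is induced at each stage $p$ by the $E$-linear inclusions $\Omega^p_{\nc,R,E}(\Lambda)\hookrightarrow \Omega^p_{\nc,R}(\Lambda)$ and the projections $\varpi^{m+p}$, all compatible with the internal and external differentials of the relative bar complexes), and that it intertwines $\cup_R$ and $-\{-,\dots,-\}_R$ on the two sides — both facts being routine from the explicit formulae and the commutative diagram preceding the statement.

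For the quasi-isomorphism part: since cohomology commutes with the filtered colimits defining both singular Hochschild cochain complexes, it suffices to compare the two inductive systems. On the $p$-th terms, $\iota$ restricts to the natural map $\overline{C}_E^*(\Lambda,\Omega^p_{\nc,R,E}(\Lambda))\hookrightarrow \overline{C}^*(\Lambda,\Omega^p_{\nc,R}(\Lambda))$; both compute $\HH^*(\Lambda,\Omega^p_{\nc,R}(\Lambda))$ — the left-hand side because $\Omega^p_{\nc,R,E}(\Lambda)$ is the cokernel of a differential in $\overline{\Barr}_E(\Lambda)$, so $\overline{C}_E^*(\Lambda,-)$ computes relative Hochschild cohomology, which agrees with absolute Hochschild cohomology whenever $E$ is separable (here $E$ is a sum of copies of $\mathbb k$, hence separable), cf.\ the discussion of relative bar resolutions in Subsection~\ref{subsection-bar} and Lemma~\ref{lemma6.1-split}. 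Concretely, the inclusion $\overline{C}_E^*(\Lambda,M)\hookrightarrow \overline{C}^*(\Lambda,M)$ is a quasi-isomorphism for any bimodule $M$, exactly as in Lemma~\ref{lemma6.1-split}; applying this with $M=\Omega^p_{\nc,R}(\Lambda)$ and noting that the relative version of this bimodule, $\Omega^p_{\nc,R,E}(\Lambda)$, injects into $\Omega^p_{\nc,R}(\Lambda)$ inducing an isomorphism on $\HH^*(\Lambda,-)$, gives that $\iota$ is a quasi-isomorphism on each term of the inductive system. One also checks that $\iota$ commutes with the connecting maps $\theta_{p,R,E}$ and $\theta_{p,R}$ — visible from the commutative diagram before the statement — so the induced map on colimits is a quasi-isomorphism.

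The main obstacle is the term-wise comparison: one must verify carefully that the inclusion of the relative cochain complex $\overline{C}_E^*(\Lambda,\Omega^p_{\nc,R,E}(\Lambda))$ into the absolute one $\overline{C}^*(\Lambda,\Omega^p_{\nc,R}(\Lambda))$ — which involves both the change of coefficient bimodule ($\Omega^p_{\nc,R,E}(\Lambda)\hookrightarrow \Omega^p_{\nc,R}(\Lambda)$) and the change from $E$-relative to absolute bar complex — is a quasi-isomorphism uniformly in $p$, and compatibly with the maps $\theta$. For the coefficient change one argues that $\Omega^p_{\nc,R}(\Lambda)\cong \Lambda\otimes_E\Omega^p_{\nc,R,E}(\Lambda)\otimes_E\Lambda$ is, up to the relevant identifications, induced up from the $E$-relative bimodule, so that absolute and $E$-relative Ext agree by the separability of $E$; for the bar-complex change one invokes the standard comparison $\overline{\Barr}_E(\Lambda)\to\overline{\Barr}(\Lambda)$ being a homotopy equivalence of $\Lambda$-$\Lambda$-bimodule resolutions when $E$ is separable. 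Assembling these, plus the (routine) observation that the five-lemma/colimit argument preserves the $B_\infty$-structure because $\iota$ is strict, completes the proof.
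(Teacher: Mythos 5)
The strictness half of your argument is fine and is exactly the paper's: $\iota$ preserves $\cup_R$ and $-\{-,\dotsc,-\}_R$, so Lemma~\ref{lemma-infinity-morphism1} makes it a strict $B_\infty$-morphism. The problem is the quasi-isomorphism half. Your plan rests on the claim that for each fixed $p$ the term-wise inclusion $\overline{C}^*_E(\Lambda,\Omega^p_{\nc,R,E}(\Lambda))\hookrightarrow \overline{C}^*(\Lambda,\Omega^p_{\nc,R}(\Lambda))$ is a quasi-isomorphism, and the justification you give conflates two different things. The left-hand complex computes $\HH^*(\Lambda,\Omega^p_{\nc,R,E}(\Lambda))$ (absolute Hochschild cohomology, but with coefficients in the \emph{relative} differential forms), not $\HH^*(\Lambda,\Omega^p_{\nc,R}(\Lambda))$; separability of $E$ lets you switch between relative and absolute bar complexes for a fixed coefficient bimodule, but it does not change the coefficients. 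Moreover the asserted isomorphism $\Omega^p_{\nc,R}(\Lambda)\cong \Lambda\otimes_E\Omega^p_{\nc,R,E}(\Lambda)\otimes_E\Lambda$ is false: the absolute forms are built from $\Lambda/(\mathbb{k}\cdot 1_\Lambda)$ and tensor products over $\mathbb{k}$, and are genuinely larger than anything induced from the relative forms. What is true is that both $\Omega^p_{\nc,R,E}(\Lambda)$ and $\Omega^p_{\nc,R}(\Lambda)$ are (shifted) $p$-th syzygies of $\Lambda$ over $\Lambda^e$ (the relative bar resolution is still a projective bimodule resolution because $E$ is separable), so by Schanuel they agree only up to projective bimodule direct summands; hence $\HH^n(\Lambda,-)$ of the two coefficients differs by terms of the form $\HH^n(\Lambda,P)$ with $P$ projective, which do not vanish in general (already $\HH^0(\Lambda,\Lambda\otimes\Lambda)$ versus $\HH^0(\Lambda,\Lambda\otimes_E\Lambda)$ differ). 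So your term-wise claim fails in general, and the colimit-of-termwise-quasi-isomorphisms argument does not go through as stated.

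The correct route — and the one the paper takes — works only after stabilization: the projective contributions are exactly what the connecting maps $\theta_{p,R,E}$ and $\theta_{p,R}$ kill in the colimit. Concretely, since $\overline{\Barr}_E(\Lambda)$ is a projective bimodule resolution and $\Omega^p_{\nc,R,E}(\Lambda)$ is the cokernel of its external differential, the same connecting-morphism argument as in Lemma~\ref{lem:HH-colimit} (i.e.\ the relative analogue of the computation identifying $\varinjlim_p \HH^n(\Lambda,\Omega^p_{\nc,R}(\Lambda))$ with $\Hom_{\mathbf{D}_{\sg}(\Lambda^e)}(\Lambda,\Sigma^n\Lambda)$) shows that $\overline{C}^*_{\sg,R,E}(\Lambda,\Lambda)$ also computes $\HH^*_{\sg}(\Lambda,\Lambda)$; compatibility of the two inductive systems via the commutative diagram (induced by $\xi\colon\overline{\Lambda}\hookrightarrow\Lambda/(\mathbb{k}\cdot1_\Lambda)$ and $\overline{\Barr}_E(\Lambda)\hookrightarrow\overline{\Barr}(\Lambda)$) then identifies the two stabilized computations and shows that $\iota$ itself is a quasi-isomorphism. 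So you should replace the term-wise comparison by this stabilized comparison; the rest of your write-up (strictness, compatibility with the $\theta$-maps) stands.
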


 \begin{proof}
Since $\iota$ preserves the cup products and brace operations,  it follows from Lemma~\ref{lemma-infinity-morphism1} that  $\iota$ is a strict  $B_{\infty}$-morphism.

It remains to prove that $\iota$ is a quasi-isomorphism of complexes. The injection $\xi\colon \overline{\Lambda} \rightarrow \overline{\overline \Lambda}$ induces an injection of complexes of $\Lambda$-$\Lambda$-bimodules
 $$\overline{\Barr}_E(\Lambda)\hooklongrightarrow \overline{\Barr}(\Lambda)= \bigoplus_{n\geq 0} \Lambda \otimes (s\overline{\overline \Lambda})^{\otimes n}\otimes \Lambda.$$
Recall that  $\Omega_{\nc, R}^p(\Lambda)$ is isomorphic to the cokernel of the external differential $d_{ex}$ in $\overline{\Barr}(\Lambda)$ and that $\Omega_{\nc, R, E}^p(\Lambda)$ is isomorphic to the cokernel of $d_{ex}$ in $\overline{\Barr}_E(\Lambda)$.
We infer that both $\overline{C}_{\sg, R,  E}^*(\Lambda, \Lambda)$ and $\overline{C}_{\sg, R}^*(\Lambda, \Lambda) $ compute $\HH_{\sg}^*(\Lambda, \Lambda)$; compare \cite[Theorem~3.6]{Wan1}. Therefore, the injection $\iota$ is a quasi-isomorphism.
 \end{proof}

Similar, we define  the {\it $E$-relative left singular Hochschild cochain complex} $\overline{C}_{\sg, E, L}^*( \Lambda, \Lambda)$ as the colimit of the inductive system
$$ \overline{C}_E^*(\Lambda, \Lambda)\xrightarrow{\theta_{0, L, E}} \overline{C}_E^*(\Lambda, \Omega_{\nc, L, E}^1(\Lambda))\xrightarrow{\theta_{1, L, E}} \cdots \xrightarrow{\theta_{p-1, L, E}}  \overline{C}_E^*(\Lambda, \Omega_{\nc, L, E}^p(\Lambda))\xrightarrow{\theta_{p, L}} \dotsc, $$
where $\Omega_{\nc, L, E}^p(\Lambda)=\Lambda \otimes_E (s\overline \Lambda)^{\otimes_E p}$ is the {\it graded $\Lambda$-$\Lambda$-bimodule of $E$-relative left noncommutative differential $p$-forms} and the maps
\begin{align}\label{equ:thetaLE}
\theta_{p, L, E}\colon   \overline{C}_E^*(\Lambda, \Omega_{\nc, L, E}^p(\Lambda))\longrightarrow \overline{C}_E^*(\Lambda, \Omega_{\nc, L, E}^{p+1}(\Lambda)), \quad f \longmapsto f\otimes_E \mathbf{1}_{s\overline{\Lambda}}.
\end{align}

We have an analogous result of Lemma~\ref{lemma7.3-split}.

\begin{lem}\label{lem:L-inclu}
There is a natural injection $ \overline{C}_{\sg, L,  E}^*(\Lambda, \Lambda)\hookrightarrow \overline{C}_{\sg, L}^*(\Lambda, \Lambda)$, which is a strict $B_{\infty}$-quasi-isomorphism. \hfill $\square$
\end{lem}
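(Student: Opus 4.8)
The plan is to adapt the proof of Lemma~\ref{lemma7.3-split} almost verbatim, exchanging the right noncommutative differential forms for the left ones.

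First I would make the injection explicit. The inclusion $\xi\colon \overline{\Lambda}\hooklongrightarrow \overline{\overline{\Lambda}}$ induces a natural injection $t_p\colon \Omega^p_{\nc, L, E}(\Lambda)\hooklongrightarrow \Omega^p_{\nc, L}(\Lambda)$ of graded $\Lambda$-$\Lambda$-bimodules, and, together with the canonical projections $\varpi^{m}\colon (s\overline{\overline{\Lambda}})^{\otimes m}\to (s\overline{\Lambda})^{\otimes_E m}$, it gives for each $m$ and $p$ an injection
\[
\overline{C}^m_E(\Lambda,\Omega^p_{\nc, L, E}(\Lambda)) \hooklongrightarrow \overline{C}^m(\Lambda,\Omega^p_{\nc, L}(\Lambda)),
\]
exactly as in the paragraph preceding Lemma~\ref{lemma7.3-split}. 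These injections are compatible with the transition maps $\theta_{p, L, E}$ of \eqref{equ:thetaLE} and $\theta_{p, L}$ (here one uses that $\mathbf{1}_{s\overline{\Lambda}}$ is respected on the nose), so passing to the colimit yields the asserted injection $\iota\colon \overline{C}^*_{\sg, L, E}(\Lambda,\Lambda)\hooklongrightarrow\overline{C}^*_{\sg, L}(\Lambda,\Lambda)$.

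Second, I would observe that $\iota$ is a strict $B_\infty$-morphism. The cup product $\cup_L$ and the brace operation $-\{-,\dotsc,-\}_L$ on $\overline{C}^*_{\sg, L}(\Lambda,\Lambda)$ restrict to the subspace of $E$-relative cochains, so $\overline{C}^*_{\sg, L, E}(\Lambda,\Lambda)$ inherits a brace $B_\infty$-structure and $\iota$ visibly commutes with $\cup_L$ and the braces; since both sides are brace $B_\infty$-algebras whose underlying $A_\infty$-structures are dg algebras, Lemma~\ref{lemma-infinity-morphism1} then finishes this point. For the quasi-isomorphism I would use that, by \cite[Lemma~2.5]{Wan1}, the bimodule $\Omega^p_{\nc, L}(\Lambda)$ (resp.\ $\Omega^p_{\nc, L, E}(\Lambda)$) is the cokernel of the external differential $d_{ex}$ in $\overline{\Barr}(\Lambda)$ (resp.\ in $\overline{\Barr}_E(\Lambda)$), and that $\xi$ induces an injection of dg-projective bimodule resolutions $\overline{\Barr}_E(\Lambda)\hooklongrightarrow\overline{\Barr}(\Lambda)$ of $\Lambda$. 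Running the computation of \cite[Theorem~3.6]{Wan1} (compare the proof of Lemma~\ref{lem:HH-colimit}) with the left forms shows that both colimit complexes compute $\HH^*_{\sg}(\Lambda,\Lambda)$ and that $\iota$ realizes this identification; hence $\iota$ is a quasi-isomorphism.

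I do not expect a genuine obstacle here: the only thing that needs care is the bookkeeping that the $E$-relative cochains form a subcomplex stable under $\cup_L$ and the braces, and that the colimit along $\theta_{p, L, E}$ embeds into the one along $\theta_{p, L}$ — but all of this is strictly parallel to the right-handed situation already handled in Lemma~\ref{lemma7.3-split}, so the proof is essentially a translation.
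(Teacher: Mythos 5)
Your proposal is correct and matches the paper's intent exactly: the paper states this lemma without proof as the left-handed analogue of Lemma~\ref{lemma7.3-split}, and your argument is precisely that translation (injection induced by $\xi$ and the projections, compatibility with $\theta_{p,L,E}$ and $\theta_{p,L}$, strictness via Lemma~\ref{lemma-infinity-morphism1}, quasi-isomorphism since both colimits compute $\HH^*_{\sg}(\Lambda,\Lambda)$ via the cokernel description of the left noncommutative forms). No gaps.
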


\subsection{The brace operation on the right singular Hochschild cochain complex}\label{subsec:example-brace}

We will recall the  brace operation $-\{-, \dotsb, -\}_R$  on $\overline{C}^*_{\sg, R}(\Lambda, \Lambda)$.
It might be carried over word by word from the left case, studied in  \cite[Section 5]{Wan1}, but with different graph presentations. We mention that, similar to the left case,  the brace operation
$-\{-, \dotsb, -\}_R$ is  induced from a natural action of the cellular chain dg operad of the spineless cacti operad \cite{Ka}.

Similar to  \cite[Figure 1]{Wan1}, any element $$f\in \overline{C}^{m-p}(\Lambda, \Omega_{\nc, R}^p(\Lambda))=\Hom((s\overline \Lambda)^{\otimes m}, (s\overline\Lambda)^{\otimes p}\otimes \Lambda)$$ can be depicted by a tree-like graph and a cactus-like graph (cf. Figure \ref{Tree-presentation}):
\begin{itemize}
\item   The tree-like presentation is the usual graphic presentation of morphisms in tensor categories (cf. e.g. \cite{JoSt}). We read the graph from top to bottom and left to right. We use the color blue to distinguish the {\it special output} $\Lambda$ and the other black outputs represent $s \overline \Lambda$.  The inputs $(s\overline{\Lambda})^{\otimes m}$ are ordered from left to right at the top but are labelled by $1, 2, \dotsc, m$ from right to left. Similarly,  the outputs $(s\overline{\Lambda})^{\otimes p}\otimes \Lambda$ are ordered  from left to right at the bottom but are labelled by $0, 1, 2, \dotsc, p$ from right to left. The above labelling is convenient when taking the colimit \eqref{colimitrepresent}; see Figure~\ref{Theta-map}.

\item The cactus-like presentation is read as follows. The image of $0\in \mathbb R$ in the red circle $S^1=\mathbb R/\mathbb Z$ is decorated by a blue dot, called the zero point of $S^1$. The center of $S^1$ is decorated by $f$. The blue radius  represents the special output $\Lambda$. The inputs $(s\overline{\Lambda})^{\otimes m}$ are represented by $m$ black radii (called {\em inward radii}) on the right semicircle pointing  towards the center in clockwise. Similarly, the outputs $(s\overline \Lambda)^{\otimes p}$ are represented by $p$ black radii (called {\em outward radii}) on the left semicircle pointing outwards the center in counterclockwise. The cactus-like presentation is inspired by the spineless cacti operad.
\end{itemize}

 \begin{figure}[h]
\centering{
 \includegraphics[height=40mm]{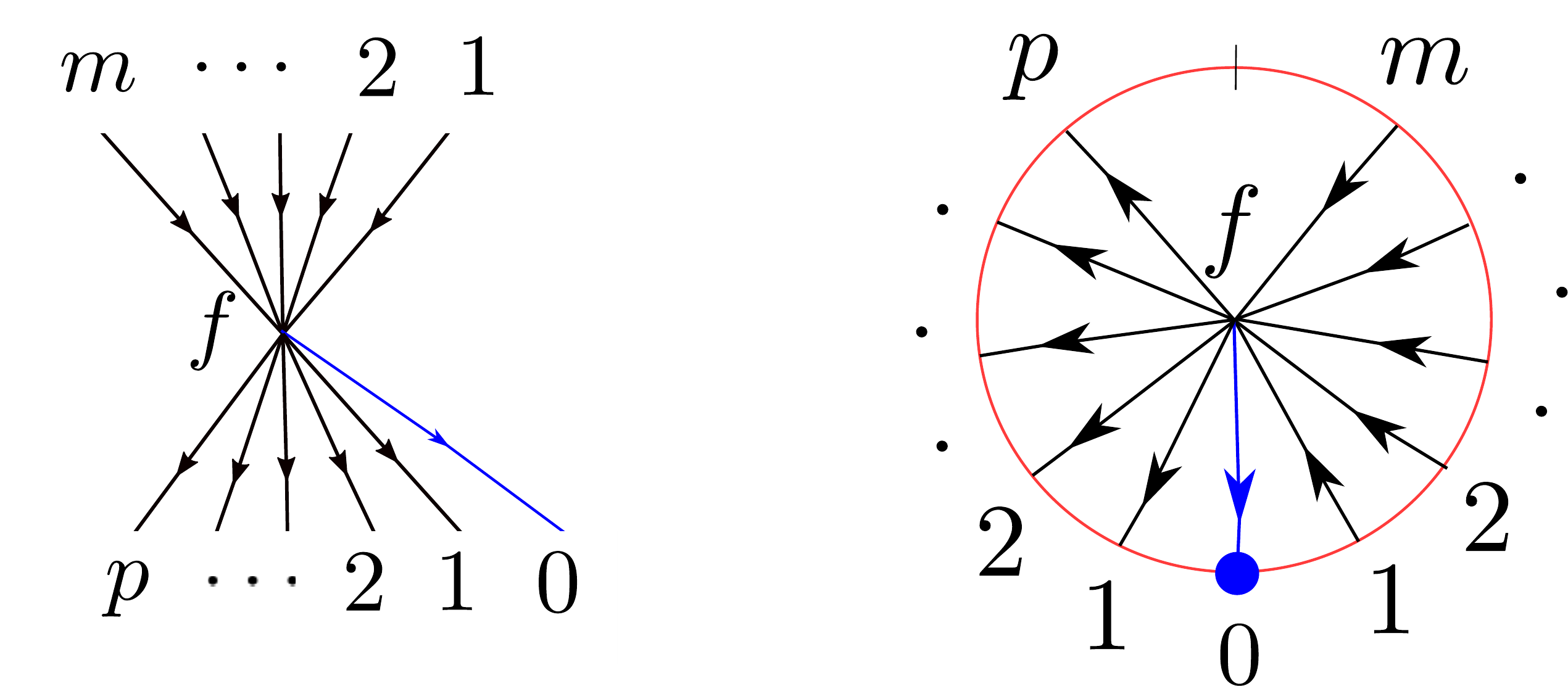}
 \caption{The tree-like and cactus-like presentations of $f\in \overline{C}^{m-p}(\Lambda, \Omega_{\nc, R}^p(\Lambda))$.}
  \label{Tree-presentation}
  }
\end{figure}

Recall that the  maps in the inductive system (\ref{equation-inductive}) of $\overline{C}_{\sg, R}^*(\Lambda, \Lambda)$ are given by $$\theta_{p, R}\colon   \overline{C}^*(\Lambda, \Omega_{\nc, R}^p(\Lambda))\stackrel{}\longrightarrow  \overline{C}^*(\Lambda, \Omega_{\nc, R}^{p+1}(\Lambda)), \quad f \longmapsto \mathbf 1\otimes f.$$ That is, for any $f\in \overline{C}^*(\Lambda,\Omega_{\nc, R}^p(\Lambda))$ we have \begin{align}\label{colimitrepresent}
f=\mathbf 1\otimes f = \mathbf 1^{\otimes 2}\otimes f =\cdots= \mathbf 1^{\otimes m}\otimes f =\cdots
\end{align}
 in  $\overline{C}_{\sg, R}^*(\Lambda, \Lambda)$. Thus, any element $f\in \overline{C}_{\sg, R}^{m-p}(\Lambda, \Lambda)$ is depicted by Figure \ref{Theta-map}, where the straight line represents the identity map of $s\overline \Lambda$.  Thanks to \eqref{colimitrepresent}, we can freely add or remove the straight lines from the left side and from the top, respectively.

The tree-like and cactus-like presentations have their own advantages: it is much easier to read off the corresponding morphisms from the tree-like presentation (as we have seen from tensor categories), while it is more convenient to construct the brace operation using the cactus-like presentation as you will see in the sequel.

\begin{figure}[h]
\centering{
 \includegraphics[height=40mm]{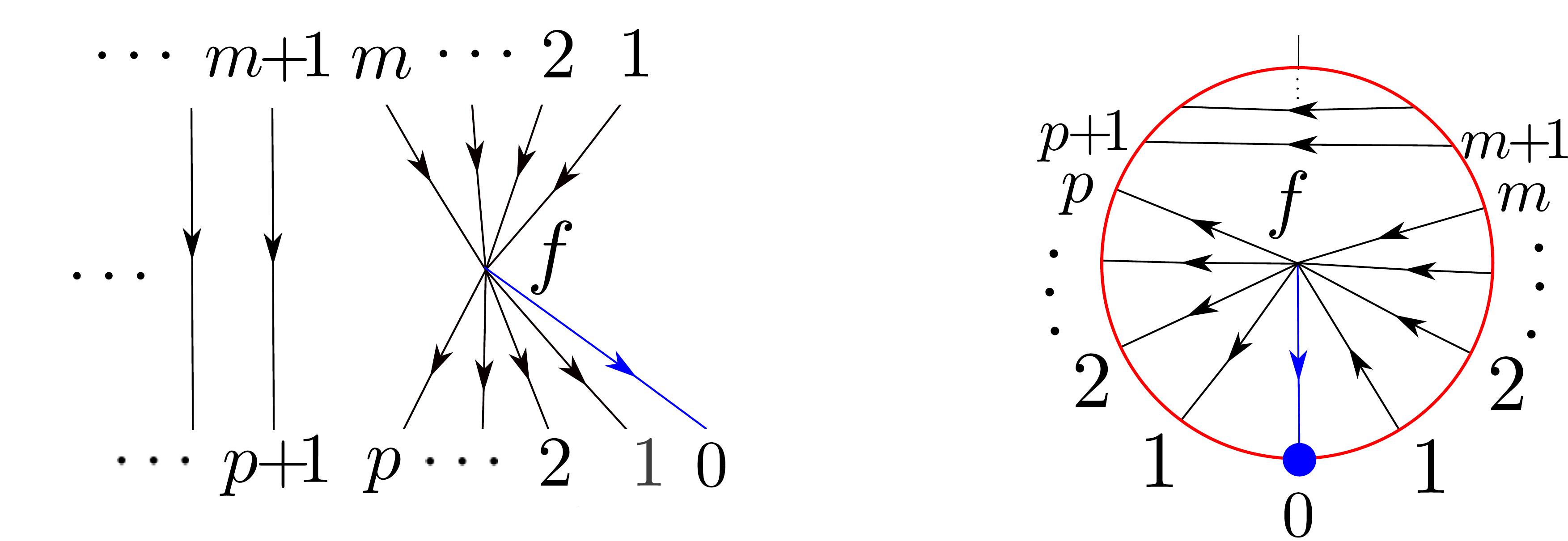}
 \caption{The colimit maps $\theta_{*, R}$, where the straight line represents  the identity map of $s \overline \Lambda$.}
  \label{Theta-map}
  }
\end{figure}

For any $k \geq 0$, let us define the brace operation of degree $-k$
$$-\{-, \dotsc, -\}_R\colon   \overline{C}_{\sg, R}^*(\Lambda, \Lambda)\otimes \overline{C}_{\sg, R}^*(\Lambda, \Lambda)^{\otimes k} \stackrel{}\longrightarrow  \overline{C}_{\sg, R}^{*}(\Lambda, \Lambda).$$

\begin{defn}\label{defn:brace-A}
Let  $x\in \overline{C}^{m-p}(\Lambda, \Omega_{\nc, R}^p(\Lambda))$ and $y_i\in \overline{C}^{n_i-q_i}(\Lambda, \Omega_{\nc, R}^{q_i}(\Lambda))$ for $1\leq i\leq k$. Set $m'=m-p$ and $n_r'=n_r-q_r-1 $ for $1\leq r\leq k.$ Then we define $$x\{y_1, \dotsc, y_k\}_R\in \Hom((s\overline \Lambda)^{\otimes m+n_1+n_2+ \dotsb +  n_k-k},  \Omega_{\nc, R}^{ p+q_1+\cdots+q_k}(\Lambda))$$ as follows: for $k=0$, we set $x\{\emptyset\}=x$;  for $k\geq 1$, we set
\begin{equation}
x\{y_1, \dotsc, y_k\}_R=\sum\limits_{\substack{0\leq j\leq k\\1\leq i_1< i_2 <\cdots < i_j\leq m\\ 1\leq l_1\leq l_2\leq  \cdots \leq l_{k-j}\leq p }}(-1)^{k-j} B^{(i_1, \dotsc, i_j)}_{(l_1, \dotsc, l_{k-j})}(x; y_1, \dotsc, y_k),
\end{equation}
where the summand $B^{(i_1, \dotsc, i_j)}_{(l_1, \dotsc, l_{k-j})}(x; y_1, \dotsc, y_k)$ is illustrated in Figure \ref{Radical-brace-operation-1}; where the extra sign $(-1)^{k-j}$ is added in order to make sure that the brace operation is compatible with the colimit maps $\theta_{*, R}$.   When the operation $B^{(i_1, \dotsc, i_j)}_{(l_1, \dotsc, l_{k-j})}(x; y_1, \dotsc, y_k)$  applies to  elements, an additional sign $(-1)^{\epsilon}$ appears due to Koszul sign rule, where
\begin{align*}
\epsilon&:=\Big(m'+\sum_{i=1}^kn_i'\Big)\Big(p+\sum_{i=1}^k q_i\Big)+m'p+\sum_{i=1}^kn_i'q_i\\
&\quad +\sum_{r=1}^{k-j} (n_1'+\cdots+n_{r}'+l_{r}-1)n_{r}' +\sum_{s=1}^{j} (n_1'+\cdots+n_{k-s+1}'+m'-i_s-1)n_{k-s+1}'.\end{align*}
\end{defn}

\begin{figure}[h]
\centering
  \includegraphics[height=40mm]{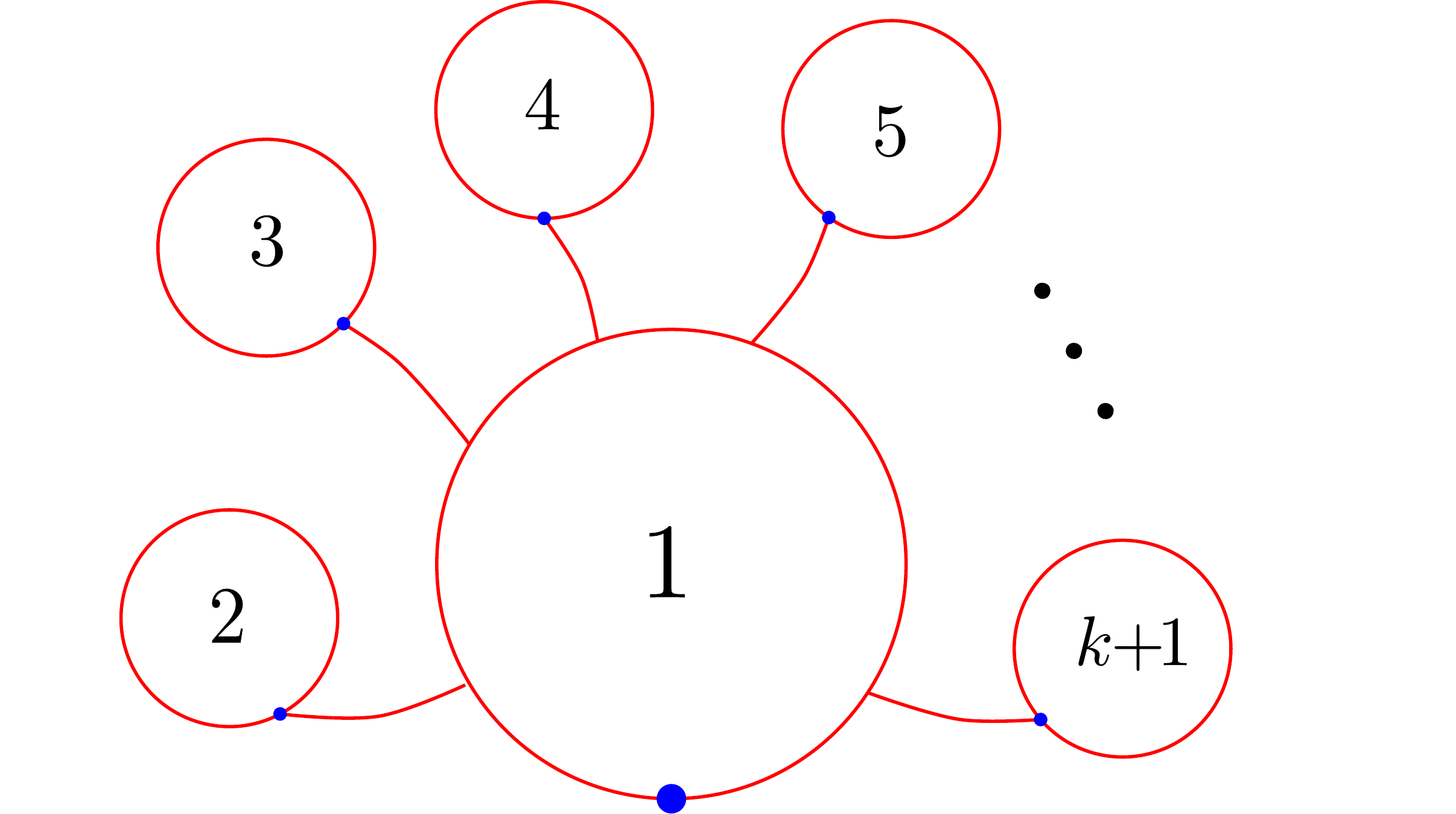}
  \caption{A cell in  the spineless cacti operad. }
  \label{Bracell-Sep}
\end{figure}

Let us now describe Figure \ref{Radical-brace-operation-1} in detail and   how to read off $B^{(i_1, \dotsc, i_j)}_{(l_1, \dotsc, l_{k-j})}(x; y_1, \dotsc, y_k)$.
\begin{enumerate}[(i)]
\item We start with  the cell depicted in    Figure~\ref{Bracell-Sep} of the spineless cacti operad. As in Figure~\ref{Theta-map}, we use the element $x$ to decorate  the circle $1$ of Figure~\ref{Bracell-Sep} and similarly use the element $y_i$ to decorate  the circle $i+1$ for $1\leq i\leq k$.

 \item The left semicircle of the circle $1$  is divided into $p+1$ arcs by the outward radii of $x$. For each $1\leq r \leq k-j$, the red curve of the circle $r$ (decorated by $y_r$)  intersects with the circle $1$   at the open arc   between the   $(l_{r}-1)$-th   and $l_{r}$-th outward radii of $x$. The red curves are  not allowed to intersect with each other.
  \item On the right semicircle of the circle $1$, we have $m$ intersection points of the $m$ inward radii of $x$ with the circe $1$. Unlike (ii),  for each $1\leq r\leq j$ the red curve of the  circle $k-r+1$ (decorated by $y_{k-r+1}$) intersects with the  circle $1$  exactly at the  $i_r$-th intersection point.
 \item We  connect some inputs with outputs using the following rule.
 \begin{itemize}
 \item For each $1\leq r\leq j$, connect the blue output of $y_{k-r+1}$ with the  $i_r$-th inward radius of the circle $1$ on the right semi-circle of the circe $1$. Then starting from the blue dot (i.e. the zero point) of circle $1$, walk counterclockwise along the red path (i.e.  the outside of the red circles and the red  curves)  and record the inward  and outward radii  (including the blue radii) in order as a sequence $\mathcal S$. When an outward radius is found closely behind an inward radius in $\mathcal S$, we call this pair {\it in-out}.
\item Let us define the following operation.
\vskip 3pt

{\bf Deletion Process:} Once the pair in-out appears in the sequence $\mathcal S$, we connect the outward radius with the inward radius  via a dashed arrow in Figure \ref{Radical-brace-operation-1}.  Delete this pair and renew the sequence $\mathcal S$. Then repeat the above operations iteratively until no pair in-out left in $\mathcal S$.
 \end{itemize}

 \item After applying the above Deletion Process, we obtain a final sequence $\mathcal S$ with all outward radii  preceding all inward radii. Finally, we translate the updated cactus-like graph  into a tree-like graph by putting the inputs (in the final sequence) on the top and outputs on the bottom.  We therefore get the $\mathbb k$-linear map
     $$B^{(i_1, \dotsc, i_j)}_{(l_1, \dotsc, l_{k-j})}(x; y_1, \dotsc, y_k)\colon  (s\overline{\Lambda})^{\otimes u}\longrightarrow  (s\overline\Lambda)^{\otimes v}\otimes \Lambda,$$ where $u$ and $v$ are respectively the numbers of the inward radii and  outward radii  in the final sequence $\mathcal S$. See  Example \ref{Example-brace1} below.
 \end{enumerate}

 \begin{figure}[h]
\centering
  \includegraphics[height=80mm]{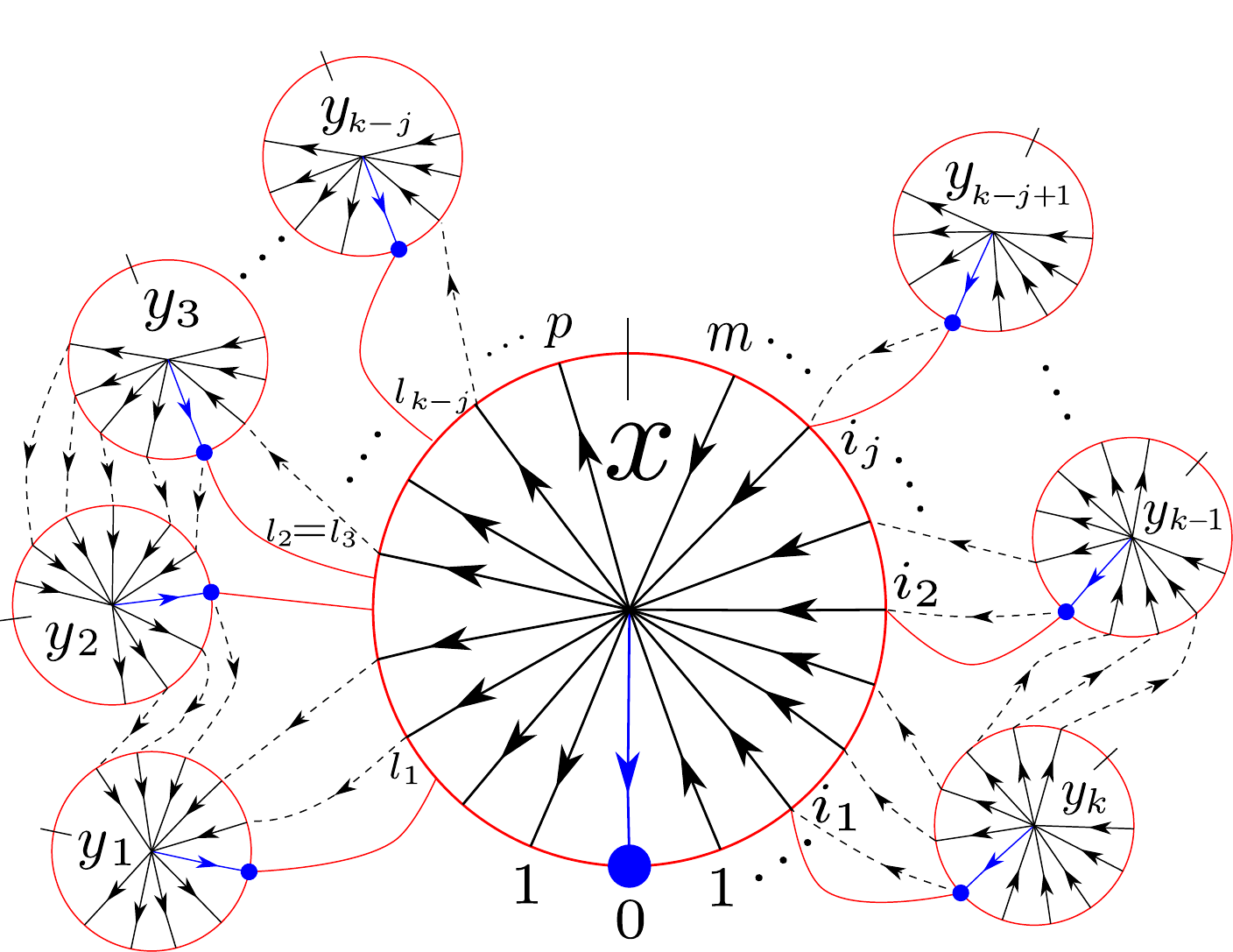}
  \caption{The summand $B^{(i_1, \dotsc, i_j)}_{(l_1, \dotsc, l_{k-j})}(x; y_1, \dotsc, y_k)$ of  $x\{y_1, \dotsc, y_k\}_R$. }
  \label{Radical-brace-operation-1}
\end{figure}

 Note that $x\{y_1, \dotsc, y_k\}_R$ is compatible with the colimit maps $\theta_{*, R}$ and thus it induces a  well-defined operation (still denoted by $-\{-, \dotsc, -\}_R$)  on $\overline{C}_{\sg, R}^*(\Lambda, \Lambda)$.
When $p=q_1=\cdots=q_k=0$, the above $x\{y_1, \dotsc, y_k\}_R$ coincides with the usual brace operation on $\overline{C}^*(\Lambda, \Lambda)$; compare (\ref{equation:brace}).

\begin{exm}\label{Example-brace1}Let
\begin{align*}
f & \in \overline{C}^{2}(\Lambda, \Omega^3_{\nc, R}(\Lambda))= \Hom((s\overline \Lambda)^{\otimes 5}, (s\overline\Lambda)^{\otimes 3}\otimes \Lambda)\\
 g_1 & \in \overline{C}^{2}(\Lambda, \Omega_{\nc, R}^1(\Lambda))=\Hom((s\overline \Lambda)^{\otimes 3}, s\overline\Lambda\otimes \Lambda)\\
  g_2  & \in \overline{C}^{0}(\Lambda, \Omega_{\nc, R}^3(\Lambda))=\Hom((s\overline \Lambda)^{\otimes 3}, (s\overline\Lambda)^{\otimes 3}\otimes \Lambda) \\
   g_3  & \in \overline{C}^{-1}(\Lambda, \Omega_{\nc, R}^3(\Lambda))=\Hom((s\overline \Lambda)^{\otimes 2}, (s\overline\Lambda)^{\otimes 3}\otimes \Lambda).
\end{align*}
 Then the operation $B_{(2)}^{(2, 4)}(f; g_1, g_2, g_3)$ is depicted in Figure~\ref{Brace-action-Sep}. It is represented by the following composition of maps (Here, we ignore the identity map $\mathbf 1_{s\overline \Lambda}^{\otimes 6}$ on the left)
$$
(\mathbf 1_{s\overline \Lambda}\otimes \overline g_1\otimes \mathbf 1_{s\overline \Lambda}\otimes \mathbf 1_{\Lambda})(\mathbf 1_{s\overline \Lambda}^{\otimes 2}\otimes f)(\overline g_2\otimes \mathbf 1_{s\overline \Lambda}^{\otimes 3})(\mathbf 1_{s\overline \Lambda} \otimes \overline  g_3\otimes \mathbf 1_{s\overline \Lambda}) \colon   (s\overline\Lambda)^{\otimes 4}\stackrel{}\longrightarrow  (s\overline\Lambda)^{\otimes 4}\otimes \Lambda
$$
where $\overline g\colon (s\overline\Lambda)^{\otimes m}\xrightarrow{g} (s\overline\Lambda)^{\otimes p}\otimes \Lambda\xrightarrow{\mathbf 1_{s\overline \Lambda}^{\otimes p}\otimes \pi} (s\overline{\Lambda})^{\otimes p+1}$ and $\pi\colon \Lambda \to s\overline \Lambda$ is the natural projection $a \mapsto s\overline a$ of degree $-1$.
\end{exm}

\begin{figure}[h]
\centering
  \includegraphics[height=53mm]{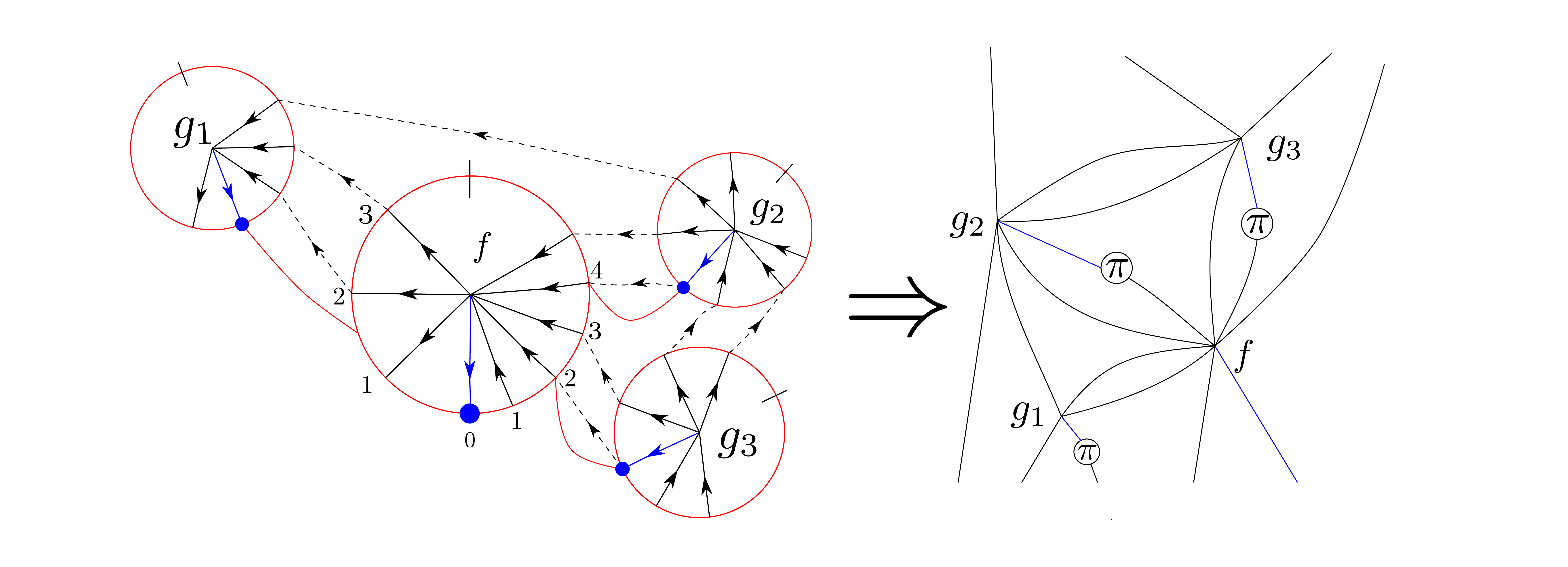}
  \caption{An example of  $B_{(2)}^{(2, 4)}(f; g_1, g_2, g_3)$.}
  \label{Brace-action-Sep}
\end{figure}

\section{$B_\infty$-quasi-isomorphisms induced by  one-point (co)extensions and bimodules}

\label{sectionforinvariance}

In this section, we prove that the (relative) singular Hochschild cochain complexes, as  $B_\infty$-algebras, are invariant under one-point (co)extensions of algebras and singular equivalences with levels.

 These invariance results are analogous to the ones in Subsection~\ref{subsec:one-point}. However, the proofs here are much harder, since the colimit construction of the singular Hochschild cochain complex is involved.

Throughout this section, $\Lambda$ and $\Pi$ will be finite dimensional $\mathbb{k}$-algebras.

\subsection{Invariance under one-point (co)extensions}

Let $E=\bigoplus_{i=1}^n  \mathbb{k}e_i\subseteq \Lambda$ be a semisimple subalgebra of $\Lambda$. Set $\overline{\Lambda}=\Lambda/{(E\cdot 1_\Lambda)}$. We have the $B_\infty$-algebra $\overline{C}_{\sg, R,  E}^*(\Lambda, \Lambda)$ of the  $E$-relative right singular Hochschild cochain complex of $\Lambda$.

Consider the one-point coextension $\Lambda'=\begin{pmatrix} \mathbb{k} & M \\ 0 & \Lambda\end{pmatrix}$ in Subsection~\ref{subsec:one-point}. Set $e'=\begin{pmatrix} 1 & 0\\ 0 & 0\end{pmatrix}$, and identify $\Lambda$ with $(1_{\Lambda'}-e')\Lambda'(1_{\Lambda'}-e')$.  We take $E'=\mathbb{k}e'\oplus E$, which is a semisimple subalgebra of $\Lambda'$. Set $\overline{\Lambda'}=\Lambda'/{(E'\cdot 1_{\Lambda'})}$.

 To consider the $E'$-relative  right singular Hochschild cochain complex $\overline{C}_{\sg, R,  E'}^*(\Lambda', \Lambda')$, we naturally  identity $\overline{\Lambda'}$ with $\overline{\Lambda}\oplus M$. Then we have a natural isomorphism for each $m \geq 1$
 \begin{align}\label{equ:nat-opce}
 (s\overline{\Lambda'})^{\otimes_{E'}m}\simeq (s\overline{\Lambda})^{\otimes_{E}m}\oplus sM\otimes_E (s\overline{\Lambda})^{\otimes_{E}m-1},
 \end{align}
 where we use the fact that $s\overline{\Lambda'}\otimes_{E'}sM=0$.
The following decomposition follows immediately from \eqref{equ:nat-opce}.
 \begin{equation}
\begin{split}
&\Hom_{\text{$E'$-$E'$}}((s \overline{\Lambda'})^{\smallotimes_{E'} m}, (s\overline{\Lambda'} )^{\smallotimes_{E'} p}\smallotimes_{E'} \Lambda') \\
\simeq{} &   \Hom_{\text{$E$-$E$}}((s \overline{\Lambda})^{\smallotimes_{E} m}, (s\overline{\Lambda} )^{\smallotimes_{E} p}\smallotimes_{E} \Lambda)  \oplus  \Hom_{\text{$\mathbb{k}$-$E$}}(sM \smallotimes_E (s\overline{\Lambda})^{\smallotimes_E {m-1}},  sM \smallotimes_E  (s\overline{\Lambda})^{\smallotimes_E{p-1}}\smallotimes_E \Lambda) \nonumber
\end{split}
\end{equation}
We take the colimits along $\theta_{p, R, E'}$ for $\Lambda'$,  and along $\theta_{p, R, E}$ for $\Lambda$ in (\ref{equ:thetaRE}). Then the above decomposition yields a restriction of complexes
$$\overline{C}_{\sg, R, E'}^*(\Lambda', \Lambda') \twoheadrightarrow \overline{C}_{\sg, R, E}^*(\Lambda, \Lambda).$$
It is routine to check that the above restriction preserves the cup products and brace operations, i.e. it is a strict $B_\infty$-morphism.

The following two lemmas show the invariance of the left and right singular Hochschild cochain complexes under one-point coextensions.

\begin{lem}\label{lem:strict-R}
Let $\Lambda'$ be the one-point coextension as above. Then the restriction map $\overline{C}_{\sg, R, E'}^*(\Lambda', \Lambda') \twoheadrightarrow \overline{C}_{\sg, R, E}^*(\Lambda, \Lambda)$ is a strict $B_\infty$-isomorphism.
\end{lem}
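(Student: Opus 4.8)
The plan is to promote the surjective strict $B_\infty$-morphism $\overline{C}_{\sg, R, E'}^*(\Lambda', \Lambda') \twoheadrightarrow \overline{C}_{\sg, R, E}^*(\Lambda, \Lambda)$ constructed above to a strict $B_\infty$-isomorphism by showing that it is bijective in each cochain degree. Indeed, these are brace $B_\infty$-algebras, and a bijective strict $B_\infty$-morphism between brace $B_\infty$-algebras is a dg algebra isomorphism compatible with the brace operations, whose inverse is therefore again compatible with the brace operations and hence, by Lemma~\ref{lemma-infinity-morphism1}, a strict $B_\infty$-morphism; so bijectivity is all that is needed. Since cohomology and the relevant forgetful functors commute with filtered colimits, it suffices to analyze, for each fixed $m$, the morphism of inductive systems $\{\overline{C}_{E'}^m(\Lambda', \Omega_{\nc, R, E'}^p(\Lambda')), \theta_{p, R, E'}\}_p \to \{\overline{C}_E^m(\Lambda, \Omega_{\nc, R, E}^p(\Lambda)), \theta_{p, R, E}\}_p$.

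First I would refine the decomposition (\ref{equ:nat-opce}). As $E' = \mathbb{k}e' \times E$ is a product of algebras, every $E'$-$E'$-bimodule splits into four blocks, and $\Hom_{E'\text{-}E'}$ vanishes between bimodules living in distinct blocks. Combining $s\overline{\Lambda'} = sM \oplus s\overline{\Lambda}$ as $E'$-$E'$-bimodules, the key vanishing $s\overline{\Lambda'} \otimes_{E'} sM = 0$ (the right $E$-action and the left $\mathbb{k}e'$-action being incompatible), and $\Lambda' = \mathbb{k}e' \oplus M \oplus \Lambda$, one obtains for all $m, p$ a direct sum
$$\overline{C}_{E'}^m(\Lambda', \Omega_{\nc, R, E'}^p(\Lambda')) = \overline{C}_E^m(\Lambda, \Omega_{\nc, R, E}^p(\Lambda)) \oplus N^m_p,$$
where for $p \geq 1$ one has $N^m_p = \Hom_{\mathbb{k}e'\text{-}E}\bigl(sM \otimes_E (s\overline{\Lambda})^{\otimes_E m+p-1},\ sM \otimes_E (s\overline{\Lambda})^{\otimes_E p-1} \otimes_E \Lambda\bigr)$, with the obvious adjustments at $p = 0$, plus an extra copy of $\mathbb{k} = \Hom_{\mathbb{k}e'\text{-}\mathbb{k}e'}(\mathbb{k}e', \mathbb{k}e')$ when $m = p = 0$. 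Under this decomposition the restriction map at each finite stage is exactly the projection onto the first summand, which visibly commutes with the cup products and brace operations.

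The heart of the matter is to trace the colimit map $\theta_{p, R, E'} = \mathbf{1}_{s\overline{\Lambda'}} \otimes_{E'} (-)$ through this decomposition: I expect it to be block lower-triangular with vanishing lower-right corner. Concretely, on $\overline{C}_E^m(\Lambda, \Omega_{\nc, R, E}^p(\Lambda))$ it restricts, via $s\overline{\Lambda'} = sM \oplus s\overline{\Lambda}$, to $\theta_{p, R, E} = \mathbf{1}_{s\overline{\Lambda}} \otimes_E (-)$ (into the first summand of the next stage) plus an extra component $\mathbf{1}_{sM} \otimes_E (-)$ landing in $N^m_{p+1}$; while on $N^m_p$, and on the extra $\mathbb{k}$ when $m = p = 0$, it vanishes, once more because $s\overline{\Lambda'} \otimes_{E'} sM = 0$. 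Granting this, the projections commute with the transition maps (so the induced map on colimits is well defined and surjective), and the kernel at stage $p$ — namely $N^m_p$, together with the extra $\mathbb{k}$ — is annihilated by one further application of $\theta_{\bullet, R, E'}$; hence every class in the kernel of the colimit map vanishes, the restriction map is bijective in each degree, and therefore a strict $B_\infty$-isomorphism. I expect the sole real difficulty to be organizational: setting up the four-block decomposition of $E'$-$E'$-bimodules, propagating it through the relative tensor powers $(s\overline{\Lambda'})^{\otimes_{E'} m+p}$ and through $\Omega_{\nc, R, E'}^p(\Lambda')$ and the maps $\theta_{p, R, E'}$, and handling the low-degree terms at $p = 0$ and at $m = p = 0$ uniformly; beyond the vanishing $s\overline{\Lambda'} \otimes_{E'} sM = 0$ there is no new conceptual ingredient.
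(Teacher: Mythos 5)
Your proposal is correct and follows essentially the same route as the paper: both hinge on the vanishing $s\overline{\Lambda'}\otimes_{E'}sM=0$, which forces $\theta_{p,R,E'}$ to kill the complementary summand $\Hom_{\mathbb{k}\text{-}E}(sM\otimes_E(s\overline{\Lambda})^{\otimes_E\bullet}, sM\otimes_E(s\overline{\Lambda})^{\otimes_E\bullet}\otimes_E\Lambda)$, so that after passing to the colimit the restriction becomes an honest isomorphism, hence a strict $B_\infty$-isomorphism. Your write-up merely makes explicit what the paper leaves implicit (the block lower-triangular form of the transition maps, the edge cases at $p=0$ and $m=p=0$, and the kernel-dies-after-one-step colimit argument), so there is no substantive difference.
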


\begin{proof}
The crucial fact is that $s\overline{\Lambda'}\otimes_{E'}sM=0$. Then by the very definition, $\theta_{p, R, E'}$ vanishes on the following component
$$\Hom_{\text{$\mathbb{k}$-$E$}}(sM \otimes_E (s\overline{\Lambda})^{\otimes_E {m-1}},  sM \otimes_E  (s\overline{\Lambda})^{\otimes_E{p-1}}\otimes_E \Lambda).$$
It follows that taking the colimits, the restriction  becomes an actual  isomorphism.
\end{proof}

We now consider the $E$-relative left  singular Hochschild cochain complex $\overline{C}_{\sg, L,  E}^*(\Lambda, \Lambda)$, and the $E'$-relative left  singular Hochschild cochain complex $\overline{C}_{\sg, L,  E'}^*(\Lambda', \Lambda')$. Using the natural isomorphism (\ref{equ:nat-opce}), we have a decomposition
 \begin{align}\label{decompositioncoextension}
&\Hom_{\text{$E'$-$E'$}}((s \overline{\Lambda'})^{\smallotimes_{E'} m}, \Lambda'\smallotimes_{E'}(s\overline{\Lambda'} )^{\smallotimes_{E'} p}) \nonumber \\
\simeq{} &   \Hom_{\text{$E$-$E$}}((s \overline{\Lambda})^{\smallotimes_{E} m}, \Lambda\smallotimes_E (s\overline{\Lambda} )^{\smallotimes_{E} p})  \oplus  \Hom_{\text{$\mathbb{k}$-$E$}}(sM \smallotimes_E (s\overline{\Lambda})^{\smallotimes_E {m-1}}, \mathbb ke'\smallotimes  sM \smallotimes_E  (s\overline{\Lambda})^{\smallotimes_E{p-1}})\nonumber\\
& \oplus  \Hom_{\text{$\mathbb{k}$-$E$}}(sM \smallotimes_E (s\overline{\Lambda})^{\smallotimes_E {m-1}}, M \smallotimes_E  (s\overline{\Lambda})^{\smallotimes_E{p}}). \end{align}
Similar as above, the decomposition will give rise to a restriction of complexes
$$\overline{C}_{\sg, L, E'}^*(\Lambda', \Lambda') \twoheadrightarrow  \overline{C}_{\sg, L, E}^*(\Lambda, \Lambda) ,$$
which is a strict $B_\infty$-morphism.

Unlike the isomorphism in Lemma~\ref{lem:strict-R}, this restriction is only a quasi-isomorphism.

\begin{lem}\label{lem:strict-L-quasi}
Let $\Lambda'$ be the one-point coextension. Then the above  restriction map $ \overline{C}_{\sg, L, E'}^*(\Lambda', \Lambda') \twoheadrightarrow  \overline{C}_{\sg, L, E}^*(\Lambda, \Lambda)$ is a strict $B_\infty$-quasi-isomorphism.
\end{lem}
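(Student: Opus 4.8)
The plan is to leverage the fact, established just above the statement, that the restriction map---call it $\rho$---is a \emph{surjective} strict $B_\infty$-morphism; hence it suffices to prove that its kernel $K^*$, regarded as a subcomplex, is acyclic. The first step is to identify $K^*$ intrinsically. Reading off the decomposition \eqref{decompositioncoextension}, one checks that $\rho$ is exactly the map induced, by functoriality of the colimit construction in the coefficient bimodule, from the surjective algebra homomorphism $p\colon \Lambda'\twoheadrightarrow \Lambda$, $\left(\begin{smallmatrix} a & m\\ 0 & b\end{smallmatrix}\right)\mapsto b$; its kernel, as a $\Lambda'$-$\Lambda'$-bimodule, is $e'\Lambda'=\left(\begin{smallmatrix}\mathbb{k} & M\\ 0 & 0\end{smallmatrix}\right)$. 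The colimit construction $\overline{C}_{\sg, L, E'}^*(\Lambda',-)$ with coefficients (defined by the obvious modification of Subsection~\ref{subsec:rel-sinHo}) is exact in the coefficient, being built from tensor products over the semisimple algebra $E'$, Hom-spaces out of $(E')^e$-projective modules, and a filtered colimit. It therefore sends the short exact sequence $0\to e'\Lambda'\to \Lambda'\to \Lambda\to 0$ of $\Lambda'$-$\Lambda'$-bimodules to a short exact sequence of complexes
\begin{equation*}
0\longrightarrow \overline{C}_{\sg, L, E'}^*(\Lambda', e'\Lambda')\longrightarrow \overline{C}_{\sg, L, E'}^*(\Lambda', \Lambda')\stackrel{\rho}\longrightarrow \overline{C}_{\sg, L, E'}^*(\Lambda', \Lambda)\longrightarrow 0,
\end{equation*}
where the rightmost term is canonically identified with $\overline{C}_{\sg, L, E}^*(\Lambda, \Lambda)$, because in the source and target all summands containing a factor $sM$ are annihilated against the $\Lambda$-coefficient ($sM$ being supported on the $e'$-side). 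Hence $K^*\cong \overline{C}_{\sg, L, E'}^*(\Lambda', e'\Lambda')$.

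The key point is then that $e'\Lambda'$ is a \emph{projective} $(\Lambda')^e$-module. Indeed, $e'\Lambda'e'=\mathbb{k}\cdot 1$, so the multiplication map $\Lambda'e'\otimes_{\mathbb{k}}e'\Lambda'\to e'\Lambda'$ is an isomorphism of $\Lambda'$-$\Lambda'$-bimodules, and its source is a projective $(\Lambda')^e$-module. In particular $e'\Lambda'$ is isomorphic to $0$ in $\mathbf{D}_{\sg}((\Lambda')^e)$, and it is projective as a left and as a right $\Lambda'$-module.

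With these identifications in place I would conclude using the coefficient-version of Lemma~\ref{lem:HH-colimit} (equivalently of \cite[Theorem~3.6]{Wan1}): for a $\Lambda'$-$\Lambda'$-bimodule $N$ which is projective on both sides, the inductive system $\{\Omega_{\nc, L, E'}^p(\Lambda', N)\}_p$ together with its connecting maps models the syzygies of $N$ in $\mathbf{D}_{\sg}((\Lambda')^e)$---the relevant short exact sequence, the left analogue of \eqref{shortexactsequence00} with coefficients, has a projective bimodule in the middle precisely because $N$ is projective on one side---so that
$$H^n\bigl(\overline{C}_{\sg, L, E'}^*(\Lambda', N)\bigr)\cong \Hom_{\mathbf{D}_{\sg}((\Lambda')^e)}(\Lambda', \Sigma^n N).$$
Taking $N=e'\Lambda'$ and using the previous paragraph gives $H^n(K^*)=0$ for all $n$. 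Thus $K^*$ is acyclic and $\rho$ is a strict $B_\infty$-quasi-isomorphism.

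I expect the main obstacle to be the bookkeeping in the identification step---verifying that after passing to colimits the map of \eqref{decompositioncoextension} is genuinely the one induced by $p\colon \Lambda'\to \Lambda$ and that it identifies $K^*$ with $\overline{C}_{\sg, L, E'}^*(\Lambda', e'\Lambda')$ as a $B_\infty$-algebra---together with setting up the coefficient-version of Lemma~\ref{lem:HH-colimit} cleanly; this is exactly the purpose of the general colimit complex for bimodules alluded to in the Introduction. For the special coefficient $N=e'\Lambda'$ one may alternatively argue by hand that the noncommutative forms $\Omega_{\nc, L, E'}^p(\Lambda', e'\Lambda')$ become projective bimodules for $p\geq 1$, so that the colimit stabilizes to $0$ directly, bypassing the general statement.
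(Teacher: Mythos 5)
Your proposal is correct in outline, but it takes a genuinely different route from the paper. The paper also reduces the statement to acyclicity of the kernel, yet proves this by a direct computation: after taking colimits of the decomposition \eqref{decompositioncoextension}, the kernel is $X^*\oplus Y^*$ with $X^*\simeq \Sigma(Y^*)$, and the induced differential is the matrix $\left(\begin{smallmatrix}\Sigma(\delta_Y) & 0\\ \mathbf{1} & \delta_Y\end{smallmatrix}\right)$ because the off-diagonal entry $\theta$ in \eqref{comcom} becomes the identity in the colimit; so the kernel is the mapping cone of $\mathbf{1}_{Y^*}$ and hence acyclic. You instead identify the kernel with a with-coefficients singular Hochschild complex $\overline{C}_{\sg,L,E'}^*(\Lambda',e'\Lambda')$ and conclude from the projectivity of the bimodule $e'\Lambda'$; your observation $\Lambda'e'\otimes_{\mathbb{k}}e'\Lambda'\simeq e'\Lambda'$ is correct, and the identification of the quotient term with $\overline{C}_{\sg,L,E}^*(\Lambda,\Lambda)$ (and of the resulting surjection with the paper's restriction) does hold, because $M$ and $\Lambda$ have complementary supports at $e'$. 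What your route buys is a conceptual explanation and extra generality: the same argument disposes of any coefficient bimodule that is projective on one side. What it costs is exactly the ingredient you flag but do not carry out: the coefficient version of Lemma~\ref{lem:HH-colimit}, namely $H^n(\overline{C}_{\sg,L,E'}^*(\Lambda',N))\simeq \Hom_{\mathbf{D}_{\rm sg}((\Lambda')^e)}(\Lambda',\Sigma^n N)$ for one-sided projective $N$, together with the verification that the differential induced on the kernel is the Hochschild differential for the $\Omega$-type bimodule structure on $e'\Lambda'\otimes_{E'}(s\overline{\Lambda'})^{\otimes_{E'}p}$ and that the transition maps lift the connecting morphisms; these checks are of the same kind as Lemma~\ref{lem:HH-colimit} and Lemma~\ref{lem:computesgspace} but are not available in the paper in the form you need, whereas the paper's matrix computation avoids them entirely. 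Your proposed shortcut also works, since $e'\Lambda'\otimes_{\Lambda'}\Omega_{\nc,L,E'}^{p}(\Lambda')\simeq \Lambda'e'\otimes_{\mathbb{k}}e'\Omega_{\nc,L,E'}^{p}(\Lambda')$ is a projective bimodule (the syzygies being projective on either side), but phrase it carefully: it is the cohomology of the terms in each fixed total degree that eventually vanishes, not the complexes themselves.
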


\begin{proof}
It suffices to show that the kernel  of the restriction map is acyclic. For this, we observe that  the decomposition \eqref{decompositioncoextension} induces a decomposition of {\it graded vector spaces}
\begin{align}\label{decompositionsg}
\overline{C}_{\sg, L, E'}^*(\Lambda', \Lambda') \simeq  \overline{C}_{\sg, L, E}^*(\Lambda, \Lambda) \oplus X^* \oplus Y^*.
\end{align}
Here, the $(m-p)$-th component $X^{m-p}$ of $X^*$ is the colimit along the maps
$$
 \Hom_{\text{$\mathbb k $-$E$}}(sM \smallotimes_E (s\overline{\Lambda})^{\smallotimes_E {m-1}}, \mathbb k  e'\smallotimes  sM \smallotimes_E  (s\overline{\Lambda})^{\smallotimes_E{p-1}}) \to  \Hom_{\text{$\mathbb{k}$-$E$}}(sM \smallotimes_E (s\overline{\Lambda})^{\smallotimes_E {m}}, \mathbb k  e'\smallotimes  sM \smallotimes_E  (s\overline{\Lambda})^{\smallotimes_E{p}})
$$
which sends $f$ to $f \otimes_E \mathbf {1}_{s\overline\Lambda}$. Similarly,  the $(m-p)$-th component $Y^{m-p}$ of $Y^*$ is the colimit along the maps
$$
 \Hom_{\text{$\mathbb{k}$-$E$}}(sM \smallotimes_E (s\overline{\Lambda})^{\smallotimes_E {m-1}},   M \smallotimes_E  (s\overline{\Lambda})^{\smallotimes_E{p}}) \to  \Hom_{\text{$\mathbb{k}$-$E$}}(sM \smallotimes_E (s\overline{\Lambda})^{\smallotimes_E {m}}, M \smallotimes_E  (s\overline{\Lambda})^{\smallotimes_E{p+1}})
$$
sending $f$ to $f \otimes_E \mathbf{1}_{s\overline\Lambda}$.

We observe that $X^*$ is, as a graded vector space, isomorphic to the $1$-shift of $Y^*$ by identifying $\mathbb k  e'\otimes sM$ with $sM$. Then we have
\begin{equation}\label{secondthirdcomponent}
X^* \simeq \Sigma(Y^*).
\end{equation}

The differential of $\overline{C}_{\sg, L, E'}^*(\Lambda', \Lambda')$ induces a differential on the decomposition \eqref{decompositionsg}. Namely we have the following commutative diagram.
\begin{equation}
\label{comcom}
\xymatrix@C=0.9pc@R=4.5pc{
\Hom_{\text{$E'$-$E'$}}(s{\overline{\Lambda'}}^{\smallotimes_{E'} m}, \Lambda'\smallotimes_{E'} s{\overline{\Lambda'}}^{\smallotimes_{E'} p})
 \ar[d]_-{\delta_{\Lambda'}}\ar[r]^-{\sim} & {\begin{matrix} \Hom_{\text{$E$-$E$}}((s \overline{\Lambda})^{\smallotimes_{E} m}, \Lambda\smallotimes_E (s\overline{\Lambda} )^{\smallotimes_{E} p}) \\  \oplus \Hom_{\text{$\mathbb k $-$E$}}(sM  \smallotimes_E  (s\overline{\Lambda})^{\smallotimes_E {m-1}}, \mathbb k  e'\smallotimes  sM  \smallotimes_E   (s\overline{\Lambda})^{\smallotimes_E{p-1}})  \\ \oplus  \Hom_{\text{$\mathbb k $-$E$}}(sM \smallotimes_E (s\overline{\Lambda})^{\smallotimes_E {m-1}}, M \smallotimes_E  (s\overline{\Lambda})^{\smallotimes_E{p}})\end{matrix}} \ar[d]^-{\tiny{\begin{pmatrix}
\delta_\Lambda & 0 &0 \\
0 & \Sigma(\delta_{Y}) &0 \\
\widetilde \delta & \theta & \delta_Y
\end{pmatrix}}} \\
\Hom_{\text{$E'$-$E'$}}(s{\overline{\Lambda'}}^{\otimes_{E'} {m+1}}, \Lambda'\otimes_{E'}s\overline{\Lambda'}^{\otimes_{E'} {p}}) \ar[r]^-{\sim}& {\begin{matrix} \Hom_{\text{$E$-$E$}}((s \overline{\Lambda})^{\smallotimes_{E} m+1}, \Lambda\smallotimes_E (s\overline{\Lambda} )^{\smallotimes_{E} p}) \\ \oplus  \Hom_{\text{$\mathbb k $-$E$}}(sM \smallotimes_E (s\overline{\Lambda})^{\smallotimes_E {m}}, \mathbb k  e' \smallotimes  sM \smallotimes_E  (s\overline{\Lambda})^{\smallotimes_E{p-1}}) \\ \oplus  \Hom_{\text{$\mathbb k $-$E$}}(sM \smallotimes_E (s\overline{\Lambda})^{\smallotimes_E {m}}, M \smallotimes_E  (s\overline{\Lambda})^{\smallotimes_E{p}})\end{matrix}}}
 \end{equation}
 where we write elements in the decomposition \eqref{decompositionsg} as column vectors.

Let us explain the entries of the $3\times 3$-matrix in \eqref{comcom}.
\begin{enumerate}[(i)]
\item We observe that $\delta_{\Lambda'}$ restricts to a differential, denoted by $\delta_Y$, of $Y^*$. That is, $(Y^*, \delta_Y)$ is a cochain complex. The differential on the second component $X^*$ is given by $\Sigma(\delta_Y)$ under the natural isomorphism $X^* \simeq \Sigma(Y^*)$ in \eqref{secondthirdcomponent}. 

\item The differential $\delta_\Lambda$ is the external differential of $\overline C_{E}^*(\Lambda,\Lambda \otimes_{E}s\overline{\Lambda}^{\otimes_{E} {p}})$.
\item The map $\widetilde \delta$ is given as follows: for any $f \in \Hom_{E\text{-}E}((s \overline{\Lambda})^{\smallotimes_{E} m}, \Lambda\smallotimes_E (s\overline{\Lambda} )^{\smallotimes_{E} p})$, the element $\widetilde \delta (f) \in  \Hom_{\text{$\mathbb k $-$E$}}(sM \smallotimes_E (s\overline{\Lambda})^{\smallotimes_E {m}}, M \smallotimes_E  (s\overline{\Lambda})^{\smallotimes_E{p}})$ is defined by 
$$
\widetilde\delta(f) (sx \otimes_E s\overline{a}_{1, m}) = -(-1)^{m-p}x \otimes_\Lambda f(s\overline{a}_{1, m}).
$$
\item The map $\theta$ is given as follows: for any $f  \in \Hom_{\mathbb k\text{-}E}(sM \smallotimes_E (s\overline{\Lambda})^{\smallotimes_E {m-1}}, \mathbb k  e'\smallotimes  sM \smallotimes_E  (s\overline{\Lambda})^{\smallotimes_E{p-1}})$, the corresponding element  $\theta(f)\in \Hom_{\text{$\mathbb k $-$E$}}(sM \smallotimes_E (s\overline{\Lambda})^{\smallotimes_E {m}}, M \smallotimes_E  (s\overline{\Lambda})^{\smallotimes_E{p}})$ is defined by
$$
\theta(f)(sx \otimes_E s\overline{a}_{1, m}) =   f(sx \otimes_E s\overline{a}_{1, m-1})\otimes_E s\overline a_m.
$$
 Here,  we use the natural isomorphism  $\mathbb k  e' \otimes sM \rightarrow M$ of degree one, and thus $\theta$ is a map of degree one. We observe that after taking the colimits,  $\theta$ becomes the identity map
$$
{\bf 1}\colon X^*  \to Y^*, \quad \Sigma(y) \mapsto y
$$
using the identification \eqref{secondthirdcomponent}.

\end{enumerate}

 Thus, the kernel of the restriction map is identified with  the subcomplex
$$
 \left( X^* \oplus Y^*,\left(\begin{smallmatrix}
 \Sigma(\delta_Y) & 0\\
 \mathbf{1} & \delta_Y
\end{smallmatrix}\right)\right),
$$
which is exactly the mapping cone of the identity of the complex $(Y^*, \delta_Y)$.  It follows that this kernel is acyclic, as required.
\end{proof}

\begin{rem}
The decomposition \eqref{decompositionsg}  induces an embedding of graded vector spaces
$$ \overline{C}_{\sg, L, E}^*(\Lambda, \Lambda) \longrightarrow \overline{C}_{\sg, L, E'}^*(\Lambda', \Lambda').
$$
However, it is in general {\it not} a cochain map,  since the differential $\widetilde{\delta}$ in the matrix of \eqref{comcom} is nonzero.
\end{rem}

  Let us consider the one-point extension $\Lambda''=\begin{pmatrix} \Lambda & N \\
                                                                     0 & \mathbb{k}\end{pmatrix}$ in Subsection~\ref{subsec:one-point}.
  We set $e''=\begin{pmatrix}0 & 0\\ 0 & 1\end{pmatrix}$ and $E''=E\oplus \mathbb{k}e''\subseteq \Lambda''$. Set $\overline{\Lambda''}=\Lambda''/{(E''\cdot 1_{\Lambda''})}$, which is identified with $\overline{\Lambda}\oplus N$.

  We first consider the  $E$-relative left singular Hochschild cochain complex $\overline{C}_{\sg, L,  E}^*(\Lambda, \Lambda)$ and $E''$-relative left  singular Hochschild cochain complex $\overline{C}_{\sg, L,  E''}^*(\Lambda'', \Lambda'')$.

  The following result is analogous to Lemma~\ref{lem:strict-R}.
  \begin{lem}
  \label{onepointext}
  Let $\Lambda''$ be the one-point extension as above. Then we have a strict $B_\infty$-isomorphism
  $$  \overline{C}_{\sg, L, E''}^*(\Lambda'', \Lambda'')  \longrightarrow  \overline{C}_{\sg, L, E}^*(\Lambda, \Lambda).$$
  \end{lem}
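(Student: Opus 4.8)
The strategy closely mirrors the proof of Lemma~\ref{lem:strict-R}, but now using the \emph{left} noncommutative differential forms rather than the right ones. The key structural fact to exploit is the vanishing $sM\otimes_{E''}s\overline{\Lambda''}=0$, which holds because $M$ is a left $\Lambda$-module on which $e''$ acts as zero (viewing $M$ as a $\Lambda$-$\mathbb k$-bimodule, there is no arrow entering the new vertex $e''$, so the relevant tensor product collapses). This is the one-point \emph{extension} analogue of the identity $s\overline{\Lambda'}\otimes_{E'}sM=0$ used for the coextension. Concretely, first I would establish, for each $m\geq 1$, a natural isomorphism of $E''$-$E''$-bimodules
$$(s\overline{\Lambda''})^{\otimes_{E''}m}\simeq (s\overline{\Lambda})^{\otimes_E m}\oplus (s\overline{\Lambda})^{\otimes_E m-1}\otimes_E sN,$$
completely parallel to \eqref{equ:nat-opce}; the crucial point is that only the summand with $sN$ appearing on the \emph{rightmost} slot survives, precisely because $sN\otimes_{E''}s\overline{\Lambda''}=0$ kills any configuration in which $sN$ is followed by a further tensor factor from $s\overline{\Lambda''}$.

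Next I would feed this isomorphism into the Hom-spaces defining the $E''$-relative left singular Hochschild cochain complex. Using $\Omega^p_{\nc,L,E''}(\Lambda'')=\Lambda''\otimes_{E''}(s\overline{\Lambda''})^{\otimes_{E''}p}$ and the above decomposition, together with the idempotent decomposition $1_{\Lambda''}=1_\Lambda+e''$ to split off $\Lambda''$-coefficients, one obtains a direct-sum decomposition of
$$\Hom_{E''\text{-}E''}\big((s\overline{\Lambda''})^{\otimes_{E''}m},\ \Lambda''\otimes_{E''}(s\overline{\Lambda''})^{\otimes_{E''}p}\big)$$
into the corresponding $E$-relative Hom-space for $\Lambda$ plus ``extra'' summands involving $sN$. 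The restriction map $\overline{C}^*_{\sg,L,E''}(\Lambda'',\Lambda'')\twoheadrightarrow\overline{C}^*_{\sg,L,E}(\Lambda,\Lambda)$ is then the projection onto the first summand, and it is routine (just as in the paragraph preceding Lemma~\ref{lem:strict-R}) to verify that it respects the cup products $\cup_L$ and brace operations $-\{-,\dots,-\}_L$, hence is a strict $B_\infty$-morphism by Lemma~\ref{lemma-infinity-morphism1}.

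The main step is then to check that, after passing to the colimit along the maps $\theta_{p,L,E''}$ of \eqref{equ:thetaLE} (which send $f\mapsto f\otimes_{E''}\mathbf 1_{s\overline{\Lambda''}}$), the ``extra'' summands become zero, so that the restriction becomes an honest isomorphism. Here is where the extension case is actually \emph{cleaner} than the coextension case (Lemma~\ref{lem:strict-L-quasi}): since the colimit map appends a factor $s\overline{\Lambda''}$ \emph{on the right}, and since $sN\otimes_{E''}s\overline{\Lambda''}=0$, the map $\theta_{p,L,E''}$ annihilates every summand of the target in which an $sN$ appears on the rightmost slot --- but by the decomposition above these are \emph{all} of the extra summands. (Contrast this with Lemma~\ref{lem:strict-L-quasi}, where for the coextension the extra summand $\mathbb ke'\otimes sM\otimes_E(s\overline\Lambda)^{\otimes p-1}$ is \emph{not} killed by $\theta_{p,L,E}$ because $sM$ sits on the left, forcing one to argue only quasi-isomorphism via a mapping cone.) Therefore in the colimit only the $\overline{C}^*_{\sg,L,E}(\Lambda,\Lambda)$-part survives, and the strict $B_\infty$-morphism is a strict $B_\infty$-isomorphism. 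The one point requiring a little care --- and the only genuine obstacle --- is to confirm that the differential of $\overline{C}^*_{\sg,L,E''}(\Lambda'',\Lambda'')$ does not mix the surviving summand with the vanishing ones in a way that would obstruct the colimit argument; but since the vanishing summands literally become $0$ after the colimit, the induced differential on the colimit is automatically that of $\overline{C}^*_{\sg,L,E}(\Lambda,\Lambda)$, and the claim follows exactly as in the proof of Lemma~\ref{lem:strict-R}.
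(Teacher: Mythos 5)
Your proposal is correct and takes essentially the same route as the paper's proof: the same decomposition of $(s\overline{\Lambda''})^{\otimes_{E''}m}$ with $sN$ confined to the rightmost slot, the same crucial vanishing $sN\otimes_{E''}s\overline{\Lambda''}=0$, and the same observation that the extra Hom-summands (whose targets end in the $e''$-part) are annihilated by $\theta_{p,L,E''}$ and hence die in the colimit, so the restriction is a strict $B_\infty$-isomorphism. The only blemish is the slip ``$sM\otimes_{E''}s\overline{\Lambda''}=0$'' in your opening sentence, where $N$ is meant, as your displayed decomposition and the rest of your argument correctly use.
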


  \begin{proof}
  The proof is completely similar to that of Lemma~\ref{lem:strict-R}. We have a similar decomposition
   \begin{equation}
\begin{split}
&\Hom_{\text{$E''$-$E''$}}((s \overline{\Lambda''})^{\smallotimes_{E''} m}, \Lambda'' \smallotimes_{E''} (s\overline{\Lambda''} )^{\smallotimes_{E''} p}) \\
\simeq{} &   \Hom_{\text{$E$-$E$}}((s \overline{\Lambda})^{\smallotimes_{E} m}, \Lambda \smallotimes_{E} (s\overline{\Lambda} )^{\smallotimes_{E} p})  \oplus  \Hom_{\text{$E$-$\mathbb{k}$}}((s\overline{\Lambda})^{\smallotimes_E {m-1}} \smallotimes_E sN, \Lambda \smallotimes_E  (s\overline{\Lambda})^{\smallotimes_E{p-1}}) \smallotimes_E sN). \nonumber
\end{split}
\end{equation}
We observe the crucial fact $sN\otimes_{E''}s\overline{\Lambda''}=0$. Then taking the colimit along $\theta_{p, L, E''}$ in (\ref{equ:thetaLE}), the above rightmost component will vanish. This gives rise to the desired $B_\infty$-isomorphism.
  \end{proof}

The following result is analogous to Lemma~\ref{lem:strict-L-quasi}. We omit the same argument.

\begin{lem}
Let $\Lambda''$ be the one-point extension as above.  Then the obvious restriction
  $$   \overline{C}_{\sg, R, E''}^*(\Lambda'', \Lambda'')  \longrightarrow  \overline{C}_{\sg, R, E}^*(\Lambda, \Lambda)$$
is a strict $B_\infty$-quasi-isomorphism. \hfill $\square$
\end{lem}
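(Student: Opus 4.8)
The statement to be proved is the analogue of Lemma~\ref{lem:strict-L-quasi} for the one-point extension $\Lambda''$ and the \emph{right} singular Hochschild cochain complexes: the obvious restriction map $\overline{C}_{\sg, R, E''}^*(\Lambda'', \Lambda'') \to \overline{C}_{\sg, R, E}^*(\Lambda, \Lambda)$ is a strict $B_\infty$-quasi-isomorphism. The plan is to mirror the proof of Lemma~\ref{lem:strict-L-quasi} essentially verbatim, with the roles of ``left'' and ``right'' noncommutative differential forms interchanged. First I would record the analogue of the natural isomorphism \eqref{equ:nat-opce}: identifying $\overline{\Lambda''}$ with $\overline{\Lambda}\oplus N$ and using the crucial vanishing $sN \otimes_{E''} s\overline{\Lambda''}=0$, one gets for each $m\geq 1$ a decomposition $(s\overline{\Lambda''})^{\otimes_{E''} m}\simeq (s\overline{\Lambda})^{\otimes_E m}\oplus (s\overline{\Lambda})^{\otimes_E m-1}\otimes_E sN$. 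Plugging this into $\Hom_{E''\text{-}E''}((s\overline{\Lambda''})^{\otimes_{E''}m}, (s\overline{\Lambda''})^{\otimes_{E''}p}\otimes_{E''}\Lambda'')$ and expanding both the source and the coefficient bimodule yields a three-term decomposition of graded vector spaces, analogous to \eqref{decompositioncoextension}, with summands of the form $\Hom_{E\text{-}E}$ (the $\Lambda$-part), $\Hom_{E\text{-}\mathbb{k}}(\,\cdot\, , (s\overline{\Lambda})^{\otimes_E p-1}\otimes_E sN\otimes e'')$, and $\Hom_{E\text{-}\mathbb{k}}(\,\cdot\, , (s\overline{\Lambda})^{\otimes_E p}\otimes_E N)$.

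Next I would pass to the colimit along $\theta_{p,R,E''}$ (resp. $\theta_{p,R,E}$) in \eqref{equ:thetaRE}, obtaining a decomposition of graded vector spaces $\overline{C}_{\sg, R, E''}^*(\Lambda'', \Lambda'') \simeq \overline{C}_{\sg, R, E}^*(\Lambda, \Lambda)\oplus X^*\oplus Y^*$, exactly as in \eqref{decompositionsg}, where $X^*$ is the colimit of the $e''$-twisted Hom-spaces and $Y^*$ the colimit of the $N$-valued ones, and where $X^*\simeq \Sigma(Y^*)$ via the degree-one identification $e''\otimes sN\cong N$ (the right-handed analogue of \eqref{secondthirdcomponent}). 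One then checks that the restriction map kills $X^*\oplus Y^*$ and is the identity on the first summand, and that it is a strict $B_\infty$-morphism: this is routine because the cup product $\cup_R$ and the brace operation $-\{-,\dotsc,-\}_R$ visibly restrict along the inclusion $\overline{\Lambda}\hookrightarrow \overline{\Lambda''}$, so Lemma~\ref{lemma-infinity-morphism1} applies. Finally I would analyze the differential induced on the decomposition, writing it as a $3\times 3$ lower-triangular matrix as in \eqref{comcom}; the only subtlety is the shape of the connecting entries $\widetilde\delta$ and $\theta$. By the same computation as in Lemma~\ref{lem:strict-L-quasi} (now using the right module structure on $\Omega^p_{\nc, R}$ and the external differential $d_{ex}$ of $\overline{\Barr}_E(\Lambda)$), $\theta$ becomes, after taking colimits, the identity $\mathbf{1}\colon X^*\to Y^*$ under $X^*\simeq \Sigma(Y^*)$. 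Hence the kernel of the restriction map is isomorphic to $\bigl(X^*\oplus Y^*, \left(\begin{smallmatrix}\Sigma(\delta_Y) & 0\\ \mathbf{1} & \delta_Y\end{smallmatrix}\right)\bigr)$, the mapping cone of $\mathbf{1}_{(Y^*,\delta_Y)}$, which is acyclic; therefore the restriction is a quasi-isomorphism.

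The only genuine point requiring care—and the step I expect to be the main (minor) obstacle—is verifying that after passing to the colimit the lower-left connecting term reduces to the identity $X^*\xrightarrow{\mathbf 1}Y^*$ and that the remaining off-diagonal term $\widetilde\delta$, while nonzero, does not affect acyclicity because of the lower-triangular shape. Concretely one must track the signs coming from the Koszul rule through the right-handed conventions of Subsection~\ref{subsec:sHcc} and confirm that the analogue of entry (iv) in the proof of Lemma~\ref{lem:strict-L-quasi} holds with ``right'' forms; this is a bookkeeping exercise with no conceptual difficulty, so I would state it as ``by an argument entirely analogous to the proof of Lemma~\ref{lem:strict-L-quasi}, with left noncommutative differential forms replaced by right ones'' and only spell out the decomposition and the mapping-cone conclusion, exactly as the excerpt does for the dual statement. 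As in the excerpt, I would also add the companion remark that the inclusion $\overline{C}_{\sg, R, E}^*(\Lambda,\Lambda)\hookrightarrow \overline{C}_{\sg, R, E''}^*(\Lambda'',\Lambda'')$ of graded vector spaces coming from the splitting is in general not a cochain map because $\widetilde\delta\neq 0$.
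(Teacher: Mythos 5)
Your proposal is correct and takes essentially the same route as the paper: the paper omits this proof precisely because it is "the same argument" as Lemma~\ref{lem:strict-L-quasi}, and your write-up is exactly that dualization — the decomposition of $(s\overline{\Lambda''})^{\otimes_{E''}m}$ via the crucial vanishing $sN\otimes_{E''}s\overline{\Lambda''}=0$, the resulting three-term splitting of graded vector spaces, the lower-triangular differential with nonzero $\widetilde\delta$, and the identification of the kernel of the restriction with the mapping cone of the identity on $(Y^*,\delta_Y)$, hence acyclic. The points you flag (that the connecting entry becomes the identity after the colimit, and that strictness follows since $\cup_R$ and the brace operation visibly restrict) are exactly the checks implicit in the paper's omitted argument.
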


\subsection{$B_\infty$-quasi-isomorphisms induced by a bimodule}

We will prove that the $B_{\infty}$-algebra structures on  singular Hochschild cochain complexes are invariant under singular equivalences with  levels. Indeed, a slightly stronger statement will be established in Theorem~\ref{thm:quasi-iso-bimod}.

We fix a $\Lambda$-$\Pi$-bimodule $M$, over which $\mathbb{k}$ acts centrally. Therefore, $M$ is also viewed a left $\Lambda\otimes \Pi^{\rm op}$-module. We require further that the underlying left $\Lambda$-module $_\Lambda M$ and the right $\Pi$-module $M_\Pi$  are both projective.

Denote by $\mathbf{D}_{\rm sg}(\Lambda^e)$, $\mathbf{D}_{\rm sg}(\Pi^e)$ and $\mathbf{D}_{\rm sg}(\Lambda\otimes \Pi^{\rm op})$ the singularity categories of the algebras $\Lambda^e$, $\Pi^e$ and $\Lambda\otimes \Pi^{\rm op}$, respectively.  The projectivity assumption on $M$ guarantees that the following two triangle functors are well defined.
\begin{equation}
\label{twofunctors}
\begin{split}
-\otimes_\Lambda M \colon \mathbf{D}_{\rm sg}(\Lambda^e) \longrightarrow \mathbf{D}_{\rm sg}(\Lambda \otimes \Pi^{\op})\\
M\otimes_\Pi - \colon  \mathbf{D}_{\rm sg}(\Pi^e) \longrightarrow \mathbf{D}_{\rm sg}( \Lambda\otimes \Pi^{\op})
\end{split}
\end{equation}
The functor $-\otimes_\Lambda M $ sends $\Lambda$ to $M$, and $M\otimes_\Pi -$ sends $\Pi$ to $M$.
Consequently, they induce  the following maps
\begin{align} \label{equ:twomaps}
\HH_{\sg}^i(\Lambda, \Lambda) \stackrel{\alpha_{\sg}^i}{\longrightarrow} \Hom_{\mathbf{D}_{\rm sg}( \Lambda\otimes \Pi^{\op})}(M, \Sigma^i(M)) \stackrel{\beta_{\sg}^i}{\longleftarrow}\HH_{\sg}^i(\Pi, \Pi)
\end{align}
for all $i\in \mathbb{Z}$.  Here, we recall that the singular Hochschild cohomology groups are defined as
$$\HH_{\sg}^i(\Lambda, \Lambda)={\rm Hom}_{\mathbf{D}_{\rm sg}(\Lambda^e)}(\Lambda, \Sigma^i(\Lambda)) \quad \mbox{and} \quad \HH_{\sg}^i(\Pi, \Pi)={\rm Hom}_{\mathbf{D}_{\rm sg}(\Pi^e)}(\Pi, \Sigma^i(\Pi)).$$
Moreover, these groups are computed by the  the right singular Hochschild cochain complexes $\overline C_{\sg, R}^*(\Lambda, \Lambda)$  and $\overline C_{\sg, R}^*(\Pi, \Pi)$, respectively; see Subsection~\ref{subsec:sHcc} for details.

Under reasonable conditions, the bimodule $M$ induces an isomorphism between the above two right singular Hocschild cochain complexes.

\begin{thm}\label{thm:quasi-iso-bimod}
Let $M$ be a $\Lambda$-$\Pi$-bimodule such that it is projective both as a left $\Lambda$-module and as a right $\Pi$-module. Suppose that the two maps  in  (\ref{equ:twomaps}) are isomorphisms for each $i\in \mathbb{Z}$. Then we have an isomorphism
$$ \overline{C}^*_{\sg, R}(\Lambda, \Lambda) \simeq  \overline{C}^*_{\sg, R}(\Pi, \Pi) $$
in the homotopy category $\mathrm{Ho}(B_\infty)$ of $B_\infty$-algebras.
\end{thm}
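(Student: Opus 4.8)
The plan is to imitate Keller's argument from \cite{Kel03} using a triangular matrix algebra, adapted to the singular Hochschild setting. Form the matrix algebra $\Gamma=\begin{pmatrix} \Lambda & M \\ 0 & \Pi\end{pmatrix}$, which contains $E=\mathbb{k}e_1\oplus \mathbb{k}e_2$ with $e_1=\begin{pmatrix} 1 & 0\\ 0 & 0\end{pmatrix}$, $e_2=\begin{pmatrix} 0 & 0\\ 0 & 1\end{pmatrix}$, and satisfies $e_1\Gamma e_1=\Lambda$, $e_2\Gamma e_2=\Pi$, $e_1\Gamma e_2=M$, $e_2\Gamma e_1=0$. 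The projectivity hypotheses on $_\Lambda M$ and $M_\Pi$ are exactly what make $\Gamma$ behave well with respect to syzygies over the relevant enveloping algebras. The idea is to compare $\overline{C}^*_{\sg,R}(\Gamma,\Gamma)$ — or rather a suitable $E$-relative version, following Subsection~\ref{subsec:rel-sinHo} — with each of $\overline{C}^*_{\sg,R}(\Lambda,\Lambda)$ and $\overline{C}^*_{\sg,R}(\Pi,\Pi)$, producing strict $B_\infty$-morphisms in each direction whose failure to be isomorphisms is controlled by off-diagonal ``corner'' complexes built from $M$.

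\textbf{Key steps.} First I would set up the $E$-relative right singular Hochschild cochain complex $\overline{C}^*_{\sg,R,E}(\Gamma,\Gamma)$ and, using Lemma~\ref{lemma7.3-split}, reduce to working with it in place of $\overline{C}^*_{\sg,R}(\Gamma,\Gamma)$; likewise I replace $\overline{C}^*_{\sg,R}(\Lambda,\Lambda)$ and $\overline{C}^*_{\sg,R}(\Pi,\Pi)$ by their $\mathbb{k}$-relative forms (no change) but organize the bookkeeping via $E$. Because $e_2\Gamma e_1=0$, one has $s\overline{\Gamma}\otimes_E s\overline{\Gamma}$ supported only on the ``composable'' corners, and the iterated tensor powers $(s\overline{\Gamma})^{\otimes_E m}$ decompose into blocks indexed by corners $e_i(-)e_j$; taking the appropriate colimits along $\theta_{p,R,E}$ yields a block decomposition of $\overline{C}^*_{\sg,R,E}(\Gamma,\Gamma)$ into four pieces, the diagonal ones being $\overline{C}^*_{\sg,R}(\Lambda,\Lambda)$ and $\overline{C}^*_{\sg,R}(\Pi,\Pi)$, and the two off-diagonal pieces $X^*,Y^*$ built from tensor powers involving $M$. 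Second, I would check that the restriction maps $\overline{C}^*_{\sg,R,E}(\Gamma,\Gamma)\twoheadrightarrow \overline{C}^*_{\sg,R}(\Lambda,\Lambda)$ and $\overline{C}^*_{\sg,R,E}(\Gamma,\Gamma)\twoheadrightarrow \overline{C}^*_{\sg,R}(\Pi,\Pi)$ are strict $B_\infty$-morphisms (the cup product and brace operations of Subsection~\ref{subsec:example-brace} are manifestly corner-compatible, so this is the same kind of verification as in Lemmas~\ref{lem:strict-R} and \ref{onepointext}, invoking Lemma~\ref{lemma-infinity-morphism1}). Third — the crux — I would show these two restrictions are $B_\infty$-\emph{quasi}-isomorphisms: both kernels are acyclic. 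Here one translates the acyclicity into a statement about the singularity category $\mathbf{D}_{\rm sg}(\Lambda\otimes\Pi^{\rm op})$: the off-diagonal blocks, after taking colimits, compute $\Hom_{\mathbf{D}_{\rm sg}(\Lambda\otimes\Pi^{\rm op})}(M,\Sigma^*(M))$ on the level of cohomology, using the colimit-description of singular Hochschild-type groups as in Lemma~\ref{lem:HH-colimit} and \cite[Lemma~2.4]{Kel18}. The hypothesis that $\alpha^i_{\sg}$ and $\beta^i_{\sg}$ in \eqref{equ:twomaps} are isomorphisms says precisely that on cohomology the diagonal inclusions into the ``$M$-to-$M$'' group are isomorphisms; combined with a long-exact-sequence / five-lemma argument for the short exact sequences of complexes $0\to \ker\to \overline{C}^*_{\sg,R,E}(\Gamma,\Gamma)\to \overline{C}^*_{\sg,R}(\Lambda,\Lambda)\to 0$ (and similarly for $\Pi$), this forces each kernel to be acyclic. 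Finally, composing, I obtain a zigzag
\[
\overline{C}^*_{\sg,R}(\Lambda,\Lambda) \;\xleftarrow{\ \sim\ }\; \overline{C}^*_{\sg,R,E}(\Gamma,\Gamma) \;\xrightarrow{\ \sim\ }\; \overline{C}^*_{\sg,R}(\Pi,\Pi)
\]
of $B_\infty$-quasi-isomorphisms, hence an isomorphism in $\mathrm{Ho}(B_\infty)$.

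\textbf{Main obstacle.} The delicate point is the acyclicity of the two kernels, i.e.\ identifying the off-diagonal colimit complexes with a complex computing $\Hom_{\mathbf{D}_{\rm sg}(\Lambda\otimes\Pi^{\rm op})}(M,\Sigma^*(M))$ and matching the connecting maps with $\alpha^*_{\sg}$, $\beta^*_{\sg}$. This requires an explicit colimit model for $\Hom$-spaces between bimodules in a singularity category that are projective on both sides — precisely the construction alluded to in the introduction (``we construct an explicit colimit complex for any $\Lambda$-$\Pi$-bimodule $M$''). Setting up that model carefully, checking it is compatible with the block differentials of $\overline{C}^*_{\sg,R,E}(\Gamma,\Gamma)$ (the analogue of the matrix \eqref{comcom}, now with genuinely nonzero off-diagonal entries that encode the $\Lambda$- and $\Pi$-actions on $M$), and then feeding in the isomorphism hypothesis via a five-lemma argument, is where essentially all the work lies; the $B_\infty$-structural part is routine given Lemma~\ref{lemma-infinity-morphism1} and the corner-compatibility of the operations in Subsection~\ref{subsec:example-brace}. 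One should also remember to run the whole argument with $E$-relative complexes throughout and only at the end invoke Lemma~\ref{lemma7.3-split} (and its left analogue, if the left-handed version is needed elsewhere) to pass back to the absolute complexes.
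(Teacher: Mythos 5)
Your proposal follows essentially the same route as the paper's proof: the same triangular matrix algebra $\Gamma$ with $E=\mathbb{k}e_1\oplus\mathbb{k}e_2$, the same block decomposition of $\overline{C}^*_{\sg,R,E}(\Gamma,\Gamma)$ with the diagonal blocks $\overline{C}^*_{\sg,R}(\Lambda,\Lambda)$ and $\overline{C}^*_{\sg,R}(\Pi,\Pi)$, the same identification of the off-diagonal colimit complex with a complex computing $\Hom_{\mathbf{D}_{\rm sg}(\Lambda\otimes\Pi^{\rm op})}(M,\Sigma^*(M))$ and matching of the connecting maps with $\alpha_{\sg}^i,\beta_{\sg}^i$, and the same zigzag of strict $B_\infty$-quasi-isomorphisms through $\overline{C}^*_{\sg,R,E}(\Gamma,\Gamma)$; you also correctly single out the real difficulty, which the paper handles via the non-standard resolution, Lemma~\ref{lem:computesgspace}, Proposition~\ref{prop:HH-sg-long} and the colimit-compatibility issue of Remark~\ref{rem:tricky}. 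The only cosmetic deviation is your block count: since $e_2\Gamma e_1=0$ there is a single $M$-block (not two off-diagonal pieces), and the paper packages both restrictions into one short exact sequence with quotient $\overline{C}^*_{\sg,R}(\Lambda,\Lambda)\oplus\overline{C}^*_{\sg,R}(\Pi,\Pi)$ and connecting map $(-\alpha_{\sg}^i,\beta_{\sg}^i)$, rather than treating the two kernels separately.
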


We postpone the proof of Theorem \ref{thm:quasi-iso-bimod} until the end of this section, whose argument  is adapted from  the one developed in \cite{Kel03}; see also \cite{LoVa1}. We will consider a triangular matrix algebra $\Gamma$, using which we construct  two strict $B_\infty$-quasi-isomorphisms  connecting $\overline{C}^*_{\sg, R}(\Lambda, \Lambda)$ to $\overline{C}^*_{\sg, R}(\Pi, \Pi)$.

We now apply Theorem~\ref{thm:quasi-iso-bimod} to singular equivalences with levels, in which case the two maps  in  (\ref{equ:twomaps}) are indeed isomorphisms for each $i\in \mathbb{Z}$.

\begin{prop}\label{prop:sing-equi2}
Assume that  $(M, N)$ defines a singular equivalence with level $n$ between $\Lambda$ and $\Pi$.  Then the maps $\alpha_{\sg}^i$ and $\beta_{\sg}^i$  in  (\ref{equ:twomaps})  are isomorphisms for all $i\in \mathbb{Z}$. Consequently, there is an isomorphism $\overline{C}^*_{\sg, R}(\Lambda, \Lambda) \simeq  \overline{C}^*_{\sg, R}(\Pi, \Pi)$ in $\mathrm{Ho}(B_\infty)$.
\end{prop}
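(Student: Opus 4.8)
The statement to prove is Proposition~\ref{prop:sing-equi2}: if $(M,N)$ defines a singular equivalence with level $n$ between $\Lambda$ and $\Pi$, then the maps $\alpha_{\sg}^i$ and $\beta_{\sg}^i$ in \eqref{equ:twomaps} are isomorphisms for all $i$, and consequently $\overline{C}^*_{\sg,R}(\Lambda,\Lambda)\simeq\overline{C}^*_{\sg,R}(\Pi,\Pi)$ in $\mathrm{Ho}(B_\infty)$. The second assertion is immediate from Theorem~\ref{thm:quasi-iso-bimod} once the first is established, so the entire task reduces to checking that $\alpha_{\sg}^i$ and $\beta_{\sg}^i$ are bijective. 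The natural approach is to exhibit an explicit quasi-inverse to the triangle functor $-\otimes_\Lambda M\colon \mathbf{D}_{\rm sg}(\Lambda^e)\to\mathbf{D}_{\rm sg}(\Lambda\otimes\Pi^{\rm op})$ (and symmetrically for $M\otimes_\Pi-$) up to a shift, using the partner bimodule $N$ and the defining isomorphisms $M\otimes_\Pi N\simeq\Omega^n_{\Lambda^e}(\Lambda)$, $N\otimes_\Lambda M\simeq\Omega^n_{\Pi^e}(\Pi)$ in the respective stable categories.

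\textbf{Key steps.} First I would observe that, by the projectivity of $_\Lambda M$, $M_\Pi$, $_\Pi N$, $N_\Lambda$, all four tensor functors $-\otimes_\Lambda M$, $M\otimes_\Pi-$, $-\otimes_\Pi N$, $N\otimes_\Lambda-$ between the relevant bounded derived categories preserve perfect complexes, hence descend to triangle functors between the singularity categories; this is the same reasoning already used in Subsection~\ref{subsec:one-point}. Next, the key point: for a $\Lambda$-$\Pi$-bimodule $X$ projective on both sides, the composite $X\mapsto (X\otimes_\Pi N)\otimes_\Lambda M\simeq X\otimes_\Pi(N\otimes_\Lambda M)$ is, by the second condition of Definition~\ref{defn:singequi}, isomorphic in $\mathbf{D}_{\rm sg}(\Lambda\otimes\Pi^{\rm op})$ to $X\otimes_\Pi\Omega^n_{\Pi^e}(\Pi)\simeq\Sigma^{-n}(X)$, using that syzygy in the stable module category of bimodules corresponds to $\Sigma^{-1}$ in the singularity category and that tensoring a bimodule projective on the $\Pi$-side against $-\otimes_\Pi-$ commutes with syzygies up to projective summands. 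Symmetrically $(-\otimes_\Lambda M)\circ(-\otimes_\Pi N)\simeq\Sigma^{-n}$ on $\mathbf{D}_{\rm sg}(\Lambda\otimes\Pi^{\rm op})$ via $M\otimes_\Pi N\simeq\Omega^n_{\Lambda^e}(\Lambda)$, and similarly on the $\Pi^e$-side. Therefore $-\otimes_\Lambda M$ and $\Sigma^n\circ(-\otimes_\Pi N)$ are mutually quasi-inverse triangle equivalences $\mathbf{D}_{\rm sg}(\Lambda^e)\xrightarrow{\ \sim\ }\mathbf{D}_{\rm sg}(\Lambda\otimes\Pi^{\rm op})$; in particular $-\otimes_\Lambda M$ is an equivalence. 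Since an equivalence of triangulated categories induces bijections on all Hom groups, and $-\otimes_\Lambda M$ sends $\Lambda$ to $M$ (up to the projective resolution used to compute syzygies), the induced map $\alpha_{\sg}^i\colon \mathrm{Hom}_{\mathbf{D}_{\rm sg}(\Lambda^e)}(\Lambda,\Sigma^i\Lambda)\to\mathrm{Hom}_{\mathbf{D}_{\rm sg}(\Lambda\otimes\Pi^{\rm op})}(M,\Sigma^i M)$ is an isomorphism for every $i$; the same argument applied with the roles of $\Lambda$ and $\Pi$ exchanged, using $M\otimes_\Pi-$ and $N\otimes_\Lambda M\simeq\Omega^n_{\Pi^e}(\Pi)$, gives that $\beta_{\sg}^i$ is an isomorphism. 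Finally, I would invoke Theorem~\ref{thm:quasi-iso-bimod} directly with this $M$ to conclude the $B_\infty$-isomorphism in $\mathrm{Ho}(B_\infty)$; this is already noted as the intended application in Remark~\ref{rem:singequi} and in the discussion preceding the proposition. Actually, a cleaner bookkeeping is available: as recorded in \cite[Remark~2.2]{Wan15} and used in the proof of Lemma~\ref{lem:sing-equi1}, the functor $M\otimes_\Pi-\colon \mathbf{D}_{\rm sg}(\Pi)\to\mathbf{D}_{\rm sg}(\Lambda)$ on one-sided modules is already a triangle equivalence with quasi-inverse $\Sigma^n\circ(N\otimes_\Lambda-)$; the bimodule statement follows by the very same formal argument carried out over $\Lambda\otimes\Pi^{\rm op}$ instead of over $\Lambda$, so I would phrase the proof as a direct adaptation of that lemma.

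\textbf{The main obstacle.} The delicate point is the identification, inside the singularity category of bimodules, of $X\otimes_\Pi\Omega^n_{\Pi^e}(\Pi)$ with $\Sigma^{-n}(X)$ when $X$ is only assumed projective on one side: one must check that tensoring with a one-sided-projective bimodule is exact enough to commute with syzygies modulo projectives, i.e. that $X\otimes_\Pi\Omega^n_{\Pi^e}(\Pi)$ and $\Omega^n_{\Lambda\otimes\Pi^{\rm op}}(X)$ agree in the stable category, and that the isomorphisms $M\otimes_\Pi N\simeq\Omega^n_{\Lambda^e}(\Lambda)$ given only in $\Lambda^e\text{-}\underline{\rm mod}$ indeed yield natural isomorphisms of the corresponding endofunctors of $\mathbf{D}_{\rm sg}(\Lambda\otimes\Pi^{\rm op})$ rather than merely objectwise ones. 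I expect this naturality/coherence verification — essentially the content of \cite[Remark~2.2]{Wan15} transported to the two-sided setting — to be the part requiring the most care; everything else is a formal consequence of having mutually quasi-inverse equivalences, together with a citation of Theorem~\ref{thm:quasi-iso-bimod}.
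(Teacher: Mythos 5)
Your overall strategy is the right one — tensor the canonical triangle $\Sigma^{n-1}\Omega^n_{\Pi^e}(\Pi)\to P\to \Pi\to \Sigma^n\Omega^n_{\Pi^e}(\Pi)$ with the given object, use condition (2) of Definition~\ref{defn:singequi} to identify the composite of the two tensor functors with $\Sigma^{-n}$, and then feed the resulting isomorphisms $\alpha^i_{\sg}$, $\beta^i_{\sg}$ into Theorem~\ref{thm:quasi-iso-bimod}. This is essentially the computation the paper performs. However, there is a genuine gap in how you globalize it: you conclude that $-\otimes_\Lambda M$ and $\Sigma^n\circ(-\otimes_\Pi N)$ are mutually quasi-inverse equivalences between the \emph{full} singularity categories $\mathbf{D}_{\rm sg}(\Lambda^e)$ and $\mathbf{D}_{\rm sg}(\Lambda\otimes\Pi^{\rm op})$, but your key identification of the composite with $\Sigma^{-n}$ is only available on objects whose relevant one-sided restriction is perfect. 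Concretely, replacing $N\otimes_\Lambda M$ (or $M\otimes_\Pi N$) by $\Omega^n$ after tensoring with an object $X$ requires the error terms, namely projective $\Pi^e$- (or $\Lambda^e$-) modules tensored with $X$, to become perfect bimodule complexes; for $P=\Pi\otimes\Pi$ one gets $P\otimes_\Pi X\cong \Pi\otimes X_\Pi$, which is perfect exactly when the restriction $X_\Pi$ is a perfect complex. An arbitrary object of the singularity category of bimodules has no such property (and objects "projective on both sides", to which you restrict your verification, do not form a triangulated subcategory, nor do they exhaust the category), so the claimed equivalence of the full categories is unsupported — and the parenthetical "the very same formal argument as in the one-sided Lemma~\ref{lem:sing-equi1}" does not transport, precisely because in the one-sided case a projective bimodule tensored with any bounded complex is automatically perfect, while in the two-sided case it is not.

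The repair, which is what the paper does, is to work not with the full singularity categories but with the full triangulated subcategories $\mathcal{X}\subseteq\mathbf{D}_{\rm sg}(\Pi^e)$ and $\mathcal{Y}\subseteq\mathbf{D}_{\rm sg}(\Lambda\otimes\Pi^{\rm op})$ of complexes whose restriction to right $\Pi$-modules is perfect (and the analogous subcategories with perfect left $\Lambda$-restriction for the $\alpha$-side). These subcategories are closed under shifts and cones, contain $\Pi$ and $M$ (resp.\ $\Lambda$ and $M$) because $M$ is projective on each side, and on them your triangle argument does yield natural isomorphisms $N\otimes_\Lambda(M\otimes_\Pi -)\simeq\Sigma^{-n}$ and $M\otimes_\Pi(N\otimes_\Lambda -)\simeq\Sigma^{-n}$, hence equivalences $\mathcal{X}\simeq\mathcal{Y}$. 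Since Hom-spaces in a full subcategory agree with those in the ambient category, this already gives the bijectivity of $\beta^i_{\sg}$ (and symmetrically $\alpha^i_{\sg}$), which is all that Theorem~\ref{thm:quasi-iso-bimod} needs; no statement about the full categories $\mathbf{D}_{\rm sg}(\Lambda^e)$, $\mathbf{D}_{\rm sg}(\Lambda\otimes\Pi^{\rm op})$ is required, and your argument as written does not prove one.
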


It follows that a singular equivalence with  level gives rise to an isomorphism of Gerstenhaber algebras
$$\HH_{\sg}^*(\Lambda, \Lambda) \simeq \HH_{\sg}^*(\Pi, \Pi).$$
We refer to  \cite{Wan2} for an alternative proof of this isomorphism. We mention that the above isomorphism as graded algebras is also obtained in \cite[Theorem~3 and Remark~2]{BRyD} via a similar argument as \cite{Kel03}.

\begin{proof}[Proof of Proposition~\ref{prop:sing-equi2}]
By Theorem~\ref{thm:quasi-iso-bimod}, it suffices to prove that both $\alpha_{\sg}^i$ and $\beta_{\sg}^i$ are isomorphisms. We only prove that the maps $\beta_{\sg}^i$ are isomorphisms, since a similar argument works for $\alpha_{\sg}^i$.

Indeed, we will prove a slightly stronger result. Let $\mathcal{X}$ (\emph{resp}. $\mathcal{Y}$) be the full subcategory of $\mathbf{D}_{\rm sg}(\Pi^e)$ (\emph{resp}. $\mathbf{D}_{\rm sg}(\Lambda\otimes \Pi^{\rm op})$) consisting of those complexes $X$, whose underlying complexes $X_\Pi$ of right $\Pi$-modules are perfect. The triangle functors
$$M\otimes_\Pi-\colon \mathcal{X}\longrightarrow \mathcal{Y} \mbox{ and } N\otimes_\Lambda-\colon \mathcal{Y}\longrightarrow \mathcal{X}$$
 are well defined. We claim that they are equivalences. This claim clearly implies that $\beta_{\sg}^i$ are isomorphisms.

For the proof of the claim, we observe that for a bounded complex $P$ of projective $\Pi^e$-modules and an object $X$ in $\mathcal{X}$, the complex $P\otimes_\Pi X$ is perfect, that is, isomorphic to zero in $\mathcal{X}$. There is a canonical exact triangle in $\mathbf{D}^b(\Pi^e\mbox{-mod})$
$$\Sigma^{n-1}\Omega^n_{\Pi^e}(\Pi)\longrightarrow P \longrightarrow \Pi \longrightarrow \Sigma^n \Omega^n_{\Pi^e}(\Pi),$$
where $P$ is a bounded complex of projective $\Pi^e$-modules with length precisely $n$. Applying $-\otimes_\Pi X$ to this triangle, we infer a natural isomorphism
$$X\simeq \Sigma^n \Omega^n_{\Pi^e}(\Pi)\otimes_\Pi X$$
in $\mathcal{X}$. By the second condition in Definition~\ref{defn:singequi}, we have
 $$N\otimes_\Lambda (M\otimes_\Pi X)\simeq \Omega^n_{\Pi^e}(\Pi)\otimes_\Pi X\simeq \Sigma^{-n}(X).$$
 Similarly, we infer that $M\otimes_\Pi(N\otimes_\Lambda Y)\simeq \Sigma^{-n}(Y)$ for any object $Y\in \mathcal{Y}$. This proves the claim.
\end{proof}

\subsection{A non-standard resolution and liftings}\label{subsection:homotopycartesiansquare}
In this subsection, we make preparation for the proof of Theorem~\ref{thm:quasi-iso-bimod}. We study a non-standard resolution of $M$, and lift certain maps between cohomological groups to cochain complexes.

Recall from Subsection~\ref{subsection-bar} the normalized bar resolution $\overline\Barr(\Lambda)$.  It is well known that $\overline\Barr(\Lambda)\otimes_\Lambda M\otimes_\Pi \overline\Barr(\Pi)$ is a projective $\Lambda$-$\Pi$-bimodule resolution of $M$, even without the projectivity assumption on $M$. However,  we will  need another \emph{non-standard} resolution of $M$; this resolution requires the projectivity assumption on the $\Lambda$-$\Pi$-bimodule $M$.

We denote by $\widetilde \Barr(\Lambda)$ the undeleted bar resolution
\begin{align}\label{equ:abc}
\dotsb \to \Lambda \otimes (s\overline \Lambda)^{\otimes m} \otimes \Lambda \xrightarrow{d_{ex}} \dotsb \xrightarrow{d_{ex}} \Lambda\otimes  (s\overline \Lambda) \otimes \Lambda   \xrightarrow{d_{ex}} \Lambda \otimes \Lambda \xrightarrow{\mu} s^{-1}\Lambda \rightarrow 0,
\end{align}
where $\mu$ is the multiplication and $d_{ex}$ is the external differential; see Subsection \ref{subsection-bar}. Here, we use $s^{-1}\Lambda$ to emphasize that it is of cohomological degree one.  Similarly, we have the undeleted bar resolution $\widetilde \Barr(\Pi)$ for $\Pi$.

 Consider  the following complex of $\Lambda$-$\Pi$-bimodules
 $$\mathbb{B}=\mathbb B(\Lambda, M, \Pi):=\widetilde \Barr(\Lambda) \otimes_\Lambda sM \otimes_\Pi \widetilde\Barr(\Pi).$$
 We observe that $\mathbb{B}$ is acyclic.  By using the natural isomorphisms  $$s^{-1}\Lambda \otimes_\Lambda sM \simeq M,\quad \text{and}\quad sM \otimes_{\Pi} s^{-1}\Pi \simeq M,$$  we obtain that the $(-p)$-th component of $\mathbb B$ is given by
  $$
\mathbb B^{-p}= \bigoplus_{\substack{ i+j=p-1\\ i, j \geq 0}} \Lambda \otimes (s\overline\Lambda)^{\otimes i} \otimes sM \otimes (s\overline \Pi)^{\otimes j} \otimes \Pi \; \bigoplus\;  \Lambda \otimes (s\overline\Lambda)^{\otimes p} \otimes  M\;  \bigoplus\;   M \otimes (s\overline \Pi)^{\otimes p} \otimes \Pi
 $$
 for any $p \geq 0$, and that  $\mathbb{B}^{1}= s^{-1}\Lambda \otimes_\Lambda sM\otimes_\Pi s^{-1}\Pi \simeq s^{-1}M$.
 In particular,  we have
 \begin{equation*}
 \begin{split}
 \mathbb {B}^0&\simeq (\Lambda\smallotimes M)\bigoplus (M\smallotimes \Pi), \\
  \mathbb{B}^{-1}&=(\Lambda\smallotimes sM\smallotimes \Pi)\bigoplus (\Lambda\smallotimes s\overline{\Lambda}\smallotimes M)\bigoplus (M\smallotimes s\overline{\Pi}\smallotimes \Pi).
 \end{split}
 \end{equation*}
The differential $\partial^{-p}\colon \mathbb{B}^{-p}\rightarrow \mathbb{B}^{-(p-1)}$ is induced by the differentials of $\widetilde{\Barr}(\Lambda)$ and $\widetilde{\Barr}(\Pi)$ in \eqref{equ:abc} via tensoring with $sM$. For instance, the differential $\partial^0\colon \mathbb B^0\xrightarrow{}\mathbb B^{1}$ is given by
$$
\Lambda \otimes M \bigoplus M \otimes \Pi \longrightarrow{} M, \quad\quad (a \otimes x  \longmapsto ax, \quad  x' \otimes b \longmapsto x'b); $$
 the differential $\partial^{-1}:\mathbb B^{-1}\xrightarrow{} \mathbb B^{0}$ is given by the maps
 \begin{flalign*}
&& \Lambda\otimes sM\otimes \Pi &\longrightarrow{} (\Lambda\otimes M)\bigoplus (M\otimes \Pi), && (a\otimes sx\otimes b\longmapsto -a\otimes xb+ax\otimes b)&& \\
&& \Lambda\otimes s\overline{\Lambda}\otimes M & \longrightarrow{} \Lambda\otimes M, &&  (a\otimes s\overline{a}_1\otimes x\longmapsto aa_1\otimes x-a\otimes a_1x)&&\\
&& M\otimes s\overline{\Pi}\otimes \Pi & \longrightarrow{}  M\otimes \Pi, &&  (x\otimes s\overline{b}_1\otimes b\longmapsto xb_1\otimes b-x\otimes b_1b).&&
 \end{flalign*}

  Since $M$ is projective as a  left $\Lambda$-module and as a right $\Pi$-module, it follows that all the direct summands of $\mathbb B^{-p}$ are projective as $\Lambda$-$\Pi$-bimodules for $p \geq 0$. We infer that  $\mathbb{B}$ is an undeleted $\Lambda$-$\Pi$-bimodule projective resolution of $M$.

\begin{lem}\label{lem:differentialforms}
For each $p\geq 1$, the cokernel $\mathrm{Cok}(\partial^{-p-1})$ is isomorphic to
$$
\Omega_{\text{$\Lambda$-$\Pi$}}^{p}(M) : = \bigoplus_{\substack{ i+j =p-1\\ i, j \geq 0}} (s\overline\Lambda)^{\otimes i} \otimes sM \otimes (s\overline \Pi)^{\otimes j} \otimes \Pi
{}  \; \bigoplus {} (s\overline\Lambda)^{\otimes p} \otimes M.$$
In particular, $\Omega_{\text{$\Lambda$-$\Pi$}}^{p}(M)$ inherits a $\Lambda$-$\Pi$-bimodule structure from $\mathrm{Cok}(\partial^{-p-1})$.
\end{lem}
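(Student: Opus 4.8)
The statement to prove is Lemma~\ref{lem:differentialforms}: that for each $p\geq 1$ the cokernel $\mathrm{Cok}(\partial^{-p-1})$ of the $(p+1)$-th differential in the non-standard resolution $\mathbb{B}$ is isomorphic, as a graded $\Lambda$-$\Pi$-bimodule, to the explicit module $\Omega^p_{\text{$\Lambda$-$\Pi$}}(M)$. The key observation, and the one I would exploit first, is that $\mathbb{B}=\widetilde{\Barr}(\Lambda)\otimes_\Lambda sM\otimes_\Pi\widetilde{\Barr}(\Pi)$ is a tensor product of three complexes whose differentials are entirely \emph{external} (there is no internal differential since $\Lambda,M,\Pi$ are concentrated in a single degree), and that $\widetilde{\Barr}(\Lambda)$ and $\widetilde{\Barr}(\Pi)$ are \emph{undeleted} bar resolutions. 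In particular, tensoring the exact sequence $\cdots\to\Lambda\otimes s\overline\Lambda\otimes\Lambda\xrightarrow{d_{ex}}\Lambda\otimes\Lambda\xrightarrow{\mu}s^{-1}\Lambda\to 0$ with $-\otimes_\Lambda M$ (resp.\ $M\otimes_\Pi-$ for the $\Pi$-side) and using the projectivity of ${}_\Lambda M$ and $M_\Pi$ keeps everything exact; this is precisely the exactness of $\mathbb{B}$ already noted in the excerpt.

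The concrete route I would take is to recall the standard fact (cf.\ the analogous computation for $\Omega^p_{\nc,R}(\Lambda)$ via \cite[Lemma~2.5]{Wan1}) that the cokernel of the external differential $d_{ex}\colon A\otimes(s\overline A)^{\otimes p+1}\otimes A\to A\otimes(s\overline A)^{\otimes p}\otimes A$ in the normalized bar resolution is isomorphic to $(s\overline A)^{\otimes p}\otimes A$ as a bimodule (the left action being absorbed into the first bar-tensor factor via $a_0\mapsto s\overline{a_0}$ up to the usual relations, the right action being the outer one). I would apply this degreewise to each of the three tensor slots of $\mathbb{B}$. Writing $\mathbb{B}^{-p}$ as the direct sum displayed in the excerpt — namely $\bigoplus_{i+j=p-1}\Lambda\otimes(s\overline\Lambda)^{\otimes i}\otimes sM\otimes(s\overline\Pi)^{\otimes j}\otimes\Pi$ together with the two ``boundary'' summands $\Lambda\otimes(s\overline\Lambda)^{\otimes p}\otimes M$ and $M\otimes(s\overline\Pi)^{\otimes p}\otimes\Pi$ — one sees that $\partial^{-p-1}$ involves exactly the external differentials moving elements \emph{out of} the leftmost $\Lambda$-slot (collapsing $\Lambda\otimes(s\overline\Lambda)^{\otimes i+1}$ to $(s\overline\Lambda)^{\otimes i}$-type terms once one quotients). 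So the cokernel collapses the leftmost $\Lambda$ in every summand of $\mathbb{B}^{-p}$ except that the summand $M\otimes(s\overline\Pi)^{\otimes p}\otimes\Pi$, whose leftmost slot is $M$ not $\Lambda$, gets identified with an appropriate $(s\overline\Lambda)^{\otimes 0}\otimes sM\otimes(s\overline\Pi)^{\otimes p-1}\otimes\Pi$-type term; bookkeeping these identifications yields exactly $\Omega^p_{\text{$\Lambda$-$\Pi$}}(M)=\bigoplus_{i+j=p-1}(s\overline\Lambda)^{\otimes i}\otimes sM\otimes(s\overline\Pi)^{\otimes j}\otimes\Pi\ \oplus\ (s\overline\Lambda)^{\otimes p}\otimes M$. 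The induced $\Lambda$-$\Pi$-bimodule structure on the cokernel then transports to $\Omega^p_{\text{$\Lambda$-$\Pi$}}(M)$, giving the ``in particular'' clause for free.

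The step I expect to be the real obstacle is \emph{not} the algebra but the careful identification of which summands of $\mathbb{B}^{-p}$ survive in the cokernel and the precise bimodule action one gets — in particular handling the ``endpoint'' terms $\Lambda\otimes(s\overline\Lambda)^{\otimes p}\otimes M$ and $M\otimes(s\overline\Pi)^{\otimes p}\otimes\Pi$ uniformly with the ``interior'' terms, and checking that the external differential $\partial^{-p-1}$ restricted to $\mathbb{B}^{-p-1}$ hits exactly the sub-bimodule generated by the image of the left-$\Lambda$-action. I would organize this by defining the candidate surjection $\mathbb{B}^{-p}\twoheadrightarrow\Omega^p_{\text{$\Lambda$-$\Pi$}}(M)$ explicitly — on each summand it is either the obvious projection $\Lambda\otimes(\text{rest})\to(\text{rest modulo }E\cdot 1)$ or the identity — verifying it is a bimodule map, checking its kernel is exactly $\mathrm{Im}(\partial^{-p-1})$ by a direct comparison of generators (using exactness of $\mathbb{B}$ and the analogous one-sided cokernel computations), and invoking that a surjective bimodule map with the correct kernel induces the desired isomorphism. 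Since the whole computation mirrors the one-sided case in \cite{Wan1} applied to three tensor factors simultaneously, I would keep the write-up short, referring to \cite[Lemma~2.5]{Wan1} for the pattern and only spelling out the combinatorics of the direct-sum decomposition.
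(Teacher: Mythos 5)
Your picture of what the cokernel should be is right, but the concrete map you propose does not do the job, and this is exactly where the content of the lemma lies. You want a surjection $\mathbb B^{-p}\twoheadrightarrow\Omega_{\text{$\Lambda$-$\Pi$}}^{p}(M)$ that is ``the obvious projection $\Lambda\otimes(\text{rest})\to(\text{rest})$ or the identity'' on each summand, and then to check its kernel equals $\mathrm{Im}(\partial^{-p-1})$. That kernel identification is false for the naive map. Indeed, for a generator $1\otimes s\overline{a_0}\otimes s\overline{a}_{1,i}\otimes y\in\mathbb B^{-p-1}$ one has $\partial^{-p-1}(1\otimes s\overline{a_0}\otimes s\overline{a}_{1,i}\otimes y)=a_0\otimes s\overline{a}_{1,i}\otimes y-z$, where $z$ is a sum of elements of the form $1\otimes(\cdots)$ involving the products $a_ka_{k+1}$ and the action of $a_i$ on $y$; so in the cokernel an element $a_0\otimes s\overline{a}_{1,i}\otimes y$ with $a_0$ in the augmentation ideal is \emph{not} zero — it is identified with $1\otimes\bigl(a_0\blacktriangleright(s\overline{a}_{1,i}\otimes y)\bigr)$-type terms — whereas your naive projection kills it. Likewise the summand $M\otimes(s\overline\Pi)^{\otimes p}\otimes\Pi$ has no counterpart in $\Omega_{\text{$\Lambda$-$\Pi$}}^{p}(M)$ (the $M$-factor there carries $i+j=p-1$, not $p$), so ``the identity'' is not available on it; in the cokernel this summand gets rewritten, via $\partial^{-p-1}(1\otimes sx\otimes s\overline{b}_{1,p}\otimes b_{p+1})$, into terms of the form $1\otimes s(xb_1)\otimes\cdots$ and $1\otimes sx\otimes\cdots$. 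For the same reason the suggested ``slot-by-slot'' reduction to the one-sided computation of \cite[Lemma~2.5]{Wan1} cannot work as stated: the total differential mixes the three tensor factors, and the asymmetric shape of the answer (right $\Pi$ retained, a $(s\overline\Lambda)^{\otimes p}\otimes M$ term, no $M\otimes(s\overline\Pi)^{\otimes p}\otimes\Pi$ term) already rules out a factorwise formula for the cokernel. Also, in this section the bar construction is over $\mathbb k$, not over $E$, so ``modulo $E\cdot 1$'' should read ``modulo $\mathbb k\cdot 1$''.

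The repair is to replace your map by one that incorporates the correction terms, and to avoid computing its kernel altogether. The paper goes the opposite way: it defines $\gamma^{-p}\colon\Omega_{\text{$\Lambda$-$\Pi$}}^{p}(M)\to\mathrm{Cok}(\partial^{-p-1})$ by $w\mapsto[1\otimes w]$, proves surjectivity exactly by the two rewriting computations above (every generator of $\mathbb B^{-p}$ is congruent modulo $\mathrm{Im}(\partial^{-p-1})$ to an element of the form $1\otimes(\cdots)$), and then produces a retraction: the degree $-1$ projection $\varpi^{-p+1}\colon\mathbb B^{-p+1}\twoheadrightarrow\Omega_{\text{$\Lambda$-$\Pi$}}^{p}(M)$ composed with $\partial^{-p}$ gives $\widetilde\eta^{-p}=\varpi^{-p+1}\circ\partial^{-p}$, which kills $\mathrm{Im}(\partial^{-p-1})$ because $\partial\circ\partial=0$, hence descends to $\eta^{-p}$ on the cokernel, and one checks $\eta^{-p}\circ\gamma^{-p}=\mathbf 1$. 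Together with surjectivity of $\gamma^{-p}$ this yields the isomorphism without ever identifying $\mathrm{Im}(\partial^{-p-1})$ inside $\mathbb B^{-p}$ (note that the map playing the role of your surjection is $\widetilde\eta^{-p}$, i.e.\ essentially the $\blacktriangleright$-action of the leftmost coefficient, not the naive projection). If you prefer your direction, you could alternatively use exactness of $\mathbb B$ to identify $\mathrm{Cok}(\partial^{-p-1})\cong\mathrm{Im}(\partial^{-p})$ and then compare with $\Omega_{\text{$\Lambda$-$\Pi$}}^{p}(M)$ via $\varpi^{-p+1}$, but some version of the correction-term bookkeeping is unavoidable.
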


\begin{proof}
We have a $\mathbb k $-linear map
$$\gamma^{-p} \colon \Omega_{\text{$\Lambda$-$\Pi$}}^{p}(M) \xrightarrow{1\otimes \mathbf 1} \mathbb B^{-p}\longrightarrow{}\mathrm{Cok}(\partial^{-p-1}), $$
where the unnamed arrow is the natural projection and  the first map $1\otimes \mathbf 1$ is given by
\begin{equation}\label{1mathbf1}
\begin{split}
s\overline a_{1, i} \otimes sx \otimes s\overline b_{1, j} \otimes b_{j+1}&  \longmapsto 1 \otimes s\overline a_{1, i} \otimes sx \otimes s\overline b_{1, j} \otimes b_{j+1}\\
s\overline a_{1, p} \otimes x & \longmapsto 1 \otimes s\overline a_{1, p} \otimes x.
\end{split}
\end{equation}

We claim that $\gamma^{-p}$ is surjective. Indeed, under the projection $\mathbb B^{-p}\twoheadrightarrow \mathrm{Cok}(\partial^{-p-1})$,  the image of a typical element $a_0\otimes s\overline{a}_{1, i}\otimes y \in  \mathbb B^{-p}$ equals the image of the following element
$$z:= \sum_{k=0}^{i-1}(-1)^k 1\otimes s\overline{a}_{0, k-1}\otimes s\overline{a_ka_{k+1}}\otimes  s\overline{a}_{k+2, i}\otimes y+ (-1)^{i}1\otimes s\overline{a}_{0, i-1}\otimes a_iy \in \mathbb B^{-p},$$
where $y$ lies in $sM\otimes (s\overline{\Pi})^{\otimes j}\otimes \Pi$ or $M$, since $\partial^{-p-1} ( 1 \otimes s\overline{a}_0\otimes s\overline{a}_{1, i}\otimes y) =a_0\otimes s\overline{a}_{1, i}\otimes y -z$.   Similarly, the image of a typical element $x\otimes s\overline{b}_{1, p}\otimes b_{p+1}\in M \otimes (s\overline{\Pi})^{\otimes p}\otimes \Pi $ equals the image of  
\begin{align*}
z':={}& 1\otimes s(xb_1) \otimes s\overline{b}_{2, p}\otimes b_{p+1} +\sum_{k=1}^{p-1} (-1)^k 1\otimes sx \otimes s\overline{b}_{1, k-1}\otimes s\overline{b_kb_{k+1}}\otimes s\overline{b}_{k+2, p}\otimes b_{p+1}
\\
&+ (-1)^p 1\otimes sx\otimes s\overline{b}_{1, p-1}\otimes b_pb_{p+1} \in \mathbb B^{-p},
\end{align*}
since $\partial^{-p-1} ( 1 \otimes sx\otimes s\overline{b}_{1, p}\otimes b_{p+1}) =x\otimes s\overline{b}_{1, p}\otimes b_{p+1} -z'$. In both cases, the latter elements $z$ and $z'$ lie in the image of the map $1\otimes \mathbf 1$. This shows that $\gamma^{-p}$ is surjective.

On the other hand, we have a projection of degree $-1$
$$\varpi^{-p+1} \colon \mathbb B^{-p+1}  \twoheadrightarrow \Omega_{\text{$\Lambda$-$\Pi$}}^{p}(M)$$
given by
\begin{align*}
a_0 \otimes s\overline a_{1, i} \otimes sm \otimes s\overline b_{1, j} \otimes b_{j+1} & \longmapsto s\overline a_0\otimes s\overline a_{1, i} \otimes sm \otimes s\overline b_{1, j} \otimes b_{j+1}\\
 a_0 \otimes s\overline a_{1, p-1} \otimes m & \longmapsto s\overline a_0\otimes  s\overline a_{1, p-1} \otimes m \\
m \otimes s \overline b_{1, p-1} \otimes b_p & \longmapsto sm \otimes s \overline b_{1, p-1} \otimes b_p
\end{align*}
We define a $\mathbb k $-linear map
$$\widetilde \eta^{-p}=\varpi^{-p+1} \circ \partial^{-p} \colon\mathbb B^{-p} \longrightarrow  \Omega_{\text{$\Lambda$-$\Pi$}}^{p}(M).$$
In view of $\widetilde \eta^{-p} \circ \partial^{-p-1} =0$, we have a unique induced map
$$
 \eta^{-p} \colon \mathrm{Cok}(\partial^{-p-1}) \longrightarrow{}\Omega_{\text{$\Lambda$-$\Pi$}}^{p}(M).
$$
One checks easily that $\eta^{-p}\circ \gamma^{-p}$ equals the identity. By the surjectivity of $\gamma^{-p}$, we infer that $\gamma^{-p}$ is an isomorphism.
\end{proof}

\begin{rem}\label{remshortexactM}
The right $\Pi$-module structure on $\Omega_{\text{$\Lambda$-$\Pi$}}^{p}(M)$ is induced by the right action of $\Pi$ on $M$ and $\Pi$. The left $\Lambda$-module structure is given by; compare \eqref{rightblacktriangle}
\begin{align*}
 a_0\blacktriangleright (s\overline a_{1, i} \otimes sx \otimes s\overline b_{1, j} \otimes b_{j+1}) & := (\pi\otimes \mathbf 1^{\otimes p})\circ \partial^{-p}(a_0 \otimes s\overline a_{1, i} \otimes sx \otimes s\overline b_{1, j} \otimes b_{j+1}),\\
a_0\blacktriangleright (s\overline a_{1, p} \otimes x) &: = (\pi\otimes \mathbf 1^{\otimes p})\circ \partial^{-p}(a_0 \otimes s\overline a_{1, p} \otimes x),
 \end{align*}
where $\pi \colon \Lambda \to s\overline\Lambda$ is the natural projection $a \mapsto s\overline a$ of degree $-1$.

We  have a short exact sequence of $\Lambda$-$\Pi$-modules; compare \eqref{shortexactsequence00}
\begin{align}\label{equ:bimod-B}
0 \longrightarrow \Sigma^{-1} \Omega_{\text{$\Lambda$-$\Pi$}}^{p+1}(M)\xrightarrow{\partial^{-p-1} \circ (1 \otimes \mathbf 1)} \mathbb B^{-p} \xrightarrow{ \quad \widetilde \eta^{-p} \; } \Omega_{\text{$\Lambda$-$\Pi$}}^{p}(M) \longrightarrow 0,
 \end{align}
 where the map $1 \otimes \mathbf 1$ is given in  \eqref{1mathbf1}. Here,  we always view $\Omega_{\text{$\Lambda$-$\Pi$}}^{p}(M)$ as a graded $\Lambda$-$\Pi$-bimodule concentrated in degree $-p$. By convention, we have $\Omega^0_{\text{$\Lambda$-$\Pi$}}(M)=M$.
\end{rem}

\vskip 2pt

Fix $p\geq 0$. Applying the functor ${\rm Hom}_{\text{$\Lambda$-$\Pi$}}(-,\Omega_{\text{$\Lambda$-$\Pi$}}^{p}(M))$ to the resolution  $\overline\Barr(\Lambda) \otimes_\Lambda M\otimes_\Pi \overline\Barr(\Pi)$ of $M$; see the proof of \cite[Proposition 4.1]{Cib}, we obtain a cochain complex
$$\overline{C}^*(M, \Omega_{\text{$\Lambda$-$\Pi$}}^{p}(M))$$
 computing ${\rm Ext}^*_{\text{$\Lambda$-$\Pi$}}(M, \Omega_{\text{$\Lambda$-$\Pi$}}^{p}(M))$. The space $\overline C^m(M, \Omega_{\text{$\Lambda$-$\Pi$}}^{p}(M))$ in degree $m$ is as follows:
 \begin{align*}
&\bigoplus_{\substack{ i+j=m+p\\i, j\geq 0}} \Hom
\left((s\overline\Lambda)^{\otimes i} \otimes M \otimes (s\overline \Pi)^{\otimes j} , \ \bigoplus_{
\substack{k+l=p-1\\ k, l \geq 0}}(s\overline\Lambda)^{\otimes k} \otimes sM \otimes (s\overline \Pi)^{\otimes l} \otimes \Pi \bigoplus (s\overline\Lambda)^{\otimes p} \otimes M \right).
\end{align*}

Recall that $\Omega^p_{\nc, R}(\Lambda)=(s\overline \Lambda)^{\otimes p}\otimes \Lambda$ is the graded $\Lambda$-$\Lambda$-bimodule of right noncommutative differential $p$-forms.  We have a natural identification
 $$\HH^*(\Lambda, \Omega_{\nc, R}^p(\Lambda)) \simeq \mathrm{Ext}^*_{\Lambda^e}(\Lambda, \Omega_{\nc, R}^p(\Lambda)).$$

Consider the following triangle functor
$$-\otimes_\Lambda M \colon \mathbf{D}(\Lambda^e) \longrightarrow \mathbf{D}(\Lambda \otimes \Pi^{\op}).$$
Then we have a map
 \begin{align*}
\alpha^*_p\colon \HH^*(\Lambda, \Omega_{\nc, R}^p(\Lambda))  \xrightarrow{-\otimes_\Lambda M} \mathrm{Ext}^*_{\Lambda\mbox{-}\Pi}(M, \Omega_{\nc, R}^p(\Lambda) \otimes_\Lambda M ) \longrightarrow   \mathrm{Ext}^*_{\Lambda\mbox{-}\Pi}(M, \Omega_{\text{$\Lambda$-$\Pi$}}^{p}(M)),
\end{align*}
where the second map is induced by the natural inclusion
 \begin{align}\label{alpha-p}
  \Omega_{\nc, R}^p(\Lambda) \otimes_\Lambda M \xrightarrow{\simeq} (s\overline \Lambda)^{\otimes p} \otimes M\hookrightarrow \Omega_{\text{$\Lambda$-$\Pi$}}^{p}(M).
\end{align}

We define a cochain map
\begin{align}\label{equ:lift-alpha}
 \widetilde \alpha_p  \colon  \overline{C}^*(\Lambda, \Omega_{\nc, R}^p(\Lambda))  \stackrel{}{\longrightarrow}   \overline C^{*}(M, \Omega_{\text{$\Lambda$-$\Pi$}}^{p}(M))
\end{align}
as follows: for any $f \in  \Hom((s\overline \Lambda)^{\otimes m}, (s\overline \Lambda)^{\otimes p}\otimes \Lambda)$ with $m\geq 0$, the corresponding map $\widetilde \alpha_p(f) \in \overline C^{m-p} (M, \Omega_{\text{$\Lambda$-$\Pi$}}^{p}(M))$ is given by
\begin{align*}
\widetilde \alpha_p(f)|_{(s\overline \Lambda)^{\otimes m-i} \otimes M \otimes (s\overline \Pi)^{\otimes i}}& = 0 &&\text{ if $i \neq 0$}\\
\widetilde \alpha_p(f)(s\overline{a}_{1, m} \otimes x) &=f(s\overline{a}_{1, m})\otimes_\Lambda x
\end{align*}
for any $s\overline{a}_{1, m} \otimes x \in (s\overline \Lambda)^{\otimes m} \otimes M$.

Recall that the cochain complexes $\overline{C}^*(\Lambda, \Omega_{\nc, R}^p(\Lambda)) $ and $\overline C^{*}(M, \Omega_{\text{$\Lambda$-$\Pi$}}^{p}(M))$ compute $\HH^*(\Lambda, \Omega_{\nc, R}^p(\Lambda))$ and  $\mathrm{Ext}^*_{\Lambda\mbox{-}\Pi}(M, \Omega_{\text{$\Lambda$-$\Pi$}}^{p}(M))$, respectively.

\begin{lem}\label{lem:liftingalpha}
The cochain map $\widetilde \alpha_p$ is a lifting of $\alpha_p^*$.
\end{lem}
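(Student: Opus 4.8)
\textbf{Proof plan for Lemma \ref{lem:liftingalpha}.}

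The statement to be verified is that the cochain map $\widetilde\alpha_p$ of \eqref{equ:lift-alpha} induces the map $\alpha_p^*$ on cohomology. The strategy is the standard one for checking that an explicitly given cochain map lifts a morphism defined by a composite of a derived functor and a bimodule map: resolve, compare at the resolution level, and track how the two sides of the definition of $\alpha_p^*$ get represented. First I would fix a projective $\Lambda^e$-bimodule resolution of $\Lambda$ computing $\HH^*(\Lambda,\Omega^p_{\nc,R}(\Lambda))$; the natural choice is the normalized bar resolution $\overline\Barr(\Lambda)$, so that a class in $\HH^m(\Lambda,\Omega^p_{\nc,R}(\Lambda))$ is represented by a cocycle $f\in\Hom((s\overline\Lambda)^{\otimes m},\Omega^p_{\nc,R}(\Lambda))$ via the identification \eqref{identification-bimodule}. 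On the target side, the relevant resolution of $M$ is $\overline\Barr(\Lambda)\otimes_\Lambda M\otimes_\Pi\overline\Barr(\Pi)$, as in the proof of \cite[Proposition 4.1]{Cib}, which is what gives the complex $\overline C^*(M,\Omega^p_{\text{$\Lambda$-$\Pi$}}(M))$.

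Next I would compute what $\alpha_p^*$ does to such a representative $f$, step by step through its definition. Applying $-\otimes_\Lambda M$ to the bimodule cocycle $\widetilde f\colon\Lambda\otimes_E(s\overline\Lambda)^{\otimes m}\otimes_E\Lambda\to\Omega^p_{\nc,R}(\Lambda)$ (of degree $m$) and using that $\overline\Barr(\Lambda)\otimes_\Lambda M$ is a $\Lambda$-$\Pi$-resolution of $M$ contained in $\overline\Barr(\Lambda)\otimes_\Lambda M\otimes_\Pi\overline\Barr(\Pi)$ — concretely, via the section given by the unit of $\Pi$ — one gets a representative of the image in $\mathrm{Ext}^*_{\Lambda\text{-}\Pi}(M,\Omega^p_{\nc,R}(\Lambda)\otimes_\Lambda M)$. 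That representative is supported on the summand $(s\overline\Lambda)^{\otimes m}\otimes M$ of $\overline C^{m-p}$ and sends $s\overline a_{1,m}\otimes x$ to $f(s\overline a_{1,m})\otimes_\Lambda x$. Composing with the map induced by the inclusion \eqref{alpha-p} $\Omega^p_{\nc,R}(\Lambda)\otimes_\Lambda M\xrightarrow{\simeq}(s\overline\Lambda)^{\otimes p}\otimes M\hookrightarrow\Omega^p_{\text{$\Lambda$-$\Pi$}}(M)$ does nothing further at the cochain level except land the value in the larger bimodule. Comparing this with the explicit formula for $\widetilde\alpha_p(f)$ — which is zero on all summands $(s\overline\Lambda)^{\otimes m-i}\otimes M\otimes(s\overline\Pi)^{\otimes i}$ with $i\neq 0$ and equals $s\overline a_{1,m}\otimes x\mapsto f(s\overline a_{1,m})\otimes_\Lambda x$ on the $i=0$ summand — shows they agree. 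The only genuine points to check are: (i) $\widetilde\alpha_p$ is indeed a cochain map, i.e. it commutes with the Hochschild differentials, which is a direct computation using that the differential on $\overline C^*(M,\Omega^p_{\text{$\Lambda$-$\Pi$}}(M))$ restricted to the $i=0$ part recovers the Hochschild differential of $\overline C^*(\Lambda,\Omega^p_{\nc,R}(\Lambda))$ up to the action by $\blacktriangleright$ described in Remark \ref{remshortexactM}; and (ii) the comparison map at the resolution level — namely the quasi-isomorphism $\overline\Barr(\Lambda)\otimes_\Lambda M\to\overline\Barr(\Lambda)\otimes_\Lambda M\otimes_\Pi\overline\Barr(\Pi)$ lifting $\mathbf 1_M$ — is, up to homotopy, the obvious inclusion via the unit of $\Pi$, so that it realizes $-\otimes_\Lambda M$ correctly on $\mathrm{Ext}$-groups.

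The main obstacle is bookkeeping rather than conceptual: one has to make the identification \eqref{identification-bimodule} and the translation between $\mathrm{Ext}$ computed from $\overline\Barr(\Lambda)\otimes_\Lambda M\otimes_\Pi\overline\Barr(\Pi)$ and the complex $\overline C^*(M,\Omega^p_{\text{$\Lambda$-$\Pi$}}(M))$ completely explicit, and then verify that the signs coming from the Koszul rule in $\widetilde\alpha_p$, in the differential of $\overline C^*(M,\Omega^p_{\text{$\Lambda$-$\Pi$}}(M))$ (built from the differential $\partial$ on $\mathbb B$ via Remark \ref{remshortexactM}), and in the map induced by \eqref{alpha-p} all match. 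I would organize this by first treating the case $p=0$ (where $\Omega^0_{\text{$\Lambda$-$\Pi$}}(M)=M$ and $\alpha_0^*$ is simply $-\otimes_\Lambda M\colon\HH^*(\Lambda,\Lambda)\to\mathrm{Ext}^*_{\Lambda\text{-}\Pi}(M,M)$), which is essentially \cite[Proposition 4.1]{Cib}, and then observing that the general $p$ reduces to $p=0$ combined with the short exact sequence \eqref{equ:bimod-B} and its counterpart \eqref{shortexactsequence00}: both $\widetilde\alpha_p$ and $\alpha_p^*$ are compatible with the connecting maps $\theta_{p,R}$ resp. the maps induced by \eqref{shortexactsequence00}, so the equality for all $p$ follows from the equality for $p=0$ by a diagram chase, once one checks the relevant square of short exact sequences commutes.
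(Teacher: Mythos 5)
Your overall route is the paper's: factor $\widetilde\alpha_p$ as the lifting of the tensor functor computed on the resolution $\overline{\Barr}(\Lambda)\otimes_\Lambda M$ (the map $f\mapsto\big(s\overline a_{1,m}\otimes x\mapsto f(s\overline a_{1,m})\otimes_\Lambda x\big)$), followed by a change of resolution and the coefficient inclusion \eqref{alpha-p}; the paper writes this as $\widetilde\alpha_p=\iota\circ\alpha'_p$. However, one step of your plan does not work as stated: the comparison of resolutions you invoke goes the wrong way and is not a bimodule map. The ``section given by the unit of $\Pi$'', $\xi\otimes x\mapsto \xi\otimes x\otimes 1\otimes 1$, is not right $\Pi$-linear (one has $\xi\otimes xb\otimes 1\otimes 1\neq \xi\otimes x\otimes 1\otimes b$ in general), so it is not a morphism of complexes of $\Lambda$-$\Pi$-bimodules and cannot serve, even ``up to homotopy'', as a comparison map lifting $\mathbf 1_M$. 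What you need is a bimodule chain map from $\overline{\Barr}(\Lambda)\otimes_\Lambda M\otimes_\Pi\overline{\Barr}(\Pi)$ \emph{to} $\overline{\Barr}(\Lambda)\otimes_\Lambda M$ lifting $\mathbf 1_M$, for instance the one induced by the augmentation $\overline{\Barr}(\Pi)\to\Pi$; precomposition with it is precisely the ``extend by zero off the $j=0$ summand'' inclusion $\Hom_{\text{$\mathbb k$-$\Pi$}}((s\overline\Lambda)^{\otimes m}\otimes M,-)\hookrightarrow\Hom((s\overline\Lambda)^{\otimes m}\otimes M,-)$ used in the paper, and with this correction your representative-chasing computation produces exactly the formula defining $\widetilde\alpha_p$.

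The second soft spot is your proposed reduction of general $p$ to $p=0$. Compatibility of both $\widetilde\alpha_p$ and $\alpha_p^*$ with the connecting maps coming from \eqref{shortexactsequence00} and \eqref{equ:bimod-B}, together with the case $p=0$, only gives agreement on classes lying in the image of the iterated connecting maps from $\HH^*(\Lambda,\Lambda)$; since these maps are in general neither surjective nor injective, this diagram chase does not prove the statement for all classes at level $p$. Fortunately the reduction is unnecessary: the direct argument of your first two paragraphs (with the corrected comparison map) works uniformly in $p$, which is exactly what the paper's proof does.
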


\begin{proof}
Since $M$ is projective as a right $\Pi$-module, it follows that the tensor functor $-\otimes_\Lambda M$ sends the projective resolution $\overline{\Barr}(\Lambda)$ of $\Lambda$ to a projective resolution $\overline{\Barr}(\Lambda)\otimes_\Lambda M$ of $M$.

Denote $\Omega_{\nc, R}^{p}(M) =\Omega_{\nc, R}^p(\Lambda) \otimes_\Lambda M$.  Consider the complex
$$\overline C_{\text{$\mathbb k $-$\Pi$}}^*(M, \Omega_{\nc, R}^{p}(M))  =\prod_{m \geq 0} \Hom_{\text{$\mathbb k $-$\Pi$}}((s\overline \Lambda)^{\otimes m}\otimes M, \Omega_{\nc, R}^{p}(M))$$ whose differential is induced by the differential of $\Hom_{\text{$\Lambda$-$\Pi$}}(\overline{\Barr}(\Lambda)\otimes_\Lambda M, \Omega_{\nc, R}^{p}(M))$ under the natural isomorphism
$$\begin{array}{cccr}
\Hom_{\text{$\Lambda$-$\Pi$}}(\Lambda\otimes  (s\overline \Lambda)^{\otimes m}\otimes  M, \Omega_{\nc, R}^{p}(M)) & \stackrel{\simeq}{\longrightarrow}  & \Hom_{\text{$\mathbb k $-$\Pi$}}((s\overline \Lambda)^{\otimes m}\otimes M, \Omega_{\nc, R}^{p}(M)) \\
f & \longmapsto & ( s\overline{a}_{1, m} \otimes x \longmapsto   f(1_\Lambda \otimes s\overline{a}_{1, m} \otimes x)).
\end{array}
$$

Note that $\overline C_{\text{$\mathbb k $-$\Pi$}}^*(M, \Omega_{\nc, R}^{p}(M))$ also computes $\mathrm{Ext}^*_{\Lambda\mbox{-}\Pi}(M, \Omega_{\nc, R}^{p}(M))$.  The first map  $ \HH^*(\Lambda, \Omega_{\nc, R}^p(\Lambda))  \xrightarrow{-\otimes_\Lambda M} \mathrm{Ext}^*_{\Lambda\mbox{-}\Pi}(M, \Omega_{\nc, R}^{p}(M))$ in defining $\alpha_p^*$ has the following lifting
$$
\alpha_p' \colon  \overline{C}^*(\Lambda, \Omega_{R}^p(\Lambda)) \stackrel{}{\longrightarrow} \overline C_{\text{$\mathbb k $-$\Pi$}}^*(M, \Omega_{\nc, R}^{p}(M)), $$
which sends  $f \in  \Hom((s\overline \Lambda)^{\otimes m}, \Omega_{\nc, R}^p(\Lambda))$ to $\alpha'(f)\in \Hom_{\text{$\mathbb k $-$\Pi$}}((s\overline \Lambda)^{\otimes m}\otimes M, \Omega_{\nc, R}^{p}(M))$
given by
$$
\alpha_p'(f) (s\overline a_{1, m}\otimes x) = f(s\overline a_{1, m}) \otimes_\Lambda x.
$$

The second map $\mathrm{Ext}^*_{\Lambda\mbox{-}\Pi}(M, \Omega_{\nc, R}^{p}(M))\rightarrow   \mathrm{Ext}^*_{\Lambda\mbox{-}\Pi}(M, \Omega_{\text{$\Lambda$-$\Pi$}}^{p}(M))$ in defining $\alpha_p^*$ has the following lifting
$$
\iota \colon\overline C_{\text{$\mathbb k $-$\Pi$}}^*(M, \Omega_{\nc, R}^{p}(M))   \hookrightarrow   \overline C^{*}(M, \Omega_{\text{$\Lambda$-$\Pi$}}^{p}(M))  $$
which is induced by the natural  inclusion
$$
\Hom_{\text{$\mathbb k $-$\Pi$}}((s\overline \Lambda)^{\smallotimes m}\smallotimes \!M, \!\Omega_{\nc, R}^{p}\!(M)) \!\hookrightarrow \!\Hom((s\overline \Lambda)^{\smallotimes m}\smallotimes M, \!\Omega_{\nc, R}^{p}\!(M))\! \hookrightarrow \!\Hom((s\overline \Lambda)^{\smallotimes m}\smallotimes M, \!\Omega_{\text{$\Lambda$-$\Pi$}}^{p}(M)).
$$
Observe that $\widetilde \alpha_p = \iota \circ \alpha_p'$. It follows that $\widetilde \alpha_p$ is a lifting of $\alpha_p^*$.
\end{proof}

Similarly, we have  the following triangle functor
\begin{equation*}
M\otimes_\Pi - \colon  \mathbf{D}(\Pi^e) \longrightarrow \mathbf{D}( \Lambda\otimes \Pi^{\op}),
\end{equation*}
and  the corresponding  map
\begin{align*}
\beta_p^* \colon \HH^*(\Pi, \Omega_{\nc, R}^p(\Pi)) \xrightarrow{M \otimes_{\Pi}-}  \mathrm{Ext}^*_{\Lambda\otimes \Pi^{\op}}(M, M \otimes_{\Pi}  \Omega_{\nc, R}^p(\Pi) )  \stackrel{}{\longrightarrow}  \mathrm{Ext}^*_{\Lambda\otimes \Pi^{\op}}(M, \Omega_{\text{$\Lambda$-$\Pi$}}^{p}(M)), \end{align*}
where the second map is induced by  the following bimodule homomorphism
 \begin{align}\label{mpi}
   M \otimes_{\Pi}  \Omega_{\nc, R}^p(\Pi)  \hookrightarrow  \Omega_{\text{$\Lambda$-$\Pi$}}^{p}(M), \quad x \otimes_{\Pi} (s\overline b_{1, p} \otimes b_{p+1})\longmapsto
x \triangleright (s \overline b_{1, p} \otimes b_{p+1}).
  \end{align}
  Here, the action $\triangleright$  is given by; compare \eqref{rightblacktriangle}
\begin{align}\label{righttriangle}
x \triangleright (s \overline b_{1, p} \otimes b_{p+1}) ={}& s(xb_1) \otimes s \overline b_{2, p} \otimes b_{p+1} +\sum_{i=1}^{p-1} (-1)^isx \otimes s \overline b_{1,i-1} \otimes s\overline{b_ib_{i+1}} \otimes s\overline b_{i+2, p} \otimes b_{p+1})\nonumber\\
&+(-1)^p sx \otimes s \overline b_{1, p-1} \otimes b_pb_{p+1}.
\end{align}
We stress that the injection \eqref{mpi} differs from the natural inclusion \eqref{alpha-p}.   This actually leads to a  tricky argument in the proof of Proposition \ref{prop:HH-sg-long}; For more explanations, see Remark \ref{rem:tricky}.

   We define a cochain map
    \begin{equation}\label{equ:lift-beta}
 \widetilde \beta_p \colon  \overline{C}^*(\Pi, \Omega_{\nc, R}^p(\Pi)) \stackrel{}{\longrightarrow} \overline C^{*}(M, \Omega_{\text{$\Lambda$-$\Pi$}}^{p}(M))
\end{equation}
 as follows: for any map  $g \in  \Hom((s\overline \Pi)^{\otimes m}, (s\overline \Pi)^{\otimes p}\otimes \Pi)$, the corresponding map $\widetilde \beta_p(g) \in \overline{C}^{m-p} (M, \Omega_{\text{$\Lambda$-$\Pi$}}^{p}(M))$ is given by
\begin{align}\label{triangleright}
\widetilde \beta_p(g)|_{(s\overline \Lambda)^{\otimes i} \otimes M \otimes (s\overline \Pi)^{\otimes m-i}}&=0 && \text{if $i \neq 0$}\nonumber\\
\widetilde \beta_p(g)( x\otimes s\overline{b}_{1, m} ) &= x \triangleright g(s\overline{b}_{1, m})
\end{align}
for any $x \otimes s\overline{b}_{1, m} \in M \otimes (s\overline \Pi)^{\otimes m}$, where the action $\triangleright$  is defined in \eqref{righttriangle}.

We have the following analogous result of Lemma~\ref{lem:liftingalpha}.

\begin{lem}\label{lem:liftingbeta}
The map $\widetilde \beta_p$ is a lifting of $\beta_p^*$.
\end{lem}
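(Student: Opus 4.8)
\textbf{Proof proposal for Lemma~\ref{lem:liftingbeta}.} The plan is to mimic closely the proof of Lemma~\ref{lem:liftingalpha}, but with the left and right roles of $\Lambda$ and $\Pi$ interchanged, and with the crucial caveat that the relevant inclusion is now \eqref{mpi} rather than the naive inclusion \eqref{alpha-p}. First I would note that since $M$ is projective as a \emph{left} $\Lambda$-module, the tensor functor $M\otimes_\Pi-$ sends the normalized bar resolution $\overline{\Barr}(\Pi)$ of $\Pi$ to a projective $\Lambda$-$\Pi$-bimodule resolution $M\otimes_\Pi\overline{\Barr}(\Pi)$ of $M$. Writing $\Omega_{\nc, R}^p(M) = M\otimes_\Pi\Omega_{\nc, R}^p(\Pi)$, I would introduce the cochain complex
$$\overline C_{\text{$\Lambda$-$\mathbb k$}}^*(M, \Omega_{\nc, R}^{p}(M)) = \prod_{m\geq 0}\Hom_{\text{$\Lambda$-$\mathbb k$}}(M\otimes (s\overline\Pi)^{\otimes m}, \Omega_{\nc, R}^{p}(M)),$$
with differential transported from $\Hom_{\text{$\Lambda$-$\Pi$}}(M\otimes_\Pi\overline{\Barr}(\Pi), \Omega_{\nc, R}^{p}(M))$ via the adjunction isomorphism $\Hom_{\text{$\Lambda$-$\Pi$}}(M\otimes\Pi\otimes(s\overline\Pi)^{\otimes m}\otimes\Pi, -)\simeq \Hom_{\text{$\Lambda$-$\mathbb k$}}(M\otimes(s\overline\Pi)^{\otimes m}, -)$, $g\mapsto(x\otimes s\overline b_{1,m}\mapsto g(x\otimes s\overline b_{1,m}\otimes 1_\Pi))$. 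This complex again computes $\mathrm{Ext}^*_{\text{$\Lambda$-$\Pi$}}(M, \Omega_{\nc, R}^{p}(M))$.

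Next I would factor $\beta_p^*$ through this complex, exactly as $\alpha_p^*$ was factored through $\overline C_{\text{$\mathbb k$-$\Pi$}}^*(M, \Omega_{\nc,R}^p(M))$ in the proof of Lemma~\ref{lem:liftingalpha}. The first map $\HH^*(\Pi, \Omega_{\nc, R}^p(\Pi))\xrightarrow{M\otimes_\Pi-}\mathrm{Ext}^*_{\text{$\Lambda$-$\Pi$}}(M, \Omega_{\nc, R}^{p}(M))$ is lifted by
$$\beta_p'\colon \overline C^*(\Pi, \Omega_{\nc, R}^p(\Pi))\longrightarrow \overline C_{\text{$\Lambda$-$\mathbb k$}}^*(M, \Omega_{\nc, R}^{p}(M)),\quad \beta_p'(g)(x\otimes s\overline b_{1,m}) = x\otimes_\Pi g(s\overline b_{1,m}),$$
which is a cochain map because it is nothing but applying $\Hom(M\otimes_\Pi-, -)$ and then the functoriality of $M\otimes_\Pi-$. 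The second map $\mathrm{Ext}^*_{\text{$\Lambda$-$\Pi$}}(M, \Omega_{\nc,R}^p(M))\to\mathrm{Ext}^*_{\text{$\Lambda$-$\Pi$}}(M, \Omega_{\text{$\Lambda$-$\Pi$}}^{p}(M))$, induced by the bimodule homomorphism \eqref{mpi}, is lifted by the cochain map
$$\iota'\colon \overline C_{\text{$\Lambda$-$\mathbb k$}}^*(M, \Omega_{\nc, R}^{p}(M))\longrightarrow \overline C^*(M, \Omega_{\text{$\Lambda$-$\Pi$}}^{p}(M))$$
obtained by postcomposing with \eqref{mpi} and precomposing with the projection $\overline{\Barr}(\Lambda)\otimes_\Lambda M\otimes_\Pi\overline{\Barr}(\Pi)\twoheadrightarrow M\otimes_\Pi\overline{\Barr}(\Pi)$ onto the summand with no $\Lambda$-bar factors; concretely $\iota'$ sends a cochain supported on $M\otimes(s\overline\Pi)^{\otimes m}$ to its composite with $x\otimes_\Pi(s\overline b_{1,p}\otimes b_{p+1})\mapsto x\triangleright(s\overline b_{1,p}\otimes b_{p+1})$, and is zero on the components $(s\overline\Lambda)^{\otimes i}\otimes M\otimes(s\overline\Pi)^{\otimes m-i}$ with $i\neq 0$. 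That $\iota'$ is a cochain map is the content of the fact that the action $\triangleright$ of \eqref{righttriangle} is precisely the $\Lambda$-module structure on $\Omega_{\text{$\Lambda$-$\Pi$}}^{p}(M)$ inherited from $\mathrm{Cok}(\partial^{-p-1})$ (Remark~\ref{remshortexactM}), so that \eqref{mpi} really is a bimodule map and commutes with the differentials. Finally, I would check by direct inspection of the formulas \eqref{triangleright} and the definitions of $\beta_p'$ and $\iota'$ that $\widetilde\beta_p = \iota'\circ\beta_p'$, whence $\widetilde\beta_p$ lifts $\beta_p^*$.

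The main subtlety — the one point where this proof genuinely differs from that of Lemma~\ref{lem:liftingalpha} rather than being a mirror image — is the verification that \eqref{mpi} is a well-defined morphism of $\Lambda$-$\Pi$-bimodules and that $\iota'$ is a cochain map. Here one cannot simply quote a "natural inclusion" as in \eqref{alpha-p}; the formula \eqref{righttriangle} for $x\triangleright(s\overline b_{1,p}\otimes b_{p+1})$ collapses the first bar-variable $b_1$ against $M$ and performs the usual internal multiplications, and one must confirm this matches $(\pi\otimes\mathbf 1^{\otimes p})\circ\partial^{-p}$ applied to $1\otimes sx\otimes s\overline b_{1,p}\otimes b_{p+1}$ (the $M\otimes(s\overline\Pi)^{\otimes p}\otimes\Pi$ summand of $\mathbb B^{-p}$) modulo the image of $\partial^{-p-1}$, as identified in Lemma~\ref{lem:differentialforms}. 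This is a bookkeeping check with signs governed by the Koszul rule, entirely parallel to the computation of $z'$ in the proof of Lemma~\ref{lem:differentialforms}, and I expect no conceptual obstacle — only the need to keep the $\Pi$-linearity on the right and the $\Lambda$-linearity (via $\triangleright$) on the left carefully separated.
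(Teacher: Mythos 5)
Your proposal is correct and follows essentially the same route as the paper: factor $\beta_p^*$ through the intermediate complex $\overline C_{\text{$\Lambda$-$\mathbb k$}}^*(M, M\otimes_{\Pi}\Omega_{\nc,R}^{p}(\Pi))$, lift the two maps by $\beta_p'$ and the inclusion induced by \eqref{mpi}, and observe $\widetilde\beta_p=\iota\circ\beta_p'$. Your additional remarks on verifying that \eqref{mpi} is a bimodule map compatible with the differentials are a sound elaboration of a point the paper leaves implicit, but do not change the argument.
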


\begin{proof}
The tensor functor $M \otimes_\Pi -$ sends the projection resolution $\overline \Barr(\Pi)$ of $\Pi$ to the projective resolution $M \otimes_\Pi \overline \Barr(\Pi)$ of $M$.

Consider the complex  $$\overline C_{\text{$\Lambda$-$\mathbb k $}}^*(M, M\otimes_{\Pi} \Omega_{\nc, R}^{p}(\Pi))  =\prod_{m \geq 0} \Hom_{\text{$\Lambda$-$\mathbb k $}}(M \otimes (s\overline \Pi)^{\otimes m},M\otimes_{\Pi} \Omega_{\nc, R}^{p}(\Pi)),$$
which is naturally isomorphic to $\Hom_{\text{$\Lambda$-$\Pi$}}(M \otimes_\Pi \overline \Barr(\Pi), M\otimes_{\Pi} \Omega_{\nc, R}^{p}(\Pi))$. Therefore, both complexes compute $\mathrm{Ext}^*_{\Lambda\mbox{-}\Pi}(M, M \otimes_{\Pi}  \Omega_{\nc, R}^p(\Pi) )$.
Then the map
$$ \HH^*(\Pi, \Omega_{\nc, R}^p(\Pi)) \xrightarrow{M \otimes_{\Pi}-}  \mathrm{Ext}^*_{\Lambda\mbox{-} \Pi}(M, M \otimes_{\Pi}  \Omega_{\nc, R}^p(\Pi) )$$
 has a lifting
$$
\beta_p' \colon \overline C^*(\Pi, \Omega_{\nc, R}^p(\Pi))  \longrightarrow \overline C_{\text{$\Lambda$-$\mathbb k $}}^*(M, M \otimes_{\Pi}  \Omega_{\nc, R}^p(\Pi)),
$$
which sends $g \in  \Hom((s\overline \Pi)^{\otimes m}, \Omega_{\nc, R}^p(\Pi))$ to $\beta_p'(g) \in  \Hom_{\text{$\Lambda$-$\mathbb k $}}(M \otimes (s\overline \Pi)^{\otimes m}, M \otimes_{\Pi}  \Omega_{\nc, R}^p(\Pi))$ given by
$$
\beta_p'(g)( x \otimes s\overline{b}_{1, m}) =x \otimes_{\Pi} g(s\overline{b}_{1, m}).
$$
We have an injection of complexes
$$
\iota \colon \overline C_{\text{$\Lambda$-$\mathbb k $}}^*(M, M \otimes_{\Pi}  \Omega_{\nc, R}^p(\Pi)) \longrightarrow{} \overline C^{*}(M, \Omega_{\text{$\Lambda$-$\Pi$}}^{p}(M))
$$
induced by the injection \eqref{mpi}. By $\widetilde \beta_p = \iota \circ \beta_p'$, we conclude that $\widetilde \beta_p$ is a lifting of $\beta_p^*$.
\end{proof}

\subsection{A triangular matrix algebra and colimits}

Denote by $\Gamma = \left(\begin{matrix} \Lambda & M \\ 0 & \Pi \end{matrix}\right)$  the  upper triangular matrix algebra. Set $e_1 =  \left(\begin{matrix}1_\Lambda & 0 \\ 0 & 0\end{matrix}\right)$ and  $e_2 =  \left(\begin{matrix} 0 & 0\\ 0 & 1_\Pi\end{matrix}\right)$.
Then we have the following natural identifications:
\begin{align}\label{idempotent}
e_1 \Gamma e_1 \simeq \Lambda, \quad e_2 \Gamma e_2 \simeq \Pi, \quad e_1\Gamma e_2 \simeq M, \quad {\rm and}\;  e_2 \Gamma e_1 =0.
\end{align}

Denote by $E = \mathbb k  e_1 \oplus \mathbb k  e_2$ the semisimple subalgebra of $\Gamma$. Set $\overline{\Gamma}=\Gamma/(E\cdot1_\Gamma)$. Consider the $E$-relative right singular Hochschild cochain complex $\overline{C}_{\sg, R, E}^*(\Gamma, \Gamma)$.

 Using \eqref{idempotent}, we identify $\overline{\Gamma}$ with $\overline{\Lambda}\oplus \overline{\Pi} \oplus M$. Here, we agree that $\overline{\Lambda}=\Lambda/(\mathbb k  \cdot 1_{\Lambda})$ and $\overline{\Pi}=\Lambda/(\mathbb k  \cdot 1_{\Pi})$.  Then we have
 \begin{align*}
s\overline \Gamma^{\otimes_E m}\cong s\overline\Lambda^{\otimes m}\bigoplus s\overline\Pi^{\otimes m}\bigoplus \left(\bigoplus_{\substack{i,j\geq 0\\ i+j=m-1}}s\overline\Lambda^{\otimes i}\otimes sM\otimes s\overline\Pi^{\otimes j}\right).
\end{align*}
For each $m, p\geq 0$, we have the following natural decomposition of vector spaces
\begin{equation}\label{decompositionmp}
\begin{split}
&\Hom_{\text{$E$-$E$}}((s\overline \Gamma)^{\smallotimes_E m}, (s\overline \Gamma)^{\smallotimes _E p}\smallotimes_E \Gamma) \\
\simeq{} &  \Hom((s\overline \Lambda)^{\smallotimes m}, (s\overline \Lambda)^{\smallotimes p}\smallotimes \Lambda) \; \bigoplus  \Hom((s\overline \Pi)^{\smallotimes m}, (s\overline \Pi)^{\smallotimes p}\smallotimes \Pi) \; \bigoplus  \\
&\!\!\! \bigoplus_{\substack{i, j, \geq 0\\ i + j=m-1}} \Hom\Big((s\overline \Lambda)^{\smallotimes i} \smallotimes sM \smallotimes (s\overline \Pi)^{\smallotimes j},  \bigoplus_{\substack{i', j' \geq 0\\ i'+j'=p-1}}(s\overline \Lambda)^{\smallotimes i'}\smallotimes sM \smallotimes  (s\overline \Pi)^{\smallotimes j'}\smallotimes \Pi \bigoplus (s\overline \Lambda)^{\smallotimes p}\smallotimes M\Big),
\end{split}
\end{equation}
which induces the following decomposition of graded vector spaces
\begin{align}\label{splitdecomposition1}
\overline{C}_{ E}^*(\Gamma, \Omega_{\nc, R, E}^p(\Gamma)) \simeq    \overline{C}^*(\Lambda, \Omega_{\nc, R}^p(\Lambda)) \oplus \overline{C}^*(\Pi, \Omega_{\nc,R}^p(\Pi))\oplus \Sigma^{-1}\overline C^{*}(M, \Omega_{\text{$\Lambda$-$\Pi$}}^{p}(M)).
\end{align}

 We write elements on the right hand side of  \eqref{splitdecomposition1} as column vectors.  The differential $\delta_\Gamma$ of $\overline{C}_{ E}^*(\Gamma, \Omega_{\nc, R, E}^p(\Gamma))$ induces a differential $\overline\delta$  on the right hand side of \eqref{splitdecomposition1}. Similar to \eqref{comcom}, we obtain that $\overline\delta$ has the following form
  \begin{equation}\label{alphabeta}
\overline\delta=\begin{pmatrix}\delta_{\Lambda}&0&0\\
0&\delta_{\Pi}&0\\
 -s^{-1}\circ  \widetilde \alpha_p &s^{-1}\circ  \widetilde \beta_p &\Sigma^{-1}(\delta_M)
\end{pmatrix},
\end{equation}
where $\delta_\Lambda, \delta_\Pi$ and $\delta_M$ are  the Hochschild differentials of  $ \overline{C}^*(\Lambda, \Omega_{\nc, R}^p(\Lambda)),  \overline{C}^*(\Pi, \Omega_{\nc, R}^p(\Pi))$ and $\overline C^{*}(M, \Omega_{\text{$\Lambda$-$\Pi$}}^{p}(M))$, respectively. The entry
$$
s^{-1}\circ  \widetilde \alpha_p \colon
\overline{C}^*(\Lambda, \Omega_{\nc, R}^p(\Lambda)) \longrightarrow \Sigma^{-1}\overline C^{*}(M, \Omega_{\text{$\Lambda$-$\Pi$}}^{p}(M))
$$
 is a map of degree one, which is the composition of $\widetilde \alpha_p$ with the natural identification $s^{-1} \colon \overline C^{*}(M, \Omega_{\text{$\Lambda$-$\Pi$}}^{p}(M))\to \overline \Sigma^{-1}C^{*}(M, \Omega_{\text{$\Lambda$-$\Pi$}}^{p}(M))$ of degree one. A similar remark holds for $s^{-1}\circ \widetilde \beta_p$.

The decomposition \eqref{splitdecomposition1} induces a short exact sequence of complexes
\begin{align}\label{shortexactsequence}
0 \longrightarrow{} \Sigma^{-1}\overline C^{*}(M, \Omega_{\text{$\Lambda$-$\Pi$}}^{p}(M))
\stackrel{\rm inc}\longrightarrow    \overline{C}_{ E}^*(\Gamma, \Omega_{\nc, R, E}^p(\Gamma))
\stackrel{\left( \begin{smallmatrix} {\rm res}_1\\ {\rm res}_2 \end{smallmatrix}\right)}
\longrightarrow
\begin{matrix}\overline{C}^*(\Lambda, \Omega_{\nc, R}^p(\Lambda)) \\\oplus \ \overline{C}^*(\Pi, \Omega_{\nc, R}^p(\Pi)) \end{matrix}\longrightarrow 0.
\end{align}
Here, ``${\rm res}_i$" denotes the corresponding projection.

In what follows, letting $p$ vary, we will take colimits of \eqref{shortexactsequence}. Recall that the colimits, along the maps $\theta_{p, R, E}$ or $\theta_{p, R}$ in \eqref{theta-p},  of the middle and the right hand terms of \eqref{shortexactsequence} are $\overline{C}_{\sg, R, E}^*(\Gamma, \Gamma)$ and $\overline{C}^*_{\sg, R}(\Lambda, \Lambda) \oplus \overline{C}^*_{\sg, R}(\Pi, \Pi),$ respectively.  Similarly, we define  $$ \theta_p^M \colon  \overline C^*(M, \Omega_{\text{$\Lambda$-$\Pi$}}^{p}(M)) \longrightarrow  \overline C^*(M, \Omega_{\text{$\Lambda$-$\Pi$}}^{p+1}(M))$$
as follows: for any $f \in \overline C^*(M, \Omega_{\text{$\Lambda$-$\Pi$}}^{p}(M))$, we set
\begin{align*}
\theta^M_p(f) (s\overline a_{1, i} \otimes x \otimes s\overline b_{1, j}) &=(-1)^{|f|} s\overline a_1 \otimes f(s\overline a_{2, i} \otimes x \otimes s\overline b_{1, j}),
\end{align*}
if $i \geq 1$; otherwise, we set
$$
\theta^M_p(f) ( x\otimes s\overline b_{1, j} )=0.
$$
We observe that $\theta^M_p$ is indeed a morphism of cochain complexes for each $p \geq 0$. Similar to the definition of right singular Hochschild cochain complex in Subsection~\ref{subsec:sHcc}, we have an induction system of cochain complexes
$$
\overline C^*(M, M) \xrightarrow{\theta^M_0} \dotsb\longrightarrow \overline C^*(M, \Omega_{\text{$\Lambda$-$\Pi$}}^{p}(M)) \xrightarrow{\theta^M_{p}} \overline C^*(M, \Omega_{\text{$\Lambda$-$\Pi$}}^{p+1}(M))\xrightarrow{\theta^M_{p+1}}\dotsb,
$$
 and denote its colimit by $\overline C_{\sg}^*(M, M)$.

We have the following commutative diagram of cochain complexes with rows being short exact.
\begin{equation}\label{exactsequencecolimitM}
\xymatrix{
\Sigma^{-1}\overline C^{*}(M, \Omega_{\text{$\Lambda$-$\Pi$}}^{p}(M))\ar[r]^-{\rm inc} \ar[d]_{\theta_p^M} &
\overline{C}_{ E}^*(\Gamma, \Omega_{\nc, R, E}^p(\Gamma)) \ar[d]^-{\theta_p^\Gamma} \ar[r]^-{\left( \begin{smallmatrix} {\rm res}_1\\ {\rm res}_2 \end{smallmatrix}\right)} &  \overline{C}^*(\Lambda, \Omega_{\nc, R}^p(\Lambda)) \oplus \overline{C}^*(\Pi, \Omega_{\nc, R}^p(\Pi)) \ar[d]^-{\theta_p^\Lambda\oplus \theta_p^\Pi}\\
\Sigma^{-1}\overline C^{*}(M, \Omega_{\text{$\Lambda$-$\Pi$}}^{p+1}(M))\ar[r]^-{\rm inc} &
\overline{C}_{ E}^*(\Gamma, \Omega_{\nc, R, E}^{p+1}(\Gamma)) \ar[r]^-{\left( \begin{smallmatrix} {\rm res}_1\\ {\rm res}_2 \end{smallmatrix}\right)} &
\overline{C}^*(\Lambda, \Omega_{\nc, R}^{p+1}(\Lambda)) \oplus \overline{C}^*(\Pi, \Omega_{\nc, R}^{p+1}(\Pi))
}
\end{equation}

The following lemma is analogous to Lemma \ref{lem:HH-colimit}. 
\begin{lem}\label{lem:computesgspace}
The cochain map $\theta_p^M$  is a lifting of  the following connecting map
$$ \widehat \theta^M_p\colon {\rm Ext}^*_{\Lambda\mbox{-}\Pi}(M, \Omega^{p}_{\Lambda\mbox{-}\Pi}(M))\longrightarrow  {\rm Ext}^*_{\Lambda\mbox{-}\Pi}(M, \Omega^{p+1}_{\Lambda\mbox{-}\Pi}(M))$$
in the long exact sequence obtained by applying the functor ${\rm Ext}^*_{\text{$\Lambda$-$\Pi$}}(M, -)$ to (\ref{equ:bimod-B}). Consequently, for any $n\in \mathbb Z$  we have an isomorphism
$$
 H^n(\overline C_{\sg}^*(M, M)) \simeq \Hom_{\mathbf{D}_{\rm sg}(\Lambda\otimes\Pi^{\op})} (M, \Sigma^n M).
$$
\end{lem}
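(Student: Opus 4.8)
The plan is to prove the two assertions of Lemma~\ref{lem:computesgspace} in turn: first that $\theta_p^M$ lifts the connecting map $\widehat\theta_p^M$, and then deduce the cohomological identification by a colimit argument modelled on the proof of Lemma~\ref{lem:HH-colimit}. For the first part, I would start from the short exact sequence of $\Lambda$-$\Pi$-bimodules \eqref{equ:bimod-B}, namely
$$0\longrightarrow \Sigma^{-1}\Omega_{\text{$\Lambda$-$\Pi$}}^{p+1}(M)\xrightarrow{\partial^{-p-1}\circ(1\otimes\mathbf 1)}\mathbb B^{-p}\xrightarrow{\widetilde\eta^{-p}}\Omega_{\text{$\Lambda$-$\Pi$}}^{p}(M)\longrightarrow 0,$$
and apply $\Hom_{\text{$\Lambda$-$\Pi$}}(-, -)$ using the standard resolution $\overline\Barr(\Lambda)\otimes_\Lambda M\otimes_\Pi\overline\Barr(\Pi)$ of $M$ in the first variable, exactly as in the setup before Lemma~\ref{lem:liftingalpha}. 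Since $\mathbb B^{-p}$ is a (graded) projective $\Lambda$-$\Pi$-bimodule, $\mathrm{Ext}^*_{\text{$\Lambda$-$\Pi$}}(M,\mathbb B^{-p})$ is concentrated in degree $p$ and the long exact sequence degenerates into isomorphisms $\mathrm{Ext}^n_{\text{$\Lambda$-$\Pi$}}(M,\Omega^{p}_{\text{$\Lambda$-$\Pi$}}(M))\xrightarrow{\sim}\mathrm{Ext}^{n+1}_{\text{$\Lambda$-$\Pi$}}(M,\Sigma^{-1}\Omega^{p+1}_{\text{$\Lambda$-$\Pi$}}(M))\simeq\mathrm{Ext}^{n}_{\text{$\Lambda$-$\Pi$}}(M,\Omega^{p+1}_{\text{$\Lambda$-$\Pi$}}(M))$; the composite of these is $\widehat\theta^M_p$. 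I would then compute the connecting morphism explicitly on a cocycle $f\in\overline C^{m-p}(M,\Omega^p_{\text{$\Lambda$-$\Pi$}}(M))$: lift $f$ along $\widetilde\eta^{-p}$, apply the differential, and read off the preimage under $\partial^{-p-1}\circ(1\otimes\mathbf 1)$. Comparing with the very definition of $\theta_p^M$ (which prepends $s\overline a_1$ with the sign $(-1)^{|f|}$, and kills the summands with no $s\overline\Lambda$-input) shows that the two agree up to the canonical sign coming from $\Sigma^{-1}$ acting on degree-$n$ maps by $(-1)^n$; this is the same bookkeeping as in the commutative diagram in the proof of Lemma~\ref{lem:HH-colimit}. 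The main technical chore here is matching the signs produced by the Koszul rule in $\widetilde\eta^{-p}$, the differential of $\mathbb B$, and the shift $\Sigma^{-1}$; I expect this — rather than anything conceptual — to be the real obstacle.

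For the second part, since $\overline C_{\sg}^*(M,M)$ is by construction the colimit of the inductive system of the $\overline C^*(M,\Omega^p_{\text{$\Lambda$-$\Pi$}}(M))$ along the $\theta^M_p$, and cohomology commutes with filtered colimits, we get
$$H^n(\overline C_{\sg}^*(M,M))\;\simeq\;\varinjlim_{\widehat\theta^M_p}\;\mathrm{Ext}^n_{\text{$\Lambda$-$\Pi$}}(M,\Omega^p_{\text{$\Lambda$-$\Pi$}}(M)),$$
using the first part to identify the transition maps of the colimit on the right with the $\widehat\theta^M_p$. It then remains to recognize this colimit as $\Hom_{\mathbf{D}_{\rm sg}(\Lambda\otimes\Pi^{\op})}(M,\Sigma^n M)$. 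For this I would invoke the description of morphism spaces in a singularity category as a colimit of Ext-groups over syzygies, exactly the tool cited for Lemma~\ref{lem:HH-colimit} (\cite[Subsection~2.3]{KV} or \cite[Lemma~2.4]{Kel18}): the sequence \eqref{equ:bimod-B} exhibits $\Omega^{p+1}_{\text{$\Lambda$-$\Pi$}}(M)$ as (a shift of) the $(p+1)$-st syzygy of $M$ in the category of $\Lambda$-$\Pi$-bimodules, because $\mathbb B$ is an (undeleted) projective bimodule resolution of $M$ — here the hypotheses that $_\Lambda M$ and $M_\Pi$ are projective are exactly what is needed to guarantee each $\mathbb B^{-p}$ is $\Lambda$-$\Pi$-projective. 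Consequently $\mathrm{Ext}^n_{\text{$\Lambda$-$\Pi$}}(M,\Omega^p_{\text{$\Lambda$-$\Pi$}}(M))\simeq\Hom_{\mathbf D^b(\Lambda\otimes\Pi^{\op}\text{-mod})}(M,\Sigma^n\Omega^p_{\text{$\Lambda$-$\Pi$}}(M))$, and the colimit over $p$ of these Hom-groups, along the maps induced by the syzygy short exact sequences, is precisely $\Hom_{\mathbf D_{\rm sg}(\Lambda\otimes\Pi^{\op})}(M,\Sigma^n M)$ by the cited stabilization lemma. Finally one checks that the transition maps in this stabilization colimit coincide with the $\widehat\theta^M_p$ by the same connecting-morphism computation as in the proof of Lemma~\ref{lem:HH-colimit}, completing the argument.

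In summary, the proof is a faithful adaptation of the proof of Lemma~\ref{lem:HH-colimit} to the bimodule $M$ in place of the algebra $\Lambda$, with \eqref{equ:bimod-B} and the resolution $\mathbb B$ playing the roles of \eqref{shortexactsequence00} and $\overline\Barr(\Lambda)$; the only genuinely new input is the verification that the projectivity of $_\Lambda M$ and $M_\Pi$ makes $\mathbb B$ a legitimate projective bimodule resolution, which was already established in the discussion preceding Lemma~\ref{lem:differentialforms}. I would present the sign-tracking for the identification $\theta_p^M=\text{(lift of }\widehat\theta_p^M)$ as the one step meriting care, and otherwise refer to the analogous arguments already in the paper.
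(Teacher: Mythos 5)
Your proposal follows essentially the same route as the paper's proof: you show that $\theta_p^M$ lifts the connecting morphism by the explicit computation (represent a class by a cocycle $f$, lift it along $\widetilde\eta^{-p}$ to the middle term, apply the differential there, and read off the preimage under $\partial^{-p-1}\circ(1\otimes\mathbf 1)$), then commute cohomology with the filtered colimit and invoke the stabilization lemma of \cite{KV} and \cite[Lemma~2.4]{Kel18} for the projective bimodule resolution $\mathbb B$, exactly as in the proof of Lemma~\ref{lem:HH-colimit}. One assertion should be deleted, though: your claim that $\mathrm{Ext}^*_{\text{$\Lambda$-$\Pi$}}(M,\mathbb B^{-p})$ is concentrated in a single degree because $\mathbb B^{-p}$ is projective is false — projectivity of the \emph{coefficient} bimodule does not force Ext-vanishing — so the long exact sequence does not degenerate and the maps $\widehat\theta_p^M$ are in general not isomorphisms (this is precisely why one must pass to the colimit, as in the singular Hochschild setting). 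Fortunately nothing in your argument uses this: $\widehat\theta_p^M$ is simply the connecting morphism composed with the canonical shift identification, and the final identification of the colimit rests only on the cited stabilization result. When you carry out the sign-tracking you flag as the main chore, note also that on the component with no $s\overline\Lambda$-inputs (the case $i=0$, where $\theta_p^M(f)$ is defined to be zero) the verification requires the cocycle condition $\delta'(f)=0$, which is exactly how the paper's computation closes that case.
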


\begin{proof} The proof is similar to that of Lemma \ref{lem:HH-colimit}. 
Since the direct colimit commutes with the cohomology functor, we have
\begin{align*}
 H^n(\overline C_{\sg}^*(M, M)) \simeq \varinjlim_{\widetilde \theta^M_p}  \;  {\rm Ext}^n_{\text{$\Lambda$-$\Pi$}}(M, \Omega_{\text{$\Lambda$-$\Pi$}}^{p}(M)),
\end{align*}
where the colimit map $\widetilde \theta_p^M$ is induced by $\theta_p^M$. Apply the functor ${\rm Ext}^n_{\text{$\Lambda$-$\Pi$}}(M, -)$ to (\ref{equ:bimod-B})
$$\dotsb
\to {\rm Ext}^n_{\text{$\Lambda$-$\Pi$}}(M, \mathbb B^{-p}) \to {\rm Ext}^n_{\text{$\Lambda$-$\Pi$}}(M, \Omega_{\text{$\Lambda$-$\Pi$}}^{p}(M)) \to {\rm Ext}^{n+1}_{\text{$\Lambda$-$\Pi$}}(M, \Sigma^{-1}\Omega_{\text{$\Lambda$-$\Pi$}}^{p+1}(M))\to \dotsb
$$
Since ${\rm Ext}^{n+1}_{\text{$\Lambda$-$\Pi$}}(M, \Sigma^{-1}\Omega_{\text{$\Lambda$-$\Pi$}}^{p+1}(M))$ is naturally isomorphic to ${\rm Ext}^{n}_{\text{$\Lambda$-$\Pi$}}(M, \Omega_{\text{$\Lambda$-$\Pi$}}^{p+1}(M))$, the connecting morphism in the long exact sequence induces a map
$$\widehat \theta_p^M \colon{\rm Ext}^n_{\text{$\Lambda$-$\Pi$}}(M, \Omega_{\text{$\Lambda$-$\Pi$}}^{p}(M))  \longrightarrow{} {\rm Ext}^{n}_{\text{$\Lambda$-$\Pi$}}(M, \Omega_{\text{$\Lambda$-$\Pi$}}^{p+1}(M)).$$

We now show that $\widetilde \theta_p^M= \widehat \theta_p^M$ using the similar argument as the proof of Lemma~\ref{lem:HH-colimit}. We write down the definition of the connecting morphism $\widehat \theta_p^M$. Apply the functor ${\rm Hom}_{\text{$\Lambda$-$\Pi$}}(\overline{\Barr}(\Lambda)\otimes_{\Lambda}M\otimes_{\Pi}\overline{\Barr}(\Pi), -)$ to the short exact sequence \eqref{equ:bimod-B}. Then we have the following short exact sequence of complexes with induced maps
\begin{align}
\label{sesinduced}
0 \to \Sigma^{-1}\overline C^{*}\!(M, \Omega_{\text{$\Lambda$-$\Pi$}}^{p+1}(M))
\!\xrightarrow{}
{\rm Hom}_{\text{$\Lambda$-$\Pi$}}(\overline{\Barr}(\Lambda)\!\smallotimes_{\Lambda}\!M\!\smallotimes_{\Pi}\!\overline{\Barr}(\Pi), \mathbb B^{-p})\!
\xrightarrow{} \!
\overline{C}^*\!(M, \Omega_{\text{$\Lambda$-$\Pi$}}^p(M)) \to 0.
\end{align}
Take $f\in {\rm Ext}^n_{\text{$\Lambda$-$\Pi$}}(M, \Omega_{\text{$\Lambda$-$\Pi$}}^{p}(M))$. It may be represented by an element $f\in \overline{C}^n(M,\Omega_{\text{$\Lambda$-$\Pi$}}^{p}(M))$ such that $\delta'(f)=0$ with $\delta'$ the differential of $ \overline{C}^*(M,\Omega_{\text{$\Lambda$-$\Pi$}}^{p}(M))$. Define
$$\overline{f}\in \bigoplus_{\substack{i,j\geq 0\\ i+j=n+p}}{\rm Hom}\big(s\overline\Lambda^{\otimes i}\otimes M\otimes s\overline\Pi^{\otimes j},\mathbb B^{-p}\big)$$
 such that
 $$\overline{f}(s\overline{a}_{1,i}\otimes x \otimes s\overline{b}_{1,j})=1_\Lambda\otimes f(s\overline{a}_{1,i}\otimes x \otimes s\overline{b}_{1,j}).$$
 We have that $f=\widetilde{\eta}^{-p}\circ \overline{f}$, where $\widetilde{\eta}^{-p}$ is given in (\ref{equ:bimod-B}).  We define $\widetilde{f}\in \overline{C}^{n}(M,\Omega_{\text{$\Lambda$-$\Pi$}}^{p+1}(M))$ such that $$\widetilde{f}(s\overline{a}_{1,i}\otimes x \otimes s\overline{b}_{1,j})=(-1)^n s \overline{a}_1\otimes f(s\overline{a}_{2,i}\otimes x \otimes s\overline{b}_{1,j})$$
 for $i\geq 1, j\geq 0$ and $i+j=n+p+1$; otherwise for $i=0$,  we set $\widetilde{f}(x\otimes s\overline{b}_{1,n+p+1})=0$. We observe that
\begin{equation}
\label{equationforzero}
\partial^{-p-1}\circ (1\otimes \mathbf 1) \circ \widetilde{f}=\delta'' (\overline{f}), \end{equation}
where $(1\otimes \mathbf 1)$ is defined in \eqref{1mathbf1} and $\delta''$ is the differential of the middle complex in \eqref{sesinduced}. Actually for $i =0$,  we have   $\widetilde{f}(x\otimes s\overline{b}_{1,n+p+1})=0$ and
\begin{align}
(\delta'' (\overline{f}))(1_\Lambda\otimes x \otimes s\overline{b}_{1,n+p+1}\otimes 1_\Pi)
&=(-1)^n 1_\Lambda \otimes (f(1_\Lambda\otimes x \otimes_{\Pi} d_{ex}(1_\Lambda \otimes s\overline{b}_{1,n+p+1}\otimes 1_\Pi)))\nonumber\\
&=1_\Lambda\otimes (\delta'(f)(1_\Lambda \otimes x \otimes s\overline{b}_{1,n+p+1}\otimes 1_\Pi))\nonumber\\
&=0,
\end{align}
where $f$, $\overline{f}$, $\delta''(\overline{f})$ and $\delta'(f)$ are identified as $\Lambda$-$\Pi$-bimodule morphisms; compare \eqref{identification-bimodule}.  For $i\neq 0$, one can check directly that \eqref{equationforzero} holds. By the general  construction of the connecting morphism, we have $\widehat{\theta}_p^M(f)=\widetilde{f}$. Note that we also have $\widetilde{\theta}_{p}^M(f)=\widetilde{f}$. This shows that $\widetilde \theta_p^M= \widehat \theta_p^M$.

Since   $\widetilde \Barr(\Lambda) \otimes_\Lambda M \otimes_\Pi \widetilde\Barr(\Pi)$ is a projective resolution of $M$, by Lemma~\ref{lem:differentialforms} and \cite[Lemma 2.4]{Kel18}, we have the following isomorphism
$$\varinjlim_{\widehat \theta^M_p} \; {\rm Ext}^i_{\text{$\Lambda$-$\Pi$}}(M, \Omega_{\text{$\Lambda$-$\Pi$}}^{p}(M)) \simeq \Hom_{\mathbf{D}_{\rm sg}(\Lambda\otimes\Pi^{\op})} (M, \Sigma^i M).
$$
Combining the above two isomorphisms we obtain the desired isomorphism.
\end{proof}

Recall from \eqref{equ:twomaps} the maps $\alpha_{\sg}^i$ and $\beta_{\sg}^i$.  Analogous to   \cite[Lemma~4.5]{Kel03}, we have the following result.

 \begin{prop}\label{prop:HH-sg-long}
 Assume that the $\Lambda$-$\Pi$-bimodule $M$ is projective on each side. Then there is an exact sequence  of cochain complexes
\begin{align}\label{equ:colimit-long}
 0 \longrightarrow \Sigma^{-1}\overline C^{*}_{\sg}(M, M) \stackrel{{\rm inc}}\longrightarrow \overline{C}_{\sg, R, E}^*(\Gamma, \Gamma) \stackrel{\left( \begin{smallmatrix} {\rm res}_1\\ {\rm res}_2 \end{smallmatrix}\right)}\longrightarrow  \overline{C}^*_{\sg, R}(\Lambda, \Lambda) \oplus \overline{C}^*_{\sg, R}(\Pi, \Pi) \longrightarrow 0,
 \end{align}
 which yields a long exact sequence
$$
 \dotsb \to  \HH_{\sg}^i(\Gamma, \Gamma) \xrightarrow{\left( \begin{smallmatrix} {\rm res}_1\\ {\rm res}_2 \end{smallmatrix}\right)} \HH_{\sg}^i(\Lambda, \Lambda) \oplus \HH_{\sg}^i(\Pi, \Pi) \xrightarrow{(-\alpha_{\sg}^i,  \beta_{\sg}^i)} \Hom_{\mathbf{D}_{\rm sg}(\Lambda\otimes\Pi^{\op})} (M, \Sigma^{i} M) \to \dotsb.
 $$
 \end{prop}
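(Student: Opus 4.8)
\textbf{Proof plan for Proposition~\ref{prop:HH-sg-long}.}
The strategy is to obtain the short exact sequence \eqref{equ:colimit-long} of cochain complexes as the colimit of the short exact sequences \eqref{shortexactsequence}, letting $p$ vary, and then to run the associated long exact sequence in cohomology, identifying the connecting homomorphism with $(-\alpha_{\sg}^i,\beta_{\sg}^i)$. First I would invoke the commutative diagram \eqref{exactsequencecolimitM}: it exhibits the family $\{\eqref{shortexactsequence}\}_{p\geq 0}$ as a directed system of short exact sequences of complexes, with transition maps $\theta_p^M$, $\theta_p^\Gamma$ and $\theta_p^\Lambda\oplus\theta_p^\Pi$. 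Since filtered colimits of $\mathbb k$-modules are exact, passing to the colimit over $p$ yields a short exact sequence of complexes; by the very definitions in Subsection~\ref{subsec:rel-sinHo} the colimit of the middle term is $\overline{C}_{\sg,R,E}^*(\Gamma,\Gamma)$, the colimit of the right-hand term is $\overline{C}^*_{\sg,R}(\Lambda,\Lambda)\oplus\overline{C}^*_{\sg,R}(\Pi,\Pi)$, and the colimit of the left-hand term is, by construction, $\Sigma^{-1}\overline{C}^*_{\sg}(M,M)$. This gives \eqref{equ:colimit-long}; the maps ``$\mathrm{res}_i$'' are the evident projections, compatible with the colimit structure, and ``$\mathrm{inc}$'' is the inclusion of the lower-triangular summand, all read off from the matrix description \eqref{alphabeta} of $\overline\delta$.

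Next I would take the long exact sequence in cohomology associated to \eqref{equ:colimit-long}. By Lemma~\ref{lem:HH-colimit} (applied to $\Lambda$ and to $\Pi$) the cohomology of the right-hand term is $\HH^i_{\sg}(\Lambda,\Lambda)\oplus\HH^i_{\sg}(\Pi,\Pi)$; by Lemma~\ref{lem:computesgspace} the cohomology of $\overline{C}^*_{\sg}(M,M)$ is $\Hom_{\mathbf{D}_{\rm sg}(\Lambda\otimes\Pi^{\op})}(M,\Sigma^\bullet M)$, so the cohomology of the left-hand term with the shift $\Sigma^{-1}$ contributes $\Hom_{\mathbf{D}_{\rm sg}(\Lambda\otimes\Pi^{\op})}(M,\Sigma^{i}M)$ in the relevant spot of the long exact sequence; and the middle term computes $\HH^i_{\sg}(\Gamma,\Gamma)$ by definition. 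It then remains to identify the connecting homomorphism
$$\HH^i_{\sg}(\Lambda,\Lambda)\oplus\HH^i_{\sg}(\Pi,\Pi)\longrightarrow \Hom_{\mathbf{D}_{\rm sg}(\Lambda\otimes\Pi^{\op})}(M,\Sigma^iM)$$
with $(-\alpha_{\sg}^i,\beta_{\sg}^i)$.

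The computation of the connecting map is where the real content lies, and it proceeds one component at a time. Fix a level $p$ and consider the sequence \eqref{shortexactsequence} before taking colimits. From the matrix form \eqref{alphabeta} of the differential $\overline\delta$, the off-diagonal entries are precisely $-s^{-1}\circ\widetilde\alpha_p$ and $s^{-1}\circ\widetilde\beta_p$; hence the snake-lemma connecting map of \eqref{shortexactsequence} is, up to the shift identification, $(-\widetilde\alpha_p,\widetilde\beta_p)$. By Lemmas~\ref{lem:liftingalpha} and \ref{lem:liftingbeta}, $\widetilde\alpha_p$ and $\widetilde\beta_p$ lift $\alpha_p^*$ and $\beta_p^*$ at the level of Ext-groups; and by the definition of $\alpha_{\sg}^i$, $\beta_{\sg}^i$ via the colimit description in Subsection~\ref{subsec:sHcc} together with Lemma~\ref{lem:computesgspace}, passing to the colimit over $p$ identifies $\varinjlim(-\widetilde\alpha_p,\widetilde\beta_p)$ with $(-\alpha_{\sg}^i,\beta_{\sg}^i)$. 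Here the main obstacle is bookkeeping: one must check that the colimit maps $\theta_p^M$, $\theta_p^\Lambda$, $\theta_p^\Pi$ are genuinely compatible with the connecting maps of \eqref{shortexactsequence} (this is exactly the commutativity of \eqref{exactsequencecolimitM}, which must be verified by a direct if tedious computation using the explicit formulas for $\widetilde\alpha_p$, $\widetilde\beta_p$ in \eqref{equ:lift-alpha}, \eqref{equ:lift-beta} and for $\theta^M_p$), and one must be careful with the two \emph{different} inclusions \eqref{alpha-p} and \eqref{mpi} of $\Omega^p_{\nc,R}(-)$ into $\Omega^p_{\text{$\Lambda$-$\Pi$}}(M)$ — the mismatch between them (flagged in Remark~\ref{rem:tricky}) is what forces $\widetilde\beta_p$ to be built from the twisted action $\triangleright$ of \eqref{righttriangle} rather than a naive inclusion, and this is the delicate point that makes the identification of the $\beta$-component work.
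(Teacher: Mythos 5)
Your overall route---taking the colimit of the short exact sequences \eqref{shortexactsequence}, passing to the induced long exact sequence, and identifying the level-$p$ connecting map as $(-\alpha_p^i,\beta_p^i)$ via the matrix description \eqref{alphabeta} together with Lemmas~\ref{lem:liftingalpha} and \ref{lem:liftingbeta}---is exactly the paper's. The gap lies in how you pass from level $p$ to the colimit. You assert that ``passing to the colimit over $p$ identifies $\varinjlim(-\widetilde\alpha_p,\widetilde\beta_p)$ with $(-\alpha_{\sg}^i,\beta_{\sg}^i)$'' and that the required compatibility can be ``verified by a direct if tedious computation using the explicit formulas for $\widetilde\alpha_p,\widetilde\beta_p$''. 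At the cochain level this colimit does not exist: as Remark~\ref{rem:tricky} records, $\theta_p^M\circ\widetilde\beta_p\neq\widetilde\beta_{p+1}\circ\theta_p^\Pi$ (the twisted action $\triangleright$ produces a nonzero discrepancy), so the direct verification you propose fails precisely for the $\beta$-component, and $\beta_{\sg}^i$ admits no canonical cochain-level lifting compatible with the colimit maps. The argument has to be run on cohomology instead: the commutative diagram \eqref{exactsequencecolimitM} involves only ${\rm inc}$ and ${\rm res}_i$ (not the lifts), and one invokes the standard fact that connecting homomorphisms are compatible with filtered colimits of short exact sequences of complexes, so that the connecting map of \eqref{equ:colimit-long} is $\varinjlim_p(-\alpha_p^i,\beta_p^i)$.

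Second, the equalities $\alpha_{\sg}^i=\varinjlim_p\alpha_p^i$ and $\beta_{\sg}^i=\varinjlim_p\beta_p^i$ are not ``by definition'': the maps $\alpha_{\sg}^i,\beta_{\sg}^i$ are defined in \eqref{equ:twomaps} through the triangle functors $-\otimes_\Lambda M$ and $M\otimes_\Pi-$ on singularity categories, not through any colimit description. To obtain \eqref{equ:alpha-beta} one needs the commutative square relating these tensor functors on $\mathbf{D}^b$ and on $\mathbf{D}_{\rm sg}$, combined with Lemmas~\ref{lem:HH-colimit} and \ref{lem:computesgspace}, which identify the transition maps of the colimit systems computing $\HH^i_{\sg}$ and $\Hom_{\mathbf{D}_{\rm sg}(\Lambda\otimes\Pi^{\op})}(M,\Sigma^iM)$ with the connecting morphisms $\widehat\theta$. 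Without this step the identification of the connecting homomorphism---which is the whole content of the proposition, since the short exact sequence itself is immediate from \eqref{exactsequencecolimitM}---remains unjustified.
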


\begin{proof}
The exact sequence of cochain complexes follows immediately from (\ref{exactsequencecolimitM}), since the three maps $\mathrm{inc}$ and $\mathrm{res}_i$ ($i = 1, 2$) are compatible with the colimits. Then taking cohomology, we  have an induced long exact sequence. However, it is tricky to prove that the maps $\alpha_{\sg}^i$ and $\beta_{\sg}^i$ do appear in the induced sequence. For this, we have to analyze the following induced long exact sequence of (\ref{shortexactsequence}).
\begin{align}\label{equ:HH-long}
 \dotsb \to  \HH^i(\Gamma, \Omega^p_{\nc, R, E}(\Gamma)) \xrightarrow{\left( \begin{smallmatrix} {\rm res}_1\\ {\rm res}_2 \end{smallmatrix}\right)}   \begin{matrix} \HH^i(\Lambda, \Omega^p_{\nc, R}(\Lambda))\\
  \oplus \HH^i(\Pi, \Omega^p_{\nc, R}(\Pi)) \end{matrix}\xrightarrow{(-\alpha_p^i,  \beta_p^i)} {\rm Ext}^i_{\Lambda\mbox{-}\Pi}(M, \Omega^p_{\Lambda\mbox{-}\Pi}(M)) \to \dotsb.
 \end{align}
Here, to see that the connecting morphism is indeed $(-\alpha_p^i, \beta_p^i)$, we use the explicit description (\ref{alphabeta}) of the differential,  and apply Lemmas~\ref{lem:liftingalpha} and \ref{lem:liftingbeta}.

Note that we have the following commutative diagram
\begin{align*}
\xymatrix@C=4pc{
 \mathbf{D}^b(\Lambda^e) \ar[d]\ar[r]^-{-\otimes_\Lambda M} &\mathbf{D}^b(\Lambda \otimes \Pi^{\op})\ar[d] &  \ar[l]_-{M\otimes_\Pi -}  \mathbf{D}^b(\Pi^e)\ar[d] \\
 \mathbf{D}_{\rm sg}(\Lambda^e) \ar[r]^-{-\otimes_\Lambda M} &\mathbf{D}_{\rm sg}(\Lambda \otimes \Pi^{\op}) & \ar[l]_-{M\otimes_\Pi -}  \mathbf{D}_{\rm sg}(\Pi^e),
}
\end{align*}
where the vertical functors are the natural quotients.  This induces the following commutative diagram for each $p \geq 0$.
\begin{align*}
\xymatrix{
 \HH^i(\Pi, \Omega_{\nc, R}^p(\Pi))\ar[r]^-{\beta_p^i} \ar[d]&   \mathrm{Ext}^i_{\Lambda\otimes \Pi^{\op}}(M, \Omega_{\text{$\Lambda$-$\Pi$}}^{p}(M))\ar[d] & \ar[l]_-{\alpha_p^i} \HH^i(\Lambda, \Omega_{\nc, R}^p(\Lambda))\ar[d] \\
 \HH_{\sg}^i(\Pi, \Pi)\ar[r]^-{\beta_{\sg}^i} &  \Hom_{\mathbf{D}_{\rm sg}(\Lambda\otimes\Pi^{\op})} (M, \Sigma^i M) & \ar[l]_-{\alpha_{\sg}^i} \HH_{\sg}^i(\Lambda, \Lambda)
}
\end{align*}
Thus, by Lemmas~\ref{lem:HH-colimit} and \ref{lem:computesgspace} we have that
\begin{align}\label{equ:alpha-beta}
\alpha_{\sg}^i = \varinjlim_p  \alpha^i_p \quad \text{and}\quad \beta_{\sg}^i = \varinjlim_p \beta_p^i
\end{align}
 for any $i \in \mathbb Z$.

  Recall the standard fact that the connecting morphism in the long exact sequence induced from  a short exact sequence of complexes is canonical, and thus is compatible with colimits of short exact sequences of complexes. We infer that the long exact sequence induced from  (\ref{equ:colimit-long}) coincides with  the colimit of (\ref{equ:HH-long}). Then the required statement follows from (\ref{equ:alpha-beta}) immediately.
\end{proof}

\begin{rem}\label{rem:tricky}
We would like to stress that, unlike \cite[Lemma 4.5]{Kel03}, the short exact sequence (\ref{equ:colimit-long}) does not have a canonical splitting. In other words, there is no canonical homotopy cartesian square as in \cite[Lemma 4.5]{Kel03}.

The reason is as follows. Note that for each $p \geq 0$, (\ref{shortexactsequence}) splits canonically as an exact sequence of  graded modules, where the sections are given by the inclusions
\begin{align*}
{\rm inc}_1 & \colon \overline{C}^*(\Lambda, \Omega_{\nc, R}^p(\Lambda))\longrightarrow \overline{C}_{ E}^*(\Gamma, \Omega_{\nc, R, E}^p(\Gamma))\\
 {\rm inc}_2 & \colon \overline{C}^*(\Pi, \Omega_{\nc, R}^p(\Pi))\longrightarrow \overline{C}_{ E}^*(\Gamma, \Omega_{\nc, R, E}^p(\Gamma)).
 \end{align*}
We observe that $\theta_p^\Gamma\circ {\rm inc}_1={\rm inc}_1\circ \theta_p^\Lambda$. Taking the colimit, we obtain an inclusion of graded modules  $$ \overline{C}^*_{\sg, R}(\Lambda, \Lambda)\longrightarrow  \overline{C}_{\sg, R, E}^*(\Gamma, \Gamma),$$
which is generally not compatible with the differentials.
We also have
$\theta_p^M\circ \widetilde\alpha_p=\widetilde \alpha_{p+1}\circ \theta_p^\Lambda.$
Taking the colimit, we obtain a lifting at the cochain complex level
$$\widetilde{\alpha}\colon \overline{C}^*_{\sg, R}(\Lambda, \Lambda) \longrightarrow \overline C^{*}_{\sg}(M, M)$$
of the maps $\alpha_{\sg}^i$.

 However, the situation for ${\rm inc}_2$ and $\widetilde\beta_p$ is different from ${\rm inc}_1$.  In general, we have  $$\theta_p^\Gamma\circ {\rm inc}_2\neq {\rm inc}_2\circ \theta_p^\Pi \mbox{ and } \theta_p^M\circ \widetilde \beta_p \neq \widetilde \beta_{p+1}\circ \theta_p^\Pi$$
 since for any $f \in \overline{C}^*(\Pi, \Omega_{\nc, R}^p(\Pi))$ we have
$$
 (\theta_p^\Gamma\circ {\rm inc}_2- {\rm inc}_2\circ \theta_p^\Pi)(f)  = \mathbf 1_{sM} \otimes f
$$
 and for $f \in \overline{C}^{m-p} (\Pi, \Omega_{\nc, R}^p(\Pi))$ we have
 \begin{align*}
 \big((\theta_p^M\circ \widetilde \beta_p )(f)\big)(x \otimes s \overline b_{1, m+1}) &= 0\\
 \big((\widetilde \beta_{p+1}\circ \theta_p^\Pi)(f)\big)(x \otimes s \overline b_{1, m+1}) &= (-1)^{m-p}x \triangleright  (b_1 \otimes f(s \overline b_{2, m+1}))\neq 0,
 \end{align*}
 where  $x\otimes s\overline{b}_{m+1}$ belongs to $M\otimes s\overline{\Pi}^{\otimes m+1}$ and  $\triangleright$ is given in \eqref{triangleright}.
This means that  the section $\left( \begin{smallmatrix} {\rm inc}_1\\ {\rm inc}_2\end{smallmatrix} \right)$  of (\ref{shortexactsequence})  is not compatible with $\theta_p^\Gamma$ and $\theta_p^\Lambda\oplus \theta_p^\Pi$, so we cannot take the colimit.

The above analysis also shows that we cannot  lift the maps $\beta_{\sg}^i$ at the cochain complex level canonically. This forces us to use the tricky argument in the proof of Proposition~\ref{prop:HH-sg-long}. \end{rem}

We are now in a position to prove Theorem~\ref{thm:quasi-iso-bimod}.

\vskip 3pt

\noindent \emph{Proof of Theorem~\ref{thm:quasi-iso-bimod}}. \quad Since both the maps $\alpha_{\sg}^i$ and $\beta_{\sg}^i$ are isomorphisms, the long exact sequence in Proposition~\ref{prop:HH-sg-long} yields a family of short exact sequences
$$0\longrightarrow \HH_{\sg}^i(\Gamma, \Gamma) \xrightarrow{\left( \begin{smallmatrix} {\rm res}_1\\ {\rm res}_2 \end{smallmatrix}\right)} \HH_{\sg}^i(\Lambda, \Lambda) \oplus \HH_{\sg}^i(\Pi, \Pi) \xrightarrow{(-\alpha_{\sg}^i,  \beta_{\sg}^i)} \Hom_{\mathbf{D}_{\rm sg}(\Lambda\otimes\Pi^{\op})} (M, \Sigma^{i} M)\longrightarrow 0.$$ In other words, we have the following  commutative diagram
\begin{align*}
\xymatrix{
 \HH_{\sg}^i(\Gamma, \Gamma)\ar[r]^-{{\rm res}_1} \ar[d]_{{\rm res}_2}&   \HH^i_{\sg} (\Lambda, \Lambda)\ar[d] ^-{\alpha_{\sg}^i} \\
 \HH_{\sg}^i(\Pi, \Pi)\ar[r]^-{\beta_{\sg}^i} &  \Hom_{\mathbf{D}_{\rm sg}(\Lambda\otimes\Pi^{\op})} (M, \Sigma^i M) ,
}
\end{align*}
which is a pullback diagram and  pushout diagram, simultaneously. We infer that both ${\rm res}_i$ are isomorphisms. Then both projections
$${\rm res}_1\colon  \overline{C}_{\sg, R, E}^*(\Gamma, \Gamma)\longrightarrow  \overline{C}_{\sg, R}^*(\Lambda, \Lambda)\ \mbox{ and } \ {\rm res}_2\colon  \overline{C}_{\sg, R, E}^*(\Gamma, \Gamma)\longrightarrow  \overline{C}_{\sg, R}^*(\Pi, \Pi)$$
are quasi-isomorphisms. It is clear that they are both strict $B_\infty$-morphisms, and thus  $B_\infty$-quasi-isomorphisms. This yields the required isomorphism in ${\rm Ho}(B_\infty)$.
\hfill $\square$

\section{Keller's conjecture and the main results}
\label{section8}

Let $\mathbb k$ be a field, and  $\Lambda$ be a finite dimensional $\mathbb k$-algebra. Denote by $\Lambda_0=\Lambda/{\rm rad}(\Lambda)$ the semisimple quotient algebra of $\Lambda$ by its Jacobson radical. Recall from Example~\ref{exm:dsc} that $\mathbf{S}_{\dg}(\Lambda)$ denotes the dg singularity category of $\Lambda$.

Recently, Keller proves the following remarkable result.

\begin{thm}[\cite{Kel18}]\label{Keller's theorem}
Assume that $\Lambda_0$ is separable over $\mathbb{k}$. Then there is a natural isomorphism of graded algebras between $\HH_{\sg}^*(\Lambda^{\op}, \Lambda^{\op})$ and $\HH^*(\mathbf{S}_{\dg}(\Lambda), \mathbf S_{\dg}(\Lambda))$. \hfill $\square$\end{thm}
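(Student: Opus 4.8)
The plan is to realise Keller's isomorphism \eqref{equ:graded} as the degree-preserving shadow of an isomorphism in $\mathrm{Ho}(B_\infty)$, since the latter is exactly what Theorem~\ref{thmintro-2} already provides in the radical-square-zero case. Concretely, one combines three inputs: (i) the $B_\infty$-isomorphism $\overline C^*_{\sg,L}(\Lambda^{\op},\Lambda^{\op})\simeq C^*(\mathbf S_{\dg}(\Lambda),\mathbf S_{\dg}(\Lambda))$ of \eqref{equ:Keller-conj-intr} whenever $\Lambda$ satisfies Keller's conjecture; (ii) Lemma~\ref{lem:Ger}, which says a $B_\infty$-quasi-isomorphism induces an isomorphism of Gerstenhaber algebras on cohomology, in particular an isomorphism of graded algebras via the cup product; and (iii) Lemma~\ref{lem:C-quasi-iso} together with the dg-enhancement results (Corollary~\ref{cor:Krause}, Proposition~\ref{prop:CY-Li}) to identify the various models of the Hochschild cochain complex of the dg singularity category. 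The first step would therefore be to reduce Theorem~\ref{Keller's theorem}, for general $\Lambda$ with $\Lambda_0$ separable, to the known cases by the invariance theorem: by Theorem~\ref{thmintro-1} (Theorem~\ref{thm:Keller-redu}) the property of satisfying Keller's conjecture is invariant under one-point (co)extensions and singular equivalences with levels, and by Theorems~\ref{thmintro-1}--\ref{thmintro-2} the conjecture holds for all radical-square-zero algebras $\mathbb kQ/J^2$ and all finite-dimensional gentle algebras; so the graded-algebra statement holds at least on that (substantial) class, with the isomorphism being the one induced on $\HH^*$ by the $B_\infty$-isomorphism.

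For the fully general statement as quoted, the cleanest route is not to deduce it from the present paper's machinery at all, but to run Keller's own argument: produce the isomorphism \eqref{equ:graded} directly and independently, and then check a posteriori that it is the one refined by \eqref{equ:Keller-conj-intr} whenever the latter exists. Thus the key steps would be: first, using the separability of $\Lambda_0$, choose a separable (e.g. $\Lambda_0$-relative) bar resolution so that $\HH^*_{\sg}(\Lambda,\Lambda)$ is computed by the relative right singular Hochschild cochain complex $\overline C^*_{\sg,R,E}(\Lambda,\Lambda)$ with $E=\Lambda_0$, as in Subsection~\ref{subsec:rel-sinHo}; second, pass to the opposite algebra via the swap isomorphism of Proposition~\ref{lemma-CL1} to land on $\overline C^*_{\sg,L}(\Lambda^{\op},\Lambda^{\op})$; third, identify $\mathbf D_{\sg}(\Lambda^e)$ with a Verdier quotient built from the enveloping dg category and use the colimit description of $\HH^*_{\sg}$ (Lemmas~\ref{lem:HH-colimit} and \ref{lem:computesgspace}, with $\Pi=\Lambda$, $M=\Lambda$) to produce a comparison map to $\Hom^*_{\mathbf S_{\dg}(\Lambda)}(\Lambda,\Lambda)$; fourth, invoke Lemma~\ref{lem:C-dga}/Lemma~\ref{lem:C-quasi-iso} to replace $C^*(\mathbf S_{\dg}(\Lambda),\mathbf S_{\dg}(\Lambda))$ by a computable model and check the comparison map is a graded-algebra isomorphism, i.e.\ respects the cup products on both sides.

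The main obstacle I expect is precisely the last point: verifying that the comparison map is multiplicative, not merely bijective in each degree. The cup product on $\HH^*_{\sg}$ is the Yoneda product (Proposition~\ref{prop:Ger-dual}), so one must match it with composition in the singularity category of $\Lambda^e$, and then transport this across the equivalence to composition of morphisms in $\mathbf S_{\dg}(\Lambda)$, which is the graded-algebra structure of $\HH^*(\mathbf S_{\dg}(\Lambda),\mathbf S_{\dg}(\Lambda))$. The subtlety is the same one flagged in Remark~\ref{rem:tricky}: the colimit maps $\theta^M_p$ defining the singular complexes are not compatible with the obvious sections, so one cannot naively split things and must track the $\blacktriangleright$-action \eqref{rightblacktriangle} through the colimit. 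Once multiplicativity is established, naturality of all the identifications gives the ``slightly stronger version'': the graded-algebra isomorphism \eqref{equ:graded} coincides with the one induced by the $B_\infty$-isomorphism \eqref{equ:Keller-conj-intr} in every case where the latter is known, in particular for $\mathbb kQ/J^2$ and for gentle algebras by Theorems~\ref{thmintro-1}--\ref{thmintro-2}. I would present the radical-square-zero and gentle cases as the concrete payoff, and treat the general separable case as the recollection of Keller's theorem that it is, citing \cite{Kel18} for the part that does not follow formally from the results assembled above.
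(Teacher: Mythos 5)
The paper does not prove this statement at all: it is Keller's theorem, recalled verbatim from \cite{Kel18} and stamped with a $\square$, and its only role here is as background motivating Conjecture~\ref{Keller's conjecture}. Your closing sentence (``treat the general separable case as the recollection of Keller's theorem that it is, citing \cite{Kel18}'') is therefore exactly what the paper does, and that part of your proposal is fine as far as it goes.

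The rest, however, does not add up to a proof and contains one genuine overclaim. First, the reduction via Theorems~\ref{thmintro-1} and \ref{thmintro-2} only covers algebras in the class $\mathcal{X}$ (radical square zero, gentle, and algebras connected to these by one-point (co)extensions and singular equivalences with levels); it says nothing about an arbitrary finite dimensional $\Lambda$ with $\Lambda_0$ separable, so it cannot substitute for \cite{Kel18} in the generality of the statement. Second, even on $\mathcal{X}$, what you obtain from the $B_\infty$-isomorphism \eqref{equ:Keller-conj-intr} via Lemma~\ref{lem:Ger} is \emph{some} isomorphism of graded (indeed Gerstenhaber) algebras, not the \emph{natural} isomorphism \eqref{equ:graded} of Keller's theorem; the assertion that ``naturality of all the identifications gives the slightly stronger version'' is precisely the stronger form of the conjecture, which the paper explicitly declines to treat, and nothing in the construction of $\Upsilon$ (which passes through the antipode duality of Theorem~\ref{thm:dualityB}, the combinatorial and Leavitt $B_\infty$-algebras, and a homotopy-transfer $A_\infty$-morphism) is compared anywhere with Keller's canonical map. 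Finally, your steps one through four sketching ``Keller's own argument'' are a plausible outline but are not carried out — the multiplicativity issue you flag (matching the Yoneda product on $\HH^*_{\sg}$ with composition in $\mathbf{S}_{\dg}(\Lambda)$, through the colimit where the sections are not compatible, cf.\ Remark~\ref{rem:tricky}) is exactly the hard content of \cite{Kel18}, and it is neither resolved here nor provable from the lemmas of this paper alone. So the correct disposition is: cite \cite{Kel18} for the theorem as stated, and present the induced graded-algebra isomorphisms for $\mathbb{k}Q/J^2$ and gentle algebras as consequences of Theorem~\ref{thm-main}, without claiming they coincide with \eqref{equ:graded}.
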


The following natural conjecture is proposed by Keller.

\begin{conj}[\cite{Kel18}]\label{Keller's conjecture}
Assume that $\Lambda_0$ is separable over $\mathbb{k}$.  There is an isomorphism in the homotopy category $\mathrm{Ho}(B_{\infty})$ of $B_\infty$-algebras
\begin{align}\label{equ:Keller}\overline{C}_{\sg , L}^*(\Lambda^{\op}, \Lambda^{\op})\longrightarrow C^*(\mathbf S_{\dg}(\Lambda), \mathbf S_{\dg}(\Lambda)).
\end{align}
Consequently, there is an induced isomorphism  of Gerstenhaber algebras between $\HH_{\sg}^*(\Lambda^{\op}, \Lambda^{\op})$ and $\HH^*(\mathbf{S}_{\dg}(\Lambda), \mathbf S_{\dg}(\Lambda))$.
\end{conj}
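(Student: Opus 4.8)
The plan is to split the desired isomorphism \eqref{equ:Keller} into a \emph{categorical half} and a \emph{combinatorial half}, exactly as in the proof of Theorem~\ref{thm-main}, but now carried out without the special geometry of radical square zero algebras. For the categorical half, one first replaces $\mathbf{S}_{\dg}(\Lambda)$ by the dg category $C_{\rm dg}^{\rm ac}(\Lambda\mbox{-}{\rm Inj})^c$ of compact acyclic complexes of injective $\Lambda$-modules via Corollary~\ref{cor:Krause}. Since $\Lambda_0$ is separable over $\mathbb k$, the triangulated category $\mathbf{D}_{\rm sg}(\Lambda)\simeq \mathbf{K}^{\rm ac}(\Lambda\mbox{-}{\rm Inj})^c$ admits a single compact generator $G$ (for instance an acyclic injective complex attached to a minimal generator of $\mathbf{D}_{\rm sg}(\Lambda)$, as in the Leavitt case \cite{Li}). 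Let $\mathcal E$ be the dg endomorphism algebra of $G$ computed inside $C_{\rm dg}^{\rm ac}(\Lambda\mbox{-}{\rm Inj})$. Then one expects a zigzag of quasi-equivalences connecting $\mathbf{S}_{\dg}(\Lambda)$ to $\mathbf{per}_{\rm dg}(\mathcal E^{\rm op})$, generalizing Proposition~\ref{prop:CY-Li}, and hence by Lemmas~\ref{lem:C-quasi-iso} and~\ref{lem:C-dga} an isomorphism
$$C^*(\mathbf{S}_{\dg}(\Lambda), \mathbf{S}_{\dg}(\Lambda))\;\cong\; C^*(\mathbf{per}_{\rm dg}(\mathcal E^{\rm op}), \mathbf{per}_{\rm dg}(\mathcal E^{\rm op}))\;\cong\; C^*(\mathcal E, \mathcal E)$$
in $\mathrm{Ho}(B_\infty)$.

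For the combinatorial half one must produce a $B_\infty$-quasi-isomorphism between $\overline{C}_{\sg, L}^*(\Lambda^{\op},\Lambda^{\op})$ and $C^*(\mathcal E,\mathcal E)$. By Proposition~\ref{lemma-CL1} and Theorem~\ref{thm:dualityB} this is equivalent to a $B_\infty$-quasi-isomorphism $\overline{C}_{\sg, R}^*(\Lambda,\Lambda)^{\rm opp}\cong C^*(\mathcal E,\mathcal E)$, and by Proposition~\ref{lemma-CL} to $\overline{C}_{\sg, R}^*(\Lambda,\Lambda)\cong C^*(\mathcal E^{\rm op},\mathcal E^{\rm op})$. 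The strategy here is to realize $\mathcal E^{\rm op}$, up to quasi-isomorphism, as an explicit dg algebra built from a quasi-free bimodule resolution of $\Lambda$ together with its ``stabilization'', in the spirit of the Leavitt path algebra and the resolution $P$ of Proposition~\ref{prop:htrforLPA}. Feeding the analogue of the homotopy deformation retract of Proposition~\ref{prop:htrforLPA} into the homotopy transfer theorem (Theorem~\ref{thm-hdr}, Corollary~\ref{corollary-hdr}) would yield an explicit $A_\infty$-quasi-isomorphism between $C^*(\mathcal E^{\rm op},\mathcal E^{\rm op})$ and the colimit complex $\overline{C}_{\sg, R}^*(\Lambda,\Lambda)$ of Section~\ref{section7}; one then checks, using the higher pre-Jacobi identity of Remark~\ref{rem-specialB}, that this $A_\infty$-morphism is in fact a $B_\infty$-morphism, exactly as for the map $\Upsilon$. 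Composing the two halves gives \eqref{equ:Keller}, and Lemma~\ref{lem:Ger} produces the induced isomorphism of Gerstenhaber algebras; its coincidence with the natural isomorphism of Theorem~\ref{Keller's theorem} would then be checked on $\HH^0$ and by multiplicativity.

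As a partial reduction one may first invoke Theorem~\ref{thm:Keller-redu}: Keller's conjecture is invariant under one-point (co)extensions and singular equivalences with levels, so it suffices to verify it on a set of representatives for the resulting equivalence classes. This already settles the radical square zero algebras $\mathbb kQ/J^2$ and the finite dimensional gentle algebras, but it does not exhaust all finite dimensional algebras with separable semisimple quotient, so the general case genuinely requires the construction outlined above (or the more intrinsic route indicated below).

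The main obstacle is the combinatorial half, and it is twofold. First, there is at present no known analogue of the Leavitt path algebra for a general $\Lambda$, that is, no explicit dg model of $\mathbf{S}_{\dg}(\Lambda)$ carrying a quasi-free bimodule resolution on which the homotopy transfer machinery can be run; even for Koszul algebras this is unclear, as remarked after Theorem~\ref{thm-main}. Second, even granting such a model, transporting the $B_\infty$-structure through the colimit defining the singular Hochschild cochain complex is delicate, since, as the proof of Proposition~\ref{prop:HH-sg-long} already shows, naive liftings need not be compatible with the colimit maps. An alternative, more conceptual line of attack would follow Keller's intrinsic description of the $B_\infty$-structures (compare the remark after Proposition~\ref{lemma-CL}): the dg category $\mathbf{S}_{\dg}(\Lambda^e)$ of singular bimodules acts on $\mathbf{S}_{\dg}(\Lambda)$ and sends the identity bimodule to the identity functor, giving a canonical $B_\infty$-morphism from the endomorphism $B_\infty$-algebra of $\Lambda$ in $\mathbf{S}_{\dg}(\Lambda^e)$ to $C^*(\mathbf{S}_{\dg}(\Lambda),\mathbf{S}_{\dg}(\Lambda))$; the content of the conjecture is then exactly that this morphism is a $B_\infty$-quasi-isomorphism, and establishing that (rather than only the induced isomorphism on cohomology algebras) is what remains open.
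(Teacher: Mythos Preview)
The statement you are trying to prove is a \emph{conjecture} in the paper, not a theorem: the paper does not contain a proof of Conjecture~\ref{Keller's conjecture} in general, and explicitly treats it as open. What the paper does prove is the special case $\Lambda=\mathbb kQ/J^2$ with $Q$ sink-free (Theorem~\ref{thm-main}), together with the invariance statement Theorem~\ref{thm:Keller-redu}, which propagates the conjecture along one-point (co)extensions and singular equivalences with levels. So there is no ``paper's own proof'' of the full statement to compare against.

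Your document is honest about this: it is labeled a ``proof strategy'' and ends by naming the step that ``remains open.'' Read as a strategy, it is a reasonable extrapolation of the paper's method, and you correctly identify the two genuine obstructions. First, the categorical half requires an explicit dg model $\mathcal E$ for a compact generator of $\mathbf K^{\rm ac}(\Lambda\mbox{-}{\rm Inj})$; the paper only has this for $\mathbb kQ/J^2$ via the injective Leavitt complex of \cite{Li}, and no analogue is known for general $\Lambda$. Second, even given such a model, the combinatorial half (the homotopy transfer producing an $A_\infty$-quasi-isomorphism that is then upgraded to a $B_\infty$-morphism via the higher pre-Jacobi identity) depends in the paper on the very specific vanishing properties in Lemma~\ref{lemma-homotopy-zero} and Lemma~\ref{facts}, which rely on the Cuntz--Krieger relations; there is no reason to expect the assumption \eqref{assumptionhomtopytransfer} of Corollary~\ref{corollary-hdr} to hold for a general quasi-free resolution, so the simplified transfer formulae would not apply and the argument of Theorem~\ref{prop-B4} would not go through verbatim. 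Your alternative ``intrinsic'' route via the action of $\mathbf S_{\rm dg}(\Lambda^e)$ on $\mathbf S_{\rm dg}(\Lambda)$ is closer in spirit to Keller's own approach in \cite{Kel18}, but as you note, proving that the resulting $B_\infty$-morphism is a quasi-isomorphism is exactly the content of the conjecture.

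In short: there is no gap to name in your argument because there is no argument, only a plausible outline whose two central steps are, by your own admission, open. That is the correct assessment of the state of affairs.
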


\begin{rem}
Indeed, there is a stronger version of Keller's conjecture: the natural isomorphism in Theorem~\ref{Keller's theorem} lifts to an isomorphism between $\overline{C}_{\sg , L}^*(\Lambda^{\op}, \Lambda^{\op})$ and $C^*(\mathbf S_{\dg}(\Lambda), \mathbf S_{\dg}(\Lambda))$ in $\mathrm{Ho}(B_{\infty})$.  Here, we treat only the above weaker version.
\end{rem}

We say that an algebra $\Lambda$ \emph{satisfies} Keller's conjecture,  provided that  there is such an isomorphism (\ref{equ:Keller}) for $\Lambda$. It is not clear whether Keller's conjecture is left-right symmetric. More precisely, we do not know whether $\Lambda$ satisfies Keller's conjecture even assuming that $\Lambda^{\rm op}$ does so; compare Remark~\ref{rem:left-right}.

The following invariance theorem provides useful reduction techniques for Keller's conjecture. We recall from Subsection~\ref{subsec:one-point} the one-point coextension $\Lambda'=\begin{pmatrix} \mathbb{k} & M \\ 0 & \Lambda \end{pmatrix}$ and the one-point extension $\Lambda''=\begin{pmatrix} \Lambda & N \\ 0 & \mathbb{k}\end{pmatrix}$ of $\Lambda$.

\begin{thm}\label{thm:Keller-redu}
The following statements hold.
\begin{enumerate}
\item The algebra $\Lambda$ satisfies Keller's conjecture if and only if so does $\Lambda'$.
\item The algebra $\Lambda$ satisfies Keller's conjecture if and only if so does $\Lambda''$.
\item Assume that the algebras $\Lambda$ and $\Pi$ are linked by a singular equivalence with a level. Then $\Lambda$ satisfies Keller's conjecture if and only if so does $\Pi$.
\end{enumerate}
\end{thm}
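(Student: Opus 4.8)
The three statements are of the same nature, so the plan is to treat them in parallel, following the blueprint already used in this section for the singular Hochschild cochain complexes. The strategy has two independent halves that are then glued together. On the ``dg side'', one uses the quasi-equivalences of dg singularity categories established in Subsection~\ref{subsec:one-point}: Lemma~\ref{lem:opce} gives $\bar j\colon \mathbf{S}_{\rm dg}(\Lambda')\xrightarrow{\sim}\mathbf{S}_{\rm dg}(\Lambda)$, Lemma~\ref{lem:ope} gives $\bar i\colon \mathbf{S}_{\rm dg}(\Lambda)\xrightarrow{\sim}\mathbf{S}_{\rm dg}(\Lambda'')$, and Lemma~\ref{lem:sing-equi1} gives $M\otimes_\Pi-\colon \mathbf{S}_{\rm dg}(\Pi)\xrightarrow{\sim}\mathbf{S}_{\rm dg}(\Lambda)$. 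Applying Lemma~\ref{lem:C-quasi-iso}, each of these quasi-equivalences induces an isomorphism in $\mathrm{Ho}(B_\infty)$ between the Hochschild cochain complexes of the respective dg singularity categories, e.g. $C^*(\mathbf{S}_{\rm dg}(\Lambda'),\mathbf{S}_{\rm dg}(\Lambda'))\simeq C^*(\mathbf{S}_{\rm dg}(\Lambda),\mathbf{S}_{\rm dg}(\Lambda))$ in $\mathrm{Ho}(B_\infty)$, and similarly for the other two cases.

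On the ``algebra side'', one needs the corresponding isomorphisms between the left singular Hochschild cochain complexes of the opposite algebras in $\mathrm{Ho}(B_\infty)$. Here I would combine the relative-version results of Section~\ref{sectionforinvariance} with the comparison between relative and absolute complexes. Concretely: Lemma~\ref{lem:L-inclu} (resp.\ its absolute analogue) identifies $\overline{C}^*_{\sg,L}(\Lambda,\Lambda)$ with the $E$-relative complex $\overline{C}^*_{\sg,L,E}(\Lambda,\Lambda)$ up to strict $B_\infty$-quasi-isomorphism; for the one-point coextension, Lemma~\ref{lem:strict-R} and Lemma~\ref{lem:strict-L-quasi} give strict $B_\infty$-(quasi-)isomorphisms between the $E'$-relative complexes of $\Lambda'$ and the $E$-relative complexes of $\Lambda$, for both the left and right variants; for the one-point extension, Lemma~\ref{onepointext} and its stated analogue play the same role; and for singular equivalences with levels, Proposition~\ref{prop:sing-equi2} gives $\overline{C}^*_{\sg,R}(\Lambda,\Lambda)\simeq\overline{C}^*_{\sg,R}(\Pi,\Pi)$ in $\mathrm{Ho}(B_\infty)$. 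To pass from these statements (some of which are phrased for the $\Lambda$-side, some for $R$ vs.\ $L$) to the required isomorphism $\overline{C}^*_{\sg,L}(\Lambda^{\op},\Lambda^{\op})\simeq\overline{C}^*_{\sg,L}(\Pi^{\op},\Pi^{\op})$ in $\mathrm{Ho}(B_\infty)$, I would invoke the duality isomorphism of Proposition~\ref{lemma-CL1} together with the fact that $(-)^{\rm opp}$ is an involutive functor on $\mathrm{Ho}(B_\infty)$: an isomorphism $A\simeq B$ in $\mathrm{Ho}(B_\infty)$ gives $A^{\rm opp}\simeq B^{\rm opp}$, and $\overline{C}^*_{\sg,L}(\Lambda^{\op},\Lambda^{\op})\simeq \overline{C}^*_{\sg,R}(\Lambda,\Lambda)^{\rm opp}$ by~(\ref{iso:duality}). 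One must also check compatibility of the one-point (co)extension constructions with passing to opposite algebras, i.e.\ that $(\Lambda')^{\op}$ is the one-point extension of $\Lambda^{\op}$ by $M$ viewed as a left module, and dually; this is a direct matrix-transpose computation.

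The final step is to assemble the zigzag. Assuming $\Lambda$ satisfies Keller's conjecture, i.e.\ $\overline{C}^*_{\sg,L}(\Lambda^{\op},\Lambda^{\op})\simeq C^*(\mathbf{S}_{\rm dg}(\Lambda),\mathbf{S}_{\rm dg}(\Lambda))$ in $\mathrm{Ho}(B_\infty)$, one chains this with the algebra-side isomorphism (relating $\Lambda^{\op}$ to $(\Lambda')^{\op}$, etc.) on one end and the dg-side isomorphism (relating $\mathbf{S}_{\rm dg}(\Lambda)$ to $\mathbf{S}_{\rm dg}(\Lambda')$) on the other, obtaining $\overline{C}^*_{\sg,L}((\Lambda')^{\op},(\Lambda')^{\op})\simeq C^*(\mathbf{S}_{\rm dg}(\Lambda'),\mathbf{S}_{\rm dg}(\Lambda'))$, hence $\Lambda'$ satisfies Keller's conjecture; the converse is symmetric, and cases (2) and (3) are identical in shape. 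The invariance claim of Theorem~\ref{thmintro-1} then follows by transitivity along finite zigzags.

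\textbf{Main obstacle.} The routine parts are the dg-side $B_\infty$-isomorphisms (immediate from Lemma~\ref{lem:C-quasi-iso}) and the bookkeeping with $(-)^{\op}$ and $(-)^{\rm opp}$. The genuine difficulty, already flagged in the excerpt's discussion of Theorem~\ref{thmintro-1}, lies in the algebra-side invariance statements of Section~\ref{sectionforinvariance}: establishing that the singular Hochschild cochain complexes, as $B_\infty$-algebras, transform correctly under these operations requires controlling the colimit constructions defining $\overline{C}^*_{\sg,R}$ and $\overline{C}^*_{\sg,L}$ and verifying that the relevant restriction maps are compatible with the colimit structure maps $\theta_{p,\bullet}$ — this is exactly where the non-canonical splitting phenomenon of Remark~\ref{rem:tricky} and the auxiliary bimodule complex $\overline{C}^*_{\sg}(M,M)$ enter. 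Once those lemmas are in hand (and they are, in the excerpt), the proof of Theorem~\ref{thm:Keller-redu} itself is a formal concatenation of isomorphisms in $\mathrm{Ho}(B_\infty)$.
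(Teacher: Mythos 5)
Your proposal is correct and follows essentially the same route as the paper: quasi-equivalences of dg singularity categories (Lemmas~\ref{lem:opce}, \ref{lem:ope}, \ref{lem:sing-equi1}) combined with Lemma~\ref{lem:C-quasi-iso} on the dg side, and the invariance results of Section~\ref{sectionforinvariance} together with the relative-to-absolute comparison (Lemma~\ref{lem:L-inclu}), the observation that $(\Lambda')^{\op}$ is a one-point extension of $\Lambda^{\op}$, and, for part (3), the duality (\ref{iso:duality}) plus Proposition~\ref{prop:sing-equi2}, all concatenated in $\mathrm{Ho}(B_\infty)$. The paper handles (1) and (2) by applying the left-sided invariance lemmas directly to the opposite algebras rather than routing through $(-)^{\rm opp}$ and Proposition~\ref{lemma-CL1}, but this is only a cosmetic difference.
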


\begin{proof}
For (1), we combine Lemmas~\ref{lem:opce} and \ref{lem:C-quasi-iso} to obtain an isomorphism
$$ C^*(\mathbf S_{\dg}(\Lambda'), \mathbf S_{\dg}(\Lambda')) \simeq  C^*(\mathbf S_{\dg}(\Lambda), \mathbf S_{\dg}(\Lambda))$$
in  the homotopy category $\mathrm{Ho}(B_{\infty})$. Note that $\Lambda'^{\rm op}$ is the one-point extension of $\Lambda^{\rm op}$. Recall from Lemma~\ref{onepointext}
the  strict $B_\infty$-quasi-isomorphism
$$ \overline{C}_{\sg, L, E'}^*({\Lambda'}^{\rm op}, \Lambda'^{\rm op}) \longrightarrow{}  \overline{C}_{\sg, L, E}^*(\Lambda^{\rm op}, \Lambda^{\rm op}).$$
Now applying Lemma~\ref{lem:L-inclu} to both $\Lambda^{\rm op}$ and ${\Lambda'}^{\rm op}$, we obtain an isomorphism
$$ \overline{C}_{\sg, L}^*(\Lambda'^{\rm op}, \Lambda'^{\rm op}) \simeq  \overline{C}_{\sg, L}^*(\Lambda^{\rm op}, \Lambda^{\rm op}).$$
Then (1) follows immediately.

The argument  for (2) is very similar. We apply Lemmas~\ref{lem:ope} and \ref{lem:C-quasi-iso}  to $\Lambda''$. Then we apply Lemma~\ref{lem:strict-L-quasi} to the opposite algebras of $\Lambda$ and $\Lambda''$.

For (3), we observe that by the isomorphism (\ref{iso:duality}), Keller's conjecture is equivalent to the existence of an isomorphism
$$\overline{C}_{\sg , R}^*(\Lambda, \Lambda)^{\rm opp}\longrightarrow C^*(\mathbf S_{\dg}(\Lambda), \mathbf S_{\dg}(\Lambda)).$$
By Lemmas~\ref{lem:sing-equi1} and \ref{lem:C-quasi-iso}, we have an isomorphism
$$C^*(\mathbf S_{\dg}(\Lambda), \mathbf S_{\dg}(\Lambda))\simeq C^*(\mathbf S_{\dg}(\Pi), \mathbf S_{\dg}(\Pi)).$$
Then we are done by Proposition~\ref{prop:sing-equi2}.
\end{proof}

The following  result confirms Keller's conjecture for an algebra $\Lambda$ with radical square zero. Moreover, it relates, at the $B_\infty$-level,  the singular Hochschild cochain complex of $\Lambda$ to  the Hochschild cochain complex of the Leavitt path algebra.

\begin{thm}\label{thm-main}
Let  $Q$ be a finite quiver without  sinks. Denote by   $\Lambda=\mathbb k Q/J^2$  the  algebra   with radical square zero, and  by $L=L(Q)$ the Leavitt path algebra.  Then we  have the following isomorphisms in  $\mathrm{Ho}(B_{\infty})$
$$ \overline{C}_{\sg , L}^*(\Lambda^{\op}, \Lambda^{\op})
\stackrel{\Upsilon}\longrightarrow   C^*(L, L)
\stackrel{\Delta}\longrightarrow   C^*(\mathbf{S}_{\dg}(\Lambda), \mathbf S_{\dg}(\Lambda)).$$
In particular, there are isomorphisms of  Gerstenhaber algebras  $$\HH_{\sg}^*(\Lambda^{\op}, \Lambda^{\op})\stackrel{}\longrightarrow  \HH^*(L, L)\stackrel{}\longrightarrow  \HH^*(\mathbf{S}_{\dg}(\Lambda), \mathbf{S}_{\dg}(\Lambda)).$$
\end{thm}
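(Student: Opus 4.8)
The plan is to construct the two claimed isomorphisms $\Upsilon$ and $\Delta$ separately, each as a zigzag of $B_\infty$-quasi-isomorphisms, and then compose. Throughout, set $E=\mathbb kQ_0$, viewed as a semisimple subalgebra of both $\Lambda=\mathbb kQ/J^2$ and $L=L(Q)$.

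First I would establish $\Delta$, which is the categorical half of the argument. By Proposition~\ref{prop:CY-Li}, there is a zigzag of quasi-equivalences connecting $\mathbf S_{\rm dg}(\Lambda)$ to $\mathbf{per}_{\rm dg}(L^{\rm op})$; this uses Corollary~\ref{cor:Krause} (the enhancement of Krause's description via acyclic complexes of injectives) together with the injective Leavitt complex of \cite{Li}. Applying Lemma~\ref{lem:C-quasi-iso} to each quasi-equivalence in this zigzag yields an isomorphism $C^*(\mathbf S_{\rm dg}(\Lambda),\mathbf S_{\rm dg}(\Lambda))\simeq C^*(\mathbf{per}_{\rm dg}(L^{\rm op}),\mathbf{per}_{\rm dg}(L^{\rm op}))$ in $\mathrm{Ho}(B_\infty)$. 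Finally, Lemma~\ref{lem:C-dga} identifies the latter with $C^*(L,L)$ in $\mathrm{Ho}(B_\infty)$, since $L$ is viewed as a dg algebra with trivial differential. Composing gives $\Delta$.

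Next I would construct $\Upsilon$, which is the combinatorial and more delicate half. The chain of identifications I would assemble is: $\overline C_{\sg,L}^*(\Lambda^{\op},\Lambda^{\op})\simeq \overline C_{\sg,R}^*(\Lambda,\Lambda)^{\rm opp}$ by the duality Proposition~\ref{lemma-CL1} (itself a consequence of Theorem~\ref{thm:dualityB}); then $\overline C_{\sg,R}^*(\Lambda,\Lambda)$ is strictly $B_\infty$-quasi-isomorphic to the $E$-relative version $\overline C_{\sg,R,E}^*(\Lambda,\Lambda)$ by Lemma~\ref{lemma7.3-split}; then the explicit description of $\overline C_{\sg,R,E}^*(\Lambda,\Lambda)$ via parallel paths in $Q$ (following \cite{Wan}) gives a strict $B_\infty$-isomorphism to the combinatorial $B_\infty$-algebra $\overline C_{\sg,R}^*(Q,Q)$ (this is Theorem~\ref{thm:radical}), and a further strict $B_\infty$-isomorphism identifies $\overline C_{\sg,R}^*(Q,Q)$ with the Leavitt $B_\infty$-algebra $\widehat C^*(L,L)$ (Proposition~\ref{prop:interme}). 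Then, using the explicit homotopy deformation retract between $\widehat C^*(L,L)$ and $\overline C_E^*(L,L)$ built from Proposition~\ref{prop:htrforLPA} via Corollary~\ref{cor:hdr}, the homotopy transfer theorem (Theorem~\ref{thm-hdr}, in the simplified form Corollary~\ref{corollary-hdr}) produces an $A_\infty$-quasi-isomorphism $(\Phi_1,\Phi_2,\dotsc)\colon\widehat C^*(L,L)\to\overline C_E^*(L,L)$; Theorem~\ref{prop-B4} upgrades this to a $B_\infty$-morphism $\widehat C^*(L,L)\to\overline C_E^*(L,L)^{\rm opp}$, verified via Lemma~\ref{lemma-infinity-morphism} and the higher pre-Jacobi identity. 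Composing with the strict $B_\infty$-quasi-isomorphism $\overline C_E^*(L,L)\hookrightarrow C^*(L,L)$ of Lemma~\ref{lemma6.1-split} (applied after passing to opposites, so that one lands in $C^*(L,L)$ rather than its opposite — here one uses that $C^*(L,L)^{\rm opp}$ and $\overline C_E^*(L,L)^{\rm opp}$ are related by a strict isomorphism) and taking the opposite throughout to cancel the two ${\rm opp}$'s, I obtain $\Upsilon$ as a zigzag of $B_\infty$-quasi-isomorphisms, hence an isomorphism in $\mathrm{Ho}(B_\infty)$. The composite $\Delta\circ\Upsilon$ then verifies Keller's conjecture for $\Lambda$.

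The last sentence of the theorem is then immediate: by Lemma~\ref{lem:Ger}, any $B_\infty$-quasi-isomorphism induces an isomorphism of Gerstenhaber algebras on cohomology, and $H^*(\overline C_{\sg,L}^*(\Lambda^{\op},\Lambda^{\op}))=\HH_{\sg}^*(\Lambda^{\op},\Lambda^{\op})$ (Lemma~\ref{lem:HH-colimit}), $H^*(C^*(L,L))=\HH^*(L,L)$, and $H^*(C^*(\mathbf S_{\rm dg}(\Lambda),\mathbf S_{\rm dg}(\Lambda)))=\HH^*(\mathbf S_{\rm dg}(\Lambda),\mathbf S_{\rm dg}(\Lambda))$. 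I expect the main obstacle to be the verification that the transferred $A_\infty$-morphism $(\Phi_1,\Phi_2,\dotsc)$ respects the $B_\infty$-structure (i.e.\ Theorem~\ref{prop-B4}): the homotopy transfer theorem only guarantees an $A_\infty$-morphism, and forcing compatibility with all the brace operations requires a careful bookkeeping of signs and an application of the higher pre-Jacobi identity, complicated further by the colimits implicit in the singular Hochschild complex; the bulk of the real work, and the delicate combinatorics with parallel paths and the Leavitt relations, sits here and in the construction of the strict isomorphism $\overline C_{\sg,R}^*(Q,Q)\simeq\widehat C^*(L,L)$.
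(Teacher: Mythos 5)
Your proposal is correct and follows essentially the same route as the paper: $\Delta$ is assembled exactly from Proposition~\ref{prop:CY-Li}, Lemma~\ref{lem:C-quasi-iso} and Lemma~\ref{lem:C-dga}, while $\Upsilon$ is the same zigzag through $\overline{C}_{\sg,R}^*(\Lambda,\Lambda)^{\rm opp}$, $\overline{C}_{\sg,R,E}^*(\Lambda,\Lambda)^{\rm opp}$, $\overline{C}_{\sg,R}^*(Q,Q)^{\rm opp}$ and $\widehat{C}^*(L,L)^{\rm opp}$ that the paper records in the diagram \eqref{diagram1}, with the transferred morphism $(\Phi_1,\Phi_2,\dotsc)$ upgraded to a $B_\infty$-morphism by Theorem~\ref{prop-B4} and the Gerstenhaber statement deduced from Lemma~\ref{lem:Ger}. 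Your handling of the two ${\rm opp}$'s cancelling along the zigzag matches the paper's bookkeeping.
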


\begin{proof}
The isomorphism $\Delta$ is obtained as the following composite
\begin{align*}
C^*(L, L) \xrightarrow{{\rm Lem.} \ref{lem:C-dga}} C^*(\mathbf{per}_{\rm dg}(L^{\rm op}), \mathbf{per}_{\rm dg}(L^{\rm op})) \xrightarrow{{\rm Lem.} \ref{lem:C-quasi-iso} + {\rm Prop.} \ref{prop:CY-Li}} C^*(\mathbf{S}_{\dg}(\Lambda), \mathbf S_{\dg}(\Lambda)).
\end{align*}

 Similarly, the isomorphism $\Upsilon$ is obtained by the following diagram
\begin{align}
\label{diagram1}
\xymatrix{
\overline{C}_{\sg , L}^*(\Lambda^{\op}, \Lambda^{\op}) \ar@{.>}[dd]_{\Upsilon} \ar[rr]^-{{\rm Prop.} \ref{lemma-CL1}} && \overline{C}_{\sg , R}^*(\Lambda, \Lambda)^{\rm opp}  && \overline{C}_{\sg , R, E}^*(\Lambda, \Lambda)^{\rm opp} \ar[ll]_-{{\rm Lem.}  \ref{lemma7.3-split}} \\
&&  &&  \overline{C}_{\sg, R}^*(Q, Q)^{\rm opp} \ar[u]_-{{\rm Thm.}  \ref{thm:radical}}  \ar[d]^-{{\rm Prop.}  \ref{prop:interme}} \\
C^*(L, L)&& \overline{C}^*_E(L, L) \ar[ll]_{{\rm Lem.} \ref{lemma6.1-split}}   && \widehat{C}^*(L, L)^{\rm opp} \ar[ll]_{{\rm Thm.} \ref{prop-B4}}
}
\end{align}
We use the isomorphism (\ref{iso:duality}), which is proved in Proposition \ref{lemma-CL1}. The \emph{combinatorial $B_\infty$-algebra} $\overline{C}_{\sg, R}^*(Q, Q)$ of $Q$ is introduced in Section~\ref{subsection:Algebras-radical}. The \emph{Leavitt $B_\infty$-algebra}  $\widehat{C}^*(L, L)$ is introduced in Section~\ref{Section:9}. Both of them are brace $B_\infty$-algebras.

The proof of Theorem \ref{prop-B4} occupies Sections \ref{Section:10} and \ref{section:11}. We obtain an explicit $A_\infty$-quasi-isomorphism $(\Phi_1, \Phi_2, \cdots)\colon  \widehat{C}^*(L, L)\rightarrow \overline{C}_E^*(L, L)$ in Proposition \ref{proposition-Phi}. We emphasize that  each $\Phi_k$ is given by the brace operation on $\widehat{C}^*(L, L)$. The verification of $(\Phi_1, \Phi_2, \cdots)$ being a $B_\infty$-morphism is essentially using the higher pre-Jacobi identity of $\widehat{C}^*(L, L)$.  The isomorphisms of Gerstenhaber algebras follow from Lemma~\ref{lem:Ger}.
\end{proof}

 Denote by $\mathcal{X}$ the class of finite dimensional algebras $\Lambda$ with the following property:  there exists some finite quiver $Q$ without sinks, such that $\Lambda$ is connected to $\mathbb{k}Q/J^2$  by a finite zigzag of one-point (co)extensions and singular equivalences with levels. For example, if $Q'$ is \emph{any} finite quiver possibly with sinks, then $\mathbb{k}Q'/J^2$ clearly lies in $\mathcal{X}$.

We have the following immediate consequence of Theorems~\ref{thm:Keller-redu} and \ref{thm-main}.

\begin{cor}
Any algebra belonging to the class $\mathcal{X}$ satisfies Keller's conjecture. \hfill $\square$
\end{cor}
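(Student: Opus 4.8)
The plan is to deduce the corollary directly from the two principal structural results already established: the invariance Theorem~\ref{thm:Keller-redu} and the confirmation Theorem~\ref{thm-main} of Keller's conjecture for radical square zero algebras attached to quivers without sinks. First I would fix an algebra $\Lambda$ belonging to $\mathcal{X}$ and, by the very definition of $\mathcal{X}$, choose a finite quiver $Q$ without sinks together with a finite zigzag of algebras $\Lambda = \Lambda_0, \Lambda_1, \dotsc, \Lambda_r = \mathbb{k}Q/J^2$, in which for each $i$ one of $\Lambda_i$, $\Lambda_{i+1}$ is a one-point extension of the other, or a one-point coextension of the other, or the two are connected by a singular equivalence with a level.

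Next I would argue by descending induction on $i$ (equivalently, by induction on the distance $r-i$ to $\mathbb{k}Q/J^2$) that each $\Lambda_i$ satisfies Keller's conjecture. The base case $i = r$ is exactly Theorem~\ref{thm-main}, which supplies the isomorphisms $\Upsilon$ and $\Delta$ in $\mathrm{Ho}(B_{\infty})$ for $\mathbb{k}Q/J^2$, and in particular the required isomorphism (\ref{equ:Keller-conj-intr}). For the inductive step, assume $\Lambda_{i+1}$ satisfies Keller's conjecture. The link between $\Lambda_i$ and $\Lambda_{i+1}$ falls into one of the three cases treated by Theorem~\ref{thm:Keller-redu}: if it is a one-point coextension we invoke part~(1), if it is a one-point extension we invoke part~(2), and if it is a singular equivalence with a level we invoke part~(3). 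In each case the conclusion is an ``if and only if'', so it applies no matter which of $\Lambda_i$, $\Lambda_{i+1}$ is the (co)extension, and it transports the property from $\Lambda_{i+1}$ to $\Lambda_i$. Iterating down to $i = 0$ yields that $\Lambda = \Lambda_0$ satisfies Keller's conjecture, which is what we want.

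There is essentially no obstacle remaining at this point: all the substance has been absorbed into Theorems~\ref{thm:Keller-redu} and~\ref{thm-main}, and the argument above is a purely formal propagation of a property along a finite path. The single point worth flagging is that the links of the zigzag are not oriented, and this is precisely why the ``if and only if'' form of Theorem~\ref{thm:Keller-redu} is indispensable here. Finally I would record the concrete instances, exactly as indicated in the discussion preceding Theorem~\ref{thmintro-2}: for an \emph{arbitrary} finite quiver $Q'$ (possibly with sinks) the algebra $\mathbb{k}Q'/J^2$ lies in $\mathcal{X}$ by deleting sinks through one-point coextensions, and every finite dimensional gentle algebra lies in $\mathcal{X}$; hence all of these algebras satisfy Keller's conjecture.
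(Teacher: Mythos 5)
Your proposal is correct and is exactly the argument the paper intends: the corollary is stated as an immediate consequence of Theorems~\ref{thm:Keller-redu} and~\ref{thm-main}, and your induction along the zigzag, using the ``if and only if'' form of the invariance theorem at each link and Theorem~\ref{thm-main} as the base case, is precisely that formal propagation. The closing remarks about $\mathbb{k}Q'/J^2$ for arbitrary $Q'$ and gentle algebras match the paper's surrounding discussion and are not needed for the corollary itself.
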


By \cite[Theorem~6.3]{CLiuW}, there is a singular equivalence with level between any given Gorenstein quadratic monomial algebra and its associated algebra with radical square zero. It follows that  $\mathcal{X}$ contains all Gorenstein quadratic monomial algebras, and thus Keller's conjecture holds for  them. By \cite{GR}, all finite dimensional gentle algebras are Gorenstein quadratic monomial. We conclude that Keller's conjecture holds for all finite dimensional gentle algebras. Let us mention the connection between gentle algebras and Fukaya categories \cite{HKK}.

\section{Algebras with radical square zero and the combinatorial $B_\infty$-algebra}
\label{subsection:Algebras-radical}

Let $Q$ be a finite quiver without sinks. Let $\Lambda=\mathbb k Q/J^2$ be the corresponding algebra with radical square zero. We will give a combinatorial description of the  singular Hochschild cochain complex of $\Lambda$; see Subsection~\ref{subsection:combinatorial}.  For its $B_\infty$-algebra structure, we describe it as the combinatorial $B_\infty$-algebra $\overline{C}_{\sg, R}^*(Q, Q)$ of $Q$; see Subsection \ref{subsection:combinatorial1}.

\subsection{A combinatorial description of the singular Hochschild cochain complex}\label{subsection:combinatorial}

 Set $E=\mathbb k Q_0$, viewed as a semisimple subalgebra of $\Lambda$. Then $\overline{\Lambda}=\Lambda/(E\cdot 1_\Lambda)$ is identified with $\mathbb k Q_1$.  We will give  a description of  the $E$-relative right singular Hochschild cochain complex $\overline{C}_{\sg,R, E}^*(\Lambda, \Lambda)$ by  parallel paths in the quiver $Q$.

For two subsets $X$ and $Y$ of paths in $Q$, we denote
$$X//Y:=\{ (\gamma, \gamma')\in X\times Y \mid \text{$s(\gamma)=s(\gamma')$ and $t(\gamma)=t(\gamma')$}\}.$$
An element in $Q_m//Q_p$ is called a {\it parallel path} in $Q$. We will abbreviate a path $\beta_m\cdots \beta_2\beta_1\in Q_m$ as $\beta_{m, 1}$. Similarly, a path $\alpha_p\cdots \alpha_2\alpha_1\in Q_p$ is denoted by $\alpha_{p, 1}$.

For a set $X$, we denote by $\mathbb k(X)$ the $\mathbb k$-vector space spanned by elements in $X$. We will view $\mathbb{k}(Q_m//Q_p)$ as a graded $\mathbb k$-space concentrated on degree $m-p$.  For a graded $\mathbb k$-space $A$, let $s^{-1}A$ be the $(-1)$-shifted graded space such that $(s^{-1}A)^i=A^{i-1}$ for $i\in\mathbb Z$. The element in $s^{-1}A$ is denoted by $s^{-1}a$ with $|s^{-1}a|=|a|+1$. Roughly speaking, we have $|s^{-1}|=1$. Therefore, $s^{-1}\mathbb k(Q_m//Q_p)$ is concentrated on degree $m-p+1$.

We will define a $\mathbb k$-linear map (of degree zero) between graded spaces
$$\kappa_{m, p}\colon    \mathbb k(Q_m// Q_p)\oplus s^{-1}  \mathbb k(Q_m//Q_{p+1})\longrightarrow {\rm Hom}_{\text{$E$-$E$}}((s\overline{\Lambda})^{\otimes_E m}, (s\overline{\Lambda})^{\otimes_E p}\otimes_E \Lambda).$$
For  $y=(\alpha_{m, 1}, \beta_{p,1})\in Q_m//Q_p$ and any monomial $x=s\alpha_m'\otimes_E\cdots\otimes_E s\alpha_1'\in (s\overline{\Lambda})^{\otimes_E m}$ with $\alpha_j' \in Q_1$ for any $1\leq j \leq m$,  we set
$$\kappa_{m, p}(y)(x)=
\begin{cases} (-1)^{\epsilon}s\beta_p\otimes_E\cdots\otimes_E s\beta_1\otimes_E 1 & \text{if $\alpha_j = \alpha_j'$ for all $1\leq j\leq m$,}\\
0 & \text{otherwise}.
\end{cases}$$
For $s^{-1}y'=s^{-1}(\alpha_{m,1}, \beta_{p, 0})\in s^{-1}\mathbb k(Q_m//Q_{p+1})$,  we set
$$\kappa_{m, p}(s^{-1}y')(x)= \begin{cases} (-1)^{\epsilon}s\beta_p\otimes_E\cdots\otimes_E s\beta_1\otimes_E \beta_0& \text{if $\alpha_j = \alpha_j' $ for all $1\leq j\leq m,$}\\
0 & \text{otherwise.}
\end{cases}
$$
Here, we denote $\epsilon=(m-p)p+\frac{(m-p)(m-p+1)}{2}$.

\begin{lem}{\rm (\cite[Lemma 3.3]{Wan})}
\label{lemma6.2}
For any $m, p\geq 0$, the above map $\kappa_{m,p}$ is an isomorphism of graded vector spaces.
\hfill $\square$
\end{lem}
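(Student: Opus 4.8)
\textbf{Proof plan for Lemma~\ref{lemma6.2}.}
The plan is to exhibit an explicit two-sided inverse to $\kappa_{m,p}$, or, what amounts to the same thing, to show that $\kappa_{m,p}$ is bijective by a dimension count combined with injectivity. First I would record the structure of the source and target spaces. Since $E=\mathbb kQ_0$ and $\overline\Lambda$ is identified with $\mathbb kQ_1$, the $E$-bimodule $(s\overline\Lambda)^{\otimes_E m}$ has a $\mathbb k$-basis given by the monomials $s\alpha_m'\otimes_E\cdots\otimes_E s\alpha_1'$ indexed by paths $\alpha_m'\cdots\alpha_1'\in Q_m$ (the $E$-relative tensor product forces $t(\alpha_j')=s(\alpha_{j+1}')$). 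Likewise $(s\overline\Lambda)^{\otimes_E p}\otimes_E\Lambda$ has, over $E$ on the left and $E$ on the right (coming from $\Lambda=E\oplus\overline\Lambda$), a basis consisting of elements $s\beta_p\otimes_E\cdots\otimes_E s\beta_1\otimes_E 1$ indexed by paths $\beta_p\cdots\beta_1\in Q_p$ together with ending vertex $t(\beta_p)$, and elements $s\beta_p\otimes_E\cdots\otimes_E s\beta_1\otimes_E\beta_0$ indexed by paths $\beta_p\cdots\beta_1\beta_0\in Q_{p+1}$. A homomorphism of $E$-bimodules out of $(s\overline\Lambda)^{\otimes_E m}$ is therefore determined by its values on the basis monomials, and $E$-bilinearity of the value on $s\alpha_m'\otimes_E\cdots\otimes_E s\alpha_1'$ means the output lies in $e_{t(\alpha_m')}\bigl((s\overline\Lambda)^{\otimes_E p}\otimes_E\Lambda\bigr)e_{s(\alpha_1')}$; splitting $\Lambda=E\oplus\overline\Lambda$ on the right, this space has basis the $s\beta_p\otimes_E\cdots\otimes_E s\beta_1\otimes_E 1$ with $(\beta_{p,1})$ parallel to $(\alpha_{m,1}')$ when $p\geq 1$ (plus $e_{t}\otimes_E 1$-type terms when $p=0$), and the $s\beta_p\otimes_E\cdots\otimes_E s\beta_1\otimes_E\beta_0$ with $\beta_p\cdots\beta_0$ a path from $s(\alpha_1')$ to $t(\alpha_m')$.

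Second, I would define the candidate inverse $\lambda_{m,p}$. Given $f\in{\rm Hom}_{E\text{-}E}((s\overline\Lambda)^{\otimes_E m},(s\overline\Lambda)^{\otimes_E p}\otimes_E\Lambda)$, for each path $(\alpha_{m,1})\in Q_m$ evaluate $f$ on the corresponding monomial and expand the result in the basis just described; the coefficient of $s\beta_p\otimes_E\cdots\otimes_E s\beta_1\otimes_E 1$ (with $(\beta_{p,1})$ necessarily parallel to $(\alpha_{m,1})$) contributes, up to the sign $(-1)^{\epsilon}$ with $\epsilon=(m-p)p+\tfrac{(m-p)(m-p+1)}{2}$, to the component of $\lambda_{m,p}(f)$ in $\mathbb k(Q_m//Q_p)$, while the coefficient of $s\beta_p\otimes_E\cdots\otimes_E s\beta_1\otimes_E\beta_0$ contributes similarly to the component in $s^{-1}\mathbb k(Q_m//Q_{p+1})$. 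By construction $\lambda_{m,p}$ is the linear map sending each basis element of the target to the corresponding generator of the source (with matching signs), so $\kappa_{m,p}\circ\lambda_{m,p}$ and $\lambda_{m,p}\circ\kappa_{m,p}$ are both identities; the degree bookkeeping ($|s^{-1}|=1$, the shift on $\mathbb k(Q_m//Q_p)$ placing it in degree $m-p$, and the degree of a homomorphism being target-degree minus source-degree, i.e. $(-(p) + 0)-(-(m)) = m-p$) shows $\kappa_{m,p}$ has degree zero. Then I would simply verify that $\kappa_{m,p}$ indeed matches basis elements to basis elements: this is exactly the content of the two displayed formulas defining $\kappa_{m,p}$, read off on a monomial $x$ that is a single path.

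Finally I would remark that this is essentially a reindexing of bases, so the only genuinely delicate points are (i) checking $E$-bilinearity forces the parallel-path constraint, so that the target really does decompose into the two pieces $\mathbb k(Q_m//Q_p)$ and $s^{-1}\mathbb k(Q_m//Q_{p+1})$ with no leftover summands, and (ii) confirming the sign $(-1)^{\epsilon}$ is consistent on both sides so that $\lambda_{m,p}$ is a genuine two-sided inverse rather than an inverse up to sign on each homogeneous piece. I expect (i) to be the main obstacle, since it requires being careful about how $\Lambda$ (rather than $\overline\Lambda$) sits as the rightmost tensor factor and about the fact that $\overline\Lambda\cdot\overline\Lambda=0$ in $\Lambda$ plays no role here because no multiplication is being performed — only the bimodule decomposition matters. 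Since the statement is quoted verbatim from \cite[Lemma~3.3]{Wan}, I would also allow myself to simply cite that reference for the sign verification, keeping the present argument at the level of identifying the bases.
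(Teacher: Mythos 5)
Your argument is correct: since both sides are concentrated in a single degree and the $E$-bilinearity constraint forces exactly the parallel-path condition, $\kappa_{m,p}$ matches the spanning set $\{(\alpha_{m,1},\beta_{p,1})\}\cup\{s^{-1}(\alpha_{m,1},\beta_{p,0})\}$ bijectively (up to the fixed sign $(-1)^{\epsilon}$) onto a basis of ${\rm Hom}_{E\text{-}E}((s\overline{\Lambda})^{\otimes_E m},(s\overline{\Lambda})^{\otimes_E p}\otimes_E\Lambda)$ obtained from the splitting $\Lambda=E\oplus\overline{\Lambda}$, and your handling of the $p=0$ and $m=0$ boundary cases is fine. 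The paper itself gives no proof here, quoting the statement directly from \cite[Lemma~3.3]{Wan}, and your basis-identification argument is precisely the expected one.
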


We define a graded vector space for each $p\geq 0$,
$$ \mathbb k(Q//Q_p):=\prod_{m\geq 0}  \mathbb k(Q_m//Q_p), $$
where the degree of  $(\gamma, \gamma')$ in $Q_m//Q_p$ is $m-p$. We define a $\mathbb k$-linear map of degree zero
$$\theta_{p, R}\colon  \mathbb k(Q//Q_p) \longrightarrow  \mathbb k(Q//Q_{p+1}), \quad (\gamma, \gamma')\longmapsto \sum_{\{\alpha\in Q_1| s(\alpha)=t(\gamma)\}} (\alpha\gamma, \alpha\gamma').$$
Denote by $\overline{C}^*_{\sg,R,  0}(Q, Q)$ the colimit of the inductive system of graded vector spaces
$$\mathbb  k(Q//Q_0)\xrightarrow{\theta_{0, R}} \mathbb  k(Q//Q_1)\xrightarrow{\theta_{1, R}} \mathbb  k(Q//Q_2)\xrightarrow{\theta_{2, R}} \cdots \xrightarrow{
\theta_{p-1, R}} \mathbb  k(Q//Q_p)\xrightarrow{\theta_{p, R}}\cdots.$$
Therefore,  for any $m\in \mathbb Z$, we have
$$\overline{C}_{\sg ,R, 0}^m(Q, Q)=\lim\limits_{\substack{\longrightarrow\\ \theta_{p, R}}} \mathbb  k(Q_{m+p}//Q_p).$$

We define  a complex
\begin{align}\label{equ:sg-R-Q}
\overline{C}_{\sg , R}^*(Q, Q)=\overline{C}_{\sg ,R,  0}^{ *}(Q, Q) \oplus s^{-1}\overline{C}_{\sg , R, 0}^{ *}(Q,Q),
\end{align}
whose differential $\delta$ is induced by
 \begin{equation}
\label{thediff}\left(\begin{smallmatrix} 0 & D_{m, p}\\ 0 & 0\end{smallmatrix}\right)\colon   \mathbb k(Q_{m}//Q_p)\oplus s^{-1} \mathbb k(Q_{m}// Q_{p+1})\longrightarrow \mathbb k(Q_{m+1}//Q_p)\oplus s^{-1}\mathbb k(Q_{m+1}// Q_{p+1}). \end{equation}
For $(\gamma, \gamma')\in Q_{m}//Q_p$, we have
\begin{equation}
\label{equation-defD}
D_{m, p}((\gamma, \gamma'))=\sum_{\{\alpha\in Q_1 \; |\; s(\alpha)=t(\gamma)\}}s^{-1}(\alpha\gamma, \alpha\gamma')- (-1)^{m-p}\sum_{\{\beta\in Q_1\; |\; t(\beta)=s(\gamma)\}} s^{-1}(\gamma \beta, \gamma' \beta).
\end{equation}
We implicitly use the identity $s^{-1}\theta_{p+1, R}\circ D_{m, p}=D_{m+1, p+1}\circ \theta_{p, R}$. Here if the set $\{\beta\in Q_1\; |\; t(\beta)=s(\gamma)\}$ is empty then we define
$\sum_{\{\beta\in Q_1\; |\; t(\beta)=s(\gamma)\}} s^{-1}(\gamma \beta, \gamma' \beta) =0.$

Recall from Subsection~\ref{subsec:rel-sinHo} that $\Omega_{\nc, R, E}^p(\Lambda)= (s\overline{\Lambda})^{\otimes_E p}\otimes_E \Lambda$. Recall from \eqref{rightblacktriangle} the left $\Lambda$-action $\blacktriangleright.$ Note that we have
\begin{align*}
\beta_{p+1} \blacktriangleright (s\beta_p \otimes_E \dotsb\otimes_E s \beta_1 \otimes_E \beta_0) = \begin{cases}
0 & \text{if $\beta_0 \in Q_1$}\\
(-1)^p s\beta_{p+1} \otimes_E \dotsb\otimes_E s\beta_2 \otimes_E \beta_1\beta_0 & \text{if $\beta_0 \in Q_0$}
\end{cases}
\end{align*}
where $\beta_i \in Q_1 =\overline \Gamma$ for $1\leq i \leq p+1$. Then it is not difficult to show that the map \eqref{thediff} is compatible with the differential $\delta_{ex}$  of $\overline C^*(\Lambda, \Omega_{\nc, R, E}^p(\Lambda))$. More precisely,  the following  diagram is commutative
\begin{equation*}
\xymatrix@C=2pc{
{\rm Hom}_{\text{$E$-$E$}}((s\overline{\Lambda})^{\otimes_E m}, (s\overline{\Lambda})^{\otimes_E p}\otimes_E \Lambda) \ar[r]^-{\delta_{ex}} & {\rm Hom}_{\text{$E$-$E$}}((s\overline{\Lambda})^{\otimes_E m+1}, (s\overline{\Lambda})^{\otimes_E p}\otimes_E \Lambda) \\
\mathbb k(Q_m// Q_p)\oplus s^{-1}  \mathbb k(Q_{m}//Q_{p+1})\ar[u]_-{\cong}^-{\kappa_{m, p}}\ar[r]^-{\left(\begin{smallmatrix} 0 & D_{m, p}\\ 0 & 0\end{smallmatrix}\right)}& \mathbb k(Q_{m+1}// Q_p)\oplus s^{-1}  \mathbb k(Q_{m+1}//Q_{p+1}) \ar[u]_-{\cong}^-{\kappa_{m+1, p}}}
\end{equation*}
where recall that  the formula for  $\delta_{ex}$ is given in Subsection \ref{subsection-dg-HH}.

The above commutative diagram allows us to take the colimit along the isomorphisms $\kappa_{m, p}$ in  Lemma~\ref{lemma6.2}. Therefore, we have the following result.

\begin{lem}\label{lem:kappa}
The isomorphisms $\kappa_{m, p}$ induce  an isomorphism of complexes
\begin{equation*}\label{equation:kappa}
\kappa\colon  \overline{C}_{\sg, R}^*(Q, Q)\xrightarrow{\sim} \overline{C}_{\sg,R, E}^*(\Lambda, \Lambda).
\end{equation*}
\end{lem}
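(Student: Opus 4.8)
The statement asserts that the family of isomorphisms $\kappa_{m,p}$ from Lemma~\ref{lemma6.2} assembles into an isomorphism of \emph{complexes} $\kappa\colon \overline{C}_{\sg,R}^*(Q,Q)\xrightarrow{\sim}\overline{C}_{\sg,R,E}^*(\Lambda,\Lambda)$. Since both sides are defined as colimits of inductive systems indexed by $p$, the plan is first to check that the squares relating $\kappa_{m,p}$ to the colimit structure maps commute, and then to check compatibility with the differentials; the colimit of a commuting ladder of isomorphisms of complexes is then automatically an isomorphism of complexes. So the proof splits into two verifications, both essentially bookkeeping with the explicit formulas already recorded in the excerpt.

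\textbf{Step 1: compatibility with the colimit maps.} I would verify that for each $m,p$ the diagram
\begin{equation*}
\xymatrix@C=2.5pc{
\mathbb k(Q_m//Q_p)\oplus s^{-1}\mathbb k(Q_m//Q_{p+1}) \ar[r]^-{\theta_{p,R}\oplus s^{-1}\theta_{p+1,R}} \ar[d]_-{\kappa_{m,p}} & \mathbb k(Q_m//Q_{p+1})\oplus s^{-1}\mathbb k(Q_m//Q_{p+2}) \ar[d]^-{\kappa_{m,p+1}} \\
{\rm Hom}_{\text{$E$-$E$}}((s\overline\Lambda)^{\otimes_E m}, \Omega^p_{\nc,R,E}(\Lambda)) \ar[r]^-{\theta_{p,R,E}} & {\rm Hom}_{\text{$E$-$E$}}((s\overline\Lambda)^{\otimes_E m}, \Omega^{p+1}_{\nc,R,E}(\Lambda))
}
\end{equation*}
commutes, where $\theta_{p,R}$ on the combinatorial side is the map $(\gamma,\gamma')\mapsto \sum_{\{\alpha\in Q_1\mid s(\alpha)=t(\gamma)\}}(\alpha\gamma,\alpha\gamma')$ (on parallel paths) and $\theta_{p,R,E}$ is $f\mapsto \mathbf 1_{s\overline\Lambda}\otimes_E f$ from \eqref{equ:thetaRE}. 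This is a direct computation: applying $\kappa_{m,p}(y)$ and then prepending an identity tensor factor produces, after the sign $(-1)^{|f|}$ from the Koszul rule, exactly the sum over arrows $\alpha$ with $s(\alpha)=t(\gamma)$ of the terms $\kappa_{m,p+1}(\alpha\gamma,\alpha\gamma')$; the sign $\epsilon=(m-p)p+\tfrac{(m-p)(m-p+1)}{2}$ in the definition of $\kappa_{m,p}$ is precisely engineered so that these match. One does the same for the $s^{-1}$-summand. (In fact this compatibility is already implicitly invoked in the excerpt to make sense of $\overline{C}^*_{\sg,R}(Q,Q)$ as a colimit.)

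\textbf{Step 2: compatibility with the differentials.} The commutative square displayed immediately before the statement of Lemma~\ref{lem:kappa} records exactly that $\kappa_{m+1,p}\circ\left(\begin{smallmatrix}0&D_{m,p}\\0&0\end{smallmatrix}\right)=\delta_{ex}\circ\kappa_{m,p}$, using the explicit description of $\beta_{p+1}\blacktriangleright(-)$ on $\Omega^p_{\nc,R,E}(\Lambda)$ given there. Since the internal differential $\delta_{in}$ vanishes on $\overline{C}^*(\Lambda,\Omega^p_{\nc,R,E}(\Lambda))$ (as $\Lambda$ sits in degree zero with trivial differential), the full Hochschild differential on each term of the inductive system is just $\delta_{ex}$, so this square is all that is needed at finite level. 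Passing to the colimit over $p$ (legitimate by Step~1), the differential $\delta$ on $\overline{C}^*_{\sg,R}(Q,Q)$ — which is by construction induced by the $D_{m,p}$ via the relation $s^{-1}\theta_{p+1,R}\circ D_{m,p}=D_{m+1,p+1}\circ\theta_{p,R}$ — is carried by $\kappa$ to the differential of $\overline{C}^*_{\sg,R,E}(\Lambda,\Lambda)$. Since each $\kappa_{m,p}$ is a bijection (Lemma~\ref{lemma6.2}) and the colimit functor is exact on filtered systems, the induced map $\kappa$ is an isomorphism of complexes, completing the proof.

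\textbf{Expected main obstacle.} There is no conceptual difficulty here; the one place demanding care is the sign reconciliation. The maps $\theta_{p,R,E}$, $\delta_{ex}$, and the isomorphisms $\kappa_{m,p}$ each carry Koszul signs (the $(-1)^{|f|}$ when prepending $\mathbf 1_{s\overline\Lambda}$, the signs $\epsilon_i$ in $\delta_{ex}$, and the sign $\epsilon=(m-p)p+\tfrac{(m-p)(m-p+1)}{2}$ built into $\kappa_{m,p}$), and one must check that they conspire so that both squares commute \emph{on the nose} rather than up to a sign. This amounts to a routine but somewhat delicate induction on $m-p$ comparing the two sign functions; everything else is immediate from the definitions recalled in the excerpt, and indeed the square before Lemma~\ref{lem:kappa} already asserts the differential-compatibility, so the remaining work is Step~1 plus the formal colimit argument.
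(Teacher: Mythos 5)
Your proposal is correct and follows essentially the same route as the paper, whose entire proof consists of the commutative square displayed just before the lemma (compatibility of $\kappa_{m,p}$ with $\delta_{ex}$ and $D_{m,p}$, noting $\delta_{in}=0$) together with passing to the colimit along the $\kappa_{m,p}$, which tacitly requires exactly your Step~1 ladder compatibility with $\theta_{p,R}$ and $\theta_{p,R,E}$. The only blemish is an index shift in your Step~1 square: since $\theta_{p,R}$ lengthens both paths by one arrow, it maps $\mathbb k(Q_m//Q_p)$ into $\mathbb k(Q_{m+1}//Q_{p+1})$, so the right-hand vertical arrow should be $\kappa_{m+1,p+1}$ landing in ${\rm Hom}_{\text{$E$-$E$}}((s\overline\Lambda)^{\otimes_E m+1},\Omega^{p+1}_{\nc,R,E}(\Lambda))$; with that correction your verification (including the sign bookkeeping, where the discrepancy $(m-p)$ between the two values of $\epsilon$ is absorbed by the Koszul sign $(-1)^{|f|}$) goes through as described.
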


\subsection{The combinatorial $B_\infty$-algebra}\label{subsection:combinatorial1}

 In this subsection, we will transfer, via the isomorphism $\kappa$,  the cup product $-\cup_R-$ and brace operation $-\{-, \dotsc, -\}_R$ of $\overline{C}_{\sg, R, E}^*(\Lambda, \Lambda)$ to $\overline{C}_{\sg, R}^*(Q, Q)$.  We will provide an example for illustration.

 By abuse of notation, we still denote the  cup product and brace operation on $\overline{C}_{\sg, R}^*(Q, Q)$ by $-\cup_R-$ and $-\{-, \dotsc, -\}_R$.

We will use the following \emph{non-standard} sequences to depict parallel paths.
 \begin{enumerate}[(i)]
\item We write
 $s^{-1}x=s^{-1}(\alpha_{m, 1}, \beta_{p, 0})\in s^{-1}\overline{C}_{\sg, R, 0}^*(Q, Q)$ as
\begin{align}\label{nonstandardsequence1}
\xymatrix{
{\color{blue}\xrightarrow{\beta_0}}\xrightarrow{\beta_1}\dotsb\xrightarrow{\beta_p}\xleftarrow{\alpha_m}\dotsb\xleftarrow{\alpha_2}\xleftarrow{\alpha_1}.
}
\end{align}
\item We write $x=(\alpha_{m, 1}, \beta_{p, 1})\in \overline{C}_{\sg, R, 0}^*(Q, Q)$ as
\begin{align}
\label{picture}\xymatrix{
\xrightarrow{\beta_1}\dotsb\xrightarrow{\beta_p}\xleftarrow{\alpha_m}\dotsb\xleftarrow{\alpha_2}\xleftarrow{\alpha_1},
}
\end{align}
Here,  all $ \alpha_1,\dotsc,  \alpha_m, \beta_0, \beta_1, \dotsc, \beta_p $ are arrows in $Q$.
\end{enumerate}
The above sequences have the following feature: the left part consists of rightward arrows, and the right part consists of leftward arrows. Recall that $\Omega^p_{{\rm nc}, R, E}(\Lambda)=(s\overline{\Lambda})^{\otimes_E p}\otimes_E \Lambda = (s\overline{\Lambda})^{\otimes_E p}\otimes_E \overline \Lambda \oplus (s\overline{\Lambda})^{\otimes_E p}\otimes_E  E$, and that the leftmost arrow $\beta_0$ in (i) is an element in the tensor factor $\overline \Lambda$. To emphasize this fact, we color the arrow  blue. These sequences will be quite convenient  to express the cup product and brace operation on $\overline{C}^*_{\sg, R}(Q, Q)$, as we will see below.

Let us first describe $-\cup_R-$  on $\overline{C}^*_{\sg, R}(Q, Q)$. Let
$$s^{-1}x=s^{-1} (\alpha_{m, 1},  \beta_{p, 0})=({\color{blue}\xrightarrow{\beta_0}} \xrightarrow{\beta_1} \cdots\xrightarrow{\beta_p} \xleftarrow{\alpha_m}\cdots\xleftarrow{\alpha_1})$$
$$ s^{-1}y=s^{-1}(\alpha'_{n, 1}, \beta_{q, 0}')=({\color{blue} \xrightarrow{\beta_0'}} \xrightarrow{\beta_1'} \cdots\xrightarrow{\beta_q'}\xleftarrow{\alpha'_n}\cdots\xleftarrow{\alpha'_1})$$ be two elements in $s^{-1}\overline C_{\sg, R, 0}^*(Q, Q)$. Let $$
z=(\alpha_{m, 1}, \beta_{p, 1})=( \xrightarrow{\beta_1}\cdots\xrightarrow{\beta_p} \xleftarrow{\alpha_m}\cdots\xleftarrow{\alpha_1})$$ $$w=(\alpha'_{n, 1}, \beta_{q,1}')=( \xrightarrow{\beta_1'}\cdots\xrightarrow{\beta_q'} \xleftarrow{\alpha'_n}\cdots\xleftarrow{\alpha'_1})$$
be two elements in $\overline{C}^*_{\sg, R, 0}(Q, Q)$.   The cup product $-\cup_R-$ is given by (C1)-(C4).
\begin{enumerate}
\item[(C1)] $(s^{-1}x)\cup_R (s^{-1}y)=0;$

\item[(C2)] The cup product $z\cup_R w$ is given by the following parallel path
$$\delta_{s(\alpha_1), s(\beta_1')}(\underbrace{\xrightarrow{\beta_1}\cdots \xrightarrow{\beta_p} \xleftarrow{\alpha_m}\cdots\xleftarrow{\alpha_1}}_{z}\underbrace{\xrightarrow{\beta_1'}\cdots\xrightarrow{\beta_q'}\xleftarrow{\alpha'_n}\cdots\xleftarrow{\alpha'_1}}_w).$$ Here,  we  replace  the subsequence
$\xleftarrow{\alpha}\xrightarrow{\beta}$ by $\delta_{\alpha, \beta}$ iteratively, till  obtaining  a parallel  path,  that is,  the left part consists of rightward arrows and the right part consists of leftward arrows. More precisely, we have
$$z\cup_R w=\begin{cases} \prod_{i=1}^q\delta_{\beta_i', \alpha_{i}} \; (\alpha_{m, q+1}\alpha'_{n,1}, \beta_{p, 1}) &\mbox{if $q<m$,}\\
\prod_{i=1}^m\delta_{\beta'_{i}, \alpha_{i}} \; (\alpha'_{n, 1}, \beta'_{q, m+1}\beta_{p, 1}) &\mbox{if $q\geq m$,}\end{cases}$$

\item[(C3)]  $(s^{-1}x)\cup_R w$ is obtained by replacing $\xleftarrow{\alpha}\xrightarrow{\beta}$ with $\delta_{\alpha, \beta}$, iteratively
$$
(\underbrace{{\color{blue} \xrightarrow{\beta_0}} \xrightarrow{\beta_1}\cdots \xrightarrow{\beta_p}\xleftarrow{\alpha_m}\cdots\xleftarrow{\alpha_1}}_{s^{-1}x}\underbrace{\xrightarrow{\beta_1'}\cdots\xrightarrow{\beta_q'}\xleftarrow{\alpha'_n}\cdots\xleftarrow{\alpha'_1}}_{w}).
$$
Therefore, we have
$$ (s^{-1}x)\cup_R w=\begin{cases}\prod_{i=1}^q\delta_{\beta_i', \alpha_{i}} \; s^{-1}(\alpha_{m, q+1}\alpha'_{n,1}, \beta_{p, 0}) &\mbox{if $q<m$,}\\
\prod_{i=1}^m\delta_{\beta'_{i}, \alpha_{i}}\;  s^{-1}(\alpha'_{n, 1}, \beta'_{q, m+1}\beta_{p, 0}) &\mbox{if $q\geq m$;}\end{cases}$$

\item[(C4)] $z\cup_R (s^{-1}y)$ is obtained by replacing
$\xleftarrow{\alpha}\xrightarrow{\beta}$ with $\delta_{\alpha, \beta}$, iteratively
$$( {\color{blue}\xrightarrow{\beta_0'}}\underbrace{\xrightarrow{\beta_1} \cdots\xrightarrow{\beta_p} \xleftarrow{\alpha_m}\cdots\xleftarrow{\alpha_1}}_{z} \xrightarrow{\beta_1'}\cdots\xrightarrow{\beta_q'} \xleftarrow{\alpha'_n}\cdots\xleftarrow{\alpha'_1}).$$
Therefore, we have
$$ z\cup_R (s^{-1}y)=\begin{cases}\prod_{i=1}^q\delta_{\beta_i', \alpha_{i}} \; s^{-1}(\alpha_{m, q+1}\alpha'_{n,1}, \beta_{p, 1}\beta_0') &\mbox{if $q<m$,}\\
\prod_{i=1}^m\delta_{\beta'_{i}, \alpha_{i}}  \;  s^{-1}(\alpha'_{n, 1}, \beta'_{q, m+1}\beta_{p, 1}\beta'_0) &\mbox{if $q\geq m$.}
\end{cases}$$
\end{enumerate}
\vskip 5pt

Let us describe the brace operation $-\{-, \dotsc, -\}_R$ on $\overline{C}_{\sg , R}^*(Q, Q)$ in the following  cases (B1)-(B3).
\begin{enumerate}
\item[(B1)] For any $x\in \overline{C}_{\sg , R}^*(Q, Q)$, we have
$$x\{y_1, \dotsc, y_k\}_R=0$$
if there exists some $1\leq j\leq k$ with $y_j\in \overline{C}_{\sg ,R,  0}^*(Q, Q)\subset \overline{C}_{\sg , R}^*(Q, Q)$.

\item[(B2)] If  $s^{-1}y_j\in s^{-1}\overline{C}_{\sg , R, 0}^*(Q, Q)$ is  such that $y_j$ is a parallel path for each $1\leq j\leq k$, and
 $s^{-1}x=s^{-1}(\alpha_{m, 1}, \beta_{p, 0})\in s^{-1}\overline{C}_{\sg , R, 0}^*(Q, Q)$, then
\begin{equation*}\label{equation-radical-B}
(s^{-1}x)\{s^{-1}y_1, \dotsc, s^{-1}y_k\}_R=\sum\limits_{\substack{a+b=k,\; a,b\geq 0\\ 1\leq i_1< i_2 <\cdots < i_a\leq m\\ 1\leq l_1\leq l_2\leq  \cdots \leq l_b\leq p }}  (-1)^{a+\epsilon}\mathfrak{b}^{(i_1, \dotsc, i_a)}_{(l_1, \dotsc, l_{b})}(s^{-1}x; s^{-1}y_1, \dotsc, s^{-1}y_k),
\end{equation*}
where
 $\mathfrak b^{(i_1, \dotsc, i_a)}_{(l_1, \dotsc, l_b)}(s^{-1}x; s^{-1}y_1, \dotsc, s^{-1}y_k)$ is illustrated as follows
\begin{align*}
{\color{blue} \xrightarrow{\beta_0}} \xrightarrow{\beta_1}\cdots \xrightarrow{\beta_{l_1-1}} y_1\xrightarrow{\beta_{l_1}} \cdots \xrightarrow{\beta_{l_{b}-1}}y_{b}\xrightarrow{\beta_{l_b}} \cdots \xrightarrow{\beta_p} \xleftarrow{\alpha_m}\cdots \xleftarrow{\alpha_{i_a}}y_{b+1}\xleftarrow{\alpha_{i_a-1}}\cdots \xleftarrow{\alpha_{i_1}} y_{k} \cdots\xleftarrow{\alpha_1}.
\end{align*}
To save the space, we just use the symbol
$y_j$ to indicate the sequence of the parallel path $y_j$ as in (\ref{picture}) for $1\leq j\leq k$. We replace any subsequence  $\xleftarrow{\alpha}\xrightarrow{\beta}$ by $\delta_{\alpha, \beta}$ iteratively, and then arrive at a well-defined parallel path.

Let us explain the sign $(-1)^{a + \epsilon}$ appeared above.
The sign $$\epsilon = \sum_{r=1}^{b} (|s^{-1}y_r|-1)(m+p-l_{r}+1)+\sum_{r=1}^{a} (|s^{-1}y_{k-r+1}|-1)(i_{r}-1)$$
is obtained via the Koszul sign rule by reordering the positions ($\beta_i^*$ and $\alpha_j$ are of degree one) of the elements $$ \beta_{0}^*, \beta_1^*,  \dotsc \beta_p^*,  \alpha_m,  \dotsc \alpha_1,    y_1,  y_2 ,  \dotsb , y_k;  $$
and the extra sign $(-1)^a$ is to make sure that the brace operation is compatible with the colimit maps $\theta_{*, R}$.

\item[(B3)]If   $s^{-1}y_j\in s^{-1}\overline{C}_{\sg , R, 0}^*(Q, Q)$ is such that $y_j$ is a parallel path for each $1\leq j\leq k$, and $x=(\alpha_{m, 1}, \beta_{p, 1})\in \overline{C}_{\sg , R, 0}^*(Q, Q)$, then
\begin{equation*}
x\{s^{-1}y_1, \dotsc, s^{-1}y_k\}_R=\sum\limits_{\substack{a+b=k,\; a, b\geq 0 \\1\leq i_1< i_2 <\cdots < i_a\leq m\\ 1\leq l_1\leq l_2\leq  \cdots \leq l_{b}\leq p }} (-1)^{a+\epsilon}\mathfrak b^{(i_1, \dotsc, i_a)}_{(l_1, \dotsc, l_{b})}(x; s^{-1}y_1, \dotsc, s^{-1}y_k),
\end{equation*}
where $\mathfrak b^{(i_1, \dotsc, i_a)}_{(l_1, \dotsc, l_{b})}(x; s^{-1}y_1, \dotsc, s^{-1}y_k)$ is  obtained from the following sequence by replacing  $\xleftarrow{\alpha}\xrightarrow{\beta}$ with $\delta_{\alpha, \beta}$ iteratively
\begin{align*}
\xrightarrow{\beta_1} \cdots \xrightarrow{\beta_{l_1-1}} y_1\xrightarrow{\beta_{l_1}} \cdots \xrightarrow{\beta_{l_{b}-1}}y_{b}\xrightarrow{\beta_{l_b}} \cdots \xrightarrow{\beta_p} \xleftarrow{\alpha_m} \cdots \xleftarrow{\alpha_{i_a}}y_{b+1}\xleftarrow{\alpha_{i_a-1}}\cdots \xleftarrow{\alpha_{i_1}} y_{k} \cdots\xleftarrow{\alpha_1},
\end{align*}
and   $\epsilon$ is the same as in (B2).
\end{enumerate}

\begin{thm}\label{thm:radical}
The complex $\overline{C}^*_{\sg, R}(Q, Q)$, equipped with the cup product $-\cup_R-$ and brace operation $-\{-, \dotsc, -\}_R$, is a brace $B_{\infty}$-algebra. Moreover, the isomorphism $\kappa\colon  \overline{C}^*_{\sg, R}(Q, Q)\rightarrow \overline{C}^*_{\sg, R, E}(\Lambda, \Lambda)$ is a strict $B_\infty$-isomorphism.
\end{thm}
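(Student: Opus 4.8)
The statement has two parts: first, that $(\overline{C}^*_{\sg, R}(Q, Q), -\cup_R-, -\{-,\dotsc,-\}_R)$ is a brace $B_\infty$-algebra, and second, that $\kappa$ is a strict $B_\infty$-isomorphism. The plan is to deduce \emph{both} from the transport-of-structure principle via the isomorphism $\kappa$ of complexes established in Lemma~\ref{lem:kappa}. Since $\kappa$ is already a cochain isomorphism and $\overline{C}^*_{\sg, R, E}(\Lambda, \Lambda)$ is a brace $B_\infty$-algebra (by the relative version of Theorem~\ref{thm-Wan2}, combined with Lemma~\ref{lemma7.3-split} which identifies the $B_\infty$-structure on $\overline{C}^*_{\sg, R, E}(\Lambda, \Lambda)$ as the restriction of that on $\overline{C}^*_{\sg, R}(\Lambda, \Lambda)$), the pulled-back cup product and brace operation $\kappa^{-1}(-\cup_R-)(\kappa\otimes\kappa)$ and $\kappa^{-1}(-\{-,\dotsc,-\}_R)(\kappa\otimes\kappa^{\otimes k})$ automatically make $\overline{C}^*_{\sg, R}(Q, Q)$ into a brace $B_\infty$-algebra for which $\kappa$ is tautologically a strict $B_\infty$-isomorphism (using Lemma~\ref{lemma-infinity-morphism1}: a dg algebra homomorphism compatible with brace operations is a strict $B_\infty$-morphism). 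Thus the \emph{only} real content is to verify that the \emph{explicitly defined} operations (C1)--(C4) and (B1)--(B3) on $\overline{C}^*_{\sg, R}(Q, Q)$ coincide with these pulled-back operations.

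\textbf{Key steps, in order.} First I would recall precisely the maps $\kappa_{m,p}$ from Lemma~\ref{lemma6.2} and the way they assemble into $\kappa$ over the colimit, so that every parallel-path symbol $(\alpha_{m,1},\beta_{p,1})$ or $s^{-1}(\alpha_{m,1},\beta_{p,0})$ is unambiguously identified with a specific element of $\Hom_{E\text{-}E}((s\overline{\Lambda})^{\otimes_E m}, (s\overline{\Lambda})^{\otimes_E p}\otimes_E\Lambda)$, together with the sign $(-1)^\epsilon$, $\epsilon = (m-p)p + \frac{(m-p)(m-p+1)}{2}$. Second, I would translate the defining formula \eqref{equation-defcupforsg} for $-\cup_R-$ on $\overline{C}^*_{\sg, R, E}(\Lambda, \Lambda)$ through $\kappa$: the composite $(\mathbf 1^{\otimes p+q}\otimes\mu)\circ(\mathbf 1^{\otimes q}\otimes\varphi\otimes\mathbf 1)\circ(\mathbf 1^{\otimes m}\otimes\phi)$, when $\varphi = \kappa_{m,p}(z)$ and $\phi = \kappa_{n,q}(w)$ are ``monomial'' evaluations, forces the first $q$ leftward arrows of $z$ to match the first $q$ rightward arrows of $w$ (producing the Kronecker deltas), and the multiplication $\mu$ in $\Lambda = \mathbb{k}Q/J^2$ kills any product of two arrows --- this is exactly the ``replace $\xleftarrow{\alpha}\xrightarrow{\beta}$ by $\delta_{\alpha,\beta}$'' rule and explains why in (C3),(C4) the leftmost blue arrow $\beta_0$ survives into the $\Lambda$-slot only when it is not collided. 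Then I would check cases (C1) ($s^{-1}x\cup_R s^{-1}y = 0$ because a product of two degree-one elements in the $\Lambda$-slot lands in $J^2 = 0$), and the sign bookkeeping in (C2)--(C4), comparing against the Koszul sign rule correction $(-1)^{mn+pq}$ noted after \eqref{equation-defcupforsg}. Third, I would do the analogous translation for the brace operation: the graphical brace operation $B^{(i_1,\dotsc,i_j)}_{(l_1,\dotsc,l_{k-j})}$ of Definition~\ref{defn:brace-A} (Figure~\ref{Radical-brace-operation-1}), when all inputs are monomial parallel-path evaluations, simplifies via the same arrow-collision mechanism to the combinatorial operations (B2),(B3); case (B1) follows because if some $y_j$ lies in the non-shifted summand $\overline{C}^*_{\sg, R, 0}(Q,Q)$, its image under $\kappa$ is a map landing in $(s\overline\Lambda)^{\otimes q}\otimes_E E \subseteq (s\overline\Lambda)^{\otimes q}\otimes_E\Lambda$, i.e.\ ending in an idempotent $e_i$; plugging such a $y_j$ into an inner slot of the brace (which feeds its output back through $\pi\colon\Lambda\to s\overline\Lambda$) gives $s\overline{e_i} = 0$ in $s\overline\Lambda$, killing the term. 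Finally, having matched all operations, the theorem follows: $\overline{C}^*_{\sg, R}(Q,Q)$ inherits the brace $B_\infty$-structure and $\kappa$ is strict $B_\infty$.

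\textbf{Main obstacle.} The genuine difficulty is entirely in the \emph{sign verification}: matching the explicit signs $(-1)^{a+\epsilon}$ in (B2)--(B3), with $\epsilon = \sum_{r=1}^b(|s^{-1}y_r|-1)(m+p-l_r+1) + \sum_{r=1}^a(|s^{-1}y_{k-r+1}|-1)(i_r-1)$, against the sign $(-1)^{k-j}$ times the big Koszul sign $(-1)^\epsilon$ from Definition~\ref{defn:brace-A}, after passing through the several shift isomorphisms $s$, $s^{-1}$ and the identifications $\kappa_{m,p}$ (each of which contributes its own $(-1)^{(m-p)p + \binom{m-p+1}{2}}$). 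One must carefully track how the Koszul rule reorders the degree-one symbols $\beta_0^*,\dotsc,\beta_p^*,\alpha_m,\dotsc,\alpha_1$ past the inserted inputs $y_1,\dotsc,y_k$, and confirm that the ``extra'' sign $(-1)^a$ (resp.\ $(-1)^{k-j}$) is precisely what is needed for compatibility with the colimit maps $\theta_{*,R}$ --- a compatibility that itself must be rechecked on the combinatorial side (i.e.\ $s^{-1}\theta_{p+1,R}\circ D_{m,p} = D_{m+1,p+1}\circ\theta_{p,R}$ and its brace analogue). I expect the cup-product cases to be routine once the conventions are fixed, and the brace cases to require a patient but ultimately mechanical induction on the number of inputs $k$, reducing to the single-input case $x\{s^{-1}y\}_R$ where the collision-and-sign pattern is transparent. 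No conceptual new idea is needed beyond transport of structure; the work is in making the dictionary between the two descriptions of the same operation.
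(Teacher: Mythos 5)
Your proposal is correct and is essentially the paper's own argument: the paper likewise deduces both assertions by transporting the brace $B_\infty$-structure of $\overline{C}^*_{\sg, R, E}(\Lambda, \Lambda)$ through the cochain isomorphism $\kappa$ of Lemma~\ref{lem:kappa}, and then checking case by case that the explicit formulas (C1)--(C4) and (B1)--(B3) coincide with the pulled-back cup product and brace operation (so that strictness of $\kappa$ follows from Lemma~\ref{lemma-infinity-morphism1}). In particular, the paper's verification of the brace identity rests on the same observation you make, namely that the Deletion Process of Definition~\ref{defn:brace-A} corresponds exactly to the iterative replacement of $\xleftarrow{\alpha}\xrightarrow{\beta}$ by $\delta_{\alpha,\beta}$, with the sign bookkeeping carried out on monomial parallel paths as in Example~\ref{exm-paralle}.
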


The resulted $B_\infty$-algebra $\overline{C}^*_{\sg, R}(Q, Q)$ is called the \emph{combinatorial $B_\infty$-algebra} of $Q$.

\begin{proof}
The above cup product $-\cup_R-$ and brace operation $-\{-, \dotsc, -\}_R$ on $\overline{C}_{\sg, R}^*(Q, Q)$ are  transferred from $\overline{C}_{\sg, R, E}^*(\Lambda, \Lambda)$ via the isomorphism $\kappa$; compare Theorem~\ref{thm-Wan2} and Lemma~\ref{lem:kappa}.  More precisely, for any $x, y, y_1, \dotsc, y_k \in \overline{C}^*_{\sg, R}(Q, Q)$  we may check
\begin{equation}\label{checkbrace}
\begin{split}
\kappa(x \cup_R y) &= \kappa(x) \cup_R \kappa(y)\\
(-1)^{a + \epsilon} \kappa\big(\mathfrak{b}^{(i_1, \dotsc, i_a)}_{(l_1, \dotsc, l_{b})}(x; y_1, \dotsc, y_k)\big) &  = (-1)^{b}B^{(i_1, \dotsc, i_a)}_{(l_1, \dotsc, l_{b})}(\kappa(x); \kappa(y_1), \dotsc, \kappa(y_k)),
\end{split}
\end{equation}
where $\epsilon$ is defined as in (B2) above.

We may check the first identity  case by case. Let $x=(\alpha_{m, 1}, \beta_{p, 1})$ and $y=(\alpha'_{n, 1}, \beta_{q,1}')$. Suppose first that $q<m$. Then for $z\in s\overline{\Lambda}^{\otimes_E m+n-q}$ we have
\begin{align*}
\kappa(x\cup_R y)(z)
&=  \prod_{i=1}^q\delta_{\beta_i', \alpha_{i}} \; \kappa((\alpha_{m, q+1}\alpha'_{n,1}, \beta_{p, 1}))(z) \\
&= \!\begin{cases}
(-1)^{\epsilon_1} \! \prod_{i=1}^q\delta_{\beta_i', \alpha_{i}}\;  s\beta_{p}\smallotimes\cdots\smallotimes s\beta_{1} \smallotimes 1, & \text{~if~ $z =\!s\alpha_{m}\!\smallotimes \!\cdots \!\smallotimes \!s\alpha_{q+1}
\smallotimes s\alpha'_{n}\!\smallotimes\!\cdots \!\smallotimes\! s\alpha'_1$,} \\
0,&  \text{otherwise,}
\end{cases}
\end{align*} where $$\epsilon_1 = (m+n-p-q)p + \frac{(m+n-p-q)(m+n-p-q+1)}{2}.$$ Here the first equality follows from (C2) and the second identity follows from the definition of $\kappa$.  Note that we have $\kappa(x)\in {\rm Hom}_{\text{$E$-$E$}}(s\overline{\Lambda}^{\otimes_E m}, \Omega_{\nc, R,E}^p(\Lambda))$ and $\kappa(y)\in {\rm Hom}_{\text{$E$-$E$}}(s\overline{\Lambda}^{\otimes_E n}, \Omega_{\nc,R,E}^q(\Lambda))$. By the definition of the cup product of $\overline{C}^*_{\sg, R, E}(\Lambda, \Lambda)$ in \eqref{equation-defcupforsg}, we have $\kappa(x)\cup_R\kappa(y)\in {\rm Hom}_{\text{$E$-$E$}}(s\overline{\Lambda}^{\otimes_E m+n}, \Omega_{\nc, R,E}^{p+q}(\Lambda)).$ One may check directly that $$\kappa(x) \cup_R \kappa(y)=(\theta_{p+q-1, R,E}\circ \cdots \circ\theta_{p+1,R,E}\circ\theta_{p,R,E})(\kappa(x \cup_R y) ).$$  Thus we have $\kappa(x\cup_R y)=\kappa(x)\cup_R \kappa(y)$ in $\overline{C}^*_{\sg, R, E}(\Lambda, \Lambda).$ Similarly,  we may check for $q\geq m$. We omit the routine verification for the other three cases, according to (C1), (C3) and (C4).

The second identity in \eqref{checkbrace} follows from the observation that the Deletion Process in Definition~\ref{defn:brace-A} exactly corresponds to the iterative replacement  in (B2) and (B3).    See Example~\ref{exm-paralle} below for a detailed illustration.
\end{proof}

\vskip 5pt

\begin{exm}\label{exm-paralle}
Consider the following  four monomial elements in $\overline{C}^*_{\sg, R}(Q, Q)$
\begin{align*}
s^{-1}x &= s^{-1} (\alpha_5\alpha_4\alpha_3\alpha_2 \alpha_1, \beta_3\beta_2 \beta_1\beta_0)\\
s^{-1}y_1 &= s^{-1}(\alpha_3' \alpha_2'\alpha_1', \beta_1' \beta_0')\\
s^{-1}y_2 &= s^{-1} (\alpha_3''\alpha_2''\alpha_1'', \beta_3''\beta_2''\beta_1''\beta_0'')\\
s^{-1}y_3 &= s^{-1} (\alpha_2'''\alpha_1''', \beta_3'''\beta_2'''\beta_1'''\beta_0''').
\end{align*}
According to \eqref{nonstandardsequence1}, they may be depicted in the following way
\begin{equation*}
\begin{split}
s^{-1}x & = ({\color{blue} \xrightarrow{\beta_0}}\xrightarrow{\beta_1} \xrightarrow{\beta_2}\xrightarrow{\beta_3}\xleftarrow{\alpha_5}\xleftarrow{\alpha_4}\xleftarrow{\alpha_3}\xleftarrow{\alpha_2}\xleftarrow{\alpha_1})\\
s^{-1}y_1 &=({\color{blue}\xrightarrow{\beta_0'}} \xrightarrow{\beta_1'}\xleftarrow{\alpha_3'}\xleftarrow{\alpha_2'}\xleftarrow{\alpha_1'})\\  s^{-1}y_2& =({\color{blue}\xrightarrow{\beta_0''}}\xrightarrow{\beta_1''}\xrightarrow{\beta_2''}\xrightarrow{\beta_3''}\xleftarrow{\alpha_3''}\xleftarrow{\alpha_2''} \xleftarrow{\alpha_1''})\\ s^{-1}y_3&=({\color{blue}\xrightarrow{\beta_0'''}} \xrightarrow{\beta_1'''} \xrightarrow{\beta_2'''}\xrightarrow{\beta_3'''}\xleftarrow{\alpha_2'''}\xleftarrow{\alpha_1'''}).\end{split}\end{equation*}
By Formula (B2), the operation $\mathfrak b^{(2,4)}_{(2)}(s^{-1}x; s^{-1}y_1, s^{-1}y_2, s^{-1}y_3)$ is depicted by
\begin{align*}
({\color{blue}\xrightarrow{\beta_0}}\xrightarrow{\beta_1}\underbrace{\xrightarrow{\beta_0'} \dotsc\xleftarrow{\alpha_1'}}_{s^{-1}y_1}\xrightarrow{\beta_2}\xrightarrow{\beta_3}\xleftarrow{\alpha_5}\xleftarrow{\alpha_4}\underbrace{\xrightarrow{\beta_0''}\dotsc\xleftarrow{\alpha_1''}}_{s^{-1}y_2}
\xleftarrow{\alpha_3}\xleftarrow{\alpha_2}\underbrace{\xrightarrow{\beta_0'''} \dotsc\xleftarrow{\alpha_1'''}}_{s^{-1}y_3}\xleftarrow{\alpha_1}).
\end{align*}
After replacing $\xleftarrow{\alpha}\xrightarrow{\beta}$ with $\delta_{\alpha, \beta}$ iteratively, we get
\begin{align}
\label{lambda}
 \lambda ({\color{blue}\xrightarrow{\beta_0}}\xrightarrow{\beta_1}\xrightarrow{\beta_0'}\xrightarrow{\beta_1'}\xrightarrow{\beta_3''}\xleftarrow{\alpha_3''}\xleftarrow{\alpha_2'''}\xleftarrow{\alpha_1'''}\xleftarrow{\alpha_1}),
\end{align}
where $\lambda=\delta_{\alpha_1', \beta_2}\delta_{\alpha_2', \beta_3}\delta_{\alpha_4, \beta_0''} \delta_{\alpha_5, \beta_1''}\delta_{\alpha_3', \beta''_2}  \delta_{\alpha_2, \beta_0'''}\delta_{\alpha_3, \beta_1'''}    \delta_{\alpha_1'', \beta_2'''} \delta_{\alpha_2'', \beta_3'''}$. Hence,
\begin{align}\label{b242}
\mathfrak b^{(2,4)}_{(2)}(s^{-1}x; s^{-1}y_1, s^{-1}y_2, s^{-1}y_3) =\lambda s^{-1} (\alpha_3''\alpha_2'''\alpha_1'''\alpha_1, \beta_3''\beta_1'\beta_0'\beta_1\beta_0).
\end{align}

\bigskip
Let us check that $\kappa$ preserves the brace operations. Note that
\begin{itemize}
\item[$\bullet$] $f:=\kappa(s^{-1}x)\in \overline{C}_E^{2}(\Lambda, \Omega^3_{\nc, R}(\Lambda))$ is uniquely determined by
$$s\alpha_5\otimes s\alpha_4\otimes s\alpha_3\otimes s\alpha_2\otimes s\alpha_1\mapsto -s\beta_3\otimes s\beta_2\otimes s\beta_1 \otimes \beta_0,$$
i.e. sending any other monomial to zero;
\item[$\bullet$] $g_1:=\kappa(s^{-1}y_1) \in \overline{C}^{2}(\Lambda, \Omega_{\nc, R}^1(\Lambda))$  is uniquely determined  by   $$s\alpha_3'\otimes s\alpha_2'\otimes s\alpha_1'\mapsto -s\beta_1'\otimes \beta_0';$$

    \item[$\bullet$] $g_2:=\kappa(s^{-1}y_2)\in \overline{C}^{0}(\Lambda, \Omega_{\nc, R}^3(\Lambda))$ is uniquely determined  by  $$ s\alpha_3''\otimes s\alpha_2''\otimes s\alpha_1''\mapsto s\beta_3''\otimes s\beta_2''\otimes s\beta_1''\otimes \beta_0'';$$

        \item[$\bullet$] $g_3:=\kappa(s^{-1}y_3)\in \overline{C}^{-1}(\Lambda, \Omega_{\nc, R}^3(\Lambda))$  is uniquely determined  by  $$ s\alpha_2'''\otimes s\alpha_1'''\mapsto -s\beta_3'''\otimes s\beta_2'''\otimes s\beta_1'''\otimes \beta_0'''.$$
\end{itemize}
By Figure~\ref{Brace-action-Sep},  we have that  the  element
$$B_{(2)}^{(2, 4)}(\kappa(s^{-1}x); \kappa(s^{-1}y_1), \kappa(s^{-1}y_2), \kappa(s^{-1}y_3))= B_{(2)}^{(2, 4)}(f; g_1, g_2, g_3)$$
 is depicted by the graph on the right of Figure \ref{Brace-action-Sep-exm}, which is uniquely determined by $$s\alpha_3''\otimes s\alpha_2'''\otimes s\alpha_1'''\otimes s\alpha_1\mapsto \lambda (s\beta_3''\otimes s\beta_2'\otimes s\beta_1'\otimes s\beta_0'\otimes s\beta_1\otimes \beta_0).$$ Here $\lambda$ is the same as the one  in (\ref{lambda}).

By \eqref{b242} we have that $\kappa(\mathfrak b_{(2)}^{(2, 4)}(s^{-1}x; s^{-1}y_1, s^{-1}y_2, s^{-1}y_3))$ is uniquely determined by
$$s\alpha_3''\otimes s\alpha_2'''\otimes s\alpha_1'''\otimes s\alpha_1\mapsto -\lambda (s\beta_3''\otimes s\beta_2'\otimes s\beta_1'\otimes s\beta_0'\otimes s\beta_1\otimes \beta_0).$$
Therefore,  we have  $$\kappa(\mathfrak b_{(2)}^{(2, 4)}(s^{-1}x; s^{-1}y_1, s^{-1}y_2, s^{-1}y_3))= -B_{(2)}^{(2, 4)}(\kappa(s^{-1}x); \kappa(s^{-1}y_1), \kappa(s^{-1}y_2), \kappa(s^{-1}y_3)).$$
This verifies that $\kappa$ preserves the brace operations.
\end{exm}
\begin{figure}[h]
\centering
\includegraphics[height=50mm]{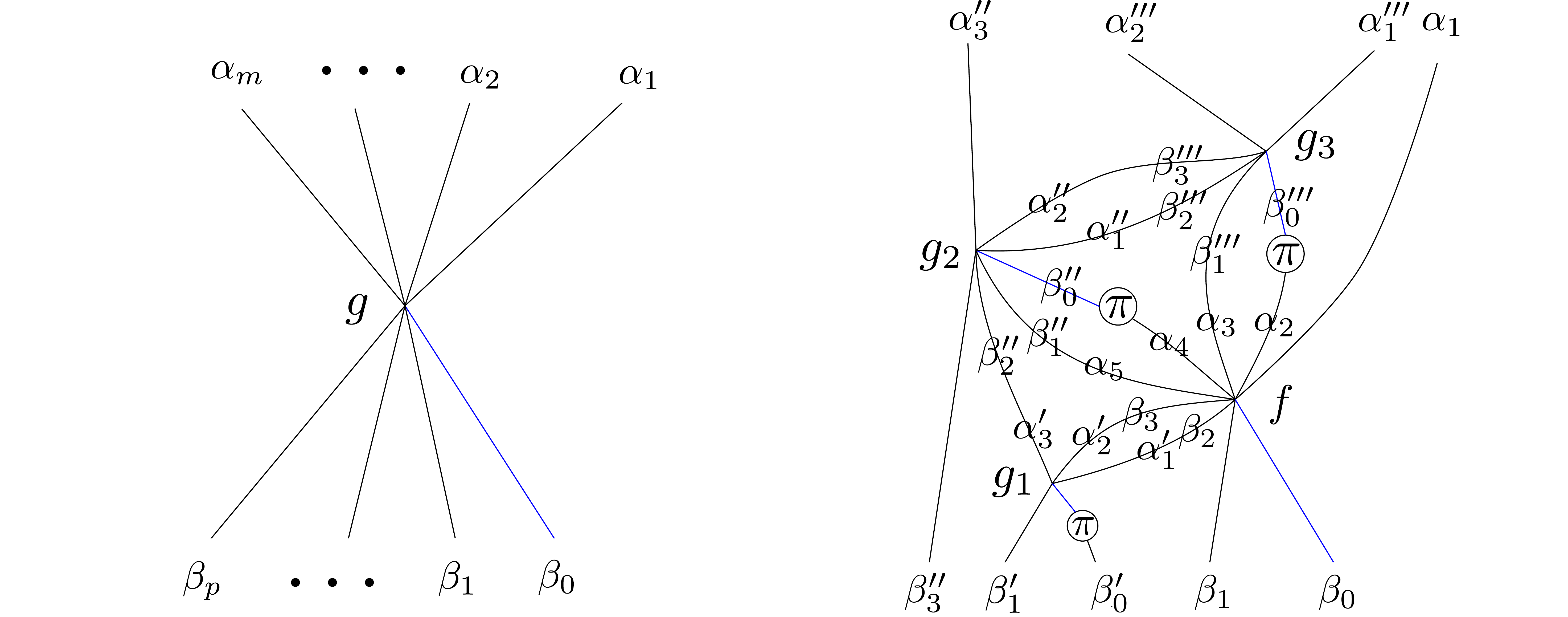}
  \caption{If $\beta_0\in Q_1$, then  the left  graph represents the element  $g=s^{-1}(\alpha_m\cdots \alpha_1, \beta_p\cdots\beta_1\beta_0)\in \overline{C}^*_{\sg, R}(Q, Q)$. If $\beta_0=s(\beta_1)\in Q_0$, then it represents $g=(\alpha_m\cdots \alpha_1, \beta_p\cdots\beta_1)\in \overline{C}^*_{\sg, R}(Q, Q)$. The map represented by the  right graph is nonzero only if the elements in each internal edge coincide (i.e. $\alpha_2''=\beta_3''', \alpha_1''=\beta_2''', \alpha_2= \beta_0'''$ and so on).   }
  \label{Brace-action-Sep-exm}
\end{figure}

\section{The Leavitt $B_{\infty}$-algebra as an intermediate object}\label{Section:9}

Let $Q$ be a finite quiver without sinks. Let $L=L(Q)$ is the Leavitt path algebra of $Q$.  In this section, we introduce the  Leavitt $B_\infty$-algebra  $(\widehat{C}^*(L, L), \delta', -\cup'-; -\{-, \dotsb, -\}')$, which is an intermediate object   connecting  the singular Hochschild cochain complex of $\mathbb k  Q/ J^2$ to the Hochschild cochain complex  of $L$. More precisely,  we will show that  the Leavitt  $B_\infty$-algebra $\widehat{C}^*(L, L)$ is strictly $B_\infty$-isomorphic to $\overline{C}_{\sg ,R}^*(Q, Q)$; see Proposition~\ref{prop:interme} below.

 In Sections~\ref{Section:10} and \ref{section:11}, we will show that there is an explicit non-strict $B_\infty$-quasi-isomorphism between the two $B_\infty$-algebras $\widehat{C}^*(L, L)$ and $\overline{C}_E^*(L, L)$. Namely, we have
$$
\overline{C}^*_{\sg, R, E}(\Lambda, \Lambda) \stackrel{\kappa}{\longleftarrow} \overline{C}_{\sg ,R}^*(Q, Q) \stackrel{\rho}{\longrightarrow} \widehat{C}^*(L, L) \xrightarrow{(\Phi_1, \Phi_2, \dotsb)}\overline{C}_E^*(L, L),
$$
where the left two maps are  strict $B_\infty$-isomorphisms and the rightmost one is a non-strict $B_{\infty}$-quasi-isomorphism. Recall that the leftmost map $\kappa$ is already given in Theorem~\ref{thm:radical}.

\subsection{An explicit complex} \label{subsection:Definition-complex}
We define the following graded vector space
$$\widehat{C}^*(L, L)=\bigoplus_{i\in Q_0} e_iLe_i\oplus \bigoplus_{i\in Q_0} s^{-1} e_iLe_i,$$
where we recall that the degree $|s^{-1}|=1$. The differential $\widehat{\delta}$ of $\widehat{C}^*(L, L)$ is given by  $\left(\begin{smallmatrix}0 &\delta' \\0 &0\end{smallmatrix}\right)$, where $$\delta'(x)=s^{-1}x-(-1)^{|x|}\sum_{\{\alpha\in Q_1\mid  t(\alpha)=i \}}s^{-1}\alpha^*x\alpha$$
 for any $x=e_ixe_i\in e_iLe_i$ and $i\in Q_0$. Note that we have $\widehat{\delta}(s^{-1}y)=0$ for $y\in \bigoplus_{i\in Q_0}e_iLe_i.$ This defines the complex $(\widehat{C}^*(L, L), \widehat{\delta})$.

Recall the complex $\overline{C}_{\sg, R}^*(Q, Q)$ from (\ref{equ:sg-R-Q}). We claim that there is a morphism of complexes
\begin{equation*}
\rho\colon   \overline{C}_{\sg, R}^*(Q, Q) \longrightarrow  \widehat{C}^*(L, L)	
\end{equation*}
given by
 \begin{flalign*}
 \rho((\gamma, \gamma')) & =\gamma'^*\gamma && \text{for $(\gamma, \gamma')\in Q_m// Q_p$}; \\
  \rho(s^{-1}(\gamma, \gamma')) &= s^{-1}\gamma'^*\gamma && \text{for $s^{-1}(\gamma, \gamma')\in s^{-1}\mathbb k(Q_m//Q_{p+1})$.}
 \end{flalign*}
 Indeed, we observe that for $(\gamma,
\gamma')\in Q_m// Q_p$
\begin{equation*}
\rho(\theta_{p,R}(\gamma, \gamma'))=\sum_{\alpha\in Q_1} (\alpha\gamma' )^* \alpha\gamma=\gamma'^*\gamma= \rho((\gamma, \gamma')),
\end{equation*}
where the second equality follows from  $\sum_{\{\alpha \in Q_1|s(\alpha)=i\}}\alpha^*\alpha=e_i$. Similarly, we have $$\rho(\theta_{p, R}(s^{-1}(\gamma,\gamma')))=\rho(s^{-1}(\gamma, \gamma')).$$
 This shows that $\rho$ is well defined. Comparing $D_{m,p}$ in \eqref{equation-defD} and $\delta'$, it is easy to check that  $\rho$ commutes with the differentials.
This proves the claim. Moreover, we have the following result.

\begin{lem}\label{lrho}
The above morphism $\rho$ is an isomorphism of complexes.
\end{lem}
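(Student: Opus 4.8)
The statement is that the map $\rho\colon \overline{C}_{\sg, R}^*(Q, Q)\to \widehat{C}^*(L, L)$ is an isomorphism of complexes. Since we already know $\rho$ is a morphism of complexes, it remains to show it is bijective in each degree. The plan is to exhibit an explicit inverse, built from the basis descriptions available to us. Concretely, recall from Lemma~\ref{lemma:basis} that $e_iLe_i = \bigcup_{m\geq 0} X_{i,m}$ where $X_{i,m}$ has the $\mathbb{k}$-linearly independent spanning set $\{\gamma^*\eta \mid t(\gamma)=t(\eta),\ s(\gamma)=i=s(\eta),\ l(\eta)=m\}$, and that $X_{i,m}\subseteq X_{i,m+1}$ via the identity \eqref{equ:CK2}. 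On the other side, recall that $\overline{C}_{\sg, R, 0}^n(Q, Q) = \varinjlim_{\theta_{p,R}} \mathbb{k}(Q_{n+p}//Q_p)$, and an element of $\mathbb{k}(Q_{n+p}//Q_p)$ is a linear combination of parallel paths $(\gamma,\gamma')$ with $l(\gamma)=n+p$, $l(\gamma')=p$. The assignment $(\gamma,\gamma')\mapsto \gamma'^*\gamma$ matches these two combinatorial pictures exactly: a parallel path $(\eta,\gamma)$ with $l(\eta)=m$, $l(\gamma)=p$ corresponds precisely to the generator $\gamma^*\eta$ of $X_{i,m}$.

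First I would make this correspondence precise at the level of the inductive systems. For fixed $i\in Q_0$, the colimit $\varinjlim_p \mathbb{k}(Q_{n+p}//Q_p)$ restricted to parallel paths starting at $i$ is computed along the maps $\theta_{p,R}\colon (\gamma,\gamma')\mapsto \sum_{s(\alpha)=t(\gamma)}(\alpha\gamma,\alpha\gamma')$; under the assignment $(\gamma,\gamma')\mapsto \gamma'^*\gamma$ this is exactly the inclusion $X_{i,m}\hookrightarrow X_{i,m+1}$ dictated by \eqref{equ:CK2}. Hence the colimit of the $\mathbb{k}(Q_{*}//Q_*)$-system over parallel paths at $i$ is canonically identified with $\bigcup_m X_{i,m} = e_iLe_i$, and this identification is precisely $\rho$ (the non-shifted summand), while the shifted summand $s^{-1}\mathbb{k}(Q_{n+p}//Q_{p+1})$ maps analogously onto $s^{-1}e_iLe_i$. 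The inverse map is then read off from the basis: given a basis element $\gamma^*\eta\in e_iLe_i$ (with $l(\eta)=m$, $l(\gamma)=p$), send it to the class of the parallel path $(\eta,\gamma)\in \mathbb{k}(Q_m//Q_p)$ in the colimit; by Lemma~\ref{lemma:basis}(1) this is well defined on a basis, and Lemma~\ref{lemma:basis}(2) guarantees it is defined on all of $e_iLe_i$. One then checks $\rho\circ(\text{this map}) = \mathbf{1}$ and $(\text{this map})\circ\rho = \mathbf{1}$, the latter using that two parallel paths $(\eta,\gamma)$ and $(\eta',\gamma')$ become equal in the colimit if and only if $\gamma^*\eta = \gamma'^*\eta'$ in $L$, which is the content of the linear independence statement once one passes to a common stage $X_{i,m}$ via \eqref{equ:CK2}.

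The one genuinely delicate point — the main obstacle — is surjectivity, or equivalently well-definedness of the candidate inverse: one must know that the spanning set of $e_iLe_i$ given by Lemma~\ref{lemma:basis} is literally in bijection with parallel paths after passing to the colimit, i.e. that no extra linear relations appear in $e_iLe_i$ beyond those generated by the second Cuntz--Krieger relation \eqref{equ:CK2}. This is exactly what Lemma~\ref{lemma:basis}(1) provides: for fixed $m$, the set $\{\gamma^*\eta : t(\gamma)=t(\eta),\ s(\gamma)=i=s(\eta),\ l(\eta)=m\}$ is $\mathbb{k}$-linearly independent, so $X_{i,m}$ is free on exactly the parallel paths $(\eta,\gamma)$ with $l(\eta)=m$; passing to the colimit over the transition maps $\theta_{p,R}$ (equivalently, over the inclusions induced by \eqref{equ:CK2}) on both sides preserves this identification since colimits are exact and the transition maps correspond. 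I would therefore organize the proof as: (i) decompose everything as a direct sum over $i\in Q_0$ and over the shifted/unshifted summands; (ii) on the unshifted summand at vertex $i$, identify the inductive system $(\mathbb{k}(Q_{*+p}//Q_p),\theta_{p,R})$ of parallel paths at $i$ with the inductive system $(X_{i,*},\text{incl})$ via $(\eta,\gamma)\mapsto\gamma^*\eta$, invoking Lemma~\ref{lemma:basis}(1) for injectivity/freeness at each stage and \eqref{equ:CK2} for compatibility of the maps; (iii) take colimits and use Lemma~\ref{lemma:basis}(2) to conclude the colimit is all of $e_iLe_i$; (iv) observe that the resulting isomorphism of graded spaces is exactly $\rho$, which we already know is a chain map, so it is an isomorphism of complexes. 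No sign or differential subtleties arise here since the chain-map property of $\rho$ is granted; the content is purely the combinatorial/basis identification.
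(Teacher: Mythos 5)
Your proof is correct and is exactly the argument the paper intends: its one-line proof just cites the definition of $\overline{C}_{\sg,R,0}^*(Q,Q)$ as the colimit along $\theta_{p,R}$ together with Lemma~\ref{lemma:basis}, and your identification of the inductive system of parallel paths at each vertex $i$ with the filtration $X_{i,m}$ of $e_iLe_i$ (linear independence for stage-wise injectivity, the union statement for surjectivity, compatibility via (\ref{equ:CK2})) is precisely the omitted detail. No further comments needed.
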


\begin{proof}
This  follows immediately from the definition of $\overline{C}_{\sg, R, 0}^*(Q, Q)$ and Lemma~\ref{lemma:basis}.  \end{proof}

 \subsection{The  Leavitt $B_{\infty}$-algebra} \label{subsection-B-widehat}

We will define the cup product $-\cup'-$ and brace operation $-\{-, \dotsc, -\}'$ on $\widehat{C}^*(L, L)$.

Recall from (\ref{equ:CK2}) that each element in $e_iLe_i \subset \widehat{C}^*(L, L)$ can be written as a linear combination of the following monomials
\begin{equation}\label{mon}
\beta_1^*\beta_2^*\cdots \beta_p^*\alpha_m\alpha_{m-1} \cdots \alpha_1,\end{equation}
where $\beta_p\cdots\beta_2\beta_1$ and $\alpha_m\alpha_{m-1}\cdots \alpha_1$ are paths in $Q$ with lengths $p$ and $m$, respectively. In particular, all $\beta_j$ and $\alpha_k$ belong to $Q_1$. Moreover, we require that $p\geq 1$ and $m\geq 0$, and that $t(\alpha_m)=s(\beta_p^*)=t(\beta_p)$. In case where $m=0$, these $\alpha_i$'s do not appear. The monomial (\ref{mon}) has degree $m-p$.

 Similarly, we write any element in $s^{-1}e_iLe_i\subset \widehat{C}^*(L, L)$ as a linear combination of the following monomials
\begin{equation}\label{montwo}
s^{-1}\beta_0^* \beta_1^*\cdots \beta_p^*\alpha_m\alpha_{m-1}\cdots \alpha_1\end{equation}
where $\alpha_k, \beta_j\in Q_1$ for $1\leq k\leq m$ and $0\leq j\leq p$. The monomial (\ref{montwo}) also has degree $m-p$. The difference here is that we require $p\geq 0$ and $m\geq 0$, since the $\beta_j$'s are indexed from zero.

The cup product $-\cup'-$ on $\widehat{C}^*(L, L)$ is defined by the following (C1')-(C4').

\begin{enumerate}
\item[(C1')] For any $s^{-1}u \in s^{-1}e_iLe_i $ and $ s^{-1}v \in s^{-1}e_jLe_j$ with $i,j\in Q_0$, we have $$s^{-1}u\cup's^{-1}v=0;$$
\item[(C2')]  For any  $u\in e_iLe_i$ and $ v \in e_jLe_j$ with $i,j\in Q_0$, we have
$$u \cup' v =  uv;$$
\item[(C3')] For any $s^{-1}u \in s^{-1}e_iLe_i$ and $v \in e_jLe_j$ with $i,j\in Q_0$, we have  $$(s^{-1}u)\cup' v= s^{-1} uv;$$

\item[(C4')] For any $u \in e_iLe_i$ and $s^{-1}v=s^{-1}\beta_0^*\beta_1^*\cdots \beta_p^* \alpha_m \alpha_{m-1} \cdots \alpha_1\in s^{-1}e_jLe_j$ with $i,j\in Q_0$,  we have
    $$
    u \cup's^{-1} v= \sum_{\alpha\in Q_1} s^{-1}\alpha^* u \alpha v= s^{-1} \beta_0^*u \beta_1^*\beta_2^*\cdots \beta_p^* \alpha_m\alpha_{m-1}\cdots \alpha_1.$$
Here, we use the relations $\alpha\beta^*=\delta_{\alpha,\beta}e_{t(\alpha)}$. Note that there is no Koszul sign caused by swapping $s^{-1}\beta_0^*$ with $u$, as the degree of $s^{-1}\beta_0^*$ is zero.
\end{enumerate}
Then $\widehat{C}^*(L, L)$ becomes a dg algebra with this cup product.

\begin{rem}
\begin{itemize}
\item[(1)] It seems that we cannot extend the cup product {\it naturally} to $L\oplus s^{-1}L$. For instance, take $u\in e_iLe_j$ and $ v\in e_jLe_i$ with $i,j\in Q_0$, $i\neq j$. When we define $u\cup' v=uv$ and extend the differential $\delta'\colon L\xrightarrow{}s^{-1}L$ by $\delta'(u)=s^{-1}u$ and $\delta'(v)=s^{-1}v$, then we have $$\delta'(u\cup' v)=s^{-1}uv-(-1)^{|uv|}\sum_{\{\alpha\in Q_1, t(\alpha)=i\}}s^{-1}\alpha^*uv\alpha.$$
But on the other hand, we have
$$\delta'(u)\cup' v+(-1)^{|u|}u\cup' \delta'(v)\!=\!s^{-1}u\cup' v+(-1)^{|u|}u\cup' s^{-1}v\!=\!s^{-1}\!uv+(-1)^{|u|}\!\sum_{\alpha\in Q_1}\!s^{-1}\!\alpha^*u\alpha v=s^{-1}uv.$$
So we may have  that $\delta'(u\cup' v)\neq \delta'(u)\cup' v+(-1)^{|u|}u\cup' \delta'(v)$. In other words, we do not obtain a dg algebra with the cup product and the differential.
\item[(2)] By (C3') and (C4'), we may view $\bigoplus_{i\in Q_0}s^{-1}e_iLe_i$ as a bimodule over $\bigoplus_{i\in Q_0}e_iLe_i$. According to (C1'), $\widehat{C}^*(L,L)$ is a trivial extension algebra; see \cite[p.78]{ARS}.
\end{itemize}
\end{rem}

Let  $v, u_1, \dotsc, u_k $ be  monomials in $\widehat{C}^*(L, L)$.  Then the brace operation $v\{u_1, \dotsc, u_k\}'$ is defined  by  the following (B1')-(B3').

\begin{enumerate}
\item[(B1')] If $u_j\in \prod_{i\in Q_0} e_iL e_i\subset \widehat{C}^*(L, L) $ for some $1\leq j\leq k$, then  \begin{align}\label{B1'}
v\{u_1, \dotsc, u_k\}'=0.
\end{align}

\item[(B2')] If $s^{-1}u_j\in \prod_{i\in Q_0} s^{-1}e_iL  e_i\subset \widehat{C}^*(L , L ) $ for each $1\leq j\leq k$, and
$$s^{-1}v=s^{-1}\beta_0^*\beta_1^*\cdots \beta_p^* \alpha_m\alpha_{m-1} \cdots \alpha_1\in\prod_{i\in Q_0} s^{-1}e_iL  e_i \subset \widehat{C}^*(L , L )$$  then we define
\begin{equation}\label{equation-brace-formula1}
\begin{split}
 &s^{-1}v \{s^{-1}u_1, \dotsc, s^{-1}u_k\}'=\sum\limits_{\substack{a+b=k,\;  a,b\geq 0 \\ 1\leq i_1< i_2 <\cdots < i_a\leq m\\ 1\leq l_1\leq l_2\leq  \cdots \leq l_b\leq p }} (-1)^{a+\epsilon} \;  \mathbb b^{(i_1, \dotsc, i_a)}_{(l_1, \dotsc, l_b)}(s^{-1}v; s^{-1}u_1, \dotsc, s^{-1}u_k),
 \end{split}
 \end{equation}
where $\mathbb{b}^{(i_1, \dotsc, i_a)}_{(l_1, \dotsc, l_b)}(s^{-1}v; s^{-1}u_1, \dotsc, s^{-1}u_k)\in \prod_{i\in Q_0} s^{-1}e_iL  e_i$ is defined as
\begin{align*}
&s^{-1}\beta_0^*\beta_1^*\cdots\beta^*_{l_1-1} u_1\beta_{l_1}^*\cdots \beta^*_{l_2-1}u_{2} \beta_{l_2}^*\cdots \beta_{l_b-1}^*u_{b} \beta_{l_b}^* \cdots \beta_{p-1}^*\beta_p^*
  \alpha_m\alpha_{m-1}\cdots\alpha_{i_a} u_{b+1} \alpha_{i_a-1}\cdots \\
  &\alpha_{i_2} u_{k-1} \alpha_{i_2-1}\cdots \alpha_{i_1}u_k \alpha_{i_1-1} \cdots \alpha_2 \alpha_1, \end{align*}
 and the sign
$$ \epsilon= \sum_{r=1}^b (|s^{-1}u_r|-1)(m+p-l_{r}+1) +\sum_{r=1}^a (|s^{-1}u_{k-r+1}|-1)(i_{r}-1)$$
is obtained via the Koszul sign rule by reordering the elements ($\beta_i^*$ and $\alpha_i$ are of degree one)
$$
\beta_0^*, \beta_1^*, \dotsc, \beta_p^*; \alpha_m, \alpha_{m-1}, \dotsc, \alpha_1; u_1, \dotsc, u_k
$$

\item[(B3')]  If $s^{-1}u_j\in \prod_{i\in Q_0} s^{-1}e_iL  e_i\subset \widehat{C}^*(L , L ) $ for each $1\leq j\leq k$, and  $v=\beta_1^*\cdots \beta_p^* \alpha_m\cdots \alpha_1\in\prod_{i\in Q_0} e_iL  e_i \subset \widehat{C}^*(L , L )$,   then  \begin{equation}\label{equation-brace-formula2}
\begin{split}
 v\{s^{-1}u_1, \dotsc, s^{-1}u_k\}'=\sum\limits_{\substack{a+b=k, \; a,b \geq 0 \\1\leq i_1< i_2 <\cdots < i_a\leq m\\ 1\leq l_1\leq l_2\leq  \cdots \leq l_b\leq p }} (-1)^{a+\epsilon}\;  \mathbb b^{(i_1, \dotsc, i_a)}_{(l_1, \dotsc, l_b)}(v; s^{-1}u_1, \dotsc, s^{-1}u_k),
 \end{split}
 \end{equation}
 where $\mathbb{b}^{(i_1, \dotsc, i_a)}_{(l_1, \dotsc, l_b)}(v; s^{-1}u_1, \dotsc, s^{-1}u_k)\in \prod_{i\in Q_0} e_iL  e_i$ is defined as
\begin{align*}
&\beta_1^*\beta_2^*\cdots\beta^*_{l_1-1} u_1\beta_{l_1}^*\cdots \beta^*_{l_2-1}u_{2} \beta_{l_2}^*\cdots \beta_{l_{b}-1}^*u_{b} \beta_{l_{b}}^* \cdots \beta_{p-1}^*\beta_p^*
  \alpha_m\alpha_{m-1} \cdots\alpha_{i_a} u_{b+1} \alpha_{i_a-1}\cdots \\
  &\alpha_{i_2} u_{k-1} \alpha_{i_2-1}\cdots \alpha_{i_1}u_k \alpha_{i_1-1} \cdots \alpha_2\alpha_1 \end{align*}
 and $\epsilon$ is the same as in (B2').
 \end{enumerate}

 \vskip 5pt

 The following remarks also apply to (B3').

\begin{rem} \label{remark-admissible}
(1) Each  summand $\mathbb b_{(l_1, \dotsc, l_b)}^{(i_1, \dotsc, i_a)}(s^{-1}v; s^{-1}u_1, \dotsc, s^{-1}u_k)$   is an insertion  of   $u_1, \dotsc, u_k$ (from left to right) into $s^{-1}v=s^{-1}\beta_0^*\beta_1^*\cdots \beta_p^* \alpha_m\alpha_{m-1}\cdots \alpha_1$ as follows
    $$s^{-1}\beta_0^*\cdots \beta_{l_1-1}^* {\color{red}\underbrace{u_1}}\beta_{l_1}^*\cdots \beta_{l_2-1}^*{\color{red}\underbrace{u_{2}}} \beta_{l_2}^* \cdots \beta_{l_{b}-1}^* {\color{red}\underbrace{u_{b}}}\beta_{l_{b}}^*\cdots\beta_p^*
 \alpha_m\cdots\alpha_{i_a} {\color{red}\underbrace{u_{b+1}}} \cdots  \alpha_{i_1} {\color{red}\underbrace{u_{k}}} \alpha_{i_1-1} \cdots \alpha_1.$$
 We are not allowed to  insert  any $u_i$ between $\beta_p^*$ and $\alpha_m$; in case where $m=0$, the insertion on the right of $\beta_p^*$ is not allowed. If $a=0$, there is no insertions into $\alpha_k$'s. Similarly, if $b=0$, there is no insertions into $\beta_j^*$'s.

Since $1\leq l_1\leq l_2\leq  \cdots \leq l_b\leq p$, we are allowed to insert more than one $u_i$'s into $s^{-1}v$ at the same position between $\beta^*_{j-1}$ and $\beta_{j}^*$ for some $1\leq j\leq p$. For example, we might have the following insertion  with $l_2=l_3$
$$s^{-1}\beta_0^*\beta_1^*\cdots \beta_{l_1-1}^*{\color{red}\underbrace{u_1}}\beta_{l_1}^*\cdots \beta_{l_2-1}^* {\color{red}\underbrace{u_{2}u_{3}}} \beta_{l_3}^* \cdots \beta_{l_{b}-1}^*{\color{red}\underbrace{u_{b}}}\beta_{l_{b}}^*\cdots\beta_p^*
 \alpha_m\cdots\alpha_{i_a} {\color{red}\underbrace{u_{b+1}}} \cdots \alpha_{i_1} {\color{red}\underbrace{u_{k}}}\cdots \alpha_1.$$
As $1\leq i_1<i_2<\cdots<i_j\leq m$, we are not allowed to insert more than one $u_i$'s into $s^{-1}v$ at the same position between $\alpha_{j-1}$ and $\alpha_j$ for some $1\leq j\leq m$. For example,   the following insertion is {\it not} allowed
 $$s^{-1}\beta_0^*\beta_1^*\cdots {\color{red}\underbrace{u_1}}\beta_{l_1}^*\cdots {\color{red}\underbrace{u_{b}}}\beta_{l_{b}}^*\cdots  \beta_p^*
 \alpha_m\cdots\alpha_{i_a} {\color{red}\underbrace{u_{b+1}}} \cdots  \alpha_{i_s} {\color{red}\underbrace{u_{k-s+1}u_{k-s+2}}}\cdots \alpha_1.$$

 (2) The brace operation is well defined, that is, it is compatible with the second Cuntz-Krieger relations or (\ref{equ:CK2}). For the proof, one might use the following relation to swap the insertion of $u_b$ into $s^{-1}v$
 $$\sum_{\{\alpha\in Q_1\; |\; s(\alpha)=i\}} \alpha^*\alpha u_b=\sum_{\{\alpha\in Q_1\; |\; s(\alpha)=i\}} u_b\alpha^*\alpha,$$
 where both sides are equal to $\delta_{i,j}u_b$ for $u_b\in e_jLe_j$. Proposition \ref{prop:interme} will provide an alternative proof for the well-definedness.

 (3) We observe that $v\{s^{-1}u_1, \dotsc, s^{-1}u_k\}$  in (\ref{equation-brace-formula2}) is also defined for any    $v \in L $,  not necessarily  $v\in \bigoplus_{i\in Q_0} e_iLe_i$. However, due to (2), it seems to be essential to require that all the $u_j$'s belong to $\bigoplus_{i\in Q_0} e_iLe_i$.
\end{rem}

It seems to be very nontrivial to verify directly that the above data define a $B_\infty$-structure on $\widehat{C}^*(L , L)$. Instead, we use the isomorphism $\rho$ in Lemma~\ref{lrho} to show that the above data are transferred from those in $\overline{C}_{\sg, R}^*(Q, Q)$.

\begin{prop}\label{prop:interme}
The isomorphism $\rho\colon   \overline{C}_{\sg, R}^*(Q, Q) \longrightarrow  \widehat{C}^*(L, L)$ preserves the cup products and the brace operations. In particular, the complex $\widehat{C}^*(L , L )$, equipped with the cup product $-\cup'-$ and the brace operation $-\{-, \dotsc, -\}'$ defined as above, is a $B_{\infty}$-algebra.
\end{prop}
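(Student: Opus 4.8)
The plan is to prove the statement entirely by transport of structure along the isomorphism $\rho$ of Lemma~\ref{lrho}. Since $\overline{C}_{\sg, R}^*(Q, Q)$ is already known to be a brace $B_\infty$-algebra (Theorem~\ref{thm:radical}), and $\rho$ is an isomorphism of complexes, it suffices to check that $\rho$ intertwines the cup products $\cup_R$ and $\cup'$, and the brace operations $-\{-,\dotsc,-\}_R$ and $-\{-,\dotsc,-\}'$. Once this is verified, the $B_\infty$-structure $(\widehat\delta, \cup', -\{-,\dotsc,-\}')$ on $\widehat C^*(L,L)$ is simply the pushforward of the one on $\overline{C}_{\sg,R}^*(Q,Q)$; in particular all the $B_\infty$-axioms (higher pre-Jacobi identity, distributivity, higher homotopy of Remark~\ref{rem-specialB}) hold automatically, $\rho$ becomes a strict $B_\infty$-isomorphism by Lemma~\ref{lemma-infinity-morphism1}, and this also retroactively settles the well-definedness concerns raised in Remark~\ref{remark-admissible}(2).

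First I would check compatibility with the cup products. Both $\cup_R$ on $\overline{C}_{\sg,R}^*(Q,Q)$ (cases (C1)--(C4)) and $\cup'$ on $\widehat C^*(L,L)$ (cases (C1')--(C4')) are defined by concatenating the parallel-path data and iteratively cancelling a subword $\xleftarrow{\alpha}\xrightarrow{\beta}$ (respectively $\alpha\beta^*$) via $\delta_{\alpha,\beta}$; under $\rho$ the parallel path $(\gamma,\gamma')$ maps to $\gamma'^*\gamma$, and the cancellation rule $\xleftarrow{\alpha}\xrightarrow{\beta}\mapsto \delta_{\alpha,\beta}$ corresponds exactly to the first Cuntz--Krieger relation $\alpha\beta^*=\delta_{\alpha,\beta}e_{t(\alpha)}$ used to define $\cup'$. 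So I would simply run through the four cases (C$i$)$\leftrightarrow$(C$i'$) and match the formulas, being careful that in (C4)/(C4') the leftmost ghost arrow $\beta_0^*$ (degree zero) is inserted without Koszul sign. Then I would do the same for the brace operation: each summand $B^{(i_1,\dotsc,i_a)}_{(l_1,\dotsc,l_b)}$ of $-\{-,\dotsc,-\}_R$ given in Definition~\ref{defn:brace-A} and Figure~\ref{Radical-brace-operation-1} is built by inserting the inputs $y_1,\dotsc,y_k$ into the word for $x$ at prescribed gaps on the two semicircles and then applying the Deletion Process; this Deletion Process is precisely the iterative replacement $\xleftarrow{\alpha}\xrightarrow{\beta}\mapsto \delta_{\alpha,\beta}$ described in (B2)/(B3), i.e.\ under $\rho$ it is concatenation of ghost/ordinary paths followed by the first Cuntz--Krieger relations, which is exactly the definition of $\mathbb b^{(i_1,\dotsc,i_a)}_{(l_1,\dotsc,l_b)}$ in (B2')/(B3'). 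The sign $(-1)^{a+\epsilon}$ is literally the same in both settings (it is the Koszul sign from reordering the degree-one symbols $\beta_j^*, \alpha_j$ past the inputs, plus the colimit-compatibility factor $(-1)^a$), so the signs match termwise; cases (B1) and (B1') both give zero when some input lies in the unshifted part. A subtle point worth spelling out is that the combinatorial rule in (B2)/(B3) requires $l_1\le\dots\le l_b$ on the ``$\beta$-side'' (where repeated insertions at one gap are allowed) but $i_1<\dots<i_a$ on the ``$\alpha$-side'' (where they are not), and this is exactly the constraint recorded in Remark~\ref{remark-admissible}(1); it reflects the fact that $s\overline\Lambda\otimes_E s\overline\Lambda\ne 0$ (paths of length two in the ghost direction survive in $L$) while on the ordinary side $s\overline\Lambda\otimes_E s\overline\Lambda=0$ inside $\overline\Lambda=\mathbb kQ_1$.

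The main obstacle will be the bookkeeping in the brace-compatibility verification: matching, term by term, the cactus/tree pictures of Figure~\ref{Radical-brace-operation-1} with the explicit monomial insertions in \eqref{equation-brace-formula1} and \eqref{equation-brace-formula2}, and checking that the Deletion Process performed on the cactus graph produces exactly the Cuntz--Krieger cancellations on the Leavitt side (including the rule forbidding insertion between $\beta_p^*$ and $\alpha_m$, which corresponds to the blue special output separating outward and inward radii). This is exactly the kind of computation illustrated in Example~\ref{exm-paralle}, so I would present one representative case in detail (say (B2)$\leftrightarrow$(B2'), as in that example) and indicate that (B3)$\leftrightarrow$(B3') and the degenerate cases are identical. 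Once $\rho$ is shown to preserve both operations, $\rho$ is a strict $B_\infty$-isomorphism by Lemma~\ref{lemma-infinity-morphism1}, and since strict $B_\infty$-isomorphisms transport $B_\infty$-structures, $(\widehat C^*(L,L),\widehat\delta,\cup';-\{-,\dotsc,-\}')$ is a (brace) $B_\infty$-algebra, completing the proof.
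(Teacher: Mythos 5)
Your proposal is correct and follows essentially the same route as the paper: the paper's proof likewise checks, case by case, that $\rho$ sends (C1)--(C4) to (C1')--(C4') and (B1)--(B3) to (B1')--(B3'), with the key observation that the iterative replacement $\xleftarrow{\alpha}\xrightarrow{\beta}\mapsto\delta_{\alpha,\beta}$ corresponds exactly to the first Cuntz--Krieger relations implicit in the multiplication of $L$, and then obtains the $B_\infty$-structure on $\widehat{C}^*(L,L)$ by transport along the strict isomorphism, illustrating the brace comparison on one representative case as you intend. (Your parenthetical heuristic about $s\overline\Lambda\otimes_E s\overline\Lambda$ explaining the $l_1\le\cdots\le l_b$ versus $i_1<\cdots<i_a$ asymmetry is not quite the right reason, but it plays no role in the argument.)
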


The obtained $B_\infty$-algebra  $\widehat{C}^*(L, L)$ is called the \emph{Leavitt $B_\infty$-algebra}, due to its closed relation to the Leavitt path algebra.   Combining this result with Theorem~\ref{thm:radical}, we infer that $\widehat{C}^*(L , L )$ and $\overline{C}^*_{\sg, R, E}(\Lambda, \Lambda)$ are strictly $B_\infty$-isomorphic.

\begin{proof}
By a routine computation, we verify that  $\rho$ sends  the formulae (C1)-(C4)  to (C1')-(C4'), respectively. The key point in the verfification is the fact that  replacing $\xleftarrow{\alpha} \xrightarrow{\beta}$ by $\delta_{\alpha, \beta}$ in (C2)-(C4) corresponds to the first Cuntz-Krieger relations $\alpha\beta^*=\delta_{\alpha, \beta}e_{t(\alpha)}$, which are implicitly used in the multiplication of $L$ in (C2')-(C4').

It remains to check that $\rho$ is compatible with the brace operations. That is, $\rho$ sends the formulae (B1)-(B3) to (B1')-(B3'), respectively.

Let $x, y_1, \dotsc, y_k$  be  parallel paths either in $\overline{C}_{\sg, R, 0}^*(Q, Q)$ or in $s^{-1}\overline{C}_{\sg, R,0}^*(Q,Q)$. If there exists some  $y_j$ belonging to $\overline{C}_{\sg, R, 0}^*(Q, Q)$, then
$x\{y_1, \dotsc, y_k\}_R=0$. Thus, we have
$$\rho(x\{y_1, \dotsc, y_k\}_R)=0=\rho(x)\{\rho(y_1), \dotsc, \rho(y_k)\}'.$$
This shows that $\rho$ sends  the formula (B1) to  the formula (B1').

Let $x=s^{-1}(\alpha_{m, 1}, \beta_{p, 0})\in s^{-1}\overline{C}_{\sg, R, 0}^*(Q, Q)$ and $y_1, \dotsc, y_k\in s^{-1}\overline{C}_{\sg, R,  0}^*(Q, Q).$ Using the first Cuntz-Krieger relations $\alpha\beta^*=\delta_{\alpha, \beta}e_{t(\alpha)}$, we infer that  $\rho$ sends  the summand $\mathfrak b_{(l_1, \dotsc, l_{k-j})}^{(i_1, \dotsc, i_j)}(x; y_1, \dotsc, y_k)$ of $x\{y_1, \dotsc, y_k\}$ in  (\ref{equation-radical-B})  to the one $\mathbb b_{(l_1, \dotsc, l_{k-j})}^{(i_1, \dotsc, i_j)}(\rho(x); \rho(y_1), \cdots,\rho( y_k))$  of $\rho(x)\{\rho(y_1), \dotsc, \rho(y_k)\}'$ in (\ref{equation-brace-formula1}). See Example \ref{exam:paralle2} below for a detailed illustration.  Thus we have  $$\rho(x\{y_1, \dotsc, y_k\}_R)=\rho(x)\{\rho(y_1), \dotsc, \rho(y_k)\}'.$$
This shows that the formula (B2) corresponds to (B2') under $\rho$.

Similarly, if $x=(\alpha_{m, 1}, \beta_{p, 1})\in \overline{C}_{\sg, R, 0}^*(Q, Q)$ and $y_1, \dotsc, y_k\in s^{-1}\overline{C}_{\sg, R,0}^*(Q, Q),$ we have $$\rho\left(\mathfrak b_{(l_1, \dotsc, l_{k-j})}^{(i_1, \dotsc, i_j)}(x; y_1, \dotsc, y_k)\right)=\mathbb b_{(l_1, \dotsc, l_{k-j})}^{(i_1, \dotsc, i_j)}(\rho(x); \rho(y_1), \cdots,\rho( y_k))$$ and thus $\rho(x\{y_1, \dotsc, y_k\}_R)=\rho(x)\{\rho(y_1), \dotsc, \rho(y_k)\}'.$ This shows that $\rho$ sends  (B3)  to (B3').
\end{proof}

\begin{exm}\label{exam:paralle2}
Consider the following monomial elements in $\overline{C}^*_{\sg, R}(Q, Q)$ as in Example~\ref{exm-paralle}
\begin{align*}
s^{-1}x &= s^{-1} (\alpha_5\alpha_4\alpha_3\alpha_2 \alpha_1, \beta_3\beta_2 \beta_1\beta_0)\\
s^{-1}y_1 &= s^{-1}(\alpha_3' \alpha_2'\alpha_1', \beta_1' \beta_0')\\
s^{-1}y_2 &= s^{-1} (\alpha_3''\alpha_2''\alpha_1'', \beta_3''\beta_2''\beta_1''\beta_0'')\\
s^{-1}y_3 &= s^{-1} (\alpha_2'''\alpha_1''', \beta_3'''\beta_2'''\beta_1'''\beta_0''').
\end{align*}

Let us check that $\rho$ preserves the brace operations. Note that
\begin{align*}
\rho(s^{-1}x)&=s^{-1} \beta_0^*\beta_1^*\beta_2^*\beta_3^*\alpha_5\alpha_4\alpha_3\alpha_2 \alpha_1\\
\rho(s^{-1}y_1)&=s^{-1}\beta_0'^*\beta_1'^*\alpha_3'\alpha_2'\alpha_1'\\
\rho(s^{-1}y_2)&=s^{-1}\beta_0''^*\beta_1''^*\beta_2''^*\beta_3''^*\alpha_3''\alpha_2''\alpha_1''\\
\rho(s^{-1}y_3)&=s^{-1}\beta_0'''^*\beta_1'''^*\beta_2''''^*\beta_3'''^*\alpha_2'''\alpha_1'''.
\end{align*}
Then by Formula (B2') we have that
\begin{align*}
&\mathbb b_{(2)}^{(2, 4)}(\rho(s^{-1}x); \rho(s^{-1}y_1), \rho(s^{-1}y_2), \rho(s^{-1}y_3))\\
 &\!\!\!\!\!\!\!\!\!\!\! = s^{-1} \beta_0^*\beta_1^*\underbrace{\beta_0'^*\beta_1'^*\alpha_3'\alpha_2'\alpha_1'}\beta_2^*\beta_3^*\alpha_5\alpha_4\underbrace{\beta_0''^*\beta_1''^*\beta_2''^*\beta_3''^*\alpha_3''\alpha_2''\alpha_1''}\alpha_3\alpha_2\underbrace{\beta_0'''^*\beta_1'''^*\beta_2''''^*\beta_3'''^*\alpha_2'''\alpha_1''' }\alpha_1\\
 &\!\!\!\!\!\!\!\!\!\!\!= \lambda s^{-1}\beta_0^*\beta_1^*\beta_0'^*\beta_1'^*\beta_3''^*\alpha_3''\alpha_2'''\alpha_1'''\alpha_1\\
 &\!\!\!\!\!\!\!\!\!\!\!=\rho(\mathfrak b_{(2)}^{(2, 4)}(s^{-1}x; s^{-1}y_1, s^{-1}y_2, s^{-1}y_3)),
\end{align*}
where the second identity follows from the second Cuntz-Krieger relations, and the coefficient $\lambda \in \mathbb k$ is defined as above.
Therefore we have
$$\rho(\mathfrak b_{(2)}^{(2, 4)}(s^{-1}x; s^{-1}y_1, s^{-1}y_2, s^{-1}y_3))=\mathbb b_{(2)}^{(2, 4)}(\rho(s^{-1}x); \rho(s^{-1}y_1), \rho(s^{-1}y_2), \rho(s^{-1}y_3)).$$

\end{exm}

\subsection{A recursive formula for the brace operation}  We will give a recursive formula for the brace operation $-\{-, \dotsc, -\}'$ on $\widehat{C}^*(L, L)$, which will be used in the proof of Proposition~\ref{proposition-Phi}.

\begin{prop}\label{prop:recursive}
Let $v=\beta_1^*\cdots\beta_p^*\alpha_m\cdots \alpha_1\in L$ be a monomial with $\beta_i, \alpha_j\in Q_1$  for  $1\leq i\leq p$ and $1\leq j\leq m$, and  let $s^{-1}u_1, \dotsc, s^{-1}u_{k}\in \bigoplus_{i\in Q_0} s^{-1}e_iLe_i$ for $k\geq 1$. Suppose that  $s^{-1}u_k=s^{-1}\gamma_0^*\widetilde{u_k} $ with $\gamma_0\in Q_1$ and $\widetilde{u_k} \in e_{t(\gamma_0)}Le_{s(\gamma_0)}$. Then we have
\begin{align}\label{equation-recursive}
&v\{s^{-1}u_1, \dotsc, s^{-1}u_k\}'\\
={} &\sum_{j=0}^{p-1} (-1)^{(j+|v|+1)\epsilon_k+|u_k|} \Big((\beta_{1, j}^*\gamma_0^*)\{s^{-1}u_1, \dotsc, s^{-1}u_{k-1}\}'\Big)\cdot (\widetilde{u_k}  \beta_{j+1,p}^* \alpha_{m, 1})\nonumber\\
& - \sum_{j=0}^{m-1} (-1)^{(j+1) \epsilon_k+|u_k|} \; \delta_{\alpha_{j+1}, \gamma_0}\Big((\beta_{1, p}^*\alpha_{m, j+2}) \{s^{-1}u_1, \dotsc, s^{-1}u_{k-1}\}'
\Big) \cdot( \widetilde{u_k} \alpha_{j, 1})\nonumber,\end{align}
where $\epsilon_k=|u_1|+\cdots+|u_k|$,  and the dot $\cdot$ indicates the multiplication of $L$.
\end{prop}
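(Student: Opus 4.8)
The plan is to prove the recursive formula \eqref{equation-recursive} directly by unwinding the explicit definition of the brace operation $-\{-,\dotsc,-\}'$ given in (B3') and then isolating the contribution of the last argument $s^{-1}u_k$. The key observation is that in each summand $\mathbb{b}^{(i_1,\dotsc,i_a)}_{(l_1,\dotsc,l_b)}(v; s^{-1}u_1,\dotsc,s^{-1}u_k)$, the element $u_k$ is inserted into $v$ at exactly one position: either at some slot in the $\beta^*$-part of $v$ (i.e.\ one of the positions $l_1\le\dotsb\le l_b$ with, say, $l_b$ corresponding to $u_k$ since $u_k$ is the last listed argument — note however that by (B3') the $u_j$'s inserted in the $\alpha$-part are listed in reversed order $u_{b+1},\dotsc,u_k$, so $u_k$ is the \emph{leftmost}-inserted among the $\alpha$-insertions), or at some slot $i_1$ in the $\alpha$-part. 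I would therefore split the sum over $(a,b;i_\bullet,l_\bullet)$ into two families according to where $u_k$ lands.

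\textbf{First family: $u_k$ inserted in the $\beta^*$-part.} Here $u_k$ sits between $\beta_j^*$ and $\beta_{j+1}^*$ for some $0\le j\le p-1$ (reading $\beta_0^*$ as absent since $v$ has no $\beta_0^*$; but $u_k=s^{-1}\gamma_0^*\widetilde{u_k}$ contributes its own $\gamma_0^*$). Writing $u_k = \gamma_0^*\widetilde{u_k}$ and placing it after $\beta_{1,j}^*$, the remaining insertions of $u_1,\dotsc,u_{k-1}$ into the shortened word $\beta_{1,j}^*\gamma_0^*$ (the part of $v$ to the left of, and including, the newly inserted $\gamma_0^*$) reproduce precisely $(\beta_{1,j}^*\gamma_0^*)\{s^{-1}u_1,\dotsc,s^{-1}u_{k-1}\}'$, while the tail $\widetilde{u_k}\beta_{j+1,p}^*\alpha_{m,1}$ is multiplied on the right by the associativity (C2'). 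The point is that in formula (B3') the arguments $u_1,\dotsc,u_{b}$ inserted into the $\beta^*$-part appear in order from left to right, so all of $u_1,\dotsc,u_{k-1}$ must be inserted to the \emph{left} of $u_k$ — this is exactly why the recursion produces $(\beta_{1,j}^*\gamma_0^*)\{s^{-1}u_1,\dotsc,s^{-1}u_{k-1}\}'$ with the \emph{full} list of remaining arguments. I would then carefully track the Koszul sign: the reshuffling needed to move $u_k$ past $\beta_{j+1}^*,\dotsc,\beta_p^*,\alpha_m,\dotsc,\alpha_1$ and past the block of inserted $u_i$'s gives the exponent $(j+|v|+1)\epsilon_k + |u_k|$ claimed, using that $|s^{-1}u_i| = |u_i|$ as degrees in $\widehat C^*(L,L)$ and that $\beta^*,\alpha$ have degree one.

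\textbf{Second family: $u_k$ inserted in the $\alpha$-part.} Here $u_k$ sits between $\alpha_{j+1}$ and $\alpha_j$ for some $0\le j\le m-1$, at position $i_1 = j+1$ (it is the rightmost-inserted in the $\alpha$-part since it is the last in the reversed list $u_{b+1},\dotsc,u_k$). The leftmost factor of $u_k$ is $\gamma_0^*$, and since in $v$ this position is $\dotsb\alpha_{j+1}\,\gamma_0^*\widetilde{u_k}\,\alpha_j\dotsb$, the first Cuntz--Krieger relation $\alpha_{j+1}\gamma_0^* = \delta_{\alpha_{j+1},\gamma_0}e_{t(\alpha_{j+1})}$ forces the Kronecker delta $\delta_{\alpha_{j+1},\gamma_0}$ and collapses $\alpha_{j+1}$. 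The remaining insertions of $u_1,\dotsc,u_{k-1}$ into $\beta_{1,p}^*\alpha_{m,j+2}$ (the part of $v$ strictly to the left of the collapsed slot) give $(\beta_{1,p}^*\alpha_{m,j+2})\{s^{-1}u_1,\dotsc,s^{-1}u_{k-1}\}'$, multiplied on the right by $\widetilde{u_k}\alpha_{j,1}$. Again one extracts the Koszul sign $(j+1)\epsilon_k + |u_k|$ and the overall sign $(-1)^{a}$ from (B3') combines with the one unit drop in $a$ (passing from $k$ to $k-1$ arguments, with $u_k$ removed from the $\alpha$-block) to yield the minus sign in front of this family. The final step is to verify that summing the two families over all admissible $(a,b;i_\bullet,l_\bullet)$ with the shifted argument list reassembles exactly the right-hand side of \eqref{equation-recursive}, invoking Remark~\ref{remark-admissible} to confirm no admissible configuration is double-counted or omitted (in particular the prohibition on inserting two $u_i$'s at the same $\alpha$-slot is respected, since $u_k$ occupies its slot alone).

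\textbf{Main obstacle.} The genuine difficulty is the sign bookkeeping: the Koszul signs in (B3') are defined by a global reordering of the degree-one letters $\beta_i^*,\alpha_j$ and the blocks $u_i$, and extracting the last argument $u_k$ requires commuting it past a variable-length tail whose contribution depends on $j$, $p$, $m$, and on which of the other $u_i$'s have already been inserted. I would handle this by first proving the sign identity in the ``universal'' case where all $u_i$ are placeholders of formal degree $|u_i|$ (so that only parities matter), reducing the computation to counting, modulo $2$, the transpositions of letters; the expressions $(j+|v|+1)\epsilon_k+|u_k|$ and $(j+1)\epsilon_k+|u_k|$ should then drop out after noting $|v| = m-p$ and carefully accounting for the $\gamma_0^*$ that $u_k$ carries. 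An alternative, possibly cleaner route is to transport the identity through the strict $B_\infty$-isomorphism $\rho^{-1}$ of Proposition~\ref{prop:interme} and prove the corresponding recursion for $\overline C^*_{\sg,R}(Q,Q)$ using the graphical (cactus) description of $-\{-,\dotsc,-\}_R$, where the ``peeling off the last argument'' operation has a transparent pictorial meaning; I would keep this as a fallback if the direct sign computation becomes unwieldy.
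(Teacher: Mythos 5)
Your proposal is correct and follows essentially the same route as the paper's proof: both split the summands of (B3') according to the slot of $v$ into which $u_k$ is inserted, identify the $\beta^*$-slot contributions (where all of $u_1,\dotsc,u_{k-1}$ are forced to the left of $u_k$) with $\bigl((\beta_{1,j}^*\gamma_0^*)\{s^{-1}u_1,\dotsc,s^{-1}u_{k-1}\}'\bigr)\cdot(\widetilde{u_k}\beta_{j+1,p}^*\alpha_{m,1})$, identify the $\alpha$-slot contributions, via the first Cuntz-Krieger relation, with $\delta_{\alpha_{j+1},\gamma_0}\bigl((\beta_{1,p}^*\alpha_{m,j+2})\{s^{-1}u_1,\dotsc,s^{-1}u_{k-1}\}'\bigr)\cdot(\widetilde{u_k}\alpha_{j,1})$, and then match the Koszul signs. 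One small slip: in your opening paragraph you call $u_k$ the leftmost-inserted among the $\alpha$-insertions, whereas (as you correctly state and use later) $u_k$ occupies the slot $i_1$ and is the rightmost one.
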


For the brace operation $v\{s^{-1}u_1, \dotsc, s^{-1}u_k\}'$ with $v\in L$, we refer to  Remark~\ref{remark-admissible}(3). Here, we write  $\alpha_{j, i}=\alpha_{j}\alpha_{j-1}\cdots \alpha_i$, $\beta_{i, j}^*=\beta_i^*\beta_{i+1}^*\cdots\beta_j^*$ for any $i\leq j$. Moreover, $\beta_{1, 0}^*\gamma_0^*$,  $\widetilde{u_k} \alpha_{0, 1}$ and $\beta_{1, p}^*\alpha_{m, m+1}$ are understood as $\gamma_0^*$, $\widetilde{u_k} $ and $\beta_{1,p}^*$, respectively. In particular, the above proposition also works for $v = \beta_1^*\dotsc \beta_p^*$ and $v= \alpha_m \dotsc \alpha_1.$

\begin{proof}
 We only prove the identify for the cases $m, p>0$. The cases where $m=0$ or $p=0$ can be proved in a similar way.

 We will compare the summands on the right hand side of  (\ref{equation-recursive}) with the summands $\mathbb b^{(i_1, \dotsc, i_a)}_{(l_1, \dotsc, l_b)}(v; s^{-1}u_1, \dotsc, s^{-1}u_k)$ in (\ref{equation-brace-formula2}). We analyze the position in $v=\beta_1^*\beta_2^* \cdots\beta_p^*\alpha_m\alpha_{m-1} \cdots \alpha_1$ where $u_k$ is inserted according to Remark~\ref{remark-admissible}(1).

  For any fixed  $0\leq j\leq p-1$, the  first term on the right hand side of (\ref{equation-recursive})
$$(-1)^{(j+|v|+1)\epsilon_k+|u_k|} \Big((\beta_{1,j}^* \gamma_0^*)\{s^{-1}u_1, \dotsc, s^{-1}u_{k-1}\}'\Big)\cdot (\widetilde{u_k}  \beta^*_{j+1, p} \alpha_{m,1})$$
equals  the following summand
$$\sum_{1\leq l_1\leq l_2\leq \cdots\leq l_{k-1}\leq l_k=j+1}(-1)^{|v|\epsilon_k+\sum_{r=1}^{k-1} (l_r-1)|u_r|+j|u_k|}  \;  \mathbb b_{(l_1, l_2, \dotsc, l_{k-1}, j+1)}^{\emptyset}(v; s^{-1}u_1, \dotsc, s^{-1}u_k),$$
since both of them are the sums of all insertions such that  $u_k$  is inserted into $v$ at the position between $\beta_j^*$ and  $\beta_{j+1}^*$.

To complete the proof, we assume that the insertion of $u_k$ into $v$ is at the position between $\alpha_{j+1}$ and $\alpha_j$ for any fixed $0\leq j\leq m-1$. That is, we are concerned with the following summand
\begin{align}\label{equ:re-br1}
\sum_{\substack{a+b=k,\; a,b \geq 0\\ j+1=i_1<i_2<\cdots < i_a\leq m\\ 1\leq l_1\leq l_2\leq \cdots \leq l_b\leq p}} (-1)^{a+\epsilon} \; \mathbb b_{(l_1, \dotsc, l_b)}^{(j+1, i_2, \dotsc, i_a)}(v; s^{-1}u_1, \dotsc, s^{-1}u_k).
\end{align}
Here, $\epsilon$ is the same as in (\ref{equation-brace-formula2}). We observe that
\begin{align*}
&\mathbb b_{(l_1, \dotsc, l_b)}^{(j+1, i_2, \dotsc, i_a)}(\beta_{1, p}^*\alpha_{m, 1}; s^{-1}u_1, \dotsc, s^{-1}u_k)\\
={} & \delta_{\alpha_{j+1}, \gamma_0} \mathbb b_{(l_1, \dotsc, l_b)}^{(i_2, \dotsc, i_a)}(\beta_{1, p}^*\alpha_{m, j+2}; s^{-1}u_1, \dotsc, s^{-1}u_{k-1}) \cdot (\widetilde{u_k} \alpha_{j, 1}),  \end{align*}
where the insertion of $u_1, \dotsc, u_{k-1}$ into $\beta_1^*\cdots \beta_p^*\alpha_m \cdots \alpha_{j+2}$ is involved in the latter term. It follows that for each $0\leq j\leq m-1$,  (\ref{equ:re-br1}) equals
$$-(-1)^{(j+1) \epsilon_k+|u_k|} \; \delta_{\alpha_{j+1}, \gamma_0}\Big((\beta_{1, p}^*\alpha_{m, j+2}) \{s^{-1}u_1, \dotsc, s^{-1}u_{k-1}\}'
\Big) \cdot( \widetilde{u_k} \alpha_{j, 1}).$$
This is the second term on the right hand side of (\ref{equation-recursive}). Then the required identity follows immediately.
\end{proof}

\section{An $A_\infty$-quasi-isomorphism for the Leavitt path algebra}\label{Section:10}

In this section, we use the homotopy transfer theorem for dg algebras to obtain an explicit $A_\infty$-quasi-isomorphism between the two dg algebras $\widehat{C}^*(L, L)$ and $\overline{C}_E^*(L, L)$; see Propositions~\ref{proposition-A-infinity} and \ref{proposition-Phi}.

\subsection{An explicit $A_{\infty}$-quasi-isomorphism between dg algebras}

In what follows, we apply the functor  $\mathrm{Hom}_{\text{$L$-$L$}}(-, L)$ to the homotopy deformation retract (\ref{equation-hdr-1}). In particular, we have the dg-projective bimodule resolution $P$ of $L$.

Recall from Section~\ref{Section:9} the Leavitt $B_\infty$-algebra $\widehat{C}^*(L, L)$.  We will use the  identification
$$\Hom_{L\text{-}L}(P, L)=(\widehat{C}^*(L, L), \widehat{\delta})$$
by the  following natural isomorphisms
\begin{flalign}\label{2}
&& \Hom_{L\text{-}L}(Le_i\otimes e_iL, L)&\xrightarrow{\cong} e_iLe_i, & &\phi\longmapsto \phi(e_i\otimes e_i); &&\nonumber \\
&& \Hom_{L\text{-}L}(Le_i\otimes s\mathbb k\otimes e_iL, L)&\xrightarrow{\cong} s^{-1}e_iLe_i,& & \phi \longmapsto (-1)^{|\phi|}s^{-1}\phi(e_i\otimes s\otimes e_i).&&
\end{flalign}
It is straightforward to verify that the above isomorphisms are compatible with the differentials.

Recall that $E=\bigoplus_{i\in Q_0} \mathbb{k}e_i$  and that the $E$-relative Hochschild cochain complex  $\overline{C}_E^*(L, L)$ is naturally  identified with ${\rm Hom}_{L\mbox{-}L}(\overline{\rm Bar}_E(L), L)$; compare (\ref{identification-bimodule}). Under the above identifications, (\ref{equation-hdr-1}) yields the following  homotopy deformation retract
\begin{equation}\label{equation-hdr-0}\xymatrix@C=0.0000000000001pc{
(\widehat{C}^*(L, L), \widehat{\delta})   \ar@<0.5ex>[rrrrrrrrrr]^-{\Phi}&&&&&&&&&&(\overline{C}_E^*(L, L), \delta) \ar@<0.5ex>[llllllllll]^-{\Psi} & \ar@(dr, ur)_-{H}}
\end{equation}
with $\Phi={\rm Hom}_{L\mbox{-}L}(\pi, L), \Psi={\rm Hom}_{L\mbox{-}L}(\iota, L)$ and $H={\rm Hom}_{L\mbox{-}L}(h, L)$ satisfying
 \begin{center}
 $\Psi\circ \Phi={\mathbf 1}_{\widehat{C}^*(L,L)}$ and ${\mathbf 1}_{\overline{C}^*_E(L,L)}=\Phi\circ \Psi+\delta\circ H+H\circ \delta$.
 \end{center}

As in Subsection~\ref{subsection-dg-HH}, we  denote the following subspaces of $\overline{C}_E^*(L, L)$ for any  $k\geq 0$\begin{align*}
\overline{C}_E^{*, k}(L, L) &=  \Hom_{\text{$E$-$E$}}((s \overline L)^{\otimes_E k}, L)\\
\overline{C}_E^{*, \geq k}(L, L) &= \prod_{i\geq k} \Hom_{\text{$E$-$E$}}((s\overline L)^{\otimes_E i}, L)\\
\overline{C}_E^{*, \leq k}(L, L) &= \prod_{0\leq i\leq k} \Hom_{\text{$E$-$E$}}((s\overline L)^{\otimes_E i}, L).
\end{align*}
In particular, we have $\overline{C}_E^{*, 0}(L, L) = \Hom_{\text{$E$-$E$}}(E, L) = \bigoplus_{i\in Q_0} e_iLe_i$.

 Let us describe  the above homotopy deformation retract (\ref{equation-hdr-0}) in more detail.

\begin{enumerate}
\item The surjection $\Psi$ is given by
\begin{flalign}\label{equ:Psi}
&&\Psi(x)&=x  && \mbox{for $x\in \overline{C}_E^{*, 0}(L, L)=\bigoplus_{i\in Q_0} e_iLe_i$};\nonumber \\
&&\Psi(f)&=-\sum_{\alpha\in Q_1} s^{-1}\alpha^*f(s\overline\alpha) &&
\mbox{for $f\in \overline{C}_E^{*,1}(L, L)$;}&&\\ \nonumber
&&\Psi(g)&=0 &&\mbox{for $g\in \overline{C}_E^{*, \geq 2}(L, L)$}.&&
\end{flalign}
\item The injection $\Phi$ is given by

\begin{equation}\label{phiformulaimage}
\begin{split}
\Phi(u)&=u \quad\quad\quad\quad\quad\quad\quad \text{for $u \in \prod_{i\in Q_0} e_iLe_i\subset \widehat{C}^*(L, L);$}\\
\Phi(s^{-1}u)& \in \overline{C}_E^{*, 1}(L, L)  \quad\quad \ \quad \text{for $s^{-1}u\in \prod_{i\in Q_0} s^{-1}e_iLe_i\subset \widehat{C}^*(L, L)$}.
\end{split}
\end{equation}
Here, in the first identity  we use the identification $\overline{C}_E^{*, 0}(L, L) = \bigoplus_{i\in Q_0} e_iLe_i$. The explicit formula of $\Phi(s^{-1}u)$ will be given  in Lemma~\ref{lem-Phi} below.
\end{enumerate}

\begin{enumerate} \setcounter{enumi}{2}
\item The homotopy $H$ is given by
\begin{equation}
\label{H-vanish}
\begin{split}
H|_{\overline{C}_E^{*, \leq 1}(L, L)}& = 0\\
H(f)(s\overline{a}_{1, n})& =(-1)^{\epsilon} f(s\overline{a}_{1, n-1}\otimes_E \overline{\iota\pi}(1\otimes_Es\overline{a_n}\otimes_E1)]
\end{split}
\end{equation}
 for any $f\in \overline{C}_E^{*, n+1}(L, L)$ with $n\geq 1$, where $\epsilon=1+|f|+\sum_{i=1}^{n-1} (|a_i|-1)$. Here, for convenience we use the following notation
\begin{flalign*}
&& f(s\overline{a}_{1, n+1}\otimes_Ex]:=f(s\overline{a}_{1, n+1})x, &&
\text{for  $  s\overline{a}_{1, n+1}\in (s\overline L)^{\otimes_E n+1}$ and $x\in L$},
\end{flalign*}
and we simply write $s\overline{a}_{1, n+1}:= s\overline{a_1}\otimes_E s\overline{a_2}\otimes_E \dotsb \otimes_E s\overline{a_{n+1}}.$
\end{enumerate}

The following lemma provides the formula of $\Phi(s^{-1}u)$ in (\ref{phiformulaimage}).

\begin{lem}\label{lem-Phi}
For  any $s^{-1}u \in \bigoplus_{i\in Q_0}s^{-1}e_iLe_i\subset \widehat{C}^*(L, L)$, we have
$$\Phi(s^{-1}u)(s\overline v)= (-1)^{(|v|-1)|u|}v\{s^{-1}u\}', $$
where  $v\in L$ and $v\{s^{-1}u\}'$ is given by (\ref{equation-brace-formula2}).
\end{lem}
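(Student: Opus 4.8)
The statement asserts an explicit formula for the first component $\Phi_1 = \Phi$ of the $A_\infty$-morphism, evaluated on the degree-one-shifted generators $s^{-1}u$. The natural approach is to compute $\Phi = \mathrm{Hom}_{L\text{-}L}(\pi, L)$ directly, by unwinding the definition of the surjection $\pi\colon \overline{\mathrm{Bar}}_E(L) \to P$ from (\ref{pi}) in Subsection~\ref{subsection:hdrforLPA}, and then translating back through the identifications (\ref{2}). First I would recall that, for a cochain $g = \Phi(s^{-1}u)$, the element $g$ lives in $\overline{C}_E^{*,1}(L,L) = \mathrm{Hom}_{\text{$E$-$E$}}(s\overline{L}, L)$ by (\ref{phiformulaimage}); this is where the claim that $\Phi$ lands in the degree-one part of the bar complex must be used, and it follows from the shape of $\pi$ on $L\otimes_E s\overline{L}\otimes_E L$ together with $\pi$ vanishing on higher tensor powers.

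The core computation is the following. Under the identification $\mathrm{Hom}_{L\text{-}L}(Le_i\otimes s\mathbb{k}\otimes e_iL, L) \xrightarrow{\cong} s^{-1}e_iLe_i$ of (\ref{2}), the cochain $s^{-1}u$ with $u = u e_i = e_i u$ corresponds to the bimodule map $\widehat{u}\colon Le_i\otimes s\mathbb{k}\otimes e_iL \to L$ sending $e_i\otimes s\otimes e_i \mapsto (-1)^{|u|}u$. Then $\Phi(s^{-1}u) = \widehat{u}\circ \pi$, restricted to $L\otimes_E s\overline{L}\otimes_E L$ and then passed through the isomorphism (\ref{identification-bimodule}) to get an element of $\mathrm{Hom}_{\text{$E$-$E$}}(s\overline{L}, L)$. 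Now by (\ref{pi}), $\pi(a\otimes_E s\overline{z}\otimes_E b) = aD(z)b$, where $D$ is the graded $E$-derivation of Lemma~\ref{lem:deri}, whose explicit value on monomials $z = \gamma^*\eta$ (and on $\gamma^*$, $\eta$ separately) is recorded in Remark~\ref{rem:derivation}. So I would take $v = \beta_1^*\cdots\beta_p^*\alpha_m\cdots\alpha_1 \in L$ a monomial, write $z = v$, apply the formula for $D(v)$ from Remark~\ref{rem:derivation} — which is a sum of terms $w_1\otimes s\otimes w_2$ with $w_1\in Le_i$, $w_2\in e_iL$ — then apply $\widehat{u}$ to each term, picking out $\widehat{u}(w_1\otimes s\otimes w_2) = (-1)^{|u|}\,w_1 u w_2$ whenever $w_1\otimes s\otimes w_2$ factors through $Le_i\otimes s\mathbb{k}\otimes e_iL$ for the appropriate $i$ (and zero otherwise, enforced by the idempotent $e_i$ in the middle, which is exactly what produces the Kronecker deltas $\delta_{\alpha_{j+1},\gamma_0}$-type conditions implicitly). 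Carrying this out term by term, the sum splits into a ``$\beta^*$-part'' (insertions of $u$ among the $\beta_j^*$, coming from the first two sums in the $D(\gamma^*\eta)$ formula) and an ``$\alpha$-part'' (insertions among the $\alpha_k$, from the last two sums). Comparing with the definition (\ref{equation-brace-formula2}) of $v\{s^{-1}u\}'$ in the case $k=1$ (where $a+b=1$, so either $a=0,b=1$ giving a $\beta^*$-insertion, or $a=1,b=0$ giving an $\alpha$-insertion), I would check that the two expressions match term by term.

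The one genuinely delicate point is the sign. The target formula carries the prefactor $(-1)^{(|v|-1)|u|}$, and the brace operation (\ref{equation-brace-formula2}) itself has internal signs $(-1)^{a+\epsilon}$ with $\epsilon$ a sum of Koszul-type terms; meanwhile the identification (\ref{2}) contributes a sign $(-1)^{|\phi|}$, the isomorphism (\ref{identification-bimodule}) contributes $(-1)^{|a_0|\cdot|f|}$, and the explicit $D$ in Remark~\ref{rem:derivation} has its own alternating signs $(-1)^l$, $(-1)^{m+p-l}$, etc. The hard part will be bookkeeping all of these and verifying that, after the dust settles, the net sign on the $j$-th $\beta^*$-insertion term and on the $j$-th $\alpha$-insertion term is precisely what (\ref{equation-brace-formula2}) prescribes, once the overall factor $(-1)^{(|v|-1)|u|}$ is pulled out. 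I would organize this by first treating the single generator $z = \alpha$ or $z = \alpha^*$ (where $D$ has the simplest form $D(\alpha) = -\alpha\otimes s\otimes e_{s(\alpha)}$, $D(\alpha^*) = -e_{s(\alpha)}\otimes s\otimes \alpha^*$), then using the graded Leibniz rule for $D$ to induct on the length of $v$ — this mirrors exactly the recursive structure of Proposition~\ref{prop:recursive}, so in fact one can alternatively deduce the lemma from Proposition~\ref{prop:recursive} by checking the $k=1$ base case and matching the recursion against the Leibniz rule for $D$. Finally, since monomials of the form (\ref{mon})–(\ref{montwo}) span $\widehat{C}^*(L,L)$ and the claimed identity is $\mathbb{k}$-linear in $v$, establishing it on monomial $v$ suffices.
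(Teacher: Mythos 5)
Your proposal follows essentially the same route as the paper's proof: you identify $\Phi(s^{-1}u)(s\overline v)$ with the value on $\pi(1\otimes_E s\overline v\otimes_E 1)=D(v)$ of the bimodule map corresponding to $s^{-1}u$ under (\ref{2}), expand $D(v)$ via Remark~\ref{rem:derivation} for a monomial $v=\beta_1^*\cdots\beta_p^*\alpha_m\cdots\alpha_1$ (with $m=0$ or $p=0$ handled separately), and match the resulting $\beta^*$-insertion and $\alpha$-insertion terms, signs included, against the $k=1$ case of (\ref{equation-brace-formula2}) --- exactly the paper's computation. One correction to feed into your sign bookkeeping: under (\ref{2}) the bimodule map corresponding to $s^{-1}u$ is $a\otimes s\otimes b\mapsto(-1)^{(|a|+1)(|u|-1)}aub$, so it carries a Koszul sign depending on $|a|$ and its value at $e_i\otimes s\otimes e_i$ is $(-1)^{|u|+1}u$, not the $|a|$-independent $(-1)^{|u|}w_1uw_2$ you wrote.
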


\begin{proof}
 Let $v=\beta_1^{*}\cdots \beta_p^{*}\alpha_m\cdots \alpha_1\in e_iLe_j$ be a monomial, where $i, j\in Q_0$.  We first assume that $m, p>0$. Under the identification (\ref{2}), the element $s^{-1}u$ corresponds to a morphism of $L$-$L$-bimodules of degree $|u|-1$
\begin{flalign*}
&&\phi_{s^{-1}u}\colon Le_i\otimes s\mathbb k\otimes e_iL\longrightarrow{} L, && a\otimes s\otimes b \longmapsto (-1)^{(|a|+1)(|u|-1)}aub.&&
\end{flalign*}
 Then we have  $\Phi(s^{-1}u)(s\overline v)=(\phi_{s^{-1}u}\circ \pi)(1\otimes s\overline{v}\otimes 1).$
  By Remark~\ref{rem:derivation}, we have
\begin{equation*}
\begin{split}
\Phi(s^{-1}u)(s\overline v)
={} &(\phi_{s^{-1}u}\circ \pi)(1\otimes s\overline{v}\otimes 1)
=\phi_{s^{-1}u}(D(v))\\
={} & (-1)^{|u|}uv+
\sum_{l=1}^{p-1}(-1)^{|u|(l+1)}\beta_1^*\cdots \beta_l^*u\beta_{l+1}^*\cdots \beta_p^*\alpha_m\cdots \alpha_1\\
&\!\!\!\! + \sum_{l=1}^{m-1} (-1)^{|u| (m+p-l-1)+1}\beta_1^*\cdots \beta_p^*\alpha_m\cdots \alpha_{l+1} u\alpha_l\cdots \alpha_1 +(-1)^{(|v|+1)|u|+1}vu.
\end{split}
\end{equation*}
It follows from the definition of the brace operation in \eqref{equation-brace-formula2} that \begin{align*}
v\{s^{-1}u\}'
 ={} &(-1)^{|v||u|}uv + \sum_{l=1}^{p-1}(-1)^{|v||u|+|u|l}\beta_1^*\cdots \beta_{l}^*u\beta_{l+1}^*\cdots \beta_p^*\alpha_m\cdots\alpha_1\\
 &+\sum_{l=1}^{m-1}(-1)^{|v||u|+1+|u|(|v|-l)}\beta_1^*\cdots \beta_p^*\alpha_m\cdots \alpha_{l+1}u\alpha_{l}\cdots\alpha_1-vu.
 \end{align*}
 By comparing the signs of the above two formulae, we infer
 $$
 \Phi(s^{-1}u)(s\overline v) = (-1)^{(|v|-1)|u|}v\{s^{-1}u\}'.
$$
Similarly, one can prove the statement for either $p=0$ or $m=0$.
\end{proof}

\begin{rem}\label{remphi1alpha}
Note that for $\alpha \in Q_1$ we have  $$\Phi(s^{-1}u) (s \overline \alpha) =\alpha \{s^{-1} u\}' = -\alpha u,$$
where the second identity is due to Remark \ref{remark-admissible}(3).
The  formula of $\Phi=\Phi_1$ will be generalized to $\Phi_k$ for $k >1$ by using $-\{\underbrace{-, \dotsc, -}_k\}'$; see Proposition \ref{proposition-Phi} below.
\end{rem}

The following simple lemma on the homotopy $H$  will  be used in Lemma \ref{lemma-homotopy-zero} below.

\begin{lem}\label{facts}
Let $\alpha\in Q_1$ and $f\in \overline{C}_E^{*, n+1}(L , L)$ with $n\geq 1$. Then we have
$$H(f)(s\overline{a_1}\otimes_E\cdots \otimes_E s\overline{a_{n-1}}\otimes_E s\overline{\alpha})=0$$
for any $a_1, \dotsc, a_{n-1}\in L$.
\end{lem}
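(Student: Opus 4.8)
\textbf{Proof plan for Lemma~\ref{facts}.}
The statement to prove is that for $\alpha\in Q_1$ and $f\in\overline C_E^{*,n+1}(L,L)$ with $n\geq 1$, one has $H(f)(s\overline{a_1}\otimes_E\cdots\otimes_E s\overline{a_{n-1}}\otimes_E s\overline\alpha)=0$. The plan is to unwind the explicit formula for $H$ in \eqref{H-vanish} and then to observe that the crucial input $\overline{\iota\pi}(1\otimes_E s\overline\alpha\otimes_E 1)$ vanishes because $\alpha$ is an arrow of length one.

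First I would substitute $a_n=\alpha$ into the defining formula \eqref{H-vanish}:
\[
H(f)(s\overline{a_1}\otimes_E\cdots\otimes_E s\overline{a_{n-1}}\otimes_E s\overline\alpha)
=(-1)^{\epsilon}\,f\bigl(s\overline{a_1}\otimes_E\cdots\otimes_E s\overline{a_{n-1}}\otimes_E \overline{\iota\pi}(1\otimes_E s\overline\alpha\otimes_E 1)\bigr),
\]
so it suffices to show that $\overline{\iota\pi}(1\otimes_E s\overline\alpha\otimes_E 1)=0$, because $f$ is then applied to a tensor with a zero slot. Recall that $\overline{\iota\pi}$ is the composition of $\iota\circ\pi$ with the natural map $s\colon L\otimes_E s\overline L\otimes_E L\to s\overline L\otimes_E s\overline L\otimes_E L$ of degree $-1$ (see the description of $h$ in Subsection~\ref{subsection:hdrforLPA}, item (3), and Proposition~\ref{prop:hdr}). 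Thus the computation reduces to evaluating $\iota\circ\pi$ on $1\otimes_E s\overline\alpha\otimes_E 1\in L\otimes_E s\overline L\otimes_E L$.

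Next I would compute $\pi(1\otimes_E s\overline\alpha\otimes_E 1)$ using \eqref{pi}: it equals $D(\alpha)$, where $D$ is the graded $E$-derivation of Lemma~\ref{lem:deri}. Since $\alpha$ is an arrow, $D(\alpha)=-\alpha\otimes s\otimes e_{s(\alpha)}$, which lies in the summand $\bigoplus_{i\in Q_0}Le_i\otimes s\mathbb k\otimes e_iL$ of $P$. Applying $\iota$ to this element gives, by the formula for $\iota$ in Subsection~\ref{subsection:hdrforLPA}, item (1),
\[
\iota(-\alpha\otimes s\otimes e_{s(\alpha)})
=\sum_{\{\beta\in Q_1\mid s(\beta)=s(\alpha)\}}\alpha\beta^*\otimes_E s\beta\otimes_E e_{s(\alpha)},
\]
and using the first Cuntz--Krieger relations $\alpha\beta^*=\delta_{\alpha,\beta}e_{t(\alpha)}$ this collapses to $e_{t(\alpha)}\otimes_E s\alpha\otimes_E e_{s(\alpha)}=1\otimes_E s\overline\alpha\otimes_E 1$ — i.e.\ $\iota\circ\pi$ is the identity on this element, consistent with $\iota\pi|_{A\otimes_E A}=\mathbf 1$ being an identity on low tensor-length but here on the length-$1$ piece it is genuinely the identity. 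Hence $\overline{\iota\pi}(1\otimes_E s\overline\alpha\otimes_E 1)=s(1\otimes_E s\overline\alpha\otimes_E 1)=s\overline{1}\otimes_E s\overline\alpha\otimes_E 1$; but $s\overline 1=0$ in $s\overline L$ since $\overline L=L/(E\cdot 1_L)$, so the whole expression is zero. The only real obstacle is bookkeeping: making sure the map $s$ in \eqref{equ:natproj} indeed sends the leading $1\in L$ to $s\overline 1=0$, which follows directly from the definition of $\overline L$ and the normalization built into $\overline{\Barr}_E(L)$; once this is noted, the lemma is immediate and no sign computation is needed. I would therefore conclude that $H(f)$ vanishes on any tensor whose last slot is $s\overline\alpha$ with $\alpha\in Q_1$.
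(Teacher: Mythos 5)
Your proposal is correct and follows essentially the same route as the paper: unwind the formula \eqref{H-vanish} for $H$, compute $\overline{\iota\pi}(1\otimes_E s\overline\alpha\otimes_E 1)$ (the paper writes it directly as $s\overline{e_{t(\alpha)}}\otimes_E s\overline\alpha$, while you spell out the intermediate steps via $D(\alpha)$ and the first Cuntz--Krieger relations), and conclude vanishing from the normalization $\overline{e_{t(\alpha)}}=0$ (equivalently $s\overline 1=0$) in $\overline L=L/(E\cdot 1_L)$. The extra detail you include is harmless and the sign bookkeeping is irrelevant to the vanishing, so the argument is complete.
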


\begin{proof} By  (\ref{H-vanish}) we have
\begin{align*}
H(f)(s\overline{a}_{1, n-1}\otimes_E s\overline{\alpha}) & =(-1)^{\epsilon} f(s\overline{a}_{1, n-1}\otimes_E \overline{\iota\pi}(1\otimes_E s\overline{\alpha}\otimes_E 1)]\\
& =(-1)^{\epsilon} f(s\overline{a}_{1, n-1}\otimes_E s\overline{e_{t(\alpha)}} \otimes_E s\overline{\alpha})\\
&=0,
\end{align*}
where the last identity comes from the fact that     $\overline{e_{t(\alpha)}} = 0 $ in $\overline L=L/(E\cdot 1)$.
\end{proof}

The following lemma shows that the homotopy deformation retract (\ref{equation-hdr-0}) satisfies the assumption \eqref{assumptionhomtopytransfer} of Corollary \ref{corollary-hdr}.
 \begin{lem}\label{lemma-homotopy-zero}
 For any $g_1, g_2\in  \overline{C}_E^*(L, L)$, we have $$H(g_1\cup H(g_2))=0=\Psi(g_1\cup H(g_2)).$$\end{lem}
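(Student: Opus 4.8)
\textbf{Proof plan for Lemma~\ref{lemma-homotopy-zero}.}
The plan is to reduce the statement to a pointwise computation on basis elements of the relative bar complex. Recall that $H = \mathrm{Hom}_{L\text{-}L}(h, L)$ and that $h$ is the explicit homotopy attached to the homotopy deformation retract $(\iota, \pi, h)$ of Proposition~\ref{prop:htrforLPA}, given on $\overline{\Barr}_E(L)$ by inserting $\overline{\iota\circ\pi}$ in the last tensor slot. Dually, by \eqref{H-vanish}, $H$ vanishes on $\overline{C}^{*,\le 1}_E(L,L)$ and, for $f\in\overline{C}^{*,n+1}_E(L,L)$ with $n\ge 1$,
\[
H(f)(s\overline{a}_{1,n}) = (-1)^\epsilon\, f\bigl(s\overline{a}_{1,n-1}\otimes_E \overline{\iota\pi}(1\otimes_E s\overline{a_n}\otimes_E 1)\bigr].
\]
Since $\overline{\iota\pi}(1\otimes_E s\overline{a}\otimes_E 1)$ lies in the span of elements of the form $s\overline{a_i'}\otimes_E s\overline{a}$ with $a_i'\in Q_1$ (this is immediate from the formula \eqref{pi} for $\pi$ together with the formula for $\iota$, which inserts a ghost arrow), the essential point is that after applying $H$ the \emph{last} argument of $f$ is always of the form $s\overline{\alpha}$ for an arrow $\alpha\in Q_1$. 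This is exactly the situation controlled by Lemma~\ref{facts}.

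First I would treat $H(g_1\cup H(g_2))$. If $H(g_2)=0$ there is nothing to prove, so assume $g_2\in\overline{C}^{*,n+1}_E(L,L)$ with $n\ge 1$, whence $H(g_2)\in\overline{C}^{*,n}_E(L,L)$. Write out $g_1\cup H(g_2)$ using the definition of the cup product on $\overline{C}^*_E(L,L)$ (Subsection~\ref{subsection-dg-HH}); it is a cochain whose value on a tuple $s\overline{a}_{1,N}$ is $\pm\, g_1(s\overline{a}_{1,p})\circ H(g_2)(s\overline{a}_{p+1,N})$, so the last entry fed into $H(g_2)$ is $s\overline{a_N}$. Now apply $H$ to $g_1\cup H(g_2)$: by \eqref{H-vanish}, evaluating $H(g_1\cup H(g_2))$ on a tuple forces the last argument of $g_1\cup H(g_2)$ to be the element $\overline{\iota\pi}(1\otimes_E s\overline{a}\otimes_E 1)$, hence a linear combination of tensors ending in $s\overline{a}$. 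If this last slot lands inside the $H(g_2)$-part of the cup product, then $H(g_2)$ is being evaluated on a tuple whose last entry is $\overline{\iota\pi}(1\otimes_E s\overline a\otimes_E 1)$; after another application of the formula defining $H(g_2)$ we again expand $\overline{\iota\pi}$ and the last argument of $g_2$ becomes $s\overline{\alpha}$ for $\alpha\in Q_1$, so Lemma~\ref{facts} kills the term. If instead $g_1$ absorbs the whole tail (possible only if $H(g_2)$ is evaluated on the empty tuple, i.e. contributes a degree-zero cochain), one checks directly from \eqref{H-vanish} that $H$ applied to a cochain supported in the wrong arity vanishes, or the corresponding combinatorial configuration does not occur. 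Collecting cases, every summand of $H(g_1\cup H(g_2))$ vanishes, so $H(g_1\cup H(g_2))=0$.

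The argument for $\Psi(g_1\cup H(g_2))=0$ is the same in spirit but shorter. By \eqref{equ:Psi}, $\Psi$ is nonzero only on $\overline{C}^{*,0}_E$ and $\overline{C}^{*,1}_E$, and on $\overline{C}^{*,1}_E$ it is given by $\Psi(f) = -\sum_{\alpha\in Q_1} s^{-1}\alpha^* f(s\overline{\alpha})$, which again evaluates $f$ only on arguments $s\overline{\alpha}$ with $\alpha\in Q_1$. So I would split according to the arity of $g_1\cup H(g_2)$: if this arity is $\ge 2$, $\Psi$ vanishes on it; if the arity is $1$, then $\Psi(g_1\cup H(g_2))$ is a sum of terms $\pm\,s^{-1}\alpha^*\bigl(g_1\cup H(g_2)\bigr)(s\overline{\alpha})$, and in $(g_1\cup H(g_2))(s\overline\alpha)$ the last (indeed only, up to splitting) slot handed to $H(g_2)$ is $s\overline{\alpha}$ — so, expanding $H(g_2)$ by \eqref{H-vanish} and using that $\overline{\iota\pi}(1\otimes_E s\overline{\alpha}\otimes_E 1)$ ends in $s\overline{\alpha}$, Lemma~\ref{facts} forces $H(g_2)(s\overline{\alpha})=0$ (the arity of $H(g_2)$ here is $1$, and $H$ vanishes on $\overline{C}^{*,\le 1}_E$ anyway, making this case vacuous); if the arity is $0$, then $H(g_2)$ must itself have arity $0$, contradicting $H(g_2)\in\overline{C}^{*,n}_E$ with $n\ge 1$ unless $H(g_2)=0$. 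Hence $\Psi(g_1\cup H(g_2))=0$ as well.

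\textbf{Main obstacle.} The bookkeeping in the first identity is the delicate part: one must carefully track \emph{which} tensor slot of the cup product $g_1\cup H(g_2)$ ends up being the one that the outer $H$ (or the outer $\Psi$) acts on, and confirm that in every admissible configuration that slot, once $\overline{\iota\pi}$ is expanded, terminates in a pure arrow $s\overline{\alpha}$ so that Lemma~\ref{facts} (for $H$) or the explicit form of $\Psi$ on $\overline{C}^{*,1}_E$ applies. The signs from the Koszul rule are irrelevant to the vanishing and can be suppressed. I expect no genuine difficulty beyond this case analysis, since the key structural input — that $\overline{\iota\pi}$ always produces ghost-arrow tensors in the last slot — is built into the construction of $(\iota,\pi,h)$ in Proposition~\ref{prop:htrforLPA}.
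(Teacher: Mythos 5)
Your proposal is correct and follows essentially the same route as the paper: reduce to arity-homogeneous components, note that $\Psi$ only probes arities $\le 1$ and that the homotopy formula for $H$ replaces the last bar slot by $\overline{\iota\pi}(1\otimes_E s\overline{a}\otimes_E 1)$, whose final $s\overline{L}$-slot is a pure arrow $s\overline{\alpha}$ landing in the $H(g_2)$-factor of the cup product, so that Lemma~\ref{facts} kills every term. Two small slips in your write-up do not affect the argument, since the operative facts you invoke are the correct ones: the expansion of $\overline{\iota\pi}(1\otimes_E s\overline{a}\otimes_E 1)$ is a sum of terms $s\overline{x\alpha^*}\otimes_E s\overline{\alpha}\otimes_E y$ (the arrow occupies the last $s\overline{L}$-slot, not $s\overline{a}$, and there is an extra right $L$-factor), and the arity-one case in the $\Psi$-part is not vacuous but is precisely the case $m=0$, $g_2$ of arity two, where Lemma~\ref{facts} gives $H(g_2)(s\overline{\alpha})=0$, exactly as in the paper.
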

\begin{proof}
Throughout the proof, we assume without loss of generality that $$g_1\in \overline{C}_E^{*, m}(L, L) \quad \text{and} \quad g_2\in \overline{C}_E^{*, n}(L, L)\quad \text{for some $m, n \geq 0$}.$$
Note that if $n \leq 1$ then  $H(g_2) =0$ by \eqref{H-vanish} and the desired identities hold.  So in the following we may further assume that  $n\geq 2$.

Let us first verify $\Psi(g_1\cup H(g_2))=0$. Since $\Psi(g)=0$ for any $g\in \overline{C}_E^{*, \geq 2}(L, L)$, we only need to verify $\Psi(g_1\cup H(g_2))=0$ when $m=0$ and $n=2$. In this case,  $g_1\in \overline{C}_E^{*, 0}(L, L)$ is viewed as  an element in  $\bigoplus_{i\in Q_0} e_iLe_i$. Then we have
 \begin{equation*}
 \begin{split}
 \Psi(g_1\cup H(g_2))=-\sum_{\alpha\in Q_1} s^{-1} (\alpha^*g_1)\cdot \Big( H(g_2)(s\overline{\alpha})\Big)=0, \end{split}
 \end{equation*}
 where the second equality follows from Lemma~\ref{facts} since $\alpha \in Q_1$. In order to avoid confusion, we sometimes use the dot $\cdot$ to  emphasize the multiplication of $L$.

 It remains to  verify $H(g_1\cup H(g_2))=0$. For this, we have
 \begin{equation*}
 \begin{split}
 &H(g_1\cup H(g_2))(s\overline{a}_{1, m+n-2})\\
 ={}& (-1)^{\epsilon} (g_1\cup H(g_2))(s\overline{a}_{m+1, m+n-3} \otimes_E \overline{\iota\pi}(1\otimes_E s\overline a_{m+n-2}\otimes_E 1)]\\
 ={}& (-1)^{\epsilon+\epsilon'+1} \sum_i  \sum_{\substack{\alpha \in Q_1}} g_1(s\overline a_{1, m})\cdot  H(g_2)( s\overline a_{m+1, m+n-3} \otimes_E s\overline{x_i \alpha^*}\otimes_E s\overline \alpha) \cdot y_i \\
 ={}&0,\end{split}
 \end{equation*}
 where the last equality follows from~Lemma \ref{facts} as $\alpha \in Q_1$, and we simply write $
\pi(1\otimes_E s\overline a_{m+n-2}\otimes_E 1)=\sum_i x_i\otimes_E s\otimes_E y_i $; compare \eqref{pi}. Here, the signs are given by
 $$
 \epsilon=|g_1|+ |g_2|+\sum_{i=1}^{m+n-3}(|a_i|-1)\quad \text{and} \quad \epsilon'=(|g_2|-1) \left(\sum_{i=1}^m (|a_i|-1)\right).
 $$

\end{proof}

Thanks to Lemma~\ref{lemma-homotopy-zero}, we can apply Corollary~\ref{corollary-hdr} to the homotopy deformation retract (\ref{equation-hdr-0}). We obtain an $A_{\infty}$-algebra structure $(m_1=\widehat\delta, m_2, \cdots)$ on $\widehat{C}^*(L, L)$ and an $A_{\infty}$-quasi-isomorphism $(\Phi_1=\Phi, \Phi_2, \cdots)$ from $(\widehat{C}^*(L, L), m_1, m_2, \cdots) $ to $(\overline{C}_E^{*}(L, L), \delta, -\cup-)$.  More precisely, we have the following recursive formulae for $k \geq 2$; see Remark \ref{ind}
\begin{align}
\label{recursivephimk}
\Phi_k(a_1\otimes \cdots \otimes a_k)  & =(-1)^{k-1} \; H(\Phi_{k-1}(a_1\otimes \cdots\otimes a_{k -1}) \cup \Phi(a_k));\\
\label{secondrecursive}
m_k(a_1\otimes \cdots\otimes a_k)  & = (-1)^{\frac{(k-1)(k-2)}{2}} \; \Psi( \Phi_{k-1}(a_1\otimes \cdots\otimes a_{k-1})\cup \Phi(a_k)).
\end{align}

The following lemma provides some basic properties of $\Phi_k$.
\begin{lem} \label{lem:phicondition}
\begin{enumerate}
\item  For $k \geq 1$,  we have
\begin{equation}
\label{equation:phi1}
\Phi_k(s^{-1}u_1\otimes \cdots\otimes s^{-1} u_k) \in \overline{C}_E^{*, 1}(L, L)
\end{equation}
if $s^{-1}u_j \in \bigoplus_{i\in Q_0}s^{-1}e_iLe_i\subset \widehat{C}^*(L, L)$ for all $1\leq j \leq k$; 

\item For $k \geq 2$, we have
\begin{equation}
\label{equation:phi}
\Phi_k(a_1\otimes \cdots\otimes a_k)=0
\end{equation}
if there exists some $1\leq j\leq k$ such that $a_j\in \bigoplus_{i\in Q_0} e_iLe_i\subset \widehat{C}^*(L, L)$.
\end{enumerate}
\end{lem}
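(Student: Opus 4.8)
\textbf{Proof plan for Lemma~\ref{lem:phicondition}.}
The plan is to prove both statements simultaneously by induction on $k$, using the recursive formula \eqref{recursivephimk}, namely
$$\Phi_k(a_1\otimes\cdots\otimes a_k)=(-1)^{k-1}H\bigl(\Phi_{k-1}(a_1\otimes\cdots\otimes a_{k-1})\cup\Phi(a_k)\bigr),\qquad k\geq 2,$$
together with the explicit descriptions of $\Phi=\Phi_1$ in Lemma~\ref{lem-Phi} and of the homotopy $H$ in \eqref{H-vanish}. The base case $k=1$ of \eqref{equation:phi1} is exactly the statement $\Phi(s^{-1}u)\in\overline C^{*,1}_E(L,L)$ recorded in \eqref{phiformulaimage}, and there is no $k=1$ instance of \eqref{equation:phi}. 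So everything reduces to the inductive step.

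First I would establish \eqref{equation:phi1} for $k\geq 2$. Assume $s^{-1}u_j\in\bigoplus_i s^{-1}e_iLe_i$ for all $j$; by the induction hypothesis $\Phi_{k-1}(s^{-1}u_1\otimes\cdots\otimes s^{-1}u_{k-1})\in\overline C^{*,1}_E(L,L)$ and $\Phi(s^{-1}u_k)\in\overline C^{*,1}_E(L,L)$. The key observation is that the cup product $-\cup-$ on $\overline C^*_E(L,L)$ is additive in the tensor-length, so the product of two elements of $\overline C^{*,1}_E(L,L)$ lies in $\overline C^{*,2}_E(L,L)$; see Subsection~\ref{subsection-dg-HH}. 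Then one applies $H$: from \eqref{H-vanish}, $H$ drops the tensor-length by one, so $H$ maps $\overline C^{*,2}_E(L,L)$ into $\overline C^{*,1}_E(L,L)$. Hence $\Phi_k(s^{-1}u_1\otimes\cdots\otimes s^{-1}u_k)\in\overline C^{*,1}_E(L,L)$, as required. (If one prefers to be careful about $H$ vanishing on $\overline C^{*,\leq 1}_E(L,L)$, note this only makes the image smaller, so the conclusion still holds.)

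Next I would prove \eqref{equation:phi}. Suppose some $a_j\in\bigoplus_i e_iLe_i=\overline C^{*,0}_E(L,L)\subset\widehat C^*(L,L)$. I would split into two cases according to whether $j=k$ or $j<k$. If $j=k$, then $\Phi(a_k)=a_k\in\overline C^{*,0}_E(L,L)$ by the first line of \eqref{phiformulaimage}, so $\Phi_{k-1}(a_1\otimes\cdots\otimes a_{k-1})\cup\Phi(a_k)$ has the same tensor-length as $\Phi_{k-1}(a_1\otimes\cdots\otimes a_{k-1})$; I would then observe that $\Phi_{k-1}(a_1\otimes\cdots\otimes a_{k-1})$ always lies in $\overline C^{*,\leq 1}_E(L,L)$ — this itself follows by induction, distinguishing whether all the remaining $a_i$ are of the form $s^{-1}(-)$ (use \eqref{equation:phi1}, giving tensor-length $1$, or tensor-length $0$ when $k-1=0$) or not (use the induction hypothesis \eqref{equation:phi}, giving $0$) — and since $H$ vanishes on $\overline C^{*,\leq 1}_E(L,L)$ by \eqref{H-vanish}, we get $\Phi_k(a_1\otimes\cdots\otimes a_k)=0$. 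If instead $j<k$, then $\Phi_{k-1}(a_1\otimes\cdots\otimes a_{k-1})=0$ by the induction hypothesis \eqref{equation:phi} applied to the $(k-1)$-fold argument (valid since $k-1\geq 1$; when $k=2$ and $j=1$ one uses instead that $\Phi_1(a_1)=a_1\in\overline C^{*,0}_E(L,L)$ lies in $\overline C^{*,\leq 1}_E(L,L)$, so again $H$ kills the cup product), and hence $\Phi_k=0$.

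The main obstacle I anticipate is purely bookkeeping: one must be scrupulous about the small-$k$ boundary cases (especially $k=2$, and the degenerate situation $k-1=0$ where $\Phi_0$ is not defined and one falls back on $\Phi_1$ directly), and about the precise meaning of ``tensor-length'' for the mixed elements $\Phi_{k-1}(\cdots)$, which can a priori be a product/sum landing in various $\overline C^{*,i}_E(L,L)$. Organizing the two assertions \eqref{equation:phi1} and \eqref{equation:phi} into a single strengthened inductive hypothesis — roughly, ``if all arguments are of type $s^{-1}(-)$ then $\Phi_k(\cdots)\in\overline C^{*,1}_E(L,L)$ (or $\overline C^{*,0}_E(L,L)$ if $k=0$), and if some argument is of type $e_iLe_i$ with $k\geq 2$ then $\Phi_k(\cdots)=0$, so in all cases $\Phi_k(\cdots)\in\overline C^{*,\leq 1}_E(L,L)$'' — is what makes the recursion close cleanly, and I would set it up that way from the start.
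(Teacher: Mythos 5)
Your proposal is correct and follows essentially the same route as the paper: induction on $k$ via the recursion $\Phi_k=(-1)^{k-1}H(\Phi_{k-1}(\cdots)\cup\Phi(a_k))$, the explicit form of $\Phi_1$ from \eqref{phiformulaimage}, additivity of tensor-length under $-\cup-$, the fact that $H$ lowers tensor-length by one, and the vanishing $H|_{\overline C^{*,\leq 1}_E(L,L)}=0$. The only difference is organizational (you run a single strengthened simultaneous induction with a $j=k$ versus $j<k$ split, while the paper proves (1) first and then feeds it into the induction for (2)), which does not change the substance of the argument.
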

\begin{proof}
Let us prove the first assertion by induction on $k$.
For $k =1$ it follows from \eqref{phiformulaimage}. For $k>1$,  by   \eqref{recursivephimk} we have the following recursive formula
$$\Phi_{k}(s^{-1}u_1\otimes  \cdots\otimes  s^{-1}u_k) = (-1)^{k-1} H(\Phi_{k-1}(s^{-1}u_1\otimes  \cdots\otimes  s^{-1}u_{k-1})\cup \Phi(s^{-1}u_k)). $$
By the induction hypothesis, we have   $\Phi(s^{-1}u_k), \Phi_{k-1}(s^{-1}u_1\otimes  \cdots\otimes  s^{-1}u_{k-1}) \in \overline{C}_E^{*, 1}(L, L). $  Then we obtain $\Phi_{k-1}(s^{-1}u_1\otimes  \cdots\otimes  s^{-1}u_{k-1})\cup \Phi(s^{-1}u_k) \in \overline{C}_E^{*,2}(L,L)$. It follows from  \eqref{H-vanish} that $\Phi_{k}(s^{-1}u_1\otimes  \cdots\otimes  s^{-1}u_k) \in \overline{C}_E^{*, 1}(L, L)$.

Similarly, we may prove the second assertion by induction on $k$. For $k=2$ we have
$$\Phi_2(a_1\otimes a_2) = H(\Phi(a_1)\cup \Phi(a_2)) .$$
By  \eqref{H-vanish} we have $H|_{\overline{C}_E^{*, \leq 1}(L, L)}=0$. It follows from \eqref{phiformulaimage}  that
$\Phi_2(a_1 \otimes  a_2) = 0$ if $a_1$ or $a_2$ lies  in $\bigoplus_{i\in Q_0} e_iLe_i\subset \widehat{C}^*(L, L)$.

Now we consider the case for $k>2$. By the induction hypothesis, we have  $\Phi_{k-1}(a_1\otimes  \cdots\otimes  a_{k-1})=0 $  if there exists $1 \leq j \leq k-1$ such that $a_j$ lies in $\bigoplus_{i\in Q_0} e_iLe_i$. Then by \eqref{recursivephimk} we have   $\Phi_{k}(a_1\otimes  \cdots \otimes  a_{k})= 0$. Otherwise, by assumption $a_k$ must be in $\bigoplus_{i\in Q_0} e_iLe_i$ and thus $\Phi(a_k) \in \overline{C}_E^{*, 0}(L, L)$. Since the elements $a_1, \dotsc, a_{k-1}$ are in $\bigoplus_{i\in Q_0} s^{-1}e_iLe_i$, by the first assertion we obtain $\Phi_{k-1}(a_1\otimes  \cdots\otimes  a_{k-1})  \in \overline{C}_E^{*, 1}(L, L)$.
 By  \eqref{recursivephimk} again, we infer  $\Phi_{k}(a_1\otimes  \cdots\otimes  a_{k})=0$. \end{proof}

A prior, the higher $A_\infty$-products $m_k$ for $k \geq 3$ might be nonzero; see \eqref{secondrecursive}. From Lemma~\ref{lem:phicondition} we have seen that the maps $\Phi_k$ satisfy a nice  degree condition, i.e. for each $k \geq 2$,  the image of $\Phi_k$ only lies in $\overline{C}_E^{*, 1}(L, L)$. This actually will lead to the fact that  $m_k =0$ for $k \geq 3$.
Moreover, we will show that $m_2 =-\cup'-$. Recall from Subsection~\ref{subsection-B-widehat} the cup product $-\cup'-$ on $\widehat{C}^*(L, L)$.

\begin{prop}\label{proposition-A-infinity}
 The product $m_2$ on $\widehat{C}^*(L, L)$ coincides with the cup product $-\cup'-$, and the higher products $m_k$ vanish for all $k>2$.

Consequently, the collection of maps  $(\Phi_1=\Phi, \Phi_2, \dotsb)$ is an $A_\infty$-quasi-isomorphism   from the dg algebra $(\widehat{C}^*(L, L), \delta', -\cup'-)$ to the dg algebra $(\overline{C}_E^{*}(L, L), \delta, -\cup-)$.
 \end{prop}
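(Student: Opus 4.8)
The statement splits into two parts: first, that the transferred $A_\infty$-structure $(m_1,m_2,\dots)$ on $\widehat{C}^*(L,L)$ is exactly the dg algebra $(\widehat{\delta},-\cup'-)$, and second, that once this is known the collection $(\Phi_1,\Phi_2,\dots)$ is automatically an $A_\infty$-quasi-isomorphism to $(\overline{C}_E^*(L,L),\delta,-\cup-)$. The second part is immediate: we already have a homotopy deformation retract \eqref{equation-hdr-0} satisfying the hypothesis \eqref{assumptionhomtopytransfer} (Lemma~\ref{lemma-homotopy-zero}), so Corollary~\ref{corollary-hdr} directly produces an $A_\infty$-algebra structure $(m_1=\widehat\delta, m_2, m_3,\dots)$ on $\widehat{C}^*(L,L)$ together with an $A_\infty$-quasi-isomorphism $(\Phi_1,\Phi_2,\dots)$ into the dg algebra $\overline{C}_E^*(L,L)$; and $\Phi_1=\Phi$ is a quasi-isomorphism by construction. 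Thus the only real content is the first part, and once $m_k=0$ for $k>2$ and $m_2=-\cup'-$ are established, the "consequently" clause follows verbatim from Corollary~\ref{corollary-hdr}.

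For the vanishing $m_k=0$ for $k>2$, I would argue by the degree bookkeeping already prepared in Lemma~\ref{lem:phicondition}. Recall from \eqref{secondrecursive} that $m_k(a_1\otimes\cdots\otimes a_k)=(-1)^{\frac{(k-1)(k-2)}{2}}\Psi(\Phi_{k-1}(a_1\otimes\cdots\otimes a_{k-1})\cup\Phi(a_k))$ for $k\geq 2$. By Lemma~\ref{lem:phicondition}(2), $\Phi_{k-1}$ vanishes unless all arguments lie in $\bigoplus_i s^{-1}e_iLe_i$, so we may assume $a_1,\dots,a_{k-1}\in\bigoplus_i s^{-1}e_iLe_i$; then by \eqref{equation:phi1} we get $\Phi_{k-1}(a_1\otimes\cdots\otimes a_{k-1})\in\overline{C}_E^{*,1}(L,L)$. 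If $a_k\in\bigoplus_i e_iLe_i$ then $\Phi(a_k)\in\overline{C}_E^{*,0}(L,L)$, so the cup product lands in $\overline{C}_E^{*,1}(L,L)$; if $a_k\in\bigoplus_i s^{-1}e_iLe_i$ then $\Phi(a_k)\in\overline{C}_E^{*,1}(L,L)$ and the cup product lands in $\overline{C}_E^{*,2}(L,L)$. In the latter case $\Psi$ kills it by \eqref{equ:Psi} since $k-1\geq 2$ forces... wait—more carefully: for $k>2$ we have $k-1\geq 2$, and I must check $\Psi$ vanishes on the image. If the cup product lies in $\overline{C}_E^{*,\geq 2}$ then $\Psi=0$ directly. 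If it lies in $\overline{C}_E^{*,1}$ (the case $a_k\in\bigoplus_i e_iLe_i$), then I invoke Lemma~\ref{facts}: $\Psi$ of an element of $\overline{C}_E^{*,1}$ evaluated involves feeding arrows $\alpha\in Q_1$, and $H$ (hidden inside $\Phi_{k-1}$ via \eqref{recursivephimk}) vanishes on such inputs — this is precisely the mechanism used in the proof of Lemma~\ref{lemma-homotopy-zero}. So in all cases $m_k=0$ for $k>2$.

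For $m_2=-\cup'-$, I would compute $m_2(a_1\otimes a_2)=\Psi(\Phi(a_1)\cup\Phi(a_2))$ case by case according to whether each $a_i$ lies in $\bigoplus_i e_iLe_i$ or in $\bigoplus_i s^{-1}e_iLe_i$, matching against the four defining formulas (C1')--(C4') of $-\cup'-$. When both $a_i=u_i\in\bigoplus_i e_iLe_i$: $\Phi(u_i)=u_i\in\overline{C}_E^{*,0}$, the cup product is $u_1u_2\in\overline{C}_E^{*,0}$, and $\Psi$ is the identity there, giving $u_1u_2$, matching (C2'). When $a_1=u_1\in\bigoplus_i e_iLe_i$ and $a_2=s^{-1}u_2$: $\Phi(u_1)\cup\Phi(s^{-1}u_2)\in\overline{C}_E^{*,1}$, and $\Psi$ of it is $-\sum_{\alpha\in Q_1}s^{-1}\alpha^*\cdot(\Phi(u_1)\cup\Phi(s^{-1}u_2))(s\overline\alpha)$; evaluating the cup product and using $\Phi(s^{-1}u_2)(s\overline\alpha)=-\alpha u_2$ (Remark~\ref{remphi1alpha}) and the first Cuntz--Krieger relations $\alpha^*u_1\alpha$-type contractions should reproduce $s^{-1}\beta_0^*u_1\beta_1^*\cdots$, i.e.\ (C4'). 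The case $a_1=s^{-1}u_1$, $a_2=u_2\in\bigoplus_i e_iLe_i$ is similar and gives $s^{-1}u_1u_2$, matching (C3'); here I'd use the explicit formula of $\Phi_1$ from Lemma~\ref{lem-Phi} together with Lemma~\ref{remphi1alpha}. Finally when both $a_i=s^{-1}u_i$: $\Phi(s^{-1}u_1)\cup\Phi(s^{-1}u_2)\in\overline{C}_E^{*,2}$, which $\Psi$ annihilates, giving $0$, matching (C1'). The main obstacle I anticipate is the signs and the Cuntz--Krieger bookkeeping in the mixed cases (C3') and (C4'): one must carefully track the Koszul signs coming from the shift $s^{-1}$ (degree $+1$) through the formulas \eqref{equ:Psi}, \eqref{phiformulaimage}, Lemma~\ref{lem-Phi}, and the definition of $-\cup-$ in Subsection~\ref{subsection-dg-HH}, and verify that the sums over arrows $\alpha\in Q_1$ collapse correctly via $\alpha\beta^*=\delta_{\alpha,\beta}e_{t(\alpha)}$. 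These are routine but delicate; everything else is a formal consequence of the homotopy transfer theorem and the degree estimates already in place.
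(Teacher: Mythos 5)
Your proposal is correct and takes essentially the same route as the paper's proof: the same case-by-case verification of $m_2$ against (C1')--(C4') using the transfer formula, Remark~\ref{remphi1alpha} and the Cuntz--Krieger relations, and the same vanishing argument for $m_k$, $k>2$, via the degree estimates of Lemma~\ref{lem:phicondition} together with Lemma~\ref{facts} applied to the homotopy $H$ hidden in the recursion \eqref{recursivephimk}, with the "consequently" clause following from Corollary~\ref{corollary-hdr} exactly as in the paper.
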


\begin{proof}
Let us first prove that $m_2$ coincides with $-\cup'-$. Let $u,  v \in \prod_{i\in Q_0}e_iLe_i$. Then we view $s^{-1}u,  s^{-1}v$ as elements in $  \prod_{i\in Q_0}s^{-1}e_iLe_i$. We need to consider the following four cases corresponding to (C1')-(C4'); see Subsection \ref{subsection-B-widehat}.
\begin{enumerate}
\item For (C1'), since $\Phi(s^{-1}u),  \Phi(s^{-1}v)\in \overline{C}_E^{*, 1}(L, L)$ and $\Psi|_{\overline{C}_E^{*, 2}(L, L)}=0$, we have
$$m_2(s^{-1}u \otimes s^{-1}v)=\Psi(\Phi(s^{-1}u)\cup \Phi(s^{-1}v)) = 0 =s^{-1} u \cup' s^{-1}v.$$

\item For (C2'), since $\Psi(u)=u$ and $ \Psi(v)=v$, we have
$$m_2(u \otimes v)=\Psi(\Phi(u)\cup \Phi(v))= \Psi(uv)= uv=u \cup' v.$$

\item For (C3'), we have
\begin{alignat*}{2}
m_2(s^{-1} v\otimes u)
&= \Psi(\Phi(s^{-1}v)\cup \Phi(u))
 &{}={}& -\sum_{\alpha\in Q_1}s^{-1}\alpha^*\Phi(s^{-1}v)(s\overline \alpha)\cdot u \\
 &=\sum_{\alpha\in Q_1}s^{-1}\alpha^*\alpha v u
 \ &={}& \
s^{-1}v \cup' u,
\end{alignat*}
where the third equality follows from Remark~\ref{remphi1alpha}, and the last one is due to the second Cuntz-Krieger relations.

\item Similarly, for (C4') we have
\begin{alignat*}{2}
m_2(u\otimes s^{-1}v)
&= \Psi(\Phi(u)\cup \Phi(s^{-1}v))
&{} ={}& -\sum_{\alpha\in Q_1} s^{-1}\alpha^*(u \cup\Phi(s^{-1}v)) (s\overline \alpha)\\
&=  \sum_{\alpha\in Q_1}s^{-1} \alpha^*u\alpha v
 \ &={}&\  u \cup' (s^{-1}v),
\end{alignat*}
where the third equality follows from Remark~\ref{remphi1alpha}.
\end{enumerate}
This shows that $m_2$ coincides with $-\cup'-$.

Now let us prove $m_k=0$ for $k>2$. Assume by way of contradiction that $m_k(a_1 \otimes  \cdots \otimes  a_k) \neq 0$ for some $a_1, \dotsc, a_k \in\widehat{C}^*(L, L)$.  By  \eqref{secondrecursive}, we have
\begin{align}
\label{align-mk}
m_k (a_1\otimes \cdots\otimes a_k)= (-1)^{\frac{(k-1)(k-2)}{2}}\Psi(\Phi_{k-1}(a_1\otimes\cdots\otimes a_{k-1})\cup \Phi(a_k)),
\end{align}
It follows from Lemma~\ref{lem:phicondition}  that $\Phi_{k-1}(a_1\otimes\cdots\otimes a_{k-1})\in \overline{C}_E^{*, 1}(L, L)$.  Since $\Psi|_{\overline{C}_E^{*, \geq 2}(L, L)}=0$, we infer that   $\Phi(a_k)$ must be in $\overline{C}_E^{*, 0}(L, L) = \bigoplus_{i\in Q_0} e_iLe_i$. Thus, by (\ref{align-mk}) again we get
\begin{align*}
& m_k(a_1 \otimes \dotsb \otimes a_k)\\
={} & -(-1)^{\frac{(k-1)(k-2)}{2}}\sum_{\alpha \in Q_1} \alpha^* \Phi_{k-1}(a_1\otimes  \cdots\otimes  a_{k-1})(s\overline \alpha)\cdot \Phi(a_k) \\
={} &-(-1)^{\frac{(k+1)(k-2)}{2}}\sum_{\alpha \in Q_1} \alpha^* H\Big(\Phi_{k-2}(a_1\otimes  \cdots\otimes  a_{k-2})\cup \Phi(a_{k-1})\Big)(s\overline \alpha) \cdot \Phi(a_k)\\
={} & 0
\end{align*}
where the first equality uses (\ref{equ:Psi}),  the second one uses (\ref{recursivephimk}), and the third one follows from Lemma~\ref{facts}. A contradiction! This shows that $m_k(a_1\otimes  \cdots \otimes  a_k)=0$ for $k>2$.
\end{proof}

\subsection{The $A_\infty$-quasi-isomorphism via the brace operation}
\label{subsection:A-quasi-brace}
It follows from Proposition~\ref{proposition-A-infinity} that we have an $A_{\infty}$-quasi-isomorphism
$$(\Phi_1=\Phi, \Phi_2, \cdots)\colon (\widehat{C}^*(L, L), \widehat{\delta}, -\cup'-)\longrightarrow (\overline{C}_E^*(L, L), \delta, -\cup-)$$
 between the two dg algebras. In this subsection, we will give an explicit formula for $\Phi_k$.

\begin{prop}\label{proposition-Phi}
Let $k\geq 1$.
For  any $s^{-1}u_1, \dotsc, s^{-1}u_k\in \bigoplus_{i\in Q_0}s^{-1}e_iLe_i\subset \widehat{C}^*(L, L)$, we have
$$\Phi_k(s^{-1}u_1\otimes \cdots\otimes s^{-1}u_k)(s\overline v)= (-1)^{(|v|-1)\epsilon_k+\sum\limits_{i=1}^{k-1} (|u_i|-1)(k-i)}v\{s^{-1}u_1, \dotsc, s^{-1}u_k\}', $$
where  $v\in L$ and $v\{s^{-1}u_1, \dotsc, s^{-1}u_k\}'$ is given by (\ref{equation-brace-formula2}). Here,  we denote $\epsilon_k=\sum_{i=1}^k|u_i|$.
\end{prop}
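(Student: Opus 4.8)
The plan is to prove the formula by induction on $k$, using the recursive formula \eqref{recursivephimk} for $\Phi_k$ and the recursive formula \eqref{equation-recursive} in Proposition~\ref{prop:recursive} for the brace operation $v\{s^{-1}u_1,\dots,s^{-1}u_k\}'$ on $\widehat{C}^*(L,L)$. The base case $k=1$ is precisely Lemma~\ref{lem-Phi}. For the inductive step, I would fix $k\geq 2$ and assume the formula holds for $k-1$; since $\Phi_{k-1}(s^{-1}u_1\otimes\cdots\otimes s^{-1}u_{k-1})\in\overline{C}_E^{*,1}(L,L)$ by Lemma~\ref{lem:phicondition}(1), the cup product $\Phi_{k-1}(s^{-1}u_1\otimes\cdots\otimes s^{-1}u_{k-1})\cup\Phi(s^{-1}u_k)$ lands in $\overline{C}_E^{*,2}(L,L)$, so applying $H$ via \eqref{H-vanish} produces an element of $\overline{C}_E^{*,1}(L,L)$ whose value on $s\overline v$ can be computed explicitly.

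\textbf{Key steps.} First I would unfold $\Phi_k(s^{-1}u_1\otimes\cdots\otimes s^{-1}u_k)(s\overline v)$ using \eqref{recursivephimk} and the formula \eqref{H-vanish} for $H$. Evaluating $H$ on $s\overline v$ requires expanding $\overline{\iota\pi}(1\otimes_E s\overline v\otimes_E 1)$, i.e.\ $D(v)$ from Remark~\ref{rem:derivation}; writing $v=\beta_1^*\cdots\beta_p^*\alpha_m\cdots\alpha_1$ as in the monomial form \eqref{mon}, the derivation $D$ produces a sum over the positions between consecutive $\beta_i^*$'s and between consecutive $\alpha_j$'s (plus the two boundary terms), each term of the shape $(\text{prefix})\otimes s\otimes(\text{suffix})$. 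Each such term $s\overline{\text{prefix}}$ gets fed into $\Phi_{k-1}(s^{-1}u_1\otimes\cdots\otimes s^{-1}u_{k-1})\cup\Phi(s^{-1}u_k)$; using the definition of the cup product on $\overline{C}_E^*$ together with Lemma~\ref{lem-Phi} (for $\Phi=\Phi_1$ applied to $s^{-1}u_k$, which contributes the factor $\alpha\{s^{-1}u_k\}'=-\alpha u_k$ via Remark~\ref{remphi1alpha}) and the inductive hypothesis (for $\Phi_{k-1}$), each term becomes a value of $(\text{prefix involving }u_k)\{s^{-1}u_1,\dots,s^{-1}u_{k-1}\}'$ times a suffix of $L$-elements. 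I would then recognize this sum as exactly the right-hand side of Proposition~\ref{prop:recursive}'s recursion \eqref{equation-recursive} applied to $v\{s^{-1}u_1,\dots,s^{-1}u_k\}'$, after isolating $s^{-1}u_k=s^{-1}\gamma_0^*\widetilde{u_k}$ with $\gamma_0\in Q_1$ (using \eqref{equ:CK2} to reduce to this shape), where the Kronecker deltas $\delta_{\alpha_{j+1},\gamma_0}$ arise precisely from the first Cuntz--Krieger relations $\alpha\beta^*=\delta_{\alpha,\beta}e_{t(\alpha)}$ implicit in the multiplication on $L$.

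\textbf{Main obstacle.} The hard part will be the sign bookkeeping: I must check that the Koszul signs generated by (a) the shift operators $s,s^{-1}$ and the map $\overline{\iota\pi}$ hidden in $H$, (b) the cup product sign $(-1)^{(|a_1|+\cdots+|a_p|-p)|\varphi|}$ in $\overline{C}_E^*$, (c) the inductive hypothesis sign $(-1)^{(|v'|-1)\epsilon_{k-1}+\sum_{i=1}^{k-2}(|u_i|-1)(k-1-i)}$, and (d) the recursion signs $(-1)^{(j+|v|+1)\epsilon_k+|u_k|}$ and $(-1)^{(j+1)\epsilon_k+|u_k|}$ in \eqref{equation-recursive}, all combine to give exactly the claimed overall sign $(-1)^{(|v|-1)\epsilon_k+\sum_{i=1}^{k-1}(|u_i|-1)(k-i)}$. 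A careful way to manage this is to track degrees as $|s^{-1}u_i|=|u_i|-1$ throughout and to verify the sign identity on a single representative monomial term (say the boundary term where $u_k$ is inserted at the far left), since all other terms differ by permuting degree-one generators $\beta_i^*,\alpha_j$ and hence carry consistent Koszul corrections; the combinatorial matching of insertion positions is already guaranteed by Proposition~\ref{prop:recursive} and the well-definedness observed in Remark~\ref{remark-admissible}(2).
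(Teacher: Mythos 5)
Your proposal is correct and follows essentially the same route as the paper's proof: induction on $k$ with base case Lemma~\ref{lem-Phi}, unfolding $\Phi_k$ via \eqref{recursivephimk} and \eqref{H-vanish}, expanding $\overline{\iota\pi}(1\otimes_E s\overline v\otimes_E 1)$ through the derivation $D$ of Remark~\ref{rem:derivation}, using $\Phi(s^{-1}u_k)(s\overline{\alpha})=-\delta_{\alpha,\gamma_0}\widetilde{u_k}$ for $u_k=\gamma_0^*\widetilde{u_k}$, and matching the resulting sum with the recursion of Proposition~\ref{prop:recursive}. The paper carries out the sign bookkeeping you defer, but your identification of where the Koszul signs arise and how they should combine is accurate.
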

\begin{proof}
We prove this identity  by induction on $k$.  By Lemma \ref{lem-Phi} this holds for $k =1$.

For $k>1$ and $v=\beta_1^*\beta_2^*\cdots \beta_p^*\alpha_m\alpha_{m-1}\cdots \alpha_1\in  L$,  we have
\begin{align}\label{equation-prop612}
&\Phi_k(s^{-1}u_1\otimes \cdots\otimes s^{-1}u_k)(s\overline v)\\
={}&(-1)^{k-1}H\big(\Phi_{k-1}(su_{1, k-1})\cup \Phi(s^{-1}u_k)\big)(s\overline v)\nonumber \\
={}&(-1)^{1+\epsilon_k+(k-1)}\big(\Phi_{k-1}(su_{1, k-1})\cup \Phi(s^{-1}u_k)\big)(\overline{\iota\pi}(1\otimes s\overline v\otimes 1)] \nonumber \\
={}&- \sum_{\alpha\in Q_1}\sum_{j=0}^{p-1} (-1)^{\epsilon_k+|u_k|j+(k-1)}\big(\Phi_{k-1}(su_{1, k-1})(s\overline{\beta_{1, j}^*\alpha^*})\big) \cdot \big(\Phi(s^{-1}u_k)(s\overline \alpha)\big) \cdot \big(\beta^*_{j+1, p}\alpha_{m,1}\big) \nonumber\\
&\!\! + \sum_{\alpha\in Q_1}\sum_{j=0}^{m-1} (-1)^{\epsilon_k+|u_k|(m+p-j)+(k-1)}\big(\Phi_{k-1}(su_{1, k-1})(s\overline{\beta_{1, p}^*\alpha_{m, j+1}\alpha^*}\big)\cdot \big(\Phi(s^{-1}u_k)(s\overline \alpha)\big) \cdot \big(\alpha_{j, 1}\big),  \nonumber
\end{align}
where the first equality follows from \eqref{recursivephimk}, the second one from \eqref{H-vanish}, and  the third one from  Remark~\ref{rem:derivation}.  Here, we simply write $\Phi_{k-1}(su_{1, k-1})= \Phi_{k-1}(s^{-1}u_1 \otimes \cdots\otimes s^{-1}u_{k-1})$,  and write  $\alpha_{j, i}=\alpha_{j}\alpha_{j-1}\cdots \alpha_i$, $\beta_{i, j}^*=\beta_i^*\beta_{i+1}^*\cdots\beta_j^*$ for any $i<j$.

 Write $u_k=\gamma_0^*\widetilde {u_k}$ with $\gamma_0\in Q_1$ and $\widetilde{u_k}\in e_{t(\gamma_0)} Le_{s(\gamma_0)}$. Then by  (\ref{equation-brace-formula2}) and  the case  where $k=1$, we have
\begin{align*}
\Phi(s^{-1}u_k)(s\overline \alpha)=\alpha\{u_k\}'=-\alpha \gamma_0^*\widetilde{u_k}=-\delta_{\alpha,\gamma_0} \widetilde{u_k}, \quad \text{for $\alpha \in Q_1$}.
\end{align*}
Substituting the above identity into (\ref{equation-prop612}), we  get
\begin{equation*}
\begin{split}
&\Phi_k(s^{-1}u_1\otimes \cdots\otimes s^{-1}u_k)(s\overline v)\\
={} &\sum_{j=0}^{p-1} (-1)^{\sum\limits_{i=1}^{k-1} (|u_i|-1)(k-i)+ j \epsilon_k+|u_k|} \big((\beta_{1, j}^* \gamma_0^*)\{su_{1, k-1}\}'\big)\cdot (\widetilde{u_k}  \beta_{j+1, p}^* \alpha_{m,1})  \nonumber\\
&+ \sum_{j=0}^{m-1} (-1)^{\sum\limits_{i=1}^{k-1} (|u_i|-1)(k-i)+(p+m-j) \epsilon_k+|u_k|+1} \delta_{\alpha_{j+1}, \gamma_0}\big((\beta_{1,p}^*\alpha_{m, j+2}) \{su_{1, k-1}\}'
\big) \cdot \big( \widetilde{u_k} \alpha_{j, 1}\big)\\
={} &(-1)^{\sum\limits_{i=1}^{k-1} (|u_i|-1)(k-i)+(|v|-1)\epsilon_k}v\{s^{-1}u_1, \dotsc, s^{-1}u_k\}.\end{split}
\end{equation*}
Here,  to save the space, we simply write $\{s^{-1}u_1, s^{-1}u_2, \dotsc, s^{-1}u_{k-1}\}'$ as $\{su_{1, k-1}\}'$. The first equality follows from the induction hypothesis,  and the second one is exactly due to the identity in Proposition~\ref{prop:recursive}.
\end{proof}

\section{Verifying the $B_\infty$-morphism}\label{section:11}

The final goal is to prove that  the $A_\infty$-quasi-isomorphism obtained in the previous section is indeed a $B_\infty$-morphism. The proof relies on the higher pre-Jacobi identity of the Leavitt $B_\infty$-algebra $\widehat{C}^*(L, L)$; see Remark~\ref{rem-specialB}. For the opposite $B_{\infty}$-algebra $A^{\rm opp}$ of a $B_{\infty}$-algebra $A$, we refer to Definition~\ref{defnopposite}.

 \begin{thm}\label{prop-B4}
The $A_{\infty}$-morphism $(\Phi_1, \Phi_2, \cdots)$ is  a $B_{\infty}$-quasi-isomorphism from the $B_{\infty}$-algebra $\widehat{C}^*(L, L)$ to the opposite $B_{\infty}$-algebra $\overline{C}_E^*(L, L)^{\rm opp}$.
\end{thm}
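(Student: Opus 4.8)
The plan is to verify directly that the explicit maps $(\Phi_1,\Phi_2,\dotsc)$ constructed in Proposition~\ref{proposition-Phi} satisfy the $B_\infty$-morphism axioms with target the opposite $B_\infty$-algebra $\overline{C}_E^*(L,L)^{\rm opp}$. Since we already know from Proposition~\ref{proposition-A-infinity} that $(\Phi_1,\Phi_2,\dotsc)$ is an $A_\infty$-quasi-isomorphism between the underlying dg algebras, and since the embedding $\overline{C}_E^*(L,L)\hookrightarrow \overline{C}_E^*(L,L)^{\rm opp}$ is an isomorphism of the underlying $A_\infty$-structures (Remark~\ref{remark-B}, as $A^{\rm opp}$ has the same $A_\infty$-structure as $A$), it remains only to check the compatibility with the $B_\infty$-products. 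This is exactly the content of Lemma~\ref{lemma-infinity-morphism}: I would use the reformulation given there in terms of the maps $\widetilde\Phi_k$ obtained from $\Phi_k$ by the sign-twist \eqref{formula-sec}, which replaces the cumbersome $B_\infty$-morphism identity \eqref{equation-B-new} by the single family of identities \eqref{equation-infinity-morphism}, one for each $p,q\geq 0$, expressed purely in terms of the brace operations $-\{-,\dotsc,-\}'$ of $\widehat C^*(L,L)$ and $-\{-,\dotsc,-\}$ of $\overline C_E^*(L,L)$.

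The first step is to reduce to the essential case. By Lemma~\ref{lem:phicondition}, $\Phi_k$ vanishes unless all $k$ of its arguments lie in $\bigoplus_{i\in Q_0}s^{-1}e_iLe_i$, and in that case its image lies in $\overline C_E^{*,1}(L,L)$. Likewise, by (B1') the brace $v\{u_1,\dotsc,u_k\}'$ vanishes if any $u_j$ lies in $\bigoplus_{i\in Q_0}e_iLe_i$. Hence in \eqref{equation-infinity-morphism} both sides vanish identically unless all the $a$'s and $b$'s are of the form $s^{-1}(\,\cdot\,)$; so I would fix $a_i=s^{-1}u_i$ and $b_j=s^{-1}w_j$ throughout. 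Next, I would substitute the closed formula of Proposition~\ref{proposition-Phi},
\[
\Phi_k(s^{-1}u_1\otimes\dotsb\otimes s^{-1}u_k)(s\overline v)=\pm\, v\{s^{-1}u_1,\dotsc,s^{-1}u_k\}',
\]
into both sides of \eqref{equation-infinity-morphism}. Evaluating at a test element $s\overline v$, the left-hand side becomes a single iterated brace of the form $\bigl(v\{s^{-1}w_1,\dotsc,s^{-1}w_q\}'\bigr)\{\,\cdot\,\}'$-type expression (here the target brace of $\overline C_E^*(L,L)$ enters, applied to the one-variable outputs $\Phi_{i_r}$), while the right-hand side becomes a single brace of $v$ with arguments that are themselves braces $u_i\{s^{-1}w_{j_i+1},\dotsc\}'$. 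Matching these two is precisely the shape of the higher pre-Jacobi identity in Remark~\ref{rem-specialB}(1), transported to $\widehat C^*(L,L)$ via the strict $B_\infty$-isomorphism $\rho$ of Proposition~\ref{prop:interme}. The bulk of the work is then a careful sign bookkeeping: one has to show that the Koszul-type signs $\epsilon$, $\eta$ appearing in \eqref{equation-infinity-morphism}, the explicit signs in Proposition~\ref{proposition-Phi}, and the signs $\epsilon$ in the higher pre-Jacobi identity combine correctly, using also that passing to the opposite $B_\infty$-algebra contributes the sign $(-1)^{pq+\ldots}$ of Definition~\ref{defnopposite} (equivalently $(\mu'_{r,1})^{\rm opp}=(-1)^r\mu'_{1,r}\circ\tau_{r,1}$, which is exactly why the target must be taken opposite rather than plain).

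Concretely I would proceed as follows. First, rewrite $\overline C_E^*(L,L)$ in its $\overline C_E^{*,1}$-truncation relevant to the computation, noting that all the $\Phi_k$ for $k\geq 1$ land in $\overline C_E^{*,1}(L,L)$, so the brace of $\overline C_E^*(L,L)$ restricted to such outputs has a particularly simple description coming from \eqref{equation:brace}. Second, observe that on the right-hand side of \eqref{equation-infinity-morphism} the inner expressions $u_i\{s^{-1}w_{j_i+1},\dotsc,s^{-1}w_{j_i+l_i}\}'$ land in $\bigoplus_i s^{-1}e_iLe_i$ (by \eqref{B1'} and the form of (B2')), so $\Phi_t$ may again be replaced by the formula of Proposition~\ref{proposition-Phi}. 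Third, apply the higher pre-Jacobi identity of $\widehat C^*(L,L)$ — which holds because $\widehat C^*(L,L)$ is a brace $B_\infty$-algebra by Proposition~\ref{prop:interme} — to the element $v\{s^{-1}u_1,\dotsc,s^{-1}u_p\}'$ braced with $s^{-1}w_1,\dotsc,s^{-1}w_q$; this yields an identity whose two sides, after the substitutions, are term-by-term the two sides of \eqref{equation-infinity-morphism} evaluated at $s\overline v$. Finally, finish by verifying that $\Phi_1$ is a quasi-isomorphism, which is immediate since it is the cochain map underlying the homotopy deformation retract \eqref{equation-hdr-0} and hence a quasi-isomorphism by Corollary~\ref{corollary-hdr}.

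The main obstacle, as usual in this circle of ideas, is the sign bookkeeping. The identity \eqref{equation-infinity-morphism} carries two compound signs $\epsilon$ and $\eta$; Proposition~\ref{proposition-Phi} carries the sign $(-1)^{(|v|-1)\epsilon_k+\sum(|u_i|-1)(k-i)}$; the higher pre-Jacobi identity carries its own $\epsilon=\sum_l(|b_l|-1)\sum_{j\le i_l}(|c_j|-1)$; and the passage to the opposite contributes $(-1)^r$ per $\mu'_{1,r}$ together with the transposition sign $\tau_{r,1}$. The degree shift convention ($|s^{-1}|=+1$, so $\alpha^*$ and $\alpha$ each have degree one in $\widehat C^*(L,L)$) must be tracked consistently against the $sA$-convention used in Lemma~\ref{lemma-infinity-morphism}. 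My plan is to handle this by doing the computation entirely in the $s\overline\Lambda$/$s^{-1}$ language of Section~\ref{Section:9}, where all the relevant generators are concentrated in degree $\pm1$ and the brace formulas (B2')–(B3') already display the reordering signs explicitly; the remaining verification is then a finite comparison of quadratic forms in the degrees $|u_i|,|w_j|,|v|$ and the combinatorial data $(i_1,\dotsc,i_a;l_1,\dotsc,l_b)$, which I would organize by first checking the case $q=0$ (purely the brace-compatibility of the $A_\infty$-morphism into the right-hand argument slot) and then the case of a single $w_1$, before treating the general $p,q$ by the pre-Jacobi identity.
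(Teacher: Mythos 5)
Your proposal is correct and follows essentially the paper's own argument: reduce via Lemma~\ref{lemma-infinity-morphism} to the identity \eqref{equation-infinity-morphism}, discard the cases with a factor in $\bigoplus_{i\in Q_0}e_iLe_i$ using Lemma~\ref{lem:phicondition} and (B1'), substitute the brace formula of Proposition~\ref{proposition-Phi} (only the $r=1$ term survives on the left because every $\Phi_k$ lands in $\overline{C}_E^{*,1}(L,L)$), and identify the two sides, evaluated at a test element $s\overline v$, with the two sides of the higher pre-Jacobi identity of the brace $B_\infty$-algebra $\widehat{C}^*(L,L)$, the remainder being exactly the sign bookkeeping the paper records via $\epsilon_1$ and $\eta+\eta_1$. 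One small remark: the evaluated left-hand side is $(v\{s^{-1}u_1,\dotsc,s^{-1}u_p\}')\{s^{-1}w_1,\dotsc,s^{-1}w_q\}'$ (inner brace with the first block, outer with the second), as your concrete step (iii) correctly states, not the reversed order suggested by the looser description in your second paragraph.
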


\begin{proof}
By Lemma \ref{lemma-infinity-morphism} it suffices to verify the identity (\ref{equation-infinity-morphism}). That is,
for any
$x=u_1\otimes u_2 \otimes \cdots \otimes u_p\in \widehat{C}^*(L, L)^{\otimes p}$ and $y =v_1\otimes v_2 \otimes \cdots \otimes v_q\in \widehat{C}^*(L, L)^{\otimes q},$
we need to verify
\begin{align}\label{equation-infinity-morphismnew}
&\sum_{r\geq 1} \sum_{i_1+\dotsb+ i_r=p} (-1)^{\epsilon}\; \widetilde{\Phi}_q(sv_{1, q}) \{\widetilde{\Phi}_{i_1}(su_{1, i_1}), \widetilde{\Phi}_{i_2}(su_{i_1+1, i_1+i_2}), \dotsb, \widetilde{\Phi}_{i_r}(su_{i_1+\dotsb+i_{r-1}+1, p})\}\nonumber\\
={}&\sum (-1)^{\eta}\;
\widetilde{\Phi}_t(sv_{1, j_1}\smallotimes s(u_1\{v_{j_1+1, j_1+l_1}\}')\smallotimes  sv_{j_1+l_1+1, j_2}\smallotimes s(u_2\{v_{j_2+1, j_2+l_2}\}') \smallotimes v_{j_2+l_2+1} \smallotimes  \nonumber\\
& \quad\quad \dotsb\smallotimes sv_{j_p} \smallotimes s(u_{p}\{v_{j_p+1, j_p+l_p}\}')\smallotimes sv_{j_p+l_p+1, q}),
\end{align}
where  the sum on the right hand side is over all nonnegative integers  $(j_1, \dotsc, j_{p}; l_1, \dotsc, l_p)$ such that  $$0 \leq j_1 \leq j_1+l_1 \leq j_2 \leq j_2+l_2\leq \dotsb \leq j_p \leq j_p+l_p \leq q,$$ and $t=p+q-l_1-\cdots-l_p$. Here, $\widetilde \Phi_k$ is defined by \eqref{formula-sec} and   the signs are given by
\begin{align*}
\epsilon &= (|u_1|+\cdots+|u_p|-p)(|v_1|+\cdots+|v_q|-q),  \\
\eta& =\sum_{i=1}^p (|u_i|-1)((|v_1|-1)+(|v_2|-1) + \dotsb +(|v_{j_i}|-1)).
\end{align*}

To verify (\ref{equation-infinity-morphismnew}), we observe that if there exists  $1\leq j\leq p$ (or $1\leq l\leq q$) such that  $u_j$ (or $v_l$)  lies in   $\bigoplus_{i\in Q_0} e_iLe_i\subset \widehat{C}^*(L, L)$, then by \eqref{equation:phi} and \eqref{B1'}
both the left and  right hand sides  of (\ref{equation-infinity-morphismnew}) vanish. So we may and will assume that all $u_j$'s and $v_l$'s are in  $\bigoplus_{i\in Q_0} s^{-1}e_iLe_i\subset \widehat{C}^*(L, L)$.

It follows from \eqref{formula-sec} and Proposition \ref{proposition-Phi} that for any $v_1,\dotsc, v_q\in \bigoplus_{i\in Q_0}s^{-1}e_iLe_i$,
\begin{align}
\label{formula}
\widetilde{\Phi}_q(sv_{1, q})(s\overline a):={}&(-1)^{|v_1|(q-1)+|v_2|(q-2)+\dotsb + |v_{q-1}|}\;  \Phi_q(v_1\otimes \cdots\otimes v_q)(s\overline a)\nonumber\\
={}&(-1)^{(|a|-1)(|v_1|+\dotsb + |v_q|-q)} a\{v_1, v_2, \dotsc, v_q\}'.
\end{align}
Here, we stress that the elements $sv_1, \dotsc, sv_q$ in $\widetilde{\Phi}_q(sv_{1, q})$ are viewed  in the component  $s(\oplus_{i\in Q_0}s^{-1}e_iLe_i)$  of $ s\widehat C^*(L, L)$,  rather than in $\oplus_{i\in Q_0}e_iLe_i \subset \widehat C^*(L, L)$.

It follows from \eqref{equation:phi1} that  $\widetilde{\Phi}_{q}(sv_{1, q}) \in   \overline{C}_E^{*,  1}(L, L)= \Hom_{\text{$E$-$E$}}(s\overline L, L).$ Thus, by \eqref{equation:brace} we note that
$$\widetilde{\Phi}_q(sv_{1, q})\{\widetilde{\Phi}_{i_1}(su_{1, i_1}), \widetilde{\Phi}_{i_2}(su_{i_1+1, i_1+i_2}), \dotsc, \widetilde{\Phi}_{i_r}(su_{i_1+\dotsb+i_{r-1}+1, p})\}=0$$
if $r \neq 1$. Therefore, the left hand side of (\ref{equation-infinity-morphismnew}), denoted by $\mathrm{LHS}$,  equals
\begin{equation*}
\begin{split}
&\mathrm{LHS}=(-1)^{\epsilon}\; \widetilde{\Phi}_q(sv_{1, q})\{\widetilde{\Phi}_p(su_{1, p})\}.
\end{split}
  \end{equation*}
   Applying the above to an arbitrary  element $s\overline a \in s\overline L$, we have
    \begin{equation}\label{equ-important-0}
\begin{split}
\mathrm{LHS}(s\overline a)={}&(-1)^{\epsilon} \;  \widetilde{\Phi}_q(sv_{1, q})( s\widetilde{\Phi}_p(su_{1, p})(s\overline a)) \\
={}&(-1)^{\epsilon+(|a|-1)(|u_1|+\dotsb + |u_p|-p)}\; \widetilde{\Phi}_q(sv_{1, q})\big( s (a\{u_1,\dotsc, u_p\}')\big)\\
={}&(-1)^{\epsilon_1}(a\{u_1, \dotsc, u_p\}')\{v_1, \dotsc, v_q\}',  \end{split}
   \end{equation}
   where $
   \epsilon_1 = (|a|-1)(|u_1|+\cdots+|u_p|-p+|v_1|+\cdots+|v_q|-q)$,  and the second and third equalities follow from \eqref{formula}.

For the right hand side of (\ref{equation-infinity-morphismnew}), denoted by $\mathrm{RHS}$,  we   use  \eqref{formula} again and  have
 \begin{align}\label{equ-important-1}
   \mathrm{RHS}(s\overline a)
 = \sum &(-1)^{\eta+\eta_1} a \{v_{1, j_1},   u_1\{v_{j_1+1,j_1+l_1}\}',  v_{j_1+l_1+1, j_2}, u_2\{v_{j_2+1, j_2+l_2}\}', v_{j_2+l_2+1},  \nonumber \\
&\dotsc, v_{j_p},  u_{p}\{v_{j_p+1, j_p+l_p}\}',  v_{j_p+l_p+1, q}\}',\end{align}
where $\eta_1 =(|a|-1)(|u_1|+\dotsb + |u_p|-p+|v_1|+\cdots+|v_q|-q).$

Comparing  (\ref{equ-important-0}) and (\ref{equ-important-1}) with  the higher pre-Jacobi identity in Remark~\ref{rem-specialB} for the Leavitt $B_\infty$-algebra $\widehat{C}^*(L, L)$,
we obtain $$\mathrm{LHS}(s\overline a)=\mathrm{RHS}(s\overline a).$$
This verifies the identity (\ref{equation-infinity-morphismnew}), completing the proof.
\end{proof}

\vskip 10pt
\noindent {\bf Acknowledgements.}\quad We are very grateful to Bernhard Keller for many inspiring discussions. Chen thanks Lleonard Rubio y Degrassi for the reference \cite{BRyD} and  Guisong Zhou for the reference \cite{Masu}. Li thanks  Pere Ara, Jie Du, Roozbeh Hazrat and Steffen Koenig for their support and help. Wang thanks Henning Krause for explaining the details in \cite{Kra} and Alexander Voronov for many useful discussions on $B_{\infty}$-algebras.  He also thanks Guodong Zhou for his constant support and encouragement.

A large part of this work was carried out while Wang was supported by the Max-Planck Institute for Mathematics in Bonn.  He thanks the institute for its hospitality and financial support. This work is also supported by the National Natural Science Foundation of China (Nos.  11871071 and 11971449) and the Australian Research Council grant DP160101481.

 {\footnotesize \noindent Xiao-Wu Chen\\
 CAS Wu Wen-Tsun Key Laboratory of Mathematics,\\
  University of Science and Technology of China, Hefei, Anhui, 230026, PR China\\
  xwchen$\symbol{64}$mail.ustc.edu.cn}

\vskip 5pt

{\footnotesize \noindent Huanhuan Li\\
School of Mathematical Sciences, Anhui University\\
Hefei 230601, Anhui, PR China\\
lihuanhuan2005$\symbol{64}$163.com}

\vskip 5pt

{\footnotesize \noindent Zhengfang Wang\\
Institute of Algebra and Number Theory, University of Stuttgart\\
Pfaffenwaldring 57, 70569 Stuttgart, Germany \\
zhengfangw$\symbol{64}$gmail.com}

\end{document}